\documentclass[9pt]{amsart}
\textwidth=14.5cm \oddsidemargin=1cm
\evensidemargin=1cm
\usepackage{amsmath}
\usepackage{amsxtra}
\usepackage{amscd}
\usepackage{amsthm}
\usepackage{amsfonts}
\usepackage{amssymb}
\usepackage{eucal}
\usepackage[all]{xy}
\usepackage{graphicx}
\usepackage{hyperref}
\usepackage{tikz-cd}

\newtheorem{cor}[subsubsection]{Corollary}
\newtheorem{lem}[subsubsection]{Lemma}
\newtheorem{mainlem}[subsubsection]{Main Lemma}
\newtheorem{prop}[subsubsection]{Proposition}

\newtheorem{propconstr}[subsubsection]{Proposition-Construction}
\newtheorem{lemconstr}[subsubsection]{Lemma-Construction}

\newtheorem{thm}[subsubsection]{Theorem}



\theoremstyle{definition}

\theoremstyle{remark}
\newtheorem{rem}[subsubsection]{Remark}

\newcommand{\thmref}[1]{Theorem~\ref{#1}}

\newcommand{\secref}[1]{Sect.~\ref{#1}}
\newcommand{\lemref}[1]{Lemma~\ref{#1}}
\newcommand{\propref}[1]{Proposition~\ref{#1}}
\newcommand{\corref}[1]{Corollary~\ref{#1}}

\numberwithin{equation}{section}

\newcommand{\nc}{\newcommand}
\nc{\renc}{\renewcommand}
\nc{\ssec}{\subsection}
\nc{\sssec}{\subsubsection}
\nc{\on}{\operatorname}

\nc\ol{\overline}
\nc\wt{\widetilde}
\nc\tboxtimes{\wt{\boxt}}
\nc\tstar{\wt{\star}}
\nc{\alp}{a}

\nc{\ZZ}{{\mathbb Z}}
\nc{\NN}{{\mathbb N}}
\nc{\OO}{{\mathbb O}}
\renc{\SS}{{\mathbb S}}
\nc{\DD}{{\mathbb D}}
\nc{\GG}{{\mathbb G}}

\nc{\Fq}{{\mathbb F}_q}
\nc{\Fqb}{\ol{{\mathbb F}_q}}
\nc{\Ql}{\ol{{\mathbb Q}_\ell}}
\nc{\id}{\text{id}}
\nc\X{\mathcal X}

\nc{\Hom}{\on{Hom}}
\nc{\Lie}{\on{Lie}}
\nc{\Loc}{\on{Loc}}
\nc{\Pic}{\on{Pic}}
\nc{\Bun}{\on{Bun}}
\nc{\IC}{\on{IC}}
\nc{\Fls}{\on{Fl}^{\frac{\infty}{2}}}
\nc{\ICs}{\on{IC}^{\frac{\infty}{2}}}
\nc{\ICsl}{\on{IC}^{\lambda+\frac{\infty}{2}}}
\nc{\ICslm}{\on{IC}^{\lambda+\frac{\infty}{2},-}}
\nc{\ICsm}{\on{IC}^{\frac{\infty}{2},-}}
\nc{\Aut}{\on{Aut}}
\nc{\rk}{\on{rk}}
\nc{\Sh}{\on{Sh}}
\nc{\Perv}{\on{Perv}}
\nc{\pos}{{\on{pos}}}
\nc{\Conv}{\on{Conv}}
\nc{\Sph}{\on{Sph}}
\nc{\Sat}{\on{Sat}}
\nc{\Sym}{\on{Sym}}
\nc{\BunBb}{\overline{\Bun}_B}
\nc{\BunNb}{\overline{\Bun}_N}
\nc{\BunTb}{\overline{\Bun}_T}
\nc{\BunBbm}{\overline{\Bun}_{B^-}}
\nc{\BunBbel}{\overline{\Bun}_{B,el}}
\nc{\BunBbmel}{\overline{\Bun}_{B^-,el}}
\nc{\Buno}{\overset{o}{\Bun}}
\nc{\BunPb}{{\overline{\Bun}_P}}
\nc{\BunBM}{\Bun_{B(M)}}
\nc{\BunBMb}{\overline{\Bun}_{B(M)}}
\nc{\BunPbw}{{\widetilde{\Bun}_P}}
\nc{\BunBP}{\widetilde{\Bun}_{B,P}}
\nc{\GUb}{\overline{G/U}}
\nc{\GUPb}{\overline{G/U(P)}}

\nc{\Hhom}{\underline{\on{Hom}}}
\nc\syminfty{\on{Sym}^{\infty}}
\nc\lal{\ol{\kappa_x}}
\nc\xl{\ol{x}}
\nc\thl{\ol{\theta}}
\nc\nul{\ol{\nu}}
\nc\mul{\ol{\mu}}
\nc\Sum\Sigma
\nc{\oX}{\overset{o}{X}{}}
\nc{\hl}{\overset{\leftarrow}h{}}
\nc{\hr}{\overset{\rightarrow}h{}}
\nc{\M}{{\mathcal M}}
\nc{\N}{{\mathcal N}}
\nc{\F}{{\mathcal F}}
\nc{\D}{{\mathcal D}}
\nc{\Q}{{\mathcal Q}}
\nc{\Y}{{\mathcal Y}}
\nc{\G}{{\mathcal G}}
\nc{\E}{{\mathcal E}}
\nc{\CalC}{{\mathcal C}}
\nc\Dh{\widehat{\D}}

\nc{\C}{{\mathcal C}}
\nc{\K}{{\mathcal K}}
\renewcommand{\H}{{\mathcal H}}

\nc{\T}{{\mathcal T}}
\nc{\V}{{\mathcal V}}
\renc{\P}{{\mathcal P}}
\nc{\A}{{\mathcal A}}
\nc{\B}{{\mathcal B}}
\nc{\U}{{\mathcal U}}

\nc{\Gr}{{\on{Gr}}}

\nc{\frn}{{\check{\mathfrak u}(P)}}

\nc{\fC}{\mathfrak C}
\nc{\p}{\mathfrak p}
\nc{\q}{\mathfrak q}
\nc\f{{\mathfrak f}}

\nc{\qo}{{\mathfrak q}}
\nc{\po}{{\mathfrak p}}
\nc{\s}{{\mathfrak s}}
\nc\w{\text{w}}

\renewcommand{\mod}{{\on{-mod}}}

\nc\mathi\iota
\nc\Spec{\on{Spec}}
\nc\Mod{\on{Mod}}
\nc{\tw}{\widetilde{\mathfrak t}}
\nc{\pw}{\widetilde{\mathfrak p}}
\nc{\qw}{\widetilde{\mathfrak q}}
\nc{\jw}{\widetilde j}

\nc{\grb}{\overline{\Gr}}

\renewcommand{\j}{\mathfrak j}

\nc{\kappach}{{\check\kappa_x}}
\nc{\Lambdach}{{\check\Lambda}{}}
\nc{\much}{{\check\mu}}
\nc{\omegach}{{\check\omega}}
\nc{\nuch}{{\check\nu}}
\nc{\etach}{{\check\eta}}
\nc{\alphach}{{\checka}}
\nc{\oblvtach}{{\check\oblvta}}
\nc{\pich}{{\check\pi}}
\nc{\ch}{{\check h}}

\nc{\Hb}{\overline{\H}}


\emergencystretch=2cm

\nc{\BA}{{\mathbb{A}}}
\nc{\BC}{{\mathbb{C}}}
\nc{\BE}{{\mathbb{E}}}
\nc{\BF}{{\mathbb{F}}}
\nc{\BG}{{\mathbb{G}}}
\nc{\BM}{{\mathbb{M}}}
\nc{\BO}{{\mathbb{O}}}
\nc{\BD}{{\mathbb{D}}}
\nc{\BN}{{\mathbb{N}}}
\nc{\BP}{{\mathbb{P}}}
\nc{\BQ}{{\mathbb{Q}}}
\nc{\BR}{{\mathbb{R}}}
\nc{\BZ}{{\mathbb{Z}}}
\nc{\BS}{{\mathbb{S}}}
\nc{\BV}{{\mathbb{V}}}

\nc{\CA}{{\mathcal{A}}}
\nc{\CB}{{\mathcal{B}}}

\nc{\CE}{{\mathcal{E}}}
\nc{\CF}{{\mathcal{F}}}
\nc{\CG}{{\mathcal{G}}}
\nc{\CH}{{\mathcal{H}}}

\nc{\CL}{{\mathcal{L}}}
\nc{\CC}{{\mathcal{C}}}
\nc{\CM}{{\mathcal{M}}}
\nc{\CN}{{\mathcal{N}}}
\nc{\cCN}{\check{{\mathcal{N}}}}
\nc{\CK}{{\mathcal{K}}}
\nc{\CO}{{\mathcal{O}}}
\nc{\CP}{{\mathcal{P}}}
\nc{\CQ}{{\mathcal{Q}}}
\nc{\CR}{{\mathcal{R}}}
\nc{\CS}{{\mathcal{S}}}
\nc{\CT}{{\mathcal{T}}}
\nc{\CU}{{\mathcal{U}}}
\nc{\CV}{{\mathcal{V}}}
\nc{\CW}{{\mathcal{W}}}
\nc{\CX}{{\mathcal{X}}}
\nc{\CY}{{\mathcal{Y}}}
\nc{\CZ}{{\mathcal{Z}}}
\nc{\CI}{{\mathcal{I}}}
\nc{\CJ}{{\mathcal{J}}}

\nc{\csM}{{\check{\mathcal A}}{}}
\nc{\oM}{{\overset{\circ}{\mathcal M}}{}}
\nc{\obM}{{\overset{\circ}{\mathbf M}}{}}
\nc{\oCA}{{\overset{\circ}{\mathcal A}}{}}
\nc{\obA}{{\overset{\circ}{\mathbf A}}{}}
\nc{\ooM}{{\overset{\circ}{M}}{}}
\nc{\osM}{{\overset{\circ}{\mathsf M}}{}}
\nc{\vM}{{\overset{\bullet}{\mathcal M}}{}}
\nc{\nM}{{\underset{\bullet}{\mathcal M}}{}}
\nc{\oD}{{\overset{\circ}{\mathcal D}}{}}
\nc{\obD}{{\overset{\circ}{\mathbf D}}{}}
\nc{\oA}{{\overset{\circ}{\mathbb A}}{}}
\nc{\op}{{\overset{\bullet}{\mathbf p}}{}}
\nc{\cp}{{\overset{\circ}{\mathbf p}}{}}
\nc{\oU}{{\overset{\bullet}{\mathcal U}}{}}
\nc{\oZ}{{\overset{\circ}{\mathcal Z}}{}}
\nc{\ofZ}{{\overset{\circ}{\mathfrak Z}}{}}
\nc{\oF}{{\overset{\circ}{\fF}}}

\nc{\fa}{{\mathfrak{a}}}
\nc{\fb}{{\mathfrak{b}}}
\nc{\fd}{{\mathfrak{d}}}
\nc{\ff}{{\mathfrak{f}}}
\nc{\fg}{{\mathfrak{g}}}
\nc{\fgl}{{\mathfrak{gl}}}
\nc{\fh}{{\mathfrak{h}}}
\nc{\fj}{{\mathfrak{j}}}
\nc{\fl}{{\mathfrak{l}}}
\nc{\fm}{{\mathfrak{m}}}
\nc{\fn}{{\mathfrak{n}}}
\nc{\fu}{{\mathfrak{u}}}
\nc{\fp}{{\mathfrak{p}}}
\nc{\fr}{{\mathfrak{r}}}
\nc{\fs}{{\mathfrak{s}}}
\nc{\ft}{{\mathfrak{t}}}
\nc{\fz}{{\mathfrak{z}}}
\nc{\fsl}{{\mathfrak{sl}}}
\nc{\hsl}{{\widehat{\mathfrak{sl}}}}
\nc{\hgl}{{\widehat{\mathfrak{gl}}}}
\nc{\hg}{{\widehat{\mathfrak{g}}}}
\nc{\chg}{{\widehat{\mathfrak{g}}}{}^\vee}
\nc{\hn}{{\widehat{\mathfrak{n}}}}
\nc{\chn}{{\widehat{\mathfrak{n}}}{}^\vee}

\nc{\fA}{{\mathfrak{A}}}
\nc{\fB}{{\mathfrak{B}}}
\nc{\fD}{{\mathfrak{D}}}
\nc{\fE}{{\mathfrak{E}}}
\nc{\fF}{{\mathfrak{F}}}
\nc{\fG}{{\mathfrak{G}}}
\nc{\fK}{{\mathfrak{K}}}
\nc{\fL}{{\mathfrak{L}}}
\nc{\fM}{{\mathfrak{M}}}
\nc{\fN}{{\mathfrak{N}}}
\nc{\fP}{{\mathfrak{P}}}
\nc{\fU}{{\mathfrak{U}}}
\nc{\fV}{{\mathfrak{V}}}
\nc{\fZ}{{\mathfrak{Z}}}

\nc{\ba}{{\mathbf{a}}}
\nc{\bb}{{\mathbf{b}}}
\nc{\bc}{{\mathbf{c}}}
\nc{\bd}{{\mathbf{d}}}
\nc{\bbf}{{\mathbf{f}}}
\nc{\be}{{\mathbf{e}}}
\nc{\bi}{{\mathbf{i}}}
\nc{\bj}{{\mathbf{j}}}
\nc{\bn}{{\mathbf{n}}}
\nc{\bo}{{\mathbf{o}}}
\nc{\bp}{{\mathbf{p}}}
\nc{\bq}{{\mathbf{q}}}
\nc{\bu}{{\mathbf{u}}}
\nc{\bv}{{\mathbf{v}}}
\nc{\bx}{{\mathbf{x}}}
\nc{\bs}{{\mathbf{s}}}
\nc{\by}{{\mathbf{y}}}
\nc{\bw}{{\mathbf{w}}}
\nc{\bA}{{\mathbf{A}}}
\nc{\bK}{{\mathbf{K}}}
\nc{\bB}{{\mathbf{B}}}
\nc{\bF}{{\mathbf{F}}}
\nc{\bC}{{\mathbf{C}}}
\nc{\bG}{{\mathbf{G}}}
\nc{\bD}{{\mathbf{D}}}
\nc{\bE}{{\mathbf{E}}}
\nc{\bH}{{\mathbf{H}}}
\nc{\bI}{{\mathbf{I}}}
\nc{\bM}{{\mathbf{M}}}
\nc{\bN}{{\mathbf{N}}}
\nc{\bO}{{\mathbf{O}}}
\nc{\bV}{{\mathbf{V}}}
\nc{\bW}{{\mathbf{W}}}
\nc{\bX}{{\mathbf{X}}}
\nc{\bZ}{{\mathbf{Z}}}
\nc{\bS}{{\mathbf{S}}}

\nc{\sA}{{\mathsf{A}}}
\nc{\sB}{{\mathsf{B}}}
\nc{\sC}{{\mathsf{C}}}
\nc{\sD}{{\mathsf{D}}}
\nc{\sF}{{\mathsf{F}}}
\nc{\sK}{{\mathsf{K}}}
\nc{\sM}{{\mathsf{M}}}
\nc{\sO}{{\mathsf{O}}}
\nc{\sW}{{\mathsf{W}}}
\nc{\sQ}{{\mathsf{Q}}}
\nc{\sP}{{\mathsf{P}}}
\nc{\sZ}{{\mathsf{Z}}}
\nc{\sr}{{\mathsf{r}}}
\nc{\bk}{{\mathsf{k}}}
\nc{\sg}{{\mathsf{g}}}
\nc{\sff}{{\mathsf{f}}}
\nc{\sfe}{{\mathsf{e}}}
\nc{\sfj}{{\mathsf{j}}}
\nc{\sfb}{{\mathsf{b}}}
\nc{\sfc}{{\mathsf{c}}}
\nc{\sd}{{\mathsf{d}}}
\nc{\sv}{{\mathsf{v}}}

\nc{\BK}{{\bar{K}}}

\nc{\tA}{{\widetilde{\mathbf{A}}}}
\nc{\tB}{{\widetilde{\mathcal{B}}}}
\nc{\tg}{{\widetilde{\mathfrak{g}}}}
\nc{\tG}{{\widetilde{G}}}
\nc{\TM}{{\widetilde{\mathbb{M}}}{}}
\nc{\tO}{{\widetilde{\mathsf{O}}}{}}
\nc{\tU}{{\widetilde{\mathfrak{U}}}{}}
\nc{\TZ}{{\tilde{Z}}}
\nc{\tx}{{\tilde{x}}}
\nc{\tbv}{{\tilde{\bv}}}
\nc{\tfP}{{\widetilde{\mathfrak{P}}}{}}
\nc{\tz}{{\tilde{\zeta}}}
\nc{\tmu}{{\tilde{\mu}}}

\nc{\urho}{\underline{\pi}}
\nc{\uB}{\underline{B}}
\nc{\uC}{{\underline{\mathbb{C}}}}
\nc{\ui}{\underline{i}}
\nc{\uj}{\underline{j}}
\nc{\ofP}{{\overline{\mathfrak{P}}}}
\nc{\oB}{{\overline{\mathcal{B}}}}
\nc{\og}{{\overline{\mathfrak{g}}}}
\nc{\oI}{{\overline{I}}}

\nc{\eps}{\varepsilon}
\nc{\hrho}{{\hat{\pi}}}

\nc{\one}{{\mathbf{1}}}
\nc{\two}{{\mathbf{t}}}

\nc{\Rep}{{\mathop{\operatorname{\rm Rep}}}}
\nc{\Tot}{{\mathop{\operatorname{\rm Tot}}}}
\nc{\Ker}{{\mathop{\operatorname{\rm Ker}}}}
\nc{\Hilb}{{\mathop{\operatorname{\rm Hilb}}}}
\nc{\End}{{\mathop{\operatorname{\rm End}}}}
\nc{\Ext}{{\mathop{\operatorname{\rm Ext}}}}
\nc{\CHom}{{\mathop{\operatorname{{\mathcal{H}}\it om}}}}
\nc{\GL}{{\mathop{\operatorname{\rm GL}}}}
\nc{\gr}{{\mathop{\operatorname{\rm gr}}}}
\nc{\Ld}{{\mathop{\operatorname{\rm Id}}}}
\nc{\de}{{\mathop{\operatorname{\rm def}}}}
\nc{\length}{{\mathop{\operatorname{\rm length}}}}
\nc{\supp}{{\mathop{\operatorname{\rm supp}}}}

\nc{\Cliff}{{\mathsf{Cliff}}}
\nc{\Fl}{\on{Fl}}
\nc{\Fib}{{\mathsf{Fib}}}
\nc{\Coh}{{\mathsf{Coh}}}
\nc{\QCoh}{{\on{QCoh}}}
\nc{\LndCoh}{{\on{IndCoh}}}
\nc{\FCoh}{{\mathsf{FCoh}}}

\nc{\reg}{{\text{\rm reg}}}

\nc{\cplus}{{\mathbf{C}_+}}
\nc{\cminus}{{\mathbf{C}_-}}
\nc{\cthree}{{\mathbf{C}_*}}
\nc{\Qbar}{{\bar{Q}}}
\nc\Eis{\on{Eis}}
\nc\Eisb{\ol\Eis{}}
\nc\Eisr{\on{Eis}^{rat}{}}
\nc\wh{\widehat}
\nc{\Def}{\on{Def_{\check{\fb}}(E)}}
\nc{\barZ}{\overline{Z}{}}
\nc{\barbarZ}{\overline{\barZ}{}}
\nc{\barpi}{\overline\iota}
\nc{\barbarpi}{\overline\barpi}
\nc{\barpip}{\overline\iota{}^+}
\nc{\barpim}{\overline\iota{}^-}

\nc{\fq}{\mathfrak q}

\nc{\fqb}{\ol{\fq}{}}
\nc{\fpb}{\ol{\fp}{}}
\nc{\fpr}{{\fp^{rat}}{}}
\nc{\fqr}{{\fq^{rat}}{}}

\nc{\hattimes}{\wh\otimes}

\nc{\bh}{{\bar{h}}}
\nc{\bOmega}{{\overline{\Omega(\check \fn)}}}

\nc{\seq}[1]{\stackrel{#1}{\sim}}

%
%
%
%

\nc{\cT}{{\check{T}}}
\nc{\cG}{{\check{G}}}
\nc{\cM}{{\check{M}}}
\nc{\cB}{{\check{B}}}
\nc{\cN}{{\check{N}}}

\nc{\ct}{{\check{\mathfrak t}}}
\nc{\cg}{{\check{\fg}}}
\nc{\cb}{{\check{\fb}}}
\nc{\cn}{{\check{\fn}}}

\nc{\cLambda}{{\check\Lambda}}

\nc{\cla}{{\check\kappa_x}}
\nc{\cmu}{{\check\mu}}
\nc{\clambda}{{\check\lambda}}
\nc{\cnu}{{\check\nu}}
\nc{\ceta}{{\check\eta}}

\nc{\DefbE}{{\on{Def}_{\cB}(E_\cT)}}

\nc{\imathb}{{\ol{\imath}}}
\nc{\rlr}{\overset{\longrightarrow}{\underset{\longrightarrow}\longleftarrow}}

\nc{\KG}{K\backslash G}
\nc{\comult}{{co\text{-}mult}}
\nc{\counit}{{co\text{-}unit}}
\nc{\uHom}{{\underline{\Maps}}}
\nc{\dgSch}{\on{Sch}}
\nc{\Sch}{\on{Sch}}
\nc{\affdgSch}{\on{Sch}^{\on{aff}}}
\nc{\affSch}{\on{Sch}^{\on{aff}}}
\nc{\Groupoids}{\on{Grpd}}
\nc{\inftygroup}{\on{Spc}}
\nc{\inftyCat}{\infty\on{-Cat}}
\nc{\StinftyCat}{\inftyCat^{\on{St}}}
\nc{\MoninftyCat}{\infty\on{-Cat}^{\on{Mon}}}
\nc{\SymMoninftyCat}{\infty\on{-Cat}^{\on{SymMon}}}
\nc{\SymMonStinftyCat}{\on{DGCat}^{\on{SymMon}}}
\nc{\MonStinftyCat}{\on{DGCat}^{\on{Mon}}}
\nc{\inftystack}{\on{Stk}}
\nc{\inftystackalg}{St\sfe^{1\text{-}alg}}
\nc{\inftyprestack}{\on{PreStk}}
\nc{\inftydgnearstack}{\on{NearStk}}
\nc{\inftydgstack}{\on{Stk}}
\nc{\inftydgstackalg}{DGSt\sfe^{1\text{-}alg}}
\nc{\inftydgprestack}{\on{PreStk}}
\nc{\dgindSch}{\on{indSch}}
\nc{\indSch}{{}^{\on{cl}}\!\on{indSch}}
\nc{\infSch}{\on{infSch}}
\nc{\dr}{{\on{dR}}}

\nc{\mmod}{{\on{-}\!{\mathbf{mod}}}}
\nc{\commod}{{\on{-}\!{\mathbf{comod}}}}
\nc{\Rres}{\mathbf{Res}}


\nc{\starr}{\text{\dh}}
\nc{\Spectra}{\on{Spectra}}
\nc{\Crys}{\on{Crys}}
\nc{\oblv}{{\on{oblv}}}
\nc{\ind}{{\on{ind}}}
\nc{\coind}{{\mathbf{coind}}}
\nc{\inv}{{\mathbf{inv}}}
\nc{\triv}{{\mathbf{triv}}}
\nc{\CMaps}{{\mathcal Maps}}
\nc{\Maps}{\on{Maps}}
\nc{\bMaps}{\mathbf{Maps}}
\nc{\BMaps}{\ul{\on{Maps}}}
\nc{\Grid}{\on{Grid}}
\nc{\hGrid}{\on{Grid}^{\geq\,\on{dgnl}}}
\nc{\Diag}{\on{Diag}}
\nc{\bDelta}{\mathbf{\Delta}}
\nc{\tCateg}{(\infty\on{-2)-Cat}}
\nc{\ul}{\underline}
\nc{\Seg}{\on{Seq}}
\nc{\biSeg}{\on{bi-Seq}}
\nc{\triSeg}{\on{tri-Seq}}
\nc{\quadSeg}{\on{quad-Seq}}
\nc{\nSeg}{\on{n-Seq}}
\nc{\Segm}{\on{Seg}^{\on{mkd}}}
\nc{\fLm}{\fL^{\on{mkd}}}
\nc{\inftyCatm}{\inftyCat^{\on{mkd}}}
\nc{\Blocks}{\mathbf{Blocks}}
\nc{\Snakes}{\mathbf{Snakes}}
\nc{\bifL}{\on{bi-}\!\fL}
\nc{\Sets}{\on{Sets}}
\nc{\Ran}{{\on{Ran}}}
\nc{\Vect}{\on{Vect}}
\nc{\Shv}{\on{Shv}}
\nc{\unn}{\mathbf{union}}
\nc{\Spc}{\on{Spc}}
\nc{\ppart}{(\!(t)\!)}
\nc{\qqart}{[\![t]\!]}
\nc{\Dmod}{\on{D}}
\nc{\cD}{\mathcal D}
\nc{\ocD}{\overset{\circ}{\cD}}
\nc{\sfo}{\mathsf{o}}
\nc{\sfob}{\mathsf{ob}}
\nc{\sfp}{\mathsf{p}}
\nc{\sfq}{\mathsf{q}}
\nc{\DGCat}{\on{DGCat}}
\renc{\det}{\on{det}}
\nc{\Conf}{\on{Conf}}
\nc{\Whit}{\on{Whit}}
\nc{\Reg}{\on{Reg}}
\nc{\Res}{\on{Res}}
\nc{\BunNbox}{(\overline\Bun_N^{\omega^\rho})_{\infty\cdot x}} 
\nc{\BunNmbox}{(\overline\Bun_{N^-}^{\omega^\rho})_{\infty\cdot x}}
\nc{\bHecke}{\on{Hecke}_{\cG,\cT}}
\nc{\Hecke}{\on{Hecke}}
\nc{\bCZ}{\ol\CZ}
\nc{\oCZ}{\overset{\circ}\CZ} 
\nc{\boCZ}{\ol{\oCZ}}
\nc{\sotimes}{\overset{!}\otimes}
\nc{\semiinf}{{\frac{\infty}{2}}}
\nc{\coInd}{\on{coInd}}
\nc{\Ind}{\on{Ind}}
\nc{\bCM}{\overset{\bullet}\CM{}}
\nc{\bCF}{\overset{\bullet}\CF{}}
\nc{\SI}{\on{SI}}
\nc{\KL}{\on{KL}}
\nc{\KM}{\on{KM}}
\nc{\fSet}{\on{fSet}}
\nc{\IndCoh}{\on{IndCoh}}
\nc{\bGamma}{\mathbf{\Gamma}}
\nc{\fc}{\mathfrak c} 
\nc{\untl}{{\on{untl}}}
\nc{\disj}{{\on{disj}}}
\nc{\onfact}{{\on{fact}}}
\nc{\lax}{{\on{lax}}}

\NewDocumentCommand{\boxt}{e{_^}}{
  \mathbin{\mathop{\boxtimes}\displaylimits
    \IfValueT{#1}{_{#1}}
    \IfValueT{#2}{^{#2}}
  }
}

\begin{document}

\title[Representations of loop groups as factorization module categories]{Representations of loop groups \\ as factorization module categories}

\dedicatory{To Sasha Beilinson} 

\author{Lin Chen, Yuchen Fu, Dennis Gaitsgory and David Yang}

\date{\today}


\maketitle

\tableofcontents

\section*{Introduction}

In this paper we show that the (2-)category of categorical representations of the loop group embeds
fully faithfully into the (2-)category of factorization module categories with respect to the affine Grassmannian. 

\ssec{Why might one expect this kind of thing to be true?}

\sssec{}

Let $X$ be a smooth curve and $x_0\in X$ a point on it. Let $G$ be a reductive group. 

\medskip

A construction going back to A.~Beilinson (and probably first fleshed out in \cite{Ga0}) says that there exists a 
family parameterized by $X$, whose fiber at $x\neq x_0$ is the product $\Gr_{G,x}\times \fL(G)_{x_0}$, and
whose fiber at $x_0$ is $\fL(G)_{x_0}$, where:

\begin{itemize}

\item $\Gr_{G,x}$ is the affine Grassmannian of $G$ associated with the formal disc $\cD_x$ around $x$;

\item $\fL(G)_{x_0}$ is the loop group of $G$ associated with the formal punctured disc $\ocD_{x_0}$ around $x_0$.

\end{itemize}

One can generalize this construction slightly, and construct a similar family parameterized by $X^n$ for any
$n$.

\sssec{}

In modern language, this construction says that we can regard $\Gr_G$ as a factorization space,
and $\fL(G)_{x_0}$ as a factorization module space at $x_0$ with respect to $\Gr_G$ (see \secref{ss:fact sp},
where the relevant definitions are recalled). 

\medskip

Another insight of Beilinson's, articulated in the early 2000's, says that this factorization module space is
universal in the following (imprecise) sense: $\fL(G)_{x_0}$ should be isomorphic to the factorization
homology of $\Gr_G$ over $\ocD_{x_0}$:
\begin{equation} \label{e:integral}
\underset{\ocD_{x_0}}\int\, \Gr_G\simeq \fL(G)_{x_0},
\end{equation}
whatever this means. 

\sssec{}

The above principle can be made precise in the topological setting: 

\medskip

The affine Grassmannian $\Gr_G$ is homotopy-equivalent to $\Omega(G)$, the loop space of $G$. 
Hence, applying Lurie's non-abelian Poincar\'e duality, we obtain that the 
factorization homology of $\Gr_G$ over the circle is homotopy-equivalent to 
$$\Maps_{\on{cont}}(S^1,G),$$ 
which can be regarded as a topological counterpart of $\fL(G)_{x_0}$.

\sssec{} \label{sss:linear}

The goal of this paper is to give an articulation of this principle in algebraic geometry. We do so by
finding an appropriate linearized statement. 

\medskip

There are (at least) two ways to linearize the above principle: $0$-categorical and $1$-categorical.

\medskip

The $0$-categorical way is straightforward: it says that the factorization homology 
of $\on{C}_\cdot(\Gr_G)$ on $\ocD_{x_0}$ (which can be made precise sense of)
maps isomorphically to $\on{C}_\cdot(\fL(G)_{x_0})$. 

\medskip

This is a true statement, and it will serve as an ingredient in the proof
of our main theorem, see \secref{sss:triv intro}. 

\sssec{}

The 1-categorical linearization is richer:

\medskip

We can consider the category of D-modules on $\Gr_G$ as a factorization category, and 
we can take its factorization homology over $\ocD_{x_0}$. This is a monoidal category
that maps to $\Dmod(\fL(G)_{x_0})$ (the latter is viewed as a monoidal category under
convolution), and we can ask whether this functor is an equivalence. 

\medskip

The answer is ``no" and that is for a simple reason: factorization homology of a category
in the de Rham setting is too loose a construction; we rarely expect it to be equivalent 
to something sensible. However, we do expect that it has the ``right category" as a quotient.

\medskip

And indeed, this happens to be this case. Our main result, \thmref{t:initial}, is
equivalent to saying that the functor
$$\underset{\ocD_{x_0}}\int\, \Dmod(\Gr_G)\to \Dmod(\fL(G)_{x_0})$$
is a quotient. 

\sssec{}

Finally, we would like to draw a (loose) analogy between the above statement and 
the contractibility result of \cite{Ga2}:

\medskip

The latter says that for a complete curve $X$, the pullback functor 
$$\Dmod(\Bun_G(X))\to \Dmod(\Gr_{G,\Ran})$$
is fully faithful. 

\medskip

Note also that in the global setting, the corresponding statement in topology is that
the factorization homology of $\Gr_G\simeq \Omega^2(BG)$ over $X$ is homotopy equivalent to
$$\Maps_{\on{cont}}(X,BG),$$
which is in turn homotopy-equivalent to $\Bun_G(X)$. 

\ssec{What is actually done in this paper?}

\sssec{}

In the main body of the paper we do not talk about factorization homology of categories over the punctured disc.
Rather, we formulate our main result as follows:

\medskip

We can view $\Dmod(\fL(G)_{x_0})$ as a factorization module category at $x_0$
with respect to the factorization category $\Dmod(\Gr_G)$; as such it carries a commuting
action of $\fL(G)_{x_0}$ ``on the right". This structure allows us to construct a functor
\begin{equation} \label{e:functor Intro}
\fL(G)_{x_0}\mmod\to \Dmod(\Gr_G)\mmod^\onfact_{x_0}, \quad \bC \mapsto \bC^{\on{fact}_{x_0},\Dmod(\Gr_G)},
\end{equation}
where:

\begin{itemize}

\item $\fL(G)_{x_0}\mmod$ is the 2-category of categorical representations of $\fL(G)_{x_0}$ (see \secref{ss:cat rep loop group},
where the definition is recalled);

\medskip

\item $\Dmod(\Gr_G)\mmod^\onfact_{x_0}$ is the 2-category of factorization module categories at $x_0$ with respect to
$\Dmod(\Gr_G)$ (see \secref{ss:fact mod cat}).

\end{itemize}

\medskip

Our main result, \thmref{t:initial}, says that the functor \eqref{e:functor Intro} is fully faithful. 

\sssec{}

Concretely, \thmref{t:initial} says that for $\bC_1,\bC_2\in \fL(G)_{x_0}\mmod$, the functor
\begin{equation} \label{e:Hom Intro}
\on{Funct}_{\fL(G)_{x_0}\mmod}(\bC_1,\bC_2)\to
\on{Funct}_{\Dmod(\Gr_G)\mmod^\onfact_{x_0}}(\bC_1^{\on{fact}_{x_0},\Dmod(\Gr_G)},\bC_2^{\on{fact}_{x_0},\Dmod(\Gr_G)}), 
\end{equation}
induced by \eqref{e:functor Intro}, is an equivalence. 

\medskip

It is easy to see that when proving this statement, one can assume that the source category, i.e., $\bC_1$, is
a copy of $\Vect$, equipped with the trivial action of $\fL(G)_{x_0}$. I.e., we have to show that for $\bC\in \fL(G)_{x_0}\mmod$, the functor
\begin{equation} \label{e:Hom triv Intro}
\on{inv}_{\fL(G)_{x_0}}(\bC)\to 
\on{Funct}_{\Dmod(\Gr_G)\mmod^\onfact_{x_0}}(\Vect^{\on{fact}_{x_0},\Dmod(\Gr_G)},\bC^{\on{fact}_{x_0},\Dmod(\Gr_G)}),
\end{equation} 
induced by \eqref{e:functor Intro}, is an equivalence. 

\sssec{}

The first step in the proof of \thmref{t:initial} consists of rewriting the right-hand side of \eqref{e:Hom triv Intro} in 
terms of \emph{factorization modules} over a \emph{factorization algebra}. 

\medskip

Namely, we show that for any $\wt\bC\in \Dmod(\Gr_G)\mmod^\onfact_{x_0}$, we have a canonical equivalence
$$\on{Funct}_{\Dmod(\Gr_G)\mmod^\onfact_{x_0}}(\Vect^{\on{fact}_{x_0},\Dmod(\Gr_G)},\wt\bC)
\simeq \omega_{\Gr_G}\mod^\onfact(\wt\bC)_{x_0},$$
where:

\begin{itemize}

\item $\omega_{\Gr_G}$ is the dualizing sheaf on $\Gr_G$, viewed as a factorization algebra in the factorization category
$\Dmod(\Gr_G)$ (see \secref{sss:fact alg});

\medskip

\item $\omega_{\Gr_G}\mod^\onfact(\wt\bC)_{x_0}$ denotes the category of factorization modules at $x_0$ in $\wt\bC$
with respect to $\omega_{\Gr_G}$ (see \secref{sss:fact mod}).

\end{itemize}

\medskip

Thus, we obtain that \thmref{t:initial} is equivalent to the following statement, which appears as \thmref{t:main} in the main
body of the paper:

\medskip

\noindent {\it The functor
\begin{equation} \label{e:Hom triv Intro bis}
\on{inv}_{\fL(G)_{x_0}}(\bC)\to 
 \omega_{\Gr_G}\mod^\onfact\left(\bC^{\on{fact}_{x_0},\Dmod(\Gr_G)}\right)_{x_0},
\end{equation} 
induced by \eqref{e:functor Intro}, is an equivalence}. 

\sssec{}

We now briefly indicate the main steps involved in the proof of \thmref{t:main}. 

\medskip

The first step, which is the geometric core of the argument says that when proving \eqref{e:Hom triv Intro bis},
one can replace $\bC$ by its maximal subcategory $\on{alm-inv}_{\fL(G)_{x_0}}(\bC)$, on which the action 
of $\fL(G)_{x_0}$ is \emph{almost trivial} (see \secref{ss:alm triv loop} for what this means). 

\medskip

This step is carried out in \secref{s:proof of main: key} and it involves playing with the geometry of the fusion
construction. 

\sssec{}

Once we assume that the action of $\fL(G)_{x_0}$ is almost trivial, there is no more ``de Rham complexity" in the
game, and the idea is to try to mimic the topological argument.

\medskip

The second step in the proof of \thmref{t:main} consists of replacing ``almost trivial" by ``trivial". This is done
in \secref{ss:red to triv}, by a categorical Koszul duality type argument.

\medskip

This reduces the proof that \eqref{e:Hom triv Intro bis} is an equivalence to the case when $\bC=\Vect$. I.e.,
we have to show that the functor
\begin{equation} \label{e:Vect triv Intro}
\Vect^{\fL(G)_{x_0}}\to 
\omega_{\Gr_G}\mod^\onfact\left(\Vect^{\on{fact}_{x_0},\Dmod(\Gr_G)}\right)_{x_0}
\end{equation} 
is an equivalence. 

\sssec{}  \label{sss:triv intro}

The third step in the proof of \thmref{e:Vect triv Intro} consists of establishing the equivalence \eqref{e:Vect triv Intro}.
We show that both sides admit monadic forgetful functors to $\Vect$, and we show that the corresponding monads 
are isomorphic. This is done in \secref{s:comp}. 

\medskip

That said, one could view/prove the assertion that \eqref{e:Vect triv Intro} is an isomorphism differently:

\medskip

The left-hand side in \eqref{e:Vect triv Intro} identifies with the category of modules over $\on{C}_\cdot(\fL(G)_{x_0})$, where the structure
of associative algebra on it is induced by the group structure on $\fL(G)_{x_0}$. 

\medskip

The right-hand side in \eqref{e:Vect triv Intro} identifies with the category of factorization modules with respect to
the factorization algebra $\on{C}_\cdot(\Gr_G)$. Hence, it can be further identified with
$$\Bigl(\underset{\ocD_{x_0}}\int\, \on{C}_\cdot(\Gr_G)\Bigr)\mod.$$

\medskip

Hence, the assertion that \eqref{e:Vect triv Intro} is an isomorphism is equivalent to the assertion that the map
$$\Bigl(\underset{\ocD_{x_0}}\int\, \on{C}_\cdot(\Gr_G)\Bigr)\mod\to \on{C}_\cdot(\fL(G)_{x_0})\mod$$
is an isomorphism, which is the linearization statement from \secref{sss:linear}.

\ssec{Extensions, applications and relation to prior work}

\sssec{}

Recall that our main result, \thmref{t:initial}, has the following form: it says that a certain functor from the 2-category of modules
over a given monoidal category to the 2-category of factorization modules over a factorization category is fully faithful.

\medskip

As far as we know, this is the \emph{second-of-its-kind} result of this form. The first such result was established in \cite{Bogd}. There,
the main theorem says that a certain naturally defined functor
$$\QCoh(\on{LS}_G^{\on{restr}}(\ocD_{x_0}))\mmod \to \Rep(G)\mmod^\onfact_{x_0}$$
is fully faithful, where:

\begin{itemize}

\item $\on{LS}_G^{\on{restr}}(\ocD_{x_0})$ is the stack of local systems with restricted variation on $\ocD_{x_0}$ with respect to $G$
(defined as in \cite[Sect. 1.4]{AGKRRV}). 

\end{itemize}

\medskip

This result paves a way to questions of spectral decomposition in the \emph{restricted} local geometric Langlands theory,
see \cite[Sect. 2.6]{Ga5}.

\sssec{}

That said, one expects a stronger result to be true. Namely, we expect that (a similarly defined functor)
\begin{equation} \label{e:LocSys}
\QCoh(\on{LS}_G(\ocD_{x_0}))\mmod \to \Rep(G)\mmod^\onfact_{x_0}
\end{equation}
is fully faithful, where:

\begin{itemize}

\item $\on{LS}_G(\ocD_{x_0})$ is the stack of de Rham local systems on $\ocD_{x_0}$ with respect to $G$. 

\end{itemize}

\medskip

A result of this form would be of crucial importance for the full (i.e., unrestricted) local geometric Langlands theory.

\sssec{}

Note, however, that when $G=T$ is a torus, the functor \eqref{e:LocSys} identifies with the functor
\eqref{e:functor Intro} for the dual torus. 

\medskip

So, thanks to our \thmref{t:main}, the fully-faithfulness of
\eqref{e:LocSys} is known for tori. 

\sssec{}

For a non-commutative $G$, one can formulate the following conjecture:

\medskip

Recall that to a category $\bC$ acted on by the loop group, one can associate its Whittaker model,
$\Whit(\bC)$, see \cite[Sect. 1.3.3]{GLC2}. Moreover, this construction works in the factorization setting.

\medskip

Thus, on the one hand, we can consider 
$$\Whit(G):=\Whit(\Dmod(\Gr_G))$$ 
as a factorization category. 

\medskip

On the other hand, we can consider the monoidal category
$$\on{bi-Whit}(\fL(G)_{x_0}):=\End_{\fL(G)_{x_0}\mmod}(\Whit(\Dmod(\fL(G)_{x_0})).$$

A construction similar to \eqref{e:functor Intro} gives rise to a functor
\begin{equation} \label{e:Whit}
\on{bi-Whit}(\fL(G)_{x_0})\mmod\to \Whit(G)\mmod^\onfact_{x_0}.
\end{equation}

We conjecture that the functor \eqref{e:Whit} is fully faithful. (Note that when $G$ is a torus, the Whittaker operation
is the identity functor, and the functor \eqref{e:Whit} is just the functor \eqref{e:functor Intro}).

\sssec{}

Now, the geometric Casselman-Shalika equivalence says that we have an equivalence of factorization categories
$$\Whit(G)\simeq \Rep(\cG),$$
where $\cG$ is the Langlands dual group of $G$.

\medskip

And one of the conjectures in local geometric Langlands says that 
$$\on{bi-Whit}(\fL(G)_{x_0})\simeq \QCoh(\on{LS}_G(\ocD_{x_0}))$$
as monoidal categories.

\medskip

Under this equivalence, the functor \eqref{e:functor Intro} is supposed to correspond to the functor
\eqref{e:Whit}. This is the basis for believing that \eqref{e:Whit} is fully faithful.

\sssec{}

Up until now, we have discussed the idea that \eqref{e:integral} is an equivalence, when we linearize
our algebro-geometric objects by applying the functor $\Dmod(-)$.

\medskip

One may wonder, however, whether one could expect a similar behavior when we linearize by means of
$\QCoh(-)$ instead. 

\medskip

The answer is that an analog of \thmref{t:initial} will fail in this case. We explain a counterexample in \secref{ss:coh}.

\medskip

That said, this failure (at least, in our example) happens for subtle homological algebra reasons 
(it takes place, so to say, at the cohomological $-\infty$). It is not impossible that one could
modify the definitions around the objects involved and make an analog of \thmref{t:initial} hold.

\sssec{}

We now explain one concrete application of our \thmref{t:initial}, rather in its incarnation as \thmref{t:main},
to usual representation theory.

\medskip

Let $\kappa$ be a non-negative integral Kac-Moody level. To it we can associate a chiral algebra
$\BV^{\on{Int}}_{G,\kappa}$. 

\medskip

For example, when $G$ is semi-simple and simply-connected, $\BV^{\on{Int}}_{G,\kappa}$ 
is the ``maximal integrable quotient" of the vacuum chiral algebra $\BV_{\fg,\kappa}$. 
When $G$ is a torus, $\BV^{\on{Int}}_{G,\kappa}$ is the lattice chiral algebra.

\medskip

It is known that \emph{at the level of abelian categories}, the category $\BV^{\on{Int}}_{G,\kappa}\mod^\onfact_{x_0}$
is equivalent to the category $\Rep(\fL(G)_{x_0},\kappa)$ of \emph{integrable} Kac-Moody modules at level $\kappa$, i.e., 
representations of the central extension 
$$1\to \BG_m\to \wh\fL(G)_{\kappa,x_0} \to \fL(G)_{x_0} \to 1,$$
corresponding to $\kappa$, on which the central $\BG_m$ acts by the standard character. 

\medskip

In \thmref{t:int}, we show that this equivalence continues to hold at the derived level. 

\medskip

It is known that the category $\Rep(\fL(G)_{x_0},\kappa)$ is semi-simple (even at the derived level). So the content of
\thmref{t:int} is that there are no higher Exts between irreducible objects of $\BV^{\on{Int}}_{G,\kappa}\mod^\onfact_{x_0}$.

\ssec{Structure of the paper}

We now explain the structure of the paper section-by-section.

\sssec{}

In \secref{s:background} we supply some background in factorization, mostly borrowed from \cite[Sects. B and C]{GLC2}. 

\sssec{}

In \secref{s:functor} we construct the functor \eqref{e:functor Intro}, state our main result (\thmref{t:initial}) and
reduce it to the case when the source category is $\Vect$. 

\sssec{}

In \secref{s:dualizing} we state \thmref{t:main}, which says that \eqref{e:Hom triv Intro bis} is an equivalence. 
We prove that \thmref{t:main} is logically equivalent to \thmref{t:initial}.

\medskip

The rest of the paper (up until \secref{s:KM}) is devoted to the proof of \thmref{t:main}. 

\sssec{}

In \secref{s:almost trivial} we discuss the notion of \emph{almost trivial} action of a group (in particular, a loop group)
on a category. 

\medskip

We state \thmref{t:red to alm const case} that says that for the proof of \thmref{t:main} we can 
assume that the action of $\fL(G)_{x_0}$ on $\bC$ is trivial. 

\medskip

We show that  \thmref{t:red to alm const case} allows us to reduce \thmref{t:main} to the case 
when $\bC=\Vect$. 

\medskip

\thmref{t:red to alm const case} will be proved in Sects. \ref{s:torus}-\ref{s:proof of main-end}. 

\sssec{}

In \secref{s:proofs Kosz} we prove several technical statements formulated in \secref{s:almost trivial}. 

\sssec{}

In \secref{s:comp} we prove \thmref{t:main} for $\bC=\Vect$ by a direct calculation, which
amounts to an algebro-geometric incarnation of a particular case of Lurie's non-abelian 
Poincar\'e duality. 

\sssec{}

In \secref{s:torus} we prove \thmref{t:red to alm const case} for a torus using local geometric class field theory. 

\sssec{}

In \secref{s:proof of main: key} we supply a key geometric argument that tackles \thmref{t:red to alm const case}  
in the non-abelian case.

\sssec{}

In \secref{s:proof of main-end} we finish the proof of  \thmref{t:red to alm const case}.

\sssec{}

In \secref{s:KM} we discuss the application of our \thmref{t:main} to integrable Kac-Moody representation.
In addition, we \emph{disprove} a coherent version of \thmref{t:initial}.

\sssec{}

In \secref{s:inf type} we (re)collect some material pertaining to the theory of D-modules on algebro-geometric
objects of infinite type.

\sssec{}

In \secref{s:1-cat rep} we (re)collect some material pertaining to categorical representations of groups and
in particular, loop groups.

\sssec{}

In \secref{s:fact cat} we supply proofs of statements pertaining to factorization categories and modules. 

\ssec{Conventions and notation}

\sssec{}

Throughout the paper, we will be working over a ground field $k$, assumed algebraically closed and of characteristic $0$.

\sssec{}

We will be working with $k$-linear \emph{higher algebra}. The basic object of study for us is the $\infty$-category
of $k$-linear DG categories, denoted $\DGCat$ (see \cite[Sect. 1]{GR1}, where the relevant definitions are discussed
in detail). 

\medskip

The $\infty$-category $\DGCat$ carries a symmetric monoidal structure, the Lurie tensor product. Its unit is
the category $\Vect$ of chain complexes of $k$-vector spaces. 

\medskip

In particular, all objects of $\DGCat$ are automatically enriched over $\Vect$; for $\bC\in \DGCat$ and 
$\bc_1,\bc_2\in \DGCat$, we will denote by $\CHom_\bC(\bc_1,\bc_2)$ the corresponding object of $\Vect$. 

\medskip

The category $\DGCat$ has an internal Hom, denoted
$$\on{Funct}_{\DGCat}(\bC_1,\bC_2), \quad \bC_1,\bC_2\in \DGCat.$$

\sssec{}

As we will be interested in D-modules\footnote{Except \secref{ss:coh}.}, we can stay in the world of \emph{classical}
(as opposed to \emph{derived}) algebraic geometry. By a prestack we will mean a (presentable) functor 
$$(\Sch^{\on{aff}})^{\on{op}}\to \infty\on{-Grpds},$$
where $\Sch^{\on{aff}}$ is the category of classical affine schemes over $k$. 

\medskip

All specific classes of algebro-geometric objects (e.g., schemes, ind-schemes, algebraic stacks, etc.) are full
subcategories in the category $\on{PreStk}$ of prestacks. 

\sssec{}

The theory of D-modules on prestacks locally of finite type is built in \cite[Chapter 4]{GR2}. An extension 
to relevant algebro-geometric objects of infinite type is discussed in \secref{s:inf type}. 

\sssec{}

The material in this paper that has to do with factorization relies to a large extent on \cite{GLC2}. We make
a brief review in \secref{s:background}, and refer the reader to {\it loc. cit.} for details. 

\sssec{}

The main result of this paper, \thmref{t:initial}, talks about comparing 2-categories. 

\medskip

In this paper, by a ($k$-linear) 2-category, we will mean an $\infty$-category enriched over $\DGCat$. 
In particular, given a 2-category $\fC$, and $\fc_1,\fc_2\in \fC$, we will denote by
$$\on{Funct}_{\fC}(\fc_1,\fc_2)$$
the corresponding object of $\DGCat$. 

\sssec{}

The main source of 2-categories will be of the form 
$$\bA\mmod,$$
where $\bA$ is a monoidal DG category, i.e., an associative algebra object in $\DGCat$. 

\medskip

Objects of $\bA\mmod$ are $\bA$-module categories, and for $\bC_1,\bC_2\in \bA\mmod$,
$$\on{Funct}_{\bA\mmod}(\bC_1,\bC_2)$$
is the naturally defined category of $\bA$-linear functors. 

\sssec{}

All other conventions and notations follow ones adopted in \cite{AGKRRV} and \cite{GLC2}. 

\ssec{Acknowledgements}

The third author would like to thank Sasha Beilinson for generously sharing his ideas, in particular about the factorization
perspective on local representation theory and \eqref{e:integral}. 

\medskip

We would like to thank Gurbir Dhillon
for helping us with an observation that proved crucial for \thmref{t:int} (see Remark \ref{r:vacuum in heart}).

\medskip

We would also like to thank Sam Raskin for some very helpful discussions on topics related to this paper: 
much of the mathematics that this paper relies on was developed in collaboration with him. 

\section{Background and preliminaries} \label{s:background}

For the readers convenience, in this section, we will (re)collect the definitions of the main 
players appearing in the paper.  The discussions will mostly repeat \cite[Sect. B and C]{GLC2}. 

\ssec{The arc and loop groups}

The material here follows \cite[Sects. B.3 and B.4]{GLC2}.

\sssec{}

The Ran space of $X$, denoted $\Ran$ is a prestack that attaches to a test affine scheme $S$ the
set of finite non-empty subsets of $\Hom(S,X)$.

\medskip

We let $\Ran_{x_0}$ be a variant of $\Ran$, where we consider finite subsets with a distinguished element
corresponding to
$$S\to \on{pt}\overset{x_0}\to X.$$

\sssec{}

For an $S$-point $\ul{x}$ of $\Ran$, let $\wh\cD_{\ul{x}}$ be the \emph{complete} formal disc around $\ul{x}$, 
i.e. the completion of $S\times X$ at the union of graphs of the maps comprising $\ul{x}$. 
This is ind-affine ind-scheme. We let $\cD_{\ul{x}}$ be the colimit of $\wh\cD_{\ul{x}}$ taken in the category of
affine schemes.

\medskip

For example, for $\ul{x}=\{x_0\}$ and $S=\Spec(R)$, if $t$ is a local coordinate at $x_0$, then
$$\wh\cD_{\ul{x}}=\on{Spf}(R\qqart):=\underset{n}{``\on{colim}"}\, \Spec(R[t]/t^n) \text{ and }
\cD_{\ul{x}}=\Spec(R\qqart).$$

\medskip

The above union of graphs is naturally a closed subscheme of $\cD_{\ul{x}}$. We denote by
$\ocD_{\ul{x}}$ the open complement. This is also an affine scheme.

\medskip

In the above example, 
$$\ocD_{\ul{x}}=\Spec(R\ppart).$$

\sssec{}

We let $\fL^+(G)_\Ran$ be the group-scheme over $\Ran$, whose points are pairs $(\ul{x},g)$, where
$\ul{x}\in \Hom(S,\Ran)$ and $g$ is a point of $\Hom(\cD_{\ul{x}},G)$. Note that the latter is the same as
$\Hom(\wh\cD_{\ul{x}},G)$, since $G$ is affine. 

\medskip

We let $\fL(G)_\Ran$ be the group-scheme over $\Ran$, whose points are pairs $(\ul{x},g)$, where
$\ul{x}\in \Hom(S,\Ran)$ and $g$ is a point of $\Hom(\ocD_{\ul{x}},G)$. 

\medskip

We will denote by $(-)_{\Ran_{x_0}}$ and $(-)_{x_0}$ the base change of the above objects along
$\Ran_{x_0}\to \Ran$ and $\on{pt}\overset{\{x_0\}}\to \Ran$, respectively. 

\medskip

Explicitly, the group of $S$-points of $\fL^+(G)_{x_0}$ is
$$\Maps(\on{Spf}(R\qqart),G)\simeq \Maps(\Spec(R\qqart),G)=G\left(R\qqart\right).$$

The group pf $S$-points of $\fL(G)_{x_0}$ is
$$\Maps(\Spec(R\ppart),G)=G\left(R\ppart\right).$$

\sssec{}

The object of study of this paper is categories equipped with an action of $\fL(G)_{x_0}$. We refer the reader
to \secref{ss:cat rep loop group}, where this notion is reviewed.  

\ssec{Factorization spaces} \label{ss:fact sp}

The material here follows \cite[Sects. B.1 and B.2]{GLC2}.

\sssec{}

A factorization space $\CT$ is a prestack $\CT_\Ran$ over $\Ran$ equipped with the datum of isomorphisms:
\begin{equation} \label{e:fact space}
\CT_\Ran\underset{\Ran,\on{union}}\times (\Ran\times \Ran)_\disj\simeq 
(\CT_\Ran\times \CT_\Ran) \underset{\Ran\times \Ran}\times (\Ran\times \Ran)_\disj,
\end{equation} 
where:

\begin{itemize} 

\item $(\Ran\times \Ran)_\disj\subset (\Ran\times \Ran)$ is the disjoint locus, i.e., the open subfunctor
consisting of pairs $(\ul{x}_1,\ul{x}_2)\in (\Ran\times \Ran)$ such that $\on{Graph}_{\ul{x}_1}\cap \on{Graph}_{\ul{x}_2}=\emptyset$;

\medskip

\item $\on{union}$ is the union map $\Ran\times \Ran\to \Ran$.

\end{itemize}

The isomorphisms \eqref{e:fact space} must be equipped with a homotopy-coherent data of commutativity and associativity.

\begin{rem}

We distinguish notationally $\CY$ and $\CY_\Ran$: the latter is a just prestack over $\Ran$, and the former 
taken into account the factorization structure.

\end{rem} 

\sssec{}

Note that for $(\ul{x}_1,\ul{x}_2)\in (\Ran\times \Ran)_\disj$ and $\ul{x}=\ul{x}_1\cup \ul{x}_2$, we have
$$\cD_{\ul{x}}\simeq \cD_{\ul{x}_1}\sqcup \cD_{\ul{x}_2} \text{ and } \ocD_{\ul{x}}\simeq \ocD_{\ul{x}_1}\sqcup \ocD_{\ul{x}_2}.$$ 
 
These isomorphisms endow $\fL^+(G)_{\Ran}$ and $\fL(G)_{\Ran}$ with a factorization structure. We denote the 
resulting factorization spaces by $\fL^+(G)$ and $\fL(G)$, respectively.

\sssec{}
\label{sss GrG factspc}

A key geometric player is the factorization space
$$\Gr_G:=\fL(G)/\fL^+(G).$$

Explicitly, for a point $\ul{x}$ of $\Ran$, the fiber $\Gr_{G,\ul{x}}$ is the set of pairs $(\CP_G,\alpha)$, where:

\begin{itemize}

\item $\CP_G$ is a $G$-bundle on $\cD_{\ul{x}}$;

\item $\alpha$ is a trivialization of $\CP_G|_{\ocD_{\ul{x}}}$.

\end{itemize}
Or equivalently, it is the set of pairs $(\CP_G^{\on{glob}},\beta)$
\begin{itemize}

\item $\CP_G^{\on{glob}}$ is a $G$-bundle on $X$;

\item $\beta$ is a trivialization of $\CP_G^{\on{glob}}|_{X\setminus \ul{x}}$.

\end{itemize}

\sssec{} \label{sss:fact mod sp}

Let $\CT$ be a factorization space. A factorization module space $\CT_m$ at $x_0$ with respect to $\CT$ is a prestack
$(\CT_m)_{\Ran_{x_0}}$ over $\Ran_{x_0}$, equipped with the datum of isomorphisms:
\begin{equation} \label{e:fact space module}
(\CT_m)_{\Ran_{x_0}}\underset{\Ran_{x_0},\on{union}}\times (\Ran\times \Ran_{x_0})_\disj\simeq 
(\CT_\Ran\times (\CT_m)_{\Ran_{x_0}}) \underset{\Ran\times \Ran_{x_0}}\times (\Ran\times \Ran_{x_0})_\disj,
\end{equation} 
equipped with a homotopy-coherent datum of associativity against \eqref{e:fact space}.

\medskip

We will often think of $\CT_m$ as a prestack $(\CT_m)_{x_0}$, equipped with an additional datum of extension to a prestack
over $\Ran_{x_0}$, all of whose fibers are specified by \eqref{e:fact space module}.

\sssec{} \label{sss:vac mod space}

For a factorization space $\CT$, the pullback
$$\CT_{\Ran_{x_0}}:=\CT_\Ran\underset{\Ran}\times \Ran_{x_0}$$
has a natural factorization structure against $\CT$.

\medskip

We denote the resulting factorization module space by $\CT^{\on{fact}_{x_0}}$. We refer to it as the 
\emph{vacuum} factorization module space at $x_0$. 

\sssec{}

For a factorization space, one can talk about it having a \emph{unital}, \emph{counital} and \emph{corr-unital}
structure. See \secref{ss unital factspc} for their definitions.

\medskip

For example, $\Gr_G$ is unital, $\fL^+(G)$ is counital, and $\fL(G)$ is corr-unital. 

\medskip

If $\CT$ is factorization space that is \emph{unital} (resp., \emph{counital}, \emph{corr-unital}) and $\CT_m$
is a factorization module space at $x_0$ with respect to $\CT$, one can talk about $\CT_m$ being 
\emph{unital} (resp., \emph{counital}, \emph{corr-unital}) against the corresponding structure on $\CT$. 

\ssec{Factorization categories}

The material here follows \cite[Sect. B.11]{GLC2}. The reader is referred to {\it loc. cit.} for more details.

\sssec{}

Let $\CY$ be a prestack (assumed locally of finite type). A crystal of categories $\ul\bC$ on $\CY$ is an assignment:

\begin{itemize}

\item $(S\overset{y}\to \CY) \mapsto \bC_{S,y}\in \Dmod(S)\mmod$, where $S$ is an affine scheme of finite type,
and $\Dmod(S)$ is viewed as a symmetric monoidal category via the $\sotimes$ operation;

\medskip

\item $(S_1\overset{f}\to S_2) \mapsto \bC_{S_1,y_1}\simeq \Dmod(S_1)\underset{\Dmod(S_2)}\otimes \bC_{S_2,y_2}$,
where $y_1=y_2\circ f$, and the symmetric monoidal functor $\Dmod(S_2)\to \Dmod(S_1)$ is $f^!$.

\item A homotopy-coherent system of compatibilities for higher order compositions.

\end{itemize}

\sssec{}

The most basic example of a crystal of categories is $\ul{\Dmod}(\CY)$, whose value on every $S\overset{y}\to \CY$
is $\Dmod(S)$. 

\sssec{}

We let $\mathbf{CrysCat}(\CY)$ denote the 2-category of crystals of categories over $\CY$. 

\medskip

We have a naturally defined functor
\begin{equation} \label{e:bGamma}
\bGamma(\CY,-):\mathbf{CrysCat}(\CY)\to \Dmod(\CY)\mmod, \quad
\ul\bC\mapsto \underset{S\overset{y}\to \CY}{\on{lim}}\, \bC_{S,y}.
\end{equation}

Recall (see \cite[Definition 1.3.7]{Ga3}) that $\CY_{\dr}$ is said to be \emph{1-affine} if the functor \eqref{e:bGamma}
is an equivalence. 

\medskip

For example $\Ran_\dr$ is 1-affine (see \cite[Lemma B.8.15]{GLC2}).

\sssec{}

For a map $g:\CY_1\to \CY_2$, there is a tautologically defined pullback functor 
$$g^*:\mathbf{CrysCat}(\CY_2)\to \mathbf{CrysCat}(\CY_1).$$

When there is no ambiguity for $g$, for $\ul\bC\in \mathbf{CrysCat}(\CY_2)$, we will sometimes write
$$\ul\bC|_{\CY_1}:=g^*(\ul\bC).$$

\medskip

We have a naturally defined functor
$$g^!:\bGamma(\CY_2,\ul\bC)\to \bGamma(\CY_1,g^*(\ul\bC)).$$

\medskip

For $\ul\bC_i\in \mathbf{CrysCat}(\CY_i)$, $i=1,2$, we define 
$$\ul\bC_1\boxt \ul\bC_2\in \mathbf{CrysCat}(\CY_1\times \CY_2)$$
naturally: for $y_i\in \Hom(S,\CY_i)$, 
$$(\ul\bC_1\boxt \ul\bC_2)_{S,(y_1,y_2)}:=(\ul\bC_1)_{S,y_1}\underset{\Dmod(S)}\otimes (\ul\bC_2)_{S,y_2}.$$

We have a naturally defined functor, 
$$\bGamma(\CY_1,\ul\bC_1)\otimes \bGamma(\CY_2,\ul\bC_2)\to \bGamma(\CY_1\times \CY_2,\ul\bC_1\boxt \ul\bC_2),$$
to be denoted 
$$\bc_1,\bc_2\mapsto \bc_1\boxt \bc_2.$$

\sssec{}
\label{sss nonunital factcat}

A factorization category $\bA$ is a crystal of categories $\ul\bA$ over $\Ran$, equipped with an equivalence
\begin{equation} \label{e:fact cat}
\on{union}^*(\ul\bA)|_{(\Ran\times \Ran)_\disj}\simeq (\ul\bA\boxt \ul\bA)|_{(\Ran\times \Ran)_\disj}
\end{equation}
equipped additionally with a homotopy-coherent datum of commutativity and associativity. 

\begin{rem}

Even though $\Ran_{\dr}$ is 1-affine, we distinguish notationally between $\bA$, $\ul\bA$ and
$$\bA_\Ran:=\bGamma(\Ran,\ul\bA).$$

That said, the data of \eqref{e:fact cat}, can be equivalently spelled out in terms of $\bA_\Ran$. 

\end{rem}

\sssec{}

The most basic example of a factorization category, denoted $\Vect$, is when the corresponding crystal
of categories over $\Ran$ is $\ul{\Dmod}(\Ran)$ itself. 

\sssec{} \label{sss:Dmods on fact}

Let $\CT$ be a factorization space. Assume that $\CT_{\Ran}$ is locally of finite type. Then the crystal of categories
$$(\ul{x}\in \Hom(S,\Ran)) \mapsto \Dmod(\CT_{\ul{x}})$$ 
has a natural factorization structure.

\medskip 

We denote the resulting factorization category by $\Dmod(\CT)$.

\sssec{} \label{sss:Gr fact}

A prime example of this is when $\CT=\Gr_G$. This way, we obtain a factorization category $\Dmod(\Gr_G)$,
which is the second main player in this paper.

\begin{rem}

Let us weaken the hypothesis that $\CT_{\Ran}$ is locally of finite type. Instead, let us require that for every $\ul{x}\in \Hom(S,\Ran)$
(with $S$ an affine scheme of finite type), the prestack $\CT_{\ul{x}}$ is an \emph{ind-placid ind-scheme}, see \secref{sss:ind-placid}
for what this means.

\medskip

In this case, the assignments
$$(\ul{x}\in \Hom(S,\Ran)) \mapsto \Dmod^!(\CT_{\ul{x}}) \text{ and } (\ul{x}\in \Hom(S,\Ran)) \mapsto \Dmod_*(\CT_{\ul{x}})$$
are crystals of categories equipped with natural factorization structures.

\medskip

We denote the resulting factorization categories by
$$\Dmod^!(\CT) \text{ and } \Dmod_*(\CT),$$
respectively.

\end{rem}

\sssec{}

Given a factorization category, one can talk about a \emph{unital} structure on it. We refer the reader to \cite[Sect. C.11]{GLC2}.
Some of this material will be reviewed in \secref{s:fact cat} of this paper.

\ssec{Factorization \emph{module} categories} \label{ss:fact mod cat}

The material here follows \cite[Sect. B.12]{GLC2}. The reader is referred to {\it loc. cit.} for more details.

\sssec{}

Let $\bA$ be a factorization category. A factorization module category $\bC$ at $x_0$ with respect to $\bA$ is a crystal 
of categories $\ul\bC$ over $\Ran_{x_0}$ equipped with an equivalence
\begin{equation} \label{e:fact mod cat}
\on{union}^*(\ul\bC)|_{(\Ran\times \Ran_{x_0})_\disj}\simeq (\ul\bA\boxt \ul\bC)|_{(\Ran\times \Ran_{x_0})_\disj},
\end{equation}
equipped with a homotopy-coherent data of associativity against \eqref{e:fact cat}.

\medskip

The totality of factorization module categories at $x_0$ with respect to $\bA$ naturally forms a 2-category, to be denoted
$$\bA\mmod^\onfact_{x_0}.$$

\sssec{}

We have a tautological forgetful functor
\begin{equation} \label{e:oblv fact mod cat}
\oblv_\bA:\bA\mmod^\onfact_{x_0}\to \DGCat, \quad \bC\mapsto \bC_{x_0}.
\end{equation} 

Note, however, that the functor \eqref{e:oblv fact mod cat} is \emph{not} conservative. Rather, it induces conservative functors between $\on{Funct}$ categories. As a formal corollary, we obtain the following:

\begin{lem}
Let $F:\bC_1\to \bC_2$ be a 1-morphism in $\bA\mmod^\onfact_{x_0}$ that admits an adjoint (on either side).
Then if 
$$\oblv_\bA(F):\oblv_\bA(\bC_1)\to \oblv_\bA(\bC_2)$$
is an equivalence, then so is $F$.
\end{lem}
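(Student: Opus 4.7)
The plan is to use the standard categorical fact that a morphism admitting an adjoint is an equivalence if and only if the unit and counit of the adjunction are isomorphisms, combined with the conservativity statement for $\oblv_\bA$ on $\on{Funct}$-categories emphasized just before the lemma.

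First I would assume, without loss of generality, that $F$ admits a right adjoint $F^R$ in $\bA\mmod^\onfact_{x_0}$ (the left adjoint case being symmetric). The adjunction provides a unit $u: \id_{\bC_1} \to F^R\circ F$ and counit $c: F\circ F^R \to \id_{\bC_2}$, which are 2-morphisms in the 2-category $\bA\mmod^\onfact_{x_0}$, i.e., morphisms in the DG-categories $\on{Funct}_{\bA\mmod^\onfact_{x_0}}(\bC_1,\bC_1)$ and $\on{Funct}_{\bA\mmod^\onfact_{x_0}}(\bC_2,\bC_2)$, respectively. Since $\oblv_\bA$, being induced by restriction along $\{x_0\}\hookrightarrow \Ran_{x_0}$, is a 2-functor, it carries the adjunction $(F, F^R)$ to the adjunction $(\oblv_\bA(F), \oblv_\bA(F^R))$ in $\DGCat$, with unit $\oblv_\bA(u)$ and counit $\oblv_\bA(c)$. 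The hypothesis that $\oblv_\bA(F)$ is an equivalence guarantees that $\oblv_\bA(u)$ and $\oblv_\bA(c)$ are isomorphisms in the respective $\on{Funct}_{\DGCat}$ categories.

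To conclude, I would invoke the highlighted fact that the functors
\[
\on{Funct}_{\bA\mmod^\onfact_{x_0}}(\bC_i,\bC_j) \to \on{Funct}_{\DGCat}(\oblv_\bA(\bC_i),\oblv_\bA(\bC_j))
\]
induced by $\oblv_\bA$ are conservative. Applied to $u$ and $c$, this forces both unit and counit to be isomorphisms in $\bA\mmod^\onfact_{x_0}$, whence $F$ is itself an equivalence in that 2-category. The one point requiring a moment of care — and the nearest thing to an obstacle — is simply to confirm that $\oblv_\bA$ genuinely transports the adjunction data coherently (so that $\oblv_\bA(u)$, $\oblv_\bA(c)$ are the unit and counit of the adjunction between $\oblv_\bA(F)$ and $\oblv_\bA(F^R)$); this is immediate from the fact that $\oblv_\bA$ is defined as a pullback of crystals of categories along a map of prestacks, and such a pullback is a $\DGCat$-enriched 2-functor.
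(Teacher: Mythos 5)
Your argument is exactly the "formal corollary" the paper has in mind: the paper gives no explicit proof, merely observing that $\oblv_\bA$ is conservative on $\on{Funct}$-categories, and your proof correctly unwinds why that suffices (2-functors transport adjunctions, conservativity then forces the unit and counit to be isomorphisms). The proposal is correct and matches the paper's intended approach.
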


\sssec{} \label{sss:colimits in fact}

The category $\bA\mmod^\onfact_{x_0}$ has colimits that commute with the forgeftul functors
\begin{equation} \label{e:eval x}
\bA\mmod^\onfact_{x_0}\to \Dmod(S)\mmod
\end{equation} 
for any $\ul{x}:S\to \Ran$.

\medskip

Assume now that $\bA$ is such that for any $\ul{x}:S\to \Ran$, the category $\bA_{S,\ul{x}}$ is dualizable. 
Then $\bA\mmod^\onfact_{x_0}$ also contains all limits that commute with the forgetful functors \eqref{e:eval x}.

\sssec{}
\label{sss:fact mod cat via fact mod space}

Let $\CT$ be as in \secref{sss:Dmods on fact}, and let $\CT_m$ be a factorization module space at $x_0$ with respect to 
$\CT$, also assumed locally of finite type.

\medskip

Then the assignment
$$(\ul{x}\in \Hom(S,\Ran_{x_0})) \mapsto \Dmod((\CT_m)_{\ul{x}})$$ 
is a crystal of categories that carries a natural factorization structure against $\Dmod(\CT)$.

\medskip

We denote the resulting object of $\Dmod(\CT)\mmod^\onfact_{x_0}$ by
$$\Dmod(\CT_m)\in \Dmod(\CT)\mmod^\onfact_{x_0}.$$ 

\sssec{} \label{sss:vac fact mod cat}

Let $\bA$ be a factorization category. The pullback of $\ul\bA$ along $\Ran_{x_0}\to \Ran$ has a natural factorization
structure against $\bA$. 

\medskip

We denote the resulting object by
$$\bA^{\on{fact}_{x_0}}\in \bA\mod^\onfact_{x_0},$$
and refer to it as the \emph{vacuum factorization module category} at $x_0$. 

\medskip

Note that in the example of \secref{sss:Dmods on fact}, 
we have
$$\Dmod(\CT)^{\on{fact}_{x_0}}\simeq \Dmod(\CT^{\on{fact}_{x_0}}),$$
where $\CT^{\on{fact}_{x_0}}$ is as in \secref{sss:vac mod space}.

\sssec{} \label{sss:fact Rres}

Let $\Phi:\bA_1\to \bA_2$ be a homomorphism of factorization categories. For $\bC_2\in \bA_2\mod^\onfact_{x_0}$
one attaches an object
$$\Rres_\Phi(\bC_2)\in \bA_1\mod^\onfact_{x_0},$$
which is the universal object such that there is a morphism $\Rres_\Phi(\bC_2) \to \bC_2$ compatible with $\Phi$, see \cite[Sect. B.12.11]{GLC2}.

\medskip

This construction will be reviewed in \secref{s:fact cat} of the present paper. 

\sssec{}

Given a unital factorization category $\bA$, one can talk about \emph{unital factorization module categories} at $x_0$ with respect to
$\bA$, see \cite[Sect. C.14]{GLC2}. We denote the resulting 2-category by
$$\bA\mmod^\onfact_{x_0}.$$

When we want to ignore the unital structure on $\bA$, we will denote the resulting 2-category of \emph{non-unital} factorization
module categories by 
$$\bA\mmod^{\on{fact-n.u.}}_{x_0}.$$

There exists an obvious forgetful functor
\begin{equation} \label{e:forget unital cat}
\bA\mmod^\onfact_{x_0}\to \bA\mmod^{\on{fact-n.u.}}_{x_0}.
\end{equation}

Note, however, that \eqref{e:forget unital cat} is \emph{not} fully faithful. Yet, it is 1-fully faithful, i.e., it is fully
faithful on the $\on{Funct}$-categories. 

\ssec{Factorization algebras and modules}

\sssec{} \label{sss:fact alg}

Let $\bA$ be a factorization category. A factorization algebra $\CA$ in $\bA$ is an object $\CA_\Ran\in \bA_\Ran$, equipped
with an identification between
$$\on{union}^!(\CA)\in \bGamma\left((\Ran\times \Ran)_\disj,\on{union}^*(\ul\bA)\right) \text{ and }
\CA\boxt \CA\in \bGamma\left((\Ran\times \Ran)_\disj,\bA\boxt \bA\right)$$
with respect to the equivalence \eqref{e:fact cat}, and further equipped with a homotopy-coherent
system of compatibilities. 

\medskip

Taking $\bA=\Vect$, we recover the usual notion of factorization algebra. 

\sssec{} \label{sss:ind fact alg}

Let $\Phi:\bA_1\to \bA_2$ be a  homomorphism of factorization categories. Then $\Phi$
sends factorization algebras in $\bA_1$ to factorization algebras in $\bA_2$.  

\sssec{} \label{sss:fact mod}

Let $\bC$ be a factorization module category at $x_0$ with respect to $\bA$. Given a factorization algebra $\CA$ in 
$\bA$, we define a factorization $\CA$-module at $x_0$ in $\bC$ to be an object $\CM\in \bC_{\Ran_{x_0}}$, equipped 
with an identification
$$\on{union}^!(\CM)\in \bGamma\left((\Ran\times \Ran_{x_0})_\disj,\on{union}^*(\ul\bC)\right) \text{ and }
\CA\boxt \CM\in \bGamma\left((\Ran\times \Ran_{x_0})_\disj,\bA\boxt \bA\right)$$
with respect to the equivalence \eqref{e:fact mod cat}, and further equipped with a homotopy-coherent
system of compatibilities. 

\sssec{}
\label{sss:notation for intermal factmod}

We let
$$\CA\mod^\onfact(\bC)_{x_0}$$
denote category of factorization $\CA$-modules at $x_0$ in $\bC$. 

\medskip

We have a naturally defined \emph{conservative} forgetful functor
$$\oblv_\CA:\CA\mod^\onfact(\bC)_{x_0}\to \bC_{x_0}.$$

\sssec{} \label{sss:vac}

In the particular case when $\bC=\bA^{\on{fact}_{x_0}}$, we will use a short-hand notation
$$\CA\mod^\onfact_{x_0}:= \CA\mod^\onfact(\bA^{\on{fact}_{x_0}})_{x_0}.$$

In the particular case when $\bA=\Vect$, and thus $\CA$ is a usual factorization algebra we thus recover the usual category 
$$\CA\mod^\onfact_{x_0}$$
of factorization $\CA$-modules at $x_0$. 

\sssec{}

Consider the pullback of $\CA_{\Ran}$ with respect to $\Ran_{x_0}\to \Ran$. It has a natural factorization module structure
against $\CA$. We denote the resulting object by
$$\CA^{\on{fact}_{x_0}}\in \CA\mod^\onfact_{x_0}.$$

We refer to it as the \emph{vacuum} factorization $\CA$-module at $x_0$. 

\sssec{}

Let $\phi:\CA_1\to \CA_2$ be a homomorphism between factorization algebras in $\bA$. Then to $\phi$ one attaches a restriction
functor
$$\Res_\phi:\CA_2\mod^\onfact(\bC)_{x_0}\to \CA_1\mod^\onfact(\bC)_{x_0},$$
see \cite[Sect. B.9.25]{GLC2}.

\medskip

This construction will be reviewed also in \secref{s:fact cat} of the present paper. 

\sssec{}

Let now $\Phi:\bA_1\to \bA_2$ be a homomorphism between factorization categories. Let $\CA_1$ be a factorization algebra in $\bA_1$,
and set $\CA_2:=\Phi(\CA_1)$, which we view as a factorization algebra in $\bA_2$ (see \secref{sss:ind fact alg} above).

\medskip

Let $\bC_2$ be an object of $\bA_2\mmod^\onfact_{x_0}$ and set
$$\bC_1:=\Rres_\Phi(\bC_2),$$
see \secref{sss:fact Rres}. 

\medskip

In this case, we have a canonical equivalence
\begin{equation} \label{e:equiv fact mod}
\CA_1\mod^\onfact(\bC_1)_{x_0}\simeq \CA_2\mod^\onfact(\bC_2)_{x_0},
\end{equation}
see \cite[Lemma B.12.14]{GLC2}, to be reviewed in \secref{s:fact cat}. 

\sssec{}
\label{sss:notation for intermal unital factmod}

Assume that $\bA$ is unital. In this case, one can talk about a factorization algebra being unital,
see \cite[Sect. 7]{GLC2}.

\medskip

Let $\bC$ be a unital factorization module category at $x_0$ with respect to $\bA$, and let $\CA$
be a unital factorization algebra in $\bA$. In this case, the category of factorization $\CA$-modules 
at $x_0$ in $\bC$ contains a full subcategory of \emph{unital} factorization modules, see \cite[C.11.19]{GLC2},
to be reviewed in \secref{s:fact cat} of the present paper.

\medskip

In this case, we denote this subcategory by $\CA\mod^\onfact(\bC)_{x_0}$, and the category of
not necessarily unital $\CA$-modules by $\CA\mod^{\on{fact-n.u.}}(\bC)_{x_0}$. 

\section{Statement of the result} \label{s:functor}

In this section we state the main result of this paper, \thmref{t:initial}, and then reformulate it
in terms of the computation of a category of factorization modules with respect to
a certain factorization algebra. 

\ssec{From loop group modules to factorization modules over \texorpdfstring{$\Gr_G$}{GrG}} \label{ss:constr of funct}

In this subsection we will construct a functor
\begin{equation} \label{e:functor in one direction}
\fL(G)_{x_0}\mmod \to \Dmod(\Gr_G)\mmod_{x_0}^\onfact
\end{equation} 
with the basic property that it preserves the forgetful functors from both sides to $\DGCat$. 

\sssec{}

Recall that $\Gr_G$ is a unital factorization space and $\Dmod(\Gr_G)$ is a unital factorization category, see \secref{sss:Gr fact} and \secref{sss unital factspc example}.

\medskip

The functor \eqref{e:functor in one direction} will be constructed by exhibiting a suitable bimodule
category, to be denoted $\Dmod(\fL(G)_{x_0})^{\on{fact}_{x_0},\Dmod(\Gr_G)}$, which is an object of
$\Dmod(\Gr_G)\mmod^\onfact_{x_0}$, and as such carries an action of $\fL(G)_{x_0}$.

\medskip

Moreover, the underlying DG category of $\Dmod(\fL(G)_{x_0})^{\on{fact}_{x_0},\Dmod(\Gr_G)}$ (i.e., its
fiber over $x_0\in \Ran_{x_0}$) will
identify with $\Dmod(\fL(G)_{x_0})$ itself, with the $\fL(G)_{x_0}$-action given by right translations. 

\sssec{}

The category $\Dmod(\fL(G)_{x_0})^{\on{fact}_{x_0},\Dmod(\Gr_G)}$ will be defined as the category of D-modules
on a certain prestack, to be denoted $\Gr^{\on{level}^\infty_{x_0}}_{G,\Ran_{x_0}}$, which is a factorization module space
\emph{at $x_0$} with respect to the factorization algebra space $\Gr_G$ (see \secref{sss:fact mod sp}), and as such equipped with an action of $\fL(G)_{x_0}$.  

\medskip

Moreover, the fiber of 
$\Gr^{\on{level}^\infty_{x_0}}_{G,\Ran_{x_0}}$ at $x_0\in \Ran_{x_0}$ will identify with $\fL(G)_{x_0}$ itself, with the $\fL(G)_{x_0}$-action given by right translations. 

\sssec{} \label{sss:construct space}

The space $\Gr^{\on{level}^\infty_{x_0}}_{G,\Ran_{x_0}}$ is constructed as follows:

\medskip

For an affine test scheme $S$ and a given $S$-point $\ul{x}$ of $\Ran_{x_0}$, the fiber 
$$\Gr^{\on{level}^\infty_{x_0}}_{G,\ul{x}}$$
of $\Gr^{\on{level}^\infty_{x_0}}_{G,\Ran_{x_0}}$ over it consists of the data of 
$$(\CP_G,\alpha,\epsilon),$$
where:

\begin{itemize}

\item $\CP_G$ is a $G$-bundle on $S\times X$;

\item $\alpha$ is a trivialization of $\CP_G$ over $S\times X-\on{Graph}_{\ul{x}}$;

\item $\epsilon$ is a trivialization of $\CP_G$ over the formal completion of $S\times X$ along $S\times x_0$.

\end{itemize}

\medskip 

The (unital) factorization module space structure on $\Gr^{\on{level}^\infty_{x_0}}_{G,\Ran_{x_0}}$ is contructed using Beauville-Laszlo theorem.

\medskip

The action of $\fL(G)_{x_0}$ on $\Gr^{\on{level}^\infty_{x_0}}_{G,\Ran_{x_0}}$ is given by the standard regluing procedure. It is easy to see this action preserves the factorization module space structure in above.

\sssec{}

As in Sect. \ref{ss:arcs and loops}, one can show $\Gr^{\on{level}^\infty_{x_0}}_{G,\ul{x}}$ is ind-placid, and there is a canonical equivalence $\Dmod^!(\Gr^{\on{level}^\infty_{x_0}}_{G,\ul{x}})\simeq \Dmod_*(\Gr^{\on{level}^\infty_{x_0}}_{G,\ul{x}})$. We will simply write $\Dmod(\Gr^{\on{level}^\infty_{x_0}}_{G,\ul{x}})$ for these categories.

\medskip
Now the assignment
\[
    \ul{x} \mapsto \Dmod(\Gr^{\on{level}^\infty_{x_0}}_{G,\ul{x}})
\]
is a crystal of categories over $\Ran_{x_0}$ that carries a natural (unital) factorization structure against $\Dmod(\Gr_G)$. See \secref{sss:fact mod cat via fact mod space}. We define
\[
    \Dmod(\fL(G)_{x_0})^{\on{fact}_{x_0},\Dmod(\Gr_G)} \in \Dmod(\Gr_G)\mmod_{x_0}^\onfact
\]
to be this object\footnote{Using Proposition \ref{sss:adj test for factres}, one can show 
    \[
        \Dmod(\fL(G)_{x_0})^{\on{fact}_{x_0},\Dmod(\Gr_G)} \simeq \Rres_{p^!} \big( \Dmod(\fL(G)_{x_0})^{\on{fact}_{x_0},\Dmod(\fL(G))}\big),
    \]
    where $p^!:\Dmod(\Gr_G) \to \Dmod^!(\fL(G))$ is the unital factorization functor given by $!$-pullbacks and $\Rres_{p^!}$ is the restriction functor along it (see \secref{sss:fact Rres}).
}.

\medskip

By construction, $\Dmod(\fL(G)_{x_0})$ acts on $\Dmod(\fL(G)_{x_0})^{\on{fact}_{x_0},\Dmod(\Gr_G)}$. We define the functor in \eqref{e:functor in one direction} by
\begin{equation} \label{e:functor in one direction formula}
\bC\mapsto \Dmod(\fL(G)_{x_0})^{\on{fact}_{x_0},\Dmod(\Gr_G)}\underset{\Dmod(\fL(G)_{x_0})}\otimes \bC.
\end{equation} 

By \corref{c:tensor over LG} and \secref{sss:colimits in fact}, the functor \eqref{e:functor in one direction formula} commutes with limits and colimits. 

\sssec{}

We can now state the main result of this paper:

\begin{thm} \label{t:initial}
The functor \eqref{e:functor in one direction} is fully faithful.
\end{thm}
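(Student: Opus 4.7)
The plan is to follow the roadmap sketched in the introduction, reducing the fully faithfulness statement in several stages to an algebro-geometric form of Lurie's non-abelian Poincar\'e duality.

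First, because the construction $\bC\mapsto \Dmod(\fL(G)_{x_0})^{\on{fact}_{x_0},\Dmod(\Gr_G)}\underset{\Dmod(\fL(G)_{x_0})}\otimes \bC$ preserves colimits, fully faithfulness can be tested on $\on{Funct}$-spaces with source $\Vect$ carrying the trivial $\fL(G)_{x_0}$-action. Since $\on{Funct}_{\fL(G)_{x_0}\mmod}(\Vect,\bC) \simeq \on{inv}_{\fL(G)_{x_0}}(\bC)$, it suffices to show that for every $\bC \in \fL(G)_{x_0}\mmod$ the induced map
$$\on{inv}_{\fL(G)_{x_0}}(\bC) \longrightarrow \on{Funct}_{\Dmod(\Gr_G)\mmod^\onfact_{x_0}}\left(\Vect^{\on{fact}_{x_0},\Dmod(\Gr_G)},\bC^{\on{fact}_{x_0},\Dmod(\Gr_G)}\right)$$
is an equivalence.

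Second, I would use the universal property of the vacuum factorization module category: for any $\wt\bC\in\Dmod(\Gr_G)\mmod^\onfact_{x_0}$ there is a canonical equivalence
$$\on{Funct}_{\Dmod(\Gr_G)\mmod^\onfact_{x_0}}\left(\Vect^{\on{fact}_{x_0},\Dmod(\Gr_G)},\wt\bC\right)\simeq \omega_{\Gr_G}\mod^\onfact(\wt\bC)_{x_0},$$
coming from the fact that $\omega_{\Gr_G}$ is the unit factorization algebra in $\Dmod(\Gr_G)$. This recasts the statement as the claim of \thmref{t:main}, namely that the natural map
$$\on{inv}_{\fL(G)_{x_0}}(\bC)\to \omega_{\Gr_G}\mod^\onfact\left(\bC^{\on{fact}_{x_0},\Dmod(\Gr_G)}\right)_{x_0}$$
is an equivalence. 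Both sides depend functorially on $\bC$ and preserve colimits.

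Third, the heart of the proof is to show that both sides are \emph{insensitive} to the distinction between $\bC$ and its maximal almost-invariant subcategory $\on{alm-inv}_{\fL(G)_{x_0}}(\bC)$; for the LHS this is tautological, but for the RHS it amounts to \thmref{t:red to alm const case}, whose proof requires a genuine fusion-geometric argument exploiting the local factorization structure at $x_0$ against the full $\fL(G)_{x_0}$-action. Once that is in place, a categorical Koszul-duality/bar-resolution argument along the inclusion of strictly-trivial-action modules reduces ``almost trivial'' to ``trivial,'' and hence the full statement to the case $\bC = \Vect$.

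Finally, for $\bC = \Vect$ both sides admit monadic forgetful functors to $\Vect$: the LHS is computed by $\on{C}_\cdot(\fL(G)_{x_0})\mod$, while the RHS identifies with modules over the factorization homology $\underset{\ocD_{x_0}}\int \on{C}_\cdot(\Gr_G)$. The remaining task is then to identify the two monads, which is the $0$-categorical linearization of Beilinson's principle \eqref{e:integral}, namely the algebro-geometric incarnation of non-abelian Poincar\'e duality for $\Gr_G\simeq \Omega G$ over the punctured disc. I expect the main obstacle to be the ``almost trivial'' reduction: tracking the factorization-geometric compatibilities needed to see that the RHS only registers the almost-invariant piece of an arbitrary loop-group representation is where the paper's geometric input must be deployed, and everything else is essentially formal or homological-algebraic packaging around it.
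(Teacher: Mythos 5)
Your roadmap matches the paper's proof essentially step for step: the reduction to source $\Vect$ by colimit-generation and dualizability, the reformulation as \thmref{t:main}, the reduction to almost-trivial actions (\thmref{t:red to alm const case}), the Koszul-duality step (\thmref{t:Kosz for loop}), and the final monadic/contractibility computation for $\bC = \Vect$. This is the same decomposition and the same chain of key lemmas.

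One justification you offer is wrong as stated, and if taken literally it would trivialize the theorem. You claim the equivalence
$$\on{Funct}_{\Dmod(\Gr_G)\mmod^\onfact_{x_0}}\bigl(\Vect^{\on{fact}_{x_0},\Dmod(\Gr_G)},\wt\bC\bigr)\simeq \omega_{\Gr_G}\mod^\onfact(\wt\bC)_{x_0}$$
``comes from the fact that $\omega_{\Gr_G}$ is the unit factorization algebra in $\Dmod(\Gr_G)$.'' It is not: the unit of $\Dmod(\Gr_G)$ is supported on the unit section (the ``delta'' algebra), and $\omega_{\Gr_G}$ is rather $\pi^!(\on{unit}_{\Vect})$, the image of the unit of $\Vect$ under the \emph{lax-unital} factorization functor $\pi^!$. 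If $\omega_{\Gr_G}$ were the unit of $\Dmod(\Gr_G)$, then by the ``modules-over-the-unit-are-nothing'' lemma the right-hand side would just be $\wt\bC_{x_0}$, and \thmref{t:main} would assert $\bC^{\fL(G)_{x_0}}\simeq \bC$ for all $\bC$, which is false. The paper's actual derivation is via \propref{p:Vect over Gr} (identifying $\Vect^{\on{fact}_{x_0},\Dmod(\Gr_G)}$ as the factorization restriction $\Rres_{\pi_!}(\Vect^{\on{fact}_{x_0}})$), the adjunction between $\Rres_{\pi_!}$ and $\Rres^\untl_{\pi^!}$ at the level of $\on{Funct}$-categories (\eqref{e:basic adj}), and the computation of the fiber of $\Rres^\untl_{\pi^!}(\wt\bC)$ at $x_0$ as $\pi^!(\on{unit}_{\Vect})\mod^\onfact(\wt\bC)_{x_0}$. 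Your outline is otherwise sound, but you should replace the ``unit algebra'' claim with this actual mechanism; it is not a formality, since $\Vect^{\on{fact}_{x_0},\Dmod(\Gr_G)}$ has fiber $\Vect$ at $x_0$ (not $\Dmod(\Gr_{G,x_0})$), so it is not the vacuum module of $\Dmod(\Gr_G)$ and the naive universal-property argument does not apply.
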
 

\sssec{}

Before we proceed any further, let us remark that the functor \eqref{e:functor in one direction} is \emph{not}
an equivalence. Indeed, it is easy to exhibit an object in $\Dmod(\Gr_G)\mmod^\onfact_{x_0}$ that 
which is not in the essential image of \eqref{e:functor in one direction}. 

\medskip

Namely, restriction to the unit section defines a factorization functor
$$\iota^!:\Dmod(\Gr_G)\to \Vect.$$

Hence, we obtain a restriction functor
$$\Rres_{\iota^!}: \Vect\mmod^\onfact_{x_0}\to  \Dmod(\Gr_G)\mmod^\onfact_{x_0},$$
(see \secref{sss:fact Rres}).

\medskip

Nonzero objects in the essential image of this functor do not lie in the essential image of the functor
\eqref{e:functor in one direction} unless $G$ is trivial \footnote{Sketch of proof: suppose $\Rres_{\iota^!}(\bC)$ is contained in the essential image of \eqref{e:functor in one direction}. By Theorem \ref{t:main} below, we have
\[
    \bC_{x_0}^{\fL(G)_{x_0}} \simeq \omega_{\Gr_G}\mod^\onfact(\Rres_{\iota^!}(\bC))_{x_0} \simeq \iota^!(\omega_{\Gr_G})\mod^\onfact(\bC)_{x_0} \simeq k\mod^\onfact(\bC)_{x_0} \simeq \bC_{x_0},
\]
which is impossible unless $G$ is trivial or $\bC\simeq 0$.}. 

\begin{rem}
    For the trival group, it is easy to see the corresponding functor
    \begin{equation}
        \label{e:functor in one direction trival group}
        \DGCat \to \Vect\mmod^\onfact_{x_0}
    \end{equation}
    is fully faithful. However, it is \emph{not} an equivalence. To see this, consider the Ran space $\Ran_\circ$ for the punctured curve $X-x_0$. Write $\Ran_\circ:=\Ran_\circ\sqcup \{\emptyset\}$, and consider the map
    \[
        j:\Ran_\circ \to \Ran_{x_0},\; \ul{y} \mapsto \ul{y}\sqcup\{x_0\}.
    \]
    One can show $\Ran_\circ$, viewed as a prestack over $\Ran_{x_0}$, has a natural factorization structure against the factorization space $\Ran$ (see \secref{sss:fact mod sp}). By \secref{sss:fact mod cat via fact mod space}, we obtain a factorization module category with respect to $\Vect$, such that $\bGamma(\Ran_{x_0},-)$ sends it to $\Dmod(\Ran_\circ)$. Moreover, using the \emph{unital} Ran spaces (see \secref{sss:unital Ran space}), we can upgrade it to a \emph{unital} factorization module category with respect to $\Vect$. In other words, we obtain a bizarre object, denote by
    \[
        \Vect^{\on{fact}_{x_0},\on{disj}} \in \Vect\mmod^\onfact_{x_0},
    \]
    which is not isomorphic to the vaccum factorization module category $\Vect^{\on{fact}_{x_0}}$, but has the same fiber at $x_0$ (i.e. $\Vect$). It is clear this obect is not in the essential image of \eqref{e:functor in one direction trival group}.

\end{rem}

\begin{rem}
At the moment we do not know how to characterize (even conjecturally) the full subcategory of $\Dmod(\Gr_G)\mmod^\onfact_{x_0}$
equal to the essential image of \eqref{e:functor in one direction}.
\end{rem} 

\ssec{Functoriality} \label{ss:functoriality}

In this subsection we establish a functoriality property of the construction in \secref{ss:constr of funct} with respect to homomorphisms
of reductive groups. 

\sssec{}

Let $\phi:G'\to G$ be a homomorphism between connected reductive groups. By a slight abuse of notation we will
denote by the same symbol $\phi$ the resulting homomorphism
$$\fL(G')_{x_0}\to \fL(G)_{x_0}.$$

Let 
$$\Gr_\phi:\Gr_{G'}\to \Gr_{G}$$
between (unital) factorization spaces.

\medskip

Direct image with respect to $\Gr_\phi$ has a natural structure of (unital) factorization functor
$$(\Gr_\phi)_*:\Dmod(\Gr_{G'})\to \Dmod(\Gr_G).$$

\sssec{} \label{sss:Gr level morphism}

Note that we also have a morphism between (unital) factorization module spaces
\begin{equation} \label{e:Gr level morphism}
\Gr^{\on{level}^\infty_{x_0}}_{G',\Ran_{x_0}} \to \Gr^{\on{level}^\infty_{x_0}}_{G,\Ran_{x_0}},
\end{equation}
compatible with $\Gr_\phi$ and the right actions of $\fL(G')_{x_0}$ and $\fL(G)_{x_0}$, respectively.

\medskip

In particular, \eqref{e:Gr level morphism} induces a morphism
\begin{equation} \label{e:Gr level morphism bis}
\underset{\fL(G')_{x_0}}{\underline{\Gr^{\on{level}^\infty_{x_0}}_{G',\Ran_{x_0}} \times \fL(G)_{x_0}}} \to \Gr^{\on{level}^\infty_{x_0}}_{G,\Ran_{x_0}},
\end{equation}
compatible with the actions of $\fL(G)_{x_0}$, where $\underset{H}{\underline{(-)}}$ means ``divide by the diagonal action of $H$."

\medskip

Moreover, the diagram
$$
\CD
\underset{\fL(G')_{x_0}}{\underline{\Gr^{\on{level}^\infty_{x_0}}_{G',\Ran_{x_0}} \times \fL(G)_{x_0}}} @>>> \Gr^{\on{level}^\infty_{x_0}}_{G,\Ran_{x_0}} \\
@VVV @VVV \\
\Gr_{G',\Ran_{x_0}} @>>> \Gr_{G,\Ran_{x_0}}
\endCD
$$
in Cartesian.

\medskip

In particular, we obtain that the morphism \eqref{e:Gr level morphism bis} is ind-proper. 

\sssec{}

Let us denote by $\Rres_\phi$ the restriction functor
$$\Dmod(\Gr_G)\mmod^\onfact_{x_0}\to \Dmod(\Gr_{G'})\mmod^\onfact_{x_0},$$
corresponding to the factorization functor $(\Gr_\phi)_*$, see \secref{sss:fact Rres}.

\medskip

From \secref{sss:Gr level morphism}, for any $\fL(G)_{x_0}$-module category $\bC$ and the $\fL(G')_{x_0}$-module category $\bC'$ given by restriction along $\fL(G')_{x_0}\to \fL(G)_{x_0}$, we obtain a naturally defined functor
\begin{equation} \label{e:res functor grps}
(\bC')^{\on{fact}_{x_0},\Dmod(\Gr_{G'})}\to  \bC^{\on{fact}_{x_0},\Dmod(\Gr_G)},
\end{equation}
compatible with $\Gr_\phi$.

\medskip

In particular, we obtain a 1-morphism
\begin{equation} \label{e:res functor grps bis}
(\bC')^{\on{fact}_{x_0},\Dmod(\Gr_{G'})}\to \Rres_{\Gr_\phi}(\bC^{\on{fact}_{x_0},\Dmod(\Gr_G)}).
\end{equation} 
in $\Dmod(\Gr_{G'})\mmod^\onfact_{x_0}$. 

\medskip

We claim:

\begin{lem} \label{l:res functor grps}
The 1-morphism \eqref{e:res functor grps bis}
is an isomorphism.
\end{lem}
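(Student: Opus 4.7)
The strategy is to invoke the criterion stated immediately after \eqref{e:oblv fact mod cat}: a 1-morphism in $\Dmod(\Gr_{G'})\mmod^\onfact_{x_0}$ that admits an adjoint is an equivalence as soon as its image under $\oblv_{\Dmod(\Gr_{G'})}$ is. It therefore suffices to verify, for the 1-morphism \eqref{e:res functor grps bis}, that (a) it admits an adjoint, and (b) its fiber at $x_0$ is an equivalence of DG categories.

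For (a), the decisive input is the ind-properness of \eqref{e:Gr level morphism bis} recorded just before the lemma. Unwinding the definitions, \eqref{e:res functor grps bis} is induced, fiberwise over $\Ran_{x_0}$, by pushforward of D-modules along \eqref{e:Gr level morphism bis}. Pushforward along an ind-proper map admits a continuous left adjoint given by $!$-pullback (which coincides with $*$-pullback here). This adjoint respects the exterior products used to set up the factorization and $\Dmod(\Gr_{G'})$-linear structures, so, using also the Cartesian diagram displayed in \secref{sss:Gr level morphism}, the fiberwise left adjoints assemble into a left adjoint 1-morphism in $\Dmod(\Gr_{G'})\mmod^\onfact_{x_0}$.

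For (b), note that the fiber of $\Gr^{\on{level}^\infty_{x_0}}_{G,\Ran_{x_0}}$ at $x_0$ is canonically identified with $\fL(G)_{x_0}$ (endowed with the right translation action), and similarly for $G'$. Under these identifications, the fiber of \eqref{e:Gr level morphism bis} at $x_0$ is the tautological isomorphism $\underset{\fL(G')_{x_0}}{\underline{\fL(G')_{x_0}\times \fL(G)_{x_0}}}\simeq \fL(G)_{x_0}$. Applying $\Dmod$ and tensoring with $\bC'=\bC$ over $\Dmod(\fL(G')_{x_0})$ shows that the fiber at $x_0$ of the LHS of \eqref{e:res functor grps bis} is $\bC$, while the fiber at $x_0$ of the RHS is also $\bC$ because $\Rres_{\Gr_\phi}$ changes only the action of the local monoidal category, not the underlying DG category; the induced map between these fibers is the identity.

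\emph{Main obstacle.} The main technical content lies in (a): promoting the pointwise left adjoints coming from ind-properness to a genuine 1-morphism in the 2-category $\Dmod(\Gr_{G'})\mmod^\onfact_{x_0}$. This reduces to base-change-type compatibilities that follow from the Cartesian square in \secref{sss:Gr level morphism} combined with the compatibility of $!$-pullback with the exterior-product structure defining factorization.
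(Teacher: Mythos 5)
Your argument is essentially the same as the paper's, which simply cites Proposition~\ref{sss:adj test for factres}: exhibit an adjoint and check that the fiber at $x_0$ is an equivalence, then conclude via conservativity on $\on{Funct}$-categories. The ingredients you use (ind-properness of \eqref{e:Gr level morphism bis} coming from the Cartesian square, identification of the fiber at $x_0$) are exactly the ones implicit in verifying the hypotheses of Proposition~\ref{sss:adj test for factres}, and the conservativity lemma you invoke is the same in content as Lemma~\ref{lem factmodcat 2-conservative}, which that proposition reduces to.

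The one place where the two routes genuinely differ is in how the adjunction is organized. You try to build an adjoint to \eqref{e:res functor grps bis} \emph{directly} inside $\Dmod(\Gr_{G'})\mmod^\onfact_{x_0}$, by assembling the fiberwise adjoints of the pushforward along \eqref{e:Gr level morphism bis}. As you note in your ``main obstacle'' remark, this requires verifying that these fiberwise adjoints are compatible with the entire unital factorization package (the insertion functors over $\Ran_{x_0}^{\on{untl}}$, not just the Cartesian square over a fixed $\ul{x}$). The paper sidesteps exactly this issue: in Proposition~\ref{sss:adj test for factres}, one takes as input an adjoint pair in the \emph{larger} 2-category $\mathbf{UntlFactModCat}^{\on{lax-untl}}_{\ul{x}_0}$ (the ambient one where the base factorization category is allowed to change), and the existence of the adjoint over the fixed base $\Dmod(\Gr_{G'})$ then drops out for free from the universal property of $\Rres$, as the proof of that proposition shows. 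For the Lemma at hand, the required adjoint pair $(\Gr_\phi)_* \rightleftarrows (\Gr_\phi)^!$ together with its module analogue is available precisely because $\Gr_\phi$ and \eqref{e:Gr level morphism bis} are ind-proper. So the paper's framing avoids the assembly problem you correctly flag as the crux, which is the real advantage of citing the packaged result rather than re-proving it in situ.

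One small technical slip: for an ind-proper map $f$, the functor $f_*$ ($= f_!$) admits a continuous \emph{right} adjoint $f^!$, not a left adjoint; $!$-pullback does not coincide with $*$-pullback for a general ind-proper map such as $\Gr_\phi$ (e.g.\ a closed embedding of affine Grassmannians). This does not invalidate your argument, since the lemma you cite permits an adjoint on either side, but the parenthetical claim and the word ``left'' should be corrected.
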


\begin{proof}

Follows from Proposition \ref{sss:adj test for factres}.

\end{proof} 

\ssec{The first reduction step} \label{ss:first reduction}

From now on, until \secref{s:KM}, we will be occupied with the proof of \thmref{t:initial}.  In this subsection we perform
the first reduction case: we show that we can assume that the source object of $\fL(G)_{x_0}\mmod$ is $\Vect$,
equipped with the trivial action of $\fL(G)_{x_0}$.

\sssec{}

Let $\bC$ be a category acted on by $\fL(G)$. Let 
\begin{equation} \label{e:factor C}
\bC^{\on{fact}_{x_0},\Dmod(\Gr_G)}\in \Dmod(\Gr_G)\mmod^\onfact_{x_0}
\end{equation}
denote its image under the functor \eqref{e:functor in one direction formula}.

\medskip

The assertion of \thmref{t:initial} is that for $\bC_1,\bC_2$ as above, the functor
\begin{equation} \label{e:ff restated}
\on{Funct}_{\fL(G)_{x_0}\mmod}(\bC_1,\bC_2) \to 
\on{Funct}_{\Dmod(\Gr_{G,\Ran})\mmod^\onfact_{x_0}}(\bC_1^{\on{fact}_{x_0},\Dmod(\Gr_G)},\bC_2^{\on{fact}_{x_0},\Dmod(\Gr_G)})
\end{equation} 
is an equivalence.

\medskip

In this subsection we will perform the first reduction step in the proof of \thmref{t:initial}. Namely,
we will show that we can assume that $\bC_1=\Vect$, equipped with the trivial action of $\fL(G)_{x_0}$.

%

\sssec{}

First, since the functor \eqref{e:functor in one direction formula} is compatible with \emph{colimits}, both sides
in \eqref{e:ff restated} send colimits in $\bC_1$ to limits in $\DGCat$. 

\medskip

Hence, it is enough to show that \eqref{e:ff restated} is an isomorphism on objects of $\fL(G)\mmod$
that generate this category under colimits.

\sssec{}

Note that a collection of generating objects for $\fL(G)_{x_0}\mmod$ provided by
\begin{equation} \label{e:free generators}
\Dmod(\fL(G)_{x_0})\otimes \bC_0,
\end{equation} 
where $\bC_0$ is a plain DG category (i.e., the action of $\fL(G)_{x_0}$ on \eqref{e:free generators} comes from the action
by left translations on the first factor). 

\medskip

This reduces the assertion that \eqref{e:ff restated} is an equivalence to the particular case when $\bC_1$ is of the form $\Dmod(\fL(G)_{x_0})\otimes \bC_0$.

\sssec{}

It is easy to see that for $\bC_1:=\bC'_1\otimes \bC_0$, where the action comes from the first factor, we have
$$\on{Funct}_{\fL(G)_{x_0}\mmod}(\bC_1,\bC_2)  \simeq \on{Funct}(\bC_0,\on{Funct}_{\fL(G)_{x_0}\mmod}(\bC'_1,\bC_2))$$
and
\begin{multline*}
\on{Funct}_{\Dmod(\Gr_G)\mmod_{x_0}^\onfact}(\bC_1^{\on{fact}_{x_0},\Dmod(\Gr_G)},\bC_2^{\on{fact}_{x_0},\Dmod(\Gr_G)})  \simeq \\
\simeq 
\on{Funct}(\bC_0,\on{Funct}_{\Dmod(\Gr_G)\mmod_{x_0}^\onfact}(\bC'_1{}^{\on{fact}_{x_0},\Dmod(\Gr_G)},\bC_2^{\on{fact}_{x_0},\Dmod(\Gr_G)})). 
\end{multline*} 


Hence, if \eqref{e:ff restated} is an equivalence for $\bC'_1$, then it is an equivalence for $\bC_1$.

\sssec{}

Thus we obtain a further reduction of the assertion that \eqref{e:ff restated} is an equivalence to the case when $\bC_1=\Dmod(\fL(G)_{x_0})$.

\sssec{}

Note that both sides in \eqref{e:functor in one direction} have a natural symmetric monoidal structure: 

\medskip

In the left-hand side, if $\bC_1,\bC_2$ are DG categories equipped with an action of $\fL(G)_{x_0}$, then the tensor product
$\bC_1\otimes \bC_2$ acquires a $\fL(G)_{x_0}$-action via the diagonal action map $\fL(G)_{x_0}\to \fL(G)_{x_0}\times \fL(G)_{x_0}$.
Furthermore, an object in $\fL(G)_{x_0}\mmod$ is dualizable if and only if the underlying DG category is dualizable. 

\medskip

In the right-hand side, if $\wt\bC_1$ and $\wt\bC_2$ are factorization module categories at $x_0$ with respect to $\Dmod(\Gr_G)$,
then $\wt\bC_1\otimes \wt\bC_2$ (i.e., the tensor product of the corresponding crystals of categories over $\Ran_{x_0}$) is
naturally a  factorization module category at $x_0$ with respect to $\Dmod(\Gr_G)\otimes \Dmod(\Gr_G)$. We produce the
sought-for factorization module category at $x_0$ with respect to $\Dmod(\Gr_G)$ by applying the restriction functor
(see \secref{sss:fact Rres}) along the (unital) factorization functor
$$(\Delta_{\Gr_G})_*:\Dmod(\Gr_G)\to \Dmod(\Gr_G\times \Gr_G)\simeq \Dmod(\Gr_G)\otimes \Dmod(\Gr_G).$$

\medskip

The following assertion follows from \lemref{l:res functor grps}: 

\begin{lem}
The functor \eqref{e:functor in one direction} has a canonical symmetric monoidal structure.
\end{lem}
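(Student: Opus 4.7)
The plan is to deduce the symmetric monoidal structure from the functoriality established in Sect.~\ref{ss:functoriality}, combined with the elementary observation that the construction $G \mapsto \Gr^{\on{level}^\infty_{x_0}}_{G, \Ran_{x_0}}$ commutes with products.

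First, inspection of the definition in \secref{sss:construct space} gives, for a product group $G_1 \times G_2$, canonical identifications
$$\Gr_{G_1 \times G_2} \simeq \Gr_{G_1} \times \Gr_{G_2}, \qquad \Gr^{\on{level}^\infty_{x_0}}_{G_1 \times G_2, \Ran_{x_0}} \simeq \Gr^{\on{level}^\infty_{x_0}}_{G_1, \Ran_{x_0}} \times \Gr^{\on{level}^\infty_{x_0}}_{G_2, \Ran_{x_0}}$$
of (unital) factorization spaces and factorization module spaces, compatible with the tautological isomorphism $\fL(G_1\times G_2)_{x_0} \simeq \fL(G_1)_{x_0} \times \fL(G_2)_{x_0}$. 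Taking D-modules and unwinding the definition of the functor \eqref{e:functor in one direction formula}, this produces, for $\bD_i \in \fL(G_i)_{x_0}\mmod$, a natural equivalence
$$(\bD_1 \otimes \bD_2)^{\on{fact}_{x_0},\Dmod(\Gr_{G_1 \times G_2})} \simeq \bD_1^{\on{fact}_{x_0},\Dmod(\Gr_{G_1})} \otimes \bD_2^{\on{fact}_{x_0},\Dmod(\Gr_{G_2})},$$
in $\Dmod(\Gr_{G_1})\mmod^\onfact_{x_0} \otimes \Dmod(\Gr_{G_2})\mmod^\onfact_{x_0} \simeq \Dmod(\Gr_{G_1\times G_2})\mmod^\onfact_{x_0}$, the last identification using the dualizability properties from \secref{sss:colimits in fact}.

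Specializing to $G_1 = G_2 = G$ and applying \lemref{l:res functor grps} to the diagonal $\Delta : G \to G \times G$, we observe that restricting the external $\fL(G\times G)_{x_0}$-action on $\bC_1 \otimes \bC_2$ along $\Delta$ recovers the diagonal $\fL(G)_{x_0}$-action used to define the source monoidal structure. The lemma then yields
$$(\bC_1 \otimes \bC_2)^{\on{fact}_{x_0},\Dmod(\Gr_G)} \simeq \Rres_{(\Gr_\Delta)_*}\left((\bC_1 \otimes \bC_2)^{\on{fact}_{x_0},\Dmod(\Gr_{G\times G})}\right).$$
Combining this with the multiplicativity isomorphism above, and observing that $\Gr_\Delta = \Delta_{\Gr_G}$, produces precisely the tensor comparison map
$$(\bC_1 \otimes \bC_2)^{\on{fact}_{x_0},\Dmod(\Gr_G)} \simeq \Rres_{(\Delta_{\Gr_G})_*}\!\left(\bC_1^{\on{fact}_{x_0},\Dmod(\Gr_G)} \otimes \bC_2^{\on{fact}_{x_0},\Dmod(\Gr_G)}\right)$$
matching the target monoidal structure recalled immediately above the lemma. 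The unit isomorphism comes from the analogous discussion applied to the trivial homomorphism $\{1\} \to G$.

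The main obstacle is upgrading this collection of comparison isomorphisms to a fully coherent symmetric monoidal structure, with associators and symmetries. The cleanest approach is to package the whole construction, as $G$ varies, as a functor from the (symmetric monoidal under Cartesian product) category of connected reductive groups to an $\infty$-category of triples (monoidal 2-category, 2-category with an action, action-preserving functor between them); the compatibility with finite products is then exactly what supplies the symmetric monoidal refinement. All higher coherences reduce to instances of \lemref{l:res functor grps} applied to the iterated diagonals $G \to G^n$ and the swap maps on $G^n$, together with the product-preservation of $G \mapsto \Gr^{\on{level}^\infty_{x_0}}_{G, \Ran_{x_0}}$ which is manifest from the moduli description in \secref{sss:construct space}.
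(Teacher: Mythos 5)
Your proof follows the same route as the paper, which disposes of the lemma with the single remark that it ``follows from \lemref{l:res functor grps}''; you have simply unpacked what that means. Specifically, your two ingredients — (i) the manifest compatibility of $\Gr^{\on{level}^\infty_{x_0}}_{G,\Ran_{x_0}}$ with products of groups (giving the K\"unneth isomorphism $(\bD_1\otimes\bD_2)^{\on{fact}_{x_0},\Dmod(\Gr_{G_1\times G_2})}\simeq \bD_1^{\on{fact}_{x_0},\Dmod(\Gr_{G_1})}\otimes\bD_2^{\on{fact}_{x_0},\Dmod(\Gr_{G_2})}$), and (ii) \lemref{l:res functor grps} applied to the diagonal $\Delta\colon G\to G\times G$, using that $\Gr_\Delta=\Delta_{\Gr_G}$ — are exactly what the paper is invoking, and together they reproduce the comparison isomorphism with the target monoidal structure $\Rres_{(\Delta_{\Gr_G})_*}(\wt\bC_1\otimes\wt\bC_2)$ recalled just before the lemma. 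Your closing paragraph on packaging the coherences via functoriality in $G$ is also the right way to address what the paper leaves implicit.

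One small imprecision to note: the displayed claim
$$\Dmod(\Gr_{G_1})\mmod^\onfact_{x_0}\otimes\Dmod(\Gr_{G_2})\mmod^\onfact_{x_0}\simeq\Dmod(\Gr_{G_1\times G_2})\mmod^\onfact_{x_0}$$
is not something one wants (or needs) to assert — this would be a nontrivial 1-affineness-type statement for factorization module 2-categories, and \secref{sss:colimits in fact} does not give it. What is actually being used is the weaker (and correct) fact that the tensor product of crystals of categories sends a pair $(\wt\bC_1,\wt\bC_2)$ with $\wt\bC_i\in\Dmod(\Gr_{G_i})\mmod^\onfact_{x_0}$ to an object of $\Dmod(\Gr_{G_1\times G_2})\mmod^\onfact_{x_0}$; there is no need for this external tensor to be an equivalence of 2-categories. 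With that phrasing corrected, the argument is sound and matches the paper's.
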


Thus, we obtain that if $\bC_1\in \fL(G)\mmod$ is dualizable, then the functor \eqref{e:ff restated} is an equivalence
if and only if it is an equivalence for $\bC_1$ replaced by $\Vect$ and $\bC_2$ replaced by $\bC_1^\vee\otimes \bC_2$.

\sssec{} \label{sss:concl red step 1}

Since $\Dmod(\fL(G))$ is dualizable as a DG category (and, hence, as an object of $\fL(G)_{x_0}\mmod$), we obtain that in order 
to prove that \eqref{e:ff restated} is an equivalence, it is enough to do so in the case when $\bC^1=\Vect$. 

\medskip

In other words, it suffices to show that for $\bC\in \fL(G)_{x_0}\mmod$, the functor
\begin{equation} \label{e:ff from Vect}
\bC^{\fL(G)_{x_0}}\simeq \on{Funct}_{\fL(G)_{x_0}\mmod}(\Vect,\bC)\to 
\on{Funct}_{\Dmod(\Gr_G)\mmod_{x_0}^\onfact}(\Vect^{\on{fact}_{x_0},\Dmod(\Gr_G)},\bC^{\on{fact}_{x_0},\Dmod(\Gr_G)})
\end{equation}
is an equivalence. 

%
%
%

\section{A reformulation: factorization modules for the dualizing sheaf} \label{s:dualizing}

In the previous section we reduced the assertion of \thmref{t:initial} to its particular case, when the source $\fL(G)_{x_0}$-module
is $\Vect$.

\medskip 

From now on we will focus on this particular case and we will formulate a statement, \thmref{t:main}, which talks about calculating
the category of factorization modules for a particular factorization algebra. We will show that \thmref{t:main} is equivalent to 
the above particular case of \thmref{t:initial}.  

\ssec{Factorization modules for the dualizing sheaf}

In this subsection we state \thmref{t:main}. In a sense, this theorem on its own is no less interesting than \thmref{t:initial}: it gives a
recipe of how to calculate $\fL(G)_{x_0}$-invariants using factorization algebras. 

\sssec{}

Denote by $\pi_{\Ran}$ the projection 
$$\Gr_{G,\Ran}\to \Ran.$$

It has a natural structure of map between factorization spaces, which we denote by 
$$\pi:\Gr_G\to \on{pt}.$$

\medskip

We consider 
\begin{equation} \label{e:projection on Ran}
\pi^!: \Vect\simeq \Dmod(\on{pt})\to \Dmod(\Gr_G)
\end{equation}
as a \emph{lax-unital} factorization functor between unital factorization categories (see \secref{sss:lax unital factfun}).

\medskip

The image of the unit factorization algebra $k\in \Vect$ under $\pi^!$ is a \emph{unital} factorization 
algebra, denoted $\omega_{\Gr_G}$ in $\Dmod(\Gr_G)$. The corresponding object
$$(\omega_{\Gr_G})_\Ran\in (\Dmod(\Gr_G))_\Ran:=\Dmod(\Gr_{G,\Ran})$$
is $\omega_{\Gr_{G,\Ran}}$, equipped with its natural factorization structure. 

\sssec{}

Denote by $\pi^{\on{level}^\infty_{x_0}}_{\Ran_{x_0}}$ the projection $\Gr^{\on{level}^\infty_{x_0}}_{G,\Ran_{x_0}}\to \Ran_{x_0}$. 
We can regard it as a map between factorization module spaces for $\Gr_G$ and $\on{pt}$, respectively, compatible with $\pi$. 
Denote by
$$(\pi^{\on{level}^\infty_{x_0}})^!:\Vect\simeq \Dmod(\on{pt})\to \Dmod(\fL(G)_{x_0})^{\on{fact}_{x_0},\Dmod(\Gr_G)}$$
the resulting (lax-unital) functor between the factorization module categories with respect to $\Vect\simeq \Dmod(\on{pt})$ and $\Dmod(\Gr_G)$, respectively,
compatible with $\pi^!$ (see \secref{sss:lax unital factfun between modules}). 

\medskip

According to \ref{sss:lax unital sends factmod to factmod}, the functor $(\pi^{\on{level}^\infty_{x_0}})^!$ induces a functor
\begin{equation} \label{e:main functor LG}
\Vect\simeq k\mod^\onfact(\Vect)_{x_0}\to \omega_{\Gr_G}\mod^\onfact(\Dmod(\fL(G)_{x_0})^{\on{fact}_{x_0},\Dmod(\Gr_G)})_{x_0},
\end{equation} 
where
\begin{equation} \label{e:omega mod cat}
\omega_{\Gr_G}\mod^\onfact(\Dmod(\fL(G)_{x_0})^{\on{fact}_{x_0},\Dmod(\Gr_G)})_{x_0},
\end{equation} 
is the category of (unital) factorization modules at $x_0$ with respect to 
$$\omega_{\Gr_G}\in \on{FactAlg}(\Dmod(\Gr_G))$$
in the (unital) factorization module category $\Dmod(\fL(G)_{x_0})^{\on{fact}_{x_0},\Dmod(\Gr_G)}$ at $x_0$ with respect to
$\Dmod(\Gr_G)$, see \secref{sss:notation for intermal factmod} and \secref{sss:notation for intermal unital factmod}.

\sssec{}

The functor \eqref{e:main functor LG} sends the generator $\sfe\in \Vect$ to an object 
\begin{equation} \label{e:omega x0}
    (\omega_{\fL(G)_{x_0}})^{\on{fact}_{x_0},\omega_{\Gr_G}} \in \omega_{\Gr_G}\mod^\onfact(\Dmod(\fL(G)_{x_0})^{\on{fact}_{x_0},\Dmod(\Gr_G)})_{x_0}
\end{equation}
in \eqref{e:omega mod cat}. Explicitly, this object is given by the dualizing sheaf
\[
\omega_{\Gr^{\on{level}^\infty_{x_0}}_{G,\Ran_{x_0}}}\in \Dmod(\Gr^{\on{level}^\infty_{x_0}}_{G,\Ran_{x_0}}),
\]
equipped with its natural factorization structure with respect to $\omega_{\Gr_{G}}$.

\sssec{}

Note that the object \eqref{e:omega x0} is naturally $\fL(G)_{x_0}$-equivariant,
with respect to the action of $\fL(G)_{x_0}$ on \eqref{e:omega mod cat} induced from its action on $\Dmod(\fL(G)_{x_0})^{\on{fact}_{x_0},\Dmod(\Gr_G)}$
by \emph{right translations} (see \secref{sss:construct space}).

\medskip

Since 
$$\left(\Dmod(\fL(G)_{x_0})^{\on{fact}_{x_0},\Dmod(\Gr_G)}\right)^{\fL(G)_{x_0}}\simeq
\Vect^{\on{fact}_{x_0},\Dmod(\Gr_G)},$$
the object \eqref{e:omega x0} corresponds to an object 
\begin{equation} \label{e:omega x0 with inv}
k^{\on{fact}_{x_0},\omega_{\Gr_G}} \in \omega_{\Gr_G}\mod^\onfact(\Vect^{\on{fact}_{x_0},\Dmod(\Gr_G)})_{x_0}.
\end{equation}

\sssec{}

By functoriality, the object \eqref{e:omega x0 with inv} gives rise to a functor
\begin{equation} \label{e:main functor C}
\bC^{\fL(G)_{x_0}}\simeq \on{Funct}_{\fL(G)_{x_0}\mmod}(\Vect,\bC) \to  \omega_{\Gr_G}\mod^\onfact(\bC^{\on{fact}_{x_0},\Dmod(\Gr_G)})_{x_0}
\end{equation}
for $\bC\in \fL(G)_{x_0}\mmod$.

\sssec{}

Over the course Sects. \ref{s:almost trivial}-\ref{s:proof of main-end} we will prove:

\begin{thm} \label{t:main}
The functor \eqref{e:main functor C} is an equivalence.
\end{thm}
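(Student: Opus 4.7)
The plan is to reduce Theorem \ref{t:main} to the special case $\bC = \Vect$ (with trivial action) by progressively simplifying the $\fL(G)_{x_0}$-action on $\bC$. Both sides of \eqref{e:main functor C} commute with appropriate limits and colimits in $\bC$ (the left by standard properties of invariants, the right by \secref{sss:colimits in fact}), so we have room to maneuver among $\fL(G)_{x_0}$-modules. The first key reduction is to show that both sides depend only on the maximal subcategory $\on{alm-inv}_{\fL(G)_{x_0}}(\bC) \subset \bC$ on which $\fL(G)_{x_0}$ acts almost trivially. Geometrically, this should follow from the fact that the factorization structure on $\bC^{\on{fact}_{x_0},\Dmod(\Gr_G)}$ is controlled by how points of $X$ collide with $x_0$: the nontrivial part of the $\fL(G)_{x_0}$-action (that which is \emph{not} fixed under deeper congruence subgroups) cannot propagate through the fusion construction, and so gets killed by the factorization structure.

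Granting this reduction, an almost-trivial $\fL(G)_{x_0}$-module $\bC$ can be further stripped down to a trivial module by a categorical Koszul duality argument, in the spirit of the analog for almost-trivial modules over an affine group scheme: the 2-category of almost-trivial $\fL(G)_{x_0}$-modules is comonadic (via an augmentation) over $\DGCat$, and the same comonad acts compatibly on the factorization module side. After carrying this out, we are reduced to the case $\bC = \Vect$, i.e., to showing that \eqref{e:main functor LG} itself is an equivalence, or equivalently that the canonical functor
\[
\Vect \to \omega_{\Gr_G}\mod^\onfact\bigl(\Vect^{\on{fact}_{x_0},\Dmod(\Gr_G)}\bigr)_{x_0}
\]
is an equivalence.

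For the base case, I would exhibit both sides as monadic over $\Vect$ (via the forgetful functor to $\fL(G)_{x_0}$-coinvariants on the left, and via $\oblv_{\omega_{\Gr_G}}$ followed by evaluation at $x_0$ on the right) and compare the resulting monads. The left-hand monad is the chain algebra $\on{C}_\cdot(\fL(G)_{x_0})$ with its convolution product. The right-hand monad identifies, by unwinding the definitions, with modules over the factorization homology $\int_{\ocD_{x_0}} \on{C}_\cdot(\Gr_G)$ of the factorization coalgebra $\on{C}_\cdot(\Gr_G)$ over the punctured disc. The required isomorphism of monads is then precisely the $0$-categorical linearization statement recalled in \secref{sss:triv intro}: it is an algebro-geometric incarnation of Lurie's non-abelian Poincar\'e duality, combined with the homotopy equivalence $\Gr_G \simeq \Omega(G)$. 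The main obstacle will be the very first step: the descent from $\bC$ to $\on{alm-inv}_{\fL(G)_{x_0}}(\bC)$ is not formal, and seems to require a nontrivial geometric argument with the fusion of $\Gr^{\on{level}^\infty_{x_0}}_{G,\Ran_{x_0}}$, most likely exploiting the ind-properness of the forgetful maps such as \eqref{e:Gr level morphism bis} to transport equivariance data through the factorization structure.
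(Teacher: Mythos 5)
Your proposal matches the paper's proof almost exactly: reduce to the almost-trivial case (Theorem \ref{t:red to alm const case}), then to the trivial case via categorical Koszul duality (Theorem \ref{t:Kosz for loop}), then establish the base case $\bC=\Vect$ by comparing monads and invoking the contractibility theorem / non-abelian Poincar\'e duality. The one place where your guess diverges from what the paper actually does is the geometric mechanism behind the first reduction: you attribute it to ind-properness of maps like \eqref{e:Gr level morphism bis}, but the paper's key geometric input (Main Lemma \ref{l:main}) is instead an \emph{approximation} argument -- one replaces an arbitrary element of $\fL(G)_{x_0}$ by a rational map $X\times X-\Delta\to G$ that is regular near $x_0$, so that its action on the factorization side can be seen to leave $j_*(\omega_{\Gr_G}\boxt\bd)$ invariant -- followed by a toric localization step, with ind-properness playing only an auxiliary role.
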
 

A particular case of \thmref{t:main} is:

\begin{cor} \label{c:main}
The functor \eqref{e:main functor LG} is an equivalence.
\end{cor}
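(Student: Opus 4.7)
The plan, to be carried out in \secref{s:comp}, is to exhibit both sides of \eqref{e:main functor LG} as monadic over $\Vect$ and to identify the two resulting monads; this is the third step sketched in \secref{sss:triv intro}. On the source, the forgetful functor is the identity of $\Vect$, with trivial monad. For the target, the plan is to construct a conservative, continuous forgetful functor to $\Vect$ that admits a left adjoint, by combining the conservative functor $\oblv_{\omega_{\Gr_G}}$ of \secref{sss:notation for intermal factmod} (which lands in the fiber at $x_0$, namely $\Dmod(\fL(G)_{x_0})$) with the $!$-restriction $e^!:\Dmod(\fL(G)_{x_0})\to \Vect$ at the identity element $e\in \fL(G)_{x_0}$. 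Compatibility of \eqref{e:main functor LG} with these forgetful functors is automatic from the explicit description of the distinguished object $(\omega_{\fL(G)_{x_0}})^{\on{fact}_{x_0},\omega_{\Gr_G}}$ in \eqref{e:omega x0}: its image under $e^!\circ \oblv_{\omega_{\Gr_G}}$ is $k$.

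The crux is then the computation of the monad on the target side. For this, the plan is to apply the exchange equivalence \eqref{e:equiv fact mod} to the lax-unital factorization functor $\pi^!:\Vect\to \Dmod(\Gr_G)$ (which sends $k\mapsto \omega_{\Gr_G}$). This rewrites the target as
\[
k\mod^\onfact\!\bigl(\Rres_{\pi^!}(\Dmod(\fL(G)_{x_0})^{\on{fact}_{x_0},\Dmod(\Gr_G)})\bigr)_{x_0},
\]
in which form the monad unwinds into a factorization-homology-type colimit over $\Ran_{x_0}$. Concretely, it assembles the factorization homology of $\on{C}_\cdot(\Gr_G)$ over the punctured disc $\ocD_{x_0}$ together with the loop-group direction encoded by the additional coordinate in $\Gr^{\on{level}^\infty_{x_0}}_{G,\Ran_{x_0}}$, all then evaluated at the identity of $\fL(G)_{x_0}$ via $e^!$.

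The final step is to identify this monad with $k$, which is the algebro-geometric incarnation of (a particular case of) Lurie's non-abelian Poincar\'e duality indicated in \secref{sss:linear}. Topologically the statement reduces to: the $!$-fiber at the constant loop of $\Maps_{\on{cont}}(S^1,G)\simeq \underset{S^1}\int \Omega(G)$ is canonically $k$. Once the two monads are both shown to be $k$, the compatibility of forgetful functors together with Barr--Beck--Lurie upgrades \eqref{e:main functor LG} to an equivalence. The main obstacle will be rigorously setting up this non-abelian Poincar\'e duality in the algebro-geometric, D-module-theoretic framework --- especially the delicate bookkeeping of unital factorization structures and the interaction with the Beauville--Laszlo glueing that defines $\Gr^{\on{level}^\infty_{x_0}}_{G,\Ran_{x_0}}$ and isolates the $!$-fiber at $e$.
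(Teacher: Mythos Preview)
You have confused two different statements. In the paper, \corref{c:main} is literally the particular case $\bC=\Dmod(\fL(G)_{x_0})$ of \thmref{t:main}, and no separate argument is given: one just observes $\Dmod(\fL(G)_{x_0})^{\fL(G)_{x_0}}\simeq\Vect$. The monadic computation you describe, with its reference to \secref{s:comp} and to ``the third step sketched in \secref{sss:triv intro}'', is the paper's proof of \corref{c:main Vect} (the case $\bC=\Vect$), whose source is $\Vect^{\fL(G)_{x_0}}$ and whose monad on \emph{both} sides is $\on{C}_\cdot(\fL(G)_{x_0})$, not $k$.

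Your proposed direct argument for \corref{c:main} has a genuine gap. The functor $e^!:\Dmod(\fL(G)_{x_0})\to\Vect$ is not conservative, so there is no a~priori reason for $e^!\circ\oblv_{\omega_{\Gr_G}}$ to be conservative on the target of \eqref{e:main functor LG}; a~posteriori it is, but only because that target is equivalent to $\Vect$, which is precisely what you are trying to prove. Likewise, the assertion that the resulting monad is $k$ is equivalent to the target being $\Vect$, i.e.\ to \thmref{t:main} for $\bC=\Dmod(\fL(G)_{x_0})$. The paper's route to this uses the full strength of \thmref{t:red to alm const case} (Sects.~\ref{s:torus}--\ref{s:proof of main-end}) to reduce to almost-trivial actions, then Koszul duality (\thmref{t:Kosz for loop}) to reduce to $\bC=\Vect$, and only then the monadic computation of \secref{s:comp}; non-abelian Poincar\'e duality enters at that last step to identify the monad with $\on{C}_\cdot(\fL(G)_{x_0})$, not with $k$. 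Your sketch bypasses the first two reductions without supplying a replacement.
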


\sssec{}

In \secref{ss:deduce initial} below we will show that the assertion of \thmref{t:main} is logically equivalent to the statement that 
the functor \eqref{e:ff from Vect} is an equivalence. As we have concluded in \secref{sss:concl red step 1}, the 
latter is equivalent to the statement of \thmref{t:initial}.

\ssec{The implication \thmref{t:main} \texorpdfstring{$\Rightarrow$}{=>} \thmref{t:initial}} \label{ss:deduce initial}

In this subsection we will show that \thmref{t:main} implies (the particular case of $\bC_1=\Vect$ of) 
\thmref{t:initial}. 
 
\medskip
 
The basic tool here is the adjunction of Proposition \ref{prop:basic adjunction}. 

\sssec{} \label{sss:pi}

Note that since the map $\pi_\Ran:\Gr_{G,\Ran}\to \Ran$ is proper, the functor $\pi_\Ran^!$ admits a 
left adjoint, given by $(\pi_\Ran)_!$, compatible with the factorization structure.

\medskip

We will denote by $\pi_!$ the resulting (unital) factorization functor $\Dmod(\Gr_G)\to \Dmod(\on{pt})=\Vect$. 
The functors $(\pi_!,\pi^!)$ form an adjoint pair as factorization functors, where $\pi^!$ is \emph{lax-unital}. 

\medskip

Denote by $\Rres_{\pi_!}$ 
the resulting restriction operation on factorization categories, see \secref{sss:fact Rres}. 

\sssec{}

Recall that for a factorization category $\bA$, we denote by $\bA^{\on{fact}_{x_0}}$ the tautological
(i.e., vacuum) factorization module at $x_0$ with respect to $\bA$, see \secref{sss:vac fact mod cat}.

\medskip

In particular, for $\bA=\Vect$, we can consider the (unital) factorization module category $\Vect^{\on{fact}_{x_0}}$ at $x_0$,
which under the embedding
$$\DGCat\hookrightarrow \Vect\mmod^\onfact_{x_0}$$
corresponds to $\Vect\in \DGCat$. 

\medskip

We claim: 

\begin{prop} \label{p:Vect over Gr}
We have a canonical identification
$$\Vect^{\on{fact}_{x_0},\Dmod(\Gr_G)} \simeq \Rres_{\pi_!}(\Vect^{\on{fact}_{x_0}}).$$
\end{prop}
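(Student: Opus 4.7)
The plan is to construct a canonical morphism between the two sides and then verify it is an equivalence, using the universal property of $\Rres_{\pi_!}$ together with the forgetful-functor conservativity lemma stated after \eqref{e:oblv fact mod cat}.

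By the universal property of $\Rres_{\pi_!}$ recorded in \secref{sss:fact Rres}, producing a morphism
$$\alpha:\Vect^{\on{fact}_{x_0},\Dmod(\Gr_G)}\to \Rres_{\pi_!}(\Vect^{\on{fact}_{x_0}})$$
in $\Dmod(\Gr_G)\mmod^\onfact_{x_0}$ is equivalent to producing a $\pi_!$-semi-linear morphism
$$\beta:\Vect^{\on{fact}_{x_0},\Dmod(\Gr_G)}\to \Vect^{\on{fact}_{x_0}}.$$
To construct $\beta$, I would use that $\Vect^{\on{fact}_{x_0},\Dmod(\Gr_G)}$, being the $\fL(G)_{x_0}$-coinvariants of $\Dmod(\Gr^{\on{level}^\infty_{x_0}}_{G,\Ran_{x_0}})$, identifies fiberwise with $\Dmod$ of the quotient prestack $\Gr^{\on{level}^\infty_{x_0}}_{G,\Ran_{x_0}}/\fL(G)_{x_0}$. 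The projection of this quotient to $\Ran_{x_0}$ is ind-proper: its fibers are products of affine Grassmannians $\Gr_{G,y}$ as $y$ ranges over the non-$x_0$ components of a point of $\Ran_{x_0}$, each of which is ind-proper over its base as in \secref{sss:pi}. The morphism $\beta$ is then the associated $!$-pushforward, and its $\pi_!$-semi-linearity follows from proper base change against the factorization structure of $\Gr_G$.

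To conclude that $\alpha$ is an equivalence, I invoke the lemma following \eqref{e:oblv fact mod cat}: since $\oblv_{\Dmod(\Gr_G)}$ is conservative on adjointable 1-morphisms, it suffices to check that $\alpha$ admits an adjoint and that $\oblv_{\Dmod(\Gr_G)}(\alpha)$ is an equivalence of DG categories. The existence of the adjoint follows from the ind-properness of the pushforward defining $\beta$, which gives $\beta$ a continuous right adjoint; this adjoint transports through the universal property to one for $\alpha$. At the fiber over $\{x_0\}\in \Ran_{x_0}$ we have $\Gr^{\on{level}^\infty_{x_0}}_{G,\{x_0\}}/\fL(G)_{x_0}\simeq \on{pt}$, since $\fL(G)_{x_0}$ acts simply transitively on $\Gr^{\on{level}^\infty_{x_0}}_{G,\{x_0\}}\simeq \fL(G)_{x_0}$ by right translations. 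Thus both sides reduce to $\Vect$ at $\{x_0\}$ and $\alpha|_{\{x_0\}}$ is the identity by construction.

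The main obstacle I anticipate is precisely matching the abstract universal morphism $\alpha$ with the geometric $\beta$ constructed above, and ensuring the adjoint transports coherently through the factorization-module 2-category. This is handled by invoking Proposition \ref{sss:adj test for factres}, which reduces the equivalence check to an adjunction condition amenable to standard proper base change arguments fiberwise over $\Ran_{x_0}$, along with careful bookkeeping of the unitality structure.
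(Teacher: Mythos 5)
Your proposal follows essentially the same route as the paper's own proof: you identify $\Vect^{\on{fact}_{x_0},\Dmod(\Gr_G)}$ with D-modules on the quotient $\Gr^{\on{level}^\infty_{x_0}}_{G,\Ran_{x_0}}/\fL(G)_{x_0}$ (the paper phrases this as $\on{Hecke}_{x_0}$-(co)invariants of $\Dmod(\Gr_G)^{\on{fact}_{x_0}}$), construct the comparison map by $!$-pushforward along the resulting projection to $\Ran_{x_0}$, and then verify it is an equivalence via Proposition \ref{sss:adj test for factres}, checking ind-properness/adjointability and that the fiber at $x_0$ induces the identity on $\Vect$. The only substantive difference is that the paper constructs the right adjoint explicitly by showing the $!$-pullback along $\pi_{\Ran_{x_0}}$ factors through $\on{Hecke}_{x_0}$-invariants, whereas you merely assert its existence from ind-properness; this is fine as a plan but one should be careful that "the adjoint transports coherently" through the universal property is precisely what Proposition \ref{sss:adj test for factres} packages, so both invocations are doing the same work.
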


The proof will be given in \secref{ss:proof of Vect over Gr}. 

\sssec{}

Recall now that for the \emph{lax-unital} factorization functor $\pi^!$ there is a \emph{unital} restriction operation
$$\Rres^\untl_{\pi^!}:\Dmod(\Gr_G)\mmod^\onfact_{x_0}\to \Vect\mmod^\onfact_{x_0},$$
see \secref{sss:unital restriction}.

\medskip

Moreover, we have the following basic facts:

\medskip

\noindent For
$\wt\bC\in \Dmod(\Gr_G)\mmod^\onfact_{x_0}$, there is a canonical identification: 
\begin{equation} \label{e:basic adj}
\on{Funct}_{\Dmod(\Gr_G)\mmod^\onfact_{x_0}}(\Rres_{\pi_!}(\Vect^{\on{fact}_{x_0}}),\wt\bC)
\simeq \on{Funct}_{\Vect\mmod^\onfact_{x_0}}(\Vect^{\on{fact}_{x_0}},
\Rres^\untl_{\pi^!}(\wt\bC)),
\end{equation} 
see Proposition \ref{prop:basic adjunction}. 

\medskip

\noindent For $\wt\bC\in \Dmod(\Gr_G)\mmod^\onfact_{x_0}$, we have
\begin{equation} \label{e:fiber untl rest}
\Rres^\untl_{\pi^!}(\wt\bC)_{x_0}\simeq \pi^!(\on{unit}_{\Vect})\mod^\onfact(\wt\bC)_{x_0},
\end{equation}
see Proposition \ref{prop:fiber of unital restriction}. 

\sssec{}

Combining \propref{p:Vect over Gr} with \eqref{e:basic adj} and \eqref{e:fiber untl rest}, we obtain an equivalence
\begin{multline} \label{e:comparison}
\on{Funct}_{\Dmod(\Gr_G)\mmod_{x_0}^\onfact}(\Vect^{\on{fact}_{x_0},\Dmod(\Gr_G)},\wt\bC) \simeq
\on{Funct}_{\Dmod(\Gr_G)\mmod_{x_0}^\onfact}(\Rres_{\pi_!}(\Vect^{\on{fact}_{x_0}}),\wt\bC) \simeq \\
\simeq \on{Funct}_{\Vect\mmod^\onfact_{x_0}}(\Vect^{\on{fact}_{x_0}},\Rres^\untl_{\pi^!}(\wt\bC)) \simeq
\Rres^\untl_{\pi^!}(\wt\bC)_{x_0} \simeq \pi^!(\on{unit}_{\Vect})\mod^\onfact(\wt\bC)_{x_0}= \\
=\omega_{\Gr_G}\mod^\onfact(\wt\bC)_{x_0}.
\end{multline}

\sssec{}

Let now $\bC$ be an object of $\fL(G)_{x_0}\mmod$. Unwinding the constructions, we obtain:

\begin{lem} \label{l:deduce initial} 
The functor
\begin{multline*}
\bC^{\fL(G)_{x_0}} \overset{\text{\eqref{e:ff from Vect}}}\longrightarrow 
\on{Funct}_{\Dmod(\Gr_G)\mmod_{x_0}^\onfact}(\Vect^{\on{fact}_{x_0},\Dmod(\Gr_G)},\bC^{\on{fact}_{x_0},\Dmod(\Gr_G)}) \simeq \\
\overset{\text{\eqref{e:comparison}}}\simeq \omega_{\Gr_G}\mod^\onfact(\bC^{\on{fact}_{x_0},\Dmod(\Gr_G)})_{x_0}
\end{multline*}
identifies canonically with the functor \eqref{e:main functor C}. 
\end{lem}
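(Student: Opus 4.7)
The plan is to verify that both functors in question are represented (in the appropriate 2-categorical sense) by the same universal object, and then deduce the identification by naturality. By construction, the functor \eqref{e:main functor C} sends a $\fL(G)_{x_0}$-invariant $\Vect \to \bC$ to the image of the universal object $k^{\on{fact}_{x_0},\omega_{\Gr_G}} \in \omega_{\Gr_G}\mod^\onfact(\Vect^{\on{fact}_{x_0},\Dmod(\Gr_G)})_{x_0}$ under the induced morphism of factorization module categories $\Vect^{\on{fact}_{x_0},\Dmod(\Gr_G)} \to \bC^{\on{fact}_{x_0},\Dmod(\Gr_G)}$. On the other hand, the composite in the statement sends such an invariant to the image of $\mathrm{id}_{\Vect^{\on{fact}_{x_0},\Dmod(\Gr_G)}}$ under the induced morphism, composed with the chain of equivalences \eqref{e:comparison}. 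So the task reduces to identifying the image of $\mathrm{id}$ under \eqref{e:comparison} with $k^{\on{fact}_{x_0},\omega_{\Gr_G}}$.

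First, I would verify that each step of \eqref{e:comparison} is natural in the input factorization module category $\wt\bC \in \Dmod(\Gr_G)\mmod^\onfact_{x_0}$. This follows from the naturality of \propref{p:Vect over Gr} (which is a tautological identification), and from the naturality built into Propositions \ref{prop:basic adjunction} and \ref{prop:fiber of unital restriction}. This naturality reduces the verification of \lemref{l:deduce initial} to the case $\bC = \Vect$ with trivial action, and thereby to a single computation: tracing the identity morphism through \eqref{e:comparison}.

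Next, I would trace $\mathrm{id} \colon \Vect^{\on{fact}_{x_0},\Dmod(\Gr_G)} \to \Vect^{\on{fact}_{x_0},\Dmod(\Gr_G)}$ step by step. Under \propref{p:Vect over Gr}, the identity corresponds to the identity of $\Rres_{\pi_!}(\Vect^{\on{fact}_{x_0}})$. Under the adjunction \eqref{e:basic adj} of Proposition \ref{prop:basic adjunction}, this identity corresponds to the unit morphism $\Vect^{\on{fact}_{x_0}} \to \Rres^\untl_{\pi^!}(\Vect^{\on{fact}_{x_0},\Dmod(\Gr_G)})$ of the adjoint pair $(\Rres_{\pi_!}, \Rres^\untl_{\pi^!})$. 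Evaluating at $x_0 \in \Ran_{x_0}$ and using \eqref{e:fiber untl rest} of Proposition \ref{prop:fiber of unital restriction}, this unit gets identified with the object $\pi^!(\on{unit}_{\Vect})\mod^\onfact(\Vect^{\on{fact}_{x_0},\Dmod(\Gr_G)})_{x_0}$ associated to the vacuum factorization $\omega_{\Gr_G}$-module on $\Vect^{\on{fact}_{x_0},\Dmod(\Gr_G)}$.

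Finally, I would check that this vacuum module agrees with $k^{\on{fact}_{x_0},\omega_{\Gr_G}}$ as defined in the paper. By construction, $k^{\on{fact}_{x_0},\omega_{\Gr_G}}$ is obtained by taking $\fL(G)_{x_0}$-invariants (with respect to right translations) of $(\omega_{\fL(G)_{x_0}})^{\on{fact}_{x_0},\omega_{\Gr_G}} = \omega_{\Gr^{\on{level}^\infty_{x_0}}_{G,\Ran_{x_0}}}$, which is itself $(\pi^{\on{level}^\infty_{x_0}})^!$ applied to the unit. Under the identification
\[
\bigl(\Dmod(\fL(G)_{x_0})^{\on{fact}_{x_0},\Dmod(\Gr_G)}\bigr)^{\fL(G)_{x_0}} \simeq \Vect^{\on{fact}_{x_0},\Dmod(\Gr_G)},
\]
this pullback matches $\pi^!(\on{unit}_{\Vect}) = \omega_{\Gr_G}$, viewed as a factorization module over itself, which is precisely the object produced above. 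The expected main obstacle is purely bookkeeping: keeping straight the (unital versus non-unital, lax versus strict) compatibilities between the adjunction of \eqref{e:basic adj} and the fiber identification \eqref{e:fiber untl rest}, and checking that the naturality squares commute coherently. Once these are in place, the lemma follows by assembling the pieces.
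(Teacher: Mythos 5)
Your proposal is correct and is precisely the "unwinding of the constructions" the paper leaves to the reader (the paper states the lemma without proof, prefaced only by ``Unwinding the constructions, we obtain:''). The key structural moves — (i) observe both functors factor through the induced morphism $\bC^{\on{fact}_{x_0},\Dmod(\Gr_G)}$ of factorization module categories, so by naturality it suffices to match the universal objects obtained from $\mathrm{id}$ on $\Vect^{\on{fact}_{x_0},\Dmod(\Gr_G)}$; (ii) trace $\mathrm{id}$ through the chain \eqref{e:comparison}, landing on the unit of the $(\Rres_{\pi_!},\Rres^\untl_{\pi^!})$-adjunction evaluated at $x_0$; (iii) identify this with $k^{\on{fact}_{x_0},\omega_{\Gr_G}}$ — are exactly right, and step (iii) is furnished by the Remark at the end of \secref{ss:proof of Vect over Gr}, which describes the inverse of \eqref{e:Vect over Gr 2} as the factored $\pi^!$-functor, making the unit of the adjunction land on the dualizing sheaf with its factorization structure.

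One phrase to tighten: you write that the result ``matches $\pi^!(\on{unit}_\Vect)=\omega_{\Gr_G}$, viewed as a factorization module over itself.'' As literally phrased this confuses the ambient category: the object produced lives in $\omega_{\Gr_G}\mod^\onfact(\Vect^{\on{fact}_{x_0},\Dmod(\Gr_G)})_{x_0}$, not in $\omega_{\Gr_G}\mod^\onfact_{x_0} = \omega_{\Gr_G}\mod^\onfact(\Dmod(\Gr_G)^{\on{fact}_{x_0}})_{x_0}$. The correct statement is that the underlying object in $\Vect^{\on{fact}_{x_0},\Dmod(\Gr_G)} \simeq (\Dmod(\Gr_G)^{\on{fact}_{x_0}})^{\on{Hecke}_{x_0}}$ is the descent of $\omega_{(\Gr_G)^{\on{fact}_{x_0}}}$ (equivalently of $\omega_{\Gr^{\on{level}^\infty_{x_0}}_{G,\Ran_{x_0}}}$ along the $\fL(G)_{x_0}$-action), equipped with its canonical $\omega_{\Gr_G}$-module structure — which is by definition $k^{\on{fact}_{x_0},\omega_{\Gr_G}}$. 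With that correction the argument is complete.
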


\sssec{}

From \lemref{e:main functor C}, we obtain that the assertion of \thmref{t:main} is equivalent to the statement that 
the functor \eqref{e:ff from Vect} is an equivalence, which in turn is equivalent to the statement of \thmref{t:initial}.

\ssec{Proof of \propref{p:Vect over Gr}} \label{ss:proof of Vect over Gr}

\sssec{}

By construction, we can identify $\Vect^{\on{fact}_{x_0},\Dmod(\Gr_G)}$ with 
\begin{equation} \label{e:Vect fact Gr as inv}
(\Dmod(\fL(G)_{x_0})^{\on{fact}_{x_0},\Dmod(\Gr_G)})^{\fL(G)_{x_0}},
\end{equation}
where we take invariants with respect to the (right) action of $\fL(G)_{x_0}$ on 
$\Dmod(\fL(G)_{x_0})^{\on{fact}_{x_0},\Dmod(\Gr_G)}$, viewed as a factorization module category 
at $x_0$ with respect to $\Dmod(\Gr_G)$.

\medskip

We will now rewrite \eqref{e:Vect fact Gr as inv} slightly differently.

\sssec{}

Consider the vacuum factorization module space over $\Gr_G$ at $x_0$; denote it
$(\Gr_G)^{\on{fact}_{x_0}}$, see \secref{sss:vac mod space}. The resulting factorization module
category identifies with $\Dmod(\Gr_G)^{\on{fact}_{x_0}}$. 

\medskip

The $\fL(G)_{x_0}$-action on $\Gr^{\on{level}^\infty_{x_0}}_{G,\Ran_{x_0}}$ gives rise to an action of the
Hecke groupoid $\on{Hecke}_{x_0}$ at $x_0$ on $(\Gr_G)^{\on{fact}_{x_0}}$. 

\medskip

Tautologically, we can rewrite \eqref{e:Vect fact Gr as inv} as
\begin{equation} \label{e:Vect fact Gr as Hecke inv}
(\Dmod(\Gr_G)^{\on{fact}_{x_0}})^{\on{Hecke}_{x_0}}.
\end{equation}

Since $\on{Hecke}_{x_0}$ is proper, we can rewrite \eqref{e:Vect fact Gr as Hecke inv}
also as 
\begin{equation} \label{e:Vect fact Gr as Hecke coinv}
(\Dmod(\Gr_G)^{\on{fact}_{x_0}})_{\on{Hecke}_{x_0}}.
\end{equation}

\sssec{}

The functor of !-pushforward along 
$$\pi_{\Ran_{x_0}}:(\Gr_G)^{\on{fact}_{x_0}}\to \Ran_{x_0}$$
gives rise to a functor
$$(\pi_{x_0})_!:\Dmod(\Gr_G)^{\on{fact}_{x_0}}\to \Vect^{\on{fact}_{x_0}}$$
as factorization module categories with respect to $\Dmod(\Gr)$ and $\Vect$,
compatible with the factorization functor $\pi_!:\Dmod(\Gr)\to \Vect$. 

\medskip

Moreover, the functor $(\pi_{x_0})_!$ canonically factors via a functor
\begin{equation} \label{e:Vect over Gr 0}
(\Dmod(\Gr_G)^{\on{fact}_{x_0}})_{\on{Hecke}_{x_0}}\to \Vect^{\on{fact}_{x_0}}.
\end{equation}
i.e., a functor
\begin{equation} \label{e:Vect over Gr 1}
\Vect^{\on{fact}_{x_0},\Dmod(\Gr_G)}\to \Vect^{\on{fact}_{x_0}}, 
\end{equation}
as factorization module categories with respect to $\Dmod(\Gr)$ and $\Vect$,
compatible with the factorization functor $\pi_!:\Dmod(\Gr)\to \Vect$.

\medskip

By the definition of the restriction operation $\Rres_{\pi_!}$, the functor \eqref{e:Vect over Gr 1}
gives rise to a (unital) functor
\begin{equation} \label{e:Vect over Gr 2}
\Vect^{\on{fact}_{x_0},\Dmod(\Gr_G)}\to \Rres_{\pi_!}(\Vect^{\on{fact}_{x_0}})
\end{equation}
as factorization module categories over $\Dmod(\Gr_G)$.

\medskip

The functor \eqref{e:Vect over Gr 2} is the sought-for functor in \propref{p:Vect over Gr}. 

\sssec{}

We will now show that \eqref{e:Vect over Gr 2} is an equivalence. In order to do so, we will apply
Proposition \ref{sss:adj test for factres}.

\medskip

Condition (i) in this lemma is satisfied because the morphism $\pi_\Ran$ is proper. Condition (iii) 
is satisfied, since at the level of fibers at $x_0$, the functor \eqref{e:Vect over Gr 2} induces an identity
endofunctor of $\Vect$. 

\medskip

Hence, it remains to show that the functor \eqref{e:Vect over Gr 2} admits a right adjoint, viewed
as a functor between sheaves of categories over $\Ran_{x_0}$. 

\sssec{}

Pullback along $\pi_{\Ran_{x_0}}$ is a functor
$$\Vect^{\on{fact}_{x_0}}\to \Dmod((\Gr_G)^{\on{fact}_{x_0}}),$$
which naturally factors via a functor
\begin{equation} \label{e:Vect over Gr 3}
\Vect^{\on{fact}_{x_0}}\to (\Dmod((\Gr_G)^{\on{fact}_{x_0}}))^{\on{Hecke}_{x_0}}.
\end{equation}

Interpreting $\Vect^{\on{fact}_{x_0},\Dmod(\Gr_G)}$ as \eqref{e:Vect fact Gr as Hecke inv}, the functor
\eqref{e:Vect over Gr 3} provides a right adjoint to \eqref{e:Vect over Gr 2}.

\qed[\propref{p:Vect over Gr}]

\begin{rem}

Unwinding the construction, one can describe the functor right adjoint (which is also the inverse) of \eqref{e:Vect over Gr 2}
as follows:

\medskip

By \eqref{e:basic adj}, a datum of a functor
\begin{equation} \label{e:Vect over Gr 4}
\Rres_{\pi_!}(\Vect^{\on{fact}_{x_0}}) \to \Vect^{\on{fact}_{x_0},\Dmod(\Gr_G)}
\end{equation}
is equivalent to that of a functor
$$\Vect^{\on{fact}_{x_0}} \to \Rres^\untl_{\pi^!}(\Vect^{\on{fact}_{x_0},\Dmod(\Gr_G)}),$$
while the latter is equivalent to that of a functor
\begin{equation} \label{e:Vect over Gr 5}
\Vect^{\on{fact}_{x_0}} \to \Vect^{\on{fact}_{x_0},\Dmod(\Gr_G)}
\end{equation}
of factorization module categories over $\Vect$ and $\Dmod(\Gr_G)$, respectively, compatible with 
$\pi^!$. 

\medskip

The functor \eqref{e:Vect over Gr 5} is given by the natural factorization of 
$$\pi_{x_0}^!:\Vect^{\on{fact}_{x_0}} \to \Dmod((\Gr_G)^{\on{fact}_{x_0}})$$
as
$$\Vect^{\on{fact}_{x_0}} \to  (\Dmod(\Gr_G)^{\on{fact}_{x_0}})^{\on{Hecke}_{x_0}}\simeq \Vect^{\on{fact}_{x_0},\Dmod(\Gr_G)}.$$

\end{rem} 

\ssec{Example: the case of \texorpdfstring{$\Vect$}{Vect}}  

In this section we run a plausibility check for \thmref{t:main} when $\bC=\Vect$ and the group $G$ is semi-simple and simply-connected. 
We calculate explicitly both sides and show that they are abstractly isomorphic. 

\sssec{}

Let us apply \thmref{t:main} to $\bC:=\Vect$, viewed as an object of $\fL(G)_{x_0}\mmod$, equipped with the trivial 
action. We obtain:

\begin{cor} \label{c:main Vect}
The functor
\begin{equation} \label{e:main functor Vect}
\Vect^{\fL(G)_{x_0}}\to \omega_{\Gr_G}\mod^\onfact(\Vect^{\on{fact}_{x_0},\Dmod(\Gr_G)})_{x_0}
\end{equation}
of \eqref{e:main functor C}
is an equivalence.
\end{cor}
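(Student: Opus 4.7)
The plan is to prove \corref{c:main Vect} by exhibiting monadic forgetful functors to $\Vect$ on both sides, identifying the resulting monads concretely, and then invoking an algebraic incarnation of non-abelian Poincar\'e duality to match them. This follows the strategy outlined in \secref{sss:triv intro}, and gives a plausibility check since both sides end up described as modules over an explicit chain-level associative algebra.

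First I would analyze the left-hand side. Since $\bC=\Vect$ carries the trivial $\fL(G)_{x_0}$-action, the forgetful functor $\oblv:\Vect^{\fL(G)_{x_0}}\to \Vect$ admits a left adjoint (the averaging/induction functor), and by Barr--Beck--Lurie it is monadic. For a group ind-scheme acting trivially, a standard calculation identifies the monad with $V\mapsto \on{C}_\cdot(\fL(G)_{x_0})\otimes V$, where $\on{C}_\cdot(\fL(G)_{x_0})$ is equipped with its convolution (group-multiplication) algebra structure. Thus $\Vect^{\fL(G)_{x_0}}\simeq \on{C}_\cdot(\fL(G)_{x_0})\mmod$ as DG categories over $\Vect$.

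Next I would simplify the right-hand side. By \propref{p:Vect over Gr}, the factorization module category in play is $\Rres_{\pi_!}(\Vect^{\on{fact}_{x_0}})$. Applying the equivalence \eqref{e:equiv fact mod} to the factorization functor $\pi_!:\Dmod(\Gr_G)\to \Vect$ and the factorization algebra $\omega_{\Gr_G}$ (with image $\pi_!(\omega_{\Gr_G})=\on{C}_\cdot(\Gr_G)$ regarded as a factorization algebra in $\Vect$), one obtains a canonical equivalence
\[
\omega_{\Gr_G}\mod^\onfact(\Vect^{\on{fact}_{x_0},\Dmod(\Gr_G)})_{x_0}\;\simeq\; \on{C}_\cdot(\Gr_G)\mod^\onfact_{x_0}.
\]
The forgetful functor $\oblv_{\on{C}_\cdot(\Gr_G)}$ to the fiber at $x_0$ (which is $\Vect$) is conservative and continuous, admits a left adjoint (free factorization module), and so is monadic. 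The resulting monad is by construction the factorization homology $\displaystyle\underset{\ocD_{x_0}}{\int}\on{C}_\cdot(\Gr_G)$, with its natural associative algebra structure coming from factorization. Hence the RHS is equivalent to $\bigl(\int_{\ocD_{x_0}}\on{C}_\cdot(\Gr_G)\bigr)\mmod$.

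Under these two identifications the functor \eqref{e:main functor Vect} is induced by a canonical morphism of associative algebras
\[
\underset{\ocD_{x_0}}{\int}\on{C}_\cdot(\Gr_G)\;\longrightarrow\;\on{C}_\cdot(\fL(G)_{x_0}),
\]
and the content of the corollary is that this morphism is an equivalence. This is precisely the $0$-categorical linearization mentioned in \secref{sss:linear}. The proof amounts to an algebro-geometric version of Lurie's non-abelian Poincar\'e duality: since $G$ is simply-connected, $\Gr_G$ is group-like as a factorization space (the group-like structure is witnessed by the identification with $\fL(G)/\fL^+(G)$ and ultimately by the topological model $\Gr_G\simeq \Omega G$), and factorization homology of a group-like object over a $1$-dimensional domain recovers maps from its boundary into the delooping.

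The main obstacle will be the last step: making Lurie's non-abelian Poincar\'e duality precise in the de Rham / algebraic setting, where $\fL(G)_{x_0}$ is an ind-scheme of infinite type and one needs to control $\on{C}_\cdot$ as a genuine $E_2$-algebra (via the factorization structure) rather than merely as a chain complex. The remaining steps (monadicity on both sides and the base-change style identification via \eqref{e:equiv fact mod}) are essentially formal given the machinery of \secref{s:background}; the non-trivial input is the factorization-theoretic Poincar\'e duality isomorphism, which the authors indicate will be carried out in \secref{s:comp}.
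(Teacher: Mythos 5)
Your proposal captures the strategy the authors themselves sketch in the introduction (\secref{sss:triv intro}) and gives a correct high-level plan, and you correctly locate the crux in an algebro-geometric non-abelian Poincar\'e duality statement. However, the paper's actual execution in \secref{s:comp} takes a more concrete geometric route that deliberately avoids the construction you invoke. Rather than identify the right-hand side monad with $\underset{\ocD_{x_0}}\int \on{C}_\cdot(\Gr_G)$ and then compare two explicit associative algebras, the paper never makes ``factorization homology over the punctured disc'' a precise object; instead it sets up the commutative diagram \eqref{e:diagram for the monad} (expanded to \eqref{e:big monad diagram}) and shows directly that the natural transformation of monads induced by $\Phi$ is an isomorphism, by verifying Properties (1)--(4) of \secref{sss:properties}. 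These are all Beck--Chevalley checks, and the geometric content is concentrated in Property (4), which reduces (after the contraction-principle argument for the $\BG_m$-action on $\Ran_{x_0}$) to showing that the locally closed embedding \eqref{e:locally closed} induces an isomorphism on homology; this is then deduced from the contractibility theorem of \cite{Ga1} via the diagram \eqref{e:NAPD}. So the final input is the same one you name, but the paper packages it as a statement about $\Bun_G$ and universal homological equivalences rather than as a chain-level algebra isomorphism.

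Two technical points you elide that the paper must handle carefully: (a) the existence of the partially defined left adjoint to $\oblv_{\omega_{\Gr_G}}$ (your ``free factorization module'' functor) is not formal in the de Rham setting and is established via the notion of objects adapted to induction (Lemma \ref{lem hol is adapt}, \propref{prop fact ind}), using ind-holonomicity of the relevant lax D-modules; and (b) once monadicity is known, showing that $\Phi$ intertwines the two monads still requires the two Beck--Chevalley verifications (Properties (2) and (3)), which rest on the Main Lemma machinery from \secref{s:proof of main: key} or, in the shortcut the authors indicate in the Remark, on \thmref{t:red to alm const case}. Finally, a small slip: you write ``since $G$ is simply-connected,'' but the corollary is asserted for arbitrary reductive $G$; the simply-connectedness hypothesis appears only in the paper's \emph{plausibility check} subsection (to make the rational-homotopy computations explicit), not in the actual proof.
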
 

\sssec{}

In \secref{sss:vect implies} we will show that the assertion of \corref{c:main Vect} is equivalent to a key
calculation involved in the proof of \thmref{t:main}. 

\medskip

In the rest of this subsection, we will explain that the existence of \emph{an} equivalence
\begin{equation} \label{e:case of Vect abs}
\Vect^{\fL(G)_{x_0}}\simeq \omega_{\Gr_G}\mod^\onfact(\Vect^{\on{fact}_{x_0},\Dmod(\Gr_G)})_{x_0}
\end{equation}
is a priori known, at least when $G$ is semi-simple and simply-connected. 

\sssec{}

Recall that according to \propref{p:Vect over Gr}, we have 
$$\Vect^{\on{fact}_{x_0},\Dmod(\Gr_G)} \simeq \Rres_{\pi_!}(\Vect^{\on{fact}_{x_0}}).$$

\medskip

Combining with \eqref{e:equiv fact mod}, we obtain 
$$\omega_{\Gr_G}\mod^\onfact(\Vect^{\on{fact}_{x_0},\Dmod(\Gr_G)})_{x_0} \simeq \pi_!(\omega_{\Gr_G})\mod^\onfact(\Vect^{\on{fact}_{x_0}})_{x_0}
=:\pi_!(\omega_{\Gr_G})\mod^\onfact_{x_0},$$
where in the right-hand side, $\pi_!(\omega_{\Gr_G})$ is viewed as a plain unital factorization algebra, and $\mod^\onfact_{x_0}$ refers to the plain
category of unital factorization modules at $x_0$.

\sssec{}

Assume now that $G$ is semi-simple and simply-connected. Let $\fa_{BG}$ be the Lie algebra\footnote{The Lie algebra $\fa_{BG}$
is actually abelian. Indeed, $\on{C}^\cdot(\fa_{BG})\simeq \on{C}^\cdot(BG)$, which is isomorphic to a polynomial algebra.} that controls the 
rational homotopy type of the classifying stack $BG$ of $G$, which is characterized by a canonical isomorphism between cocommutative coalgebras
\[
    \on{C}_\cdot(\fa_{BG})\simeq \on{C}_\cdot(BG),
\]
where the first $\on{C}_\cdot(-)$ denoted the homological Chevalley complex of a Lie algebra. 

\medskip
Let $\fa_{BG,X}:=\fa_{BG}\otimes k_X$ be the corresponding constant Lie$^\star$-algebra and $U^{\on{ch}}( \fa_{BG,X} )$ be its chiral universal enveloping algebra. We have the following  result:

\begin{lem}
    Let
    \[
        \pi_!(\omega_{\Gr_G})^{\on{ch}}:= \pi_!(\omega_{\Gr_G})|_X[-1]
    \]
    be the unital chiral algebra corresponding to the unital factorization algebra $\pi_!(\omega_{\Gr_G})$ (see \cite[Sect. D.1]{GLC2}). We have a canonical isomorphism
    \[
        \pi_!(\omega_{\Gr_G})^{\on{ch}}\simeq U^{\on{ch}}( \fa_{BG,X} ).
    \]
\end{lem}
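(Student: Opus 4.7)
I would prove the isomorphism by producing a chiral-algebra map $U^{\on{ch}}(\fa_{BG,X}) \to \pi_!(\omega_{\Gr_G})^{\on{ch}}$ via the universal property of the chiral enveloping algebra, and then verifying that it is an isomorphism by restriction to a single point $x\in X$. The fiber-wise check is the classical Koszul-duality statement $\on{C}_\cdot(\Omega BG)\simeq U(\fa_{BG})$, combined with the (topological) identification $\Gr_G\simeq \Omega BG$ for $G$ semi-simple and simply-connected. In this sense the lemma is the statement that this classical bar/cobar duality, which holds at a single point, lifts coherently to the entire curve via the factorization structure.

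\textbf{Constructing the map.} By the universal property of $U^{\on{ch}}$, giving a chiral algebra map $U^{\on{ch}}(\fa_{BG,X})\to \pi_!(\omega_{\Gr_G})^{\on{ch}}$ is equivalent to giving a map of Lie$^\star$ algebras $\fa_{BG,X}\to (\pi_!(\omega_{\Gr_G})^{\on{ch}})^{\on{Lie}}$. I would build this map by extracting a sub-Lie$^\star$ algebra $L_X\hookrightarrow (\pi_!(\omega_{\Gr_G})^{\on{ch}})^{\on{Lie}}$ of primitive elements: the factorization identification $\Gr_{G,\ul{x}_1\cup\ul{x}_2}\simeq \Gr_{G,\ul{x}_1}\times\Gr_{G,\ul{x}_2}$ endows $\pi_!(\omega_{\Gr_G})$ with the structure of a commutative factorization algebra, whose underlying chiral cocommutative coalgebra has a well-defined primitive part. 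Fiber-wise, $L_X|_x$ recovers the primitives of the Hopf algebra $\on{C}_\cdot(\Gr_G)\simeq U(\fa_{BG})$, which is exactly $\fa_{BG}$. Naturality in $X$ upgrades this to an isomorphism $\fa_{BG,X}\simeq L_X$ of Lie$^\star$ algebras, yielding the desired map into $(\pi_!(\omega_{\Gr_G})^{\on{ch}})^{\on{Lie}}$.

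\textbf{Checking isomorphism.} Both sides are unital chiral algebras on $X$, so it suffices to check an isomorphism on fibers. The fiber of $U^{\on{ch}}(\fa_{BG,X})$ at $x$ is $U(\fa_{BG})$ by definition of the chiral enveloping of a constant Lie$^\star$ algebra. The fiber of $\pi_!(\omega_{\Gr_G})^{\on{ch}}$ at $x$ is $\on{C}_\cdot(\Gr_G)$ (with the appropriate shift absorbed in the $[-1]$ in the definition). The identification $\on{C}_\cdot(\Gr_G)\simeq U(\fa_{BG})$ comes from $\on{C}_\cdot(\Gr_G)\simeq \on{C}_\cdot(\Omega BG)$ together with cobar/bar duality: $\on{C}_\cdot(\Omega BG)\simeq \Omega^{\on{cobar}}\on{C}_\cdot(BG) \simeq U(\fa_{BG})$, the last step by the very definition of $\fa_{BG}$ as the Lie algebra whose Chevalley complex $\on{C}_\cdot(\fa_{BG})$ computes $\on{C}_\cdot(BG)$ (here we use that $G$ semi-simple simply-connected forces $BG$ to be simply-connected so this cobar is well-behaved). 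By construction, our map realizes this classical identification at each fiber, so it is an isomorphism.

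\textbf{Main obstacle.} The genuine difficulty is the rigorous implementation of the ``primitive part'' construction in the chiral setting, i.e., identifying $\fa_{BG,X}$ intrinsically as a Lie$^\star$ algebra inside $(\pi_!(\omega_{\Gr_G})^{\on{ch}})^{\on{Lie}}$ and not merely at the level of fibers. The natural framework is chiral Koszul duality (Francis-Gaitsgory), which identifies commutative factorization algebras with Lie$^\star$ algebras via chiral bar/cobar. Under this equivalence, $\pi_!(\omega_{\Gr_G})$ corresponds to the Lie$^\star$ algebra whose chiral Chevalley complex is the factorization algebra version of $\on{C}_\cdot(BG)$; the challenge is to show that this is precisely the constant Lie$^\star$ algebra $\fa_{BG,X}$ and not some deformation thereof. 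Once this is verified (which one can reduce to the fact that $BG$ is of cohomological finite type with $\on{C}^\cdot(BG)$ a polynomial algebra, forcing $\fa_{BG}$ to be abelian and hence rigid with no nontrivial chiral deformations), the proof concludes.
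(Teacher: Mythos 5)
Your overall strategy is morally aligned with the paper's: both proceed via chiral Koszul duality à la Francis--Gaitsgory, reducing the problem to identifying the Lie$^\star$-algebra Koszul dual of $\CA := \pi_!(\omega_{\Gr_G})$ with the constant Lie$^\star$-algebra $\fa_{BG,X}$, and then invoking the relation between Koszul duality and chiral envelopes to conclude. And your fiber computation ($\on{C}_\cdot(\Gr_G) \simeq \on{C}_\cdot(\Omega BG) \simeq U(\fa_{BG})$) is correct and is indeed the local heart of the matter.

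However, there is a genuine gap where you acknowledge the "main obstacle" and then wave at "rigidity." The hard step is precisely to prove that the chiral Koszul dual $\CA^{\vee,\on{KD}}$ is the \emph{constant} Lie$^\star$-algebra and not some global deformation of it — and this is exactly the step that cannot be read off a fiber-wise calculation, because the Koszul dual is a global construction involving $\CA_\Ran$, not just $\CA|_X$ or a single fiber. Your proposed resolution (abelianness of $\fa_{BG}$ forcing the Lie$^\star$ structure to be trivial) is an unproved heuristic; nothing in the proposal rules out a nontrivial bracket $j_*j^*(\fa_{BG}\otimes\omega_{X^2}) \to \Delta_*(\fa_{BG}\otimes\omega_X)$. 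The paper's proof supplies exactly the missing global input: it quotes the Atiyah--Bott/contractibility theorem \cite[Theorem~15.3.3]{Ga1}, which says that the augmented \emph{cocommutative} factorization algebra $\CA$ and the augmented commutative factorization algebra $\CB := \on{C}^\cdot(BG)$ are Verdier dual to each other, and then uses \cite[Theorem~1.5.9]{Ho} to convert this (delicate, non-compact) Verdier duality into an honest Verdier duality of Koszul duals. Since $\CB$ is a \emph{constant} commutative chiral algebra, its Koszul dual \emph{is} computable fiber-wise (it is constant with fibers $(\fa_{BG})^*$), and Verdier duality then forces $\CA^{\vee,\on{KD}}$ to be constant with fibers $\fa_{BG}$, with no deformation possible. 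Your proposal never brings in the Atiyah--Bott identification, and without it there is no way to control the global Lie$^\star$ structure on the Koszul dual.

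Two smaller points. First, the factorization structure on $\Gr_G$ makes $\pi_!(\omega_{\Gr_G})$ a \emph{cocommutative} (not commutative) factorization algebra — the diagonal of $\Gr_G$ extends the structural isomorphism to a not-necessarily-invertible map $\on{union}^!(\CA)\to\CA\boxt\CA$; you wrote "commutative" but the coalgebra structure you go on to use is the correct one, so this is a slip. Second, once one \emph{has} established $\CA^{\vee,\on{KD}}\simeq\fa_{BG,X}$, the final step is not a fiber-wise check at all but a direct application of \cite[Proposition~6.1.2]{FG}, which produces the isomorphism $\CA\simeq U^{\on{ch}}(\fa_{BG,X})$ in one stroke — there is no need to build the map from $U^{\on{ch}}(\fa_{BG,X})$ separately and then verify it on fibers.
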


\proof[Sketch]
    By \cite[Theorem 15.3.3]{Ga1}, the \emph{augmented cocommutative factorization algebra}\footnote{A unital cocommutative factorization algebra is a unital factorization algebra $\CA$ whose structural isomorphism
$$\on{union}^!(\CA)|_{(\Ran\times \Ran)_\disj} \xrightarrow{\simeq} (\CA\boxt \CA)|_{(\Ran\times \Ran)_\disj} $$
is extended to a not necessarily invertible map $\on{union}^!(\CA)\to \CA\boxt \CA$ (and is equipped with a homotopy-coherent data of commutativity and associativity). Being augmented means it is equipped with a homomorphism $\CA \to \on{unit}_{\on{Vect}}$ compatible with the cocommutative factorization structures. Informally speaking, this means the cocommutative coalgebra structure on the cochain complex $\on{C}_\cdot(\Gr_G)$ is naturally compatible with its factorization structure induced from the factorization space $\Gr_G$.}  $\CA:=\pi_!(\omega_{\Gr_G})$ and the \emph{augmented commutative factorization algebra}\footnote{A unital commutative factorization algebra is a unital factorization algebra $\CB$ whose structural isomorphism is extended to a not necessarily invertible map $\CB\boxt \CB \to \on{union}^!(\CB)$ (and is equipped with a homotopy-coherent data). By \cite[Sect. 3.4.20-3.4.22]{BD1}, knowing a unital commutative factorization algebra is equivalent to knowing a unital commutative algebra in the symmetric monoidal category $\Dmod(X)$. Via this correspondence, $\CB$ is given by $\on{C}^\cdot(BG)\otimes \omega_X$.} $\CB:= \on{C}^\cdot(BG)$ are ``Verdier dual'' to each other after removing the augmented units. Here the notion of Verdier duality is developed in \cite{Ga1} and is subtler than the usual one because the D-modules $\CA_\Ran$ and $\CB_\Ran$ are not compact. Nevertheless, \cite[Theorem 1.5.9]{Ho} shows that after passing to the \emph{chiral Koszul duals} (see \cite{FG} for what this means), this Verdier duality becomes the usual one. In other words, 
\[
    \CA^{\vee,\on{KD}} \simeq \mathbb{D}^{\on{Ver}}_X( \CB^{\vee,\on{KD}}),
\]
where $\CA^{\vee,\on{KD}} $ is the Lie$^\star$-algebra that is Koszul dual to $\CA$, while $\CB^{\vee,\on{KD}}$ is the Lie$^\star$-coalgebra that is Koszul dual to $\CB$. Since $\CB|_X$ is constant with $!$-fibers equal to $\on{C}^\cdot(BG)$, we see $\CB^{\vee,\on{KD}}$ is constant with $!$-fibers equal to the Koszul dual of $\on{C}^\cdot(BG)$, which is just the Lie coalgebra $(\fa_{BG})^*$. It follows that $\CA^{\vee,\on{KD}}$ is constant with $*$-fibers equal to the Lie algebra $\fa_{BG}$. In other words,
\[
    \CA^{\vee,\on{KD}} \simeq \fa_{BG,X}.
\]
By \cite[Proposition 6.1.2]{FG}, this implies
\[
    \CA \simeq U^{\on{ch}}( \fa_{BG,X} ).
\]

\qed

\sssec{}

As a corollary, we have
\[
    \pi_!(\omega_{\Gr_G})\mod^\onfact_{x_0} \simeq \pi_!(\omega_{\Gr_G})^{\on{ch}}\mod^{\on{ch}}_{x_0} \simeq \fa_{BG,X}\mod^{\on{Lie}^\star}_{x_0}.
\]
Since $\fa_{BG,X}$ is a constant Lie$^\star$-algebra, the above category is equivalent to the category of modules over the Lie algebra
\begin{equation} \label{e:Lie alg LG}
\fa_{BG}\otimes \on{C}^\cdot(\cD^\times_{x_0}),
\end{equation} 
or which is the same, to the category of modules over
\begin{equation} \label{e:ass alg LG}
U(\fa_{BG}\otimes \on{C}^\cdot(\cD^\times_{x_0})).
\end{equation}

\sssec{}

The category $\Vect^{\fL(G)_{x_0}}$ is equivalent to the category of modules over 
\begin{equation} \label{e:homology LG}
\on{C}_\cdot(\fL(G)_{x_0}),
\end{equation}
viewed as an associative algebra via the product operation on $\fL(G)_{x_0}$. 

\medskip

Let $\fa_G$ be the group-object in the category of Lie algebras that controls the homotopy of $G$.
The assumption that $G$ is semi-simple and simply-connected implies that $\fL(G)_{x_0}$ is connected
and simply-connected. According to \cite[Theorem 1.4.4]{GL}, the group-object in the category of Lie algebras 
that controls the rational homotopy of $\fL(G)_{x_0}$ is canonically isomorphic to
$$\fa_G\otimes \on{C}^\cdot(\cD^\times_{x_0}).$$

Hence,
\begin{equation} \label{e:cohomology of loop group}
\on{C}_\cdot(\fL(G)_{x_0}) \simeq \on{C}_\cdot(\fa_G\otimes \on{C}^\cdot(\cD^\times_{x_0})).
\end{equation}

\medskip

The structure of an associative algebra on $\on{C}_\cdot(\fa_G\otimes \on{C}^\cdot(\cD^\times_{x_0}))$ is induced
by the group structure on $\fa_G\otimes \on{C}^\cdot(\cD^\times_{x_0})$ as a Lie algebra, where the latter results from the
group structure on $G$. 

\sssec{} \label{sss:rat hom 3}

Note that we have
$$\fa_G=\Omega(\fa_{BG}),$$
as group-objects in the category of Lie algebras, 
where $\Omega(-)$ is the loop functor on the category of Lie algebras.

\medskip

Hence, we also have 
$$\fa_G\otimes \on{C}^\cdot(\cD^\times_{x_0}) \simeq \Omega(\fa_{BG}\otimes \on{C}^\cdot(\cD^\times_{x_0})).$$

Finally, according to \cite[Chapter 5, Theorem 6.1.2]{GR2}, we have
$$\on{C}_\cdot(\Omega(-))\simeq U(-),$$
and hence the associative algebras \eqref{e:ass alg LG} and \eqref{e:homology LG} are canonically 
isomorphic. 

\sssec{}

Summarizing, we obtain
\begin{multline*}
\Vect^{\fL(G)_{x_0}}\simeq \on{C}_\cdot(\fL(G)_{x_0})\mod\simeq 
\on{C}_\cdot(\Omega(\fa_{BG}\otimes \on{C}^\cdot(\cD^\times_{x_0})))\mod \simeq \\
\simeq U(\fa_{BG}\otimes \on{C}^\cdot(\cD^\times_{x_0}))\mod \simeq \pi_!(\omega_{\Gr_G})\mod^\onfact_{x_0}\simeq
\omega_{\Gr_G}\mod^\onfact(\Vect^{\on{fact}_{x_0},\Dmod(\Gr_G)})_{x_0}.
\end{multline*}

\section{Proof of \thmref{t:main}: reduction to the case of a trivial action} \label{s:almost trivial}

In this subsection we will reduce the assertion of \thmref{t:main} to the case when $\bC=\Vect$. The main
tool will be the notion of \emph{almost trivial} action of a group on a category. 

\ssec{Almost constant sheaves}

In this subsection we discuss the notion of \emph{almost constant} sheaf on a scheme of finite type. 
This notion will be relevant for defining the notion of \emph{almost trivial} action in the subsequent subsections. 

\sssec{}

Let $Y$ be a scheme of finite type. Let
\begin{equation} \label{e:alm const Y}
\Dmod(Y)^{\on{alm-const}}\subset \Dmod(Y)
\end{equation} 
be the full subcategory generated by the constant sheaf $\ul{k}_Y$. 

\medskip

We have a canonical identification
\begin{equation} \label{e:almost const as cochains}
\Dmod(Y)^{\on{alm-const}}\simeq \on{C}^\cdot(Y)\mod,
\end{equation} 
given by the action of $\on{C}^\cdot(Y)$ on $\ul{k}_Y$. 

\begin{rem}
We warn the reader that the assignment $Y\mapsto \Dmod(Y)^{\on{alm-const}}$ (unlike its close relative
$\Dmod(Y)^{\on{q-const}}$, see \secref{sss:q-const}) is \emph{not} a sheaf even 
for the Zariski topology. See, however, \secref{l:descent alm} for a descent-type statement. 
\end{rem} 

\sssec{}

The embedding \eqref{e:alm const Y} admits a right adjoint. In terms of the identification \eqref{e:almost const as cochains},
this right adjoint is given by
$$\CF\in \Dmod(Y) \mapsto \on{C}^\cdot(Y,\CF)\in \on{C}^\cdot(Y)\mod.$$

\medskip

Thus, we can view $\Dmod(Y)^{\on{alm-const}}$ as a quotient of $\Dmod(Y)$ by the full subcategory 
$$\{\CF\in \Dmod(Y)\,|\, \on{C}^\cdot(Y,\CF)=0\}.$$

\sssec{} \label{sss:q-const}

The category $\Dmod(Y)^{\on{alm-const}}$ carries a natural t-structure, 
in which $\ul{k}_Y$ is in the heart.

\medskip

Let 
$$\Dmod(Y)^{\on{q-const}}$$
be the left-completion of $\Dmod(Y)^{\on{alm-const}}$ with respect to this t-structure. 
It is easy to see that the embedding \eqref{e:alm const Y} extends to a fully faithful embedding
\begin{equation} \label{e:q const Y}
\Dmod(Y)^{\on{q-const}}\subset \Dmod(Y).
\end{equation} 

The essential image of \eqref{e:q const Y} is the full subcategory of $\Dmod^{\on{hol}}(Y)$ consisting
of objects, whose cohomologies with respect to either perverse or the \emph{usual} t-structure\footnote{One can mimic the definition
of the usual t-structure on constructible sheaves and define its counterpart on $\Dmod^{\on{hol}}(Y)$.}
admit a filtration with
subquotients isomorphic to $\ul{k}_Y$. 

\begin{rem} 

The (fully faithful) embedding
\begin{equation} \label{e:ind to alm const Y}
\Dmod(Y)^{\on{alm-const}}\hookrightarrow \Dmod(Y)^{\on{q-const}}
\end{equation} 
is not always an equivalence. E.g., it fails to be an equivalence for $Y=\BP^1$ (and, which is more, relevant for us,
for $Y$ being a semi-simple group).

\end{rem} 

\sssec{} \label{sss:rat hom type}

Assume that $Y$ is connected and simply-connected. Choose a base point $y\in Y$, and let
$L_Y$ be the Lie algebra that controls the homotopy type of $Y$ (since $Y$ is simply-connected,
$L_Y$ sits in cohomological degrees $\leq -1$). In particular, we have
$$\on{C}^\cdot(Y) \simeq \on{C}^\cdot(L_Y) \text{ and } \on{C}_\cdot(Y) \simeq \on{C}_\cdot(L_Y).$$

\medskip

The functor of *-fiber at $y$ defines an equivalence
$$\Dmod(Y)^{\on{q-const}}\simeq L_Y\mod.$$

\medskip

In particular, we obtain that \eqref{e:ind to alm const Y} is an equivalence if and only if 
the universal enveloping algebra of $L_Y$ is eventually coconnetive (equivalently, if $L_Y$
sits only in \emph{odd} degrees). 

\medskip

Note also that if $U(L_Y)$ is \emph{not} eventually coconnetive, the embedding \eqref{e:q const Y}
does not preserve compactness; in particular, its right adjoint is discontinuous. 

\sssec{} \label{sss:almost const omega}

Let 
\begin{equation} \label{e:almost const omega}
\Dmod(Y)^{\omega,\on{alm-const}}\hookrightarrow \Dmod(Y)
\end{equation}
be the full subcategory of $\Dmod(Y)$ obtained from ones in \eqref{e:ind to alm const Y} 
by $\overset{*}\otimes$-tensoring with $\omega_Y$.

\medskip

We still have an equivalence
\begin{equation} \label{e:almost const as cochains omega}
\on{C}^\cdot(Y)\mod \simeq \Dmod(Y)^{\omega,\on{alm-const}},
\end{equation} 
given by sending 
$$\on{C}^\cdot(Y)\in \on{C}^\cdot(Y)\mod \,\mapsto \omega_Y\in \Dmod(Y)^{\omega,\on{alm-const}}.$$

\sssec{}

Note that Verdier duality identifies 
$$\Dmod(Y)^{\omega,\on{alm-const}}\simeq (\Dmod(Y)^{\on{alm-const}})^\vee.$$

Under this identification, the functor \eqref{e:almost const omega} is the dual of the 
right adjoint of \eqref{e:alm const Y}. 

\sssec{} \label{sss:almost constant prestacks omega}

Let now $\CY$ be an ind-scheme of ind-finite type:
$$\CY\simeq \underset{i}{``\on{colim}"}\, Y_i,$$
where $Y_i$'s are schemes of finite type, and the transition maps $Y_{i_1}\to Y_{i_2}$ are closed embeddings.

\medskip

Recall that the category $\Dmod(\CY)$ is defined as 
$$\underset{i}{\on{lim}}\, \Dmod(Y_i),$$
where the limit is taken with respect to the !-pullbacks. 

\medskip

Let 
$$\Dmod(\CY)^{\omega,\on{alm-const}} \subset \Dmod(\CY)$$
be the full subcategory equal to 
$$\underset{i}{\on{lim}}\, \Dmod(Y_i)^{\omega,\on{alm-const}} \subset \underset{i}{\on{lim}}\, \Dmod(Y_i)$$

\sssec{}

Recall that 
$$\on{C}^\cdot(\CY)\simeq \underset{i}{\on{lim}}\, \on{C}^\cdot(Y_i).$$

Hence, $\on{C}^\cdot(\CY)$ acts on $\omega_\CY\in \Dmod(\CY)^{\omega,\on{alm-const}}$, and we obtain a functor
\begin{equation} \label{e:almost const as cochains ind sch omega}
\on{C}^\cdot(\CY)\mod \to \Dmod(\CY)^{\omega,\on{alm-const}}.
\end{equation} 

However, the functor \eqref{e:almost const as cochains ind sch omega} is in general not even fully faithful
(e.g., it fails to be such for $\CY=\BP^\infty$). 

\sssec{}

Note that we can also write
\begin{equation} \label{e:ind sch presentation}
\Dmod(\CY) \simeq  \underset{i}{\on{colim}}\, \Dmod(Y_i),
\end{equation}
with transition functors being given by *-pushforward. 

\medskip

We let $\Dmod(\CY)^{\on{alm-const}}$ be the \emph{quotient} category of $\Dmod(\CY)$,
defined in terms of \eqref{e:ind sch presentation} as 
$$\underset{i}{\on{colim}}\, \Dmod(Y_i)^{\on{alm-const}}.$$

We do not know whether the category $\Dmod(\CY)^{\on{alm-const}}$ is dualizable in general.
However, we have:
\begin{equation} \label{e:dual of alm-const ind sch}
\on{Funct}_{\DGCat}(\Dmod(\CY)^{\on{alm-const}},\Vect)\simeq \Dmod(\CY)^{\omega,\on{alm-const}}.
\end{equation} 

\sssec{} \label{sss:almost constant prestacks}

In terms of the equivalence \eqref{e:almost const as cochains}, we have:
\begin{equation} \label{e:almost const as cochains ind-sch}
\Dmod(\CY)^{\on{alm-const}} \simeq \underset{i}{\on{colim}}\, \on{C}^\cdot(Y_i)\mod,
\end{equation} 
where the transition functors are given by restriction along the maps
$$\on{C}^\cdot(Y_{i_2})\to \on{C}^\cdot(Y_{i_1}).$$

\medskip

We have a naturally defined functor
\begin{equation} \label{e:almost const as cochains ind sch}
\underset{i}{\on{colim}}\, \on{C}^\cdot(Y_i)\mod\to \on{C}^\cdot(\CY)\mod,
\end{equation}
given by restriction.

\medskip

In terms of \eqref{e:dual of alm-const ind sch}, the dual of the functor \eqref{e:almost const as cochains ind sch} 
is the functor \eqref{e:almost const as cochains ind sch omega}.

\ssec{Almost trivial actions: the case of algebraic groups} \label{ss:alm triv pro}

In this subsection we develop the notion of \emph{almost trivial} action for groups of finite type. 

\sssec{}

Let $H$ be an algebraic group of finite type. 

\medskip

Consider the embedding 
\begin{equation} \label{e:almost const H}
\Dmod(H)^{\on{alm-const}}\hookrightarrow\Dmod(H).
\end{equation} 

By \eqref{e:almost const as cochains}, we can identify 
\begin{equation} \label{e:almost const as cochains H}
\Dmod(H)^{\on{alm-const}}\simeq \on{C}^\cdot(H)\mod.
\end{equation}

\sssec{}

The subcategory \eqref{e:almost const H} is preserved by the monoidal operation, and
hence inherits a monoidal structure. The \emph{right} adjoint to \eqref{e:almost const H}
is (strictly) compatible with monoidal structures. 

\medskip

In terms of the identification \eqref{e:almost const as cochains H}, the monoidal structure
on $\Dmod(H)^{\on{alm-const}}$ corresponds to the Hopf algebra structure on $\on{C}^\cdot(H)$,
induced by the group-structure on $H$. 

\medskip

The latter description implies that the monoidal category $\Dmod(H)^{\on{alm-const}}$ 
is \emph{semi-rigid} (see \cite[Appendix C]{AGKRRV} for what this means). 

\sssec{}

The monoidal functor 
$$\on{C}^\cdot(H,-):\Dmod(H)\to \Vect$$
induces a monoidal functor 
\begin{equation} \label{e:dR enh}
\Dmod(H)^{\on{alm-const}}\to \Vect,
\end{equation}
which admits a left adjoint. The functor \eqref{e:dR enh} is conservative. 

\medskip

In terms of the identification \eqref{e:almost const as cochains H}, the functor \eqref{e:dR enh}
corresponds to the forgetful functor
$$\on{C}^\cdot(H)\mod\to \Vect.$$

\sssec{}

Let $\bC$ be a category acted on by $H$. Set
$$\on{alm-inv}_H(\bC):=\Dmod(H)^{\on{alm-const}}\underset{\Dmod(H)}\otimes \bC.$$

\medskip

The embedding \eqref{e:almost const H} and its right adjoint give rise to a pair of adjoint functors
\begin{equation} \label{e:almost inv}
\on{alm-inv}_H(\bC) \rightleftarrows \bC,
\end{equation} 
with the left adjoint being fully faithful.

\sssec{}

We shall say that action of $\bC$ is \emph{almost trivial} if the functors \eqref{e:almost inv}
are mutually inverse equivalences.

\sssec{Example} \label{sss:bad example}

An example to keep in mind of an action that is \emph{not} almost trivial is
$$\bC:=\Dmod(G)^{\on{q-const}}.$$

In fact, for 
$$\bC':=\Dmod(G)^{\on{q-const}}/\Dmod(G)^{\on{alm-const}},$$
we have $(\bC')^G=0$. 

\sssec{}

Let $(H\mmod)_{\on{alm-trivial}}\subset H\mmod$ be the full subcategory that consists of $H$-module
categories equipped with an almost trivial action.

\medskip

The embedding 
\begin{equation} \label{e:embed almost inv}
(H\mmod)_{\on{alm-trivial}}\hookrightarrow H\mmod
\end{equation} 
admits 
a right adjoint, given by 
\begin{equation} \label{e:pass almost inv}
\bC\mapsto \on{alm-inv}_H(\bC).
\end{equation} 

\medskip

The counit of this adjunction is the left adjoint in \eqref{e:almost inv}. Since \eqref{e:almost inv}
admits a right adjoint, we can identify \eqref{e:pass almost inv} also with the \emph{left} adjoint
of \eqref{e:embed almost inv}.

\sssec{}

Consider the category 
$$\on{inv}_H(\bC):=\bC^H\simeq \Vect\underset{\Dmod(H)}\otimes \bC.$$

Consider the corresponding pair of
adjoint functors
$$\oblv_H:\bC^H\rightleftarrows \bC:\on{Av}^H_*.$$

It is clear that the functor $\oblv_H$ has essential image contains in $\on{alm-inv}_H(\bC)$. 
Hrnce, the functor $\on{Av}^H_*$ factors as
$$\bC \to \on{alm-inv}_H(\bC) \to \bC^H,$$
where the first arrow is the right adjoint in \eqref{e:almost inv}, and the second arrow is the right adjoint to
\begin{equation} \label{e:oblv to almost}
\oblv_H:\bC^H\to \on{alm-inv}_H(\bC).
\end{equation} 

\medskip

One can view the adjunction
$$\bC^H\rightleftarrows \on{alm-inv}_H(\bC)$$
as obtained by tensoring $-\underset{\Dmod(H)}\otimes \bC$ 
from the adjunction
\begin{equation} \label{e:Vect alm adj}
\Vect \rightleftarrows \Dmod(H)^{\on{alm-const}}.
\end{equation} 

\medskip

From the above it follows that the essential image of $\oblv_H$ generates $\on{alm-inv}_H(\bC)$ under colimits.
Moreover, we have:

\begin{lem} \label{l:ker Av}
The kernel of the right adjoint in \eqref{e:almost inv} equals $\on{ker}(\on{Av}^H_*)$.
\end{lem}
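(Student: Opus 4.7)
The plan is to factor $\on{Av}^H_*$ through $\on{alm-inv}_H(\bC)$ as already indicated in the excerpt, and then argue that the second stage of the factorization is conservative, so that the entire kernel is detected by the first stage.

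More precisely, let me denote by $R\colon \bC \to \on{alm-inv}_H(\bC)$ the right adjoint appearing in \eqref{e:almost inv}, and by $R'\colon \on{alm-inv}_H(\bC) \to \bC^H$ the right adjoint to \eqref{e:oblv to almost}. The discussion preceding the lemma shows that $\on{Av}^H_* = R' \circ R$. The inclusion $\on{ker}(R) \subset \on{ker}(\on{Av}^H_*)$ is then immediate, since $R'(0) = 0$.

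For the reverse inclusion, the key observation is that $R'$ is \emph{conservative}. Indeed, a continuous right adjoint is conservative if and only if the essential image of its left adjoint generates the source under colimits; and the excerpt asserts precisely that the essential image of $\oblv_H\colon \bC^H \to \on{alm-inv}_H(\bC)$ generates $\on{alm-inv}_H(\bC)$ under colimits. Given this, if $\bc \in \bC$ satisfies $\on{Av}^H_*(\bc) = R'(R(\bc)) = 0$, then conservativity of $R'$ forces $R(\bc) = 0$, proving $\on{ker}(\on{Av}^H_*) \subset \on{ker}(R)$.

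The only step with any content is the conservativity of $R'$, and this is a direct consequence of what has already been established; it can alternatively be seen by tensoring the adjunction \eqref{e:Vect alm adj}, whose left adjoint is visibly a generator (the object $k \in \Vect$ generates $\Vect$, and its image $\on{unit} \in \Dmod(H)^{\on{alm-const}}$ generates $\Dmod(H)^{\on{alm-const}}$ by definition of the latter as the subcategory generated by $\ul{k}_H$), with $\bC$ over $\Dmod(H)$. No further ingredient is needed, and there is no real obstacle.
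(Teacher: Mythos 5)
Your proof is correct and matches the paper's implicit reasoning. The paper leaves the lemma without an explicit proof precisely because it is an immediate consequence of the just-established fact that the essential image of $\oblv_H$ generates $\on{alm-inv}_H(\bC)$ under colimits; your factorization $\on{Av}^H_* = R'\circ R$ together with the conservativity of $R'$ (which is exactly the content of that generation statement in a presentable stable setting) is the intended argument.
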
 

\sssec{} \label{sss:Av !}

Note that the left adjoint in \eqref{e:Vect alm adj} itself admits a left adjoint\footnote{This follows, e.g., from the fact that
$\on{C}^\cdot(H)$ is finite-dimensional.}. Since the monoidal category 
$\Dmod(H)^{\on{alm-const}}$ is semi-rigid, this left adjoint is automatically $\Dmod(H)^{\on{alm-const}}$-linear.

\medskip

This implies that the forgetful functor $\bC^H\to \on{alm-inv}_H(\bC)$ also admits a left adjoint,
functorially in $\bC$. We denote this left adjoint by $\on{Av}^H_!$. 

\sssec{}

Consider the functor
$$\on{inv}_H:H\mmod \to \DGCat.$$
It naturally enhances to a functor
$$\on{inv}^{\on{enh}}_H:H\mmod \to \Vect^H\mmod,$$
where we identify
$$\Vect^H\simeq \on{Funct}_{H\mmod}(\Vect,\Vect)$$
as a monoidal category.

\medskip

The functor $\on{inv}^{\on{enh}}_H$ admits a left adjoint, given by
$$\wt\bC\mapsto \Vect\underset{\Vect^H}\otimes \wt\bC.$$

It is clear, however, that the adjunction $((\on{inv}^{\on{enh}}_H)^L,\on{inv}^{\on{enh}}_H)$ factors
as 
$$\Vect^H\mmod\rightleftarrows (H\mmod)_{\on{alm-triv}} \rightleftarrows H\mmod.$$

We claim: 

\begin{prop}  \label{p:oblv enh}
The adjoint functors
\begin{equation} \label{e:oblv enh}
\Vect^H\mmod\rightleftarrows (H\mmod)_{\on{alm-triv}}
\end{equation} 
are mutually inverse equivalences. 
\end{prop}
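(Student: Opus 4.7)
My plan is to apply the Barr--Beck--Lurie monadicity theorem to the adjoint pair
$$L : \Vect^H\mmod \rightleftarrows (H\mmod)_{\on{alm-triv}} : \on{inv}^{\on{enh}}_H,$$
where $L(\wt\bC)=\Vect\underset{\Vect^H}\otimes\wt\bC$, and then verify that the resulting monad on $\Vect^H\mmod$ is trivial.

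First, I claim that $\on{inv}^{\on{enh}}_H$ is conservative on $(H\mmod)_{\on{alm-triv}}$. Indeed, if $\bC\in(H\mmod)_{\on{alm-triv}}$ satisfies $\bC^H\simeq 0$, then, as observed just before \lemref{l:ker Av}, the essential image of $\oblv_H:\bC^H\to\on{alm-inv}_H(\bC)=\bC$ generates $\bC$ under colimits, forcing $\bC\simeq 0$.

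Second, I would argue that $\on{inv}^{\on{enh}}_H$ preserves the bar-resolution colimits required for Barr--Beck--Lurie. The crucial input is that by \S\ref{sss:Av !}, for each $\bC\in(H\mmod)_{\on{alm-triv}}$ the forgetful functor $\oblv_H:\bC^H\to\on{alm-inv}_H(\bC)$ admits a left adjoint $\on{Av}^H_!$ (in addition to its familiar right adjoint $\on{Av}^H_*$), obtained by tensoring the adjunction \eqref{e:Vect alm adj} with $\bC$ over the semi-rigid monoidal category $\Dmod(H)^{\on{alm-const}}$. Since $(H\mmod)_{\on{alm-triv}}$ is a colocalization of $H\mmod$ and the formation of $\on{Av}^H_!$ is functorial in $\bC$, this implies that $\bC\mapsto\bC^H$ is given on the almost-trivial locus by a left adjoint and therefore commutes with the relevant colimits.

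With conservativity and colimit-preservation in hand, Barr--Beck--Lurie identifies $(H\mmod)_{\on{alm-triv}}\simeq T\mmod$ for the monad $T=\on{inv}^{\on{enh}}_H\circ L$ on $\Vect^H\mmod$. On the unit object one has tautologically $T(\Vect^H)\simeq L(\Vect^H)^H\simeq\Vect^H$, with the monad unit equal to the identity. To extend this to all $\wt\bC$, I would invoke the projection formula
$$\on{inv}^{\on{enh}}_H(\Vect\underset{\Vect^H}\otimes\wt\bC)\simeq\on{inv}^{\on{enh}}_H(\Vect)\underset{\Vect^H}\otimes\wt\bC\simeq\wt\bC,$$
which follows from the dualizability of $\Vect$ as a $\Dmod(H)^{\on{alm-const}}$-module, a consequence of the semi-rigidity of $\Dmod(H)^{\on{alm-const}}$. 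This gives $T\simeq\on{id}$, and the proposition follows.

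The hardest step, which I expect to be the main obstacle, is establishing the projection formula in the final step: one needs that forming $H$-invariants commutes with extension of scalars along the $\Vect^H$-action. This is precisely where semi-rigidity enters, upgrading $\on{Av}^H_!$ from a pointwise left adjoint to a structure map of $\Dmod(H)^{\on{alm-const}}$-module categories. Pinning down this compatibility in the 2-categorical setting of $\Vect^H\mmod$ is the technical heart of the argument.
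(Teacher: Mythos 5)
Your approach is genuinely different from the paper's. The paper proves \propref{p:oblv enh} in \S\ref{ss:proof of oblv enh} by reducing to a concrete statement: it identifies $\on{C}^\cdot(H)$ as a Hopf algebra with a symmetric algebra $\Sym(\ul\fa^*)$ on a vector space in odd cohomological degrees, and then invokes the classical Koszul duality for such algebras (\S\ref{sss:simple Koszul}). Your proposal instead tries to run a Barr--Beck / Morita argument using semi-rigidity and a projection formula. The fully-faithfulness half of your argument (the computation that the monad $T=RL$ is the identity, using that $\Vect$ is dualizable as a $\Dmod(H)^{\on{alm-const}}$-module) is reasonable and captures a real simplification. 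But the proposal has a genuine gap where it matters most.

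The gap is in the conservativity step. You show that $\bC^H\simeq 0$ forces $\bC\simeq 0$, i.e.\ that $\on{inv}^{\on{enh}}_H$ has zero ``kernel''. This is not the same as conservativity: $(H\mmod)_{\on{alm-triv}}$ and $\Vect^H\mmod$ are 2-categories whose underlying $\infty$-categories are not stable, so the usual deduction ``zero kernel + colimit-preserving $\Rightarrow$ conservative'' does not apply. What you actually need is that for a 1-morphism $F:\bC_1\to\bC_2$ with $F^H$ an equivalence, $F$ itself is an equivalence; equivalently, that the counit $\Vect\underset{\Vect^H}\otimes\bC^H\to\bC$ is an equivalence for all almost-trivial $\bC$. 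That statement \emph{is} the essential content of the proposition (it appears as the Corollary immediately after \propref{p:oblv enh}), so attempting to assume it via ``conservativity'' is circular, and the ``kernel zero'' observation does not bridge the gap. The paper closes exactly this gap by the explicit Koszul-duality identification, which computes both the unit and the counit. Any abstract Morita-style argument must still verify that $\Vect$ is a \emph{generator} of $\Dmod(H)^{\on{alm-const}}\mmod$ in the 2-categorical sense, not merely that it detects zero objects, and you have not supplied this.

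Two smaller remarks. First, the colimit-preservation digression is both unnecessary (you only need $RL\simeq\on{id}$ with its unit plus conservativity of $R$, not the full Barr--Beck package) and slightly off: the existence of $\on{Av}^H_!$ as a left adjoint of $\oblv_H:\bC^H\to\bC$ \emph{within a fixed} $\bC$ does not make the 2-functor $\bC\mapsto\bC^H$ into a left adjoint. The relevant colimit-preservation of $\bC\mapsto\bC^H$ actually follows from the inv $=$ coinv statement, \corref{c:inv and colimits}. Second, even in the projection-formula step you need to verify that the abstract isomorphism $T\simeq\on{id}$ is implemented by the monad unit, not merely that $T(\wt\bC)$ happens to be equivalent to $\wt\bC$; this is routine once the formula $R\simeq\Vect^{\vee}\underset{\Dmod(H)^{\on{alm-const}}}\otimes(-)$ is set up carefully, but it should be stated.
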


The proposition will be proved in \secref{ss:proof of oblv enh}.

\begin{cor}
For $\bC\in H\mmod$, the counit of the adjunction
$$\Vect\underset{\Vect^H}\otimes \bC^H\to \bC$$
is fully faithful with essential image $\on{alm-inv}_H(\bC)$.
\end{cor}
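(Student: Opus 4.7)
The plan is to deduce the Corollary formally from \propref{p:oblv enh} by tracing through the factorization of the adjunction $((\on{inv}^{\on{enh}}_H)^L,\on{inv}^{\on{enh}}_H)$ described immediately before \propref{p:oblv enh}.

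Concretely, for $\bC\in H\mmod$, I would factor the counit map $\Vect\underset{\Vect^H}\otimes\bC^H\to \bC$ as a composition
\[
\Vect\underset{\Vect^H}\otimes \bC^H \;\longrightarrow\; \on{alm-inv}_H(\bC) \;\longrightarrow\; \bC,
\]
where the second arrow is the counit of the adjunction \eqref{e:almost inv} between $(H\mmod)_{\on{alm-triv}}\hookrightarrow H\mmod$ and $\on{alm-inv}_H(-)$. This is possible because, by the discussion preceding \propref{p:oblv enh}, the adjunction $((\on{inv}^{\on{enh}}_H)^L,\on{inv}^{\on{enh}}_H)$ factors through $(H\mmod)_{\on{alm-triv}}$, and the object $\on{alm-inv}_H(\bC)$ is the image of $\bC^H$ under the left adjoint $\Vect^H\mmod \to (H\mmod)_{\on{alm-triv}}$ composed with the embedding into $H\mmod$.

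By \propref{p:oblv enh}, the first arrow is an equivalence (it is the counit of an adjunction which is an equivalence, applied to $\bC^H\in \Vect^H\mmod$). The second arrow is the left adjoint in \eqref{e:almost inv}, which is explicitly noted in the paper to be fully faithful, with essential image tautologically equal to $\on{alm-inv}_H(\bC)\subset \bC$. Composing, the counit is fully faithful with essential image $\on{alm-inv}_H(\bC)$, as required.

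Since the entire content reduces to an unwinding of already-stated adjunctions and to \propref{p:oblv enh}, there is no real obstacle here; the only subtle point to verify carefully is that the composition $\Vect^H\mmod \to (H\mmod)_{\on{alm-triv}} \hookrightarrow H\mmod$ is indeed the left adjoint $(\on{inv}^{\on{enh}}_H)^L = \Vect\underset{\Vect^H}\otimes(-)$, and that the counit of the composed adjunction is the composition of the two counits. This is purely formal given the factorization noted in the text. All the genuine mathematical work is deferred to the proof of \propref{p:oblv enh}.
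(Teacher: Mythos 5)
Your argument is correct and is exactly the formal unwinding the paper has in mind: the paper states the corollary without proof, implicitly leaving it to the reader to observe that the counit factors through $\on{alm-inv}_H(\bC)$ via the factorization of the adjunction $((\on{inv}^{\on{enh}}_H)^L,\on{inv}^{\on{enh}}_H)$ through $(H\mmod)_{\on{alm-triv}}$, and to then invoke \propref{p:oblv enh} for the first arrow and the fully faithfulness of the embedding in \eqref{e:almost inv} for the second. The only caveat is a minor imprecision in the parenthetical of your second paragraph: the counit of the inner adjunction is evaluated at $\on{alm-inv}_H(\bC)\in (H\mmod)_{\on{alm-triv}}$, not at $\bC^H$, though since the adjunction is an equivalence this does not affect the conclusion.
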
 

\ssec{Almost constant sheaves on the affine Grassmannian} 

In this subsection we give a Koszul-dual description of the category of almost constant sheaves on the affine Grassmannian
and related geometries. 

\sssec{Notational change} From now and until \secref{ss:red to triv} we will adopt the following 
notational change\footnote{We do it since the geometry of the curve will not be involved, unlike other
places in this paper.}
$$\fL(G)_{x_0}\rightsquigarrow \fL(G),\,\, \fL^+(G)_{x_0}\rightsquigarrow \fL^+(G),\,\, \Gr_{G,x_0} \rightsquigarrow \Gr_G.$$

\sssec{} \label{sss:alm const Gr intro}

We return to the setting of \secref{sss:almost constant prestacks}. We take
$\CY$ to be the \emph{neutral connected component} of $\fL(G)/K$, where 
$K=K_i$ for $i\geq 0$. 

\sssec{}

Let $\CY_0:=\fL^+(G)/K$. Consider the restriction map
$$\on{C}^\cdot(\CY)\to \on{C}^\cdot(\CY_0).$$

From \eqref{e:cohomology of loop group}\footnote{Indeed, the cochain algebras involved are the same as in the
semi-simple simply-connected case.}, we obtain that the restriction map
\begin{equation} \label{e:generate mods over cohomology}
\on{C}^\cdot(\CY_0)\mod\to \on{C}^\cdot(\CY)\mod
\end{equation}
preserves compactness.

\medskip

We let $\on{C}^\cdot(\CY)\mod_0$ be the full subcategory of $\on{C}^\cdot(\CY)\mod$,
generated by objects in the essential image of \eqref{e:generate mods over cohomology}.
The embedding
$$\on{C}^\cdot(\CY)\mod_0\hookrightarrow \on{C}^\cdot(\CY)\mod$$
admits a continuous right adjoint.

\sssec{Example} \label{sss:cohom Gr}

Let $i=0$, so $\CY$ is the neutral component of $\Gr_{G,{x_0}}$. We have
$$\on{C}^\cdot(\CY)\simeq \Sym(V),$$
where $V$ is a finite-dimensional, cohomologically graded vector space,
concentrated in positive even degrees. 

\medskip

In this case
$$\on{C}^\cdot(\CY)\mod_0\simeq \Sym(V)\mod_0,$$
where the category in the right-hand side is the full subcategory of $\Sym(V)\mod$,
generated by the augmentation module.

\sssec{}

Recall the functor \eqref{e:almost const as cochains ind sch}. We claim:

\begin{prop} \label{p:almost const Gr}
The functor \eqref{e:almost const as cochains ind sch}, i.e.,
$$\Dmod(\CY)^{\on{alm-const}}\to \on{C}^\cdot(\CY)\mod,$$
is an equivalence onto $\on{C}^\cdot(\CY)\mod_0\subset \on{C}^\cdot(\CY)\mod$.
\end{prop}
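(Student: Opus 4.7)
My approach is to translate the statement, via \eqref{e:almost const as cochains ind-sch}, into a comparison of module categories over cochain algebras. Choose an exhaustion $\CY=\underset{j}{\text{colim}}\, Y_j$ by proper closed subschemes with $Y_0 = \CY_0$ (say, by unions of $\fL^+(G)$-orbit closures). Then $\Dmod(\CY)^{\on{alm-const}}\simeq \underset{j}{\text{colim}}\,\on{C}^\cdot(Y_j)\mod$, with transition functors being restriction of scalars along the surjections $\on{C}^\cdot(Y_{j'})\to \on{C}^\cdot(Y_j)$ for $j\leq j'$. Under this identification, the functor of the proposition is assembled from the level-wise restriction-of-scalars functors $\on{C}^\cdot(Y_j)\mod\to \on{C}^\cdot(\CY)\mod$ along the maps $\on{C}^\cdot(\CY)\to \on{C}^\cdot(Y_j)$. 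I would then verify three things: the functor lands in $\on{C}^\cdot(\CY)\mod_0$; its essential image generates; and it is fully faithful.

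For the first two steps, the compact generator $k_{Y_j}\in \Dmod(\CY)^{\on{alm-const}}$ maps to $\on{C}^\cdot(Y_j)$, so I need to show $\on{C}^\cdot(Y_j)\in \on{C}^\cdot(\CY)\mod_0$ for each $j$. In the polynomial model of \secref{sss:cohom Gr} this is transparent: $\on{C}^\cdot(Y_j)$ is a finite-dimensional $\Sym(V)$-module with augmentation ideal acting locally nilpotently (since $V$ sits in strictly positive even degrees), hence a finite iterated extension of shifts of the augmentation. In general one stratifies $Y_j$ by its $\fL^+(G)$-orbits and inducts along the stratification, using that each orbit fibers over $\CY_0$ so its cohomology is pulled back from $\on{C}^\cdot(\CY_0)\mod$. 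Essential surjectivity onto $\on{C}^\cdot(\CY)\mod_0$ is then immediate, since $\on{C}^\cdot(Y_0)=\on{C}^\cdot(\CY_0)$ is already a compact generator of $\on{C}^\cdot(\CY)\mod_0$ by definition.

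The main difficulty is fully faithfulness, which on compact generators reduces to showing
\[
    \underset{j''\geq j, j'}{\text{colim}}\ \on{RHom}_{\on{C}^\cdot(Y_{j''})}\!\bigl(\on{C}^\cdot(Y_j), \on{C}^\cdot(Y_{j'})\bigr) \xrightarrow{\simeq} \on{RHom}_{\on{C}^\cdot(\CY)}\!\bigl(\on{C}^\cdot(Y_j), \on{C}^\cdot(Y_{j'})\bigr).
\]
The compactness hypothesis in \eqref{e:generate mods over cohomology} upgrades, by the same orbit-stratification argument, to compactness of every $\on{C}^\cdot(Y_j)$ over every $\on{C}^\cdot(Y_{j''})$ and over $\on{C}^\cdot(\CY)$. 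Both sides of the displayed arrow are then computed by the corresponding bar constructions, and the key point to check is that the bar complex over $\on{C}^\cdot(\CY)=\underset{j''}{\lim}\,\on{C}^\cdot(Y_{j''})$ is the degreewise colimit of its counterparts over the truncations: this holds because the tower $\{\on{C}^\cdot(Y_{j''})\}$ has the property that any fixed cohomological degree only involves finitely many generators, all appearing at a finite level $j''$. Combining this computation with the first two steps yields the equivalence onto $\on{C}^\cdot(\CY)\mod_0$.
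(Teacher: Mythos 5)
Your approach is genuinely different from the paper's. You work directly with the colimit presentation $\Dmod(\CY)^{\on{alm-const}}\simeq \underset{j}{\on{colim}}\,\on{C}^\cdot(Y_j)\mod$ and argue via bar complexes. The paper instead first reduces (via \lemref{l:descent alm}) to the case where $\CY$ is the neutral component of $\Gr_G$, then replaces the algebras by their truncations and checks each arrow of the resulting square, finally landing on the assertion that $\underset{n}{\on{colim}}\, \QCoh(S_n)\to \QCoh(\BA^1)_0$ is an equivalence, which it establishes by comparison with the $\IndCoh$-colimit and a Verdier-quotient argument.

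The gap in your argument is precisely the subtlety that the $\IndCoh$ detour is designed to handle. You claim compactness of $\on{C}^\cdot(Y_j)$ over every $\on{C}^\cdot(Y_{j''})$, but for $j<j''$ this is false: $\on{C}^\cdot(Y_{j''})$ is a finite-dimensional coconnective DG algebra that is proper but not smooth, and the quotient $\on{C}^\cdot(Y_j)$ has infinite projective dimension over it. Concretely, already $k$ is not perfect over $k[u]/u^{n+1}$ (for $n\geq 1$), regardless of whether $u$ sits in degree $0$ or degree $2$, so the orbit-stratification argument cannot upgrade \eqref{e:generate mods over cohomology} to this statement. Consequently the restriction-of-scalars transition functors do \emph{not} preserve compactness, and the objects $\on{C}^\cdot(Y_j)$ need not be compact in the colimit category $\underset{j}{\on{colim}}\,\on{C}^\cdot(Y_j)\mod$. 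Without that compactness, the reduction of fully-faithfulness to the displayed colimit of $\on{RHom}$'s --- which implicitly uses ``mapping objects in a filtered colimit of compactly generated categories along compactness-preserving functors are filtered colimits of mapping objects'' --- is unjustified. The degree-wise finiteness of the bar complex is fine once you have that reduction in hand, but getting to it is the hard step. The half of your compactness claim that \emph{is} true, namely that $\on{C}^\cdot(Y_j)$ is perfect over the smooth algebra $\on{C}^\cdot(\CY)\simeq\Sym(V)$, controls the target and gives essential surjectivity, but it does not control the source. This is exactly why the paper passes to $\IndCoh$, where closed-embedding pushforwards do preserve compactness, identifies $\underset{n}{\on{colim}}\,\IndCoh(S_n)$ with $\QCoh(\BA^1)_0$ by \cite[Proposition 7.4.5]{GR0}, and then transfers the conclusion back across the Verdier quotient $\underset{n}{\on{colim}}\,\IndCoh(S_n)\to\underset{n}{\on{colim}}\,\QCoh(S_n)$.
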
 

The proposition will be proved in \secref{ss:proof of almost const Gr}.

\begin{cor}
The category $\Dmod(\CY)^{\on{alm-const}}$ is dualizable.
\end{cor}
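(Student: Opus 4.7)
The plan is to deduce this corollary directly from \propref{p:almost const Gr}, which identifies $\Dmod(\CY)^{\on{alm-const}}$ with the full subcategory $\on{C}^\cdot(\CY)\mod_0 \subset \on{C}^\cdot(\CY)\mod$. It therefore suffices to exhibit $\on{C}^\cdot(\CY)\mod_0$ as a compactly generated DG category, since every compactly generated DG category is dualizable in $\DGCat$ (with dual equal to the ind-completion of the opposite of its subcategory of compact objects).

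The key input, already recorded in \secref{sss:alm const Gr intro}, is that the restriction functor
\begin{equation*}
\on{C}^\cdot(\CY_0)\mod \to \on{C}^\cdot(\CY)\mod
\end{equation*}
preserves compactness. The source category $\on{C}^\cdot(\CY_0)\mod$ is compactly generated, being modules over a single associative algebra, so choose a set $S$ of compact generators of $\on{C}^\cdot(\CY_0)\mod$. Their images under the above functor form a set of compact objects of $\on{C}^\cdot(\CY)\mod$, and by the very definition of $\on{C}^\cdot(\CY)\mod_0$ as the colimit-closure of the essential image of this restriction functor, these images generate $\on{C}^\cdot(\CY)\mod_0$ under colimits.

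Now I would invoke the standard fact that if $\bD$ is a presentable DG category and $\bD_0 \subset \bD$ is the full subcategory generated under colimits by a set of objects that are compact \emph{in $\bD$}, then $\bD_0$ is itself compactly generated: its compact objects form the idempotent-complete thick subcategory of $\bD$ generated by the chosen set, and the inclusion $\bD_0 \hookrightarrow \bD$ preserves (and reflects) compactness. Applied to $\on{C}^\cdot(\CY)\mod_0 \subset \on{C}^\cdot(\CY)\mod$, this yields compact generation of $\on{C}^\cdot(\CY)\mod_0$, hence dualizability, hence (via \propref{p:almost const Gr}) dualizability of $\Dmod(\CY)^{\on{alm-const}}$. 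There is no genuine obstacle; essentially all the real work has been done in establishing \propref{p:almost const Gr} and the compactness-preservation statement.
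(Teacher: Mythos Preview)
Your proposal is correct and follows exactly the route the paper intends: the corollary is stated without proof immediately after \propref{p:almost const Gr}, and the implicit reasoning is precisely that $\on{C}^\cdot(\CY)\mod_0$ is compactly generated by the images of compact objects under the compactness-preserving restriction functor \eqref{e:generate mods over cohomology}. Your write-up simply makes this explicit.
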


\begin{cor} \label{c:alm const Gr}
The category $\Dmod(\fL(G)/K)^{\on{alm-const}}$ is dualizable.
\end{cor}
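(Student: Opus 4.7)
The plan is to reduce the statement for $\fL(G)/K$ to the statement for the neutral connected component $\CY$, which has already been handled by \propref{p:almost const Gr} and its immediate corollary.

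First, I would analyze the connected components of $\fL(G)/K$. Since $K\subset \fL^+(G)$ and $\fL^+(G)$ is connected, the set of connected components of $\fL(G)/K$ coincides with that of $\Gr_G=\fL(G)/\fL^+(G)$, which is $\pi_1(G)$. Choosing representatives $g_\alpha\in \fL(G)$ for $\alpha\in\pi_1(G)$, left multiplication by $g_\alpha$ provides an isomorphism of ind-schemes from the neutral component $\CY$ onto the component $\CY_\alpha$ indexed by $\alpha$, so
\[
\fL(G)/K\simeq \bigsqcup_{\alpha\in\pi_1(G)}\CY_\alpha,\qquad \CY_\alpha\simeq \CY.
\]
Correspondingly,
\[
\Dmod(\fL(G)/K)\simeq \bigoplus_{\alpha\in\pi_1(G)}\Dmod(\CY_\alpha),
\]
and this decomposition restricts to the full subcategories $\Dmod(-)^{\on{alm-const}}$ on each side (since the defining generators are preserved by $*$-pushforward along an open-and-closed embedding, and the latter simply identifies the respective ind-scheme presentations).

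Next, I would invoke the immediate consequence of \propref{p:almost const Gr}: each $\Dmod(\CY_\alpha)^{\on{alm-const}}$ is equivalent to $\on{C}^\cdot(\CY)\mod_0$, which is compactly generated (it is a full subcategory of the compactly generated category $\on{C}^\cdot(\CY)\mod$ generated by a set of compact objects coming from compact generators of $\on{C}^\cdot(\CY_0)\mod$ under the compactness-preserving functor \eqref{e:generate mods over cohomology}). In particular, each $\Dmod(\CY_\alpha)^{\on{alm-const}}$ is dualizable in $\DGCat$.

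Finally, dualizability is preserved under coproducts in $\DGCat$ indexed by a small set (the dual of $\bigoplus_\alpha \bC_\alpha$ is $\prod_\alpha \bC_\alpha^\vee$, and for compactly generated categories this product is again compactly generated via the componentwise compact generators). Applied to our decomposition this gives that
\[
\Dmod(\fL(G)/K)^{\on{alm-const}}\simeq \bigoplus_{\alpha\in\pi_1(G)}\Dmod(\CY_\alpha)^{\on{alm-const}}
\]
is dualizable, proving the corollary. The only point requiring mild care is the claim that the almost-constant subcategory respects the decomposition into connected components; this is where I would spend the bulk of the verification, appealing to the definition in \secref{sss:almost constant prestacks} via the colimit of $\Dmod(Y_i)^{\on{alm-const}}$ and the fact that one can choose the $Y_i$'s compatibly with the decomposition into $\pi_1(G)$-components.
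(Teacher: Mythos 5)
Your proof is correct and supplies exactly the details the paper leaves implicit when stating \corref{c:alm const Gr} as an immediate consequence of \propref{p:almost const Gr}: the decomposition of $\fL(G)/K$ into $\pi_1(G)$-indexed connected components each isomorphic to $\CY$ via left translation, the compatibility of the almost-constant construction with disjoint unions, and the stability of dualizability under small direct sums in $\DGCat$. This matches the intended route.
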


\begin{cor}  \label{c:ten omega}
For a category $\bC$, the functor
$$\Dmod(\fL(G)/K)^{\omega,\on{alm-const}}\otimes \bC\to \on{Funct}_{\DGCat}(\Dmod(\CY)^{\on{alm-const}},\bC)$$
is an equivalence.
\end{cor}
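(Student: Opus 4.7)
The plan is to deduce this corollary as a formal consequence of \corref{c:alm const Gr} together with the identification \eqref{e:dual of alm-const ind sch} of the dual of $\Dmod(\CY)^{\on{alm-const}}$. The prior corollary establishes the missing ingredient — namely, dualizability — so Corollary \ref{c:ten omega} should follow by transporting the tautological equivalence $\bD^\vee\otimes\bC\simeq \on{Funct}_{\DGCat}(\bD,\bC)$ (valid for any dualizable $\bD$) through that identification, with essentially no new geometric input.

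Concretely, I would proceed as follows. First, invoke \corref{c:alm const Gr} to conclude that $\Dmod(\fL(G)/K)^{\on{alm-const}}$ is dualizable, and hence so is $\Dmod(\CY)^{\on{alm-const}}$ (since $\CY$ is a connected component of $\fL(G)/K$, with the full space decomposing as a disjoint union of translates of $\CY$; Zariski-disjoint union of ind-schemes translates into a direct sum of the associated almost-constant categories, and dualizability is preserved under small direct sums). Next, for any dualizable $\bD$ one has the canonical equivalence
\[
\bD^\vee\otimes\bC \;\xrightarrow{\;\simeq\;}\; \on{Funct}_{\DGCat}(\bD,\bC), \quad \bd^\vee\otimes\bc \mapsto \bigl(\bd\mapsto \langle \bd^\vee,\bd\rangle\cdot \bc\bigr).
\]
Specializing $\bD:=\Dmod(\CY)^{\on{alm-const}}$ and using \eqref{e:dual of alm-const ind sch} to identify $\bD^\vee \simeq \Dmod(\CY)^{\omega,\on{alm-const}}$ gives the desired equivalence for the neutral component, and assembling over connected components yields the statement for $\fL(G)/K$ in place of $\CY$ on the left-hand side.

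The one step that must be checked — and this is the only place where any real content enters — is that the formal duality pairing coming from \eqref{e:dual of alm-const ind sch} matches the pairing implicit in the functor named in \corref{c:ten omega}. Unwinding \sssec{sss:almost constant prestacks omega}, this pairing is given by $\overset{*}\otimes$-tensoring an object of $\Dmod(\CY)^{\omega,\on{alm-const}}$ with one of $\Dmod(\CY)^{\on{alm-const}}$ and then taking de Rham cohomology on $\CY$, which is exactly the Verdier-duality pairing that defines the functor in the corollary. I do not expect any serious obstacle: once dualizability is granted by \corref{c:alm const Gr}, the remainder is a formal manipulation, and the only mild bookkeeping is the passage from the neutral component $\CY$ to the disjoint union of translates comprising $\fL(G)/K$.
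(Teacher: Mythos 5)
Your proposal correctly identifies the proof: this is indeed a formal consequence of \corref{c:alm const Gr} (dualizability of $\Dmod(\fL(G)/K)^{\on{alm-const}}$) combined with \eqref{e:dual of alm-const ind sch} (which identifies its dual with $\Dmod(\fL(G)/K)^{\omega,\on{alm-const}}$), transported through the tautological equivalence $\bD^\vee\otimes\bC\simeq\on{Funct}_{\DGCat}(\bD,\bC)$ for dualizable $\bD$. The paper itself gives no proof and clearly intends exactly this deduction, as confirmed by the proof of the immediately following \corref{c:omega ff}, whose commutative diagram uses the identification $\Dmod(\fL(G)/K)^{\omega,\on{alm-const}}\otimes \bC\simeq \on{Funct}_{\DGCat}(\Dmod(\fL(G)/K)^{\on{alm-const}},\bC)$.

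One small point of clarification: the appearance of $\Dmod(\CY)^{\on{alm-const}}$ (with $\CY$ the neutral component) on the right-hand side of \corref{c:ten omega} is a typo in the paper — it should read $\Dmod(\fL(G)/K)^{\on{alm-const}}$, matching the left-hand side. The proof of \corref{c:omega ff} makes this explicit. So you do not actually need the final ``assembling over connected components'' step that you describe, which would in any case leave a mismatch (you would replace $\CY$ with $\fL(G)/K$ on only one side). Instead apply \eqref{e:dual of alm-const ind sch} and the dualizability statement of \corref{c:alm const Gr} directly to $\fL(G)/K$ itself; then the argument closes on one line. Your remark about matching the duality pairing is a good instinct, but since \eqref{e:dual of alm-const ind sch} is precisely the construction by which the functor in the corollary is defined, there is nothing to check there.
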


\begin{cor} \label{c:omega ff}
For a category $\bC$, the functor
$$\Dmod(\fL(G)/K)^{\omega,\on{alm-const}}\otimes \bC\to  \Dmod(\fL(G)/K)\otimes \bC$$
is fully faithful.
\end{cor}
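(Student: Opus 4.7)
The plan is to reduce to the finite-type case, where compactness of $\omega_Y$ in $\Dmod(Y)$ supplies a continuous right adjoint to the embedding with trivial composition, and then to transfer this to the ind-scheme setting using the duality identifications of \corref{c:ten omega} and \corref{c:alm const Gr}.

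For a scheme $Y$ of finite type, the embedding $\iota_Y \colon \Dmod(Y)^{\omega,\on{alm-const}} \hookrightarrow \Dmod(Y)$ admits a continuous right adjoint. Indeed, under the equivalence $\Dmod(Y)^{\omega,\on{alm-const}} \simeq \on{C}^\cdot(Y)\mod$ of \eqref{e:almost const as cochains omega}, this right adjoint is given by $\CF \mapsto \on{RHom}_{\Dmod(Y)}(\omega_Y,\CF) \otimes^L_{\on{C}^\cdot(Y)} \omega_Y$, which is continuous because $\omega_Y$ is compact in $\Dmod(Y)$ for finite-type $Y$. Since the composition with $\iota_Y$ is the identity, tensoring the adjunction with $\bC$ yields a continuous right adjoint to $\iota_Y \otimes \on{id}_\bC$ whose composition with $\iota_Y \otimes \on{id}_\bC$ is again the identity; hence $\iota_Y \otimes \on{id}_\bC$ is fully faithful for every $\bC \in \DGCat$.

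To transfer this to the ind-scheme $\CY = \fL(G)/K$, fix a presentation $\CY \simeq \underset{i}{``\on{colim}"}\, Y_i$ by finite-type schemes. Combining \corref{c:ten omega} with the colimit description $\Dmod(\CY)^{\on{alm-const}} \simeq \underset{i}{\on{colim}}\, \Dmod(Y_i)^{\on{alm-const}}$ of \eqref{e:almost const as cochains ind-sch}, and with dualizability of each $\Dmod(Y_i)^{\on{alm-const}}$ with dual $\Dmod(Y_i)^{\omega,\on{alm-const}}$, we obtain
\[
\Dmod(\CY)^{\omega,\on{alm-const}} \otimes \bC \;\simeq\; \underset{i}{\on{lim}}\, \bigl(\Dmod(Y_i)^{\omega,\on{alm-const}} \otimes \bC\bigr).
\]
Using Verdier self-duality of $\Dmod(\CY)$ as an ind-scheme of ind-finite type together with $\Dmod(\CY) \simeq \underset{i}{\on{colim}}\, \Dmod(Y_i)$, the analogous reasoning gives
\[
\Dmod(\CY) \otimes \bC \;\simeq\; \underset{i}{\on{lim}}\, \bigl(\Dmod(Y_i) \otimes \bC\bigr).
\]
Under these identifications the map in the corollary is the limit of the fully faithful functors $\iota_{Y_i} \otimes \on{id}_\bC$ from the previous paragraph, and limits of fully faithful functors in $\DGCat$ are fully faithful, since hom-spaces in a limit are computed as limits of hom-spaces and limits of isomorphisms are isomorphisms.

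The main obstacle is not conceptual but bookkeeping: one must verify that the equivalences in the third paragraph are sufficiently natural in the ind-system $\{Y_i\}$ that the embedding of interest really does correspond to the limit of the level-wise embeddings. This amounts to checking functoriality of \corref{c:ten omega} in its input together with naturality of Verdier self-duality with respect to closed embeddings of finite-type schemes, both of which are built into their constructions.
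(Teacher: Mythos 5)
Your proposal is correct, and it shares the key duality inputs with the paper's proof, but it finishes the argument differently. The paper's own proof is a one-liner: it sets up the commutative square
\[
\CD
\Dmod(\fL(G)/K)^{\omega,\on{alm-const}}\otimes \bC @>>> \Dmod(\fL(G)/K)\otimes \bC \\
@V{\sim}VV @VV{\sim}V \\
\on{Funct}_{\DGCat}(\Dmod(\fL(G)/K)^{\on{alm-const}},\bC) @>>> \on{Funct}_{\DGCat}(\Dmod(\fL(G)/K),\bC),
\endCD
\]
with the left vertical from \corref{c:ten omega} and the right vertical from Verdier self-duality of $\Dmod(\fL(G)/K)$, and then simply observes that the bottom horizontal arrow — precomposition with the Verdier quotient $\Dmod(\fL(G)/K)\to \Dmod(\fL(G)/K)^{\on{alm-const}}$ — is fully faithful, because this quotient admits a fully faithful left adjoint. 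You use the same identifications, but rather than invoking the general fact about precomposition with a localization, you push the duality all the way to a decomposition of both sides as limits over the ind-system, verify full faithfulness at each finite level by a continuous-retraction argument (compactness of $\omega_{Y_i}$), and then pass to the limit. The two approaches are logically equivalent — your levelwise argument is essentially what one would get by unwinding the paper's claim about the quotient — but the paper's formulation stays at the ind-scheme level and avoids checking compatibility of the limit decompositions with the embedding, whereas yours trades that one-line appeal to localization theory for a more explicit but more elementary reduction to finite type; the bookkeeping you flag in the last paragraph is real but unproblematic, since the relevant transitions in both limits are $f^!\otimes\on{id}_\bC$ and the inclusions $\iota_{Y_i}^\omega$ visibly commute with $f^!$.
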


\begin{proof}

Follows from the commutative diagram
$$
\CD
\Dmod(\fL(G)/K)^{\omega,\on{alm-const}}\otimes \bC @>>> \Dmod(\fL(G)/K)\otimes \bC \\
@V{\sim}VV @VV{\sim}V \\
\on{Funct}_{\DGCat}(\Dmod(\fL(G)/K)^{\on{alm-const}},\bC) @>>> \on{Funct}_{\DGCat}(\Dmod(\fL(G)/K),\bC), 
\endCD
$$
in which the right vertical arrow is given by Verdier duality on $\Dmod(\fL(G)/K)$, and the bottom horizontal arrow
is fully faithful. 

\end{proof} 

\begin{rem} \label{r:almost const Gr}
As another consequence of \propref{p:almost const Gr}, we obtain that
$$\Dmod(\fL(G)/K)^{\omega,\on{alm-const}} \subset \Dmod(\fL(G)/K)$$
is the full subcategory generated under colimits by the dualizing sheaf.
\end{rem} 

\ssec{Almost constant sheaves on the loop group}

In this subsection we define what we mean by the category of almost constant sheaves on $\fL(G)$. As usual,
some extra care is needed here, since $\fL(G)$ is of infinite type. 

\sssec{}

Let us return to the setting of \secref{sss:almost const omega}. Let $f:Y'\to Y$ be a map of schemes 
of finite type that is a universal homological equivalence. 

\medskip

In this case the functor
$$f^!:\Dmod(Y)\to \Dmod(Y')$$
gives rise to an \emph{equivalence}
$$\Dmod(Y)^{\omega,\on{alm-const}} \overset{\sim}\to \Dmod(Y')^{\omega,\on{alm-const}}.$$

A similar observation applies to a map between indschemes $f:\CY\to \CY'$.  

\sssec{}

Let $K'\subset K$ be subgroups as in \secref{sss:alm const Gr intro}, but we assume that $K$ is pro-unipotent.
We obtain that the pullback functor
$$\Dmod(\fL(G)/K)\to \Dmod(\fL(G)/K')$$
induces an equivalence 
$$\Dmod(\fL(G)/K)^{\omega,\on{alm-const}} \overset{\sim}\to \Dmod(\fL(G)/K')^{\omega,\on{alm-const}}.$$

\medskip

Denote by
$$\Dmod(\fL(G))^{\omega,\on{alm-const},\on{right}}\subset \Dmod(\fL(G))$$
the full subcategory equal to the essential image of
$$\Dmod(\fL(G)/K)^{\omega,\on{alm-const}} \hookrightarrow \Dmod(\fL(G)/K) \overset{!\on{-pullback}}\hookrightarrow 
\Dmod(\fL(G))$$
for some/any $K$ as above.

\medskip

Define
$$\Dmod(\fL(G))^{\omega,\on{alm-const},\on{left}}\subset \Dmod(\fL(G))$$
similarly, by swapping the roles of left and right.

\sssec{}

We claim:

\begin{prop} \label{p:left vs right}
The subcategories 
$$\Dmod(\fL(G))^{\omega,\on{alm-const},\on{right}}\subset \Dmod(\fL(G)) \supset \Dmod(\fL(G))^{\omega,\on{alm-const},\on{left}}$$
coincide. 
\end{prop}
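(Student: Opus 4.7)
The plan is to show that both subcategories coincide with the full subcategory $\CC \subseteq \Dmod(\fL(G))$ generated under colimits by the dualizing sheaf $\omega_{\fL(G)}$. Since this characterization is intrinsic to the ind-scheme $\fL(G)$ and makes no reference to the left/right distinction, the claimed equality follows at once from such a description.

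It suffices to verify that $\Dmod(\fL(G))^{\omega,\on{alm-const},\on{right}} = \CC$; the left analogue then follows by the same argument applied to $K \backslash \fL(G)$. First, fix a pro-unipotent congruence subgroup $K$ with quotient map $f \colon \fL(G) \to \fL(G)/K$. By Remark \ref{r:almost const Gr}, the subcategory $\Dmod(\fL(G)/K)^{\omega,\on{alm-const}}$ is precisely the colimit-closure of $\omega_{\fL(G)/K}$ inside $\Dmod(\fL(G)/K)$. Next, since $K$ is pro-unipotent, pro-unipotent descent identifies $\Dmod(\fL(G)/K)$ with the $K$-equivariant subcategory of $\Dmod(\fL(G))$; in particular $f^!$ is fully faithful, continuous, and carries $\omega_{\fL(G)/K}$ to $\omega_{\fL(G)}$ (up to the renormalization built into the definitions of \secref{s:inf type}). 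It follows that the essential image of $f^!$ restricted to $\Dmod(\fL(G)/K)^{\omega,\on{alm-const}}$ is a colimit-closed full subcategory of $\Dmod(\fL(G))$ which contains and is generated by $\omega_{\fL(G)}$ --- that is, it coincides with $\CC$.

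The main obstacle I anticipate is the technical justification in the infinite-type setting: the pro-unipotent descent statement and the full faithfulness of $f^!$ on the almost-constant subcategory must be verified via the renormalized $\Dmod$-theory of \secref{s:inf type}, rather than the familiar finite-type framework, and one needs to confirm that the essential image is closed under colimits formed in the ambient $\Dmod(\fL(G))$ (and not merely in the image of $f^!$). Once these are in place the rest of the argument is formal. As a cross-check, I will also invoke the inversion automorphism $\iota \colon \fL(G) \to \fL(G)$, $g \mapsto g^{-1}$: it interchanges the two quotients $\fL(G)/K$ and $K \backslash \fL(G)$ while fixing $\omega_{\fL(G)}$, so $\iota^!$ exchanges the two subcategories under consideration yet must act as the identity on $\CC$ by the intrinsic description above. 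This forces the two subcategories to agree with $\CC$, and hence with each other.
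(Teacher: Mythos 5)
Your proof is correct in substance, but it proceeds along a genuinely different route from the paper's argument, so it is worth comparing the two.

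The paper's proof is local and geometric: for each $\fL^+(G)\times\fL^+(G)$-invariant subscheme $Y\subset\fL(G)$ it chooses $K'$ small enough that $Y\to Y/K$ factors through $K'\backslash Y\to Y/K$, observes that the latter map is smooth with contractible fibers and hence a universal homological equivalence, and invokes the elementary fact (stated just before \propref{p:left vs right}) that a universal homological equivalence of finite-type schemes induces an equivalence on $\Dmod^{\omega,\on{alm-const}}$. It then passes to the limit over $Y$. This argument is completely insensitive to the internal structure of $\on{C}^\cdot(\Gr_G)$.

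Your argument instead identifies both subcategories with an intrinsic object, the colimit-closure $\CC$ of $\omega_{\fL(G)}$ in $\Dmod(\fL(G))$, by combining Remark \ref{r:almost const Gr} (itself a consequence of the Koszul-type description in \propref{p:almost const Gr}) with the full faithfulness and colimit-preservation of $f^!$. This requires the cohomological input about $\Gr_G$ that the paper's proof deliberately avoids, but it buys you the cleaner intrinsic characterization of the common subcategory, which you also exploit via the inversion automorphism. As to the gap you anticipated: the full faithfulness and continuity of $f^!$ are immediate from the presentation \eqref{e:Dmod! LG via Gr} of $\Dmod^!(\fL(G))$ as a filtered colimit over $n$ of $\Dmod(\fL(G)/K_n)$ with fully faithful, colimit-preserving $!$-pullback transitions (each transition has contractible, smooth fibers). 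The essential image of a fully faithful colimit-preserving functor is automatically closed under colimits in the target --- one lifts any diagram valued in the essential image along full faithfulness --- so your worry about colimit-closure in the ambient $\Dmod(\fL(G))$ resolves itself. Thus your proof is sound, just less elementary than the paper's.
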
 

\begin{proof}

Let $Y$ be a $\fL^+(G)\times \fL^+(G)$-invariant subscheme of $\fL(G)$. Let $K$ be as above.
Note we can find $K'$ sufficiently small so that the projection
$$Y\to Y/K$$
factors $K'\backslash Y\to Y/K$; moreover, the latter map is smooth with contractible fibers, and hence is a universal homological
equivalence.

\medskip

This implies that the subcategory
$$\Dmod(Y)^{\omega,\on{alm-const},\on{right}}\subset \Dmod(Y)$$
defined to be the essential image of
\begin{equation} \label{e:Y right}
\Dmod(Y/K_1)^{\omega,\on{alm-const}}\hookrightarrow \Dmod(Y/K_1) \overset{!\on{-pullback}}\hookrightarrow \Dmod(Y)
\end{equation} 
for some/any $K_1$, coincides with the subcategory
$$\Dmod(Y)^{\omega,\on{alm-const},\on{left}}\subset \Dmod(Y)$$
defined to be the essential image of
\begin{equation} \label{e:Y left}
\Dmod(K_2\backslash Y)^{\omega,\on{alm-const}}\hookrightarrow \Dmod(K_2\backslash Y) \overset{!\on{-pullback}}\hookrightarrow \Dmod(Y)
\end{equation} 
for some/any $K_2$. 

\medskip

This implies the statement of the proposition, since 
$$\Dmod(\fL(G))^{\omega,\on{alm-const},\on{right}}\subset \Dmod(\fL(G))$$
equals
$$\underset{Y}{\on{lim}}\, \Dmod(Y)^{\omega,\on{alm-const},\on{right}} \subset \underset{Y}{\on{lim}}\, \Dmod(Y)$$
and 
$$\Dmod(\fL(G))^{\omega,\on{alm-const},\on{left}}\subset \Dmod(\fL(G))$$
equals
$$\underset{Y}{\on{lim}}\, \Dmod(Y)^{\omega,\on{alm-const},\on{left}} \subset \underset{Y}{\on{lim}}\, \Dmod(Y).$$

%

\end{proof}

\sssec{}

Denote the subcategory
$$\Dmod(\fL(G))^{\omega,\on{alm-const},\on{right}}=\Dmod(\fL(G))^{\omega,\on{alm-const},\on{left}}$$
of $\Dmod(\fL(G))$ by $\Dmod(\fL(G))^{\omega,\on{alm-const}}$.

\sssec{}

From \propref{p:left vs right} and \corref{c:ten omega}, we obtain:

\begin{cor} \label{c:coprod loc const}
The coproduct functor
$$\Dmod(\fL(G))\to \Dmod(\fL(G))\otimes \Dmod(\fL(G))$$
sends $\Dmod(\fL(G))^{\omega,\on{alm-const}}\subset \Dmod(\fL(G))$ to the full subcategory
$$\Dmod(\fL(G))^{\omega,\on{alm-const}}\otimes \Dmod(\fL(G))^{\omega,\on{alm-const}} \subset
\Dmod(\fL(G))\otimes \Dmod(\fL(G)).$$
\end{cor}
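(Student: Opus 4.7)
The plan is to reduce to checking the claim on a single generator of $\Dmod(\fL(G))^{\omega,\on{alm-const}}$, and then to invoke continuity of the coproduct functor $m^{!}: \Dmod(\fL(G)) \to \Dmod(\fL(G) \times \fL(G)) \simeq \Dmod(\fL(G)) \otimes \Dmod(\fL(G))$ together with colimit-closure of the target subcategory.

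First I would pin down a generator. Fix any pro-unipotent subgroup $K \subset \fL^{+}(G)$. By Remark~\ref{r:almost const Gr}, the category $\Dmod(\fL(G)/K)^{\omega,\on{alm-const}}$ is generated under colimits by $\omega_{\fL(G)/K}$. Combined with Proposition~\ref{p:left vs right} and the very definition of $\Dmod(\fL(G))^{\omega,\on{alm-const}}=\Dmod(\fL(G))^{\omega,\on{alm-const},\on{right}}$ as the essential image of the fully faithful $!$-pullback along $p_{K}: \fL(G) \to \fL(G)/K$, this exhibits $\omega_{\fL(G)} := p_{K}^{!}(\omega_{\fL(G)/K})$ as a colimit-generator of the ambient category $\Dmod(\fL(G))^{\omega,\on{alm-const}}$.

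Next I would compute $m^{!}(\omega_{\fL(G)})$ by factoring $p_{K} \circ m = \bar{m} \circ (\on{id} \times p_{K})$, where $\bar{m}: \fL(G) \times \fL(G)/K \to \fL(G)/K$ is the action map. Since $\bar{m}$ is smooth (being the action of a group ind-scheme on its quotient), I get, up to a consistently normalized cohomological shift, $\bar{m}^{!}\,\omega_{\fL(G)/K} \simeq \omega_{\fL(G) \times \fL(G)/K} \simeq \omega_{\fL(G)} \boxtimes \omega_{\fL(G)/K}$, whence
\[
m^{!}(\omega_{\fL(G)}) \simeq (\on{id} \times p_{K})^{!}\bigl(\omega_{\fL(G)} \boxtimes \omega_{\fL(G)/K}\bigr) \simeq \omega_{\fL(G)} \boxtimes \omega_{\fL(G)}.
\]
This manifestly belongs to $\Dmod(\fL(G))^{\omega,\on{alm-const}} \otimes \Dmod(\fL(G))^{\omega,\on{alm-const}}$. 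Since the Lurie tensor product of two colimit-closed subcategories is colimit-closed and $m^{!}$ preserves colimits, the containment propagates from the single generator to all of $\Dmod(\fL(G))^{\omega,\on{alm-const}}$.

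The main obstacle will be justifying the smoothness of $\bar{m}$ and the $!$-pullback identification $\bar{m}^{!}\omega \simeq \omega$ rigorously in the infinite-type setting for $\fL(G)$. This should follow from the presentation $\Dmod(\fL(G)) = \lim_{Y}\, \Dmod(Y)$ over $\fL^{+}(G) \times \fL^{+}(G)$-invariant finite-type subschemes $Y \subset \fL(G)$, together with the infinite-type $!$-pullback formalism of Section~\ref{s:inf type}, which reduces the question to the standard smoothness of action maps on finite-dimensional quotients.
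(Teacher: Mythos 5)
Your core strategy is correct and is essentially the one the paper intends: the paper's one-line proof cites \propref{p:left vs right} (which is what makes the subcategory $\Dmod(\fL(G))^{\omega,\on{alm-const}}$ well-defined and generated under colimits by $\omega_{\fL(G)}$, via the Remark following \ref{r:almost const Gr}) and \corref{c:ten omega} (which is what makes the target tensor product a well-behaved full subcategory). Two points in your write-up should be tightened.

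First, the computation of $m^{!}(\omega_{\fL(G)})$ is made to look harder than it is. For \emph{any} morphism $f$ of prestacks, $f^{!}$ carries the dualizing sheaf to the dualizing sheaf, with no smoothness hypothesis and no cohomological shift: this is built into the definition of $\omega$ as the $!$-pullback of $k$ from the point. So $m^{!}(\omega_{\fL(G)}) = \omega_{\fL(G)\times\fL(G)}$ immediately, and under the symmetric monoidal identification $\Dmod^{!}(\fL(G)\times\fL(G)) \simeq \Dmod^{!}(\fL(G)) \otimes \Dmod^{!}(\fL(G))$ this is $\omega_{\fL(G)} \boxtimes \omega_{\fL(G)}$. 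The factorization through $\fL(G)/K$, the appeal to smoothness of the action map $\bar m$ (a notion that is not even standard for ind-schemes of infinite type), and the ``consistently normalized cohomological shift'' are all spurious; you appear to be conflating $f^{!}$ with $f^{*}$, for which the shift by $2\dim$ does appear in the smooth case.

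Second, the step ``the Lurie tensor product of two colimit-closed subcategories is colimit-closed'' is not a general fact and cannot be invoked casually: in general, tensoring two fully faithful colimit-preserving functors together need not produce a fully faithful functor. This is exactly the non-trivial input that the paper is citing. What saves you is \corref{c:omega ff} (a consequence of \corref{c:ten omega} and the dualizability of $\Dmod(\fL(G))^{\omega,\on{alm-const}}$ from \corref{c:alm const Gr}): tensoring the inclusion $\Dmod(\fL(G))^{\omega,\on{alm-const}} \hookrightarrow \Dmod(\fL(G))$ with an arbitrary DG category preserves fully faithfulness. Applying this twice shows that $\Dmod(\fL(G))^{\omega,\on{alm-const}} \otimes \Dmod(\fL(G))^{\omega,\on{alm-const}} \hookrightarrow \Dmod(\fL(G)) \otimes \Dmod(\fL(G))$ is a fully faithful, colimit-preserving functor, and hence has colimit-closed essential image — which is what you need to propagate from the generator $\omega$ to the whole category. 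You should cite this explicitly rather than treat it as a formality.
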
 

\sssec{}

It follows from \corref{c:alm const Gr} that the category $\Dmod(\fL(G))^{\omega,\on{alm-const}}$ is dualizable.
Denote its dual by $\Dmod(\fL(G))^{\on{alm-const}}$; we can view it as a quotient category of $\Dmod(\fL(G))$.

\medskip

From \corref{c:coprod loc const} we obtain that the \emph{monoidal} structure on $\Dmod(\fL(G))$ gives 
rise to a monoidal structure on $\Dmod(\fL(G))^{\on{alm-const}}$. 

\begin{rem}

It follows from Remark \ref{r:almost const Gr} that 
$$\Dmod(\fL(G))^{\omega,\on{alm-const}}\subset \Dmod(\fL(G))$$
is the full subcategory, generated under colimits by the dualizing sheaf.
\end{rem} 

\ssec{Almost invariants for the loop group} \label{ss:alm inv loop}


\sssec{}

Let $\bC$ be a category equipped with an action of $\fL^+(G)$. We let
$\on{alm-inv}_{\fL^+(G)}(\bC)$ be the full subcategory of $\bC$ equal to
$$\on{alm-inv}_{G}(\bC^{K_1}).$$

The contents of \secref{ss:alm triv pro} apply equally well to this situation.

\sssec{}

Let now $\bC$ be equipped with an action of $\fL(G)$. We let 
$\on{alm-inv}_{\fL(G)}(\bC)$ to be the full subcategory of $\bC$
consisting of objects that are sent by the co-action functor
$$\bC\to \Dmod(\fL(G))\otimes \bC$$
to the full subcategory 
$$\Dmod(\fL(G))^{\omega,\on{alm-const}}\otimes \bC\subset \Dmod(\fL(G))\otimes \bC.$$

\medskip

We can identify
$$\on{alm-inv}_{\fL(G)}(\bC)=\Dmod(\fL(G))^{\omega,\on{alm-const}}\underset{\Dmod(\fL(G))}\otimes \bC\simeq
\on{Funct}_{\Dmod(\fL(G))}(\Dmod(\fL(G))^{\on{alm-const}},\bC).$$

\sssec{}

It is easy to see that $\on{alm-inv}_{\fL(G)}(\bC)$, viewed as a full subcategory of $\bC$,
is preserved by the action of $\fL(G)$, i.e., it can itself be viewed as a category acted on by
$\fL(G)$. 

\sssec{Example} \label{e:alm inv torus}

Let $G=T$ be a torus. Then it is easy to see that the inclusion
$$\on{alm-inv}_{\fL(T)}(\bC)\subset \on{alm-inv}_{\fL^+(T)}(\bC)$$
is an equality. 

\sssec{}

Unwinding the definitions, we obtain:

\begin{lem} \label{l:alm const vs alm triv} \hfill

\smallskip

\noindent{\em(i)} The full subcategories
$$\on{alm-inv}_{\fL(G)\on{-right}}(\Dmod(\fL(G))),\,\, 
\on{alm-inv}_{\fL(G)\on{-left}}(\Dmod(\fL(G))),\,\, \on{alm-inv}_{\fL(G)\times \fL(G)}(\Dmod(\fL(G)))$$ and 
$$\Dmod(\fL(G))^{\omega,\on{alm-const}}$$
of $\Dmod(\fL(G))$, coincide.

\smallskip

\noindent{\em(ii)} For $K=K_i$, $i\geq 1$, the full subcategories
$$\on{alm-inv}_{\fL(G)}(\Dmod(\fL(G)/K)) \text{ and } \Dmod(\fL(G)/K)^{\omega,\on{alm-const}}$$
of $\Dmod(\fL(G)/K)$, coincide.

\end{lem}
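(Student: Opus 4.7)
The plan is to \emph{unwind the definitions}: each containment splits into an ``easy'' direction via colimit generation (using \corref{c:coprod loc const}) and a ``hard'' direction by restricting the coaction along the unit section of the ``space'' factor.

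For part (i), each of the three containments $\Dmod(\fL(G))^{\omega,\on{alm-const}}\subset \on{alm-inv}(\cdots)$ reduces, via colimit generation of $\Dmod(\fL(G))^{\omega,\on{alm-const}}$ by $\omega_{\fL(G)}$ (the Remark following \propref{p:left vs right}), to checking on $\omega_{\fL(G)}$. For this, $m^!(\omega_{\fL(G)})\simeq \omega_{\fL(G)}\boxt \omega_{\fL(G)}$ lies in $\Dmod(\fL(G))^{\omega,\on{alm-const}}\otimes \Dmod(\fL(G))^{\omega,\on{alm-const}}$ by \corref{c:coprod loc const}, which is a priori stronger than any of the three one-sided conditions.

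For the reverse inclusion in (i), it suffices to treat the left-action case: the right case is symmetric, and bi-invariance is a fortiori stronger than either. Suppose $\CF$ has $m^!(\CF)\in \Dmod(\fL(G))^{\omega,\on{alm-const}}\otimes \Dmod(\fL(G))$ (with the first factor the group factor). Apply $(\on{id}\times \iota_e)^!$, where $\iota_e:\{e\}\hookrightarrow \fL(G)$ is the unit. The base-change identity $(\on{id}\times \iota_e)^!\circ m^!=(m\circ(\on{id}\times \iota_e))^!$, together with $m(\cdot,e)=\on{id}_{\fL(G)}$, yields $(\on{id}\times \iota_e)^!\, m^!(\CF)\simeq \CF$. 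On the other hand, $(\on{id}\times \iota_e)^!$ factors as $\on{id}\otimes \iota_e^!$ and so carries $\Dmod(\fL(G))^{\omega,\on{alm-const}}\otimes \Dmod(\fL(G))$ into $\Dmod(\fL(G))^{\omega,\on{alm-const}}\otimes \Vect\simeq \Dmod(\fL(G))^{\omega,\on{alm-const}}$. Thus $\CF\in \Dmod(\fL(G))^{\omega,\on{alm-const}}$.

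Part (ii) follows the same pattern, replacing $m$ by the action map $a:\fL(G)\times \fL(G)/K\to \fL(G)/K$. The easy direction reduces, via colimit generation of $\Dmod(\fL(G)/K)^{\omega,\on{alm-const}}$ by $\omega_{\fL(G)/K}$ (\remref{r:almost const Gr}), to $a^!(\omega_{\fL(G)/K})\simeq \omega_{\fL(G)}\boxt \omega_{\fL(G)/K}\in \Dmod(\fL(G))^{\omega,\on{alm-const}}\otimes \Dmod(\fL(G)/K)^{\omega,\on{alm-const}}$. For the reverse, apply $(\on{id}\times \iota_{\bar e})^!$ with $\iota_{\bar e}:\{\bar e\}\hookrightarrow \fL(G)/K$ the basepoint. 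Since $a|_{\fL(G)\times\{\bar e\}}$ is precisely the quotient map $\pi:\fL(G)\to \fL(G)/K$, we obtain $\pi^!(\CF)\in \Dmod(\fL(G))^{\omega,\on{alm-const}}$. Because $i\geq 1$ makes $K$ pro-unipotent, $\pi^!$ is fully faithful, and by construction $\Dmod(\fL(G))^{\omega,\on{alm-const}}$ is the essential image under $\pi^!$ of $\Dmod(\fL(G)/K)^{\omega,\on{alm-const}}$; hence $\CF\in \Dmod(\fL(G)/K)^{\omega,\on{alm-const}}$.

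The main technical point to verify carefully is that the tensor-product subcategory $\Dmod(\fL(G))^{\omega,\on{alm-const}}\otimes \Dmod(\fL(G))$ (resp.\ $\otimes\, \Dmod(\fL(G)/K)$) sits inside $\Dmod(\fL(G))\otimes\Dmod(\fL(G))$ (resp.\ $\Dmod(\fL(G))\otimes\Dmod(\fL(G)/K)$) as a genuine full subcategory, and that $!$-pullback along $\iota_e$ (resp.\ $\iota_{\bar e}$) in the second factor respects this inclusion. Both facts follow from dualizability of $\Dmod(\fL(G))^{\omega,\on{alm-const}}$ (\corref{c:alm const Gr}) together with \corref{c:omega ff}, which ensures its tensor product with another DG category is a full subcategory of the ambient tensor product; $\Vect$-linearity of the inclusion then makes the claimed factorization automatic.
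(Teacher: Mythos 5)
Your proof is correct, and it is essentially the intended argument: the paper presents this lemma with no proof beyond the phrase "Unwinding the definitions, we obtain," and the unwinding you carry out---easy containment via the observation that the coaction is $m^!$ (respectively $a^!$) and preserves almost-constancy by \corref{c:coprod loc const}, hard containment by restricting the coaction along the unit section of the module factor and invoking full faithfulness of $\Dmod(\fL(G))^{\omega,\on{alm-const}}\otimes(-)\hookrightarrow\Dmod(\fL(G))\otimes(-)$---is exactly the content that "unwinding" suppresses.

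One minor slip in attribution: the isomorphism $m^!(\omega_{\fL(G)})\simeq\omega_{\fL(G)}\boxt\omega_{\fL(G)}$ is just the Künneth formula for dualizing sheaves, not a consequence of \corref{c:coprod loc const}; conversely, \corref{c:coprod loc const} already gives the easy containment for an arbitrary object of $\Dmod(\fL(G))^{\omega,\on{alm-const}}$, so the detour through colimit generation by $\omega_{\fL(G)}$ is unnecessary there (though it is the right move for the action map in part~(ii), where no such corollary is recorded). The only place where care beyond bookkeeping is genuinely needed is the point you flag at the end---that $\Dmod(\fL(G))^{\omega,\on{alm-const}}\otimes\Dmod(\fL(G))$ really is a full subcategory of $\Dmod(\fL(G))\otimes\Dmod(\fL(G))$ stable under $\on{id}\otimes\iota_e^!$---and your appeal to dualizability of $\Dmod(\fL(G))^{\omega,\on{alm-const}}$ together with the fully faithful embedding from \corref{c:omega ff} handles it.
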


\sssec{}

Consider
$$\Dmod(\Gr_G)\in \fL(G)\mmod.$$

It follows from the definitions that we have an inclusion of subcategories
\begin{equation} \label{e:alm const aff Gr}
\Dmod(\Gr_G)^{\omega,\on{alm-const}}\subset \on{alm-inv}_{\fL(G)}(\Dmod(\Gr_G)).
\end{equation}

We claim:

\begin{prop} \label{p:alm const aff Gr}
The inclusion \eqref{e:alm const aff Gr} is an equality.
\end{prop}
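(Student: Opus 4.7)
The inclusion \eqref{e:alm const aff Gr} is given, so the task is to establish $\on{alm-inv}_{\fL(G)}(\Dmod(\Gr_G))\subseteq \Dmod(\Gr_G)^{\omega,\on{alm-const}}$. The strategy is to descend from $\fL(G)/K$ for $K=K_i$ with $i\ge 1$, where \lemref{l:alm const vs alm triv}(ii) already provides the analogous statement. With such a $K$, the group $K$ is pro-unipotent and $H:=\fL^+(G)/K$ is a finite-dimensional algebraic group; the projection $p\colon\fL(G)/K\to\Gr_G$ is a principal $H$-bundle, and pullback along $p$ yields an equivalence $p^!\colon\Dmod(\Gr_G)\xrightarrow{\;\simeq\;}\Dmod(\fL(G)/K)^{H}$ onto $H$-equivariant D-modules for the right $H$-action, which commutes with the left $\fL(G)$-action.

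Since both $\on{alm-inv}_{\fL(G)}(-)$ and $(-)^{H}$ are invariant-type constructions---realized respectively as $\CHom$'s in $\Dmod(\fL(G))$-modules and in $\Dmod(H)$-modules---and since the two group actions commute, the two operations interchange. Combining this commutation with \lemref{l:alm const vs alm triv}(ii) yields
$$\on{alm-inv}_{\fL(G)}(\Dmod(\Gr_G))\simeq \bigl(\Dmod(\fL(G)/K)^{\omega,\on{alm-const}}\bigr)^{H},$$
so the proposition reduces to identifying this last category with $\Dmod(\Gr_G)^{\omega,\on{alm-const}}$ inside $\Dmod(\Gr_G)\simeq\Dmod(\fL(G)/K)^{H}$.

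The inclusion $\supseteq$ is routine: by Remark \ref{r:almost const Gr} (and its counterpart for $\Gr_G$, obtained from \propref{p:almost const Gr} applied to each connected component), both sides are the cocomplete subcategories generated by the respective dualizing sheaves; since $p$ is smooth, $p^!(\omega_{\Gr_G})\simeq\omega_{\fL(G)/K}$ carries its canonical $H$-equivariance, and $p^!$ preserves colimits. The opposite inclusion $\subseteq$ is the \emph{main obstacle}, which I plan to establish by translating both sides into cochain-algebra modules via \propref{p:almost const Gr}. Applied componentwise, the proposition (together with its $\omega$-variant) identifies $\Dmod(\fL(G)/K)^{\omega,\on{alm-const}}$ with $\on{C}^\cdot(\fL(G)/K)\mod_0$ and $\Dmod(\Gr_G)^{\omega,\on{alm-const}}$ with $\on{C}^\cdot(\Gr_G)\mod_0$. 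The right $H$-action then corresponds to a coaction compatible with the restriction map $\on{C}^\cdot(\Gr_G)\to\on{C}^\cdot(\fL(G)/K)$ induced by $p$; since $p$ is a principal $H$-bundle, an Eilenberg--Moore type descent---exploiting that $\on{C}^\cdot(\fL(G)/K)$ is suitably free/cofree as a $\on{C}^\cdot(\Gr_G)$-algebra---should match the $H$-invariant part of the almost-constant category on $\fL(G)/K$ with the almost-constant category on $\Gr_G$, completing the proof.
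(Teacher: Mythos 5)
Your overall strategy matches the paper's: both reduce the statement to a descent along the projection $\fL(G)/K\to \Gr_G$ (a torsor for the finite-dimensional group $H=\fL^+(G)/K$), invoking \lemref{l:alm const vs alm triv}(ii) to identify $\on{alm-inv}_{\fL(G)}(\Dmod(\Gr_G))$ with $\bigl(\Dmod(\fL(G)/K)^{\omega,\on{alm-const}}\bigr)^H$ and then trying to identify the latter with $\Dmod(\Gr_G)^{\omega,\on{alm-const}}$. The paper uses $K=K_1$ so that $H=G$; your choice of general $K_i$ is a harmless variant. The setup and the observation that the commuting left $\fL(G)$- and right $H$-actions let you swap $\on{alm-inv}$ and $(-)^H$ are both correct.

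However, the crucial descent step is left as a gap. You write that ``an Eilenberg--Moore type descent---exploiting that $\on{C}^\cdot(\fL(G)/K)$ is suitably free/cofree as a $\on{C}^\cdot(\Gr_G)$-algebra---should match'' the two sides. This is a plan, not a proof, and its central premise is dubious: for a principal $H$-bundle $\wt Y\to Y$, the Eilenberg--Moore identification gives $\on{C}^\cdot(\wt Y)\simeq \on{C}^\cdot(Y)\underset{\on{C}^\cdot(BH)}\otimes k$, which is a relative tensor product over $\on{C}^\cdot(BH)$ and not in any evident sense free or cofree over $\on{C}^\cdot(Y)$. The paper instead isolates exactly this descent statement at the finite-type level as \lemref{l:descent alm} and \corref{c:descent alm}, and proves it by a compactness-and-generation argument: the functor $\Dmod(Y)^{\on{alm-const}}\to \bigl(\Dmod(\wt Y)^{\on{alm-const}}\bigr)^H$ is fully faithful and compact-preserving, so it suffices to check that $\ul{k}_{\wt Y}$ generates the target, which reduces to conservativity of $\on{inv}_H\colon\Vect^H\to\Vect$ for connected $H$. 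That is a genuinely simpler argument than what you propose, and it circumvents any freeness claim.

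A second gap is the ind-scheme bookkeeping. You invoke \propref{p:almost const Gr} and descent as if they applied directly to $\Gr_G$ and $\fL(G)/K$, but both the cochain-algebra identification and the descent lemma are established only at the level of finite-type schemes. The paper's proof therefore writes $\Gr_G=\underset{i}{``\on{colim}"}\,Y_i$, sets $\wt Y_i:=Y_i\underset{\Gr_G}\times \wt\Gr_G$, expresses all three $\omega$-almost-constant categories as limits over $i$, and then applies \corref{c:descent alm} termwise. Your sketch omits this limit argument, and without it the assertions about the categories on ind-schemes are unjustified. In short: the reduction is right, but you have not supplied the descent lemma (and the paper's version of it is both correct and cleaner than your proposed route), nor the passage from finite-type schemes to the ind-scheme setting.
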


The proof will be given in \secref{ss:proof of alm const aff Gr}.

\sssec{} \label{sss:sph gen}

Let $\bC$ be an object of $\fL(G)\mmod$. Recall that $\bC$ is said to be \emph{spherically generated} if the
(a priori fully faithful) functor
\begin{equation} \label{e:Sph gen}
\Dmod(\Gr_G)\underset{\Sph_G}\otimes \bC^{\fL^+(G)}\to \bC
\end{equation} 
is an equivalence, where
$$\Sph_G:=\Dmod(\fL(G))^{\fL^+(G)\times \fL^+(G)}.$$

We claim:

\begin{prop} \label{p:left adj into Sph gen}
Suppose that $\bC$ is spherically generated. Then the embedding
$$\on{alm-inv}_{\fL(G)}(\bC)\hookrightarrow \bC$$
admits a left adjoint. 
\end{prop}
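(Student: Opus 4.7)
The plan is to use the spherical generation hypothesis to reduce the existence of the left adjoint to a statement at the level of $\Dmod(\Gr_G)$ alone, and then to construct the reduced left adjoint via the Koszul-dual description from \propref{p:almost const Gr}.

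First, tensoring the given equivalence $\Dmod(\Gr_G) \underset{\Sph_G}\otimes \bC^{\fL^+(G)} \xrightarrow{\sim} \bC$ with $\Dmod(\fL(G))^{\omega,\on{alm-const}}$ over $\Dmod(\fL(G))$, and invoking \propref{p:alm const aff Gr} to identify $\on{alm-inv}_{\fL(G)}(\Dmod(\Gr_G))$ with $\Dmod(\Gr_G)^{\omega,\on{alm-const}}$, I obtain
$$\on{alm-inv}_{\fL(G)}(\bC) \;\simeq\; \Dmod(\Gr_G)^{\omega,\on{alm-const}} \underset{\Sph_G}\otimes \bC^{\fL^+(G)},$$
under which the embedding into $\bC$ is induced, via base change along $-\underset{\Sph_G}\otimes \bC^{\fL^+(G)}$, by the inclusion $\iota: \Dmod(\Gr_G)^{\omega,\on{alm-const}} \hookrightarrow \Dmod(\Gr_G)$. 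It therefore suffices to produce an $\Sph_G$-linear left adjoint to $\iota$.

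Second, I would construct the left adjoint to $\iota$ by an assembly argument on the ind-scheme structure of $\Gr_G$. Writing $\Dmod(\Gr_G) \simeq \underset{Y}{\on{colim}}\, \Dmod(Y)$ over $\fL^+(G)$-stable finite-type closed subschemes $Y \subset \Gr_G$ (with $*$-pushforward transition maps), and similarly for $\Dmod(\Gr_G)^{\omega,\on{alm-const}}$, it is enough to construct left adjoints at each stage that intertwine the transition maps. By \propref{p:almost const Gr} combined with \eqref{e:almost const as cochains omega}, each embedding $\Dmod(Y)^{\omega,\on{alm-const}} \hookrightarrow \Dmod(Y)$ is Koszul-equivalent to the inclusion of the subcategory of modules generated by the augmentation module inside $\on{C}^\cdot(Y)\mod$. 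On each connected component, \secref{sss:cohom Gr} gives $\on{C}^\cdot(\CY) \simeq \Sym(V)$ with $V$ finite-dimensional in positive even degrees, whence the augmentation-supported subcategory is part of a standard recollement and its inclusion admits a left adjoint (``augmentation approximation''). A direct check at the level of $\on{C}^\cdot$-modules shows compatibility of these local left adjoints with the transition maps, and they assemble into a left adjoint of $\iota$.

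Finally, the subcategory $\Dmod(\Gr_G)^{\omega,\on{alm-const}}$ is preserved by the right $\Sph_G$-action on $\Dmod(\Gr_G)$ by \propref{p:alm const aff Gr} (since the left $\fL(G)$- and right $\Sph_G$-actions commute), so $\iota$ is $\Sph_G$-linear; its left adjoint is then automatically $\Sph_G$-linear by naturality of the unit and counit. Tensoring with $\bC^{\fL^+(G)}$ over $\Sph_G$ yields the desired left adjoint to $\on{alm-inv}_{\fL(G)}(\bC) \hookrightarrow \bC$. The hard part will be the assembly in the second step: verifying coherently that the local ``augmentation approximation'' left adjoints on each $\Dmod(Y)$ commute with $*$-pushforward along closed embeddings $Y_1 \hookrightarrow Y_2$, which via the Koszul-dual picture amounts to a compatibility between the augmentation-supported subcategories of $\on{C}^\cdot(Y_1)\mod$ and $\on{C}^\cdot(Y_2)\mod$ under restriction of scalars along $\on{C}^\cdot(Y_2) \to \on{C}^\cdot(Y_1)$.
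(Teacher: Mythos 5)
Your first paragraph matches the paper's reduction to the universal case $\bC=\Dmod(\Gr_G)$ (the paper compresses this to one sentence) and the use of \propref{p:alm const aff Gr}. The second paragraph is where the argument breaks, and in fact the local statement you lean on is false. For $G=\on{SL}_2$ take $Y=\BP^1\subset\Gr_G$, so $\on{C}^\cdot(Y)\simeq k[\eta]/(\eta^2)$. By \eqref{e:almost const as cochains omega} the subcategory $\Dmod(Y)^{\omega,\on{alm-const}}$ is \emph{all} of $\on{C}^\cdot(Y)\mod$, not a subcategory generated by the augmentation module: \propref{p:almost const Gr} and the polynomial description $\on{C}^\cdot(\CY)\simeq\Sym(V)$ of \secref{sss:cohom Gr} are statements about the \emph{ind-scheme}, not about a finite-type $Y$, where $\on{C}^\cdot(Y)$ is a finite-dimensional (and typically non-regular) algebra. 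The inclusion $\Dmod(Y)^{\omega,\on{alm-const}}\hookrightarrow\Dmod(Y)$ has a left adjoint if and only if its Verdier dual $\on{C}^\cdot(Y,-):\Dmod(Y)\to\on{C}^\cdot(Y)\mod$ preserves compactness; but $\on{C}^\cdot(\BP^1,\delta_{\on{pt}})\simeq k$ is not perfect over $k[\eta]/(\eta^2)$, so the local left adjoint does not exist. Independently, $*$-pushforward along a closed embedding $Y_1\hookrightarrow Y_2$ does not carry $\Dmod(Y_1)^{\omega,\on{alm-const}}$ into $\Dmod(Y_2)^{\omega,\on{alm-const}}$ (the image is supported on $Y_1$), so $\Dmod(\Gr_G)^{\omega,\on{alm-const}}$ is a \emph{limit} along $!$-pullbacks (\secref{sss:almost constant prestacks omega}), not a colimit along $*$-pushforwards; the ``assembly'' you propose has no diagram to assemble over.

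The paper avoids any stage-wise construction. Dualizing, it suffices to show that the projection $\Dmod(\Gr_G)\to\Dmod(\Gr_G)^{\on{alm-const}}$, which by \propref{p:almost const Gr} is $\CF\mapsto\on{C}^\cdot(\CY,\CF)$ valued in $\on{C}^\cdot(\CY)\mod_0$, preserves compactness. This rests on two ind-scheme facts: because $\on{C}^\cdot(\CY)$ is polynomial on \emph{even}-degree generators, compactness in $\on{C}^\cdot(\CY)\mod_0$ is equivalent to finite-dimensionality of the underlying complex (the passage from the non-regular truncation $\on{C}^\cdot(Y)$ to the smooth $\Sym(V)$ is exactly what restores the needed finiteness); and because $\CY$ is ind-\emph{proper}, $\on{C}^\cdot(\CY,-)$ carries compact objects to finite-dimensional complexes. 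Your proposal never invokes ind-properness, which is the essential geometric input here.
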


\begin{proof}

It suffices to consider the universal case, i.e., $\bC=\Dmod(\Gr_G)$. I.e., we need to show that the 
embedding
$$\on{alm-inv}_{\fL(G)}(\Dmod(\Gr_G))\hookrightarrow \Dmod(\Gr_G)$$
admits a left adjoint. 

\medskip

By \propref{p:alm const aff Gr}, this is equivalent to showing that the embedding 
$$\Dmod(\Gr_G)^{\omega,\on{alm-const}}\hookrightarrow \Dmod(\Gr_G)$$
admits a left adjoint. 

\medskip

Dually, we need to show that the projection functor 
$$\Dmod(\Gr_G)\to \Dmod(\Gr_G)^{\on{alm-const}}$$
admits a continuous right adjoint, i.e., that it preserves compactness.  

\medskip

We can work at one connected component of $\Gr_G$ at a time, and it is enough
to consider the neutral connected component; denote it by $\CY$. Thus, by \propref{p:almost const Gr},
we have to show that the functor
$$\CF\mapsto \on{C}^\cdot(\CY,\CF), \quad \Dmod(\Gr_G)\to \on{C}^\cdot(\CY)\mod_0$$
preserves compactness. 

\medskip

Note, however, that since $\on{C}^\cdot(\CY)$ is isomorphic to a polynomial algebra on generators in \emph{even} degrees,
an object of $\on{C}^\cdot(\CY)\mod_0$ is compact if and if the underlying vector space is finite-dimensional.

\medskip

The required assertion follows now from the fact that $\CY$ is ind-proper, and so the functor 
$$\on{C}^\cdot(\CY,-):\Dmod(\Gr_G)\to \Vect$$
preserves compactness. 

\end{proof} 

\begin{cor}  \label{c:alm loop left adj}
The embedding
$$\on{alm-inv}_{\fL(G)}(\bC)\hookrightarrow \on{alm-inv}_{\fL^+(G)}(\bC)$$
admits a left adjoint.
\end{cor}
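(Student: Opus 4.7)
The plan is to reduce to the spherically generated case, where \propref{p:left adj into Sph gen} directly provides the needed adjoint. Introduce the \emph{spherical part}
$$\bC_{\on{sph}} := \Dmod(\Gr_G)\underset{\Sph_G}\otimes \bC^{\fL^+(G)},$$
together with its (a priori fully faithful, $\fL(G)$-linear) canonical functor $\bC_{\on{sph}} \hookrightarrow \bC$. By construction $\bC_{\on{sph}}$ is spherically generated, its image is an $\fL(G)$-stable subcategory of $\bC$ closed under colimits, and $(\bC_{\on{sph}})^{\fL^+(G)} = \bC^{\fL^+(G)}$.

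The crucial step is to establish the containment
$$\on{alm-inv}_{\fL^+(G)}(\bC) \subset \bC_{\on{sph}}.$$
By definition $\on{alm-inv}_{\fL^+(G)}(\bC) = \on{alm-inv}_G(\bC^{K_1})$, with $G = \fL^+(G)/K_1$. The observation at the end of \secref{ss:alm triv pro} says that $\on{alm-inv}_G(\bC^{K_1})$ is generated under colimits by its subcategory of strict $G$-invariants $(\bC^{K_1})^G = \bC^{\fL^+(G)}$. Moreover, each of the inclusions $\on{alm-inv}_G(\bC^{K_1}) \hookrightarrow \bC^{K_1}$ and $\bC^{K_1} \hookrightarrow \bC$ preserves colimits (each is a left adjoint, the second because $K_1$ is pro-unipotent). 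Hence every object of $\on{alm-inv}_{\fL^+(G)}(\bC)$ is a colimit in $\bC$ of objects of $\bC^{\fL^+(G)} \subset \bC_{\on{sph}}$, and lies in $\bC_{\on{sph}}$ by closure of the latter under colimits.

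Consequently $\on{alm-inv}_{\fL(G)}(\bC) \subset \on{alm-inv}_{\fL^+(G)}(\bC) \subset \bC_{\on{sph}}$, and the $\fL(G)$-stability of $\bC_{\on{sph}}$ gives $\on{alm-inv}_{\fL(G)}(\bC_{\on{sph}}) = \on{alm-inv}_{\fL(G)}(\bC)$. Applying \propref{p:left adj into Sph gen} to the spherically generated category $\bC_{\on{sph}}$ yields a left adjoint $L$ to the embedding $\on{alm-inv}_{\fL(G)}(\bC_{\on{sph}}) \hookrightarrow \bC_{\on{sph}}$. Its restriction along the intermediate full subcategory $\on{alm-inv}_{\fL^+(G)}(\bC) \subset \bC_{\on{sph}}$ is then the desired left adjoint to $\on{alm-inv}_{\fL(G)}(\bC) \hookrightarrow \on{alm-inv}_{\fL^+(G)}(\bC)$: this invokes only the general principle that restricting a left adjoint to a full subcategory containing the target produces a left adjoint to the restricted inclusion.

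The main delicate point is the containment $\on{alm-inv}_{\fL^+(G)}(\bC) \subset \bC_{\on{sph}}$: it requires combining colimit-generation of almost invariants by strict invariants (which reduces to the finite-dimensional algebraic-group statement of \secref{ss:alm triv pro}) with colimit preservation of the two-step inclusion into $\bC$ and the colimit-closedness of the spherical part. Everything else is formal.
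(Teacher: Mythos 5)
Your proof is correct and takes essentially the same route as the paper: reduce to the spherically generated case via the containment $\on{alm-inv}_{\fL^+(G)}(\bC) \subset \bC_{\on{sph}}$, then invoke \propref{p:left adj into Sph gen}. The only difference is that the paper asserts the containment without argument, whereas you supply a (valid) proof of it via colimit generation by $\bC^{\fL^+(G)}$.
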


\begin{proof}

Note that $\on{alm-inv}_{\fL^+(G)}(\bC)$ is contained in the spherically generated subcategory of
$\bC$, i.e., the essential image of \eqref{e:Sph gen}. Hence, we can assume that $\bC$ is
spherically generated. 

\medskip

Now the assertion follows from \propref{p:left adj into Sph gen}. 

\end{proof} 

\begin{rem}

It follows from the proof of \propref{p:left adj into Sph gen} that the formation of the left adjoint in
\propref{p:left adj into Sph gen} (resp., \corref{c:alm loop left adj}) is functorial in $\bC$, i.e.,
the corresponding Beck-Chevalley natural transformation is an isomorophism.

\end{rem} 

\ssec{Almost trivial actions of the loop group} \label{ss:alm triv loop}

In this subsection we define what it means for a $\fL(G)_{x_0}$-action on a group to be almost trivial,
and we give a Koszul-dual description of the totality of such categories. 

\sssec{}

We shall say that an action of $\fL(G)$ on $\bC$ is \emph{almost trivial} if the embedding 
\begin{equation} \label{e:alm inv loop}
\on{alm-inv}_{\fL(G)}(\bC)\hookrightarrow \bC
\end{equation} 
is an equivalence. 

\sssec{}

Note that we can also characterize almost trivial actions as follows: an action of $\fL(G)$ on $\bC$ is almost trivial if and only if
the monoidal action of $\Dmod(\fL(G))$ on $\bC$ factors through the quotient
$$\Dmod(\fL(G))\twoheadrightarrow \Dmod(\fL(G))^{\on{alm-const}}.$$

\sssec{}

For future use, we notice:

\begin{lem} \label{l:alm triv cons}
Let $F:\bC_1\to \bC_2$ be a 1-morphism in $\fL(G)\mmod$, which is 
\emph{conservative} as a functor on the underlying categories. Then if
the action on $\bC_2$ is almost trivial, then so it is on $\bC_1$.
\end{lem}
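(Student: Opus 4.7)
The plan is to use the alternative characterization of almost triviality pointed out just before the lemma: the $\fL(G)$-action on $\bC$ is almost trivial if and only if the monoidal action functor $\Dmod(\fL(G))\otimes \bC\to \bC$ factors through the quotient monoidal category $\Dmod(\fL(G))^{\on{alm-const}}$. Equivalently, if we let $\CK\subset \Dmod(\fL(G))$ denote the full subcategory of objects sent to zero by the projection $\Dmod(\fL(G))\twoheadrightarrow \Dmod(\fL(G))^{\on{alm-const}}$, almost triviality of $\bC$ is equivalent to the restriction of the action functor
$$\CK\otimes \bC\to \bC$$
being the zero functor.

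Given this reformulation the argument is transparent. Since $F$ is a $1$-morphism in $\fL(G)\mmod$, it is $\Dmod(\fL(G))$-linear, so we obtain a commutative square
$$
\CD
\CK\otimes \bC_1 @>>> \bC_1 \\
@V{\id\otimes F}VV @VV{F}V \\
\CK\otimes \bC_2 @>>> \bC_2
\endCD
$$
whose horizontal arrows are (the restrictions of) the action functors. By hypothesis the bottom row is zero, so $F$ precomposed with the top row is zero. Since $F$ is conservative and we are in the stable (DG) setting, any object $X\in \bC_1$ with $F(X)\simeq 0$ must itself satisfy $X\simeq 0$; hence the top row is the zero functor as well. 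This says precisely that the $\Dmod(\fL(G))$-action on $\bC_1$ factors through $\Dmod(\fL(G))^{\on{alm-const}}$, i.e., the $\fL(G)$-action on $\bC_1$ is almost trivial, as desired.

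There is no serious obstacle; the only mild point is the passage from ``$F\circ G=0$ and $F$ conservative'' to ``$G=0$ as a functor,'' which is a standard consequence of stability combined with conservativity (an object is zero iff its identity is zero iff the map to $0$ is an equivalence, and $F$ reflects this). The content of the lemma is then the general principle that a conservative equivariant functor pulls back the ``annihilator ideal'' of the action.
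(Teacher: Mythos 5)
Your proof is correct and takes a genuinely different route from the paper's. The paper argues through the chain of inclusions $\on{alm-inv}_{\fL(G)}(\bC)\hookrightarrow \on{alm-inv}_{\fL^+(G)}(\bC)\hookrightarrow \bC$, treating the two steps separately: for the second inclusion it uses the right adjoint and conservativity of $F$ to transfer the counit isomorphism from $\bC_2$ to $\bC_1$, and for the first it uses the \emph{left} adjoint supplied by Corollary~\ref{c:alm loop left adj} and argues analogously. This two-layer structure is forced in the paper because the left adjoint to the $\fL(G)$-inclusion is only constructed on the spherically generated part, so one must route through the $\fL^+(G)$-level first. Your argument bypasses all of that: $\Dmod(\fL(G))$-linearity of $F$ gives $F(k\star c)\simeq k\star F(c)$ for $k\in\CK$; the right side vanishes by almost triviality of $\bC_2$; and conservativity (in the stable setting: $F(x)\simeq 0\Rightarrow x\simeq 0$, since $F$ reflects the isomorphism $x\to 0$) kills the left side. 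One gets the annihilator-ideal transfer in a single step without ever invoking the auxiliary adjoints.

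The one thing worth flagging is the initial equivalence, which the paper states in passing but does not prove: that almost triviality of $\bC$ is the same as the action functor killing the kernel $\CK$ of $\Dmod(\fL(G))\twoheadrightarrow \Dmod(\fL(G))^{\on{alm-const}}$ objectwise. This does follow from the paper's own identification $\on{alm-inv}_{\fL(G)}(\bC)\simeq \on{Funct}_{\Dmod(\fL(G))}(\Dmod(\fL(G))^{\on{alm-const}},\bC)$: by the universal property of the (monoidal) Verdier quotient, $\Dmod(\fL(G))$-linear functors out of $\Dmod(\fL(G))^{\on{alm-const}}$ correspond exactly to objects $c\in\bC$ with $k\star c\simeq 0$ for all $k\in\CK$, so the inclusion $\on{alm-inv}_{\fL(G)}(\bC)\hookrightarrow\bC$ is an equivalence iff $\CK$ acts by zero on all of $\bC$. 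You should, however, be aware that you are leaning on this characterization, which the paper only asserts; the paper's own proof of the lemma is arranged so as to proceed directly from the definition via adjunctions, thereby avoiding any reliance on that unproved remark. So the trade-off is: your argument is shorter and conceptually cleaner, but less self-contained relative to what is actually proved earlier in the section.
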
 

\begin{proof}

Since the action on $\bC_2$ is almost trivial, both arrows in
$$\on{alm-inv}_{\fL(G)}(\bC_2)\hookrightarrow \on{alm-inv}_{\fL^+(G)}(\bC_2)\hookrightarrow \bC_2$$
are equivalences. We need to show that the same is true for 
$$\on{alm-inv}_{\fL(G)}(\bC_1)\hookrightarrow \on{alm-inv}_{\fL^+(G)}(\bC_1)\hookrightarrow \bC_1.$$

\medskip

We have a commutative diagram\footnote{The horizontal arrows are the adjoint functors from \eqref{e:almost inv}, both of which
are functorial in $\bC$.}
$$
\CD
\bC_1 @>>> \on{alm-inv}_{\fL^+(G)}(\bC_1) @>>> \bC_1 \\
@V{F}VV @VVV @VV{F}V \\
\bC_2 @>>> \on{alm-inv}_{\fL^+(G)}(\bC_2) @>>>  \bC_2.
\endCD
$$

\medskip

Hence, the fact that the counit of the adjunction
$$\on{alm-inv}_{\fL^+(G)}(\bC_2) \rightleftarrows \bC_2$$
is an isomorphism and the conservativity of $F$ imply that 
the unit of the adjunction
$$\on{alm-inv}_{\fL^+(G)}(\bC_1) \rightleftarrows \bC_1$$
is an isomorphism. 

\medskip

The proof for 
$$\on{alm-inv}_{\fL(G)}(\bC_i)\hookrightarrow \on{alm-inv}_{\fL^+(G)}(\bC_i)$$
is similar using left adjoints and \corref{c:alm loop left adj}. 

\end{proof}

\sssec{}

Let
\begin{equation} \label{e:embed alm inv loop}
(\fL(G)\mmod)_{\on{alm-triv}}\subset \fL(G)\mmod
\end{equation} 
be the full subcategory, consisting of $\fL(G)$-module categories, on which the action is almost trivial.

\medskip

Note that we can identify
$$(\fL(G)\mmod)_{\on{alm-triv}}\simeq \Dmod(\fL(G))^{\on{alm-const}}\mmod,$$
viewed as a full subcategory of 
$$\fL(G)\mmod\simeq \Dmod(\fL(G))\mmod.$$

\medskip

The assignment 
$$\bC\mapsto \on{alm-inv}_{\fL(G)}(\bC)$$
is a right adjoint to the embedding \eqref{e:embed alm inv loop}. 

\sssec{}

Consider the (symmetric) monoidal category
$\Vect^{\fL(G)}$.

\medskip

The functor
$$\on{inv}_{\fL(G)}:\fL(G)\mmod\to \DGCat$$
naturally upgrades to a functor
$$\on{inv}^{\on{enh}}_{\fL(G)}:\fL(G)\mmod\to \Vect^{\fL(G)}\mmod,$$
which admits a left adjoint, given by
\begin{equation} \label{e:LG in tensor up}
\wt\bC\mapsto \Vect\underset{\Vect^{\fL(G)}}\otimes \wt\bC.
\end{equation}

\medskip

It is clear that the above adjoint pair factors as
$$\Vect^{\fL(G)}\mmod \rightleftarrows (\fL(G)\mmod)_{\on{alm-triv}} \rightleftarrows \fL(G)\mmod .$$

\sssec{}

We will prove:

\begin{thm}  \label{t:Kosz for loop}
The adjoint functors 
$$\Vect^{\fL(G)}\mmod \rightleftarrows (\fL(G)\mmod)_{\on{alm-triv}}$$
are mutually inverse equivalences.
\end{thm}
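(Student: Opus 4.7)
The plan is to deduce \thmref{t:Kosz for loop} from its finite-type analog \propref{p:oblv enh} by a devissage along the tower of congruence subgroups, using the Koszul-type description of almost-constant sheaves afforded by \propref{p:alm const aff Gr} and the comparison between $\fL(G)$- and $\fL^+(G)$-almost-invariants recorded in \corref{c:alm loop left adj}.

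First I would reformulate the statement in Morita-theoretic language. Using the identification $(\fL(G)\mmod)_{\on{alm-triv}} \simeq \Dmod(\fL(G))^{\on{alm-const}}\mmod$ noted in the excerpt, the theorem becomes the assertion that $\Vect$ (with its trivial $\fL(G)$-action) is a 2-generator of the 2-category $\Dmod(\fL(G))^{\on{alm-const}}\mmod$ whose endomorphism algebra is $\Vect^{\fL(G)} = \End_{\fL(G)\mmod}(\Vect)$. To verify the unit $\wt\bC \to \bigl(\Vect\underset{\Vect^{\fL(G)}}\otimes \wt\bC\bigr)^{\fL(G)}$, I would check that both sides are 2-functorial and preserve 2-colimits in $\wt\bC$: the left-hand side tautologically, and the right-hand side using that $L$ is a left adjoint and that the invariants functor, restricted to almost-trivial modules, commutes with colimits by virtue of the duality identification \corref{c:ten omega}. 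Since both sides agree on the free generator $\wt\bC = \Vect^{\fL(G)}$ (where both evaluate to $\Vect^{\fL(G)}$), the unit is an equivalence.

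The main task is the counit: for an almost-trivial $\bC \in \fL(G)\mmod$, the functor $\Vect\underset{\Vect^{\fL(G)}}\otimes \bC^{\fL(G)} \to \bC$ is an equivalence. The strategy is two-stage. In the first stage, I would exploit \corref{c:alm loop left adj} and \propref{p:alm const aff Gr} to replace $\fL(G)$-invariants by $\fL^+(G)$-invariants: the left adjoint to $\bC^{\fL(G)} \hookrightarrow \bC^{\fL^+(G)}$ is controlled by the action of the quotient $\Dmod(\fL(G))^{\on{alm-const}} \to \Dmod(\fL^+(G)\backslash \fL(G)/\fL^+(G))^{\on{alm-const}}$, and on the Koszul-dual side corresponds to the passage from modules over $\on{C}^\cdot(\fL(G)/\fL^+(G))$ to modules over $\on{C}^\cdot(\fL(G))$, which is controlled by \corref{c:omega ff}. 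In the second stage, I would apply \propref{p:oblv enh} to each finite-type quotient $H_i := \fL^+(G)/K_i$ and pass to the filtered limit. The key observation enabling this passage is that for an almost-trivial $\fL^+(G)$-module $\bC$, the defining equality $\on{alm-inv}_{\fL^+(G)}(\bC) = \on{alm-inv}_G(\bC^{K_1}) = \bC$ forces the first congruence subgroup $K_1$ to act trivially, so the action factors through the finite-type reductive quotient $G = \fL^+(G)/K_1$, where \propref{p:oblv enh} applies directly.

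The main obstacle will be ensuring that the limit/colimit interchange in the second stage converges: one needs the restriction functors between $\Vect^{H_i}\mmod$ (for varying $i$) to interact cleanly with the formation of $L$ and $R$. I would handle this by reducing, via \lemref{l:alm triv cons} and \propref{p:left adj into Sph gen}, to the spherically generated case, where the left adjoints provided by those results make the compatibilities tractable and allow one to reduce to checking statements at the level of the plain (underlying) DG category.
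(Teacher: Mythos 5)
Your proposal has a genuine gap, and it is located exactly where the theorem is nontrivial.

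Your Stage 2 devissage works: for an almost-trivial $\fL^+(G)$-module, $\bC^{K_1}\simeq\bC$ does force the pro-unipotent $K_1$ to act trivially, so the $\fL^+(G)$-action descends to the finite-type reductive quotient $G$ where \propref{p:oblv enh} applies. But this only handles the pro-algebraic part of the story. The essential difficulty in \thmref{t:Kosz for loop} comes from the other direction, i.e.\ the contribution of the affine Grassmannian $\Gr_G\simeq\fL(G)/\fL^+(G)$, and your Stage 1 does not actually resolve it. You invoke \corref{c:alm loop left adj} and \corref{c:omega ff}, but the first only asserts the existence of a left adjoint to the inclusion of $\fL(G)$-almost-invariants into $\fL^+(G)$-almost-invariants, and the second only asserts fully faithfulness of a certain tensored-up functor; neither is a Koszul-duality statement, and neither tells you that the counit in question is an equivalence. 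Recall that $\on{C}^\cdot(\Gr_G)$ is a polynomial algebra on generators in \emph{positive even} degrees; for such an algebra the naive Koszul adjunction between the algebra and its dual is \emph{not} an equivalence at the level of all modules, and the real content of \propref{p:almost const Gr} is precisely that one must pass to the full subcategory $\on{C}^\cdot\mod_0$ generated by the augmentation module for the comparison to hold. Nothing in your Stage 1 performs this identification or invokes the corresponding convergence argument.

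Consistent with this, your argument makes no distinction between types of reductive groups, whereas the paper's proof has to. For $G$ semi-simple and simply-connected, the paper explicitly identifies $\on{C}^\cdot(\fL(G))$ as a Hopf algebra of the form $\Sym(V)\otimes\Sym(W)$ ($V$ in positive even degrees, $W$ in positive odd degrees), proves Koszul duality for the $\Sym(W)$-factor by the finite-dimensional Hopf-algebra argument and for the $\Sym(V)$-factor by the grading-shearing reduction to $\Rep(V^*)$; this latter step has no analogue in a devissage by congruence subgroups. For $G=T$ a torus, the argument is different again: it uses the exact sequence $1\to\fL^+(T)\to\fL(T)\to\Lambda\to 1$ and a $\Lambda$-invariance trick, i.e.\ a $1$-affineness-type input for the discrete lattice $\Lambda=\Gr_T$, which also cannot be seen from the pro-unipotent filtration. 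Finally, the paper assembles the general case by two isogeny arguments. Your plan, which claims to reduce uniformly to the finite-type group $G=\fL^+(G)/K_1$, would, if correct, make the entire case analysis unnecessary, which is a sign that something is being elided; concretely, what is elided is the Koszul duality in the $\Gr_G$-direction, which is irreducibly a statement about the ind-scheme $\fL(G)$ and not about any of its pro-algebraic quotients.
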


The theorem will be proved in \secref{ss:proof of Kosz for loop}. 

\sssec{}

From \thmref{t:Kosz for loop} we obtain:

\begin{cor}
The naturally defined functor 
$$\Vect\underset{\Vect^{\fL(G)}}\otimes \Vect\to \Dmod(\fL(G))^{\omega,\on{alm-const}}$$
is an equivalence.
\end{cor}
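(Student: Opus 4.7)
The plan is to deduce the corollary directly from \thmref{t:Kosz for loop} by applying it to the specific module category $\bC := \Dmod(\fL(G))^{\omega,\on{alm-const}}$, equipped with the right-translation action of $\fL(G)$. Since \thmref{t:Kosz for loop} asserts that the adjunction $\Vect^{\fL(G)}\mmod \rightleftarrows (\fL(G)\mmod)_{\on{alm-triv}}$ is an equivalence, it follows in particular that for any $\bC \in (\fL(G)\mmod)_{\on{alm-triv}}$ the counit map
$$\Vect\underset{\Vect^{\fL(G)}}\otimes \bC^{\fL(G)}\longrightarrow \bC$$
is an equivalence. So the corollary reduces to two verifications for this specific $\bC$: that the action is almost trivial, and that $\bC^{\fL(G)} \simeq \Vect$ with the augmentation $\Vect^{\fL(G)}$-module structure.

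The first verification is immediate from \lemref{l:alm const vs alm triv}(i): the subcategory $\Dmod(\fL(G))^{\omega,\on{alm-const}}$ coincides with $\on{alm-inv}_{\fL(G)\text{-right}}(\Dmod(\fL(G)))$, so the right-translation action on $\bC$ is tautologically almost trivial. For the second, since taking $\fL(G)$-invariants factors through taking almost-invariants, one has
$$\bC^{\fL(G)} \simeq \Dmod(\fL(G))^{\fL(G),\on{right}}.$$
Via the standard identification $\Dmod(\fL(G))^{\fL(G),\on{right}} \simeq \Dmod(\fL(G)/\fL(G)) \simeq \Vect$, this yields the required $\Vect$; the preimage of the generator is the dualizing sheaf $\omega_{\fL(G)}$, which does lie in $\Dmod(\fL(G))^{\omega,\on{alm-const}}$ since it generates that subcategory under colimits (by the remark following Proposition~\ref{p:left vs right}).

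The main point that will require some care is identifying the induced $\Vect^{\fL(G)}$-module structure on $\bC^{\fL(G)} \simeq \Vect$ with the augmentation structure used in forming $\Vect\underset{\Vect^{\fL(G)}}\otimes\Vect$. The idea is to observe that this $\Vect^{\fL(G)}$-action arises from the residual \emph{left}-translation action of $\fL(G)$ on $\Dmod(\fL(G))^{\fL(G),\on{right}}$ (the left and right actions commute); since the left action descends along the quotient map $\fL(G)\to \fL(G)/\fL(G) = \on{pt}$, it is trivial, and consequently the action of $\Vect^{\fL(G)}$ is through the augmentation $\Vect^{\fL(G)}\to \Vect$. Substituting these identifications into the counit equivalence above then yields the desired isomorphism $\Vect\underset{\Vect^{\fL(G)}}\otimes \Vect \xrightarrow{\simeq} \Dmod(\fL(G))^{\omega,\on{alm-const}}$, and a final unpacking confirms that the resulting functor is indeed the one referred to in the corollary.
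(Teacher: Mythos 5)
Your overall strategy matches what the paper intends: the corollary is deduced from \thmref{t:Kosz for loop} by applying the counit of the adjunction to $\bC := \Dmod(\fL(G))^{\omega,\on{alm-const}}$ (with the right-translation action), using \lemref{l:alm const vs alm triv}(i) to see that this action is almost trivial, and computing $\bC^{\fL(G)} \simeq \Dmod(\fL(G))^{\fL(G),\on{right}} \simeq \Vect$. The first two verifications you make are correct and are exactly what is needed.

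However, the argument you give for the key step — that the induced $\Vect^{\fL(G)}$-module structure on $\bC^{\fL(G)} \simeq \Vect$ is the augmentation — is not valid as stated. You claim this follows because the residual \emph{left}-translation action of $\fL(G)$ on $\bC^{\fL(G)}$ is trivial. But the residual left $\fL(G)$-action on right-invariants and the $\Vect^{\fL(G)}$-module structure on right-invariants (which arises by precomposition in $\on{Funct}_{\fL(G)\mmod}(\Vect,-)$) are logically independent pieces of data that merely commute with each other; knowing that one is trivial does not determine the other. In general, a $\Vect^{\fL(G)}$-module whose underlying category is $\Vect$ corresponds to an arbitrary monoidal functor $\Vect^{\fL(G)} \to \Vect$ (equivalently, an algebra character of $\on{C}_\cdot(\fL(G))$), and the augmentation is only one such.

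The conclusion is correct, but it needs a genuine argument. One way: use that $\Dmod(\fL(G))$ is the regular module, so that $\Dmod(\fL(G))^{\fL(G),\on{right}} = \on{Funct}_{\fL(G)\mmod}(\Vect,\Dmod(\fL(G)))$; the $\Vect^{\fL(G)}$-action by precomposition, evaluated on $F \in \Vect^{\fL(G)}$, sends a map $\bc\colon \Vect \to \Dmod(\fL(G))$ to $\bc \circ F$, and at the level of underlying objects $(\bc\circ F)(k) = \oblv(F) \otimes \bc(k)$. One then has to check that this underlying identification lifts coherently to the identification $\Dmod(\fL(G))^{\fL(G),\on{right}} \simeq \Vect$; this can be done either by unwinding the descent diagram for $\fL(G)\to \on{pt}$, or more cleanly by invoking the self-duality of $\Dmod(\fL(G))$ as a bi-$\fL(G)$-module (the corollary following \lemref{l:fake LG bi inv}) together with the free-forgetful adjunction for $\Dmod(\fL(G))$, under which precomposition in $\on{Funct}_{\fL(G)\mmod}(\Vect,\Dmod(\fL(G)))$ becomes the forgetful action of $\Vect^{\fL(G)}$ on $\on{Funct}_{\DGCat}(\Vect,\Vect)\simeq\Vect$, i.e.\ the augmentation. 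So: the skeleton of your proof is right and agrees with the paper's intended deduction, but the heuristic you offer for the middle step is not a proof, and that step is precisely where the content of the corollary sits.
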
 

\sssec{} \label{sss:Av Gr}

For future use, we record the following consequence of \thmref{t:Kosz for loop}.

\medskip

Let $\bC$ be an object of $\fL(G)\mmod$. Note that since $\Gr_G$ is proper, the forgetful functor
$$\oblv_{\fL(G)\to \fL^+(G)}:\bC^{\fL(G)}\to \bC^{\fL^+(G)}$$
admits a \emph{left} adjoint, to be denoted 
$$\on{Av}^{\fL^+(G)\to \fL(G)}_!,$$
see \secref{sss:Av Gr App}. 

\medskip

We claim:

\begin{prop} \label{p:when al triv}
Let $\bC$ be spherically generated. Then the action of $\fL(G)$ is almost trivial if 
the functor $\on{Av}^{\fL^+(G)\to \fL(G)}_!$ is conservative. 
\end{prop}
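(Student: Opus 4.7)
My plan is to show that the fully faithful embedding $i:\on{alm-inv}_{\fL(G)}(\bC)\hookrightarrow\bC$, whose left adjoint $L$ exists by \propref{p:left adj into Sph gen}, is an equivalence; equivalently, that $L$ is conservative.

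The first step is to make $L$ explicit via the spherical presentation. Combining $\bC\simeq \Dmod(\Gr_G)\underset{\Sph_G}\otimes \bC^{\fL^+(G)}$ with \propref{p:alm const aff Gr}, and using that $\on{alm-inv}_{\fL(G)}(-)$ is functorial and commutes with base change along $\Sph_G$-modules, one obtains
\[
\on{alm-inv}_{\fL(G)}(\bC)\;\simeq\; \Dmod(\Gr_G)^{\omega,\on{alm-const}}\underset{\Sph_G}\otimes \bC^{\fL^+(G)},
\]
so that $i$ is the base change over $\Sph_G$ of the universal inclusion $\Dmod(\Gr_G)^{\omega,\on{alm-const}}\hookrightarrow \Dmod(\Gr_G)$, and $L$ is the base change of the associated left adjoint. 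Setting $\bC_0:=\ker(L)$, a $\fL(G)$-stable subcategory of $\bC$, spherical generation reduces the vanishing of $\bC_0$ to that of $\bC_0^{\fL^+(G)}$; equivalently, it suffices to show that no nonzero $\bd\in\bC^{\fL^+(G)}$ has $\oblv(\bd)\in\bC_0$.

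The second step translates this condition through a chain of adjunctions. By the $(\oblv_{\fL^+(G)},\on{Av}_*^{\fL^+(G)})$-adjunction, $\oblv(\bd)\in\bC_0$ is equivalent to the vanishing of $\Hom_{\bC^{\fL^+(G)}}(\bd,\on{Av}_*^{\fL^+(G)}(\bc'))$ for every $\bc'\in\on{alm-inv}_{\fL(G)}(\bC)$. Invoking \thmref{t:Kosz for loop} applied to the almost-trivial $\fL(G)$-module $\on{alm-inv}_{\fL(G)}(\bC)$, this category is generated, under colimits and the $\Vect^{\fL(G)}$-action, by the image of $\bC^{\fL(G)}$. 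Transferring this generation through $\on{Av}_*^{\fL^+(G)}$ reduces the vanishing to $\Hom_{\bC^{\fL^+(G)}}(\bd,\oblv_{\fL(G)\to\fL^+(G)}(\bc''))=0$ for every $\bc''\in\bC^{\fL(G)}$, which by the $(\on{Av}_!^{\fL^+(G)\to\fL(G)},\oblv_{\fL(G)\to\fL^+(G)})$-adjunction is precisely $\on{Av}_!^{\fL^+(G)\to\fL(G)}(\bd)=0$. The conservativity hypothesis then forces $\bd=0$, so $\bC_0^{\fL^+(G)}=0$ and hence $\bC_0=0$, giving almost triviality of the $\fL(G)$-action on $\bC$.

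The main technical obstacle lies in the second step, in the transfer of the generation of $\on{alm-inv}_{\fL(G)}(\bC)$ from $\bC^{\fL(G)}$ through $\on{Av}_*^{\fL^+(G)}$: since the latter is a right adjoint, it does not automatically commute with the colimits used in the generation. I would handle this by leveraging the semi-rigid structure of the monoidal category $\Vect^{\fL(G)}$ acting on $\on{alm-inv}_{\fL(G)}(\bC)$, inherited from the semi-rigidity of $\Dmod(\fL(G))^{\on{alm-const}}$ noted in \secref{ss:alm triv pro}, together with the ind-properness of $\Gr_G$, which forces $\on{Av}_*^{\fL^+(G)\to\fL(G)}$ and $\on{Av}_!^{\fL^+(G)\to\fL(G)}$ to agree up to a cohomological shift, making the required comparison of Hom-spaces functorially tractable.
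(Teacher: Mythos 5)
Your proposal follows the same high-level strategy as the paper — reduce by spherical generation to $\fL^+(G)$-invariants, use \thmref{t:Kosz for loop} to identify $(\on{alm-inv}_{\fL(G)}(\bC))^{\fL^+(G)}$ with $\Vect^{\fL^+(G)}\underset{\Vect^{\fL(G)}}\otimes \bC^{\fL(G)}$, and relate the conservativity of the relevant left adjoint to that of $\on{Av}^{\fL^+(G)\to \fL(G)}_!$. However, the paper's implementation is cleaner and your object-by-object route has a genuine gap that the paper sidesteps entirely.

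The paper observes that the fully faithful inclusion $(\on{alm-inv}_{\fL(G)}(\bC))^{\fL^+(G)}\hookrightarrow \bC^{\fL^+(G)}$ precomposed with $\bC^{\fL(G)}\to \Vect^{\fL^+(G)}\underset{\Vect^{\fL(G)}}\otimes \bC^{\fL(G)}$ is exactly $\oblv_{\fL(G)\to \fL^+(G)}$. Passing to left adjoints, one gets a factorization $\on{Av}^{\fL^+(G)\to\fL(G)}_! = H^L\circ G^L$, and conservativity of the composite immediately forces conservativity of the outer factor $G^L$ — no comparison of $\on{Av}_*$ against $\on{Av}_!$, and no generation-under-colimits argument, is needed. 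By contrast, your argument runs through the Hom-vanishing
$\Hom_{\bC^{\fL^+(G)}}(\bd,\on{Av}_*^{\fL^+(G)}(\bc'))$ and then needs to pass from the family $\{\on{Av}_*^{\fL^+(G)}(\bc')\,:\,\bc'\in \on{alm-inv}_{\fL(G)}(\bC)\}$ to the family $\{\oblv_{\fL(G)\to\fL^+(G)}(\bc'')\,:\,\bc''\in\bC^{\fL(G)}\}$. Since $\on{Av}_*^{\fL^+(G)}$ is a right adjoint, it does not transport the colimit-generation supplied by \thmref{t:Kosz for loop}, and the patch you propose does not hold: $\on{Av}_*^{\fL^+(G)\to\fL(G)}$ and $\on{Av}_!^{\fL^+(G)\to\fL(G)}$ are \emph{not} related by a cohomological shift, because $\Gr_G$, while ind-proper, is not smooth and its Schubert strata have varying dimensions, so there is no single shift that can implement the comparison (the finite-dimensional phenomenon $\on{Av}_! \simeq \on{Av}_*[2\dim]$ relies on smoothness and equidimensionality of the flag variety, neither of which holds here). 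You should replace the object-level detour with the paper's functorial argument: factor $\oblv_{\fL(G)\to\fL^+(G)}$ through the fully faithful inclusion and read off conservativity from the composite.
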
 

\begin{proof}

Since both $\bC$ and $\on{alm-inv}_{\fL(G)}(\bC)$ are spherically generated, the inclusion 
$$\on{alm-inv}_{\fL(G)}(\bC)\hookrightarrow \bC$$
is an equivalence if and only if the functor
$$((\on{alm-inv}_{\fL(G)}(\bC))^{\fL^+(G)}\to \bC^{\fL^+(G)},$$
which is a priori also a fully faithful inclusion, is an equivalence. 

\medskip

By \thmref{t:Kosz for loop}, we can identify the above functor with
\begin{equation} \label{e:Sph triv}
\Vect^{\fL^+(G)}\underset{\Vect^{\fL(G)}}\otimes \bC^{\fL(G)}\to \bC^{\fL^+(G)}.
\end{equation}

\medskip

Thus, the action of $\fL(G)$ is almost trivial if and only if the left adjoint to \eqref{e:Sph triv} is conservative. 

\medskip

The precomposition of \eqref{e:Sph triv} with
\begin{equation} \label{e:Sph triv one}
\bC^{\fL(G)}\simeq \Vect^{\fL(G)}\underset{\Vect^{\fL(G)}}\otimes \bC^{\fL(G)}\to \Vect^{\fL^+(G)}\underset{\Vect^{\fL(G)}}\otimes \bC^{\fL(G)}
\end{equation}
is the forgetful functor $\oblv_{\fL(G)\to \fL^+(G)}$.

\medskip

Thus, if the left adjoint to  
$\oblv_{\fL(G)\to \fL^+(G)}$, i.e., $\on{Av}^{\fL^+(G)\to \fL(G)}_!$, is conservative, then so is the left adjoint to \eqref{e:Sph triv}. 
%
%

\end{proof} 

\begin{rem}

In fact, one can show that the assertion of \propref{p:when al triv} is ``if and only if", but we will not need this.

\end{rem} 

\sssec{} \label{sss:Frob}

Note that we an identify
$$\Vect^{\fL(G)}\simeq \on{C}_\cdot(\fL(G))\mod,$$
as monoidal categories. 

\medskip

Hence, being equivalent to the category of modules over a Hopf algebra, $\Vect^{\fL(G)}$ is a Frobenius
algebra in $\DGCat$. In particular, we have a canonical equivalence
$$(\Vect^{\fL(G)})^\vee\simeq \Vect^{\fL(G)}$$
as $\Vect^{\fL(G)}$-module categories. 

\ssec{The reduction step} \label{ss:red to triv}

After all the preparations, in this subsection we will finally finally formulate a reduction step in the proof
of \thmref{t:main}: the claim is that it is sufficient to prove it for $\bC=\Vect$. 

\sssec{} \label{sss:pass to alm inv}

It is clear that the embedding \eqref{e:alm inv loop}
induces an equivalence
$$(\on{alm-inv}_{\fL(G)}(\bC))^{\fL(G)}\overset{\sim}\to \bC^{\fL(G)}.$$

\sssec{}

We will prove:

\begin{thm} \label{t:red to alm const case}
The embedding \eqref{e:alm inv loop} induces an equivalence
$$\omega_{\Gr_G}\mod^\onfact((\on{alm-inv}_{\fL(G)}(\bC))^{\on{fact}_{x_0},\Dmod(\Gr_G)})_{x_0} \overset{\sim}\to 
\omega_{\Gr_G}\mod^\onfact(\bC^{\on{fact}_{x_0},\Dmod(\Gr_G)})_{x_0}.$$
\end{thm}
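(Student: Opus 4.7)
The embedding $\iota:\on{alm-inv}_{\fL(G)}(\bC)\hookrightarrow\bC$ is a fully faithful $\fL(G)$-linear morphism that admits a right adjoint in $\fL(G)\mmod$ (namely $\on{alm-inv}_{\fL(G)}$ itself, applied to the target, as noted in \secref{ss:alm inv loop}). Applying the colimit-preserving functor $(-)^{\on{fact}_{x_0},\Dmod(\Gr_G)}$, whose formation commutes with $\Dmod(\fL(G)_{x_0})$-tensor products by \corref{c:tensor over LG}, and then $\omega_{\Gr_G}\mod^\onfact(-)_{x_0}$, produces the comparison functor $\Phi$ that we wish to show is an equivalence.

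\textbf{Full-faithfulness of $\Phi$.} Because $\iota$ is $\fL(G)$-linear, fully faithful, and has a $\fL(G)$-linear right adjoint, the induced 1-morphism in $\Dmod(\Gr_G)\mmod^\onfact_{x_0}$ again admits a right adjoint, and the unit of this derived adjunction is an isomorphism (this is inherited by the 2-functoriality of $(-)^{\on{fact}_{x_0},\Dmod(\Gr_G)}$). The functor $\omega_{\Gr_G}\mod^\onfact(-)_{x_0}$ preserves adjunctions between objects of $\Dmod(\Gr_G)\mmod^\onfact_{x_0}$ and is conservative when restricted to morphisms with adjoints on either side. Hence $\Phi$ is fully faithful.

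\textbf{Essential surjectivity — the main step.} Given $\CM\in \omega_{\Gr_G}\mod^\onfact(\bC^{\on{fact}_{x_0},\Dmod(\Gr_G)})_{x_0}$, I claim the underlying object $\CM_{x_0}\in\bC$ lies in $\on{alm-inv}_{\fL(G)}(\bC)$. For any disjoint test point $\ul{x_\circ}\in \Ran$ (disjoint from $x_0$), factorization identifies
\[
(\bC^{\on{fact}_{x_0},\Dmod(\Gr_G)})_{\ul{x_\circ}\sqcup\{x_0\}}\simeq \Dmod(\Gr_{G,\ul{x_\circ}})\otimes \bC,
\]
and the $\omega_{\Gr_G}$-module structure rigidifies $\CM|_{\ul{x_\circ}\sqcup\{x_0\}}$ as an object of the form $\omega_{\Gr_{G,\ul{x_\circ}}}\boxt \CM_{x_0}$. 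By Remark \ref{r:almost const Gr}, $\omega_{\Gr_{G,\ul{x_\circ}}}$ generates $\Dmod(\Gr_{G,\ul{x_\circ}})^{\omega,\on{alm-const}}$ under colimits, which by \propref{p:alm const aff Gr} equals $\on{alm-inv}_{\fL(G)_{\ul{x_\circ}}}(\Dmod(\Gr_{G,\ul{x_\circ}}))$. Degenerating $\ul{x_\circ}$ into $x_0$ along the family over $X^n$ that deforms $\Gr_{G,\ul{x_\circ}}\times \fL(G)_{x_0}$ into $\fL(G)_{x_0}$ transports the almost-invariance of the first tensor factor under $\fL(G)_{\ul{x_\circ}}$ into an almost-invariance of $\CM_{x_0}$ under the right action of $\fL(G)_{x_0}$ inherited from the level structure $\Gr^{\on{level}^\infty_{x_0}}_G$. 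This yields $\CM_{x_0}\in \on{alm-inv}_{\fL(G)_{x_0}}(\bC)$, as required.

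\textbf{Main obstacle.} The delicate point is the fusion-degeneration step: converting the factorization-theoretic statement ``the near-$x_0$ part of $\CM$ is spanned by dualizing sheaves'' into the group-theoretic statement ``the coaction of $\Dmod(\fL(G)_{x_0})$ on $\CM_{x_0}$ factors through $\Dmod(\fL(G))^{\on{alm-const}}$''. This requires a robust comparison functor linking the factorization-module datum on $\Ran_{x_0}$ with the $\fL(G)_{x_0}$-equivariant structure on the $x_0$-fiber, compatible on one side with the $\omega_{\Gr_G}$-module structure and on the other with the right $\fL(G)_{x_0}$-action on $\Gr^{\on{level}^\infty_{x_0}}_G$. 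This is precisely the geometric core flagged in the introduction (to be supplied in \secref{s:proof of main: key}), and the bulk of the work in the remaining sections will go into establishing and checking the compatibilities of this fusion-degeneration construction.
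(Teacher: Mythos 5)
Your setup is reasonable—you want to reduce to the claim that for any $\CM\in\omega_{\Gr_G}\mod^\onfact(\bC^{\on{fact}_{x_0},\Dmod(\Gr_G)})_{x_0}$ the fiber $\CM_{x_0}$ lies in $\on{alm-inv}_{\fL(G)}(\bC)$—but your essential-surjectivity step is a sketch of a hope rather than an argument, and the mechanism you propose is not the one the paper uses nor evidently one that works. You want to "degenerate $\ul{x}_\circ\rightsquigarrow x_0$" and have the almost-invariance of $\omega_{\Gr_{G,\ul{x}_\circ}}$ under $\fL(G)_{\ul{x}_\circ}$ transport into almost-invariance of $\CM_{x_0}$ under $\fL(G)_{x_0}$. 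These are actions of different groups on different categories, and ``transport along the degeneration'' is not a defined operation; resolving precisely this is the theorem. The paper bridges the two sides by a different device: it constructs an honest $\fL(G)_{x_0}$-action on the entire category $\omega_{\Gr_G}\mod^\onfact(\bC^{\on{fact}_{x_0},\Dmod(\Gr_G)})_{x_0}$ via the group ind-scheme $\fL^{\on{mer}\rightsquigarrow\on{reg}}(G)_{\Ran_{x_0}}$ (\propref{p:action of sph}, \corref{c:action of sph}), which commutes with fusion against spherical objects, and then reduces the theorem via \lemref{l:alm triv cons} to showing that this lifted action is almost trivial.

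Establishing that almost-triviality takes substantially more than a single degeneration. Per \secref{sss:proof of alm strategy}, one shows (i) the category of $\omega_{\Gr_G}$-modules is spherically generated, and (ii) $\on{Av}^{\fL^+(G)_{x_0}\to\fL(G)_{x_0}}_!$ is conservative on it, and then invokes \propref{p:when al triv}. The key geometric input for both is the Main Lemma (\lemref{l:main}, \corref{c:main cor}) of \secref{s:proof of main: key}, whose proof is not a specialization/nearby-cycles argument but an explicit approximation of the loop group by $G(t,t^{-1})\subset\Maps(X\times X-\Delta,G)$ (\secref{ss:approx loop}) played against the factorization structure. Step (i) moreover requires the torus case, proved independently by geometric class field theory in \secref{s:torus}, together with the toric-localization argument of \secref{ss:toric localization}. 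None of the loop-group approximation, the torus reduction, or the toric localization appear in your proposal, and I do not see how the essential-surjectivity claim could be established without replacing them by something of comparable strength. (The full-faithfulness part of your write-up is fine—2-functors preserve adjunctions and unit isomorphisms—but it is a small part of the theorem; the paper absorbs it into the reduction above.)
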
  

\thmref{t:red to alm const case} will be proved in Sects. \ref{s:torus}-\ref{s:proof of main-end}. In the remainder of this section and 
\secref{s:comp}, we will show how \thmref{t:red to alm const case} implies \thmref{t:main}. 

\sssec{}

Note that by \thmref{t:red to alm const case} and \secref{sss:pass to alm inv} we obtain that it suffices to prove
\thmref{t:main} for $\bC\in (\fL(G)\mmod)_{\on{alm-triv}}$. 

\medskip

By \thmref{t:Kosz for loop}, we can assume that $\bC$ is of the form
$$\Vect\underset{\Vect^{\fL(G)_{x_0}}}\otimes \wt\bC, \quad \wt\bC\in \Vect^{\fL(G)_{x_0}}\mmod.$$

\sssec{}

It follows from \thmref{t:Kosz for loop} that the operation
$$\wt\bC\mapsto \Vect\underset{\Vect^{\fL(G)_{x_0}}}\otimes \wt\bC$$
preserves limits.

\medskip

Moreover, by \secref{sss:Frob}, any object of $\Vect^{\fL(G)_{x_0}}\mmod$ can be written as a totalization of a cosimplicial object with terms 
of the form 
$$\Vect^{\fL(G)_{x_0}}\otimes \wt\bC_0, \quad \wt\bC_0\in \DGCat.$$

\sssec{}

This reduces the assertion of \thmref{t:main} to the case when the action 
$$\bC=\Vect\otimes \wt\bC_0\simeq \wt\bC_0, \quad \wt\bC_0\in \DGCat,$$
equipped with the trivial action.

\medskip

In \secref{s:comp}, we will prove the assertion of \thmref{t:main} in this case by
an explicit calculation. 

\sssec{} \label{sss:vect implies}

To simplify the exposition we will consider the case when $\wt\bC_0=\Vect$. 
The case of a general category is completely analogous.

\medskip

Note that the assertion of \thmref{t:main} for $\bC=\Vect$ coincides with that of
\corref{c:main Vect}. 

\section{Proofs of Propositions \ref{p:oblv enh}, \ref{p:almost const Gr}, \ref{p:alm const aff Gr} and \thmref{t:Kosz for loop}} \label{s:proofs Kosz}

The goal of this subsection is to supply proofs of \propref{p:oblv enh}, \ref{p:almost const Gr}, \ref{p:alm const aff Gr} and \thmref{t:Kosz for loop}.
These all are Koszul duality-type statements, and we essentially need to take care of convergence issues. 

\medskip

For the duration of this section we keep the notational change
$$\fL(G)_{x_0}\rightsquigarrow \fL(G),\,\, \fL^+(G)_{x_0}\rightsquigarrow \fL^+(G),\,\, \Gr_{G,x_0} \rightsquigarrow \Gr_G.$$

\ssec{Proof of \propref{p:oblv enh}} \label{ss:proof of oblv enh}

\sssec{}

It is easy to see that the statement of \propref{p:oblv enh} for $H$ follows from the corresponding statement
for its neutral connected component. Hence, for the duration of this subsection, we will assume that $H$
is connected. 

\sssec{} \label{sss:Kosz gen}

Let $\bA$ be a monoidal category equipped with a monoidal $\phi$ functor to $\Vect$. Denote
$$\bB:=\on{Funct}_{\bA\mmod}(\Vect,\Vect)^{\on{rev}},$$
where $\bA$ acts on $\Vect$ via $\phi$. 

\medskip

The functor 
$$\bA\mmod\to \Vect, \quad \bC\mapsto \on{inv}_\bA=\on{Funct}_{\bA\mmod}(\Vect,\bC)$$
upgrades to a functor 
$$\on{inv}^{\on{enh}}_\bA:\bA\mmod\to \bB\mmod,$$
and the latter admits a left adjoint given by
$$\wt\bC\mapsto \Vect\underset{\bB}\otimes \wt\bC,$$
where the augmentation on $\bB$ is given by the forgetful functor
$$\on{Funct}_{\bA\mmod}(\Vect,\Vect)\to \on{Funct}_{\DGCat}(\Vect,\Vect)\simeq \Vect.$$

\medskip

We shall say that the pair
$(\bA,\phi)$ satisfies Koszul duality if the adjoint functors $((\on{inv}^{\on{enh}}_\bA)^L,\on{inv}^{\on{enh}}_\bA)$
are mutually inverse equivalences.

\sssec{} \label{sss:fin dim Hopf}

Let $A$ be a finite-dimensional Hopf algebra, and let $\bA:=A\mod$. Let $\phi$ be the tautological forgetful
functor $A\mod\to \Vect$.

\medskip

It is easy to see that in this case 
$$\bB\simeq B\mod,$$
where $B$ is the linear dual of $A$. 

\sssec{Example} \label{sss:simple Koszul}

Let $W$ be a compact object of $\Vect$, concentrated in odd degrees. Set
$$\bA:=\Sym(W)\mod,$$
which we regard as a monoidal category with respect to \emph{convolution}.
Let $\phi$ be the tautological forgetful functor $\Sym(W)\mod\to \Vect$. 

\medskip

According to \secref{sss:fin dim Hopf}, 
$\bB\simeq \Sym(W^*)\mod$, viewed as a monoidal category
also with respect to \emph{convolution}.

\medskip

Then it is easy to see that this pair $(\bA,\phi)$ satisfies Koszul duality. 

\sssec{} \label{sss:BH}

The statement of \propref{p:oblv enh} is equivalent to the fact that the monoidal category $\on{C}^\cdot(H)\mod$ equipped
with the tautological forgetful functor to $\Vect$ satisfies Koszul duality. 

\medskip

By construction, this example fits the pattern of \secref{sss:fin dim Hopf}. Hence, it suffices to show that it fits 
in fact the pattern of \secref{sss:simple Koszul}.

\medskip

Let $\fa$ be the Lie algebra that controls the rational homotopy type of $BH$. Note that $\on{C}^\cdot(H)\simeq \on{C}^\cdot(\Omega(\fa))$. 
Recall also that $\fa$ is abelian and is concentrated in odd degrees. Hence, 
$$\on{C}^\cdot(\Omega(\fa))\simeq \Sym(\ul\fa^*),$$
as Hopf algebras, where $\ul\fa$ is the vector space underlying $\fa$.

\qed[\propref{p:oblv enh}]

\begin{rem}

Note that by \secref{sss:fin dim Hopf}, we obtain that
$$\Vect^H\simeq \on{C}_\cdot(H)\mod,$$
as is supposed to be the case. 

\end{rem} 

\ssec{A descent result for almost constant sheaves} \label{ss:descent alm-const}

\sssec{}

Let $H$ be a connected algebraic group, and let 
$$f:\wt{Y}\to Y$$
be an $H$-torsor and $Y$ is a scheme of finite type. 

\medskip

We consider the category $\Dmod(\wt{Y})$ as equipped with an action of $H$. 
Its full subcategory $\Dmod(\wt{Y})^{\on{alm-const}}$ is stable under this action; moreover
the $H$-action on $\Dmod(\wt{Y})^{\on{alm-const}}$ is almost trivial. 

\sssec{}

The functor of *-pullback identifies
$$\Dmod(Y) \overset{\sim}\to (\Dmod(\wt{Y}))^H.$$

Since $f^*$ sends
$$\Dmod(Y)^{\on{alm-const}}\to \Dmod(\wt{Y})^{\on{alm-const}}$$
and 
$$(\Dmod(\wt{Y})^{\on{alm-const}})^H=\Dmod(\wt{Y})^{\on{alm-const}}\underset{\Dmod(\wt{Y})}\times (\Dmod(\wt{Y}))^H,$$
we obtain a commutative diagram
\begin{equation} \label{e:descent alm}
\CD
(\Dmod(\wt{Y})^{\on{alm-const}})^H @>>> \Dmod(\wt{Y})^H \\
@AAA @AA{\sim}A \\
\Dmod(Y)^{\on{alm-const}} @>>> \Dmod(Y),
\endCD
\end{equation} 
where the horizontal arrows are fully faithful.

\medskip

We claim:

\begin{lem} \label{l:descent alm}
The above functor 
$$\Dmod(Y)^{\on{alm-const}} \to (\Dmod(\wt{Y})^{\on{alm-const}})^H$$
is an equivalence. 
\end{lem}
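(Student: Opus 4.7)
The plan is to verify the lemma by establishing fully faithfulness and essential surjectivity separately, exploiting that both $\Dmod(Y)^{\on{alm-const}}$ and $(\Dmod(\wt{Y})^{\on{alm-const}})^H$ embed fully faithfully into $\Dmod(Y) \simeq \Dmod(\wt{Y})^H$ via the right vertical arrow of \eqref{e:descent alm}. Fully faithfulness is an immediate diagram chase: for $\CF_1, \CF_2 \in \Dmod(Y)^{\on{alm-const}}$, both the source and target mapping objects identify with $\CHom_{\Dmod(Y)}(\CF_1, \CF_2) \simeq \CHom_{\Dmod(\wt{Y})^H}(f^*\CF_1, f^*\CF_2)$, using the fully-faithfulness of the two horizontal arrows and the equivalence of the right vertical arrow.

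For essential surjectivity, I will first observe that both subcategories are closed under colimits in $\Dmod(Y)$ (the left-hand side by its definition; the right-hand side because $\Dmod(\wt{Y})^{\on{alm-const}}$ is closed under colimits in $\Dmod(\wt{Y})$ and the forgetful functor $\Dmod(\wt{Y})^H \to \Dmod(\wt{Y})$ creates colimits), and that both contain $\ul k_Y$, whose pullback $\ul k_{\wt{Y}}$ is almost constant by definition. Since $\Dmod(Y)^{\on{alm-const}}$ is by definition generated under colimits by $\ul k_Y$, the task reduces to showing that $\ul k_Y$ is a \emph{compact generator} of $(\Dmod(\wt{Y})^{\on{alm-const}})^H$. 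Compactness follows from the identification $\CHom(\ul k_Y, \wt\CG) \simeq \on{C}^\cdot(Y, \CG)$ (where $\CG$ denotes the descent of $\wt\CG$), which commutes with colimits. For zero-detection, suppose $\on{C}^\cdot(Y, \CG) = 0$; the projection formula together with the fact that for $H$ connected the Leray local system $f_*\ul k_{\wt{Y}}$ on $Y$ has trivial monodromy (and hence equals $\on{C}^\cdot(H) \otimes \ul k_Y$) yields
$$\on{C}^\cdot(\wt{Y}, \wt\CG) \simeq \on{C}^\cdot(Y, \CG \otimes f_*\ul k_{\wt{Y}}) \simeq \on{C}^\cdot(Y, \CG) \otimes \on{C}^\cdot(H) = 0,$$
whence the equivalence $\Dmod(\wt{Y})^{\on{alm-const}} \simeq \on{C}^\cdot(\wt{Y})\mod$ forces $\wt\CG = 0$.

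The main obstacle I anticipate is the verification that the Leray local system $f_*\ul k_{\wt{Y}}$ has trivial monodromy; this is the only place the connectedness hypothesis on $H$ enters substantively, and it relies on the standard fact that the adjoint action of a connected algebraic group on its own cohomology is trivial (hence principal $H$-torsors have cohomologically constant fibers). The remaining steps are routine applications of the adjoint equivalence $\Dmod(\wt{Y})^{\on{alm-const}} \simeq \on{C}^\cdot(\wt{Y})\mod$ and the standard characterization of a presentable stable DG category as the closure under colimits of any compact generator.
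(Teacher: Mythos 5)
Your overall strategy coincides with the paper's: both reduce to showing that $\ul{k}_{\wt Y}$ is a compact generator of $(\Dmod(\wt{Y})^{\on{alm-const}})^H$. Where you diverge is the zero-detection step. The paper computes the mapping object ``upstairs'' as $\on{inv}_H\bigl(\CHom_{\Dmod(\wt Y)^{\on{alm-const}}}(\ul k_{\wt Y},\wt\CG)\bigr)$ and invokes conservativity of $\on{inv}_H:\Vect^H\to\Vect$ (a purely algebraic input: $\Vect^H\simeq\Sym(\ul\fa^*)\mod$ with $\ul\fa^*$ in odd degrees, so the augmentation module generates); you instead descend the mapping object to $\on{C}^\cdot(Y,\CG)$ and run a Leray/projection-formula argument. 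Both routes use connectedness of $H$, but the paper's avoids any computation of $f_*\ul{k}_{\wt Y}$, which is worth noticing because that is exactly where your write-up slips.

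The slip: your parenthetical ``hence equals $\on{C}^\cdot(H)\otimes\ul k_Y$'' is false. Trivial monodromy of the $R^qf_*\ul k_{\wt Y}$ gives constancy of the cohomology local systems, not a splitting of the complex $f_*\ul k_{\wt Y}$. Already for a nontrivial $\BG_m$-torsor this fails: for $f:\BA^2\setminus 0\to\BP^1$ the splitting would give $\on{C}^\cdot(\BA^2\setminus 0)\simeq\on{C}^\cdot(\BP^1)\oplus\on{C}^\cdot(\BP^1)[-1]$, whereas in fact $\on{C}^\cdot(\BA^2\setminus 0)\simeq k\oplus k[-3]$ (the extension is governed by $c_1$). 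This also breaks the displayed isomorphism $\on{C}^\cdot(Y,\CG\otimes f_*\ul k_{\wt Y})\simeq\on{C}^\cdot(Y,\CG)\otimes\on{C}^\cdot(H)$. The argument is salvageable: $f_*\ul k_{\wt Y}$ has a \emph{finite} Postnikov filtration with associated graded $\bigoplus_q H^q(H)\otimes\ul k_Y[-q]$ (finiteness since $H$ has bounded cohomology), so $\on{C}^\cdot(Y,\CG\overset{*}\otimes f_*\ul k_{\wt Y})$ acquires a finite filtration with subquotients $H^q(H)\otimes\on{C}^\cdot(Y,\CG)[-q]=0$, and the desired vanishing follows. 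With this repair your proof is correct, but it is more geometric work than the paper's argument, which sidesteps the structure of $f_*\ul k_{\wt Y}$ entirely.
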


\begin{proof}

The functor in question is fully faithful and preserves compactness (since the other three arrows in 
\eqref{e:descent alm} have this property).

\medskip

Hence, it is enough to show that it sends compact generators of $\Dmod(Y)^{\on{alm-const}}$ to
generators of $(\Dmod(\wt{Y})^{\on{alm-const}})^H$. 

\medskip

This functor sends 
$$\ul{k}_Y\mapsto \ul{k}_{\wt{Y}}.$$

Hence, it remains to show that the latter is a generator of $(\Dmod(\wt{Y})^{\on{alm-const}})^H$. (It is here that
the assumption that $H$ is connected will be used.) 

\medskip

Indeed, for a category $\bC$ with an action of $H$ and $\bc,\bc'\in \bC^H$, the object
$$\CHom_\bC(\bc,\bc')\in \Vect$$ naturally upgrades to an object of $\Vect^H$, while
$$\CHom_{\bC^H}(\bc,\bc')\simeq \on{inv}_H(\CHom_\bC(\bc,\bc')).$$

Now, if $\bc$ is a generator of $\bC$, we have
$$\bc'\neq 0 \, \Rightarrow \CHom_\bC(\bc,\bc')\neq 0.$$ 

We now use the fact that for a connected $H$, the functor
$$\on{inv}_H:\Vect^H\to \Vect$$
is conservative.\footnote{Indeed, in the notations of \secref{sss:BH}, we have $\Vect^H\simeq \Sym(\ul\fa^*)\mod$,
and since $\ul\fa^*$ is concentrated in \emph{odd} degrees, this category is generated by the augmentation module.}


\end{proof} 

\begin{cor} \label{c:descent alm}
The functor $f^!$ induces an equivalence
$$\Dmod(Y)^{\omega,\on{alm-const}} \to (\Dmod(\wt{Y})^{\omega,\on{alm-const}})^H.$$
\end{cor}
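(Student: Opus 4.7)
The plan is to derive this corollary formally from \lemref{l:descent alm}. Since $f:\wt{Y}\to Y$ is an $H$-torsor with $H$ smooth of dimension $d:=\dim H$, the functors $f^!$ and $f^*$ differ only by a cohomological shift: $f^!\simeq f^*[2d]$ canonically. In particular, applied to any stable subcategory they have the same essential image, so the question is purely one of bookkeeping.

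Recall from \secref{sss:almost const omega} that the $\omega$-variant is tautologically related to the original by the equivalence $\CF\mapsto \CF\overset{*}\otimes \omega_{(-)}$, which identifies $\Dmod(Y)^{\on{alm-const}}\simeq \Dmod(Y)^{\omega,\on{alm-const}}$, and similarly for $\wt Y$. The isomorphism $\omega_{\wt Y}\simeq f^!(\omega_Y)\simeq f^*(\omega_Y)[2d]$, together with the compatibility of $f^*$ with $\overset{*}\otimes$, yields a canonical isomorphism
$$f^!(\CF\overset{*}\otimes \omega_Y)\simeq f^*(\CF)\overset{*}\otimes \omega_{\wt Y},$$
natural in $\CF\in \Dmod(Y)^{\on{alm-const}}$. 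Moreover, $\omega_{\wt Y}$ inherits a canonical $H$-equivariant structure from $\omega_Y$, so this natural isomorphism is $H$-equivariant.

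Combining these observations, the square
$$
\CD
\Dmod(Y)^{\on{alm-const}}   @>{f^*}>>   (\Dmod(\wt{Y})^{\on{alm-const}})^H \\
@V{\overset{*}\otimes \omega_Y}V{\simeq}V   @VV{\overset{*}\otimes \omega_{\wt Y}}V \\
\Dmod(Y)^{\omega,\on{alm-const}}   @>{f^!}>>   (\Dmod(\wt{Y})^{\omega,\on{alm-const}})^H
\endCD
$$
commutes. The vertical arrows are equivalences by the definition of the $\omega$-variants, and the top horizontal arrow is an equivalence by \lemref{l:descent alm}. Hence the bottom arrow $f^!$ is an equivalence as well. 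There is essentially no obstacle here beyond checking compatibilities: the entire content has already been packaged into \lemref{l:descent alm}, and the passage to the $\omega$-version is a formal translation via Verdier-type twisting.
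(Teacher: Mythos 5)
Your proof is correct. Since the paper states the corollary without proof, the intended argument is exactly what you reconstruct: $\Dmod(Y)^{\omega,\on{alm-const}}$ differs from $\Dmod(Y)^{\on{alm-const}}$ by the equivalence $\overset{*}\otimes\,\omega_Y$ (and likewise for $\wt Y$, $H$-equivariantly), the torsor map $f$ is smooth so $f^!\simeq f^*[2\dim H]$, the canonical identification $f^!(\omega_Y)\simeq\omega_{\wt Y}$ makes the square commute, and \lemref{l:descent alm} then pushes the equivalence through. The only minor point worth flagging is that the paper's remark immediately preceding the corollary (identifying $\Dmod(Y)^{\omega,\on{alm-const}}\simeq(\Dmod(Y)^{\on{alm-const}})^\vee$ via Verdier duality, together with \corref{c:inv self-dual} for passing $(-)^\vee$ through $(-)^H$) gives an equivalent one-line derivation; your twist-by-$\omega_Y$ argument is the covariant avatar of that duality argument and is, if anything, slightly more self-contained since it avoids invoking dualizability of the categories involved.
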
 

\ssec{Proof of \propref{p:almost const Gr}} \label{ss:proof of almost const Gr}

\sssec{Reduction to the case of the affine Grassmannian}

We change the notations slightly and denote by $\CY$ the neutral connected component of $\Gr_G$, and by
$\wt\CY$ its preimage in $\fL(G)/K$. 

\medskip

Write 
$$\CY_i=\underset{i}{``\on{colim}"}\, Y_i,$$
and set
$$\wt{Y}_i:=Y_i\underset{\CY}\times \wt\CY,$$
so that
$$\wt\CY\simeq \underset{i}{``\on{colim}"}\, \wt{Y}_i.$$

\medskip 

We view both sides of 
$$\underset{i}{\on{colim}}\, \on{C}^\cdot(\wt{Y}_i)\mod \to \on{C}^\cdot(\wt{\CY})\mod,$$
as acted on (almost trivially) by $\fL^+(G)/K$.  Since operation $\on{inv}_{\fL^+(G)/K}(-)$ is conservative 
on the subcategory of almost trivial modules, it suffices to show that the functor
\begin{equation} \label{e:almost const as cochains ind sch inv}
\underset{i}{\on{colim}}\, (\on{C}^\cdot(\wt{Y}_i)\mod)^{\fL^+(G)/K}\overset{\sim}\to 
(\underset{i}{\on{colim}}\, \on{C}^\cdot(\wt{Y}_i)\mod)^{\fL^+(G)/K}\to (\on{C}^\cdot(\wt{\CY})\mod)^{\fL^+(G)/K}
\end{equation} 
is an equivalence onto
$$(\on{C}^\cdot(\wt{\CY})\mod_0)^{\fL^+(G)/K}\subset (\on{C}^\cdot(\wt{\CY})\mod)^{\fL^+(G)/K}.$$

\medskip

By \lemref{l:descent alm}, we can identify the terms 
$$(\on{C}^\cdot(\wt{Y}_i)\mod)^{\fL^+(G)/K}\simeq (\Dmod(\wt{Y}_i)^{\on{alm-const}})^{\fL^+(G)/K}$$
with 
$$\Dmod(Y_i)^{\on{alm-const}}\simeq \on{C}^\cdot(Y_i)\mod.$$

\medskip

Similarly, it is easy to see (e.g., using \eqref{e:cohomology of loop group}) that the right-hand side in \eqref{e:almost const as cochains ind sch inv}
identifies with $\on{C}^\cdot(\CY)\mod$, which contains $\on{C}^\cdot(\CY)\mod_0$ as a full subcategory. 

\medskip

Thus, we obtain that it suffices to show that the resulting map
$$\underset{i}{\on{colim}}\, \on{C}^\cdot(Y_i)\mod \to 
\on{C}^\cdot(\CY)\mod_0.$$
is an equivalence. 
 
\sssec{}

Note that for a \emph{coconnective} algebra $A$ and $n\geq 0$, the truncation $A^{\leq n}$ has a natural structure 
of algebra, equipped with a map from $A$.

\medskip

Consider the commutative diagram
\begin{equation} \label{e:trunc diag}
\CD
\underset{i,n}{\on{colim}}\, (\on{C}^\cdot(Y_i))^{\leq n}\mod  @>>> \underset{i}{\on{colim}}\, \on{C}^\cdot(Y_i)\mod \\
@VVV @VVV \\
\underset{n}{\on{colim}}\, (\on{C}^\cdot(\CY))^{\leq n}\mod  @>>> \on{C}^\cdot(\CY)\mod_0.
\endCD
\end{equation} 

We need to show that the right vertical arrow is an equivalence. We will achieve this by showing that
the other three arrows in \eqref{e:trunc diag} are equivalences.

\sssec{}

The equivalence is immediate for the top horizontal arrow: indeed for a fixed $i$, the family
$$n\rightsquigarrow (\on{C}^\cdot(Y_i))^{\leq n}$$
stabilizes to $\on{C}^\cdot(Y_i)$, since $Y_i$ is finite-dimensional.

\sssec{}

We claim that for a fixed $n$, the family 
$$i\rightsquigarrow (\on{C}^\cdot(Y_i))^{\leq n}$$
also stabilizes to $(\on{C}^\cdot(\CY))^{\leq n}$. Indeed, this follows from the cellular decomposition of the affine Grassmannian.

\medskip

Hence, the left vertical arrow in \eqref{e:trunc diag} is an equivalence. 

\sssec{}

Thus, it remains to show that the functor
$$\underset{n}{\on{colim}}\, (\on{C}^\cdot(\CY))^{\leq n}\mod \to \on{C}^\cdot(\CY)\mod_0$$
is an equivalence. 

\medskip

We identify $\on{C}^\cdot(\CY)$ with $\Sym(V)$, see \secref{sss:cohom Gr}. So we need to show that the functor
$$\underset{n}{\on{colim}}\, (\Sym(V)/\Sym^{>n}(V))\mod \to \Sym(V)\mod_0$$
is an equivalence.

\sssec{}

It is easy to reduce the assertion to the case when $V$ is one dimensional. In this case, we can use 
the grading-shearing trick (see \cite[Sect. A.2]{AG}), 
and assume that $V$ is a finite-dimensional vector space in cohomological degree $0$. 

\medskip

Hence, the assertion becomes that
$$\underset{n}{\on{colim}}\, (\Sym(V)/\Sym^{>n}(V))\mod \to \Sym(V)\mod_0$$
is an equivalence. I.e., we have to show that 
\begin{equation} \label{e:QCoh trunc}
\underset{n}{\on{colim}}\, \QCoh(S_n)\to \QCoh(\BA^1)_0
\end{equation} 
is an equivalence, where $S_n=\Spec(k[t]/t^n)$. 

\medskip

Note, however, that the composition
\begin{equation} \label{e:QCoh trunc bis}
\underset{n}{\on{colim}}\, \IndCoh(S_n)\to \underset{n}{\on{colim}}\, \QCoh(S_n)\overset{\text{\eqref{e:QCoh trunc}}}\longrightarrow  \QCoh(\BA^1)_0
\end{equation} 
is an equivalence (e.g., by \cite[Proposition 7.4.5]{GR0}). This implies that \eqref{e:QCoh trunc} is an equivalence, since the first arrow
in  \eqref{e:QCoh trunc bis} is a Verdier quotient. 

\qed[\propref{p:almost const Gr}]

\ssec{Proof of \propref{p:alm const aff Gr}} \label{ss:proof of alm const aff Gr}

\sssec{}

Denote $\wt\Gr_G:=\fL(G)/K_1$, so that
$$\Gr_G\simeq \wt\Gr_G/G.$$

\medskip

Unwinding the definitions and using \lemref{l:alm const vs alm triv}, we obtain that the category $\on{alm-inv}_{\fL(G)}(\Dmod(\Gr_G))$
identifies with 
$$(\Dmod(\wt\Gr_G)^{\omega,\on{alm-const}})^G.$$

\medskip

Thus, we need to show that the inclusion
$$\Dmod(\Gr_G)^{\omega,\on{alm-const}}\hookrightarrow (\Dmod(\wt\Gr_G)^{\omega,\on{alm-const}})^G$$
is an equality.

\sssec{} \label{sss:proof of alm const aff Gr}

Write 
$$\Gr_G=\underset{i}{``\on{colim}"}\, Y_i,$$
and set
$$\wt{Y}_i:=Y_i\underset{\Gr_G}\times \wt\Gr_G,$$
so that
$$\wt\Gr_G\simeq \underset{i}{``\on{colim}"}\, \wt{Y}_i.$$

We have:
$$\Dmod(\Gr_G)^{\omega,\on{alm-const}}\simeq \underset{i}{\on{lim}}\, \Dmod(Y_i)^{\omega,\on{alm-const}}$$
and 
$$\Dmod(\wt\Gr_G)^{\omega,\on{alm-const}}\simeq \underset{i}{\on{lim}}\, \Dmod(\wt{Y}_i)^{\omega,\on{alm-const}},$$
and hence
$$(\Dmod(\wt\Gr_G)^{\omega,\on{alm-const}})^G\simeq \underset{i}{\on{lim}}\, (\Dmod(\wt{Y}_i)^{\omega,\on{alm-const}})^G.$$

\medskip

Now the required isomorphism follows from \corref{c:descent alm}.

\qed[\propref{p:alm const aff Gr}]

\ssec{Proof of \thmref{t:Kosz for loop}} \label{ss:proof of Kosz for loop}

\sssec{}

We consider the adjunction
\begin{equation} \label{e:inf LG adj}
\Vect^{\fL(G)}\mmod \rightleftarrows (\fL(G)\mmod)_{\on{alm-triv}}.
\end{equation} 

Since the functor $\on{inv}_{\fL(G)}$ (and hence $\on{inv}^{\on{enh}}_{\fL(G)}$)
commutes with tensor products, the left adjoint in \eqref{e:inf LG adj} is fully faithful. 

\medskip

Hence, the two functors are mutually inverse equivalences if and only if $\on{inv}^{\on{enh}}_{\fL(G)}$
is conservative. 

\sssec{} 

We first consider the case when $G$ is semi-simple and simply-connected. 

\medskip

We view 
$\bA:=\Dmod(\fL(G))^{\on{alm-const}}$ as a monoidal
category under convolution and a natural monoidal functor $\phi$ to $\Vect$. 
We claim that this pair $(\bA,\phi)$ satisfies Koszul duality (see \secref{sss:Kosz gen}). 

\medskip

By \propref{p:almost const Gr}, we can identify
$$\bA\simeq \on{C}^\cdot(\fL(G))\mod_0,$$
equipped with the tautological forgetful functor to $\Vect$.

\medskip

By \eqref{e:cohomology of loop group} and \secref{sss:rat hom 3}, we can identify $\on{C}^\cdot(\fL(G))$ as a Hopf algebra with 
$$\Sym(V)\otimes \Sym(W),$$
where $V$ is a cohomologically graded vector space concentrated in positive even degrees,
and $W$ is a cohomologically graded vector space concentrated in positive odd degrees. 

\medskip

Under this identification $\on{C}^\cdot(\fL(G))\mod_0$ corresponds to
$$\Sym(V)\mod_0\otimes \Sym(W)\mod.$$

\sssec{}

It is enough to show that both monoidal categories $$\bA_1:=\Sym(V)\mod_0 \text{ and } \bA_2:=\Sym(W)\mod,$$
equipped with the forgetful functors to $\Vect$, satisfy Koszul duality.

\medskip

The case of $\bA_2$ is immediate, see \secref{sss:simple Koszul}. 

\sssec{}

In the case of $\bA_1$, Using the grading-shearing trick (see \cite[Sect. A.2]{AG}), we can assume that $V$ is a 
finite-dimensional vector space in cohomological degree $0$. In this case, we identify
$$\Sym(V)\mod_0\simeq \Rep(V^*),$$
where $V^*$ is regarded as an (additive) algebraic group. In this case, the Koszul duality
statement is well-known. 

\sssec{} \label{sss:Koszul torus}

Next we consider the case when $G=T$ is a torus. We will show
directly that the counit of the adjunction in \eqref{e:inf LG adj} is an equivalence. Let $\bC$ be an object of $\fL(T)\mmod_{\on{alm-triv}}$.
The counit is the functor
\begin{equation} \label{e:counit torus}
\Vect\underset{\Vect^{\fL(T)}}\otimes \bC^{\fL(T)}\to \bC.
\end{equation} 

Consider the short exact sequence
$$1\to \fL^+(T)\to \fL(T)\to \Lambda\to 1.$$

By \propref{p:oblv enh}, in order to show that \eqref{e:counit torus}
is an equivalence, it suffices to show that
\begin{equation} \label{e:counit torus 1}
\Vect^{\fL^+(T)}\underset{\Vect^{\fL(T)}}\otimes \bC^{\fL(T)}\overset{\sim}\to
(\Vect\underset{\Vect^{\fL(T)}}\otimes \bC^{\fL(T)})^{\fL^+(T)}\to \bC^{\fL^+(T)}
\end{equation}
is an equivalence.

\medskip

We regard both sides of \eqref{e:counit torus 1} as acted on by $\fL(T)/\fL^+(T)\simeq \Lambda$. 
In order to show that \eqref{e:counit torus 1} is an equivalences, it is sufficient that it becomes
so after taking $\Lambda$-invariants:
\begin{equation} \label{e:counit torus 2}
(\Vect^{\fL^+(T)})^\Lambda\underset{\Vect^{\fL(T)}}\otimes \bC^{\fL(T)}\overset{\sim}\to
\left(\Vect^{\fL^+(T)}\underset{\Vect^{\fL(T)}}\otimes \bC^{\fL(T)}\right)^\Lambda \to  (\bC^{\fL^+(T)})^\Lambda \simeq \bC^{\fL(T)}.
\end{equation}

However, the latter composition is the identity functor
$$\bC^{\fL(T)}\simeq \Vect^{\fL(T)}\underset{\Vect^{\fL(T)}}\otimes \bC^{\fL(T)}\simeq
(\Vect^{\fL^+(T)})^\Lambda\underset{\Vect^{\fL(T)}}\otimes \bC^{\fL(T)}\to \bC^{\fL(T)}.$$

\sssec{}

We now consider the case when the derived group $G'$ of $G$ is simply connected. We have a short
exact sequence
$$1\to G'\to G\to T_0\to 1,$$
where $T_0$ is a torus. 

\medskip

In this case, the fact that the counit of the adjunction in \eqref{e:inf LG adj} is an equivalence follows from the validity of 
\thmref{t:Kosz for loop} for $G'$ and $T_0$ by the argument in \secref{sss:Koszul torus} above. 

\sssec{}

Finally, let $G$ be arbitrary. We wish to show that the functor $\on{inv}_{\fL(G)}(-)$ is conservative. 

\medskip

Choose a short exact sequence
$$1\to T_0\to \wt{G}\to G\to 1,$$
where $T_0$ is a torus and $\wt{G}$ is such that its derived group is simply-connected.  By what we proved
above, the the operation $\on{inv}_{\fL(\wt{G})_{x_0}}(-)$ is conservative. Hence, it suffices to show that for
a functor $\bC_1\to \bC_2$ if
\begin{equation} \label{e:C12 functor inv}
\on{inv}_{\fL(G)}(\bC_1)\to \on{inv}_{\fL(G)}(\bC_2)
\end{equation}
is an equivalence, then so is 
\begin{equation} \label{e:C12 functor inv tilde}
\on{inv}_{\fL(\wt{G})}(\bC_1)\to \on{inv}_{\fL(\wt{G})}(\bC_2).
\end{equation}

\medskip

For a category $\bC$ with an action of $\fL(G)$, we have
$$\on{inv}_{\fL(\wt{G})}(\bC)\simeq \on{inv}_{\fL(G)}(\bC\otimes \Vect^{\fL(T_0)}).$$

We have a monadic adjunction
$$\Vect^{\fL(T_0)}\rightleftarrows \Vect.$$

From here we obtain a monadic adjunction
$$\on{inv}_{\fL(\wt{G})}(\bC) \rightleftarrows \on{inv}_{\fL(G)}(\bC).$$

Moreover, for a functor $\bC_1\to \bC_2$, the functor \eqref{e:C12 functor inv} intertwines
the two monads. Hence, if \eqref{e:C12 functor inv} is an equivalence, so is
\eqref{e:C12 functor inv tilde}.

\qed[\thmref{t:Kosz for loop}]

\section{Proof of \thmref{t:main}: the case of a trivial action} \label{s:comp}

In this section we will prove \thmref{t:main} for $\bC=\Vect$. The proof will amount
to calculating a certain monad, and this calculation will turn out to be equivalent to the
\emph{contractibility}\footnote{A.k.a., \emph{non-abelian Poincaré duality}.} statement
from \cite{Ga1}.

\medskip

This calculation is the crux of the proof, and expresses the intuitive idea (alluded to in the
Introduction) that
$$\underset{\text{punctured disc}}\int\, \Gr_G\simeq \fL(G).$$

\ssec{Setting up the monad}

In this subsection we will reduce the assertion of \thmref{t:main} for $\bC=\Vect$ to a calculation that says that
some particular map (in $\Vect$) is an isomorphism. 

\sssec{}

We consider the functor
\begin{equation} \label{e:main functor Vect again}
\Phi:\Vect^{\fL(G)_{x_0}}\to \omega_{\Gr_G}\mod^\onfact(\Vect^{\on{fact}_{x_0},\Dmod(Gr_G)})_{x_0}
\end{equation}
of \eqref{e:main functor Vect} (which is a particular case of \eqref{e:main functor C}).

\medskip

It makes the diagram
\begin{equation} \label{e:diagram for the monad}
\CD
\Vect @>{\on{Id}}>> \Vect \\
@A{\oblv_{\fL(G)_{x_0}}}AA @AA{\oblv_{\omega_{\Gr_G}}}A  \\
\Vect^{\fL(G)_{x_0}} @>{\Phi}>>  \omega_{\Gr_G}\mod^\onfact(\Vect^{\on{fact}_{x_0},\Dmod(\Gr_G)})_{x_0}
\endCD
\end{equation} 
commute, where the vertical arrows are the tautological forgetful functors. 

\sssec{} \label{sss:need monad}

Note that the left vertical arrow in \eqref{e:diagram for the monad} is conservative and admits a left
adjoint, to be denoted $\on{Av}^{\fL(G)_{x_0}}_!$.

\medskip

By the Barr-Beck-Lurie theorem, we can identify $\Vect^{\fL(G)_{x_0}}$ with the category of modules in $\Vect$
over the resulting monad. 

\medskip

The right vertical arrow in \eqref{e:diagram for the monad} is also conservative. We will show (shortly)
that it also admits a left adjoint.

\medskip

Thus, in order to prove that $\Phi$ is an equivalence, it suffices to show that $\Phi$ 
induces an isomorphism between the two monads. 

\sssec{}

Denote $\wt\Gr_{G,x_0}:=\fL(G)_{x_0}/K$, where $K=K_i$ for some/any $i\geq 1$. We consider it as an ind-scheme, equipped with an action of
$\fL(G)_{x_0}$. Consider the corresponding factorization module category at $x_0$ with respect to $\Dmod(\Gr_G)$: 
\begin{equation} \label{e:fact Gr tilde}
\Dmod(\wt\Gr_{G,x_0})^{\on{fact}_{x_0},\Dmod(\Gr_G)}.
\end{equation} 

\medskip

Denote:
$$\wt\Gr_{G,\Ran_{x_0}}:=\Gr^{\on{level}^\infty_{x_0}}_{G,\Ran_{x_0}}/K_i.$$

We regard $\wt\Gr_{G,\Ran_{x_0}}$ as a factorization module
space at $x_0$ with respect to the factorization space $\Gr_G$. The factorization module
category \eqref{e:fact Gr tilde} is given by considering D-modules on $\wt\Gr_{G,\Ran_{x_0}}$,
viewed as a crystal of categories over $\Ran_{x_0}$ equipped with a natural factorization
structure against $\Dmod(\Gr_G)$.

\sssec{}

Let $\wt\pi_{x_0}$ denote the projection $\wt\Gr_{G,x_0}\to \on{pt}$, and let $\wt\pi^{\on{fact}_{x_0},\Gr_G}$ denote the projection
$$\wt\Gr_{G,\Ran_{x_0}}\to \Gr^{\on{level}^\infty_{x_0}}_{G,\Ran_{x_0}}/\fL(G)_{x_0}\simeq \Gr_{G,\Ran_{x_0}}/\on{Hecke}_{x_0},$$
viewed as a map between factorization module spaces at $x_0$ over $\Gr_G$.

\medskip

Pullback with respect to $\wt\pi^{\on{fact}_{x_0},\Gr_G}$ can be viewed as a functor 
$$\Vect^{\on{fact}_{x_0},\Dmod(\Gr_G)}\to \Dmod(\wt\Gr_{G,x_0})^{\on{fact}_{x_0},\Dmod(\Gr_G)}$$
as factorization module categories at $x_0$ with respect to $\Dmod(\Gr_G)$. 

\sssec{}

Consider the following diagram
\begin{equation} \label{e:big monad diagram}
\CD
\Vect @>{\on{Id}}>> \Vect \\ 
& & @AA{\iota_1^!}A \\
@AA{\on{Id}}A  \Dmod(\wt\Gr_{G,x_0}) \\
& &  @AA{\oblv_{\omega_{\Gr_G}}}A \\
\Vect @>{\wt\Phi}>> \omega_{\Gr_G}\mod^\onfact(\Dmod(\wt\Gr_{G,x_0})^{\on{fact}_{x_0},\Dmod(\Gr_G)})_{x_0} \\
@A{\oblv_{\fL(G)_{x_0}}}AA @AA{(\wt\pi^{\on{fact}_{x_0},\Gr_G})^!}A \\
\Vect^{\fL(G)_{x_0}} @>{\Phi}>>  \omega_{\Gr_G}\mod^\onfact(\Vect^{\on{fact}_{x_0},\Dmod(\Gr_G)})_{x_0},
\endCD
\end{equation} 

where:
\begin{itemize}

\item $\iota_1$ denotes the embedding of the unit point into $\wt\Gr_{G,x_0}$;

\item $\wt\Phi$ denotes the functor that sends the generator $k\in \Vect$ to $\omega_{\wt\Gr_{G,\Ran_{x_0}}}$,
equipped with its natural factorization structure against $\omega_{\Gr_G}$. 

\end{itemize}

It is easy to see that the outer diagram in \eqref{e:big monad diagram} identifies with \eqref{e:diagram for the monad}. 

\sssec{} \label{sss:properties}

We will show that:

\bigskip

\begin{enumerate}

\item The functor $\iota_1^!\circ \oblv_{\omega_{\Gr_G}}$ admits a left adjoint;

\medskip

\item The partially defined functor $(\wt\pi^{\on{fact}_{x_0},\Gr_G})_!$, left adjoint to the lower-right vertical functor
in \eqref{e:big monad diagram}, is defined on the essential images of $\wt\Phi$ and $(\iota_1^!\circ \oblv_{\omega_{\Gr_G}})^L$;

\medskip

\item The Beck-Chevalley natural transformation
$$(\wt\pi^{\on{fact}_{x_0},\Gr_G})_!\circ \wt\Phi \to \Phi\circ \on{Av}^{\fL(G)_{x_0}}_!$$
(arising from the lower portion of \eqref{e:big monad diagram}) becomes an isomorphism after applying the functor
$$\omega_{\Gr_G}\mod^\onfact(\Vect^{\on{fact}_{x_0},\Dmod(\Gr_G)})_{x_0}
\overset{\oblv_{\omega_{\Gr_G}}}\longrightarrow \Vect.$$

\medskip

\item The Beck-Chevalley natural transformation 
$$(\iota_1^!\circ \oblv_{\omega_{\Gr_G}})^L\to \wt\Phi$$
(arising from the upper portion of \eqref{e:big monad diagram}) becomes an isomorphism after applying the functor
\begin{multline} \label{e:complicated forgetful}
\omega_{\Gr_G}\mod^\onfact(\Dmod(\wt\Gr_{G,x_0})^{\on{fact}_{x_0},\Dmod(\Gr_G)})_{x_0} 
\overset{(\wt\pi^{\on{fact}_{x_0},\Gr_G})_!}\longrightarrow \\
\to \omega_{\Gr_G}\mod^\onfact(\Vect^{\on{fact}_{x_0},\Dmod(\Gr_G)})_{x_0}
\overset{\oblv_{\omega_{\Gr_G}}}\longrightarrow \Vect.
\end{multline}

\end{enumerate} 

\medskip

It is clear that the above properties (1)-(4) imply the required property of the monad from \secref{sss:need monad}.

\bigskip

The rest of this section is devoted to the verification of properties (1)-(4). 

\begin{rem}

We will give a purely geometric proof of Properties (2) and (3). However, if we allow ourselves to use
\thmref{t:red to alm const case} (which will be proved independently), the proof of both properties
can be significantly simplified.

\medskip

Indeed, by \thmref{t:red to alm const case}, we can replace 
$$\omega_{\Gr_G}\mod^\onfact(\Dmod(\wt\Gr_{G,x_0})^{\on{fact}_{x_0},\Dmod(\Gr_G)})_{x_0} \rightsquigarrow
\omega_{\Gr_G}\mod^\onfact(\on{alm-inv}_{\fL(G)}(\Dmod(\wt\Gr_{G,x_0}))^{\on{fact}_{x_0},\Dmod(\Gr_G)})_{x_0}.$$

Then, the left adjoint in point (2) is induced by the functor
\begin{equation} \label{e:Ccs loop}
\Dmod(\wt\Gr_{G,x_0})^{\omega,\on{alm-const}}\simeq \on{alm-inv}_{\fL(G)_{x_0}}(\Dmod(\wt\Gr_{G,x_0}))\to \Vect
\end{equation}
as objects of $\fL(G)_{x_0}\mmod$, where \eqref{e:Ccs loop} is the left adjoint to 
$$\Vect\to \Dmod(\wt\Gr_{G,x_0})^{\omega,\on{alm-const}}, \quad k\mapsto \omega_{\wt\Gr_{G,x_0}},$$
which exists, e.g., since the objects of $\Dmod(\wt\Gr_{G,x_0})^{\omega,\on{alm-const}}$ are ind-holonomic\footnote{Alternatively
for any $\bC\in \fL(G)\mmod$, the functor $\bC^{\fL(G)}\to \on{alm-inv}_{\fL(G)}(\bC)$ admits a left adjoint, which is the composition
$\on{alm-inv}_{\fL(G)}(\bC)\hookrightarrow \on{alm-inv}_{\fL^+(G)}(\bC)\overset{\on{Av}^{\fL^+(G)}_!}\longrightarrow
\bC^{\fL^+(G)}\overset{\on{Av}^{\fL^+(G)\to \fL(G)}_!}\longrightarrow \bC^{\fL(G)}$.}.

\medskip

Property (3) follows from the fact that the Beck-Chevalley natural transformation arising from the commutative diagram
$$
\CD
\Vect @>{k\mapsto \omega_{\wt\Gr_{G,x_0}}}>> \Dmod(\wt\Gr_{G,x_0})^{\omega,\on{alm-const}} \\
@A{\oblv_{\fL(G)_{x_0}}}AA @AA{\wt\pi_{x_0}^!}A  \\
\Vect^{\fL(G)_{x_0}} @>{\oblv_{\fL(G)_{x_0}}}>> \Vect
\endCD
$$
is an isomorphism. This follows by identifying the above diagram with
$$
\CD
\bC_1^{\fL(G)_{x_0}}  @>>> \bC_1 \\
@AAA @AAA \\ 
\bC_2^{\fL(G)_{x_0}}  @>>> \bC_2
\endCD
$$
with
$\bC_1:=\Dmod(\wt\Gr_{G,x_0})^{\omega,\on{alm-const}}$, $\bC_2=\Vect$ and the functor being $\wt\pi_{x_0}^!$, and the above fact that 
$\wt\pi_{x_0}^!$ admits a left adjoint in $\fL(G)_{x_0}\mmod$. 

\end{rem} 

\ssec{Left adjoint for factorization modules}

In this subsection we establish Property (1) in \secref{sss:properties}. 

\sssec{}

We will apply Proposition \ref{prop fact ind} to 
$$\bA:=\Dmod(\Gr_G),\,\, \bC=\Dmod(\wt\Gr_{G,x_0})^{\on{fact}_{x_0},\Dmod(\Gr_G)},\,\, \CA:=\omega_{\Gr_G}.$$

\begin{lem}
  \label{lem hol is adapt}
    In the above setting, any object $\CF\in \Dmod_{\on{hol}}(\wt\Gr_{G,x_0})$ is adapted to $\omega_{\Gr_G}$-induction (see \secref{sss adapt to induction}).
\end{lem}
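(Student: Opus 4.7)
The goal is to verify the criterion of \propref{prop fact ind} for any holonomic $\CF$. Recall informally that the notion of being \emph{adapted to $\omega_{\Gr_G}$-induction} means that the canonical ind-object computing $\ind_{\omega_{\Gr_G}}(\CF)$ (as the left adjoint to $\oblv_{\omega_{\Gr_G}}$ applied fiber-by-fiber over $\Ran_{x_0}$) represents a genuine object of $\omega_{\Gr_G}\mod^\onfact(\bC)_{x_0}$, i.e., the relevant colimits converge and the resulting family carries the correct factorization module structure.

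The first step is to unwind what the induction looks like geometrically. For an $S$-point $\ul{x}$ of $\Ran_{x_0}$, write $\ul{x}=\ul{x}'\sqcup\{x_0\}$ whenever possible, so that on the disjoint locus the factorization isomorphism identifies the fiber of $\ind_{\omega_{\Gr_G}}(\CF)$ with $\omega_{(\Gr_G)_{\ul{x}'}}\boxt \CF$. The induced object over general $\ul{x}$ is obtained by $!$-pushing forward such external products along the union correspondence $\Ran\times\Ran_{x_0}\supset(\Ran\times\Ran_{x_0})_\disj\xrightarrow{\on{union}}\Ran_{x_0}$. Concretely, we need to show that the $!$-pushforward from the corresponding open locus inside $\wt\Gr_{G,\Ran_{x_0}}\underset{\Ran_{x_0}}\times(\Ran\times\Ran_{x_0})$ is well-defined and compatible with $!$-restriction, so that the output assembles into a crystal of categories over $\Ran_{x_0}$ with the required factorization compatibility against $\omega_{\Gr_G}$.

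The second step is to use holonomicity to guarantee convergence. The key point is that $!$-pullbacks and $!$-pushforwards along the relevant Ran-level maps preserve the (ind-)holonomic subcategory; in particular, $\omega_{\Gr_{G,\ul{x}'}}\boxt\CF$ is ind-holonomic since $\Gr_{G,\ul{x}'}$ is a (placid) ind-scheme of ind-finite type and external tensor product preserves holonomicity. The colimit computing $\ind_{\omega_{\Gr_G}}(\CF)$ at each $\ul{x}$ is indexed by the stratification of $\Ran_{x_0}$ by the incidence pattern with $\{x_0\}$, and within each stratum the relevant pushforward is proper (on the nose, or after passage to the appropriate level of the ind-scheme presentation). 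Holonomicity then ensures that the colimit stabilizes at each finite stage, and that the structural $!$-restriction maps are isomorphisms (not merely maps of pro-objects).

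The third step is to verify the factorization compatibility: the output is tautologically equipped with the structural maps of a $\omega_{\Gr_G}$-module against the factorization structure on $\Gr_G$, because we constructed it by $!$-pushforward from the disjoint locus where the compatibility is given by the external product description. The main obstacle in this program is the second step: controlling the colimit over the Ran stratification despite $\wt\Gr_{G,x_0}$ being of infinite type. Here holonomicity is used twice---first to know that each term of the colimit lies in a category where the relevant $!$-operations are defined, and second (and more importantly) to deduce, via the compatibility of Verdier duality with ind-holonomicity, that the colimit defining each fiber is in fact a \emph{finite} colimit after taking a compact piece of $\CF$, so no genuine limit--colimit interchange issue arises. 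With this in hand, \propref{prop fact ind} applies and gives the assertion.
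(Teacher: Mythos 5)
Your overall plan---unwind the adaptedness criterion from \secref{sss adapt to induction}, identify the relevant $!$-pushforward over the Ran space, and use holonomicity to justify it---is the right shape, and the paper follows it. But the crucial step (your Step~2) has a genuine gap, and the claim you insert there is incorrect.

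You assert that holonomicity implies ``the colimit defining each fiber is in fact a \emph{finite} colimit after taking a compact piece of $\CF$.'' This is false. The functor $\jmath_{I,!}$ is a left Kan extension along the map $\jmath_I:\CY_I\to\CZ_I$, which involves pushing forward over the Ran space and its unital variant, and the underlying colimit ranges over all finite powers $X^I$ (or $(X-x_0)^I$). That index category is genuinely infinite; for a given $\ul{x}\in\Ran_{x_0}$ the maps $X^I\to\Ran_{x_0}$ do not stabilize to isomorphisms after finitely many $I$, and their $!$-pushforwards are not eventually constant. The reason the left adjoint exists on ind-holonomic objects is not finiteness of the colimit but \emph{pseudo-properness} of the maps $X^I\times x_0\to\Ran_{x_0}$ (so that $!$-pushforward is defined, satisfies base change, and preserves ind-holonomicity), combined with \emph{ind-properness} of the unital structural maps of the coCartesian space $\wt\Gr_{G,\Ran^\untl_{x_0}}$ over $\Ran^\untl_{x_0}$, which allows one to push forward along $\Ran_{x_0}\to\Ran^\untl_{x_0}$ as in \cite[Sect.~4.3]{Ga1}. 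Your proposal never invokes either of these; in particular it does not engage with the unital Ran space at all, which is where the real work lies, since $\jmath_I$ factors through $\Ran^\untl_{x_0}$, not just $\Ran_{x_0}$.

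There is a second, related issue: you say the factorization compatibility (the second condition in \secref{sss adapt to induction}) follows ``tautologically'' because the object was built as a $!$-pushforward from the disjoint locus. It is not tautological: what is required is that a specific Beck--Chevalley natural transformation (comparing $\jmath_{I,!}$ of an external product with the external product of $\jmath_!$) is an isomorphism, and proving this requires base-change for the pseudo-proper maps and the compatibility of $!$-direct images with external tensor products. The paper isolates exactly these hypotheses in the notion of ``collections of indschemes adapted to $!$-direct images'' compatible with $\jmath$ (and with $(f,\on{id}_\CW)$), and then verifies them using the two geometric inputs above. Without that scaffolding, your argument leaves both conditions of adaptedness unproved.
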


\sssec{}
    To prove the lemma, we need some notations.

    \medskip 
    Unwinding the definitions, the functor \eqref{eqn-j!} is given by
    \[
       \jmath_I^!: \Dmod^\lax( \CZ_I  ) \to \Dmod^\lax( \CY_I  ),
    \]
    where
    \begin{itemize}
      \item $\CZ_I$ is the categorical prestack (see \secref{sss categorical prestack})
      \[
          \big( \big(\prod_{i\in I^\circ} \Gr_{G,\Ran^\untl}\big) \times \wt\Gr_{G,\Ran_{x_0}^\untl}\big)_\disj,
      \]
      where
      \begin{itemize}
        \item $\Gr_{G,\Ran^\untl}$ is the space encoding the unital factorization structure of $\Gr_G$ (see \secref{sss GrG untl});

        \item  $\wt\Gr_{G,\Ran_{x_0}^\untl}$ is the space encoding the unital factorization $\Gr_G$-module structure of $\wt\Gr_{G,x_0}$

        \item $(-)_\disj$ means we apply base-change along the open subspace
        \[
            \big( \big(\prod_{i\in I^\circ} \Ran^\untl\big) \times \Ran_{x_0}^\untl \big)_\disj \subseteq \big(\prod_{i\in I^\circ} \Ran^\untl\big) \times \Ran_{x_0}^\untl,
        \]
        see \secref{sss Ran untl disjoint loci}.
      \end{itemize}
      \item $\CY_I$ is the categorical prestack 
        \[
          \big(\prod_{i\in I} \Gr_{G,\Ran_\circ^\untl}\big)_\disj \times \wt\Gr_{G,x_0},
        \]
        where $\Gr_{G,\Ran_\circ^\untl}$ is the space encoding the unital factorization structure of $\Gr_G$, but for the punctured curve $X_\circ:=X\setminus x_0$. See \secref{sss punctured Ran}.
      \item 
        The morphism 
        \[
            \jmath_I: \CY_I\to \CZ_I
        \]
        is the base-change of the morphism \eqref{eqn-jI}. The functor
        \[
            \jmath_I^!:\Dmod^\lax(\CZ_I) \to \Dmod^\lax(\CY_I)
        \]
        is the $!$-pullback functors for lax D-modules (see \secref{sss lax D-module} and \secref{sss !-pullback lax global section}).

    \end{itemize}

\sssec{}
  \label{sss hol is adapt translate}
      Let $\CF\in \Dmod_{\on{hol}}(\wt\Gr_{G,x_0})$ and write 
      \[
          \omega\in \Dmod^\lax(\Gr_{G,\Ran_\circ^\untl})
      \]
      for the dualizing D-module. To prove Lemma \ref{lem hol is adapt}, we need to check:
    \begin{itemize}
      \item 
        For any marked finite set $I$, the partially defined left adjoint $\jmath_{I,!}$ of $\jmath_I^!$ is defined on the object
        \begin{equation}
          \label{eqn-hol-adapt-1}
            (\boxt_{i\in I} \omega)|_\disj  \boxt \CF  \in \Dmod^\lax(\CZ_I).
        \end{equation}
      \item 
        The canonical morphism
        \[
            \jmath_{I,!}(   (\boxt_{i\in I} \omega)|_\disj  \boxt \CF   ) \to  \big( (\boxt_{i\in I} \omega) \boxt \jmath_!( \omega\boxt \CF  ) \big)|_\disj
        \]
        is invertible. 
    \end{itemize}

\sssec{}
  To verify the claims in \secref{sss hol is adapt translate}, we need some preparations.

  \medskip
  Let $\CY$ be any categorical prestack. We say a lax D-module $\CF\in \Dmod^\lax(\CY)$ is ind-holonomic if its $!$-pullback along any affine point $S\to \CY$ is contained in $\Dmod_{\on{hol}}(S)$. Note that \eqref{eqn-hol-adapt-1} is an ind-holonomic object.

  \medskip 
  By definition, $!$-pullback functors preserve ind-holonomic lax D-modules.

  \medskip 
  Let $\CY$ be a categorical prestack. We say a collection of (finite type) indschemes $(f_\alpha:Y_\alpha\to \CY)_{\alpha\in A}$ over $\CY$ is \emph{adapted to $!$-direct images} if
  \begin{itemize}
    \item 
      The functors 
      \[
          f_\alpha^!:\Dmod^\lax(\CY) \to \Dmod(Y_\alpha)
      \]
      are jointly conservative.
    \item 
      The left adjoint of $f_\alpha^!$ exists, i.e., we have the $!$-direct image functor
      \[
        f_{\alpha,!}:\Dmod(Y_\alpha) \to \Dmod^{\lax}(\CY).
      \]
    \item 
      The functor $f_{\alpha,!}$ preserves ind-holonomic lax D-modules, i.e., we have a functor
      \begin{equation}
        \label{eqn-falpha!}
        f_{\alpha,!}:\Dmod_{\on{hol}}(Y_\alpha) \to \Dmod^{\lax}_{\on{hol}}(\CY).
      \end{equation}
  \end{itemize}
  It is clear these conditions imply that $\Dmod^\lax_{\on{hol}}(\CY)$ is generated by the images of \eqref{eqn-falpha!}.

\sssec{}
  \label{sss ! direct image hol}
  Let $\jmath:\CY\to \CZ$ be a morphism between categorical prestacks, and $(f_\alpha:Y_\alpha\to \CY)_{\alpha\in A}$ and $(g_\beta:Z_\beta \to \CZ)_{\beta\in B}$ be collections of indschemes that are adapted to $!$-direct images. We say these two collections are \emph{compatible with $\jmath$} if for any $\alpha$, there exists $\beta$ such that the composition $Y_\alpha\to \CY \to \CZ$ factors through $Z_\beta$.

  \medskip 
  We claim the above condition implies the partially defined left adjoint $\jmath_!$ of
  \[
      \jmath^!: \Dmod^\lax(\CZ) \to \Dmod^\lax(\CY)
  \]
  is defined on $\Dmod^\lax_{\on{hol}}(\CY)$. Namely, we only need to show $(\jmath\circ f_\alpha)_!$ is defined on $\Dmod_{\on{hol}}(Y_\alpha)$ for any $\alpha\in A$. Note that $\jmath\circ f_\alpha$ factors as $\jmath\circ f_\alpha$ as 
  \[
    Y_\alpha \xrightarrow{\jmath_{\alpha\beta}} Z_\beta \xrightarrow{g_\beta} \CZ
  \] 
  for some $\beta$. Now the claim follows from the following two facts:
  \begin{itemize}
    \item 
      The partially defined left adjoint $\jmath_{\alpha\beta,!}$ of 
      \[
          \jmath_{\alpha\beta}^!: \Dmod(Z_\beta) \to \Dmod(Y_\alpha)
      \] 
      is defined on $\Dmod_{\on{hol}}(Y_\alpha)$, because $Y_\alpha$ and $\beta$ are (ind-finite type) indschemes;
    \item 
      The functor $g_{\beta,!}:\Dmod(Z_\beta) \to \Dmod^\lax(Z)$ left adjoint to $g_\beta^!$ exists by assumption.
  \end{itemize}

\sssec{}
  \label{sss ! direct image hol continue}
  Let $f:\CY\to \CZ$ be a morphism between categorical prestacks and $\CW$ be another categorical prestack. Let $(f_\alpha:Y_\alpha\to \CY)_{\alpha\in A}$, $(g_\beta:Z_\beta \to \CZ)_{\beta\in B}$ and $(h_\gamma:W_\gamma \to \CW)_{\gamma\in C}$ be collections of indschemes that are adapted to $!$-direct images. We say these collections are \emph{compatble with $f$ and $(f,\on{id}_\CW)$} if
  \begin{itemize}
    \item 
      $(f_\alpha,h_\gamma)_{(\alpha,\gamma)\in A\times C}$ is a collection of schemes over $\CY \times \CW$ that is adapted to $!$-direct images;
    \item 
      $(g_\beta,h_\gamma)_{(\beta,\gamma)\in B\times C}$ is a collection of schemes over $\CZ \times \CW$ that is adapted to $!$-direct images;
    \item 
      $(f_\alpha)_{\alpha\in A}$ and $(g_\beta)_{\beta\in B}$ are compatible with $f$.
  \end{itemize}

  \medskip 
  As in \secref{sss ! direct image hol}, one can show these conditions imply for $\CM \in \D_{\on{hol}}^\lax(\CY)$ and $\CN \in \D_{\on{hol}}^\lax(\CW)$, we have
  \[
      (\jmath,\on{id}_\CW)_!( \CM\boxt \CN  ) \xrightarrow{\simeq} \jmath_!(\CM) \boxt \CN.
  \]

\sssec{}
  \label{sss ! direct image hol last}
  Finally, let us apply the above paradigm to the claims in \secref{sss hol is adapt translate}. We only need to find collections of indschemes over
  \[
      \CW:=\Gr_{G,\Ran_\circ^\untl},\, \CY:= \Gr_{G,\Ran_\circ^\untl} \times \wt{\Gr_{G,x_0}}, \, \CZ:= \Gr_{G,\Ran_{x_0}^\untl}
  \]
  satisfying the conditions in \secref{sss ! direct image hol continue}. Note that here we can ignore the functor $(-)|_\disj$ because it commutes with arbitrary $!$-direct images.

  \medskip

  Note that $\CY$, $\CZ$ and $\CW$ are defined over $\Ran_{x_0}^\untl$ (resp. $\Ran_\circ^\untl$). Now the desired collections of indschemes over them can be given by applying base-change to the schemes
  \[
    \begin{aligned}
        X^I \times x_0 &\to& \Ran_{x_0}^\untl,\; |I|<\infty  \\ 
        (X-x)^I  &\to& \Ran_{\circ}^\untl ,\; |I|<\infty
    \end{aligned}
  \]
  respectively. Namely, the conditions in \secref{sss ! direct image hol continue} can be verified by combining the following two arguments:
  \begin{itemize}
    \item 
      The maps $X^I\times x_0 \to \Ran_{x_0}$ into the \emph{non-unital} marked Ran space are pseudo-proper, hence there is a $!$-direct image functor along
      \[
          \CT|_{X^I\times x_0} \to  \CT|_{\Ran_{\ul{x}_0}}
      \]
      for $\CT=\CY$ or $\CZ$. Moreover, these functors preserve ind-holonomic objects and commute with external tensor products.
    \item 
      $\CT=\CY$ or $\CZ$ (resp. $\CT=\CW$) is a \emph{coCartesian space} over $\Ran_{x_0}^\untl$ (resp. $\Ran_\circ^\untl$), see \secref{sss Cart catprestk}. Also, for any morphism $\ul{x}\subseteq \ul{x}'$ in $\Ran_{x_0}^\untl$ (resp. in $\Ran_\circ^\untl$), the structural morphism
      \[
          \CT_{\ul{x}} \to \CT_{\ul{x}'}
      \]
      is ind-proper. Then we can mimic the construction in \cite[Sect. 4.3]{Ga1} to obtain a $!$-direct image functor along
      \[
          \CT|_{\Ran_{\ul x_0}} \to \CT,
      \]
      which preserves ind-holonomic objects and commute with external tensor products.
  \end{itemize}

\qed[Lemma \ref{lem hol is adapt}]

\sssec{}
  \label{sss ind in application}

As a consequence of Lemma \ref{lem hol is adapt}, we can apply Proposition \ref{prop fact ind} to $V:= \delta_1$, where $1\in \wt\Gr_{G,x_0}$ 
is a unit point. This gives an object
\[
    \ind_{\omega_{\Gr_G}}(\delta_1) \simeq \jmath_!( \omega_{\Gr_G,\circ} \boxt \delta_1 ) \in \omega_{\Gr_G}\on{-mod}^\onfact(  \Dmod(\wt\Gr_{G,x_0})^{\on{fact}_{x_0},\Dmod(\Gr_G)} ).
\]
Note that the underlying lax D-module of this object is
\[
    \jmath_!( \omega_{\Gr_{G,\Ran_\circ^\untl}} \boxt \delta_1  ) \in \Dmod^\lax(  \wt\Gr_{G,\Ran_{x_0}^\untl}  ),
\]
where we recall that $\jmath$ is the map
\[
    \jmath: \Gr_{G,\Ran_\circ^\untl} \times \wt\Gr_{G,x_0} \to  \wt\Gr_{G,\Ran_{x_0}^\untl}.
\]

\ssec{Verification of Properties (2) and (3)}

\sssec{}

Using the method in \secref{sss ! direct image hol} - \secref{sss ! direct image hol last}, one can show the partially defined functor $(\wt\pi^{\on{fact}_{x_0},\Gr_G})_!$, left adjoint to 
$$(\wt\pi^{\on{fact}_{x_0},\Gr_G})^!:\Dmod^\lax(\Gr_{G,\Ran^\untl_{x_0}}/\on{Hecke}_{x_0})\to \Dmod^\lax(\wt\Gr_{G,\Ran^\untl_{x_0}}),$$
is defined on ind-holonomic objects, and it sends objects in 
$$\omega_{\Gr_G}\mod^\onfact(\Dmod(\wt\Gr_{G,x_0})^{\on{fact}_{x_0},\Dmod(\Gr_G)})_{x_0},$$
whose underlying object of $\Dmod^\lax(\wt\Gr_{G,\Ran_{x_0}})$ is ind-holonomic to objects in 
$$\omega_{\Gr_G}\mod^\onfact(\Vect^{\on{fact}_{x_0},\Dmod(\Gr_G)})_{x_0},$$
thereby providing a left adjoint to the lower-right vertical functor
in \eqref{e:big monad diagram}.

\medskip

This establishes Property (2) in \secref{sss:properties}. 

\sssec{}

We now proceed to establishing Property (3). We need to establish that the Beck-Chevalley natural transformation
corresponding to the diagram
\begin{equation} \label{e:BC setup prelim}
\CD
\Vect @>{\wt\Phi}>>  \omega_{\Gr_G}\mod^\onfact(\Dmod(\wt\Gr_{G,x_0})^{\on{fact}_{x_0},\Dmod(\Gr_G)})_{x_0}  \\
@A{\oblv_{\fL(G)_{x_0}}}AA @AA{(\wt\pi^{\on{fact}_{x_0},\Gr_G})^!}A \\
\Vect^{\fL(G)_{x_0}}  @>{\Phi}>> \omega_{\Gr_G}\mod^\onfact(\Vect^{\on{fact}_{x_0},\Dmod(\Gr_G)})_{x_0}
\endCD
\end{equation} 
is an isomorphism.

\medskip

Note that we can view \eqref{e:BC setup prelim} as a commutative diagram involving the functors \eqref{e:main functor C}:
\begin{equation} \label{e:BC setup}
\CD
\on{inv}_{\fL(G)_{x_0}}(\Dmod(\fL(G)_{x_0})^K) @>{\wt\Phi}>>  \omega_{\Gr_G}\mod^\onfact(\Dmod(\fL(G)_{x_0})^K)^{\on{fact}_{x_0},\Dmod(\Gr_G)})_{x_0}  \\
@AAA @AAA \\
\on{inv}_{\fL(G)_{x_0}}(\Dmod(\fL(G)_{x_0})^{\fL(G)_{x_0}}) @>{\Phi}>> \omega_{\Gr_G}\mod^\onfact((\Dmod(\fL(G)_{x_0})^{\fL(G)_{x_0}})^{\on{fact}_{x_0},\Dmod(\Gr_G)})_{x_0},
\endCD
\end{equation} 
where $\Dmod(\fL(G)_{x_0})^K$ and $\Dmod(\fL(G)_{x_0})^{\fL(G)_{x_0}}$ are considered as objects of $\fL(G)_{x_0}\mmod$ with respect to the \emph{left}
action of $\fL(G)_{x_0}$, and $(-)^K$ and $(-)^{\fL(G)_{x_0}}$ are taken with respect to the \emph{right} action. 

\medskip

The vertical arrows in \eqref{e:BC setup} are induced by
the 1-morphism 
$$\Dmod(\fL(G)_{x_0})^{\fL(G)_{x_0}}\to \Dmod(\fL(G)_{x_0})^K$$
in $\fL(G)_{x_0}\mmod$, given by $\oblv_{\fL(G)_{x_0}\to K}$ with respect to the \emph{right} action. 

\sssec{}

Let $\Phi^+$ be the functor
$$\on{inv}_{\fL(G)_{x_0}}(\Dmod(\fL(G)_{x_0})^{\fL^+(G)_{x_0}})  \to \omega_{\Gr_G}\mod^\onfact((\Dmod(\fL(G)_{x_0})^{\fL^+(G)_{x_0}})^{\on{fact}_{x_0},\Dmod(\Gr_G)})_{x_0},$$
which is a particular case of \eqref{e:main functor C} for $\bC:=(\Dmod(\fL(G)_{x_0})^{\fL^+(G)_{x_0}}=\Dmod(\Gr_{G,x_0})$.

\medskip

We expand diagram \eqref{e:BC setup} as 
\begin{equation} \label{e:BC setup expanded}
\CD
\on{inv}_{\fL(G)_{x_0}}(\Dmod(\fL(G)_{x_0})^K) @>{\wt\Phi}>>  \omega_{\Gr_G}\mod^\onfact(\Dmod(\fL(G)_{x_0})^K)^{\on{fact}_{x_0},\Dmod(\Gr_G)})_{x_0}  \\
@A{\oblv_{\fL^+(G)_{x_0}\to K}}AA @AA{\oblv_{\fL^+(G)_{x_0}\to K}}A \\
\on{inv}_{\fL(G)_{x_0}}(\Dmod(\fL(G)_{x_0})^{\fL^+(G)_{x_0}}) @>{\Phi^+}>> \omega_{\Gr_G}\mod^\onfact((\Dmod(\fL(G)_{x_0})^{\fL^+(G)_{x_0}})^{\on{fact}_{x_0},\Dmod(\Gr_G)})_{x_0} \\
@A{\oblv_{\fL(G)_{x_0}\to \fL^+(G)_{x_0}}}AA @AA{\oblv_{\fL(G)_{x_0}\to \fL^+(G)_{x_0}}}A \\
\on{inv}_{\fL(G)_{x_0}}(\Dmod(\fL(G)_{x_0})^{\fL(G)_{x_0}}) @>{\Phi}>> \omega_{\Gr_G}\mod^\onfact((\Dmod(\fL(G)_{x_0})^{\fL(G)_{x_0}})^{\on{fact}_{x_0},\Dmod(\Gr_G)})_{x_0}.
\endCD
\end{equation}

It is enough to show that the Beck-Chevalley natural transformations 
\begin{equation} \label{e:BC again 1}
(\oblv_{\fL(G)_{x_0}\to \fL^+(G)_{x_0}})^L\circ \Phi^+_{\oblv} \to \Phi \circ (\oblv_{\fL(G)_{x_0}\to \fL^+(G)_{x_0}})^L
\end{equation}
and 
\begin{equation} \label{e:BC again 2}
(\oblv_{\fL^+(G)_{x_0}\to K})^L\circ \wt\Phi \to \Phi^+\circ (\oblv_{\fL^+(G)_{x_0}\to K})^L
\end{equation}
are both isomorphisms.

\sssec{}

The assertion for \eqref{e:BC again 1} follows from the fact that the 1-morphism
$$\oblv_{\fL(G)_{x_0}\to \fL^+(G)_{x_0}}: (\Dmod(\fL(G)_{x_0})^{\fL(G)_{x_0}}\to (\Dmod(\fL(G)_{x_0})^{\fL^+(G)_{x_0}}$$
admits a left adjoint already in $\fL(G)_{x_0}\mmod$, see \secref{sss:Av Gr}. 

\sssec{}

To prove the assertion for \eqref{e:BC again 2}, we expand the upper portion of \eqref{e:BC setup expanded}:

\medskip

\begin{equation} \label{e:BC setup expanded bis}
\CD
\on{inv}_{\fL(G)_{x_0}}(\Dmod(\fL(G)_{x_0})^K) @>{\wt\Phi}>>  \omega_{\Gr_G}\mod^\onfact(\Dmod(\fL(G)_{x_0})^K)^{\on{fact}_{x_0},\Dmod(\Gr_G)})_{x_0}  \\
@AAA @AAA \\
\on{inv}_{\fL(G)_{x_0}}(\Dmod(\fL(G)_{x_0})^{\on{alm-}\fL^+(G)_{x_0}}) 
@>{\Phi^{\on{alm-}+}}>> \omega_{\Gr_G}\mod^\onfact((\Dmod(\fL(G)_{x_0})^{\on{alm-}\fL^+(G)_{x_0}})^{\on{fact}_{x_0},\Dmod(\Gr_G)})_{x_0}  \\
@AAA @AAA  \\
 \on{inv}_{\fL(G)_{x_0}}(\Dmod(\fL(G)_{x_0})^{\fL^+(G)_{x_0}}) @>{\Phi^+}>> \omega_{\Gr_G}\mod^\onfact((\Dmod(\fL(G)_{x_0})^{\fL^+(G)_{x_0}})^{\on{fact}_{x_0},\Dmod(\Gr_G)})_{x_0},
 \endCD
\end{equation} 
where the symbol $\on{alm-}\fL^+(G)_{x_0}$ refers to \emph{almost invariants} with respect to $\fL^+(G)_{x_0}$.

\medskip 

It suffices to show that the Beck-Chevalley natural transformations corresponding to both subdiagrams are isomorphisms. 

\medskip

For the lower square this follows from the fact that
the 1-morphism
$$(\Dmod(\fL(G)_{x_0})^{\fL^+(G)_{x_0}}\to (\Dmod(\fL(G)_{x_0})^{\on{alm-}\fL^+(G)_{x_0}}$$
admits a left adjoint already in $\fL(G)_{x_0}\mmod$, see \secref{sss:Av !}.

\sssec{}

For the upper square in \eqref{e:BC setup expanded bis} we note:

\medskip

\begin{itemize}

\item The left vertical arrow is an equivalence;

\smallskip

\item The right vertical arrow is fully faithful. Indeed, the 1-morphism
$$(\Dmod(\fL(G)_{x_0})^{\on{alm-}\fL^+(G)_{x_0}}\to \Dmod(\fL(G)_{x_0})^K$$
in $\fL(G)_{x_0}\mmod$ is fully faithful and admits a \emph{right} adjoint. 

\end{itemize}

Now, the fact that the Beck-Chevalley natural transformation is an isomorphism follows from the next
general assertion:

\begin{lem} \label{l:BC}
Let 
$$
\CD
\bC_1 @>{\Phi}>> \bC_2 \\
@A{\iota_1}AA @A{\iota_2}AA \\
\bC'_1 @>{\Phi'}>> \bC'_2
\endCD
$$
be a commutative diagram, in which $\iota_1$ is an equivalence and $\iota_2$ is fully faithful.
Then the Beck-Chevalley natural transformation
$$(\iota_2)^L\circ \Phi\to \Phi'\circ (\iota_1)^L$$
is an isomorphism.
\end{lem}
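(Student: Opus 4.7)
The plan is to write out the Beck-Chevalley natural transformation explicitly as a composition of two natural transformations and observe that each factor is automatically an isomorphism under the given hypotheses. Recall that the BC transformation in question is the composition
\[
(\iota_2)^L\circ \Phi \xrightarrow{\;\on{unit}_{\iota_1}\;} (\iota_2)^L\circ \Phi \circ \iota_1\circ (\iota_1)^L
\simeq (\iota_2)^L\circ \iota_2\circ \Phi' \circ (\iota_1)^L
\xrightarrow{\;\on{counit}_{\iota_2}\;} \Phi' \circ (\iota_1)^L,
\]
where the middle isomorphism uses the given commutativity $\Phi\circ \iota_1\simeq \iota_2\circ \Phi'$.

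The first arrow is obtained by whiskering the unit $\on{id}\to \iota_1\circ (\iota_1)^L$ of the adjunction $((\iota_1)^L,\iota_1)$ on the right, and this unit is an isomorphism precisely because $\iota_1$ is an equivalence. The second arrow is obtained by whiskering the counit $(\iota_2)^L\circ \iota_2 \to \on{id}$ of the adjunction $((\iota_2)^L,\iota_2)$ on the left, and this counit is an isomorphism precisely because $\iota_2$ is fully faithful. Since a composition of isomorphisms is an isomorphism, the BC transformation is an isomorphism, as claimed.

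I expect no real obstacle here: the content is purely the formal manipulation of adjunctions, and the only subtlety is to make sure that the BC map one writes down is indeed the one arising naturally from the diagram. Once applied, this general lemma immediately finishes the verification that \eqref{e:BC again 2} is an isomorphism on the upper square of \eqref{e:BC setup expanded bis}, since there the left vertical arrow $\on{inv}_{\fL(G)_{x_0}}(\Dmod(\fL(G)_{x_0})^{\on{alm-}\fL^+(G)_{x_0}})\to \on{inv}_{\fL(G)_{x_0}}(\Dmod(\fL(G)_{x_0})^K)$ is an equivalence and the right vertical arrow is fully faithful by the observations made just before the lemma.
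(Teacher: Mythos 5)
Your proof is correct and is essentially the paper's argument: the paper precomposes the BC transformation with $\iota_1$ (legitimate since $\iota_1$ is an equivalence) and then produces a commutative square in which the other three arrows — the commutativity $\Phi\circ\iota_1\simeq\iota_2\circ\Phi'$, the counit $\Phi'\circ(\iota_1)^L\circ\iota_1\to\Phi'$, and the counit $(\iota_2)^L\circ\iota_2\circ\Phi'\to\Phi'$ — are isomorphisms, whereas you write the BC transformation out directly as the composite $\text{unit}_{\iota_1}$ followed by the commutativity followed by $\text{counit}_{\iota_2}$ and observe each factor is an isomorphism. These are the same computation packaged slightly differently; the paper's reduction avoids having to say anything about units, which is a marginal economy but not a substantive difference.
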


\begin{proof}

It suffices to show that the natural transformation
$$(\iota_2)^L\circ \Phi\circ \iota_1\to \Phi'\circ (\iota_1)^L\circ \iota_1$$
is an isomorphism.

\medskip

We have a commutative diagram of functors
$$
\CD
(\iota_2)^L\circ \Phi \circ \iota_1 @>>> \Phi'\circ (\iota_1)^L\circ \iota_1 \\
@V{\sim}VV @VVV \\
(\iota_2)^L\circ \iota_2\circ \Phi' @>>> \Phi'.
\endCD
$$

We have to show that the top horizontal arrow is an isomorphism. 
However, the conditions of the lemma imply that all three other arrows 
are isomorphisms.

\end{proof} 

\ssec{Verification of Property (4)}

In this subsection we perform the key calculation involved in the proof of \thmref{t:main}.

\sssec{}

We rewrite the functor \eqref{e:complicated forgetful} as
\begin{multline} \label{e:complicated forgetful 1}
\omega_{\Gr_G}\mod^\onfact(\Dmod(\wt\Gr_{G,x_0})^{\on{fact}_{x_0},\Dmod(\Gr_G)})_{x_0} \overset{\oblv}\to 
\Dmod^\lax(\wt\Gr_{G,\Ran^\untl_{x_0}}) \overset{(\wt\pi^{\on{fact}_{x_0},\Gr_G})_!}\longrightarrow \\ 
\to \Dmod^\lax(\Gr_{G,\Ran^\untl_{x_0}}/\on{Hecke}_{x_0}) 
\overset{\iota_{x_0}^!}\longrightarrow \Vect,
\end{multline} 
where 
\[
    \iota_{x_0}: \on{pt} \to \Gr_{G,\Ran^\untl_{x_0}}/\on{Hecke}_{x_0}
\] 
is the base change of the map $x_0 \to \Ran_{x_0}^\untl$.

\medskip

Note that by \secref{sss ind in application}, the image of the natural transformation
$$(\iota_1^!\circ \oblv_{\omega_{\Gr_G}})^L\to \wt\Phi$$
evaluated on $k\in \Vect$ under the first arrow in \eqref{e:complicated forgetful 1} is the map 
\begin{multline} \label{e:decode BC} 
\jmath_!(\omega_{\Gr_{G,\Ran^\untl_\circ}}\boxt \delta_1)\to
\jmath_!(\omega_{\Gr_{G,\Ran^\untl_\circ}}\boxt \omega_{\wt\Gr_{G,x_0}})
\simeq \jmath_!\circ \jmath^!(\omega_{\wt\Gr_{G,\Ran^\untl_{x_0}}})\to \omega_{\wt\Gr_{G,\Ran^\untl_{x_0}}}.
\end{multline} 

\sssec{}

Denote by $\pi_{\Ran^\untl}/\on{Hecke}_{x_0}$ the projection
$$\Gr_{G,\Ran^\untl_{x_0}}/\on{Hecke}_{x_0}\to \Ran^\untl_{x_0}.$$

Note that this morphism is pseudo-proper (see \cite[Sect. 1.5.3]{Ga1}); hence, the functor 
$$(\pi_{\Ran^\untl}/\on{Hecke}_{x_0})_!:\Dmod^\lax(\Gr_{G,\Ran^\untl_{x_0}}/\on{Hecke}_{x_0})\to \Dmod^\lax(\Ran^\untl_{x_0}),$$
left adjoint to $(\pi_{\Ran^\untl}/\on{Hecke}_{x_0})^!$, is well-defined and satisfies base change. 

\medskip

Hence, we obtain that the functor
$$\Dmod^\lax(\wt\Gr_{G,\Ran^\untl_{x_0}}) \overset{(\wt\pi^{\on{fact}_{x_0},\Gr_G})_!}\longrightarrow 
\Dmod^\lax(\Gr_{G,\Ran^\untl_{x_0}}/\on{Hecke}_{x_0}) 
\overset{(\iota_{x_0})^!}\longrightarrow \Vect,$$
appearing in \eqref{e:complicated forgetful 1} can be rewritten as
\begin{equation} \label{e:complicated forgetful 2}
\Dmod^\lax(\wt\Gr_{G,\Ran^\untl_{x_0}})  \overset{(\wt\pi_{\Ran^\untl})_!}\longrightarrow \Dmod^\lax(\Ran^\untl_{x_0}) \overset{(-)_{x_0}}\longrightarrow \Vect,
\end{equation}
where:

\begin{itemize}

\item $\wt\pi_{\Ran^\untl}$ denotes the projection $\wt\Gr_{G,\Ran^\untl_{x_0}}\to \Ran_{x_0}^\untl$;

\item $(\wt\pi_{\Ran^\untl})_!$ denotes the partially defined left adjoint to $(\wt\pi_{\Ran^\untl})^!$.

\end{itemize}

\medskip

Thus, we obtain that we need to show that the morphism \eqref{e:decode BC} becomes an isomorphism after applying the functor
\eqref{e:complicated forgetful 2}.

\sssec{}

The question is local; hence we can assume that $(X,x_0)=(\BA^1,0)$. Consider the objects of $\Dmod(\Ran_{x_0})$ obtained 
by applying the functor 
$$\Dmod^\lax(\wt\Gr_{G,\Ran^\untl_{x_0}})  \overset{(\wt\pi_{\Ran^\untl})_!}\longrightarrow \Dmod^\lax(\Ran^\untl_{x_0}) \xrightarrow{ \imath^! }\Dmod(\Ran_{x_0})
$$
to the two sides of \eqref{e:decode BC}, where $\Ran_{x_0}$ is the \emph{non-unital} marked Ran space (which is a non-categorical prestack) and 
\[
  \imath: \Ran_{x_0} \to \Ran_{x_0}^\untl
\]
is the obvious map. Note that both these objects are equivariant with respect to the action of $\BG_m$
on $\Ran_{x_0}$ by dilations.

\sssec{}
Now we apply the contraction principle to the $\BG_m$-action on $\Ran_{x_0}$, which says the functors
\[
    x_0^!:\Dmod(\Ran_{x_0}) \to \Vect
\]
and 
\[
    \on{C}^\cdot_c(\Ran_{x_0},-): \Dmod(\Ran_{x_0}) \to \Vect
\]
are canonically equivalent when restricted to $\BG_m$-equivariant D-modules on $\Ran_{x_0}$\footnote{Proof: the map $x_0:\on{pt}\to \Ran_{x_0}$ is right inverse to $p:\Ran_{x_0} \to \on{pt}$. Hence $x_0^!\circ p^!\simeq \on{Id}$. We only need to show this natural isomorphism exhibits $x_0^!$ and the left adjoint of $p^!$, when restricted to $\BG_m$-equivariant objects. Note that we have 
\[
    \Ran_{x_0}\simeq \on{colim}_{I\in \on{Fin}} (\Ran_{x_0} \times_{\Ran} X^I)
\] 
where $\on{Fin}$ is the category of finite sets. Moreover, this isomorphism is compatible with the $\BG_m$-actions on both sides. Hence the desired claim follows from the contraction principle for schemes (\cite[Appendix A]{DG}).}. This implies the functors
\[
    \Dmod^\lax(\Ran_{x_0}^\untl) \overset{(-)_{x_0}}\longrightarrow \Vect
\]
and
\[
    \Dmod^\lax(\Ran_{x_0}^\untl) \xrightarrow{\imath^!} \Dmod(\Ran_{x_0}) \xrightarrow{\on{C}^\cdot_c(\Ran_{x_0},-)} \Vect,
\]
are canonically equivalent when restricted to $\BG_m$-equivariant D-modules on $\Ran_{x_0^\untl}$. Note that by \cite[Lemma C.5.12]{GLC2}, the latter functor is canonically equivalent to
\[
    \Dmod^\lax(\Ran_{x_0}^\untl)  \xrightarrow{\on{C}^\cdot_c(\Ran^\untl_{x_0},-)} \Vect.
\]

\sssec{}

\medskip

Thus, it remains to show that the morphism \eqref{e:decode BC} becomes an isomorphism after applying the functor
\begin{equation} \label{e:complicated forgetful 3}
\Dmod^\lax(\wt\Gr_{G,\Ran^\untl_{x_0}})  \overset{(\wt\pi_{\Ran^\untl})_!}\longrightarrow \Dmod^\lax(\Ran^\untl_{x_0}) \overset{\on{C}^\cdot_c(\Ran^\untl_{x_0},-)}\longrightarrow \Vect,
\end{equation}

\sssec{}

Let us again assume that $(X,x_0)$ is arbitrary. We obtain that it suffices to show that the morphism \eqref{e:decode BC}
induces an isomorphism after applying the functor
$$\on{C}^\cdot_c(\wt\Gr_{G,\Ran^\untl_{x_0}},-).$$

I.e., we need to show that the locally closed embedding
\begin{equation} \label{e:locally closed}
\Gr_{G,\Ran^\untl_\circ}\times \{1\} \to \Gr_{G,\Ran^\untl_\circ}\times \wt\Gr_{G,x_0} \to \wt\Gr_{G,\Ran^\untl_{x_0}}
\end{equation} 
induces an isomorphism on homology.

\medskip

As we will show in the next subsection, the latter assertion follows from the \emph{homological contractibility}
statement from \cite{Ga1}. 

\ssec{The contractibility statement}

\sssec{} \label{sss:contr 1}

Write
$$X=\ol{X}-\ul{x},$$
where $\ol{X}$ is a complete (and smooth) curve and $\ul{x}=\{x_1,...,x_n\}$ is a finite collection
of points on $\ol{X}$.

\medskip

Let $\Bun_G$ denote the moduli stack of $G$-bundles on $\ol{X}$. Let $\Bun^{\on{level}^1_{\ul{x}}}_G$ 
(resp., $\wt\Bun^{\on{level}^1_{\ul{x}}}_G$) be the moduli stack of $G$-bundles with structure of level
$1$ at $\{x_1,...,x_n\}$ (resp., additional structure of level $K$ at $x_0$). 

\medskip

We have a commutative diagram
\begin{equation} \label{e:NAPD}
\CD 
\Gr_{G,\Ran^\untl_\circ} @>{\text{\eqref{e:locally closed}}}>> \wt\Gr_{G,\Ran^\untl_{x_0}} \\
@VVV @VVV \\
\wt\Bun^{\on{level}^1_{\ul{x}}}_G @>{\on{id}}>> \wt\Bun^{\on{level}^1_{\ul{x}}}_G.
\endCD
\end{equation} 

We will show that both vertical maps in \eqref{e:NAPD} induce isomorphisms at the level of $\on{C}_\cdot(-)$. 
This will establish the corresponding fact for the morphism \eqref{e:locally closed}.

\medskip

We will show that both vertical maps in \eqref{e:NAPD} are in fact universal homological equivalences. 

\sssec{} \label{sss:contr 2}

The fact that the left vertical arrow in \eqref{e:NAPD} is a universal homological equivalence is
the statement of the contractibility theorem from \cite{Ga1}, applied to the complete curve $\ol{X}$
and $\{x_0,x_1,...,x_n\}$ as marked points. 

\sssec{} \label{sss:contr 3}

Note that the right vertical arrow in \eqref{e:NAPD} is the base change of the map
\begin{equation} \label{e:NAPD marked}
\Gr_{G,\Ran^\untl_{x_0}}\to \Bun^{\on{level}^1_{\ul{x}}}_G.
\end{equation}

Hence, it suffices to show that \eqref{e:NAPD marked} is a universal homological equivalence.

\sssec{} \label{sss:contr 4}

Consider the forgetful map
$\Ran^\untl_{x_0} \to \Ran^\untl$ and the map $\on{add}_{x_0}:\Ran^\untl\to \Ran^\untl_{x_0}$. 
They realize $\Ran_{x_0}$ as a retract of $\Ran$. Similarly, the map \eqref{e:NAPD marked}
is a retract of the map 
$$\Gr_{G,\Ran^\untl}\to \Bun^{\on{level}^1_{\ul{x}}}_G.$$

Hence, it suffices to show that the latter map is a universal homological equivalence.
However, this is again an instance of the contractibility theorem, applied to the complete curve $\ol{X}$
and $\{x_1,...,x_n\}$ as marked points. 

\qed

\section{Proof of \thmref{t:red to alm const case} for a torus} \label{s:torus}

For the next three sections we will be concerned with the proof of  \thmref{t:red to alm const case}.

\medskip

In this section we take $G=T$ to be a torus, and we will prove \thmref{t:red to alm const case}
using local geometric class field theory. 

\medskip

This special case will also be used in the proof of the general case of \thmref{t:red to alm const case}. 

\ssec{Reduction to the case of characters}

\sssec{}

Write 
$$\fL^+(T)_{x_0}\simeq T\times \on{ker}(\fL^+(\ft)\to \ft).$$

\medskip

Fourier-Laumon transforms identifies the monoidal category $\Dmod(\fL^+(T)_{x_0})$ with
$$\QCoh(\ct/\check\Lambda)\otimes \Dmod(\on{ker}(\fL^+(\ft)\to \ft)^*),$$
equipped with the \emph{pointwise} tensor product.

\sssec{}

Note that for $\bC\in \fL^+(T)_{x_0}\mmod$, the inclusion 
$$\on{alm-inv}_{\fL^+(T)_{x_0}}(\bC)\to \bC$$
is an equivalence if and only if for every \emph{non-zero} geometric point
$$\chi^+\in (\ct/\check\Lambda)\times (\on{ker}(\fL^+(\ft)\to \ft)^*),$$
the fiber $\bC_{\chi^+}$ is zero, see \cite[Lemma 21.4.6]{AGKRRV}.

\sssec{}

Let $\bC$ be an object of $\fL(T)_{x_0}\mmod$. Thus, we obtain that in order to prove 
\thmref{t:red to alm const case}, it suffices to show that for any such $\chi^+$, 
$$\omega_{\Gr_T}\mod^\onfact(\bC_{\chi^+}^{\on{fact}_{x_0,\Dmod(\Gr_G)}})_{x_0}=0.$$

Up to changing the ground field, we can assume that $\chi^+$ is a rational point. 

\sssec{}

Note that $k$-rational points of $(\ct/\check\Lambda)\times (\on{ker}(\fL^+(\ft)\to \ft)^*)$ can be thought 
of character sheaves on $\fL^+(T)_{x_0}$.

\medskip

For $\bC\in \fL(T)_{x_0}\mmod$ and a $k$-rational point $\chi^+$ as above, we have 
$$\bC_{\chi^+}\simeq \Dmod(\fL(T)_{x_0})^{(\fL^+(T)_{x_0},\chi^+)}\otimes \bC_0,$$
where $\bC_0\in \DGCat$ and the action of $\fL(T)_{x_0}$ is via the first factor.

\medskip

Thus, we need to show that for a non-trivial $\chi^+$ and any $\bC_0$, the category 
$$\omega_{\Gr_T}\mod^\onfact((\Dmod(\fL(T)_{x_0})^{(\fL^+(T)_{x_0},\chi^+)})^{\on{fact}_{x_0},\Dmod(\Gr_T)}\otimes \bC_0)_{x_0}$$
is zero.

\sssec{}

We will prove: 

\begin{thm} \label{t:main torus zero}
For a non-trivial $\chi^+$, 
the category $$\omega_{\Gr_T}\mod^\onfact((\Dmod(\fL(T)_{x_0})^{(\fL^+(T)_{x_0},\chi^+)})^{\on{fact}_{x_0},\Dmod(\Gr_T)})_{x_0}$$
is zero.
\end{thm}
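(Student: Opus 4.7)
My plan is to make the category concrete and then exploit both the abelian structure of $\Gr_T$ and local geometric class field theory.

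First, I would rewrite $\bC^{\on{fact}_{x_0},\Dmod(\Gr_T)}$ concretely as the factorization module category coming from $(\fL^+(T)_{x_0},\chi^+)$-equivariant D-modules on the factorization module space $\Gr^{\on{level}^\infty_{x_0}}_{T,\Ran_{x_0}}$, so that a factorization $\omega_{\Gr_T}$-module at $x_0$ in it amounts to a D-module on this space which is (a) $\chi^+$-equivariant for the right action of $\fL^+(T)_{x_0}$, and (b) carries a factorization $\omega_{\Gr_T}$-module structure. The question becomes whether such a nonzero object can exist.

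Next, I would exploit the group structure of $\Gr_T$: since $T$ is abelian, $\Gr_T$ is a commutative factorization group ind-scheme acting on $\Gr^{\on{level}^\infty_{x_0}}_{T,\Ran_{x_0}}$ making the latter a factorization torsor, and $\omega_{\Gr_T}$ is the unit factorization algebra for this group multiplication. I expect that the category of factorization $\omega_{\Gr_T}$-modules at $x_0$ then identifies, via a descent argument along $\Gr_T$, with a category of ``factorization-$\Gr_T$-equivariant'' objects in $\bC$; concretely, $\chi^+$-equivariant D-modules on $\fL(T)_{x_0}$ equipped with a factorization-compatible equivariance against the translation action of the cocharacter lattice $\Lambda\simeq \Gr_{T,x_0}$. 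Combined with the $\chi^+$-equivariance, this amounts to extending $\chi^+$ to a factorization-compatible character structure on the full loop group $\fL(T)_{x_0}$.

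Finally, I would invoke local geometric class field theory for $T$: factorization-compatible characters of $\fL(T)_{x_0}$ are governed by the moduli of de Rham local systems on $\ocD_{x_0}$ valued in the dual torus $\cT$, and such a ``factorization extension'' of a character of $\fL^+(T)_{x_0}$ can exist only when the starting character is trivial. Hence no nonzero object survives, and the category vanishes.

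The main obstacle will be making the second step precise: translating factorization $\omega_{\Gr_T}$-modules into objects of $\bC$ with an explicit translation equivariance against $\Lambda$ coming from the factorization structure. This requires a descent argument along the factorization group ind-scheme $\Gr_T$, careful handling of lax D-modules on factorization spaces in the infinite-type setting, and reconciling the factorization compatibility at $x_0$ with the nearby factorization structure over $\Ran_{x_0}$. Once this translation is rigorously established, the appeal to LGCFT to rule out non-trivial $\chi^+$ is essentially formal.
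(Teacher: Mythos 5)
Your step (1), unwinding $\bC^{\on{fact}_{x_0},\Dmod(\Gr_T)}$ in terms of $(\fL^+(T)_{x_0},\chi^+)$-equivariant D-modules on $\Gr^{\on{level}^\infty_{x_0}}_{T,\Ran_{x_0}}$, is a correct reformulation, and your instinct that local geometric class field theory is the decisive input is right. But there is a real gap in steps (2)--(3): the proposed identification of factorization $\omega_{\Gr_T}$-modules with ``extensions of $\chi^+$ to a factorization-compatible character on $\fL(T)_{x_0}$'' is not correct, and as a consequence the final appeal to LGCFT cannot close the argument.

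To see the problem, run your own description in the case of trivial $\chi^+$. There a factorization extension certainly exists (the trivial one), and up to equivalence it is essentially unique, so on your account the factorization module category would reduce to something very small. But by \corref{c:main Vect} the category in question is then $\Vect^{\fL(T)_{x_0}}\simeq \on{C}_\cdot(\fL(T)_{x_0})\mod$, a genuinely large module category. Factorization $\omega_{\Gr_T}$-modules are \emph{modules}, not characters: the underlying object of such a module is an arbitrary vector space, and the factorization structure on it is an action datum, not a character-sheaf structure. Nothing in the definition forces a nonzero factorization module to produce a factorization-compatible character sheaf, so the conclusion ``no extension exists $\Rightarrow$ category is zero'' does not follow.

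The paper's route is different in a way that matters. It first chooses an arbitrary extension $\chi$ of $\chi^+$ to $\fL(T)_{x_0}$, and \thmref{t:gCFT} then produces a (possibly ramified) $\cT$-local system $\sigma$ on $X-x_0$ such that $\chi$ \emph{does} extend to a lax factorization structure $\chi_{\Ran_{x_0}}$ on $\Gr^{\on{level}^\infty_{x_0}}_{T,\Ran_{x_0}}$ --- but the compatibility is against the twisted factorization local system $\Lambda(\sigma)$, not against $\omega_{\Gr_T}$. This is used in \propref{p:Vect chi} to identify $\Vect_\chi^{\on{fact}_{x_0},\Dmod(\Gr_T)}$ with $\Rres_{\pi_!^\sigma}(\Vect^{\on{fact}_{x_0}})$, whence by \eqref{e:equiv fact mod} the module category becomes $R_{\cT,\sigma}\mod^\onfact_{x_0}$ for the twisted lattice factorization algebra $R_{\cT,\sigma}=\pi_!^\sigma(\omega_{\Gr_T})$. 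The vanishing of $R_{\cT,\sigma}\mod^\onfact_{x_0}$ (\thmref{t:main torus zero ult}) is then a genuine chiral-algebra computation: one pairs the $\lambda(\sigma)$ and $(-\lambda)(\sigma)$ components via the chiral bracket into the unit $\omega_{X-x_0}[-1]$, uses unitality of the module, and derives a contradiction with the ramification of $\lambda(\sigma)$. That last step is where the non-triviality of $\chi^+$ actually bites; it is not replaced by, and does not reduce to, non-existence of a factorization character. So your ``main obstacle'' is not merely a technical issue of making the descent precise --- the intermediate statement you want to prove by descent is false, and a separate module-theoretic vanishing argument (such as the chiral-bracket computation) is unavoidable.
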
 

The proof in the presence of $\bC_0$ is the same. The rest of this section is devoted to the proof of \thmref{t:main torus zero}. 

\ssec{Character sheaves on \texorpdfstring{$\fL(T)_{x_0}$}{LT}}

\sssec{}

Let $\chi$ be an extension of $\chi^+$ to a character sheaf on all of $\fL(T)_{x_0}$. (Note that the space of such extensions 
is a torsor over 
$$\Hom(\Lambda,B(k^\times))\simeq B(\cT(k)),$$
where $\cT$ is the Langlands-dual torus.)

\medskip

The choice of $\chi$ equips the category $\Dmod(\fL(T)_{x_0})^{(\fL^+(T)_{x_0},\chi^+)}$ with an action of $\Lambda$, so that
$$(\Dmod(\fL(T)_{x_0})^{(\fL^+(T)_{x_0},\chi^+)})^\Lambda\simeq \Vect.$$

Let us denote the above copy of $\Vect$, viewed as an object of $\fL(T)_{x_0}\mmod$, by $\Vect_\chi$. 

\sssec{}

Using the equivalence\footnote{Note that the category $\Rep(\Lambda)\mmod$, which appears in the formula below, identifies with $\QCoh(\cT)$, 
viewed as a monoidal category under \emph{convolution}.}
$$\Lambda\mmod \simeq \Rep(\Lambda)\mmod$$
of \cite{Ga3}, we can recover $\Dmod(\fL(T)_{x_0})^{(\fL^+(T)_{x_0},\chi^+)}$ as 
$$\on{Funct}_{\Rep(\Lambda)}(\Vect,\Vect_\chi).$$

\medskip

Hence, we can reformulate \thmref{t:main torus zero} as:

\begin{thm} \label{t:main torus zero char}
The category $$\omega_{\Gr_T}\mod^\onfact(\Vect_\chi^{\on{fact}_{x_0},\Dmod(\Gr_T)})_{x_0}$$
is zero.
\end{thm}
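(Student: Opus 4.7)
The plan is to use local geometric class field theory for the torus $T$. This theory associates to the character $\chi$ on $\fL(T)_{x_0}$ a factorization multiplicative D-module $\mathcal{L}_\chi$ on $\Gr_T$ --- i.e., a line object in $\Dmod(\Gr_T)$ equipped with a factorization algebra structure compatible with the group structure on $\Gr_T$. The hypothesis that $\chi^+$ is nontrivial translates into the fact that $\mathcal{L}_\chi$ is a \emph{nontrivial} multiplicative line, and moreover nontrivial already after restriction to an appropriate open part of the neutral connected component of $\Gr_T$.

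First I would identify $\Vect_\chi^{\on{fact}_{x_0},\Dmod(\Gr_T)}$ with the ``$\mathcal{L}_\chi$-twist'' of the vacuum factorization module category $\Vect^{\on{fact}_{x_0},\Dmod(\Gr_T)}$. Concretely, the underlying crystal of categories over $\Ran_{x_0}$ is unchanged, but the compatibility isomorphisms defining the factorization module structure against $\Dmod(\Gr_T)$ are modified by the multiplicative line $\mathcal{L}_\chi$. This identification should come by unwinding the definition of $(-)^{\on{fact}_{x_0},\Dmod(\Gr_T)}$ from \secref{ss:constr of funct}, using that the $\fL(T)_{x_0}$-action on $\Vect_\chi$ factors through the character $\chi$ and hence, via the class field theory correspondence, through the multiplicative action of $\mathcal{L}_\chi$ on the geometry of $\Gr^{\on{level}^\infty_{x_0}}_{T,\Ran_{x_0}}$.

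Under this identification, the datum of an $\omega_{\Gr_T}$-factorization module in $\Vect_\chi^{\on{fact}_{x_0},\Dmod(\Gr_T)}$ becomes the datum of a factorization module, in the \emph{untwisted} vacuum factorization module category, for the factorization algebra $\omega_{\Gr_T} \otimes \mathcal{L}_\chi^{-1}$. The final step is then to show that this twisted factorization algebra admits no nonzero factorization modules at $x_0$. This can be argued directly from the multiplicative nature of $\mathcal{L}_\chi$: the unital structure at $x_0$ would force such a module to be compatible with the trivialization of $\mathcal{L}_\chi^{-1}$ at the unit section, and then the factorization compatibility over the disjoint locus would propagate this trivialization, contradicting the nontriviality of $\mathcal{L}_\chi$ away from the unit. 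Equivalently, using the commutative group structure on $\Gr_T$ and its Fourier--Mukai-type dual, $\omega_{\Gr_T} \otimes \mathcal{L}_\chi^{-1}$ corresponds to a skyscraper at a nontrivial point of the moduli of local systems, which is disjoint from the unit point governing the vacuum factorization module structure.

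The main obstacle is the careful formulation of the class field theory equivalence in the factorization setting, in particular matching the unital factorization module structures on both sides of the correspondence $\chi \leftrightsquigarrow \mathcal{L}_\chi$. Once this is in place, both the twisting interpretation of $\Vect_\chi^{\on{fact}_{x_0},\Dmod(\Gr_T)}$ and the final vanishing statement are essentially formal; in particular, the vanishing does not require any ``hard'' geometric input beyond the nontriviality of $\mathcal{L}_\chi$.
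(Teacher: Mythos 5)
Your high-level plan matches the paper's: invoke geometric class field theory to convert the character $\chi$ on $\fL(T)_{x_0}$ into a factorization-level object on $\Gr_T$ (the paper uses the $\cT$-local system $\sigma$ of \thmref{t:gCFT} and the resulting factorization local system $\Lambda(\sigma)$, which plays the role of your $\mathcal{L}_\chi$), then identify $\Vect_\chi^{\on{fact}_{x_0},\Dmod(\Gr_T)}$ with a twist/restriction of the vacuum module category (\propref{p:Vect chi} does this via $\Rres_{\pi_!^\sigma}$), and finally reduce to the vanishing of factorization modules for a concrete factorization algebra (the paper's $R_{\cT,\sigma}=\pi_!^\sigma(\omega_{\Gr_T})$, the pushforward of your $\omega_{\Gr_T}\otimes\mathcal{L}_\chi^{-1}$). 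Up to reformulation, these reduction steps are the same.

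The gap is in your final step. You assert that the vanishing is ``essentially formal'' and that ``the factorization compatibility over the disjoint locus would propagate the trivialization, contradicting the nontriviality of $\mathcal{L}_\chi$ away from the unit.'' This is not a valid argument as stated. The degree-$0$ graded piece $R^{\on{ch},0}_{\cT,\sigma}\simeq\omega_{X-x_0}[-1]$ of the chiral algebra is precisely the chiral unit, and a unital module is automatically compatible with it; there is no tension there. The nontriviality of $\sigma$ lives entirely in the higher components $R^{\on{ch},\lambda}_{\cT,\sigma}=\lambda(\sigma)\otimes\omega_{X-x_0}[-1]$ for $\lambda\neq 0$, and to obtain a contradiction one must combine information from two opposite weights $\lambda$ and $-\lambda$ via the Jacobi identity for the chiral bracket. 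Concretely, the paper restricts the three terms of the Jacobi constraint to the summand $R^{\on{ch},\lambda}\boxt R^{\on{ch},-\lambda}$, observes that two of them vanish because $\Hom((j_{x\neq x_0})_*(\lambda(\sigma)\otimes\omega_{X-x_0}),i_*(M))=0$ when $\lambda(\sigma)$ is ramified at $x_0$, and then uses unitality to see that the surviving term forces a factorization of the chiral bracket through $\Delta_*(\omega_X[-1])$ that is impossible. Nothing in your sketch produces this contradiction; ``propagation of a trivialization'' does not by itself rule out a nonzero module. Your alternative heuristic via a Fourier--Mukai dual skyscraper at a nontrivial point of the moduli of local systems is the right picture (and the paper remarks that the vanishing indeed follows from \cite{Bogd}), but turning it into a proof is a nontrivial input, not a formality, and you would need to either cite that result or supply the chiral-algebra computation.

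One smaller point: you describe $\mathcal{L}_\chi$ as a ``multiplicative'' line on $\Gr_T$ compatible with the group structure. The object supplied by gCFT is a \emph{factorization} local system on $\Gr_{T,\Ran}$ (more precisely on $\Gr_{T,\Ran_\circ}$ after possibly shrinking $X$ around $x_0$, as in \thmref{t:gCFT}), not a multiplicative line for the pointwise group structure on each $\Gr_{T,x}$. These are related but distinct structures, and conflating them could cause trouble when you try to make the twist identification precise; the paper's \propref{p:Vect chi} shows the correct formulation.
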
 

\sssec{}

The rest of this section is devoted to the proof of \thmref{t:main torus zero char}. The proof will amount 
to a simple computation, once we input the assertion of geometric class field theory (gCFT), reviewed
in the next subsection.

\ssec{Geometric class field theory}

\sssec{}

Consider the map
$$X\to \Ran_{x_0}, \quad x \mapsto \{x,x_0\}.$$

Let $\Gr^{\on{level}^\infty_{x_0}}_{T,X}$ denote the pullback of $\Gr^{\on{level}^\infty_{x_0}}_{G,\Ran_{x_0}}$ along this map.
Denote by $j$ and $i$ the open and closed embeddings
$$\Gr^{\on{level}^\infty_{x_0}}_{T,X-x_0}\hookrightarrow \Gr^{\on{level}^\infty_{x_0}}_{T,X} \hookleftarrow \Gr^{\on{level}^\infty_{x_0}}_{T,x_0},$$
respectively.

\medskip

Note that we have the identifications
$$\Gr^{\on{level}^\infty_{x_0}}_{T,X-x_0}\simeq \Gr_{T,X-x_0}\times \fL(T)_{x_0} \text{ and } \Gr^{\on{level}^\infty_{x_0}}_{T,x_0}\simeq \fL(T)_{x_0}.$$

\sssec{}

For $\lambda\in \Lambda$ consider the connected component
$$(\Gr_{T,X-x_0})^\lambda\times \fL^+(T)_{x_0}=
(\Gr_{T,X-x_0})^\lambda\times (\fL(T)_{x_0})^0\subset (\Gr_{T,X-x_0})^\lambda\times \fL(T)_{x_0}.$$ 

Its closure in $\Gr^{\on{level}^\infty_{x_0}}_{T,X}$, denoted $\overline{(\Gr_{T,X-x_0})^\lambda\times \fL^+(T)_{x_0}}$, is a  $\fL^+(T)_{x_0}$-torsor over $X$,
and its special fiber identifies with
$$(\fL(T)_{x_0})^\lambda\subset \fL(T)_{x_0}.$$

\sssec{}

We will need the following statement from gCFT:

\begin{thm} \label{t:gCFT}
Up to replacing $X$ by an open subset containing $x_0$, there exists a $\cT$-local system $\sigma$ on $X-x$ such that
for every $\lambda\in \Lambda$, the local system 
$$\lambda(\sigma)\boxt \chi^+$$
on 
$$X\times \fL^+(T)_{x_0}\simeq (\Gr_{T,X-x_0})^\lambda\times \fL^+(T)_{x_0}$$
extends to a local system on $\overline{(\Gr_{T,X-x_0})^\lambda\times \fL^+(T)_{x_0}}$. 

\medskip

Moreover, $\sigma$ is unique, up to tensoring with $\cT$-local systems unramified near $x$. 
\end{thm}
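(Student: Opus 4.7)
The plan is to apply local geometric class field theory (gCFT), which identifies character sheaves on $\fL(T)_{x_0}$ with $\cT$-local systems on the formal punctured disc $\ocD_{x_0}$, and to combine this with a standard algebraization step that extends formal local systems to a Zariski neighborhood of $x_0$ in the original curve.

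First, pick any extension $\chi$ of $\chi^+$ to a character sheaf on all of $\fL(T)_{x_0}$; as noted immediately preceding the theorem, the space of such extensions is a torsor over $B(\cT(k))$ and hence non-empty. Local gCFT produces from $\chi$ a $\cT$-local system $\sigma_0$ on $\ocD_{x_0}$. A standard algebraization argument then yields, after possibly shrinking $X$ to an open affine neighborhood of $x_0$ in the original curve, a $\cT$-local system $\sigma$ on $X - x_0$ restricting to $\sigma_0$.

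To verify the extension property, note that $\chi$, being defined on all of $\fL(T)_{x_0}$, restricts to a non-zero local system $\chi|_{(\fL(T)_{x_0})^\lambda}$ on each connected component for $\lambda\in\Lambda$. The unital factorization module structure on $\Gr^{\on{level}^\infty_{x_0}}_{T,\Ran_{x_0}}$ (see \secref{sss:construct space}) allows one to pull $\chi$ back along the map
\[
    X \longrightarrow \Ran_{x_0},\qquad x\mapsto\{x,x_0\},
\]
yielding a local system on $\overline{(\Gr_{T,X-x_0})^\lambda \times \fL^+(T)_{x_0}}$ whose fiber over $x_0$ is $\chi|_{(\fL(T)_{x_0})^\lambda}$, and whose restriction to $X - x_0$ identifies, by the defining compatibility of local gCFT with Abel--Jacobi, with $\lambda(\sigma) \boxt \chi^+$. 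This is the desired extension.

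For uniqueness, two choices of $\chi$ extending $\chi^+$ differ by a character of the quotient $\Lambda = \fL(T)_{x_0}/\fL^+(T)_{x_0}$, and under local gCFT such characters correspond precisely to $\cT$-local systems on $\ocD_{x_0}$ which extend across $x_0$, i.e., are unramified. The main obstacle in executing this plan is making the factorization compatibility in the previous paragraph precise: one must check that pulling back the character sheaf $\chi$ through the Abel--Jacobi type morphisms into each component of the Beilinson--Drinfeld space is exactly the local system $\lambda(\sigma)$, tensored with the vacuum factorization structure given by $\chi^+$. This is how local gCFT is set up compatibly with factorization; it can be verified by functoriality in $T$, reducing to the case $T = \BG_m$, where the statement becomes the classical Kummer-theoretic description of rank-one local systems with prescribed polar behavior.
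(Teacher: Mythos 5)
The paper does not prove this theorem. It is stated in \secref{s:torus} as an input from geometric class field theory, with the words \emph{``We will need the following statement from gCFT''}, and no proof is given; so there is no argument in the paper to compare your proposal against.

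The plan you sketch is the standard route: extend $\chi^+$ to a character sheaf $\chi$ on all of $\fL(T)_{x_0}$, apply the local gCFT dictionary to produce a $\cT$-local system $\sigma_0$ on $\ocD_{x_0}$, algebraize it to a Zariski punctured neighborhood, and verify the factorization compatibility. The uniqueness argument is fine. The gap is in the verification of the extension property. The claim that the factorization module structure ``allows one to pull $\chi$ back'' to $\overline{(\Gr_{T,X-x_0})^\lambda\times\fL^+(T)_{x_0}}$ is not a well-defined operation: $\chi$ lives on the special fiber $(\fL(T)_{x_0})^\lambda$, and the closure is a degenerating family over $X$ with no canonical retraction onto that fiber. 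The assertion that the restriction to $X-x_0$ ``identifies, by the defining compatibility of local gCFT with Abel--Jacobi, with $\lambda(\sigma)\boxt\chi^+$'' is essentially the theorem being restated rather than proved, and the closing paragraph promises a reduction to $T=\BG_m$ and Kummer theory without carrying it out. A genuine proof would have to construct the $\fL^+(T)_{x_0}$-torsor $\overline{(\Gr_{T,X-x_0})^\lambda\times\fL^+(T)_{x_0}}\to X$ explicitly via the description of $\Gr_T$ in terms of $T$-bundles, i.e.\ line bundles indexed by characters, and then check directly, one character of $T$ at a time, that the monodromy of $\lambda(\sigma)$ near $x_0$ matches the restriction of $\chi^+$ under the degeneration; that calculation is the content of the theorem, and your sketch stops right before it.
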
 

\sssec{}

Note that a choice of $\sigma$ as in the theorem determines an extension $\chi^+\rightsquigarrow \chi$. Indeed, 
the restriction of $\chi$ to $(\fL(T)_{x_0})^\lambda$ equals the restriction of the extended local system to
$$\overline{(\Gr_{T,X-x_0})^\lambda\times \fL^+(T)_{x_0}} \hookleftarrow (\fL(T)_{x_0})^\lambda.$$

\sssec{}

Note that the assignment 
$$\lambda\rightsquigarrow \lambda(\sigma)$$
extends to a local system on $\Gr_{T,\Ran_\circ}$, to be denoted $\Lambda(\sigma)$, which is equipped with a natural factorization
structure. 

\medskip

It follows formally from \thmref{t:gCFT} that the local system
$$\Lambda(\sigma)\boxt \chi$$
on 
$$\Gr_{T,\Ran_\circ}\times \fL(T)_{x_0}\simeq (\Ran_\circ\times \{x_0\})\underset{\Ran_{x_0}}\times \Gr^{\on{level}^\infty_{x_0}}_{G,\Ran_{x_0}}$$
extends (uniquely) to a local system, to be denoted $\chi_{\Ran_{x_0}}$, on
$\Gr^{\on{level}^\infty_{x_0}}_{G,\Ran_{x_0}}$.

\medskip

Moreover, $\chi_{\Ran_{x_0}}$ has a natural factorization structure with respect to $\Lambda(\sigma)$. 

\ssec{The module \texorpdfstring{$\Vect_\chi^{\on{fact}_{x_0},\Dmod(\Gr_T)}$}{Vect chi}}

Recall the object
\begin{equation} \label{e:Vect chi}
\Vect_\chi^{\on{fact}_{x_0},\Dmod(\Gr_T)}\in \Dmod(\Gr_T)\mmod^\onfact_{x_0}.
\end{equation} 

In this subsection we will describe it as the factorization restriction 
of the tautological (i.e., vacuum) object $\Vect^{\on{fact}_{x_0}}\in \Vect\mmod^\onfact_{x_0}$. 

\sssec{}

Denote by $\pi_!^\sigma$ the factorization functor $\Dmod(\Gr_T)\to \Vect$, given by the precomposition of $\pi_!$ (see \secref{sss:pi}) with 
the operation of tensoring by the inverse of $\Lambda(\sigma)$. 

\medskip

We claim:

\begin{prop} \label{p:Vect chi}
The object \eqref{e:Vect chi} identifies canonically with $\Rres_{\pi_!^\sigma}(\Vect\mmod^{\on{fact}_{x_0}})$.
\end{prop}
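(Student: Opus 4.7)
The plan is to adapt the strategy used for \propref{p:Vect over Gr}, employing the character sheaf $\chi_{\Ran_{x_0}}$ constructed from \thmref{t:gCFT} to untwist the $\chi$-equivariance, with the factorization compatibility of $\chi_{\Ran_{x_0}}$ against $\Lambda(\sigma)$ accounting for the difference between $\pi_!^\sigma$ and $\pi_!$.

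First, I would unwind the construction of $\Vect_\chi^{\on{fact}_{x_0},\Dmod(\Gr_T)}$: as a crystal of categories over $\Ran_{x_0}$, its value at $\ul{x}$ is
$$\Dmod(\Gr^{\on{level}^\infty_{x_0}}_{T,\ul{x}}) \underset{\Dmod(\fL(T)_{x_0})}\otimes \Vect_\chi,$$
i.e., the $(\fL(T)_{x_0},\chi)$-coinvariants of the D-module category on the fiber of $\Gr^{\on{level}^\infty_{x_0}}_{T,\Ran_{x_0}}$. Since $\chi_{\Ran_{x_0}}$ is a multiplicative local system on $\Gr^{\on{level}^\infty_{x_0}}_{T,\Ran_{x_0}}$ that is $\fL(T)_{x_0}$-equivariant with character $\chi$, tensoring with $\chi_{\Ran_{x_0}}^{-1}$ provides an equivalence of crystals of categories over $\Ran_{x_0}$
$$\Vect_\chi^{\on{fact}_{x_0},\Dmod(\Gr_T)} \overset{\sim}{\to} \Vect^{\on{fact}_{x_0},\Dmod(\Gr_T)}$$
after forgetting the factorization structure against $\Dmod(\Gr_T)$.

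Second, I would trace the effect of this twist on the factorization structures. The factorization structure of $\Vect_\chi^{\on{fact}_{x_0},\Dmod(\Gr_T)}$ against $\Dmod(\Gr_T)$ is induced from the $\fL(T)_{x_0}$-equivariant factorization module structure of $\Gr^{\on{level}^\infty_{x_0}}_{T,\Ran_{x_0}}$ against $\Gr_T$. By the factorization compatibility of $\chi_{\Ran_{x_0}}$ against $\Lambda(\sigma)$ (which follows from \thmref{t:gCFT} and the construction of $\chi_{\Ran_{x_0}}$ recorded thereafter), the twist by $\chi_{\Ran_{x_0}}^{-1}$ translates the factorization structure against $\Dmod(\Gr_T)$ on the left-hand side into the factorization structure on the right-hand side \emph{precomposed with the auto-equivalence of $\Dmod(\Gr_T)$ given by tensoring with $\Lambda(\sigma)^{-1}$}.

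Combining this with \propref{p:Vect over Gr}, under which $\Vect^{\on{fact}_{x_0},\Dmod(\Gr_T)} \simeq \Rres_{\pi_!}(\Vect^{\on{fact}_{x_0}})$, precomposition with tensoring by $\Lambda(\sigma)^{-1}$ turns $\pi_!$ into $\pi_!^\sigma$, yielding the asserted identification. Alternatively, one can mimic the proof of \propref{p:Vect over Gr} verbatim by constructing the functor via the composition: tensor with $\chi_{\Ran_{x_0}}^{-1}$, descend along $\on{Hecke}_{x_0}$ to $(\Gr_T)^{\on{fact}_{x_0}}$, tensor further by $\Lambda(\sigma)^{-1}$, and apply $!$-pushforward along $\pi_{\Ran_{x_0}}$; one then invokes Proposition \ref{sss:adj test for factres}, whose hypotheses are satisfied because $\pi_\Ran$ is proper and because at the fiber over $x_0\in \Ran_{x_0}$, both $\chi_{\Ran_{x_0}}$ and $\Lambda(\sigma)$ restrict to the trivial local system at the unit, so that the induced functor on fibers is the identity of $\Vect$. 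The main obstacle is the careful bookkeeping of the factorization compatibility in the second step, which is dictated by the compatibilities built into the definition of $\chi_{\Ran_{x_0}}$ in \thmref{t:gCFT}, but which needs to be recorded at the 2-categorical level of factorization module categories.
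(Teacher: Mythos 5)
Your proof takes essentially the same approach as the paper's: the key observation in both is that tensoring by $\chi_{\Ran_{x_0}}$ gives an isomorphism between $\Vect^{\on{fact}_{x_0},\Dmod(\Gr_T)}$ and $\Vect_\chi^{\on{fact}_{x_0},\Dmod(\Gr_T)}$ compatible with the factorization automorphism of $\Dmod(\Gr_T)$ given by tensoring with $\Lambda(\sigma)$, after which the claim reduces to \propref{p:Vect over Gr} by precomposing $\pi_!$ with this automorphism to produce $\pi_!^\sigma$. You spell out the factorization bookkeeping more explicitly and also sketch a direct route via Proposition \ref{sss:adj test for factres}, but the mechanism is identical.
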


\begin{proof}

Note that the operation of tensoring by $\Lambda(\sigma)$ is a factorization automorphism of $\Dmod(\Gr_T)$.

\medskip

Moreover, tensoring by $\chi_{\Ran_{x_0}}$ defines an \emph{isomorphism} 
$$(\Dmod(\fL(T)_{x_0})^{\on{fact}_{x_0},\Dmod(\Gr_T)})^{\fL(T)_{x_0}}\to 
(\Dmod(\fL(T)_{x_0})^{\on{fact}_{x_0},\Dmod(\Gr_T)})^{(\fL(T)_{x_0},\chi)},$$
i.e., 
$$\Vect^{\on{fact}_{x_0},\Dmod(\Gr_T)}\to \Vect_\chi^{\on{fact}_{x_0},\Dmod(\Gr_T)},$$
compatible with the above automorphism of $\Dmod(\Gr_T)$.

\medskip

Now, the assertion of the proposition follows from that of \propref{p:Vect over Gr}.

\end{proof} 

\sssec{}

Denote 
$$R_{\cT,\sigma}:=\pi_!^\sigma(\omega_{\Gr_T})\in \on{FactAlg}(\Vect).$$

\medskip

Thus, by \eqref{e:equiv fact mod}, we can reformulate \thmref{t:main torus zero} as follows:

\begin{thm} \label{t:main torus zero ult} 
The category $R_{\cT,\sigma}\mod^\onfact_{x_0}$ is zero.
\end{thm}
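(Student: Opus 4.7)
The plan is to compute $R_{\cT,\sigma}\mod^\onfact_{x_0}$ directly, exploiting the commutativity of $\Gr_T$ as a factorization group space to reduce the vanishing to a concrete monodromy obstruction provided by \thmref{t:gCFT}. First, since $T$ is abelian, $\Gr_T$ is a commutative factorization group space and $R_{\cT,\sigma}=\pi^\sigma_!(\omega_{\Gr_T})$ is therefore a \emph{commutative} factorization algebra in $\Vect$. It is naturally $\Lambda$-graded via the decomposition $\Gr_T=\bigsqcup_{\lambda\in\Lambda}\Gr_T^\lambda$, and on $X-x_0$ the graded pieces are simply $R_{\cT,\sigma}^\lambda=\lambda(\sigma)^{-1}$, with multiplication given by $\Lambda$-addition.

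Second, I would invoke the standard description (Beilinson--Drinfeld) of factorization modules over a commutative factorization algebra: $R_{\cT,\sigma}\mod^\onfact_{x_0}$ identifies with the category of $\Lambda$-torsor-graded objects $\CM=\bigoplus_\mu\CM^\mu$ equipped, for each $\lambda\in\Lambda$, with $\cD$-compatible isomorphisms of local systems on the formal punctured disc
\[
\CM^\mu\otimes\lambda(\sigma)^{-1}|_{\wh\cD^\circ_{x_0}}\ \simeq\ \CM^{\mu+\lambda}|_{\wh\cD^\circ_{x_0}},
\]
functorial in $\lambda$ and compatible under composition (here $\CM^\mu$ and $\CM^{\mu+\lambda}$ are viewed as constant local systems on $\wh\cD^\circ_{x_0}$). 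Third, by \thmref{t:gCFT} and the construction of the extension $\chi^+\rightsquigarrow\chi$ from $\sigma$, the non-triviality of $\chi^+$ forces $\sigma|_{\wh\cD^\circ_{x_0}}$ to be a non-trivial $\cT$-local system; since the pairing $\Lambda\times\cT\to\BG_m$ is non-degenerate on $\cT$, there exists some $\lambda\in\Lambda$ for which $\lambda(\sigma)|_{\wh\cD^\circ_{x_0}}$ has non-trivial monodromy.

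For this $\lambda$, the displayed isomorphism equates a local system whose monodromy is the non-trivial scalar $\lambda(t_\sigma)^{-1}\cdot\on{id}_{\CM^\mu}$ (where $t_\sigma$ denotes the monodromy of $\sigma$) with the constant local system attached to $\CM^{\mu+\lambda}$; both sides being scalar on each stalk, the only possibility is $\CM^\mu=0=\CM^{\mu+\lambda}$. Propagating this across the $\Lambda$-torsor (since every element is linked to any other by suitable $\lambda$'s) yields $\CM=0$, and hence $R_{\cT,\sigma}\mod^\onfact_{x_0}=0$. The main obstacle will be the second step: establishing the precise equivalence between the abstract category of factorization modules (defined via lax $\cD$-modules on categorical prestacks with all the homotopy-coherent and unital structure of the paper) and the concrete category of graded $\cD$-modules on the punctured formal disc. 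An alternative that bypasses this translation is to work directly with the factorization module definition and compute the monad associated to the forgetful functor $R_{\cT,\sigma}\mod^\onfact_{x_0}\to\Vect$ via the adjunction coming from \propref{p:Vect chi} and \eqref{e:equiv fact mod}, ultimately reducing to the same monodromy vanishing.
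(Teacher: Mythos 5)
Your step 2 contains the fatal gap, and it invalidates the chain of reasoning. The factorization algebra $R_{\cT,\sigma}$ is \emph{not} commutative, despite $\Gr_T$ being a commutative factorization group space: the chiral algebra $R^{\on{ch}}_{\cT,\sigma}$ on $X-x_0$ has underlying D-module $\bigoplus_\lambda \lambda(\sigma)\otimes \omega_{X-x_0}[-1]$, and the chiral bracket is given by the residue-type map
$(j_{x_1\neq x_2})_*\bigl(R^{\on{ch},\lambda}\boxt R^{\on{ch},\mu}\bigr)\to \Delta_*\bigl(R^{\on{ch},\lambda}\sotimes R^{\on{ch},\mu}\bigr)[1]\simeq \Delta_*\bigl(R^{\on{ch},\lambda+\mu}\bigr)$, which is non-zero (it is the boundary map from the Cousin sequence). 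A commutative factorization algebra in the sense of \cite[Sect. 3.4.20--3.4.22]{BD1} is one whose chiral bracket vanishes, and that is not the case here. Consequently, your claimed description of $R_{\cT,\sigma}\mod^\onfact_{x_0}$ as a category of $\Lambda$-graded objects with compatible isomorphisms on $\wh\cD^\circ_{x_0}$ is not available, and the whole second step falls through. The non-trivial chiral bracket is not a technicality that can be worked around: it is the mechanism by which the obstruction is detected.

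Indeed, the paper's proof uses exactly this non-trivial bracket via the chiral Jacobi identity. For a chiral module $M$ supported at $x_0$, the axiom says that the signed sum of three maps
$(j_{x_1\neq x_2,\,x_i\neq x_0})_*(R^{\on{ch}}\boxt R^{\on{ch}})\otimes M\to \Delta_*\circ i_*(M)$
vanishes; restricting to the $(\lambda,-\lambda)$-summand with $\lambda(\sigma)$ ramified at $x_0$ kills the two ``act-then-act'' terms (because $\Hom\bigl((j_{x\neq x_0})_*(\lambda(\sigma)\otimes\omega),\, i_*(M)\bigr)=0$ for ramified $\lambda(\sigma)$), forcing the first term, which passes through the chiral bracket and lands in the unit piece $R^{\on{ch},0}$, to vanish; but unitality of $M$ makes this first term essentially the unit action on $M$, which cannot vanish unless $M=0$. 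Your step 3 correctly identifies the ramification obstruction, and your concluding remark about computing the monad directly is closer to what would actually work, but as written the middle step of your argument is not a proof and genuinely misses the chiral algebra structure. To repair it, you would need to replace the ``graded objects with isomorphisms'' description with the chiral-module description and then run the Jacobi argument as in the paper.
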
 

\begin{rem}
The assertion of \thmref{t:main torus zero ult} follows easily from that of \cite[Equation (4.10) and/or Theorem 3.8]{Bogd}.
Below will give an alternative (in a sense, more elementary) proof.
\end{rem}

\ssec{Proof of \thmref{t:main torus zero ult} via chiral algebras}

\sssec{}

Let $R^{\on{ch}}_{\cT,\sigma}$ be the chiral algebra (on $X-x_0$) corresponding to  $R_{\cT,\sigma}$,
so that 
$$R_{\cT,\sigma}\mod^\onfact_{x_0}\simeq R^{\on{ch}}_{\cT,\sigma}\mod^{\on{ch}}_{x_0},$$
see \cite[Sect. D.1]{GLC2}.

\medskip

Thus, we need to show that $R^{\on{ch}}_{\cT,\sigma}\mod^{\on{ch}}_{x_0}=0$. 

\sssec{} \label{sss:chiral bracket}

Note that the D-module on $X-x_0$ underlying $R^{\on{ch}}_{\cT,\sigma}$ identifies canonically with
$$\underset{\lambda}\oplus\, R^{\on{ch},\lambda}_{\cT,\sigma}, \quad R^{\on{ch},\lambda}_{\cT,\sigma}=\lambda(\sigma)\otimes \omega_{X-x_0}[-1].$$

The chiral operation is given by 
\begin{multline*} 
(j_{x_1\neq x_2})_*((\lambda(\sigma)\otimes \omega_{X-x_0}[-1])\boxt (\mu(\sigma)\otimes \omega_{X-x_0}[-1]))\to \\
\to \Delta_*((\lambda(\sigma)\otimes \omega_{X-x_0}[-1])\sotimes (\mu(\sigma)\otimes \omega_{X-x_0}[-1]))[1]\simeq
\Delta_*((\lambda+\mu)(\sigma)\otimes \omega_{X-x_0}[-1]).
\end{multline*} 

\sssec{}

Let $\CM$ be an object of $R^{\on{ch}}_{\cT,\sigma}\mod^{\on{ch}}_{x_0}$; let $M$ denote the underlying the vector space,
so that the D-module underlying $\CM$ is $i_*(M)$. 

\medskip

The chiral action is given by
$$\on{act}:(j_{x\neq x_0})_*(R^{\on{ch}}_{\cT,\sigma})\otimes M\to i_*(M).$$

\medskip

The axiom of chiral action implies that the (signed) sum of the following three morphisms
\begin{equation} \label{e:chiral action}
(j_{x_1\neq x_2,x_1\neq x_0,x_2\neq x_0})_*(R^{\on{ch}}_{\cT,\sigma} \boxt R^{\on{ch}}_{\cT,\sigma})\otimes M\to 
\Delta_*\circ i_*(M)
\end{equation} 
(as D-modules on $X^2$) is zero:

\begin{itemize}

\item 
$(j_{x_1\neq x_2,x_1\neq x_0,x_2\neq x_0})_*(R^{\on{ch}}_{\cT,\sigma} \boxt R^{\on{ch}}_{\cT,\sigma})\otimes M
\overset{\{-,-\}\otimes \on{Id}}\to \Delta_*\circ (j_{x\neq x_0})_*(R^{\on{ch}}_{\cT,\sigma})\otimes M \overset{\on{act}}\to  \Delta_*\circ i_*(M)$; 

\item 
\begin{multline*} 
(j_{x_1\neq x_2,x_1\neq x_0,x_2\neq x_0})_*(R^{\on{ch}}_{\cT,\sigma} \boxt R^{\on{ch}}_{\cT,\sigma})\otimes M
\overset{\on{Id}\otimes \on{act}}\longrightarrow \\
\to (\on{id}\times i)_* \circ (j_{x\neq x_0})_*(R^{\on{ch}}_{\cT,\sigma})\otimes M \overset{\on{act}}\to 
(\on{id}\times i)_* \circ i_*(M)=(i\times i)_*(M)=\Delta_*\circ  i_*(M);
\end{multline*}

\item The map, obtained from the previous one by interchanging the roles of $x_1$ and $x_2$.

\end{itemize}

\sssec{}

Since $\sigma$ is non-trivial at $x$, we can choose $\lambda\in \Lambda$, so that the 1-dimensional local system 
$\lambda(\sigma)$ is non-trivially ramified at $x$. 

\medskip

Let us restrict the three maps in \eqref{e:chiral action} to 
\begin{equation} \label{e:lambda summand}
(j_{x_1\neq x_2,x_1\neq x_0,x_2\neq x_0})_*(R^{\on{ch},\lambda}_{\cT,\sigma} \boxt R^{\on{ch},-\lambda}_{\cT,\sigma})\otimes M.
\end{equation} 

Note that the space of maps
$$(j_{x\neq x_0})_*(\lambda(\sigma)\otimes \omega_{X-x_0})\to i_*(M)$$
is zero, and similarly for $-\lambda$. 

\medskip

Hence, the restriction of the 2nd and 3rd maps to \eqref{e:lambda summand} are zero. Hence, so is the restriction of the 1st map. 

\sssec{}

By definition, the first map identifies with
\begin{multline} \label{e:action on M}
(j_{x_1\neq x_2,x_1\neq x_0,x_2\neq x_0})_*\left((\lambda(\sigma)\otimes \omega_{X-x_0}[-1])\boxt (-\lambda(\sigma)\otimes \omega_{X-x_0}[-1])\right)\otimes M\to \\
\to \Delta_*\circ (j_{x\neq x_0})_*(\omega_{X-x_0}[-1])\otimes M\overset{\on{act}}\to \Delta_*\circ i_*(M).
\end{multline} 

Note however, that the above copy of 
$$\omega_{X-x_0}[-1]=R^{\on{ch},0}_{\cT,\sigma}$$ 
is the chiral unit in $R^{\on{ch}}_{\cT,\sigma}$, and since $\CM$ is a unital chiral module, the last arrow in \eqref{e:action on M}
comes from the canonical map
$$(j_{x\neq x_0})_*(\omega_{X-x_0}[-1]) \to i_*(k).$$

\medskip

Since \eqref{e:action on M} is zero, this means that the first arrow in \eqref{e:action on M} factors via a map
$$(j_{x_1\neq x_2,x_1\neq x_0,x_2\neq x_0})_*\left((\lambda(\sigma)\otimes \omega_{X-x_0}[-1])\boxt (-\lambda(\sigma)\otimes \omega_{X-x_0}[-1])\right)\otimes M\to 
\Delta_*(\omega_X[-1])\otimes M.$$

\medskip

If $M\neq 0$, this would mean that the canonical map
$$(j_{x_1\neq x_2,x_1\neq x_0,x_2\neq x_0})_*\left((\lambda(\sigma)\otimes \omega_{X-x_0}[-1])\boxt (-\lambda(\sigma)\otimes \omega_{X-x_0}[-1])\right) \to
\Delta_*\circ (j_{x\neq x_0})_*(\omega_{X-x_0}[-1]))$$
factors as
\begin{multline*}
(j_{x_1\neq x_2,x_1\neq x_0,x_2\neq x_0})_*\left((\lambda(\sigma)\otimes \omega_{X-x_0}[-1])\boxt (-\lambda(\sigma)\otimes \omega_{X-x_0}[-1])\right) \dashrightarrow \\
\dashrightarrow \Delta_*(\omega_X[-1])\to \Delta_*\circ (j_{x\neq x_0})_*(\omega_{X-x_0}[-1])),
\end{multline*}
which is false.

\qed[\thmref{t:main torus zero ult} ]

\section{Towards the proof of \thmref{t:red to alm const case}: the key geometric lemma} \label{s:proof of main: key}

In this subsection we supply a key geometric input for the proof of \thmref{t:red to alm const case}, 
incarnated by \lemref{l:main}. 

\ssec{Extending the loop group action}

\sssec{} \label{sss:factorizable LG}

Consider the (corr-unital) factorization group ind-scheme $\fL(G)$ and its (co-unital) factorization 
group subscheme $\fL^+(G)$, so that their fibers at $x_0$ are $\fL(G)_{x_0}$ and $\fL^+(G)_{x_0}$,
respectively and 
$$\Gr_G\simeq \fL(G)/\fL^+(G).$$

We consider also the corresponding tautological (i.e., vacuum) factorization module spaces at $x_0$, denoted
$\fL(G)^{\on{fact}_{x_0}}$ and $\fL^+(G)^{\on{fact}_{x_0}}$ over $\fL(G)$ and $\fL^+(G)$, respectively.

\sssec{}

Unwinding the definitions, the $!$-pullback along the multiplication map of $\fL(G)$ defines a \emph{lax-unital} factorization functor
\[
    \Dmod(\fL(G)) \to \Dmod(\fL(G))\otimes \Dmod(\fL(G)),
\]
while the similar functor for $\fL^+(G)$ is a strictly unital factorization functor (see \secref{exam-LG-L+G-GrG} and \secref{ss Mor factalgcat}). In other words, we obtain (associative) coalgebra objects
\begin{equation}
  \label{eqn-DLG-as-coalg}
    \Dmod(\fL(G)) \in \mathbf{coAlg}( \mathbf{UntlFactCat}^{\on{lax-untl}}  )
\end{equation}
while
\begin{equation}
  \label{eqn-DL+G-as-coalg}
   \Dmod(\fL^+(G)) \in \mathbf{coAlg}( \mathbf{UntlFactCat}  ).
\end{equation}
Note that we have a homomorphism between these coalgebra objects
\[
    i^!:\Dmod(\fL(G))\to \Dmod(\fL^+(G))
\]
given by $!$-pullback along the factorization map $i:\fL^+(G) \to \fL(G)$.

\sssec{} \label{sss:factorizable LG acts}

We define a unital factorization action of $\fL(G)$ on a unital factorization category $\bA$ to be a comodule structure of $\bA$ with respect to the coalgebra \eqref{eqn-DLG-as-coalg}, such that the composition
\[
    \bA \to \Dmod(\fL(G))\otimes \bA \xrightarrow{i^!\otimes \on{Id}_\bA} \Dmod(\fL^+(G))\otimes \bA
\]
is \emph{strictly} unital. 

\medskip

If $\fL(G)$ acts on a unital factorization category $\bA$, and $\bD$ is a unital factorization $\bA$-module
category at $x_0$, we can similarly define the notion of unital factorization actions of $\fL(G)^{\on{fact}_{x_0}}$ on $\bD$ that are compatible with the given $\fL(G)$-action on $\bA$.

\medskip

Note that in this case, the fiber $\bD_{x_0}$ at $x_0$ acquires an action of $\fL(G)_{x_0}$ as a DG category. 

\sssec{Example} \label{sss:level w action}

An example of a unital factorization category equipped with an action of $\fL(G)$ is $\bA:=\Dmod(\Gr_G)$.

\medskip

For this choice of $\bA$, an example of a unital factorization module category $\bD$ equipped with a compatible $\fL(G)^{\on{fact}_{x_0}}$-action
is $\Dmod(\fL(G)_{x_0})^{\on{fact}_{x_0},\Dmod(\Gr_G)}$. 

\medskip

Note that the resulting $\fL(G)_{x_0}$-action on 
$$(\Dmod(\fL(G)_{x_0})^{\on{fact}_{x_0},\Dmod(\Gr_G)})_{x_0}\simeq \fL(G)_{x_0}$$
is given by \emph{left} translations. 

\sssec{} \label{sss:fact sph}

For $\bA$ as above, denote 
$$\bA^0:=\bA^{\fL^+(G)},$$
which is defined as the cosimplicial limit of
\[
    \bA \rightrightarrows \Dmod(\fL^+(G)) \otimes \bA \cdots
\]
in $\mathbf{UntlFactCat}$. Note that the forgetful functor
$$\iota:\bA^0\to \bA$$
is a \emph{strictly} unital factorization functor.

\medskip

For $\bD$ as in \secref{sss:factorizable LG acts}, consider
$$\bD^0:=\Rres_\iota(\bD)\in \bA^0\mmod^\onfact_{x_0}.$$

We claim:

\begin{propconstr} \label{p:action of sph}
Under the above circumstances, we have a natural action of $\fL(G)_{x_0}$ on $\bD^0$
\emph{as an object of} $\bA^0\mmod^\onfact_{x_0}$, 
so that the action on
$$\bD^0_{x_0}\simeq \bD_{x_0}$$
is the action from \secref{sss:factorizable LG acts}.
\end{propconstr}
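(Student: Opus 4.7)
The desired $\fL(G)_{x_0}$-action on $\bD^0$ will be obtained by transporting the factorization $\fL(G)^{\on{fact}_{x_0}}$-action on $\bD$ through the restriction functor $\Rres_\iota$, using the fact that after passing to $\fL^+(G)$-invariants on $\bA$, a residual ``rigid'' $\fL(G)_{x_0}$-action decouples from the $\bA^0$-module structure.

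First, I would reinterpret the factorization $\fL(G)^{\on{fact}_{x_0}}$-action on $\bD$ as a coaction
\[
\alpha \colon \bD \longrightarrow \Dmod(\fL(G)^{\on{fact}_{x_0}}) \hattimes_{\Dmod(\fL(G))} \bD
\]
in the 2-category $\bA\mmod^\onfact_{x_0}$, where the relative tensor product is formed in unital factorization module categories and the target is interpreted via the coalgebra structure \eqref{eqn-DLG-as-coalg}. The fiber of $\alpha$ over $x_0 \in \Ran_{x_0}$ recovers, tautologically, the $\fL(G)_{x_0}$-action on $\bD_{x_0}$ from \secref{sss:factorizable LG acts}, via the identification $\Dmod(\fL(G)^{\on{fact}_{x_0}})_{x_0}\simeq \Dmod(\fL(G)_{x_0})$.

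Second, I would apply $\Rres_\iota$ to $\alpha$, using its universal property to obtain a coaction
\[
\alpha^0 \colon \bD^0 \longrightarrow \Rres_\iota\bigl(\Dmod(\fL(G)^{\on{fact}_{x_0}}) \hattimes_{\Dmod(\fL(G))} \bD\bigr)
\]
in $\bA^0\mmod^\onfact_{x_0}$. The key point is that, after restriction of scalars along $\iota$, the target rewrites as $\Dmod(\fL^+(G)\backslash \fL(G)^{\on{fact}_{x_0}}) \hattimes_{\Dmod(\Gr_G)} \bD^0$, where the relative tensor is now over the factorization quotient $\Dmod(\Gr_G)$. The quotient space $\fL^+(G)\backslash \fL(G)^{\on{fact}_{x_0}}$ still carries the natural \emph{right} action of $\fL(G)_{x_0}$ coming from right translation on the fiber $\fL(G)^{\on{fact}_{x_0}}_{x_0} = \fL(G)_{x_0}$; this right action commutes with the left factorization $\fL^+(G)$-action by the standard Beauville-Laszlo description of \secref{sss:construct space}, since right multiplication at $x_0$ only modifies the trivialization $\epsilon$ and does not interact with the $\fL^+(G)$-changes of trivialization elsewhere. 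Pushing this right $\fL(G)_{x_0}$-action through $\alpha^0$ produces the desired action of $\Dmod(\fL(G)_{x_0})$ on $\bD^0$ in $\bA^0\mmod^\onfact_{x_0}$, and the compatibility at the fiber $x_0$ follows from the fiber identification in the previous paragraph.

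\textbf{Main obstacle.} The technical heart of the argument is making precise the statement that the right $\fL(G)_{x_0}$-action on $\fL^+(G)\backslash \fL(G)^{\on{fact}_{x_0}}$ commutes with the left \emph{factorization} $\fL^+(G)$-action, globally over $\Ran_{x_0}$ (not only in the disjoint locus where $\fL(G)^{\on{fact}_{x_0}}_{\ul y}$ splits as $\fL(G)_{x_0} \times \fL(G)_{\ul y\setminus\{x_0\}}$). This amounts to a compatibility of the right multiplication at $x_0$ with the unital structure relating strata of different cardinalities in $\Ran_{x_0}$. Once this commutation is established, the existence of the action and its fiberwise compatibility are formal consequences of the universal property of $\Rres_\iota$ combined with the coaction/action dictionary for unital factorization modules developed in \secref{s:fact cat}.
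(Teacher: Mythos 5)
The high-level picture you sketch is right: after passing to $\fL^+(G)$-invariants on the algebra side, there should be a residual right-translation action of $\fL(G)_{x_0}$ on the restricted module $\bD^0$. But the step you flag as the "main obstacle" is exactly where the entire content of the statement lies, and your proposal leaves it unresolved rather than resolved.

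The paper's proof does \emph{not} try to verify, globally over $\Ran_{x_0}$, that a right $\fL(G)_{x_0}$-action commutes with the left factorization $\fL^+(G)$-action. Instead it cites a pre-built geometric object: the group ind-scheme $\fL^{\on{mer}\rightsquigarrow \on{reg}}(G)_{\Ran_{x_0}}\subset \fL(G)_{\Ran_{x_0}}$ of \cite[Sect.~C.10.10]{GLC2} — informally, loops that are genuinely meromorphic near $x_0$ but arc-valued away from $x_0$. By construction of the factorization restriction operation $\Rres_\iota$ (see \cite[Sect.~B.9.28]{GLC2}), the $\fL(G)_{\Ran_{x_0}}$-action on the underlying crystal of $\bD$ automatically restricts to an action of $\fL^{\on{mer}\rightsquigarrow \on{reg}}(G)_{\Ran_{x_0}}$ on $\Rres_\iota(\bD)$, and this group ind-scheme comes equipped with a projection to the constant group ind-scheme $\fL(G)_{x_0}\times \Ran_{x_0}$ through which the action factors. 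The commutation you are worried about is already encoded in the existence and properties of $\fL^{\on{mer}\rightsquigarrow \on{reg}}(G)_{\Ran_{x_0}}$; the proof is essentially a citation of that construction. Your argument would need to either exhibit this ind-scheme and the factorization of its action, or carry out the global commutation check you postpone; as written, it asserts the hard part rather than proving it.

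A secondary issue: the coaction map you write, $\alpha\colon \bD\to \Dmod(\fL(G)^{\on{fact}_{x_0}}) \hattimes_{\Dmod(\fL(G))} \bD$, does not match the setup in \secref{sss:factorizable LG acts}. There, an $\fL(G)$-action on $\bA$ is a comodule structure over the coalgebra $\Dmod(\fL(G))$ in $\mathbf{UntlFactCat}^{\on{lax-untl}}$, i.e.\ the coaction map is $\bA\to \Dmod(\fL(G))\otimes \bA$ (plain external tensor, not a relative one over $\Dmod(\fL(G))$), and similarly for the module category $\bD$. The relative tensor product over $\Dmod(\fL(G))$ in your target would, if anything, collapse rather than produce the extra copy of the group you need; this needs to be restated in the comodule language of the paper before the argument can proceed.
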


\begin{proof} 

Let 
\begin{equation} \label{e:mer to reg}
\fL^{\on{mer}\rightsquigarrow \on{reg}}(G)_{\Ran_{x_0}}\subset \fL(G)_{\Ran_{x_0}}
\end{equation}
be the group ind-scheme over $\Ran_{x_0}$ defined in \cite[Sect. C.10.10]{GLC2}.

\medskip

By construction, $\fL^{\on{mer}\rightsquigarrow \on{reg}}(G)_{\Ran_{x_0}}$ comes equipped
with a projection to the constant group ind-scheme with fiber $\fL(G)_{x_0}$. 

\medskip

By the construction of the operation of factorization restriction (see \cite[Secrt. B.9.28]{GLC2}), the action of $\fL(G)_{\Ran_{x_0}}$
on the crystal of categories underlying $\bD$ restricts to an action of 
$\fL^{\on{mer}\rightsquigarrow \on{reg}}(G)_{\Ran_{x_0}}$ on $\Rres_\iota(\bD)$.
Moreover, this action factors via 
$$\fL^{\on{mer}\rightsquigarrow \on{reg}}(G)_{\Ran_{x_0}}\to \fL(G)_{x_0}\times \Ran_{x_0}.$$

\end{proof}

\begin{rem} 
Informally, \propref{p:action of sph} reads as follows: the action of $\fL(G)_{x_0}$ on $\bD_{x_0}$ 
commutes with the factorization module structure with respect to $\bA^0$. 
\end{rem} 

\begin{cor} \label{c:action of sph}
For a factorization algebra $\CA\in \bA^0$, the 
category
$$\iota(\CA)\mod^\onfact(\bD)_{x_0}\overset{\text{\cite[Lemma B.12.12]{GLC2}}}\simeq \CA\mod^\onfact(\bD^0)_{x_0}$$
carries an action of $\fL(G)_{x_0}$ compatible with the forgetful functor
$$\CA\mod^\onfact(\bD^0)_{x_0}\to \bD_{x_0}.$$
\end{cor}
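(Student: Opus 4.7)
The plan is to deduce the corollary from \propref{p:action of sph} by invoking the functoriality of the factorization-module construction. First I would observe that for a fixed factorization algebra $\CA$ in $\bA^0$, the assignment
\[
\bE \mapsto \CA\mod^\onfact(\bE)_{x_0}
\]
defines a functor
\[
\bA^0\mmod^\onfact_{x_0}\to \DGCat,
\]
and that this functor is equipped with a natural transformation to the forgetful functor $\bE \mapsto \bE_{x_0}$ (given termwise by $\oblv_\CA$). This is essentially tautological from the definition in \secref{sss:fact mod}: the datum of a factorization $\CA$-module in $\bE$ at $x_0$ is pulled back under any $1$-morphism of unital factorization $\bA^0$-module categories, and the forgetful functor to the fiber at $x_0$ is visibly natural.

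Next I would apply this functor to the object $\bD^0 \in \bA^0\mmod^\onfact_{x_0}$. By \propref{p:action of sph}, the group ind-scheme $\fL(G)_{x_0}$ acts on $\bD^0$ \emph{within} the $2$-category $\bA^0\mmod^\onfact_{x_0}$, and this action recovers the original $\fL(G)_{x_0}$-action on $\bD^0_{x_0}\simeq \bD_{x_0}$ upon passing to the fiber. Transporting this action through the functor above yields an action of $\fL(G)_{x_0}$ on $\CA\mod^\onfact(\bD^0)_{x_0}$; the naturality of the forgetful functor $\oblv_\CA$ guarantees the compatibility
\[
\oblv_\CA\colon \CA\mod^\onfact(\bD^0)_{x_0} \to \bD^0_{x_0} \simeq \bD_{x_0},
\]
as a map of $\fL(G)_{x_0}$-module categories.

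Finally, I would identify this with the desired structure on $\iota(\CA)\mod^\onfact(\bD)_{x_0}$ by means of the canonical equivalence of \cite[Lemma B.12.12]{GLC2}
\[
\iota(\CA)\mod^\onfact(\bD)_{x_0}\simeq \CA\mod^\onfact(\bD^0)_{x_0},
\]
noting that this equivalence intertwines the two forgetful functors to $\bD_{x_0}\simeq \bD^0_{x_0}$, so the $\fL(G)_{x_0}$-action transferred along it has the claimed compatibility. The only nontrivial point is invoking \propref{p:action of sph}; once that is in hand, the rest is formal functoriality. The main (minor) obstacle to watch out for is verifying that all the operations are carried out in the appropriate $\infty$-categorical setting so that ``functoriality'' and ``naturality'' actually produce a coherent action, rather than merely a collection of compatible equivalences, but this is built into the homotopy-coherent definitions of \secref{s:background} and \cite[Sects. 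B and C]{GLC2}.
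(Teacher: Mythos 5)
Your proposal is correct and takes essentially the route the paper intends (it supplies no explicit proof of this corollary, treating it as a formal consequence of \propref{p:action of sph}). The only genuine content is: (i) the assignment $\bE\mapsto \CA\mod^\onfact(\bE)_{x_0}$ is a $\DGCat$-enriched $2$-functor $\bA^0\mmod^\onfact_{x_0}\to \DGCat$, equipped with a natural transformation (termwise $\oblv_\CA$) to the fiber functor $\bE\mapsto\bE_{x_0}$; and (ii) by \propref{p:action of sph}, $\bD^0$ is an $\fL(G)_{x_0}$-module \emph{object} of $\bA^0\mmod^\onfact_{x_0}$ (equivalently, a $\Dmod_*(\fL(G)_{x_0})$-module object), and any $\DGCat$-linear $2$-functor preserves module objects over an algebra in $\DGCat$; the compatibility with the forgetful functor is then the statement that the natural transformation in (i) is $\DGCat$-linear. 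That is exactly what you argue, and the final identification via \cite[Lemma B.12.12]{GLC2} (= \eqref{e:equiv fact mod}) together with $\bD^0_{x_0}\simeq \bD_{x_0}$ (Corollary \ref{cor fiber unital restriction}) finishes things. One small terminological slip: a factorization $\CA$-module in $\bE$ is transported \emph{covariantly} (pushed forward) along a $1$-morphism $G\colon \bE_1\to\bE_2$ by $\CC\mapsto G\circ\CC$ (see \secref{sss:lax unital sends factmod to factmod}); it is not ``pulled back.'' This does not affect the argument, since what you actually use is the covariant functoriality.
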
 

\sssec{Example}

Let us consider what may be a familiar situation in which \propref{p:action of sph} is applicable.

\medskip

Consider the factorization category $\KM(\fg,\kappa)$ of Kac-Moody representations
(at a given level). It carries an action of  $\fL(G)_{\Ran}$ (twisted by the level), defined
as in \cite[Sect. B.14.22]{GLC2}.

\medskip

The corresponding
category $(\KM(\fg,\kappa))^{\fL^+(G)}$ is by the definition the factorization version of the
Kazhdan-Lusztig category, denoted $\KL(G,\kappa)$, see  \cite[Sect. B.14.28]{GLC2}.

\medskip

Consider 
$$\KM(\fg,\kappa)^{\on{fact}_{x_0}}\in \KM(\fg,\kappa)\mmod^\onfact_{x_0}.$$

\medskip

Then \propref{p:action of sph} says that the action of $\fL(G)_{x_0}$ on $\KM(\fg,\kappa)_{x_0}$
commutes with fusion against objects of $\KL(G,\kappa)$.

\medskip

In particular, \corref{c:action of sph} says that for a factorization algebra
$\CA\in \KL(G,\kappa)_{\Ran}$, the category
$$\CA\mod^\onfact(\KM(\fg,\kappa))_{x_0},$$
carries a natural action of $\fL(G)_{x_0}$.

\sssec{}

Let $\bC$ be an object of $\fL(G)_{x_0}\mmod$. By the construction of the functor \eqref{e:functor in one direction},
the example in \secref{sss:level w action} shows that the resulting object
$$\bC^{\on{fact}_{x_0},\Dmod(\Gr_G)}\in \Dmod(\Gr_G)\mmod^\onfact_{x_0}$$
carries a compatible action of $\fL(G)^{\on{fact}_{x_0}}$.

\medskip

Applying \corref{c:action of sph} to 
$$\omega_{\Gr_G} \in \on{Alg}^\onfact(\Dmod(\Gr_G)),$$
viewed as a $\fL(G)$- (and hence $\fL^+(G)$)-equivariant object,
we obtain that the category
$$\omega_{\Gr_G}\mod^\onfact(\bC^{\on{fact}_{x_0},\Dmod(\Gr_G)})_{x_0}$$
carries an action of $\fL(G)_{x_0}$, which commutes with the forgetful functor
$$\omega_{\Gr_G}\mod^\onfact(\bC^{\on{fact}_{x_0},\Dmod(\Gr_G)})_{x_0}\to
(\bC^{\on{fact}_{x_0},\Dmod(\Gr_G)})_{x_0}\simeq \bC.$$

\ssec{The key geometric lemma}

\sssec{} \label{sss:i and j not}

We continue to be in the context of \secref{sss:factorizable LG acts}. Consider the restrictions 
$$\bA_X:=\bA|_X \text{ and } \bD_{X\times x_0}:=\bD|_{X\times x_0},$$
along $X\to \Ran$ and $X\times x_0\to \Ran_{x_0}$, respctively. 

\medskip

Denote by
$$(X-x_0) \overset{j}\hookrightarrow X \overset{i}\hookleftarrow \{x_0\}$$
the corresponding morphisms, and by
$$\bA_{X-x_0}\otimes \bD_{x_0} \overset{j_*}\hookrightarrow \bD_{X\times x_0} \overset{i_*}\hookleftarrow \bD_{x_0}$$
the corresponding functors.

\sssec{}

For a triple of objects $\ba\in \bA_{X-x_0}$ and $\bd,\bd'\in \bD_{x_0}$, consider the space
$$\CHom_{\bD_{X\times x_0}}(j_*(\ba\boxt \bd),i_*(\bd')).$$

For future reference, we note that if $\ba\in \bA^0_{X-x_0}$, we have a canonical identification
\begin{equation} \label{e:restr doesnt matter}
\CHom_{\bD_{X\times x_0}}(j_*(\iota(\ba)\boxt \bd),i_*(\bd'))\simeq
\CHom_{\bD^0_{X\times x_0}}(j_*(\ba\boxt \bd),i_*(\bd')),
\end{equation}
see \secref{sss:fact sph} for the notation. This follows from the 
construction of the operation of factorization restriction (see \cite[Secrt. B.9.28]{GLC2}). 

\sssec{}

More generally, for an affine test-scheme $S$ and an $S$-point $g$ of $\fL(G)_{x_0}$, consider the spaces
\begin{equation} \label{e:mor before action}
\CHom_{\bD_{X\times x_0}\otimes \Dmod^!(S)}\left(\on{pr}^!(j_*(\ba\boxt \bd)),\on{pr}^!(i_*(\bd'))\right)
\end{equation} 
and 
\begin{equation} \label{e:mor after action}
\CHom_{\bD_{X\times x_0}\otimes \Dmod^!(S)}\left(\on{pr}^!(j_*(\ba\boxt \bd)), \on{act}^!(i_*(\bd'))\right),
\end{equation} 
where $\on{pr}^!$ and $\on{act}^!$ are the two functors
$$\bD_{X\times x_0}\to \bD_{X\times x_0}\otimes \Dmod^!(S).$$

\sssec{} \label{sss:act g} 

In what follows, we will abuse the notation slightly and omit $S$. So we will simply write  
$j_*(\ba\boxt \bd)$ instead of $\on{pr}^!(j_*(\ba\boxt \bd))$, and we will write
$$g\cdot i_*(\bd'):=\on{act}^!(i_*(\bd')).$$

Note also that 
$$g\cdot i_*(\bd')\simeq i_*(g\cdot \bd'),$$
where in the right-hand side $g\cdot -$ refers to the $\fL(G)_{x_0}$-action on $\bD_{x_0}$.

\medskip

Thus, instead of \eqref{e:mor before action} we will write 
\begin{equation} \label{e:mor before action simple}
\CHom_{\bD_{X\times x_0}}(j_*(\ba\boxt \bd),i_*(\bd')),
\end{equation}
and instead of \eqref{e:mor after action} we will write
\begin{equation} \label{e:mor after action simple}
\CHom_{\bD_{X\times x_0}}(j_*(\ba\boxt \bd),i_*(g\cdot \bd')).
\end{equation} 

\sssec{}

Let us come back for a moment to the statement of \propref{p:action of sph}. It implies that for
$\ba,\bd,\bd'$ and $g$ as above, we have a canonical isomorphism 
\begin{equation} \label{e:action of sph}
\CHom_{\bD_{X\times x_0}}(j_*(\ba\boxt \bd),i_*(\bd')) \simeq \CHom_{\bD_{X\times x_0}}(j_*(\ba\boxt g\cdot \bd),i_*(g\cdot \bd')).
\end{equation} 

\medskip

Hence, by \propref{p:action of sph}, the expression in \eqref{e:mor after action simple} is canonically
isomorphic to 
\begin{equation} \label{e:mor after action simple bis}
\CHom_{\bD_{X\times x_0}}(j_*(\ba\boxt (g^{-1}\cdot \bd)),i_*(\bd')). 
\end{equation} 

\sssec{}

The key assertion behind the proof of \thmref{t:red to alm const case} is the following:

\begin{mainlem} \label{l:main}
Suppose that the object $\ba$ belongs to $(\bA_{X-x_0})^{\fL(G)_{X-x_0}}$. 

\medskip

\noindent{\em(a)} If $g$ is a point of\footnote{In the formula below, $N$ is the maximal unipotent subgroup of $G$.}
 $\fL(N)_{x_0}$, there exists a canonical isomorphism
between the spaces \eqref{e:mor before action simple} and \eqref{e:mor after action simple}.

\medskip

\noindent{\em(b)} If $\bd\in (\bD_{x_0})^{\fL^+(G)_{x_0}}$, then for any $g\in \fL(G)_{x_0}$, 
there exists a natural isomorphism, to be denoted $\alpha_{g,\bd,\bd'}$, 
between the spaces \eqref{e:mor before action simple} and \eqref{e:mor after action simple}, 
such that for
$g_1\in \fL^+(G)_{x_0}$ the diagram
$$
\CD
\CHom_{\bD_{X\times x_0}}(j_*(\ba\boxt \bd),i_*(\bd'))  @>{\alpha_{g_1\cdot g,\bd,\bd'}}>> \CHom_{\bD_{X\times x_0}}(j_*(\ba\boxt \bd),i_*((g_1\cdot g)\cdot \bd')) \\
& &  @V{=}VV \\
@V{\bd \text{ is spherical}}V{\sim}V \CHom_{\bD_{X\times x_0}}(j_*(\ba\boxt \bd),i_*(g_1\cdot (g\cdot \bd'))) \\
& &  @V{\sim}V{\text{\eqref{e:action of sph}}}V \\
\CHom_{\bD_{X\times x_0}}(j_*(\ba\boxt g_1^{-1}\cdot \bd),i_*(\bd')) @>{\alpha_{g,g_1^{-1}\cdot \bd,\bd'}}>> 
\CHom_{\bD_{X\times x_0}}(j_*(\ba\boxt g_1^{-1}\cdot \bd),i_*(g\cdot \bd'))
\endCD
$$
commutes.

\end{mainlem}

In what follows, adopting the conventions of Remark \secref{sss:act g}, we will write the sought-for isomorphism in
Main \lemref{l:main} as
\begin{equation} \label{e:main iso}
\CHom_{\bD_{X\times x_0}}(j_*(\ba\boxt \bd),i_*(\bd')) \overset{\alpha_{g,\bd,\bd'}}\simeq \CHom_{\bD_{X\times x_0}}(j_*(\ba\boxt \bd),i_*(g\cdot \bd')).
\end{equation} 
\begin{rem}
Let us emphasize the difference between the assertion of Main \lemref{l:main} and that of \propref{p:action of sph}:

\medskip

In \eqref{e:action of sph}, both $\bd$ and $\bd'$ are moved by $g$ (while $\ba$ is only required to be $\fL^+(G)$-equivariant).

\medskip

By contrast, in Main \lemref{l:main} only $\bd$ (or $\bd'$) is moved by $g$, but $\ba$ is required to be $\fL(G)$-equivariant.

\end{rem}

%
%

\begin{rem}

One can show that when $G$ is semi-simple, one can get rid of the condition that  
$\bd$ be $\fL^+(G)_{x_0}$-equivariant. However, when $G$ has a non-trivial connected center, the
$\fL^+(G)_{x_0}$-equivariance condition on $\bd$ is necessary. 

\end{rem} 

\sssec{}

Here is how Main Lemma \ref{l:main} will be used:

\begin{cor} \label{c:main cor} Let $\ba$, $\bd$ and $\bd'$ be as in \lemref{l:main}.

\medskip

\noindent{\em(a)} We have an isomorphism
$$\CHom_{\bD_{X\times x_0}}\Bigl(j_*(\ba\boxt \bd),i_*(\bd')\Bigr) \simeq
\CHom_{\bD_{X\times x_0}}\left(j_*(\ba\boxt \bd),i_*(\on{Av}^{\fL^+(N)_{x_0}}_*(\bd'))\right).$$

\medskip

\noindent{\em(b)} For $G$ reductive, under the additional assumptions of \lemref{l:main}(b), we have
\begin{multline*} 
\CHom_{\bD_{X\times x_0}}\Bigl(j_*\left(\ba\boxt \bd\right)\otimes \on{C}_\cdot(\Gr_{G,x_{x_0}}),i_*(\bd')\Bigr) \simeq \\
\simeq \CHom_{\bD_{X\times x_0}}\left(j_*\Bigl(\ba\boxt \on{Av}^{\fL^+(G)_{x_0}\to \fL(G)_{x_0}}_!(\bd)\Bigr),i_*(\bd')\right),
\end{multline*} 
where $\on{Av}^{\fL^+(G)_{x_0}\to \fL(G)_{x_0}}_!$ is as in \secref{sss:Av Gr}. 

\end{cor}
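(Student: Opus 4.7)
The plan is to deduce both statements from Main Lemma~\ref{l:main} by packaging its family of isomorphisms as equivariance data and then averaging.

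For part (a), I would first observe that by construction (the isomorphisms in Main Lemma~\ref{l:main}(a) arise from comparing $\on{pr}^!$ and $\on{act}^!$ on an $S$-point of $\fL(N)_{x_0}$, cf.\ \secref{sss:act g}), the $\alpha_{g,\bd,\bd'}$ are natural in $g$ in the family sense and assemble into an $\fL(N)_{x_0}$-equivariance datum on the functor
$$F \colon \bd' \longmapsto \CHom_{\bD_{X\times x_0}}(j_*(\ba\boxt\bd),\, i_*(\bd')),$$
viewed as a functor $\bD_{x_0} \to \Vect$ with the trivial $\fL(N)_{x_0}$-action on the target. Restricting this equivariance along $\fL^+(N)_{x_0}\hookrightarrow \fL(N)_{x_0}$, and using that $\fL^+(N)_{x_0}$ is pro-unipotent (so that the adjunction $(\oblv,\on{Av}^{\fL^+(N)_{x_0}}_*)$ is well-defined), $F$ is canonically isomorphic to its pre-composition with $\on{Av}^{\fL^+(N)_{x_0}}_*$ via the counit $\on{Av}^{\fL^+(N)_{x_0}}_*(\bd')\to \bd'$. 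This is the assertion of (a).

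For part (b), I would combine $\alpha_{g,\bd,\bd'}$ with the isomorphism \eqref{e:action of sph} to produce, for every $g\in \fL(G)_{x_0}$, an isomorphism
$$\CHom_{\bD_{X\times x_0}}(j_*(\ba\boxt\bd),\, i_*(\bd')) \;\simeq\; \CHom_{\bD_{X\times x_0}}(j_*(\ba\boxt (g^{-1}\cdot \bd)),\, i_*(\bd')).$$
The cocycle diagram in Main Lemma~\ref{l:main}(b), read together with the $\fL^+(G)_{x_0}$-equivariance of $\bd$, precisely encodes that this family is equivariant for the left translation action of $\fL^+(G)_{x_0}$ on $\fL(G)_{x_0}$, and hence descends to a family parameterized by $\Gr_{G,x_0}=\fL(G)_{x_0}/\fL^+(G)_{x_0}$. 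I would then integrate the descended family over $\Gr_{G,x_0}$: applying $\on{C}_\cdot(\Gr_{G,x_0},-)$, the left-hand side yields $\on{C}_\cdot(\Gr_{G,x_0})\otimes \CHom(j_*(\ba\boxt\bd),i_*(\bd'))$, while the right-hand side yields a $\CHom$ out of $j_*(\ba\boxt(-))$ applied to the fiber integral $\int_{g\in \Gr_{G,x_0}}\! g^{-1}\cdot \bd$. Identifying this fiber integral with $\on{Av}^{\fL^+(G)_{x_0}\to \fL(G)_{x_0}}_!(\bd)$ is the standard description, recalled in~\secref{sss:Av Gr}, of the left adjoint to $\oblv_{\fL(G)_{x_0}\to\fL^+(G)_{x_0}}$ as homological integration over the ind-proper $\Gr_G$.

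The hardest step will be the packaging in part (b): promoting the pointwise $\alpha_{g,\bd,\bd'}$ to a genuine morphism of D-modules on $\fL(G)_{x_0}$ that is $\fL^+(G)_{x_0}$-equivariant in the precise sense needed for descent to $\Gr_{G,x_0}$, and recognizing the descended morphism as the one given by the $!$-averaging functor. Part (a) by comparison is essentially formal, once Main Lemma~\ref{l:main}(a) is in hand, because the target $\Vect$ carries the trivial action and $\fL^+(N)_{x_0}$ is pro-unipotent.
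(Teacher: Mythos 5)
Your route for part~(a) is a valid alternative to the paper's, but worth contrasting.  The paper does \emph{not} package Main Lemma~\ref{l:main}(a) into an equivariance datum.  Instead it applies the Main Lemma exactly once, at the tautological $\fL^+(N)_{x_0}$-point $g_{\on{univ}}$: by adjunction,
\[
\CHom_{\bD_{X\times x_0}}\bigl(j_*(\ba\boxt \bd),\, i_*(\on{Av}^{\fL^+(N)_{x_0}}_*(\bd'))\bigr)
\;\simeq\;
\CHom_{\bD_{X\times x_0}\otimes \Dmod^!(\fL^+(N))}\bigl(j_*(\ba\boxt \bd),\, i_*(g_{\on{univ}}\cdot \bd')\bigr),
\]
then Main Lemma~(a) replaces $g_{\on{univ}}\cdot\bd'$ by $\bd'$, and since $\omega_{\fL^+(N)}$ is compact the right-hand side factors as $\CHom(j_*(\ba\boxt\bd),i_*(\bd'))\otimes \on{C}^\cdot(\fL^+(N))\simeq \CHom(j_*(\ba\boxt\bd),i_*(\bd'))$.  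This avoids what your argument implicitly needs: that the isomorphisms $\alpha_{g,\bd,\bd'}$ for $g\in\fL(N)_{x_0}$ assemble into a genuine $\fL(N)_{x_0}$-equivariance structure on the functor $\bd'\mapsto F(\bd')$, i.e.\ satisfy the full tower of coherences.  Main Lemma~\ref{l:main}(a) as stated only produces isomorphisms for individual $S$-points; the cocycle condition is stated only in part~(b).  Your conclusion ($F$ factors through $\bD_{x_0}^{\fL^+(N)_{x_0}}$, hence $F\simeq F\circ\oblv\circ\on{Av}_*$ by pro-unipotence) is correct once that coherence is supplied, so treat this as a justified-but-costlier route rather than an error.

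Your argument for part~(b) has a genuine gap, located in the ``integration'' step.  You claim that applying $\on{C}_\cdot(\Gr_{G,x_0},-)$ to the descended family makes the left-hand side $\on{C}_\cdot(\Gr_{G,x_0})\otimes\CHom(j_*(\ba\boxt\bd),i_*(\bd'))$, i.e.\ a \emph{tensor} by $\on{C}_\cdot(\Gr_G)$.  But the statement of the corollary has the \emph{cotensor}
\[
\CHom_{\bD_{X\times x_0}}\bigl(j_*(\ba\boxt\bd)\otimes \on{C}_\cdot(\Gr_{G,x_0}),\, i_*(\bd')\bigr)
\;=\;
\Hom_\Vect\bigl(\on{C}_\cdot(\Gr_{G,x_0}),\, \CHom(j_*(\ba\boxt\bd),i_*(\bd'))\bigr),
\]
and these disagree because $\on{C}_\cdot(\Gr_{G,x_0})$ is not dualizable: writing $\Gr_G\simeq\underset{i}{``\on{colim}"}\,Y_i$ with $Y_i$ proper, the cotensor is $\underset{i}{\lim}\;\on{C}^\cdot(Y_i)\otimes\CHom(j_*(\ba\boxt\bd),i_*(\bd'))$, whereas your expression is the colimit $\underset{i}{\colim}\;\on{C}_\cdot(Y_i)\otimes\CHom(\ldots)$.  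The mismatch is not cosmetic: the right-hand side $\CHom(j_*(\ba\boxt\on{Av}_!(\bd)),i_*(\bd'))$ is a \emph{limit} over $\Gr_G$ (a colimit $\on{Av}_!(\bd)=\underset{i}{\colim}(\ldots)$ inside the first argument of $\CHom$), and the corollary's left-hand side is formulated precisely so as to be that same limit.  More fundamentally, the $\CHom$'s on both sides of the descended family are limits, so ``applying $\on{C}_\cdot(\Gr_G,-)$'' (a left adjoint) does not commute past them without justification.  The argument the paper has in mind, mirroring~(a), goes by adjunction:  $\on{Av}_!(\bd)$ is $\on{act}_!(\omega_{\Gr_G}\wt\boxtimes\bd)$, one pulls the $!$-pushforward out of $j_*(\ba\boxt -)$ (using the $\fL(G)_{x_0}$-linearity from \propref{p:action of sph}), applies Main Lemma~(b) for the universal $\Gr_{G,x_0}$-point, and then identifies $\CHom_{\bD\otimes\Dmod^!(\Gr_G)}(j_*(\ba\boxt\bd)\otimes\omega_{\Gr_G}, i_*(\bd')\otimes\omega_{\Gr_G})$ with $\underset{i}{\lim}\;\on{C}^\cdot(Y_i)\otimes\CHom(j_*(\ba\boxt\bd),i_*(\bd'))$, which is exactly the cotensor appearing on the left of the corollary.
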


\begin{proof}

We prove point (a), as point (b) is similar. By adjunction,
\begin{multline} \label{e:Hom and Av}
\CHom_{\bD_{X\times x_0}}\left(j_*(\ba\boxt \bd),i_*(\on{Av}^{\fL^+(N)_{x_0}}_*(\bd'))\right)\simeq \\
\simeq \CHom_{\bD_{X\times x_0}\otimes \Dmod^!(\fL^+(N))}\left(j_*(\ba\boxt \bd),i_*(g_{\on{univ}}\cdot \bd')\right),
\end{multline}
where $g_{\on{univ}}$ is the tautological $\fL^+(N)$-point of $\fL^+(N)$. 

\medskip

Now, Main \lemref{l:main}(a) implies that the left-hand side in \eqref{e:Hom and Av} identifies with
\begin{equation} \label{e:Hom and Av bis}
\CHom_{\bD_{X\times x_0}\otimes \Dmod^!(\fL^+(N))}\left(j_*(\ba\boxt \bd),i_*(\bd')\right).
\end{equation}

Both objects 
$$j_*(\ba\boxt \bd) \text{ and } i_*(\bd')\in \bD_{X\times x_0}\otimes \Dmod^!(\fL^+(N))$$
are the same-named objects in $\bD_{X\times x_0}$, tensored with $\omega_{\fL^+(N)}\in \Dmod^!(\fL^+(N))$.
Since the latter object is compact, the expression in \eqref{e:Hom and Av bis} is isomorphic to
$$\CHom_{\bD_{X\times x_0}}\left(j_*(\ba\boxt \bd),i_*(\bd')\right)\otimes \End_{\Dmod^!(\fL^+(N))}(\omega_{\fL^+(N)})\simeq
\CHom_{\bD_{X\times x_0}}\left(j_*(\ba\boxt \bd),i_*(\bd')\right) \otimes \on{C}^\cdot(\fL^+(N)).$$

Now, since $N$ is unipotent, $\fL^+(N))$ is contractible, and hence $\on{C}^\cdot(\fL^+(N))\simeq k$. 

\end{proof}

\ssec{Proof of Main Lemma \ref{l:main}: reduction to a particular congruence level} \hfill \label{ss:red to congr}

\smallskip 

In this subsection we will show that we can assume that both $\bd$ and $\bd'$ are
invariant with respect to some congruence subgroup $K_n\subset \fL^+(G)_{x_0}$.

\sssec{}

First, as the category $\bD_{x_0}$ is acted on by $\fL(G)_{x_0}\supset \fL^+(G)_{x_0}$, every object can be written as a colimit 
of objects invariant with respect to congruence subgroups. 

\medskip

Thus, we can assume that $\bd$ is invariant with respect to some $K_n$, $n\geq 1$. 

\sssec{} \label{sss:Av up}

We now use the fact that $\ba$ is $\fL^+(G)_{X-x_0}$-equivariant, and we apply isomorphism
\eqref{e:restr doesnt matter}: 

\medskip 

By \propref{p:action of sph}, the category $\bD^0_{X\times x_0}$ is acted on by $\fL(G)_{x_0}$, and in particular
by $K_n$. By assumption, with respect to this action, the object 
$$j_*(\ba\boxt \bd)\in \bD^0_{X\times x_0}$$
is $K_n$-invariant. 

\medskip

Hence, the left-hand side in \eqref{e:main iso} remains unchanged if we replace $\bd'$ 
by $\on{Av}_*^{K_n}(\bd')$.

\medskip

Similarly, the right-hand side in \eqref{e:main iso} remains unchanged if we replace 
$g\cdot \bd'$ by $\on{Av}_*^{K_n}(g\cdot \bd')$.

\sssec{}

Now, for a given point $g$ of $\fL(G)_{x_0}$, let $m\gg 1$ be large enough so that
$$\on{Ad}_g(K_m)\subset K_n.$$

In this case, 
$$\on{Av}_*^{K_n}(g\cdot \bd') \simeq \on{Av}_*^{K_n}(g\cdot \on{Av}_*^{K_m}(\bd')).$$ 

Therefore, the right-hand side in \eqref{e:main iso} remains unchanged if we replace 
$\bd'$ by $\on{Av}_*^{K_m}(\bd')$. 

\sssec{}

Thus, we can assume that $\bd'$ is also invariant with respect to some congruence subgroup. 

\ssec{Proof of Main Lemma: reduction of point (a) to point (b)} \label{ss:b=>a}

\sssec{}

Given an $S$-point $g$ of $\fL(N)_{x_0}$, we need to establish isomorphism \eqref{e:main iso}.

\medskip

By the previous subsection, we can assume that both $\bd$ and $\bd'$ are invariant with respect to
the subgroup $K_n(N):=K_n\cap \fL(N)_{x_0}$ for some $n\geq 1$.

\sssec{}

Let $\lambda$ be a dominant coweight so that 
$$\on{Ad}_{t^\lambda}(\fL^+(N)_{x_0})\subset K_n(N).$$

I.e., 
$$\fL^+(N)_{x_0} \subset \on{Ad}_{t^{-\lambda}}(K_n(N)).$$

\medskip

Using the $\fL(G)_{x_0}$-action on $\bD^0_{X\times x_0}$, we can identify the two sides of \eqref{e:main iso}
with ones, where we replace $\bd$ by $t^{-\lambda}\cdot \bd$ and $\bd'$ by $t^{-\lambda}\cdot \bd'$,
and $g$ by $\on{Ad}_{t^{-\lambda}}(g)$. 

\medskip

However, now $t^{-\lambda}\cdot \bd$ and $t^{-\lambda}\cdot \bd'$ are $\on{Ad}_{t^{-\lambda}}(K_n(N))$-equivariant,
and hence $\fL^+(N)_{x_0}$-equivariant, by construction.

\sssec{}

This reduces the assertion of point (a) of \lemref{l:main} to point (b) (for $G$ replaced by $N$).

\ssec{Proof of Main Lemma \ref{l:main}(b): the mechanism} \label{ss:idea}

In this subsection we will explain the main geometric idea behind the proof of Main Lemma \ref{l:main}(b).

\medskip

We will describe a paradigm in which one obtains an isomorphism \eqref{e:main iso}. 

\sssec{}

Assume that $\bd$ is invariant with respect to a subgroup $K\subset \fL(G)_{x_0}$. Note that using the
averaging functor as in \secref{sss:Av up}, we can assume that $\bd'$ is also $K$-invariant\footnote{In 
practice, for the proof of Main Lemma \ref{l:main}(b), we will take $K$ to be $\fL(G)_{x_0}$ itself.}. 

\medskip

Let $g$ be an element of $\fL(G)_{x_0}$, i.e., a map $\cD^\times_{x_0}\to G$. Let $g'$ be a map 
\begin{equation} \label{e:rat map}
g':X\times X-\Delta\to G
\end{equation} 
with the following properties:

\begin{itemize} 

\item For every $x\neq x_0$, the restriction of map $$g'_x:=g'|_{(X-x)\times x}:X-x\to G$$ along
$\cD_{x_0}\hookrightarrow X-x$ lies in $K$; 

\medskip

\item The restriction of the map $$g'_{x_0}:=g'|_{(X-x_0)\times x_0}:X-x_0\to G$$ along $\cD^\times_{x_0}\hookrightarrow X-x_0$ equals 
$g$ modulo $K$. 

\end{itemize} 

We claim that a choice of a map $g'$ as above gives rise to an isomorphism \eqref{e:main iso}.

\sssec{} \label{sss:act by global 1}

For any map as in \eqref{e:rat map}, its Laurent expansion in the first coordinate around the divisor
$$\Delta \cup (x_0\times X)$$
defines a section of 
$$\fL(G)_{X\times x_0}:=\fL(G)_{\Ran_{x_0}}|_{X\times x_0}$$ over $X\simeq X\times x_0$, and hence acts by a self-equivalence on $\bD_{X\times x_0}$. 

\medskip

We claim that the action of the above element $g'$ is such that
\begin{equation} \label{e:action away from point}
g'\cdot (j_*(\ba\boxt \bd))\simeq j_*(\ba\boxt \bd)
\end{equation}
and 
\begin{equation} \label{e:action at point}
g'\cdot (i_*(\bd'))\simeq i_*(g\cdot \bd).
\end{equation}

This would give rise to an isomorphism \eqref{e:main iso}. 

\sssec{}  \label{sss:act by global 2}

To prove \eqref{e:action away from point}, we need to establish the corresponding isomorphism 
over $X-x_0$. 

\medskip

Note that a map \eqref{e:rat map} acts on an object
$$\ba\boxt \bd\in \bA_{X-x_0}\otimes \bD_{x_0}$$
via:

\begin{itemize}

\item The Laurent expansion of $g'$ around the diagonal divisor and the action of
the resulting element of $\fL(G)_{X-x_0}$ on $\ba$;

\medskip

\item The Laurent expansion of $g'$ around $x_0\times X$ and the resulting action of
$\fL(G)_{x_0}$ on $\bd$. 

\end{itemize} 

Now, the condition that $\ba$ is $\fL(G)_{X-x_0}$-equivariant implies that
$$g'\cdot \ba\simeq \ba.$$

And the condition that the $g'|_{\cD^\times_{x_0}\times x}\in K$ combined with 
the assumption that $\bd$ in $K$-invariant implies that $g'\cdot \bd\simeq \bd$. 

\sssec{}  \label{sss:act by global 3}

To prove \eqref{e:action at point}, we need to show that the action of 
$$g'|_{\cD^\times_{x_0}\times x_0}\in \fL(G)$$
on $\bd'$ produces the same result as $g\cdot \bd'$. 

\medskip

However, this follows from the assumption that
$$g'|_{\cD^\times_{x_0}\times x_0}=g\,\on{mod}\,K$$
and the fact that $\bd'$ is $K$-invariant. 

\sssec{} \label{sss:two isos coincide}

Assume now that $g'$ is in fact a map
$$X\times X\to G,$$
i.e., that it is regular around the diagonal.

\medskip

Unwinding the construction, we obtain that the resulting isomorphism \eqref{e:main iso} equals
\begin{multline*}
\CHom_{\bD_{X\times x_0}}(j_*(\ba\boxt \bd),i_*(\bd')) \overset{\text{\eqref{e:action of sph}}}\simeq  \\
\CHom_{\bD_{X\times x_0}}(j_*(\ba\boxt (g\cdot \bd)),i_*(g\cdot \bd')) \overset{\bd\text{ is spherical}}\simeq
\simeq \CHom_{\bD_{X\times x_0}}(j_*(\ba\boxt \bd),i_*(g\cdot \bd')).
\end{multline*}

\sssec{}

Note also that the isomorphism \eqref{e:main iso} is naturally compatible with the group structure on 
$\Maps(X\times X-\Delta,G)$, i.e., for a pair of points $g'_1,g'_2\in \Maps(X\times X-\Delta,G)$, the diagram
$$
\CD
\CHom_{\bD_{X\times x_0}}(j_*(\ba\boxt \bd),i_*(\bd'))  @>{\alpha_{g_2,\bd,\bd'}}>> 
\CHom_{\bD_{X\times x_0}}(j_*(\ba\boxt \bd),i_*(g_2\cdot \bd'))  \\
@V{\alpha_{g_1\cdot g_2,\bd,\bd'}}VV @VV{\alpha_{g_1,g_2\cdot \bd,\bd'}}V \\
\CHom_{\bD_{X\times x_0}}(j_*(\ba\boxt \bd),i_*((g_1\cdot g_2)\cdot \bd')) @>{=}>>
\CHom_{\bD_{X\times x_0}}(j_*(\ba\boxt \bd),i_*(g_1\cdot (g_2\cdot \bd'))) 
\endCD
$$
commutes.

\ssec{Proof of Main Lemma \ref{l:main}(b): approximating the loop group} \label{ss:approx loop}

The assertion of the Main Lemma \ref{l:main}(b) is \'etale-local, so in what follows we will 
assume that the pair $(X,x_0)$ is $(\BA^1,0)$. We will implement the mechanism from \secref{ss:idea}
or $K=\fL^+(G)_{x_0}$. 

\sssec{} 

We will construct a group ind-scheme (ind-affine, of ind-finite type)
$\Gamma$, equipped with a map
$$\phi:\Gamma\to \Maps(X\times X-\Delta,G),$$
such that the composite map
\begin{equation} \label{e:Gamma eval}
\Gamma\overset{\phi}\to \Maps(X\times X-\Delta,G)\to \fL(G)_{x_0} 
\end{equation} 
satisfies:

\begin{itemize}

\item For $\Gamma^+:=\Gamma\underset{\fL(G)_{x_0}}\times \fL^+(G)_{x_0}$
the resulting map 
\begin{equation} \label{e:Gamma eval Gr}
\Gamma/\Gamma^+\to \fL(G)_{x_0}/\fL^+(G)_{x_0}=\Gr_{G,x_0}
\end{equation} 
is an isomorphism.

\end{itemize}

Given such a map, the construction in \secref{ss:idea} will imply the assertion of Main \lemref{l:main}(b). 

\sssec{}

We take
$$\Gamma:=G(t,t^{-1}):=\bMaps(\BA^1-0,G).$$

We let $\phi$ be defined by precomposition with the projection 
$$X\times X-\Delta=\BA^1\times \BA^1-\Delta \overset{t_1,t_2\mapsto t_1-t_2}\longrightarrow \BA^1-0.$$

\sssec{}

Note that the map \eqref{e:Gamma eval} is the standard map corresponding to $k[t,t^{-1}]\hookrightarrow k\ppart$.
Hence, 
$$\Gamma^+=\bMaps(\BA^1,G)=G[t].$$

\medskip

Hence, the map \eqref{e:Gamma eval Gr} is the map
$$G(t,t^{-1})/G[t]\to G\ppart/G\qqart,$$ 
which is known to be an isomorphism\footnote{This is a consequence of the Beauville-Laszlo theorem,
combined with the fact that every $G$-bundle on $\BP^1$ can be trivialized on $\BA^1$, \'etale-locally with
respect to the
scheme of parameters.}.

\section{Proof of \thmref{t:red to alm const case}} \label{s:proof of main-end}

\ssec{What to we need to show?}

\sssec{}

Recall that by \corref{c:action of sph}, the category 
\begin{equation} \label{e:category of interest}
\omega_{\Gr_G}\mod^\onfact(\bC^{\on{fact}_{x_0},\Dmod(\Gr_G)})_{x_0}
\end{equation} 
carries an action of $\fL(G)_{x_0}$.

\medskip

Since the forgetful functor
$$\omega_{\Gr_G}\mod^\onfact(\bC^{\on{fact}_{x_0},\Dmod(\Gr_G)})_{x_0}\to \bC$$
is conservative and compatible with the $\fL(G)_{x_0}$-actions, by \lemref{l:alm triv cons}, it suffices to show that the functor
\begin{equation} \label{e:alm eq omega mod}
\on{alm-inv}_{\fL(G)_{x_0}}\left(\omega_{\Gr_G}\mod^\onfact(\bC^{\on{fact}_{x_0},\Dmod(\Gr_G)})_{x_0}\right)\to
\omega_{\Gr_G}\mod^\onfact(\bC^{\on{fact}_{x_0},\Dmod(\Gr_G)})_{x_0}
\end{equation}
is an equivalence. 

\sssec{} \label{sss:proof of alm strategy}

The strategy of the proof will be as follows:

\medskip

\begin{enumerate}

\item We will show that the category \eqref{e:category of interest} is \emph{spherically generated}
(see \secref{sss:sph gen} for what this means); 

\medskip

\item We will show that the functor $\on{Av}^{\fL^+(G)_{x_0}\to \fL(G)_{x_0}}_!$ on the spherical subcategory
of \eqref{e:category of interest} (see \secref{sss:Av Gr}) is conservative.
By \propref{p:when al triv}, this will imply that \eqref{e:alm eq omega mod} is an equality.

\end{enumerate}

%
%
%
%
%
%
%
%

\ssec{Functoriality revisted}

In this subsection we briefly revisit the set-up of \secref{ss:functoriality}. 

\sssec{}

Note that we have a canonical map of factorization algebras in $\Dmod(\Gr_G)$
$$\omega_\phi:(\Gr_\phi)_*(\omega_{\Gr_{G'}})\to \omega_{\Gr_G}.$$

Hence, we obtain a functor
\begin{multline} \label{e:res functor grps omega}
\omega_{\Gr_G}\mod^\onfact(\bC^{\on{fact}_{x_0,\Dmod(\Gr_G)}})_{x_0}\overset{\Res_{\omega_\phi}}\longrightarrow
(\Gr_\phi)_*(\omega_{\Gr_{G'}})\mod^\onfact(\bC^{\on{fact}_{x_0},\Dmod(\Gr_G)})_{x_0}\simeq \\
\simeq \omega_{\Gr_{G'}}\mod^\onfact(\Rres_{\Gr_\phi}(\bC^{\on{fact}_{x_0},\Dmod(\Gr_G)}))_{x_0}\simeq 
 \omega_{\Gr_{G'}}\mod^\onfact((\bC')^{\on{fact}_{x_0},\Dmod(\Gr_{G'})})_{x_0}.
\end{multline} 

Note that the source (resp., target) of the above functor carries a natural action of $\fL(G)_{x_0}$ (resp., $\fL(G')_{x_0}$),
see \corref{c:action of sph}. 

\sssec{}

Unwinding the constructions, we obtain:

\begin{lem} \label{l:compat actions}
The functor \eqref{e:res functor grps omega} intertwines the $\fL(G')_{x_0}$-action
on the target with the $\fL(G')_{x_0}$-action on the source obtained from the $\fL(G)_{x_0}$-action by 
precomposing with $\phi$.
\end{lem}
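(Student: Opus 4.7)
The plan is to verify the compatibility by unwinding the constructions of all three ingredients appearing in the definition of \eqref{e:res functor grps omega} and checking that each of them is natural with respect to $\phi$. There are essentially no calculations — the lemma is a naturality statement, and the work is in organizing the data correctly.

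First I would recall that by \propref{p:action of sph} the $\fL(G)_{x_0}$-action on the source of \eqref{e:res functor grps omega} is obtained by restricting the action of the group ind-scheme
$$\fL^{\on{mer}\rightsquigarrow \on{reg}}(G)_{\Ran_{x_0}} \subset \fL(G)_{\Ran_{x_0}}$$
on the crystal of categories underlying $\bC^{\on{fact}_{x_0},\Dmod(\Gr_G)}$ along its projection to the constant group ind-scheme with fiber $\fL(G)_{x_0}$, and then passing through the adjunction that defines factorization modules over $\omega_{\Gr_G}$. The analogous statement holds for $G'$ on the target. The homomorphism $\phi:G'\to G$ induces a commutative square of factorization group ind-schemes
$$
\CD
\fL^{\on{mer}\rightsquigarrow \on{reg}}(G')_{\Ran_{x_0}} @>>> \fL(G')_{x_0}\times \Ran_{x_0} \\
@VVV @VV{\phi\times \on{id}}V \\
\fL^{\on{mer}\rightsquigarrow \on{reg}}(G)_{\Ran_{x_0}} @>>> \fL(G)_{x_0}\times \Ran_{x_0},
\endCD
$$
and the left vertical arrow is compatible with the action on $\bC^{\on{fact}_{x_0},\Dmod(\Gr_G)}$ versus the action on $(\bC')^{\on{fact}_{x_0},\Dmod(\Gr_{G'})}$ (the latter being the restriction of the former along $\phi$, by construction of \eqref{e:res functor grps}).

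Next I would check, step by step, that each of the three constructions composing \eqref{e:res functor grps omega} is equivariant for these group actions: (i) restriction along the morphism of factorization algebras $\omega_\phi:(\Gr_\phi)_*(\omega_{\Gr_{G'}})\to \omega_{\Gr_G}$, which is a morphism of $\fL(G)$-equivariant factorization algebras in $\Dmod(\Gr_G)$ (the $\fL(G)$-action on $(\Gr_\phi)_*(\omega_{\Gr_{G'}})$ factoring through $\phi$); (ii) the equivalence \eqref{e:equiv fact mod} between factorization modules in $\bC^{\on{fact}_{x_0},\Dmod(\Gr_G)}$ over $(\Gr_\phi)_*(\omega_{\Gr_{G'}})$ and factorization modules in $\Rres_{\Gr_\phi}(\bC^{\on{fact}_{x_0},\Dmod(\Gr_G)})$ over $\omega_{\Gr_{G'}}$, which is a formal consequence of the universal property of $\Rres_{(\Gr_\phi)_*}$; and (iii) the identification of \lemref{l:res functor grps}
$$(\bC')^{\on{fact}_{x_0},\Dmod(\Gr_{G'})} \simeq \Rres_{\Gr_\phi}(\bC^{\on{fact}_{x_0},\Dmod(\Gr_G)}),$$
which intertwines the $\fL(G')_{x_0}$-action on the left with the $\fL(G)_{x_0}$-action on the right, precomposed with $\phi$, by the construction of \eqref{e:res functor grps} from the $\fL(G')_{x_0}$-equivariant map \eqref{e:Gr level morphism bis}.

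The main obstacle is purely bookkeeping: one has to assemble these three naturality statements in a homotopy-coherent fashion, i.e.\ as a commuting diagram of module categories over the appropriate monoidal $\infty$-category. In practice this is most cleanly done by reformulating everything as a single assertion at the level of crystals of categories over $\Ran_{x_0}$: the functor \eqref{e:res functor grps omega} underlies a morphism of sheaves of categories on $\Ran_{x_0}$ that is equivariant for the action of $\fL^{\on{mer}\rightsquigarrow \on{reg}}(G')_{\Ran_{x_0}}$ on both sides (on the source through $\phi$), and the resulting equivariance at $x_0\in \Ran_{x_0}$ is the desired statement.
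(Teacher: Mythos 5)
Your approach is correct and matches what the paper has in mind: the paper itself states \lemref{l:compat actions} with no argument beyond ``unwinding the constructions,'' and your proof is precisely that unwinding, organized as equivariance of each of the three pieces of \eqref{e:res functor grps omega} and ultimately reduced to the equivariance of the $\fL^{\on{mer}\rightsquigarrow \on{reg}}(G)_{\Ran_{x_0}}$-action on the underlying crystals of categories, which is indeed the source of the $\fL(G)_{x_0}$-action via \propref{p:action of sph}.

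One precision is worth flagging in step (i): $(\Gr_\phi)_*(\omega_{\Gr_{G'}})$ does not carry an $\fL(G)$-equivariant structure — the image of $\Gr_{G'}$ in $\Gr_G$ is generally not $\fL^+(G)$-invariant, so this object is only $\fL^+(G')$-equivariant. The correct statement is that $\omega_\phi$ is a morphism of $\fL(G')$-equivariant factorization algebras in $\Dmod(\Gr_G)$, where $\fL(G')$ acts on $\Dmod(\Gr_G)$ through $\phi$. Consequently the intermediate category $(\Gr_\phi)_*(\omega_{\Gr_{G'}})\mod^\onfact\bigl(\bC^{\on{fact}_{x_0},\Dmod(\Gr_G)}\bigr)_{x_0}$ carries only an $\fL(G')_{x_0}$-action (obtained by applying \corref{c:action of sph} to $G'$ rather than $G$), and this is what is matched on both sides — which is all the lemma asserts. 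Rephrasing (i) in these terms would make the argument cleanly watertight without changing its substance.
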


\ssec{Proof of spherical generation}

\sssec{}

Replacing $\bC$ by the kernel of the right adjoint to \eqref{e:Sph gen}, we can assume that
$\bC^{\fL^+(G)_{x_0}}=0$.

\medskip

We will show that the category \eqref{e:category of interest} is zero in this case. 

\sssec{}

By lemmas \ref{l:res functor grps} and \ref{l:compat actions}, and using the fact that \thmref{t:main}
has already been proved for the group $T$, we know that the action of $\fL(T)_{x_0}$
(and in particular of $\fL^+(T)_{x_0}$) has been canonically trivialized. 

\medskip

In particular, the inclusion
$$\on{alm-inv}_{\fL^+(T)_{x_0}}\left(\omega_{\Gr_G}\mod^\onfact(\bC^{\on{fact}_{x_0},\Dmod(\Gr_G)})_{x_0}\right)
\hookrightarrow \omega_{\Gr_G}\mod^\onfact(\bC^{\on{fact}_{x_0},\Dmod(\Gr_G)})_{x_0}$$
is an equality.

\sssec{}

We claim that the inclusion 
$$\on{alm-inv}_{\fL^+(N)_{x_0}}\left(\omega_{\Gr_G}\mod^\onfact(\bC^{\on{fact}_{x_0},\Dmod(\Gr_G)})_{x_0}\right)
\hookrightarrow \omega_{\Gr_G}\mod^\onfact(\bC^{\on{fact}_{x_0},\Dmod(\Gr_G)})_{x_0}$$
is also an equality.

\medskip

To prove this, it suffices to show that the functor $\on{Av}^{\fL^+(N)_{x_0}}_*$ is conservative on \eqref{e:category of interest}. Let 
$$\bc\in \omega_{\Gr_G}\mod^\onfact(\bC^{\on{fact}_{x_0},\Dmod(\Gr_G)})_{x_0}$$ 
be an object in the kernel of $\on{Av}^{\fL^+(N)_{x_0}}_*$. Then by \corref{c:main cor}(a), we obtain that 
\begin{equation} \label{e:contr 1}
\CHom_{\bD_{X\times x_0}}\Bigl(j_*(\omega_{\Gr_G}\boxt \bc_{x_0}),i_*(\bc_{x_0})\Bigr)=0,
\end{equation} 
where:

\begin{itemize}

\item $\bD:=\bC^{\on{fact}_{x_0},\Dmod(\Gr_G)}$;

\item $\bc_{x_0}=\oblv_{\omega_{\Gr_G}}(\bc)$. 

\end{itemize} 

\sssec{} \label{sss:units}

We claim that \eqref{e:contr 1} implies that $\bc_{x_0}=0$. Indeed, the structure of factorization $\omega_{\Gr_G}$-module on $\bc$ 
gives rise to a map
$$j_*(\omega_{\Gr_G}\boxt \bc_{x_0}) \to i_*(\bc_{x_0})[1],$$
and we claim that if this map is zero, then $\bc_{x_0}$ is zero.

\medskip

Indeed, this follows from the fact that $\bc$ was a \emph{unital} factorization module, and hence
the composition
$$j_*(\one_{\Dmod(\Gr_G)} \boxt \bc_{x_0})\to j_*(\omega_{\Gr_G}\boxt \bc_{x_0}) \to i_*(\bc_{x_0})[1]$$
is the canonical morphism. 

\sssec{}

We now claim that the inclusion 
\begin{multline*} 
\on{inv}_{K_1}\left(\omega_{\Gr_G}\mod^\onfact(\bC^{\on{fact}_{x_0},\Dmod(\Gr_G)})_{x_0}\right)\overset{\sim}\to 
\on{alm-inv}_{K_1}\left(\omega_{\Gr_G}\mod^\onfact(\bC^{\on{fact}_{x_0},\Dmod(\Gr_G)})_{x_0}\right) \hookrightarrow \\
\hookrightarrow \omega_{\Gr_G}\mod^\onfact(\bC^{\on{fact}_{x_0},\Dmod(\Gr_G)})_{x_0}
\end{multline*} 
is also an equality.

\medskip

Indeed, this follows from the triangular decomposition
$$K_1=(K_1\cap \fL^+(N)_{x_0})\cdot (K_1\cap \fL^+(T)_{x_0})\cdot (K_1\cap \fL^+(N^-)_{x_0}),$$
and the fact that the equality holds for each of the factors.

\sssec{}

Thus, the action of $\fL^+(G)_{x_0}$ on 
$$\bC':=\omega_{\Gr_G}\mod^\onfact(\bC^{\on{fact}_{x_0},\Dmod(\Gr_G)})_{x_0}$$
factors through 
$$\fL^+(G)_{x_0}\twoheadrightarrow G,$$
while we have:

\begin{itemize}

\item $(\bC')^N=\bC'$

\item The action of $T$ on $\bC'$ is trivialized;

\item $(\bC')^G=0$.

\end{itemize}

\medskip

In \secref{ss:toric localization} below we will show that any $\bC'$ with the above properties is zero. 

\begin{rem}

Note that if we weakened the second assumption to $\on{alm-inv}_T(\bC')=\bC'$, the assertion that
$\bC'=0$ would be false: a counterexample is provided by \secref{sss:bad example}. 

\end{rem}

\begin{rem}

As we shall see, the first condition (i.e., that $(\bC')^N=\bC'$) is actually superfluous. So, in fact we are proving the following: 

\end{rem}

\begin{prop}
Let us be a given a categorical representation of $G$, such that its restriction to $T$ can be trivialized. Then the initial
categorical representation is almost trivial.
\end{prop}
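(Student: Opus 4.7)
The plan is to combine Koszul duality (Proposition~\ref{p:oblv enh}) with the torus case of \thmref{t:red to alm const case}, already established in \secref{s:torus}, to reduce the claim to a descent question on a square of 2-categories.

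First, by Proposition~\ref{p:oblv enh} applied to $G$ and to $T$, the fully faithful embeddings $\Vect^G\mmod \hookrightarrow G\mmod$ and $\Vect^T\mmod \hookrightarrow T\mmod$ identify their sources with the subcategories of almost trivial actions. The hypothesis that the $T$-action is trivializable yields that $\bC'$, viewed in $T\mmod$, lies in $\Vect^T\mmod$ (indeed, in the image of $\Vect\mmod \to \Vect^T\mmod$), so one must show that $\bC' \in G\mmod$ itself lies in $\Vect^G\mmod$; equivalently, that the commutative square
$$
\CD
\Vect^G\mmod @>>> G\mmod \\
@VVV @VV{\oblv^{G\to T}}V \\
\Vect^T\mmod @>>> T\mmod
\endCD
$$
is 2-Cartesian on those objects whose $T$-restriction lifts further to $\Vect\mmod$.

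To establish this Cartesian property, I would exploit the structure of $G$ as reductive with maximal torus $T$. The restriction functor $\Vect^G \to \Vect^T$ is dual to the map $\on{C}_\cdot(T) \to \on{C}_\cdot(G)$, and the latter exhibits $\on{C}_\cdot(G)$ as built from $\on{C}_\cdot(T)$ via the Bruhat cell decomposition of $G/T$ (each cell being an affine space, hence rationally contractible). From this one expects $\Vect^T$ to be faithful as a $\Vect^G$-module category, so that a $\Vect^T$-module structure coming from a $G$-equivariant source descends uniquely to a $\Vect^G$-module structure. The trivialization of the $T$-action on $\bC'$ then produces such a structure.

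The main obstacle will be reconciling the abstractly produced $\Vect^G$-module structure with the given $G$-action, i.e., verifying the analogue of a Beck--Chevalley isomorphism. Here the unipotent radicals $N^{\pm} \subset G$ play a key role: since $N^{\pm}$ are unipotent, $\on{C}^\cdot(N^{\pm}) \simeq k$ and hence $\Vect^{N^{\pm}} \simeq \Vect$, so the $N^{\pm}$-factors impose no additional constraint beyond the one arising from the $T$-trivialization. I expect the finishing argument to parallel the mechanism of \secref{ss:idea}, using the $T$-triviality to construct the global ``equivariance'' data needed for the descent, analogous to the role of the global section $g' \in \on{Maps}(X \times X - \Delta, G)$ in the loop group setting.
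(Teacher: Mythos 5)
Your proposal takes a genuinely different route from the paper's, and it has serious gaps.

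The paper proves this proposition by a toric localization argument (\secref{ss:toric localization}). The key mechanism is the action of $\on{C}^\cdot(BT)\simeq \Sym(\ft^*[-2])$ on $\bC^T$, together with \lemref{l:loc princ}: the ``localized'' category $\wt\bC:=\bC^T/\bC^T_0$ (quotient by objects killed by a nonzero graded ideal) sees convolution with $\CF\in \Dmod(T\backslash G/B)$ as a direct sum over $T$-fixed points, which is proved by reducing to the contractibility of the quotient $Y/\BG_m$ (a DM stack has finite-dimensional cohomology, so $\Sym(\ft^*[-2])\to\on{C}^\cdot(Y/\BG_m)$ has nontrivial kernel). The hypothesis $\bC^G=0$ then forces $\bC^B$ into the torsion subcategory, while the $T$-trivialization produces objects outside it, giving the contradiction.

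Your proposed Cartesian square
$$
\CD
\Vect^G\mmod @>>> G\mmod \\
@VVV @VV{\oblv^{G\to T}}V \\
\Vect^T\mmod @>>> T\mmod
\endCD
$$
is \emph{not} 2-Cartesian, and the failure is precisely the subtle point of the proposition. The remark immediately preceding the statement in the paper gives a counterexample: for $\bC':=\Dmod(G)^{\on{q-const}}/\Dmod(G)^{\on{alm-const}}$, the $T$-action is almost trivial but the $G$-action is not, and $(\bC')^G=0$. Thus a $G$-module whose $T$-restriction lands in $\Vect^T\mmod$ need not land in $\Vect^G\mmod$. You correctly note that the hypothesis is $T$-\emph{trivializability}, not $T$-almost-triviality, and propose to restrict to such objects — but a ``Cartesian-on-a-subclass'' square is not a coherent descent statement, and nothing in the Bruhat-cell argument sees the difference between trivializable and almost trivial. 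Moreover, the asserted faithfulness of $\Vect^T$ as a $\Vect^G$-module category is false for semi-simple $G$: the monoidal functor $\Vect^G\to \Vect^T$ corresponds to restriction along $\on{C}_\cdot(T)\to \on{C}_\cdot(G)$, and for semi-simple $G$ this map is zero on $H_1(T)$ by degree reasons, hence factors through the unit $\Vect$; the $\Vect^G$-module structure on $\Vect^T$ is therefore trivial, not faithful. Finally, the ``finishing argument'' you anticipate — paralleling \secref{ss:idea}, which is about approximating the loop group by rational maps $X\times X - \Delta\to G$ — does not transport to this finite-type Koszul-duality context; it is unclear what the analogue of the global element $g'$ would be, and you do not say. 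So the descent framework is appealing but the central step is left unproved, and the two inputs you name (faithfulness and Cartesianness) do not hold.
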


\ssec{The toric localization argument} \label{ss:toric localization}

To simplify the notation, for the duration of this subsection we will perform the notational change,
$$\bC'\rightsquigarrow \bC.$$

\sssec{}

Let $\bD$ be a category equipped with an action of $T$. Then the category 
$\bD^T$ is acted on by $\Vect^T$. 

\medskip

In particular, the (commutative algebra)
$$\on{C}^\cdot(BT)\simeq \Sym(\ft^*[-2])$$
maps to the Bernstein center of $\bD^T$. 

\sssec{}

Let $\bD^T_0\subset \bD$ be the \emph{non-cocomplete} subcategory consisting of objects, on which
some non-zero graded ideal in $\Sym(\ft^*[-2])$ acts trivially. 

\medskip

Denote by $\wt\bD$ the quotient 
$$\bD^T/\bD^T_0,$$
\emph{taken in the world of non-cocomplete categories}.

\sssec{}

We consider $\bC^N$ and $\bC$ itself as acted on by $T$ (note that by \cite[Remark 1.2]{BZGO}, if $\bC\neq 0$, then $\bC^N\neq 0$). 
Let $\CF$ be an object of $\Dmod(T\backslash G/B)$. Convolution with $\CF$ can be thought of as a functor
$$\bC^B=(\bC^N)^T\to \bC^T.$$

\medskip

The following assertion is an abstract version of the toric localization principle:

\begin{lem} \label{l:loc princ}
For $\bc\in \bC^B$, the image of $\CF\star \bc\in \bC^T$ under
$$\bC^T\to \wt\bC$$
is canonically isomorphic to the image of 
$$\underset{w\in W}\oplus\, (w\cdot \bc)\otimes \CF_w,$$
where:

\begin{itemize}

\item $W$ denotes the Weyl group;

\item For $w\in W$ we denote by the same symbol the corresponding $T$-fixed point in $G/B$;

\item $w\cdot \bc:=\delta_w\star \bc$, where $\delta_w$ is viewed as an object of $\Dmod(G/B)^T=\Dmod(T\backslash G/B)$. 

\item $\CF_w$ is the !-fiber of $\CF$ at $w\in G/B$. 

\end{itemize}

\end{lem}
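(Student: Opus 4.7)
The plan is to verify the isomorphism on a generating class of $\CF$'s. Since both sides of the proposed identification are continuous exact functors of $\CF \in \Dmod(T\backslash G/B)$, it suffices to check equality on objects of the form $(i_O)_!\,\CG$ where $O \subset G/B$ is a $T$-orbit, $i_O$ is its inclusion, and $\CG \in \Dmod(T\backslash O)$. Such objects generate under colimits because $G/B$ admits a finite $T$-invariant stratification by $T$-orbits (obtained, for instance, from the Bruhat decomposition refined by $T$-orbits inside each Schubert cell).

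The first case is $O = \{w\}$ for some $w \in W$. Then $\Dmod(T\backslash O) \simeq \Rep(T)$, and by the very definition of convolution at a $T$-fixed point we have $(i_w)_!\,V \star \bc \simeq V \otimes (w\cdot \bc)$ in $\bC^T$ for any $V \in \Rep(T)$. On the right-hand side of the lemma, the $!$-fibers of $(i_w)_!\,V$ vanish at all $w' \neq w$ and equal $V$ at $w$, giving $V \otimes (w\cdot \bc)$ as well. Equality holds already in $\bC^T$, before passing to $\wt\bC$.

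The second case is $O$ not a $T$-fixed point, so $O \cong T/T'$ for a proper closed subgroup $T' \subsetneq T$. Here $((i_O)_!\CG)_w = i_w^!\,(i_O)_!\,\CG = 0$ for every $w \in W$ since $w \notin O$, so the right-hand side vanishes. It remains to show $(i_O)_!\CG \star \bc$ lies in $\bC^T_0$. Under $\Dmod(T\backslash O) \simeq \Dmod(BT')$, the natural $\Rep(T)$-action (equivalently, the $\Sym(\ft^*[-2])$-action) on $\Dmod(T\backslash O)$ factors through $\Dmod(BT') \simeq \Rep(T')$, corresponding at the level of endomorphism algebras to the surjection $\Sym(\ft^*[-2]) \twoheadrightarrow \Sym((\ft')^*[-2])$ induced by the restriction $\ft^* \twoheadrightarrow (\ft')^*$. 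Since both $(i_O)_!$ and $-\star\bc$ are $\Rep(T)$-linear, the induced action of $\Sym(\ft^*[-2])$ on $(i_O)_!\CG \star \bc \in \bC^T$ also factors through $\Sym((\ft')^*[-2])$, hence is annihilated by the non-zero graded ideal generated by the kernel of $\ft^* \twoheadrightarrow (\ft')^*$. This places the object in $\bC^T_0$, which becomes zero in $\wt\bC$.

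The main obstacle is verifying that the convolution $-\star\bc: \Dmod(T\backslash G/B) \to \bC^T$ is genuinely $\Rep(T)$-linear, equivalently that the $\Sym(\ft^*[-2])$-action on $\bC^T$ coming from $T$-equivariance agrees with the one transported along convolution; this boils down to the naturality of the Chern class of the universal $T$-bundle and its compatibility with the geometric $T$-action on $G/B$. A subsidiary technical point is arranging the generation by $!$-extensions from $T$-orbits cleanly, which one can do by induction along a fixed $T$-invariant stratification of $G/B$ with finitely many strata, each being a single $T$-orbit.
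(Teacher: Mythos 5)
Your proposal has two genuine gaps, both of which concern the reduction step; the local computations for the two cases are fine as far as they go.

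\medskip

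\noindent\textbf{The stratification claim is false.} For $G$ of semisimple rank $\geq 2$, the flag variety $G/B$ does \emph{not} admit a finite $T$-invariant stratification by single $T$-orbits. Already for $G=SL_3$, the big Bruhat cell $\cong \BA^3$ (with $T\cong\BG_m^2$ acting through the three positive roots) contains a 1-parameter family of pairwise distinct 2-dimensional $T$-orbits, so there are infinitely many $T$-orbits. The Bruhat cells are $T$-stable, but refining them "by $T$-orbits" produces infinitely many strata. Only in rank 1 is $G/B=\BP^1$ a finite union of $T$-orbits. So the class $\{(i_O)_!\CG\}$ is not a finite family, and the finite-filtration argument you gesture at does not exist.

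\medskip

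\noindent\textbf{Passage to colimits is not permitted.} Even granting generation under \emph{arbitrary} colimits (which is true, since $!$-extensions from $T$-orbits are compact and have trivial right orthogonal), the reduction to generators fails because the target of the comparison is $\wt\bC=\bC^T/\bC^T_0$, a Verdier quotient taken \emph{in the world of non-cocomplete categories}. The subcategory $\bC^T_0$—objects annihilated by \emph{some} nonzero graded ideal of $\Sym(\ft^*[-2])$—is thick but not closed under infinite colimits: a direct sum $\oplus_n \CM_n$ with $\CM_n$ killed by the $n$-th power of the augmentation ideal but not the $(n-1)$-st is not killed by any single nonzero ideal. Consequently the quotient functor $\bC^T\to\wt\bC$ is not colimit-preserving, "continuous exact functor valued in $\wt\bC$" is not a meaningful condition, and checking the isomorphism on generators under arbitrary colimits does not establish it in general. (There is also the more minor point that you never construct the natural transformation $\oplus_w(w\cdot\bc)\otimes\CF_w \to \CF\star\bc$ that would be required to phrase the comparison functorially.)

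\medskip

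The paper sidesteps both problems: it constructs the canonical map $\oplus_w \delta_w\otimes\CF_w\to\CF$, uses the \emph{finite} Cousin filtration coming from the finite Bruhat stratification (so only finitely many cones are taken, which $\bC^T_0$ does absorb), and then proves the stronger claim that $\Dmod(Y)^T_0=\Dmod(Y)^T$ for \emph{any} finite-type $Y$ with fixed-point-free $T$-action—via passage to a $\BG_m$ acting with finite stabilizers and finiteness of $\on{C}^\cdot(Y/\BG_m)$ for the DM stack $Y/\BG_m$. Your single-orbit computation (that $\Sym(\ft^*[-2])\to\Sym((\ft')^*[-2])$ has nonzero kernel) is correct and is a special instance of that key lemma, but it does not cover the strata $(G/B)_w\setminus\{w\}$, which are not single orbits, and the argument cannot be completed without the general version.
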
 

The proof is given in \secref{ss:proof T loc} below. 

\begin{cor} \label{c:loc princ}
The image of $\bc$ under $\bC^T\to \wt\bC$ is a retract of the image of 
$$\ul{k}_{G/B}\star \bc.$$
\end{cor}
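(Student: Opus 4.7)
The plan is to deduce the corollary directly from Lemma \ref{l:loc princ} by taking $\CF := \ul{k}_{G/B}$, the constant D-module on $G/B$, equipped with its tautological $T$-equivariant structure (so it can be viewed as an object of $\Dmod(T \backslash G/B)$).

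The only computation required is that of the $!$-fibers $(\ul{k}_{G/B})_w$ at the $T$-fixed points $w \in W \subset G/B$. Since $G/B$ is smooth of dimension $d := \dim(G/B)$ and $\ul{k}_{G/B}$ is the constant D-module, each such $!$-fiber is canonically a copy of $k$ (up to the uniform cohomological shift $[-2d]$, which is independent of $w$ and which we may absorb into the normalization of $\ul{k}_{G/B}$). Substituting into Lemma \ref{l:loc princ}, the image of $\ul{k}_{G/B} \star \bc$ in $\wt\bC$ is therefore canonically isomorphic to
$$\bigoplus_{w \in W}\, w \cdot \bc,$$
up to the above uniform shift.

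The summand corresponding to $w = e \in W$ is exactly $\bc$, so $\bc$ is exhibited as a direct summand of this object. Since $\wt\bC$ is (a non-cocomplete but still) stable $\infty$-category, every direct summand is automatically a retract, which gives the claim. There is essentially no obstacle here, as all the substantive work has been done in the preceding Lemma \ref{l:loc princ}; the corollary is simply the observation that the identity element of $W$ contributes a copy of $\bc$ to the sum.
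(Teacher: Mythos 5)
Your proof is correct and is essentially the only available route: substitute $\CF = \ul{k}_{G/B}$ into Lemma~\ref{l:loc princ}, observe that all the $!$-fibers at the $T$-fixed points are (uniform shifts of) $k$, and read off the $w = e$ summand. The paper gives the corollary without proof, and this is the argument it intends. Your handling of the cohomological shift is the right thing to flag: strictly speaking the $w = e$ summand is $\bc[-2\dim G/B]$ rather than $\bc$ on the nose, so the retract statement holds up to a shift that is harmless for the intended application (deducing that $\bc$ vanishes whenever the image of $\ul{k}_{G/B}\star\bc$ does, since a stable category has no nonzero retracts of zero). One could also simply absorb the shift into the normalization of $\ul{k}_{G/B}$, as you say, or read the corollary as asserting the retract up to shift.
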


\sssec{} 

Note now that the assumption that $\bC^G=0$ implies that the functor 
$$\ul{k}_{G/B}\star (-): \bC^B\to \bC^T$$
vanishes.

\medskip

Hence, from \corref{c:loc princ} we obtain that this assumption forces that the inclusion
$$(\bC^N)^T_0\subset (\bC^N)^T=\bC^B$$
is an equality.

\sssec{}

We now use the assumption that the $T$-action on $\bC$ is trivialized. Hence so is the $T$-action on $\bC^N$. 
This assumption implies that
$$(\bC^N)^T\simeq \bC^N\otimes \Vect^T,$$
where the action of $\Vect^T$ is via the second factor.

\medskip

Now, for $0\neq \bc\in \bC^N$, the object
$$\bc\otimes k\in \bC^N\otimes \Vect^T$$
does \emph{not} belong to $(\bC^N)^T_0$. 

\qed

\ssec{Proof of \lemref{l:loc princ}} \label{ss:proof T loc}

\sssec{}

It is enough to show that the image of $\CF\in \Dmod(T\backslash G/B)\simeq \Dmod(G/B)^T$ under
$$\Dmod(G/B)^T\to \wt{\Dmod(G/B)}$$
is isomorphic to the image of 
$$\underset{w}\oplus\, (\delta_w \otimes \CF_w).$$

We have a canonical map
$$\underset{w}\oplus\, (\delta_w \otimes \CF_w)\to \CF,$$
whose cone has the property that its !-fibers at the points $w$ are zero. 

\medskip

Hence, it is enough to show that if $\CF$ has vanishing !-fibers at all the points $w$, then
its projection to $\wt{\Dmod(G/B)}$ vanishes. 

\sssec{}

Using Cousin decomposition, we can assume that $\CF$ is the *-extension from an object on
a single Schuber cell $(G/B)_w\in G/B$. Furthermore, by assumption, it is the *-extension from
the open subscheme $(G/B)_w-w$. Hence, it is enough to show that the inclusion 
$$\Dmod((G/B)_w-w)^T_0\subseteq \Dmod((G/B)_w-w)^T$$
is an equality.

\medskip

We claim that this is the case for any scheme $Y$ on which a torus $T$ acts without fixed points.

\sssec{}

First, in order to show that the inclusion 
$$\Dmod(Y)^T_0\subseteq \Dmod(Y)^T$$
is an equality, we can replace the action of the original $T$ by the action of any 
$\BG_m$ that maps to $T$. 

\medskip

The fact that $T$ has no fixed points on $Y$ implies that we can find $\BG_m\to T$
that acts on $Y$ with finite stabilizers. We will show that for this copy of $\BG_m$, the action of 
$$\on{C}^\cdot(B\BG_m)\simeq k[\eta], \quad\deg(\eta)=2$$
on $\Dmod(Y)^{\BG_m}$ factors though a non-zero ideal.

\sssec{}

The action
of $\on{C}^\cdot(B\BG_m)$ on 
$$\Dmod(Y)^{\BG_m}\simeq \Dmod(Y/\BG_m)$$
factors via a homomorphism
\begin{equation} \label{e:DM}
\on{C}^\cdot(B\BG_m)\to \on{C}^\cdot(Y/\BG_m).
\end{equation} 

Hence, it is enough to show that this homomorphism factors though a non-zero ideal.

\sssec{}

The assumption on the $\BG_m$-action on $Y$ implies that $Y/\BG_m$ is a Deligne-Mumford stack. 
Hence, $\on{C}^\cdot(Y/\BG_m)$ is finite-dimensional.

\medskip

Hence, the homomorphism \eqref{e:DM} has a non-trivial kernel. 

\qed

\ssec{Proof of the conservativity of \texorpdfstring{$\on{Av}^{\fL^+(G)_{x_0}\to \fL(G)_{x_0}}_!$}{Av!}}

\sssec{}

Let $$\bc\in \left(\omega_{\Gr_G}\mod^\onfact(\bC^{\on{fact}_{x_0},\Dmod(\Gr_G)})_{x_0}\right)^{\fL^+(G)_{x_0}}$$ 
be an object in the kernel of $\on{Av}^{\fL^+(G)_{x_0}\to \fL(G)_{x_0}}_!$. 

\medskip

Then by \corref{c:main cor}(b), we obtain that 
\begin{equation} \label{e:contr 2}
\CHom_{\bD_{X\times x_0}}\Bigl(j_*(\omega_{\Gr_G}\boxt \bc_{x_0}),i_*(\bc_{x_0})\Bigr)=0.
\end{equation} 

\sssec{} This implies that $\bc_{x_0}=0$ by the same argument as in \secref{sss:units}.

\qed[\thmref{t:red to alm const case}]

\section{An application: integrable Kac-Moody representations} \label{s:KM}

In this section we discuss an application of \thmref{t:initial} (rather, of its incarnation as 
\thmref{t:main}). Namely, we establish an equivalence between integrable Kac-Moody
representations and representations of the integrable quotient of the Kac-Moody
chiral algebra. 

\ssec{Integrable Kac-Moody representations and the integrable quotient}

\sssec{}

We define integrable Kac-Moody representations, following the framework of \cite[Sect. 7.4]{Ro}. Namely,
we let $\CL_{\Gr_G}$ be a factorization line bundle on $\Gr_G$. 

\medskip

Consider the factorization functor
\begin{equation} \label{e:L-twisted sects}
\Dmod(\Gr_G)\to \Vect, \quad \CF\mapsto \Gamma^{\IndCoh}(\Gr_G,\CF\otimes \CL_{\Gr_G}).
\end{equation}

\medskip

We define the \emph{integrable Kac-Moody factorization algebra} in $\Vect$ to be
$$\BV^{\on{Int}}_{G,\kappa}:=\Gamma^{\IndCoh}(\Gr_G,\omega_{\Gr_G}\otimes \CL_{\Gr_G}).$$

\begin{rem} \label{r:int exact}

Recall that to the datum of $\CL_{\Gr_G}$ there corresponds a discrete invariant, called a level,
and denoted $\kappa$, which is a $W$-invariant $\BZ$-valued quadratic form on the coweight lattice 
$\Lambda$ of $G$.

\medskip

It is shown in \cite[Theorem 7.4.3]{Ro} that if the restriction of $\kappa$ to one of the simple factors of
$G$ is negative-definite, then $\BV^{\on{Int}}_{G,\kappa}=0$.

\medskip

If the restriction of $\kappa$ to all simple factors of
$G$ is non-negative definite, then 
$$\BV^{\on{Int,ch}}_{G,\kappa}:=\BV^{\on{Int}}_{G,\kappa}|_X[-1]\in \Dmod(X)$$
lives in single cohomological degree $0$, so can be regarded as a classical
chiral algebra. 

\medskip

In the latter case, if $G$ is semi-simple and simply-connected, $\BV^{\on{Int,ch}}_{G,\kappa}$ is what is
usually called \emph{the integrable quotient} of the Kac-Moody chiral algebra at level $\kappa$, denoted 
$$\BV^{\on{ch}}_{\fg,\kappa}:=\BV_{\fg,\kappa}|_X[-1]$$ 
(per our conventions, we reserve the symbol $\BV_{\fg,\kappa}$ for the corresponding factorization algebra). 

\medskip

In the opposite case, namely, when $G=T$ is a torus, $\BV^{\on{Int,ch}}_{G,\kappa}$ is the lattice chiral algebra. 

\end{rem} 

\sssec{}

Recall that the pullback of
$\CL_{\Gr_{G,\Ran}}$ along the unit section
$$\on{unit}_{\Gr_G}:\Ran\to \Gr_G$$
is canonically trivialized (as a factorization line bundle over $\on{pt}$). 

\medskip

We quote the following result (see \cite[Corollary C]{Zhao}):

\begin{thm} \label{t:mult line bundle}
The pullback $\CL_{\fL(G)}$  of $\CL_{\Gr_G}$ along the projection
$$\fL(G)\to \Gr_G$$
carries a uniquely defined multiplicative structure, compatible with the
trivialization of its restriction to $\fL^+(G)$, given by the trivialization of $\on{unit}_{\Gr_G}^*(\CL_{\Gr_G})$. 
\end{thm}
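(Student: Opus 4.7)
The plan is to construct the multiplicative structure via the standard fusion-to-convolution mechanism, exploiting the factorization structure on $\CL_{\Gr_G}$, and then handle uniqueness by a characterization of multiplicative line bundles with a trivialization on $\fL^+(G)$. A multiplicative structure on $\CL_{\fL(G)}$ amounts to an isomorphism $m^*\CL_{\fL(G)} \simeq \CL_{\fL(G)} \boxtimes \CL_{\fL(G)}$ on $\fL(G) \times \fL(G)$, satisfying associativity and compatible (via the unit section) with the given trivialization over $\fL^+(G)$. Equivalently, we must exhibit $\CL_{\fL(G)}$ as a $\BG_m$-central extension of the factorization group $\fL(G)$, together with a splitting over the factorization subgroup $\fL^+(G)$ matching the prescribed trivialization.

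For existence, I would use the factorization isomorphism on $(\Ran \times \Ran)_\disj$ for $\CL_{\Gr_G}$, namely
\[
\CL_{\Gr_G, \{x_1\}\cup\{x_2\}}\big|_\disj \simeq (\CL_{\Gr_G,x_1} \boxtimes \CL_{\Gr_G,x_2})\big|_\disj,
\]
which pulls back to an analogous isomorphism for $\CL_{\fL(G)}$ on the disjoint locus. The multiplication $m: \fL(G)_{x_0}\times\fL(G)_{x_0}\to\fL(G)_{x_0}$ arises as the fusion limit of this factorization as two distinct points $x_1, x_2$ collide to $x_0$: the natural map $\fL(G)_{x_1}\times\fL(G)_{x_2} \to \fL(G)_{\{x_1,x_2\}}$ degenerates over the diagonal to the composition $\fL(G)_{x_0}\times\fL(G)_{x_0} \xrightarrow{m} \fL(G)_{x_0}$, modulo the right $\fL^+(G)$-action. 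The given trivialization of $\CL_{\fL(G)}\big|_{\fL^+(G)}$ provides precisely the normalization data needed to extend the factorization isomorphism across the diagonal and pin down a canonical multiplicative structure.

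The main obstacle will be verifying the higher coherence data, in particular associativity. Associativity amounts to a compatibility on $\fL(G)^{\times 3}$ which, via the three-point factorization isomorphism, reduces to the commutativity and associativity coherence already built into the factorization line bundle $\CL_{\Gr_G}$. One must carefully track how the fusion limit interacts with the nested degeneration of three points, and check that the pentagon identity for $m$ follows from the existing coherences of $\CL_{\Gr_G}$. Similar checks are needed for unitality.

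For uniqueness, the set of multiplicative structures on $\CL_{\fL(G)}$ compatible with the given trivialization on $\fL^+(G)$ is a torsor under the group of multiplicative (i.e., character) line bundles on $\fL(G)$ that are trivialized over $\fL^+(G)$. Such a character factors through $\fL(G)/\fL^+(G) = \Gr_G$; but an actual \emph{character} of the group $\fL(G)$ whose underlying line bundle on $\Gr_G$ is trivial (as a non-multiplicative line bundle) must itself be trivial, since $\Gr_G$ is ind-proper on each connected component and the group $\Hom(\fL(G), \BG_m)$ is discrete. This forces uniqueness. An alternative, perhaps cleaner route would bypass the explicit fusion construction: one classifies multiplicative factorization line bundles on $\fL(G)$ split over $\fL^+(G)$ by descent along the $\fL^+(G)$-torsor $\fL(G) \to \Gr_G$, and shows directly that this classification matches factorization line bundles on $\Gr_G$ with a unit trivialization.
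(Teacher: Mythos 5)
The paper does not give a proof of this theorem; it quotes it directly from \cite{Zhao} (``Half-integral levels''), so there is no in-paper argument to compare yours against. What I can do is assess your sketch on its own terms, and there the picture is mixed.

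Your existence outline — that the multiplicative structure on $\CL_{\fL(G)}$ should be extracted from the factorization isomorphism on the disjoint locus via fusion as two points collide — is morally the right mechanism, but the sketch conceals exactly where the real work lives. The factorization isomorphisms for $\CL_{\Gr_G}$ are a priori defined only over $(\Ran\times\Ran)_\disj$; the multiplicative structure requires extending the relevant identification across the diagonal, and this does \emph{not} come for free. The obstruction is tied to the commutativity constraint of the factorization structure: the identification over the disjoint locus carries a $\BZ/2$-symmetry, and one must check that the resulting sign data is compatible with extension across the diagonal. This is precisely the content that makes the statement a theorem (and the reason it is the subject of a separate paper of Zhao about \emph{half-integral} levels). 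Your phrase ``the fusion limit \dots degenerates to the composition $\fL(G)_{x_0}\times\fL(G)_{x_0}\xrightarrow{m}\fL(G)_{x_0}$'' is also imprecise: the factorization space $\fL(G)_{\{x_1,x_2\}}$ degenerates to $\fL(G)_{x_0}$ over the diagonal, not to a product, and the multiplication on $\fL(G)_{x_0}$ is recovered through the convolution diagram $\Gr_G\,\widetilde\times\,\Gr_G\to\Gr_G$, not from a literal limit of the product decomposition.

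Your uniqueness argument has a genuine error. First, two multiplicative structures on the \emph{same} underlying line bundle $\CL_{\fL(G)}$ differ by an automorphism of $\CL_{\fL(G)}\boxtimes\CL_{\fL(G)}$ on $\fL(G)\times\fL(G)$, i.e.\ by a $\BG_m$-valued function on $\fL(G)^2$ that the associativity constraint forces to be a $2$-cocycle. This is not the same as a \emph{character} (a $1$-cocycle), and it is not the same as a ``character line bundle''; your torsor is mis-identified. Second, and more damagingly, even if the relevant torsor were controlled by characters $\chi:\fL(G)_{x_0}\to\BG_m$ trivial on $\fL^+(G)_{x_0}$, your claim that such characters vanish is false in general: for $G=T$ a torus, such a character descends to a homomorphism $\Lambda=\fL(T)_{x_0}/\fL^+(T)_{x_0}\to\BG_m$, and $\Hom(\Lambda,\BG_m)=\check T$ is far from trivial. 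The actual rigidity responsible for uniqueness must come from imposing compatibility with the \emph{factorization} structure (not merely compatibility with the trivialization on $\fL^+(G)$ at the single point $x_0$), and your argument never invokes this constraint.
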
 

\sssec{}

Consider the resulting central extension of $\fL(G)_{x_0}$; denote it by
$$1\to \BG_m\to \wh{\fL(G)}_{\kappa,x_0}\to \fL(G)_{x_0}\to 1.$$

The datum of such a central extension is equivalent to that of a \emph{weak} action of $\fL(G)_{x_0}$ on $\Vect$.
Denote the resulting object\footnote{We refer the reader to \cite[Sect. B.14.19]{GLC2}, where weak actions of loop groups on categories are discussed.}
of $\fL(G)_{x_0}\mmod^{\on{weak}}$ by $\Vect_\kappa$. 

\medskip

The category $\Rep(\fL(G)_{x_0},\kappa)$ of integrable $\fL(G)$-representations at level $\kappa$ is by definition
$$\on{Funct}_{\fL(G)_{x_0}\mmod^{\on{weak}}}(\Vect,\Vect_\kappa).$$ 

\begin{rem} \label{r:neg def}

The category $\Rep(\fL(G)_{x_0},\kappa)$ is well-defined for \emph{any} $\kappa$. However, when $\kappa$ is negative-definite,
the natural forgetful functor 
\begin{equation} \label{e:forget from int}
\Rep(\fL(G)_{x_0},\kappa)\to \Vect
\end{equation}
is zero\footnote{However, the forgetful functor $\Rep(\fL(G)_{x_0},\kappa)\to \KL(G,\kappa)$ is conservative. There is no contradiction here, since the
forgetful functor  $\KL(G,\kappa)\to \Vect$ is not conservative.}. 

\medskip

Indeed, for an object of $\Rep(\fL(G)_{x_0},\kappa)$, the $n$-th cohomology of its image under \eqref{e:forget from int}
would be an \emph{integrable representation} of $\fL(G)_{x_0}$ at level $\kappa$ in the classical sense. But for $\kappa$ negative-definite, there
are non-zero such, because the sign of $\kappa$ makes the dominance condition on the highest weight impossible
to satisfy. 

\medskip

Yet, the category $\Rep(\fL(G)_{x_0},\kappa)$ is non-zero. In fact, for any $\kappa$, we 
have a natural identification
$$\Rep(\fL(G)_{x_0},\kappa')\simeq (\Rep(\fL(G)_{x_0},\kappa))^\vee, \quad \kappa'=-\kappa-\kappa_{\on{Killing}}.$$

\end{rem}

\sssec{}

In this section we will prove:

\begin{thm} \label{t:int}
Suppose that the level $\kappa$ is non-negative definite on each simple factor. 
Then there is a canonical equivalence 
$$\Rep(\fL(G)_{x_0},\kappa) \simeq \BV^{\on{Int}}_{G,\kappa}\mod^\onfact_{x_0},$$
commuting with the tautological forgetful functors of both sides to $\Vect$.
\end{thm}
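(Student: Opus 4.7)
The approach is to apply a level-$\kappa$ twisted version of \thmref{t:main} to the $\fL(G)_{x_0}$-module category $\Vect_\kappa$, using the factorization line bundle $\CL_{\Gr_G}$ supplied by \thmref{t:mult line bundle}. By definition, the invariants of $\Vect_\kappa$ in the 2-category $\fL(G)_{x_0}\mmod^{\on{weak}}$ are precisely $\Rep(\fL(G)_{x_0},\kappa)$. The right-hand side, $\BV^{\on{Int}}_{G,\kappa}\mod^\onfact_{x_0}$, will be produced on the factorization side by the $\CL$-twisted ind-coherent pushforward along $\pi: \Gr_G \to \on{pt}$.

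Concretely, I would first establish a level-$\kappa$ twisted analog of the package in \secref{s:functor}--\secref{s:dualizing}. For $\bC$ equipped with a weak (level-$\kappa$) action of $\fL(G)_{x_0}$, one constructs an associated factorization module category $\bC^{\on{fact}_{x_0},\Dmod(\Gr_G,\CL_{\Gr_G})}$ valued in the twisted factorization category $\Dmod(\Gr_G,\CL_{\Gr_G})$ of $\CL_{\Gr_G}$-twisted D-modules on $\Gr_G$, and one proves
$$\bC^{\fL(G)_{x_0},\kappa} \;\simeq\; (\omega_{\Gr_G}\otimes \CL_{\Gr_G})\mod^\onfact\!\left(\bC^{\on{fact}_{x_0},\Dmod(\Gr_G,\CL_{\Gr_G})}\right)_{x_0}.$$
Specializing to $\bC=\Vect_\kappa$ and applying the twisted analog of \propref{p:Vect over Gr} identifies the inner factorization module category on the right with $\Rres_{\pi_!^{\IndCoh}}(\Vect^{\on{fact}_{x_0}})$, where $\pi_!^{\IndCoh}: \Dmod(\Gr_G,\CL_{\Gr_G}) \to \Vect$ is the $\CL$-twisted ind-coherent pushforward, viewed as a unital factorization functor. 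The change-of-algebra equivalence \eqref{e:equiv fact mod} applied along $\pi_!^{\IndCoh}$ then converts the right-hand side into
$$\pi_!^{\IndCoh}(\omega_{\Gr_G}\otimes \CL_{\Gr_G})\mod^\onfact_{x_0} \;=\; \BV^{\on{Int}}_{G,\kappa}\mod^\onfact_{x_0},$$
as required. Compatibility with the forgetful functors to $\Vect$ follows by chasing the fiber at $x_0$ through this chain of equivalences.

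The main obstacle is verifying that the entire argument of \secref{s:functor}--\secref{s:proof of main-end} transfers to the twisted (level-$\kappa$) setting. Tensoring with the factorization line bundle $\CL_{\Gr_G}$ is an invertible operation compatible with all the categorical constructions involved, so I expect the reduction to almost-trivial actions (\thmref{t:red to alm const case}), the Koszul-duality equivalence (\thmref{t:Kosz for loop}), and the non-abelian Poincar\'e duality computation of \secref{s:comp} to all carry over by essentially the same proofs once the twisted framework is installed; nevertheless the bookkeeping is nontrivial. The hypothesis that $\kappa$ is non-negative definite on each simple factor enters precisely to guarantee, via \cite[Theorem 7.4.3]{Ro} (cf.\ \remref{r:int exact}), that $\BV^{\on{Int}}_{G,\kappa}$ is a classical factorization algebra concentrated in cohomological degree $0$---this is the content of the observation attributed to G.~Dhillon in the acknowledgements and ensures that the derived equivalence refines the known abelian-level statement. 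Since $\Rep(\fL(G)_{x_0},\kappa)$ is semi-simple even at the derived level, the equivalence yields as a corollary the vanishing of higher $\Ext$ groups between irreducible objects of $\BV^{\on{Int}}_{G,\kappa}\mod^\onfact_{x_0}$, which is the substantive content of the theorem noted in the introduction.
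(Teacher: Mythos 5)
Your proposal takes a genuinely different route from the paper, and the route contains a gap that the paper's own structure makes visible.

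The paper does not develop a twisted version of \thmref{t:main} at all. Instead, in \secref{sss:enter KM} it uses the universal property of the Kac--Moody category \eqref{e:weak vs strong}: the datum of the central extension $\wh{\fL(G)}_{\kappa,x_0}$ lets one modify the weak level-$\kappa$ action on $\KM(\fg,\kappa)$ into a \emph{strong level-$0$} action, so that $\KM(\fg,\kappa)_{x_0}$ corepresents weak level-$\kappa$ invariants inside ordinary $\fL(G)_{x_0}\mmod$. Applying the \emph{untwisted} \thmref{t:main} to $\KM(\fg,\kappa)$, together with \propref{p:fact act loop} and the change-of-algebra equivalence \eqref{e:equiv fact mod}, gives
$$\Rep(\fL(G)_{x_0},\kappa)\;\simeq\;\on{Vac}^{\on{Int}}_{G,\kappa}\mod^\onfact_{x_0},$$
and this holds for \emph{arbitrary} $\kappa$ (the paper emphasizes this explicitly after \eqref{e:almost there}). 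The positivity hypothesis is used only in the separate and nontrivial step \propref{p:convergence}, which compares factorization $\on{Vac}^{\on{Int}}_{G,\kappa}$-modules internal to $\KM(\fg,\kappa)$ with factorization $\BV^{\on{Int}}_{G,\kappa}$-modules internal to $\Vect$. That comparison fails in general because $\oblv_{\KM}$ is conservative only on the bounded-below subcategory; the argument goes through because semi-simplicity of $\Rep(\fL(G)_{x_0},\kappa)$ together with Dhillon's observation (\remref{r:vacuum in heart}) and the left-completeness of the target reduce everything to eventually coconnective objects.

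Here is the concrete gap in your proposal. If a twisted analog of \thmref{t:main} and \propref{p:Vect over Gr} existed in the form you describe, producing
$$\Rep(\fL(G)_{x_0},\kappa)\;\simeq\;\pi_!^{\IndCoh}(\omega_{\Gr_G}\otimes\CL_{\Gr_G})\mod^\onfact_{x_0}\;=\;\BV^{\on{Int}}_{G,\kappa}\mod^\onfact_{x_0}$$
directly, it would have no mechanism to see a level restriction and would therefore hold for all $\kappa$. But the paper's remark immediately after \thmref{t:int} points out that the statement is \emph{false} for negative-definite $\kappa$: there $\BV^{\on{Int}}_{G,\kappa}=0$ while $\Rep(\fL(G)_{x_0},\kappa)\neq 0$. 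So your chain of equivalences cannot all be valid. The specific misstep is identifying the factorization functor in the twisted analog of \propref{p:Vect over Gr} with $\pi_!^{\IndCoh}$: in the untwisted statement, $\pi_!$ is the \emph{left adjoint of $\pi^!$} (D-module $!$-pushforward), whereas $\pi_!^{\IndCoh}$ is a differently renormalized functor, and in the twisted setting the two no longer coincide. What the twisted machinery actually produces is an answer at the level of the \emph{refined} factorization category (corresponding to $\on{Vac}^{\on{Int}}_{G,\kappa}\in\on{FactAlg}(\KM(\fg,\kappa))$), and the descent to $\BV^{\on{Int}}_{G,\kappa}\in\on{FactAlg}(\Vect)$ is precisely the convergence issue of \propref{p:convergence}. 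You have also misidentified the role of the positivity hypothesis: it does not merely ensure that $\BV^{\on{Int}}_{G,\kappa}$ lies in a single cohomological degree (that is an input to the argument, used to control the unitality maps), but is essential for the conservativity that makes $\oblv^{\on{Int}}_{\KM}$ an equivalence at all.
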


The theorem will be proved in Sects. \ref{ss:proof of int 1}-\ref{ss:proof of int 2}.

\begin{rem}

The statement of \thmref{t:int} is false for $\kappa$ negative-definite:

\medskip

Indeed, according
to Remark \ref{r:int exact}, in this case $\BV^{\on{Int}}_{G,\kappa}=0$, so the right-hand side
in \thmref{t:int} is zero. However, according to Remark \ref{r:neg def}, the left-hand side is non-zero.

\end{rem}

\begin{rem}

The statement of \thmref{t:int} is known at the level of abelian categories, in two cases:
either when $G$ is semi-simple and simply-connected or when $G$ is a torus. 

\medskip

When $G$ is semi-simple and simply-connected, this is the statement that for a module $\CM$ over the affine Kac-Moody Lie algebra
at level $\kappa$,
the following two conditions are equivalent:

\smallskip

\noindent(i) The action of the affine Kac-Moody Lie algebra on $\CM$ integrates to an action of $\wh{\fL(G)}_{\kappa,x_0}$;

\smallskip

\noindent(ii) When we view $\CM$ as a chiral module over $\BV^{\on{ch}}_{\fg,\kappa}$ (i.e., a
factorization module over $\BV_{\fg,\kappa}$ at $x_0$), the action factors through the integrable quotient
chiral module over $$\BV^{\on{ch}}_{\fg,\kappa}\twoheadrightarrow \BV^{\on{Int,ch}}_{G,\kappa}.$$

\medskip

When $G=T$ is a torus, the statement of \thmref{t:int} at the abelian level is equivalent to that
of \cite[Theorem 3.10.14]{BD1}.

\end{rem} 

\sssec{} \label{sss:int ss}

Under the assumption on the level $\kappa$, the category $\Rep(\fL(G)_{x_0},\kappa)$ is known to be semi-simple 
(e.g., this can be proved by the same method
as in \cite[Appendix D]{Ro}). Hence, from \thmref{t:int}, we obtain:

\begin{cor} \label{c:int semi-simple}
The category $\BV^{\on{Int}}_{G,\kappa}\mod^\onfact_{x_0}$ is semi-simple.
\end{cor}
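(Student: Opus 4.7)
The plan is to derive the corollary as an essentially immediate consequence of \thmref{t:int} combined with the semi-simplicity of the representation-theoretic side. First I would invoke \thmref{t:int} to identify $\BV^{\on{Int}}_{G,\kappa}\mod^\onfact_{x_0}$ with $\Rep(\fL(G)_{x_0},\kappa)$ as DG categories; since the equivalence commutes with the tautological forgetful functors to $\Vect$, any intrinsic categorical property (in particular, semi-simplicity at the DG level) transfers from one side to the other. Thus the entire task reduces to establishing that $\Rep(\fL(G)_{x_0},\kappa)$ is semi-simple at the derived level whenever $\kappa$ is non-negative definite on each simple factor of $G$.

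For the latter, I would follow the method of \cite[Appendix D]{Ro}, which proves a parallel result in closely related settings. The strategy there is to exhibit, for each simple object (i.e., each integrable highest-weight representation) $V$ of $\Rep(\fL(G)_{x_0},\kappa)$, that $\End(V)=k$ concentrated in degree $0$ and that $\Ext^i(V,V')=0$ for $i>0$ between distinct simples; the positivity of $\kappa$ ensures finiteness of the Weyl-Kac character data and provides a Casimir-type operator that cleanly splits higher $\Ext$'s. Once the Ext-vanishing between simples is verified, standard formal arguments decompose any object of the DG category as a direct sum of shifts of simples, giving semi-simplicity at the derived level.

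The main obstacle, such as it is, is not really in the argument presented above but in making sure the Ext-vanishing statement is correctly formulated and proved in the integrable setting; one needs to be careful that the vanishing holds in $\Rep(\fL(G)_{x_0},\kappa)$ itself and not merely in its heart (the latter being automatic from classical representation theory, but insufficient for our purposes). Given the reference to \cite[Appendix D]{Ro}, however, this is a technical verification rather than a conceptual difficulty, and once it is in hand the corollary follows with no further input beyond \thmref{t:int}.
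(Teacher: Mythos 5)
Your proof takes essentially the same route as the paper: invoke \thmref{t:int} to transport the question to $\Rep(\fL(G)_{x_0},\kappa)$, and then cite the known semi-simplicity of the latter (proved by the method of \cite[Appendix D]{Ro}) under the non-negativity hypothesis on $\kappa$. The paper states this deduction in one line (\secref{sss:int ss}); your additional remarks on the Ext-vanishing mechanism and the distinction between abelian-level and derived-level semi-simplicity are correct glosses on what the cited reference supplies.
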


\begin{rem}
The statement of \corref{c:int semi-simple} is known at the abelian level. The innovation here
is that it continues to hold at the derived level, i.e., that there are no higher Exts between
objects in the heart. 

\medskip

One may (somewhat recklessly) conjecture that the same holds for any rational VOA.

\end{rem} 

\ssec{Factorization categories with an action of the loop group} \label{ss:fact w loop}


%


Recall the setting of Sects. \ref{sss:factorizable LG}-\ref{sss:factorizable LG acts}. 

\medskip

Thus, let $\bA$ be a factorization algebra category, equipped with an action of $\fL(G)_{\Ran}$,
compatible with the factorization structure. 

\medskip

In particular, $\bA_{x_0}$ is a category acted on by $\fL(G)_{x_0}$, and we can form
\begin{equation} \label{e:action Gr 1}
(\bA_{x_0})^{\on{fact}_{x_0},\Dmod(\Gr_G)}\in \Dmod(\Gr_G)\mmod^\onfact_{x_0}.
\end{equation} 

In this subsection we will discuss an additional feature of the above construction
in the unital setting. 

\sssec{}

Let us assume that $\bA$ is unital and that its unit object $\one_{\bA}$ is $\fL^+(G)$-equivariant. 
In this case, the action on the unit gives rise to a unital factorization functor
$$\Phi:\Dmod(\Gr_G)\to \bA.$$

\medskip

In particular, we can form the object
\begin{equation} \label{e:action Gr 2}
\Rres_\Phi(\bA^{\on{fact}_{x_0}})\in  \Dmod(\Gr_G)\mmod^\onfact_{x_0}.
\end{equation}

\sssec{}
\label{sss:tight}

We now add the following  technical condition: we assume that $\bA$ is \emph{tight}, i.e., the functor of the insertion of the unit
$$\on{ins.unit}_{\ul{x}_1\subseteq \ul{x}_2}:\bA_{\ul{x}_1} \to \bA_{\ul{x}_2}, \quad \ul{x}_1\subseteq \ul{x}_2$$
admits a colimit-preserving right adjoint (see \cite[Sect. C.16.1]{GLC2}). 

\medskip
In particular, this implies that for any $\ul{x}$, the object $(\one_\bA)_{\ul{x}}\in \bA_{\ul{x}}$ is compact. We impose
an even stronger condition, namely, that $(\one_\bA)_{\ul{x}}$ is compact as an object of $(\bA_{\ul{x}})^{\fL^+(G)_{\ul{x}}}$.

\begin{propconstr}  \label{p:fact act loop}
Under the above circumstances, the objects \eqref{e:action Gr 1} and \eqref{e:action Gr 2}
are canonically isomorphic.
\end{propconstr}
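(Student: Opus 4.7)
My plan is to generalize the argument of Proposition \ref{p:Vect over Gr}, which is precisely the case $\bA = \Vect$, $\Phi = \pi_!$, $\one_\bA = k$. The strategy has two main steps: construct a canonical comparison morphism, and then verify it is an equivalence via Proposition \ref{sss:adj test for factres}, following the template of \secref{ss:proof of Vect over Gr}.

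For the construction of the comparison morphism, I start from the tautological identification
\[
(\bA_{x_0})^{\on{fact}_{x_0},\Dmod(\Gr_G)} \simeq \Dmod(\fL(G)_{x_0})^{\on{fact}_{x_0},\Dmod(\Gr_G)} \underset{\Dmod(\fL(G)_{x_0})}\otimes \bA_{x_0}.
\]
The factorization $\fL(G)$-action on $\bA$, combined with the $\fL^+(G)$-equivariance of $\one_\bA$ (which is what defines $\Phi$ in the first place), produces an action map
\[
\Dmod(\fL(G)_{x_0})^{\on{fact}_{x_0},\Dmod(\Gr_G)} \underset{\Dmod(\fL(G)_{x_0})}\otimes \bA_{x_0} \to \bA^{\on{fact}_{x_0}}
\]
of factorization module categories, compatible with $\Phi:\Dmod(\Gr_G)\to \bA$. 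By the universal property of $\Rres_\Phi$ from \secref{sss:fact Rres}, this yields the desired
\[
\wt F:(\bA_{x_0})^{\on{fact}_{x_0},\Dmod(\Gr_G)} \to \Rres_\Phi(\bA^{\on{fact}_{x_0}})
\]
in $\Dmod(\Gr_G)\mmod^\onfact_{x_0}$.

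To show $\wt F$ is an isomorphism, I apply Proposition \ref{sss:adj test for factres}. The conditions to check are that $\wt F$ induces an equivalence on fibers at $x_0$, and that it admits a right adjoint as a functor of crystals of categories over $\Ran_{x_0}$. The fiber condition is direct: both sides specialize to $\bA_{x_0}$ (the right side via the identification of $\Rres_\Phi(-)_{x_0}$ with $\Phi(\on{unit}_{\Dmod(\Gr_G)})$-factorization modules, where the unital compatibility forces the module structure to be trivial), and $\wt F$ restricts to the identity on $\bA_{x_0}$.

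The main obstacle will be producing the right adjoint to $\wt F$ at the level of crystals of categories; this is precisely the place where the tightness hypothesis of \secref{sss:tight} enters essentially. In the proof of \propref{p:Vect over Gr} the right adjoint was built via the $\on{Hecke}_{x_0}$-invariants construction, paired with properness of $\pi_{\Ran_{x_0}}$. In the general setting the analogous role is played by the continuous right adjoint to insertion-of-unit, whose existence is guaranteed by tightness, while the stronger hypothesis that $(\one_\bA)_{\ul x}$ is compact in the $\fL^+(G)_{\ul x}$-equivariant category $(\bA_{\ul x})^{\fL^+(G)_{\ul x}}$ is precisely what ensures that these adjoints assemble functorially across the factorization structure and respect $\Dmod(\Gr_G)$-linearity. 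The pointwise existence of the right adjoint at each $\ul x$ is immediate; the delicate point is the functorial assembly over $\Ran_{x_0}$, and verifying that the two circles of ideas (insertion-of-unit adjoints versus the $\fL(G)$-action on vacuum factorization modules) are intertwined in the manner needed for Proposition \ref{sss:adj test for factres} to apply.
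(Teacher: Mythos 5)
The overall strategy matches the paper: construct a comparison morphism from \eqref{e:action Gr 1} to \eqref{e:action Gr 2}, then verify it is an equivalence via Proposition \ref{sss:adj test for factres}, supplying colimit-preserving right adjoints using tightness and compactness. But there is a genuine gap in the first step. You assert that the factorization $\fL(G)$-action on $\bA$ together with the $\fL^+(G)$-equivariance of $\one_\bA$ ``produces an action map'' $\Dmod(\fL(G)_{x_0})^{\on{fact}_{x_0},\Dmod(\Gr_G)} \underset{\Dmod(\fL(G)_{x_0})}\otimes \bA_{x_0} \to \bA^{\on{fact}_{x_0}}$ of factorization module categories compatible with $\Phi$, and then invoke the universal property of $\Rres_\Phi$. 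This is not automatic. The issue is that, at a general point $\ul{x}\in\Ran_{x_0}$, the left-hand side is built out of $\fL(G)_{x_0}$ (the fiber at $x_0$), while $\bA_{\ul{x}}$ carries an action of $\fL(G)_{\ul{x}}$; one needs a bridge between the two. The paper supplies this via the group ind-scheme $\fL^{\on{mer}\rightsquigarrow\on{reg}}(G)_{\Ran_{x_0}}$, the short exact sequence $1\to \ol\fL^+(G)_{\ul{x}}\to \fL^{\on{mer}\rightsquigarrow\on{reg}}(G)_{\ul{x}}\to \fL(G)_{x_0}\to 1$, and the identification $\Gr^{\on{level}^\infty_{x_0}}_{G,\ul{x}}\simeq \fL(G)_{\ul{x}}/\ol\fL^+(G)_{\ul{x}}$: one first inserts the unit into $\ol\fL^+(G)_{\ul{x}}$-invariants, rewrites via the short exact sequence, and only then applies the $\fL(G)_{\ul{x}}$-action. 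Without this mechanism the ``action map'' you posit is not defined.

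A second imprecision concerns how the two hypotheses from \secref{sss:tight} enter. In the paper, tightness gives the colimit-preserving right adjoint to the insert-of-unit functor \eqref{e:insert unit x0 bis}, hence to the first step \eqref{e:action Gr construct 1}; the compactness of $(\one_\bA)_{\ul{x}}$ in $(\bA_{\ul{x}})^{\fL^+(G)_{\ul{x}}}$, combined with ind-properness of $\Gr_{G,\ul{x}}$, is used separately to show that $\Phi$ itself has a colimit-preserving right adjoint — not, as you suggest, for ``functorial assembly.'' And the third step \eqref{e:action Gr construct 3} needs its own argument using ind-properness of $\Gr_{G,\ul{x}}$ and pseudo-properness of $\on{Hecke}_{x_0}$, which your plan does not anticipate. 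The fiber check and the invocation of Proposition \ref{sss:adj test for factres} are correct as stated.
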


The rest of this subsection is devoted to the proof of this proposition.

\sssec{}

We first construct a map in one direction, \eqref{e:action Gr 1} $\to$  \eqref{e:action Gr 2}. By the definition
of factorization restriction, the datum of such a map is equivalent to the datum of a functor
\begin{equation} \label{e:action Gr construct}
\Phi_m:(\bA_{x_0})^{\on{fact}_{x_0},\Dmod(\Gr_G)}_{\ul{x}}\to \bA^{\on{fact}_{x_0}}_{\ul{x}}\simeq \bA_{\ul{x}}, \quad \ul{x}\in \Ran_{x_0}
\end{equation} 
compatible with factorization via $\Phi$.

\sssec{}

Recall the group ind-scheme $\fL^{\on{mer}\rightsquigarrow \on{reg}}(G)_{\Ran_{x_0}}$, see 
\eqref{e:mer to reg}. It projects onto $\fL(G)_{x_0}$, and let us denote by $\ol\fL^+(G)_{\Ran_{x_0}}$
the kernel. In particular, we have a short exact sequence
\begin{equation} \label{e:congr fact}
1\to \ol\fL^+(G)_{\ul{x}}\to \fL^{\on{mer}\rightsquigarrow \on{reg}}(G)_{\ul{x}}\to \fL(G)_{x_0}\to 1.
\end{equation}

\noindent(Note that $\ol\fL^+(G)_{\ul{x}}$ identifies also with the kernel of 
$\fL^+(G)_{\ul{x}}\to \fL^+(G)_{x_0}$.)

\medskip

We can identify
$$\Gr^{\on{level}^\infty_{x_0}}_{G,\ul{x}}\simeq \fL(G)_{\ul{x}}/\ol\fL^+(G)_{\ul{x}},$$
where the \emph{right} action of $\fL(G)_{x_0}$ on $\Gr^{\on{level}^\infty_{x_0}}_{G,\ul{x}}$ comes
from the short exact sequence \eqref{e:congr fact}. 

\sssec{}

Consider the functor
\begin{equation} \label{e:insert unit x0}
\on{ins.unit}_{\{x_0\}\subseteq \ul{x}}:\bA_{x_0}\to \bA_{\ul{x}}.
\end{equation} 

Both categories are acted on by $\fL(G)_{x_0}$, and the functor \eqref{e:insert unit x0} is compatible with these actions.
Moreover, \eqref{e:insert unit x0} factors via a functor
\begin{equation} \label{e:insert unit x0 bis}
\on{ins.unit}_{\{x_0\}\subseteq \ul{x}}:\bA_{x_0}\to (\bA_{\ul{x}})^{\ol\fL^+(G)_{\ul{x}}}.
\end{equation} 

The functor \eqref{e:insert unit x0 bis} is also compatible with the $\fL(G)_{x_0}$-actions, where the action
on the right-hand side is via the short exact sequence \eqref{e:congr fact}.

\sssec{} \label{sss:tight bis}

For future reference we note that it follows from the assumptions on $\bA$ in \secref{sss:tight} that the functor \eqref{e:insert unit x0 bis}
admits a colimit-preserving right adjoint. 

\sssec{}

Recall that 
$$(\bA_{x_0})^{\on{fact}_{x_0},\Dmod(\Gr_G)}_{\ul{x}}=
\Dmod(\Gr^{\on{level}^\infty_{x_0}}_{G,\ul{x}})\underset{\fL(G)_{x_0}}\otimes \bA_{x_0}.$$

Thus, from \eqref{e:insert unit x0 bis} we obtain a functor 
\begin{equation} \label{e:action Gr construct 1}
(\bA_{x_0})^{\on{fact}_{x_0},\Dmod(\Gr_G)}_{\ul{x}}\to
\Dmod(\Gr^{\on{level}^\infty_{x_0}}_{G,\ul{x}})\underset{\fL(G)_{x_0}}\otimes  (\bA_{\ul{x}})^{\ol\fL^+(G)_{\ul{x}}}
\end{equation}

\sssec{}

We rewrite the right-hand side in \eqref{e:action Gr construct 1} as
\begin{equation} \label{e:action Gr construct 2}
\Dmod(\fL(G)_{\ul{x}})^{\ol\fL^+(G)_{\ul{x}}}\underset{\fL^{\on{mer}\rightsquigarrow \on{reg}}(G)_{\ul{x}}/\ol\fL^+(G)_{\ul{x}}}\otimes (\bA_{\ul{x}})^{\ol\fL^+(G)_{\ul{x}}} 
\simeq \Dmod(\fL(G)_{\ul{x}}) \underset{\fL^{\on{mer}\rightsquigarrow \on{reg}}(G)_{\ul{x}}}\otimes (\bA_{\ul{x}})^{\ol\fL^+(G)_{\ul{x}}}.
\end{equation}

Now, the action of $\fL(G)_{\ul{x}}$ on $\bA_{\ul{x}}$ gives rise to a functor
\begin{equation} \label{e:action Gr construct 3}
\Dmod(\fL(G)_{\ul{x}}) \underset{\fL^{\on{mer}\rightsquigarrow \on{reg}}(G)_{\ul{x}}}\otimes (\bA_{\ul{x}})^{\ol\fL^+(G)_{\ul{x}}}\to \bA_{\ul{x}}.
\end{equation}

Composing \eqref{e:action Gr construct 1}, \eqref{e:action Gr construct 2} and \eqref{e:action Gr construct 3}, we obtain the sought-for functor
\eqref{e:action Gr construct}. 

\medskip

The compatibility with factorization against $\Phi$ follows from the construction. 

\sssec{}

We now show that the resulting functor
\begin{equation} \label{e:action Gr constructed} 
(\bA_{x_0})^{\on{fact}_{x_0},\Dmod(\Gr_G)}\to \Rres_\Phi(\bA^{\on{fact}_{x_0}})
\end{equation} 
is an equivalence.

\medskip

In order to do se, we apply Proposition \ref{sss:adj test for factres}. We need to show:

\begin{itemize} 

\item The functor \eqref{e:action Gr constructed} induces an equivalence between the fibers at $x_0$;

\medskip

\item The functor $\Phi$ admits a colimit-preserving right adjoint;

\medskip

\item The functor $\Phi_m$ admits a colimit-preserving right adjoint.

\end{itemize}

\medskip

The fact that the first condition is satisfied is automatic. Indeed, at the level of fibers at $x_0$, the functor
\eqref{e:action Gr constructed} is the identity functor $\bA_{x_0}\to \bA_{x_0}$. 

\medskip

The fact that $\Phi$ admits a colimit-preserving right adjoint follows from the fact that $(\one_\bA)_{\ul{x}}$
is compact as an object of $\bA^{\fL^+(G)}_{\ul{x}}$, combined with the ind-properness of $\Gr_{G,\ul{x}}$.

\medskip

Finally, let us show that $\Phi$ admits a colimit-preserving right adjoint. For that it is sufficient show that
both functors \eqref{e:action Gr construct 1} and \eqref{e:action Gr construct 3} admit a colimit-preserving right adjoints. 

\sssec{}

The fact that \eqref{e:action Gr construct 1} admits a colimit-preserving right adjoint follows from the corresponding fact for
\eqref{e:insert unit x0 bis}, see \secref{sss:tight bis}. 

\medskip

For the functor \eqref{e:action Gr construct 3}, we write it as a composition
$$\Dmod(\fL(G)_{\ul{x}})^{\fL^{\on{mer}\rightsquigarrow \on{reg}}(G)_{\ul{x}}}\otimes (\bA_{\ul{x}})^{\ol\fL^+(G)_{\ul{x}}}\to
\Dmod(\fL(G)_{\ul{x}})^{\fL^{\on{mer}\rightsquigarrow \on{reg}}(G)_{\ul{x}}}\otimes \bA_{\ul{x}} \to \bA_{\ul{x}},$$
and it suffices to show that the second arrow admits a colimit-preserving right adjoint.

\medskip

Using the $\fL(G)_{\ul{x}}$-action on $\bA_{\ul{x}}$, we rewrite this arrow as
$$\Dmod(\fL(G)_{\ul{x}})^{\fL^{\on{mer}\rightsquigarrow \on{reg}}(G)_{\ul{x}}}\otimes \bA_{\ul{x}}\to \bA_{\ul{x}},$$
induced by the functor 
$$\Dmod(\fL(G)_{\ul{x}})^{\fL^{\on{mer}\rightsquigarrow \on{reg}}(G)_{\ul{x}}} \simeq \Dmod(\Gr_{G,\ul{x}}/\on{Hecke}_{x_0})
\overset{\on{C}^\cdot(\Gr_{G,\ul{x}}/\on{Hecke}_{x_0},-)}\longrightarrow \Vect.$$

This implies the desired assertion since $\Gr_{G,\ul{x}}$ is ind-proper and $\on{Hecke}_{x_0}$ is pseudo-proper.

\qed[\propref{p:fact act loop}]

\ssec{Proof of \thmref{t:int}} \label{ss:proof of int 1}

\sssec{} \label{sss:enter KM}

We consider the factoriztion category $\KM(\fg,\kappa)$ of Kac-Moody representations at level $\kappa$,
see \cite[Sect. B.14.22]{GLC2}. It is naturally equipped with a (strong) action of $\fL(G)$ \emph{at level} $\kappa$. 

\medskip

Now, the datum of the central extension $\wh{\fL(G)}_\kappa$ allows us to modify the weak action of $\fL(G)$
on $\KM(\fg,\kappa)$, so that the resulting strong action occurs at level $0$. The resulting object
$$\KM(\fg,\kappa)\in \fL(G)\mmod$$
has the universal property that
\begin{equation} \label{e:weak vs strong}
\on{Funct}_{\fL(G)_{\ul{x}}}(\bC,\KM(\fg,\kappa)_{\ul{x}}) \simeq \on{Funct}_{\fL(G)_{\ul{x}}\on{-weak}}(\bC,\Vect_\kappa), \quad \bC\in \fL(G)_{\ul{x}}\mmod, \quad \ul{x}\in \Ran,
\end{equation} 
see \cite[Sect. B.14.12]{GLC2}.

\sssec{}

Taking $\bC$ in \eqref{e:weak vs strong} to be $\Vect$, we obtain
\begin{equation} \label{e:int via KM}
\Rep(\fL(G)_{x_0},\kappa):=\on{Funct}_{\fL(G)_{x_0}\on{-weak}}(\Vect,\Vect_\kappa) \simeq 
\on{Funct}_{\fL(G)_{x_0}\mmod}(\Vect,\KM(\fg,\kappa)_{x_0}).
\end{equation} 

\sssec{}

Consider the object
\begin{equation} \label{e:KM as mod}
\KM(\fg,\kappa)^{\on{fact}_{x_0},\Dmod(\Gr_G)}\in \Dmod(\Gr_G)\mmod^\onfact_{x_0}.
\end{equation} 

\medskip

We now perform the crucial step in the proof of \thmref{t:int}. Namely, we combine \eqref{e:int via KM} with \thmref{t:main}, and obtain an equivalence
\begin{equation} \label{e:apply main to int}
\Rep(\fL(G)_{x_0},\kappa)\simeq 
\omega_{\Gr_{G,\Ran}}\mod^\onfact(\KM(\fg,\kappa)^{\on{fact}_{x_0},\Dmod(\Gr_G)})_{x_0}.
\end{equation} 

\medskip

The equivalence \eqref{e:apply main to int} commutes with the forgetful functors to $\Vect$, where in the
right-hand side, the corresponding functor is 
$$\omega_{\Gr_{G,\Ran}}\mod^\onfact(\KM(\fg,\kappa)^{\on{fact}_{x_0},\Dmod(\Gr_G)})_{x_0}\to  \KM(\fg,\kappa)_{x_0}\overset{\oblv_{\KM}}\longrightarrow \Vect,$$
where the second arrow is the tautological forgetful functor. 

\sssec{}

Note now that $\KM(\fg,\kappa)$, viewed as a factorization category equipped with an action of $\fL(G)$, fits into the paradigm of 
\secref{ss:fact w loop}.

\medskip

The corresponding factorization functor
$$\Phi:\Dmod(\Gr_G)\to \KM(\fg,\kappa)$$
is the functor of $\CL_{\Gr_G}$-twisted IndCoh sections, to be denoted $\Gamma^{\on{enh}}_\kappa$. 
 I.e., its composition with the forgetful functor 
\begin{equation} \label{e:forget KM}
\oblv_{\KM}:\KM(\fg,\kappa)\to \Vect
\end{equation}
is the functor \eqref{e:L-twisted sects}, to be denoted $\Gamma_\kappa$.

\medskip

Thus, applying \propref{p:fact act loop}, we obtain that the object \eqref{e:KM as mod} identifies with
$$\Rres_{\Gamma^{\on{enh}}_\kappa}(\KM(\fg,\kappa)^{\on{fact}_{x_0}}).$$

\sssec{}

Let $\on{Vac}^{\on{Int}}_{G,\kappa}$ be the factorization algebra in $\KM(\fg,\kappa)$ equal to the image of
$\omega_{\Gr_G}$ under the functor $\Gamma^{\on{enh}}_\kappa$. 

\medskip

Applying \eqref{e:equiv fact mod}, we rewrite 
$$\omega_{\Gr_{G,\Ran}}\mod^\onfact\left(\Rres_{\Gamma^{\on{enh}}_\kappa}(\KM(\fg,\kappa)^{\on{fact}_{x_0}})\right)_{x_0}\simeq
\on{Vac}^{\on{Int}}_{G,\kappa}\mod^\onfact(\KM(\fg,\kappa)^{\on{fact}_{x_0}})_{x_0}.$$

\sssec{}

Thus, combining, we obtain an equivalence
\begin{equation} \label{e:almost there}
\Rep(\fL(G)_{x_0},\kappa)\simeq \on{Vac}^{\on{Int}}_{G,\kappa}\mod^\onfact_{x_0}.
\end{equation} 
(see \secref{sss:vac} for the notation), which commutes with the natural forgetful functors of both
sides to $\Vect$. 

\begin{rem}
Note that the equivalence \eqref{e:almost there} did not use the assumption that $\kappa$ is non-negative definite.
\end{rem} 

\sssec{}

By construction
$$\BV^{\on{Int}}_{G,\kappa}\simeq \oblv_{\KM}(\on{Vac}^{\on{Int}}_{G,\kappa}).$$

Hence, in order to prove \thmref{t:int}, it remains to show the following:

\begin{prop} \label{p:convergence}
The functor
$$\oblv^{\on{Int}}_{\KM}:\on{Vac}^{\on{Int}}_{G,\kappa}\mod^\onfact_{x_0}\to \BV^{\on{Int}}_{G,\kappa}\mod^\onfact_{x_0},$$
induced by the factorization functor $\oblv_{\KM}$, is an equivalence.
\end{prop}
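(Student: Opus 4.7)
The strategy is to exhibit $\oblv^{\on{Int}}_{\KM}$ as a conservative, colimit-preserving functor admitting a colimit-preserving left adjoint, and then to verify that the resulting monad is the identity. The factorization functor $\oblv_{\KM}:\KM(\fg,\kappa)\to \Vect$ is conservative and admits a left adjoint $\ind_{\KM}$, chiral induction, whose value on the unit is $\BV_{\fg,\kappa}$. Both properties descend to the induced functor on factorization modules, yielding an adjoint pair $(\ind^{\on{Int}}_{\KM},\oblv^{\on{Int}}_{\KM})$ with $\oblv^{\on{Int}}_{\KM}$ conservative and commuting with colimits. By Barr--Beck--Lurie, it then suffices to show that the monad $T:=\oblv^{\on{Int}}_{\KM}\circ \ind^{\on{Int}}_{\KM}$ on $\BV^{\on{Int}}_{G,\kappa}\mod^\onfact_{x_0}$ is the identity.

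To identify $T$, I would apply the transport equivalence \eqref{e:equiv fact mod} to the factorization functor $\oblv_{\KM}$ together with the pair of factorization algebras $\on{Vac}^{\on{Int}}_{G,\kappa}\mapsto \BV^{\on{Int}}_{G,\kappa}$. This rewrites the source of $\oblv^{\on{Int}}_{\KM}$ as the category of $\BV^{\on{Int}}_{G,\kappa}$-factorization modules in $\Vect$ equipped with a compatible auxiliary $\BV_{\fg,\kappa}$-action, the latter factoring through $\BV^{\on{Int}}_{G,\kappa}$. Unwinding the construction of the left adjoint under this rewriting, the monad $T$ on a $\BV^{\on{Int}}_{G,\kappa}$-module $\CM$ becomes the relative chiral tensor product $\BV^{\on{Int}}_{G,\kappa}\otimes^{\on{ch}}_{\BV_{\fg,\kappa}}\CM$. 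Hence the proposition is reduced to showing that the canonical multiplication map
$$\BV^{\on{Int}}_{G,\kappa}\otimes^{\on{ch}}_{\BV_{\fg,\kappa}}\BV^{\on{Int}}_{G,\kappa}\longrightarrow \BV^{\on{Int}}_{G,\kappa}$$
is an equivalence, i.e., that $\BV_{\fg,\kappa}\to \BV^{\on{Int}}_{G,\kappa}$ is a \emph{derived epimorphism} of factorization algebras in $\Vect$.

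The main obstacle is this last assertion, and this is precisely where the hypothesis that $\kappa$ is non-negative definite on each simple factor enters decisively. The key input is Remark~\ref{r:vacuum in heart} (due to Dhillon): under this hypothesis, $\on{Vac}^{\on{Int}}_{G,\kappa}$, and equivalently $\BV^{\on{Int,ch}}_{G,\kappa}$, is concentrated in the heart of the natural t-structure, so that $\BV^{\on{Int,ch}}_{G,\kappa}$ is a \emph{classical} chiral algebra and the homomorphism $\BV^{\on{ch}}_{\fg,\kappa}\twoheadrightarrow \BV^{\on{Int,ch}}_{G,\kappa}$ is an honest abelian surjection. Combining this with t-exactness of the chiral relative tensor product against objects of the heart, the derived equivalence of the multiplication map is reduced to its abelian-categorical counterpart. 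This classical statement is known: for $G$ semisimple and simply connected it is the defining property of the integrable quotient chiral algebra; for $G$ a torus it is \cite[Theorem~3.10.14]{BD1} identifying $\BV^{\on{Int,ch}}_{T,\kappa}$ with the lattice chiral algebra; and the general reductive case follows by combining these via an extension $1\to T_0\to \wt G\to G\to 1$ in the spirit of the argument in \secref{ss:proof of Kosz for loop}.
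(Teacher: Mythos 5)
Your proposal breaks at the very first step, and the breakage is precisely the point of the Proposition. The functor $\oblv_{\KM}:\KM(\fg,\kappa)\to \Vect$ is \emph{not} conservative in general: it is conservative only on the bounded-below subcategory, and for negative-definite $\kappa$ (where the Proposition \emph{fails}) the defect is fatal. The paper even says so explicitly in the paragraph preceding the reduction step: the equivalence "would be almost tautological, if not for some homological algebra issues (the idea is that the category $\KM(\fg,\kappa)$ is 'almost' the same as $\BV_{\fg,\kappa}\mod^\onfact$)", and those issues become fatal for negative $\kappa$. Since $\oblv^{\on{Int}}_{\KM}$ inherits its putative conservativity from $\oblv_{\KM}$, the Barr--Beck--Lurie setup you propose does not get off the ground. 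Relatedly, your invocation of \eqref{e:equiv fact mod} is misapplied: that lemma identifies $\CA_1\mod^\onfact(\bC_1)$ with $\CA_2\mod^\onfact(\bC_2)$ when $\bC_1 = \Rres_\Phi(\bC_2)$, but here you are comparing two \emph{vacuum} module categories, i.e.\ $\KM(\fg,\kappa)^{\on{fact}_{x_0}}$ versus $\Vect^{\on{fact}_{x_0}}$, and the former is not $\Rres_{\oblv_{\KM}}$ of the latter, so the identification you use to compute the monad is not available.

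There is a second, subtler gap: even granting a monad of the form $\BV^{\on{Int}}_{G,\kappa}\otimes^{\on{ch}}_{\BV_{\fg,\kappa}}(-)$, relative chiral tensor products are not t-exact in general, so one cannot transport an abelian-level computation to the derived level by a simple appeal to "t-exactness against objects of the heart." Nor is surjectivity of $\BV^{\on{ch}}_{\fg,\kappa}\twoheadrightarrow\BV^{\on{Int,ch}}_{G,\kappa}$ at the abelian level enough to conclude that the map is a derived epimorphism of factorization algebras; that would require some flatness or formality input which you do not supply. What the paper does instead, and what your proposal is missing, is the reduction to the bounded-below category: one first observes that the target $\BV^{\on{Int}}_{G,\kappa}\mod^\onfact_{x_0}$ is left-complete in its t-structure, and — crucially — that the source $\on{Vac}^{\on{Int}}_{G,\kappa}\mod^\onfact_{x_0}$ is \emph{semi-simple} (being already identified with $\Rep(\fL(G)_{x_0},\kappa)$ via \eqref{e:almost there}), with irreducibles in the heart. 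This lets one restrict attention to bounded-below objects, where the conservativity and "$\KM(\fg,\kappa)\simeq\BV_{\fg,\kappa}\mod^\onfact$" identifications actually hold (via the enhanced forgetful functor and \cite[Lemma 4.2.3(a)]{GLC2}). You correctly flag Remark~\ref{r:vacuum in heart} and the non-negativity of $\kappa$ as the essential hypotheses, but you use them to try to make the monad classical, whereas the paper uses them to ensure that $\on{Vac}^{\on{Int}}_{G,\kappa}$ is in the heart so that the bounded-below reduction goes through.
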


\qed[\thmref{t:int}]

\begin{rem} 
As we will see, \propref{p:convergence} would be almost tautological, if not not for some
homological algebra ``issues" (the idea is that the category $\KM(\fg,\kappa)$ is ``almost" the same
as $\BV_{\fg,\kappa}\mod^\onfact$). 

\medskip

Yet, this issues become fatal when $\kappa$ is negative-definite. 
(Indeed, as was remarked above, the equivalence \eqref{e:almost there} holds for any $\kappa$.)  
\end{rem} 

\ssec{Proof of \propref{p:convergence}: reduction to the bounded below category} 

\sssec{}

Note that the chiral algebra $\BV^{\on{Int,ch}}_{G,\kappa}$ is concentrated in non-positive cohomological 
degrees\footnote{In fact, according to Remark \ref{r:int exact}, it is actually concentrated in degree $0$, 
a fact that will be used later.}. Hence, the category 
$\BV^{\on{Int}}_{G,\kappa}\mod^\onfact_{x_0}$ acquires a t-structure,
characterized by the property that the forgetful functor
$$\oblv_{\BV^{\on{Int}}_{G,\kappa}}:\BV^{\on{Int}}_{G,\kappa}\mod^\onfact_{x_0}\to \Vect$$
(which is by definition conservative) is t-exact. Moreover, $\BV^{\on{Int}}_{G,\kappa}\mod^\onfact$ is left-complete in its t-structure,
see \cite[Proposition B.9.18]{GLC2}.

\sssec{}

Note that the factorization category $\KM(\fg,\kappa)$ carries a t-structure in the sense of \cite[Sect. B.11.11]{GLC2}
(see \cite[Sect. B.14.22]{GLC2}). 

\medskip

Since the functor $\oblv_{\KM}$ is t-exact and conservative \emph{on the bounded below category}, we obtain 
that the object 
$$\on{Vac}^{\on{Int}}_{G,\kappa}|_X[-1]\in \KM(\fg,\kappa)_X$$
is connective.

\medskip

Hence, by \cite[Sect. B.11.16]{GLC2}, the category $\on{Vac}^{\on{Int}}_{G,\kappa}\mod^\onfact_{x_0}$ also acquires 
a t-structure, characterized by the property that the forgetful functor
$$\oblv_{\on{Vac}^{\on{Int}}_{G,\kappa}}:\on{Vac}^{\on{Int}}_{G,\kappa}\mod^\onfact_{x_0}\to \KM(\fg,\kappa)_{x_0}$$
(which is by definition conservative) is t-exact.

\begin{rem} \label{r:vacuum in heart}

For future reference, we note that the object $\on{Vac}^{\on{Int}}_{G,\kappa}|_X[-1]$
lies in the heart of the t-structure of $\KM(\fg,\kappa)_X$. 

\medskip

Let us consider $\on{Vac}^{\on{Int}}_{G,\kappa}|_X[-1]$ as an object of $\KL(G,\kappa)_X$, where
$$\KL(G,\kappa)=\KM(\fg,\kappa)^{\fL^+(G)}.$$

Recall that the level was assumed \emph{non-negative definite}
(on every simple factor of $\fg$). We claim that in this case, the functor $\oblv_{\KM}$, restricted to $\KL(G,\kappa)_{x_0}$, 
is actually conservative\footnote{This observation is due to G.~Dhillon.}.

\medskip

Indeed, in this case, the compact generators of $\KL(G,\kappa)_{x_0} $ have a finite cohomological dimension
(this can be seen, e.g., from the Kashiwara-Tanisaki localization); hence $\KL(G,\kappa)_{x_0} $ has no non-zero
infinitely connective objects.

%

\medskip

Hence, the fact that $\BV^{\on{Int}}_{G,\kappa}|_X[-1]=\BV^{\on{Int,ch}}_{G,\kappa}$ lies in the heart of
the t-structure implies that the same is true for $\on{Vac}^{\on{Int}}_{G,\kappa}|_X[-1]$. 

\end{rem} 

\sssec{} \label{sss:truncated conclusion}

In particular, we obtain that the functor $\oblv^{\on{Int}}_{\KM}$, appearing in \propref{p:convergence}, is t-exact
(since its composition with $\oblv_{\BV^{\on{Int}}_{G,\kappa}}$, which is t-exact and conservative, is t-exact). 

\medskip

Recall that the source category in \propref{p:convergence}, being equivalent to $\Rep(\fL(G)_{x_0},\kappa)$,
is semi-simple (see \secref{sss:int ss}). Moreover, the same argument shows that its irreducible objects are bounded below
(in fact, that they lie in the heart of the t-structure). 

\medskip

Since the target category is left-complete in its t-structure, we obtain that in order to prove that $\oblv^{\on{Int}}_{\KM}$ is an equivalence, 
it suffices to show that it induces an equivalence on the corresponding
bounded below categories.

\ssec{Proof of \propref{p:convergence}: the bounded below part} \label{ss:proof of int 2}

\sssec{}

Let $\on{Vac}_{\fg,\kappa}$ denote the factorization unit in $\KM(\fg,\kappa)$. Denote 
$$\BV_{\fg,\kappa}:=\oblv_{\KM}(\on{Vac}_{\fg,\kappa}).$$

This is the usual factorization algebra attached to $\fg$ at level $\kappa$. 

\sssec{}

The unit for $\on{Vac}^{\on{Int}}_{G,\kappa}$, which is a map
$$\on{Vac}_{\fg,\kappa}\to \on{Vac}^{\on{Int}}_{G,\kappa},$$
induces a homomorphism
\begin{equation} \label{e:two vacs}
\BV_{\fg,\kappa}\to \BV^{\on{Int}}_{G,\kappa}
\end{equation}
as unital factorization algebras in $\Vect$.

\medskip

Let us regard $\BV_{\fg,\kappa}\mod^\onfact$ as a unital \emph{lax factorization category}, see \cite[Sect. B.11.12]{GLC2}.
The homomorphism \eqref{e:two vacs} allows us to upgrade $\BV^{\on{Int}}_{G,\kappa}$ to a unital factorization algebra in
$\BV_{\fg,\kappa}\mod^\onfact$ (see \cite[Setct. C.11.18]{GLC2}); when viewed as such we will denote it by $\BV^{\on{Int,enh}}_{G,\kappa}$.

\medskip

The forgetful factorization functor 
$$\oblv_{\BV_{\fg,\kappa}}:\BV_{\fg,\kappa}\mod^\onfact\to \Vect$$
induces a functor
\begin{equation} \label{e:two algs}
\BV^{\on{Int,enh}}_{G,\kappa}\mod^\onfact_{x_0}\to \BV^{\on{Int}}_{G,\kappa}\mod^\onfact_{x_0},
\end{equation}
to be denoted $\oblv^{\on{Int}}_{\BV_{\fg,\kappa}}$.

\medskip

According to \cite[Lemma C.11.19]{GLC2}, the functor \eqref{e:two algs} is an equivalence. 

\sssec{}

The functor $\oblv_{\KM}$ induces a factorization functor
\begin{equation} \label{e:oblv KM enh}
\KM(\fg,\kappa)\simeq \on{Vac}_{\fg,\kappa}\mod^\onfact(\KM(\fg,\kappa))
\to \BV_{\fg,\kappa}\mod^\onfact,
\end{equation}
to be denoted $\oblv^{\on{enh}}_{\KM}$.

\medskip

We have
$$\oblv^{\on{enh}}_{\KM}(\BV^{\on{int}}_{G,\kappa})\simeq \BV^{\on{Int,enh}}_{G,\kappa}$$
as factorization algebras in $\BV_{\fg,\kappa}\mod^\onfact$. 

\medskip

In particular, $\oblv^{\on{enh}}_{\KM}$ induces a functor
\begin{equation} \label{e:oblv KM enh Int}
\on{Vac}^{\on{Int}}_{G,\kappa}\mod^\onfact_{x_0}\to \BV^{\on{Int,enh}}_{G,\kappa}\mod^\onfact_{x_0},
\end{equation}
to be denoted $\oblv^{\on{Int,enh}}_{\KM}$.

\medskip

Unwinding, we obtain that the functor $\oblv^{\on{Int}}_{\KM}$ identifies with the composition
$$\oblv^{\on{Int}}_{\BV_{\fg,\kappa}}\circ \oblv^{\on{Int,enh}}_{\KM}.$$

\sssec{}

By \cite[Sect. B.11.15]{GLC2}, the lax factorization category $\BV_{\fg,\kappa}\mod^\onfact$
carries a t-structure in the sense of \cite[Sect. B.11.11]{GLC2}. It is characterized by the property 
that the (conservative) forgetful functor $\oblv_{\BV_{\fg,\kappa}}$ is t-exact. 

\medskip

Since $\BV^{\on{Int,enh}}_{G,\kappa}$ is connective, by \cite[Sect. B.11.16]{GLC2}, we obtain that the category
$\BV^{\on{Int,enh}}_{G,\kappa}\mod^\onfact_{x_0}$ acquires a t-structure, characterized by the property 
that the (conservative) forgetful functor 
$$\BV^{\on{Int,enh}}_{G,\kappa}\mod^\onfact_{x_0}\to \BV_{\fg,\kappa}\mod^\onfact_{x_0}\to \Vect$$
is t-exact.

\medskip

In particular, we obtain that the equivalence $\oblv^{\on{Int}}_{\BV_{\fg,\kappa}}$ of \eqref{e:two algs}
t-exact.

\sssec{}

Hence, in order to prove \propref{p:convergence}, it remains to show that 
the functor $\oblv^{\on{Int,enh}}_{\KM}$ of \eqref{e:oblv KM enh Int} induces an equivalence on the bounded below subcategories
of the two sides. 

\medskip

Recall (see Remark \ref{r:vacuum in heart}) that the object $\on{Vac}^{\on{Int}}_{G,\kappa}|_X[-1]$ lies in the heart of the t-structure\footnote{It is here
that we crucially use the assumption that $\kappa$ is non-negative definite.}. 
Hence, its values over all powers of $X$ are bounded below. Hence, for an eventually coconnective object in $\KM(\fg,\kappa)_{x_0}$, the morphisms
that define on it a structure of object of $\on{Vac}^{\on{Int}}_{G,\kappa}\mod^\onfact_{x_0}$ take place in the bounded 
below subcategories of values $\KM(\fg,\kappa)$ on powers of $X$.

\medskip

The same is true for $\BV^{\on{Int}}_{G,\kappa}|_X[-1]$ and $\BV_{\fg,\kappa}\mod^\onfact$. 

\medskip

Therefore, in order to prove that $\oblv^{\on{Int,enh}}_{\KM}$ induces an equivalence on the bounded below subcategories,
it suffices to show that the functor $\oblv^{\on{enh}}_{\KM}$ of \eqref{e:oblv KM enh} induces an equivalence 
between the bounded below subcategories of the two sides (evaluated on powers of $X$). 

\medskip

However, the latter is the
assertion of \cite[Lemma 4.2.3(a)]{GLC2}.

\qed[\propref{p:convergence}]

\ssec{An addendum: failure of the coherent version of \thmref{t:initial}} \label{ss:coh}

\sssec{}

Parallel to the setting of \secref{ss:constr of funct}, one can consider:

\medskip

\begin{itemize}

\item The category $\fL(G)_{x_0}\mmod^{\on{weak}}$
of DG categories, equipped with a weak action of $\fL(G)_{x_0}$;

\item The factorization category
$\IndCoh(\Gr_G)$;

\item The functor
\begin{equation} \label{e:the functor coh}
\fL(G)_{x_0}\mmod^{\on{weak}} \to \IndCoh(\Gr_G)\mmod^\onfact_{x_0},\quad \bC\mapsto \bC^{\on{fact}_{x_0},\IndCoh(\Gr_G)}. 
\end{equation}

\end{itemize} 

\sssec{}

However, we claim that, unlike \thmref{t:initial}, the functor \eqref{e:the functor coh} fails to be fully faithful.
Namely, as we shall presently explain, the functor

\smallskip

\begin{multline} \label{e:functor to disprove}
\on{Funct}_{\fL(G)_{x_0}\mmod^{\on{weak}}}(\bC_1,\bC_2)\to \\
\to \on{Funct}_{\IndCoh(\Gr_G)\mmod^\onfact_{x_0}}((\bC_1)^{\on{fact}_{x_0},\IndCoh(\Gr_G)},(\bC_2)^{\on{fact}_{x_0},\IndCoh(\Gr_G)})
\end{multline}
fails to be an equivalence.

\medskip

Namely, the right-hand side admits a natural conservative functor to $\on{Funct}_{\DGCat}(\bC_1,\bC_2)$, whose composition with the functor
in \eqref{e:functor to disprove} is the natural forgetful functor
\begin{equation} \label{e:forget weak}
\on{Funct}_{\fL(G)_{x_0}\mmod^{\on{weak}}}(\bC_1,\bC_2)\to \on{Funct}_{\DGCat}(\bC_1,\bC_2).
\end{equation} 

However, we claim that we can find $\bC_1,\bC_2$ so that the functor \eqref{e:forget weak} fails to be conservative.

\sssec{}

Namely, we take $\bC_1=\Vect$ and $\bC_2=\Vect_\kappa$, where $\kappa$ is negative-definite. The left-hand side
in \eqref{e:forget weak} is $\Rep(\fL(G)_{x_0},\kappa)$, and \eqref{e:forget weak} is the natural forgetful functor
$$\Rep(\fL(G)_{x_0},\kappa)\to \Vect.$$

However, the above functor is zero for negative-definite (see Remark \ref{r:neg def}). 

\appendix

\section{D-modules in infinite type} \label{s:inf type}

In this section we (re)collect some facts pertaining to the extension of the theory of D-modules
to algebro-geometric objects of infinite type. The goal is to make sense of the category of D-modules
on the loop group $\fL(G)_{x_0}$. 

\ssec{The case of affine schemes}

In this subsection we develop the theory of D-modules on affine schemes (not necessarily of finite type).
We will mostly follow \cite[Sect. A.4-A.5]{GLC2}.

\sssec{}

Let $S$ be a scheme (not necessarily of finite type). Set
\begin{equation} \label{e:D!}
\Dmod^!(S):=\underset{S\to S_0}{\on{colim}}\, \Dmod(S_0),
\end{equation} 
where:

\begin{itemize}

\item The colimit is taken over the (opposite of the) category of affine schemes of finite type
that receive a map from $S$;

\smallskip

\item The transition functors are given by !-pullback.

\end{itemize}

\medskip

Equivalently, for a fixed presentation
\begin{equation} \label{e:present affine}
S\simeq \underset{\alpha}{\on{lim}}\, S_{0,\alpha},
\end{equation}
where $S_{0,\alpha}$ are affine schemes of finite type, we have
\begin{equation} \label{e:present affine D!}
\Dmod^!(S)\simeq \underset{\alpha}{\on{colim}}\, \Dmod(S_{0,\alpha}).
\end{equation} 

\medskip

For an arbitrary $S$, there is no reason for $\Dmod^!(S)$ defined in the above way to be 
compactly generated or dualizable.

\sssec{} \label{sss:ind-hol aff sch}

We shall say that an object $\Dmod^!(S)$ is \emph{ind-holonomic} it lies in the essential image
of 
$$\underset{\alpha}{\on{colim}}\, \Dmod^{\on{hol}}(S_{0,\alpha})\to \underset{\alpha}{\on{colim}}\, \Dmod(S_{0,\alpha})=\Dmod^!(S).$$

\sssec{}

We set
$$\Dmod_*(S):=\on{Funct}_{\DGCat}(\Dmod^!(S),\Vect).$$

Using Verdier duality, we obtain:
$$\Dmod_*(S)\simeq \underset{S\to S_0}{\on{lim}}\, \Dmod(S_0),$$
where the limit is taken with respect to the *-pushforward functors. 

\medskip

In terms of the presentation \eqref{e:present affine}, we have
$$\Dmod_*(S)\simeq \underset{\alpha}{\on{lim}}\, \Dmod(S_{0,\alpha}).$$

\sssec{}

Since the transition functors in \eqref{e:D!} are symmetric monoidal, the category $\Dmod^!(S)$
carries a natural symmetric monoidal structure. Its unit object, denoted $\omega_S$, is the image
of $\omega_{S_0}$ for any $S\to S_0$. 

\medskip

Note that we have a canonical isomorphism
$$\on{C}^\cdot(S):=\underset{S\to S_0}{\on{colim}}\, \on{C}^\cdot(S_0) \simeq \underset{S\to S_0}{\on{colim}}\, \End_{\Dmod(S_0)}(\omega_{S_0})\simeq 
\End_{\Dmod^!(S)}(\omega_S).$$

\medskip

In addition, we have a natural action of $\Dmod^!(S)$ on $\Dmod_*(S)$.

\medskip

The category $\Dmod_*(S)$ has a distinguished object, denoted $\ul{k}_S$; it corresponds to the compatible
family of functors 
$$\Dmod(S_0)\to \Vect, \quad S\to S_0$$ equal to
$$\underset{S\to S'_0\overset{f}\to S_0}{\on{colim}}\, \CHom(\omega_{S'_0},f^!(\CF)).$$

\begin{rem}

One can describe the object $\ul{k}_S$ more explicitly as follows: let
$$\Dmod^{\on{hol}}_*(S)\subset \Dmod_*(S)$$
be the full subcategory equal to
$$\underset{S\to S_0}{\on{lim}}\, \Dmod^{\on{hol}}(S_0)\subset \underset{S\to S_0}{\on{lim}}\, \Dmod(S_0),$$
where the limits are taken with respect to the *-pushforward functors. 

\medskip

Since on the holonomic category, the *-pushforward admit left adjoints, we can rewrite $\Dmod^{\on{hol}}_*(S)$ also as
$$\underset{S\to S_0}{\on{colim}}\, \Dmod^{\on{hol}}(S_0),$$
where the colimit is taken with respect to the *-pullbacks.

\medskip

In terms of the latter presentation, $\ul{k}_S$ equals the image of $\ul{k}_{S_0}$ 
for some/any $S_0$.

\end{rem}

\sssec{} \label{sss:funct !}

For a map $f:S_1\to S_2$ between affine schemes, we have a tautologically defined functor 
$$f^!:\Dmod^!(S_2)\to \Dmod^!(S_1).$$

Dually, we have a functor
$$f_*:\Dmod_*(S_1)\to \Dmod_*(S_2).$$

\sssec{} \label{sss:funct *}
 
Assume now that $f$ is of finite presentation. In this case we can define a functor
$$f_*:\Dmod^!(S_1)\to \Dmod^!(S_2),$$
which satisfies base change against !-pullbacks.
 
\medskip
 
Dually, we can define
$$f^!:\Dmod_*(S_2)\to \Dmod_*(S_1),$$
which satisfies base change against *-pushforwards. When $f$ is \'etale, this functor sends $\ul{k}_{S_2}\mapsto \ul{k}_{S_1}$. 

\sssec{}

Assume now that $f$ is a closed embedding of finite presentation. In this case the functors
$$f_*:\Dmod^!(S_1)\rightleftarrows \Dmod^!(S_2):f^!$$
and 
$$f_*:\Dmod_*(S_1)\rightleftarrows \Dmod_*(S_2):f^!$$
form adjoint pairs. 

\ssec{Extension to prestacks--the !-version} 

\sssec{} \label{sss:Dmod prestck}

The functoriality of $\Dmod^!(-)$ on affine schemes with respect to !-pullbacks allows us to extend the functor
$$\Dmod(-)^!:(\affSch)^{\on{op}}\to \DGCat$$
to arbitrary prestacks by the \emph{procedure of right Kan extension}. 

\medskip

Explicitly, for $\CY\in \on{PreStk}$, we have
\begin{equation} \label{e:Dmod! prestacks}
\Dmod^!(\CY)=\underset{S\to \CY}{\on{lim}}\, \Dmod^!(S),
\end{equation} 
where:

\begin{itemize}

\item The limit is taken over the (opposite of the) category of affine schemes mapping to $\CY$;

\smallskip

\item The transition functors are given by !-pullback.

\end{itemize}

\sssec{}

We shall say that an object of $\Dmod^!(\CY)$ is \emph{ind-holonomic} if its value on each $S\to \CY$
belongs to the ind-holonomic subcategory (see \secref{sss:ind-hol aff sch}).

\sssec{}

Since the transition functors in \eqref{e:Dmod! prestacks} are symmetric monoidal, the category 
$\Dmod^!(\CY)$ acquires a symmetric monoidal structure.

\medskip

Its unit is the object $\omega_\CY\in \Dmod^!(\CY)$, whose value on every $S\to \CY$ is
$\omega_S\in \Dmod^!(S)$. 

\sssec{}

By \secref{sss:funct *}, if $f:\CY_1\to \CY_2$ is map between prestacks that is affine and of finite presentation,
we have a well-defined functor
$$f_*:\Dmod(\CY_1)\to \Dmod(\CY_2)$$
that satisfies base change against !-pullbacks.

\medskip

Assume now that $f$ is a closed embedding of finite presentation, In this the functors
$$f_*:\Dmod^!(\CY_1)\rightleftarrows \Dmod^!(\CY_2):f^!$$
are an adjoint pair. 

\sssec{}

Let $f$ again be arbitrary. The functor $f^!$ has a partially defined left adjoint, to be denoted $f_!$. 

\medskip

In particular, we can consider the partially defined functor
$$\on{C}^\cdot_c(\CY,-):\Dmod^!(\CY)\to \Vect,$$
left adjoint to 
$$k\mapsto \omega_\CY.$$

\begin{lem} \label{l:ccs on ind-hol}
The functor $\on{C}^\cdot_c(\CY,-)$ is defined on ind-holonomic objects.
\end{lem}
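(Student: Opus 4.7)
The strategy is to exhibit the value $\on{C}^\cdot_c(\CY, \CF)$ explicitly as a filtered colimit of compactly-supported cohomologies on affine schemes of finite type approximating $\CY$. For any map $f: S_0 \to \CY$ with $S_0$ affine of finite type, the pullback $\CF_{S_0}:=f^!(\CF)$ is again ind-holonomic (the ind-holonomic subcategory is preserved by !-pullback by its very definition), and since $S_0$ is of finite type, the functor $\on{C}^\cdot_c(S_0,-)=(p_{S_0})_!:\Dmod(S_0)\to \Vect$ is an honest functor defined on all of $\Dmod(S_0)$, left adjoint to $p_{S_0}^!$, as part of the standard six-functor formalism. The candidate formula is
\[
\on{C}^\cdot_c(\CY,\CF) \;:=\; \underset{(S_0\overset{f}{\to}\CY)}{\on{colim}}\, \on{C}^\cdot_c(S_0,f^!\CF),
\]
with the colimit running over the filtered category of finite-type affines over $\CY$, and transition arrows along $g:S_0\to S_0'$ obtained from the counit $g_!g^!\to \on{id}$ combined with $(p_{S_0'})_!\circ g_!\simeq (p_{S_0})_!$ (using that $g_!$ is defined for morphisms of finite-type schemes).

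The universal property will then follow by a formal chain of isomorphisms. Namely, using the defining presentation $\Dmod^!(\CY)\simeq \underset{S_0\to \CY}{\on{lim}}\,\Dmod(S_0)$ (for which one can restrict to finite-type affines, since for $S$ affine of infinite type the category $\Dmod^!(S)$ is itself a colimit over $\Dmod(S_0)$'s), I would compute
\[
\Hom_{\Dmod^!(\CY)}(\CF, V\otimes \omega_\CY) \simeq \underset{S_0\to \CY}{\on{lim}}\,\Hom_{\Dmod(S_0)}(\CF_{S_0},V\otimes \omega_{S_0}),
\]
apply the adjunction $((p_{S_0})_!,p_{S_0}^!)$ at each finite-type stage to rewrite the right-hand side as $\underset{S_0}{\on{lim}}\,\Hom_\Vect(\on{C}^\cdot_c(S_0,\CF_{S_0}),V)$, and finally use that $\Hom_\Vect(-,V)$ converts colimits into limits to identify this with $\Hom_\Vect(\on{C}^\cdot_c(\CY,\CF),V)$.

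The hard part will be the set-theoretic size issue: the filtered category of affine schemes of finite type over an arbitrary prestack $\CY$ is in general a proper class, so the colimit defining $\on{C}^\cdot_c(\CY,\CF)$ is not a priori computable. This is exactly where ind-holonomicity becomes essential: because each $\CF_{S_0}$ lies in the ind-completion of the (small) category $\Dmod^{\on{hol}}(S_0)$, one can select a small cofinal subdiagram of $(S_0,f)$ on which $\CF_{S_0}$ is built from holonomic approximations, and show that the large colimit above is cofinally controlled by this small filtered diagram. Concretely one verifies that $\CF$, being ind-holonomic, is already determined on such a small family, so the colimit exists in $\Vect$ and the preceding chain of isomorphisms goes through. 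Executing this cofinality/accessibility reduction carefully is the principal technical point; the remainder of the argument is purely formal.
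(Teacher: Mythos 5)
There is a genuine gap. Your proposed colimit runs over \emph{finite-type} affines $S_0\to\CY$, and you justify this restriction by claiming that the defining limit $\Dmod^!(\CY)\simeq\lim_{S\to\CY}\Dmod^!(S)$ "can be restricted to finite-type affines" because $\Dmod^!(S)$ is itself a colimit over finite-type approximations. This reasoning does not hold up: an affine scheme $S$ of infinite type mapping to $\CY$ need not factor through \emph{any} finite-type affine mapping to $\CY$. The simplest test case already shows the problem: if $\CY=S$ is itself an affine scheme of infinite type, then the identity $\id_S$ is terminal in the category of affines over $\CY$ and computes $\Dmod^!(\CY)=\Dmod^!(S)$, whereas the category of finite-type affines over $S$ does not see $\id_S$ at all, and there is no reason the limit of $\Dmod(S_0)$ over maps $S_0\to S$ of finite type should recover $\Dmod^!(S)=\colim_{S\to S_0}\Dmod(S_0)$ (note that the latter colimit is over finite-type schemes that $S$ maps \emph{to}, not schemes mapping \emph{to} $S$). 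The limit-over-all-affines and the colimit-inside-each-$\Dmod^!(S)$ are indexed in opposite directions and do not collapse to a single limit over finite-type affines over $\CY$. This matters because the Lemma is needed precisely for prestacks such as $\fL(G)$ which are not locally of finite type, where your indexing category does not control the situation.

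The paper's actual argument proceeds in two stages that your proposal conflates. First one treats the affine case: for an affine $S$ (possibly of infinite type) and $\CF$ ind-holonomic, written as $g^!\CF_0$ with $g:S\to S_0$ and $\CF_0\in\Dmod^{\on{hol}}(S_0)$, one defines $\on{C}^\cdot_c(S,\CF)$ as a colimit of $\on{C}^\cdot_c(S'_0,f^!\CF_0)$ over \emph{factorizations} $S\to S'_0\to S_0$ through finite-type affines. Only after this does one pass to a general prestack $\CY$, taking the colimit of $\on{C}^\cdot_c(S,f^!\CF)$ over \emph{all} affines $f:S\to\CY$, the inner terms now being available by the affine case. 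Your proposal skips the affine step entirely and tries to index directly by finite-type objects over $\CY$. Separately, the difficulty you single out as "the hard part" (set-theoretic size of the finite-type slice category) is not actually an issue: the category of finite-type affines over $\CY$ is essentially small, so there is no size problem with the colimit you write down; the genuine role of ind-holonomicity is to guarantee that the $!$-pushforwards along factorization maps exist and that the transition arrows in the colimit diagram are well-defined, not to cure a size pathology.
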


\begin{proof}

We first consider the case when $\CY=S$ is an affine scheme. Let $\CF\in \Dmod^!(S)$ be obtained
as $g^!(\CF_0)$ for $g:S\to S_0$, where $S_0$ is an affine scheme of finite type and 
$\CF_0\in \Dmod^{\on{hol}}(S_0)$. 

\medskip

Then the sought-for value of $\on{C}^\cdot_c(S,\CF)$ is given by
$$\underset{S\to S'_0\overset{f}\to S_0}{\on{colim}}\, \on{C}^\cdot_c(S'_0,f^!(\CF_0)).$$

\medskip

Let now $\CY$ be a general prestack and $\CF\in \Dmod^!(\CY)$ be ind-holonomic. Then it is easy to see that 
$$\underset{f:S\to \CY}{\on{colim}}\, \on{C}^\cdot_c(S,f^!(\CF))$$
provides the value of the sought-for left adjoint.

\end{proof} 

\sssec{}

We will denote
$$\on{C}_\cdot(\CY):=\on{C}^\cdot_c(\CY,\omega_\CY).$$

Note that the dual of $\on{C}_\cdot(\CY)$ identifies with
$$\on{C}^\cdot(\CY)\simeq \End_{\Dmod^!(\CY)}(\omega_\CY)\simeq
\underset{S\to \CY}{\on{lim}}\, \End_{\Dmod^!(S)}(\omega_S)\simeq \underset{S\to \CY}{\on{lim}}\, \on{C}^\cdot(S).$$

\ssec{The case of (non-affine) schemes}

In this subsection we study/define the categories $\Dmod^!(-)$ and $\Dmod_*(-)$ on \emph{schemes}. 

\sssec{}

Let now $Y$ be a (not necessarily affine) scheme, assumed quasi-compact and separated. Let $S\to Y$ be a Zariski cover, where $S$ is affine, 
and let $S^\bullet$ be its Cech nerve. Consider $\Dmod^!(S^\bullet)$ as a cosimplicial category under !-pullbacks. 

\medskip

A standard argument shows that the restriction functor
$$\Dmod^!(Y) \to \on{Tot}(\Dmod^!(S^\bullet))$$
is an equivalence.

\medskip

In addition, since the morphism in $S^\bullet$ are of finite presentation, we can view $\Dmod^!(S^\bullet)$ as a \emph{simplicial}
category under *-pushforwards. The functor of *-pushforward gives rise to a functor
\begin{equation} \label{e:glue scheme *}
|\Dmod^!(S^\bullet)|\to \Dmod^!(Y),
\end{equation}
and another standard argument shows that \eqref{e:glue scheme *} is also an equivalence. 

\sssec{}

We continue to assume that $Y$ is a scheme. Note that we can consider $\Dmod_*(S^\bullet)$ as simplicial
(resp., cosimplicial) category via *-pushforwards (resp., !-pullbacks). 

\medskip

Set
$$\Dmod_*(Y) := |(\Dmod_*(S^\bullet)|.$$

A standard argument shows that the restriction functor
\begin{equation} \label{e:glue scheme !}
\Dmod_*(Y) \to \on{Tot}(\Dmod_*(S^\bullet)),
\end{equation}
given by !-pullback, is an equivalence.

\medskip

The category $\Dmod_*(Y)$ contains a canonically defined object, denoted $\ul{k}_Y$, whose value on the terms of $\Dmod_*(S^\bullet)$
is $\ul{k}_{S^\bullet}$ (see the last sentence in \secref{sss:funct *}). 

\medskip

From the equivalences \eqref{e:glue scheme *} and \eqref{e:glue scheme !}, we obtain that we have a canonical equivalence
$$\on{Funct}_{\DGCat}(\Dmod^!(Y),\Vect) \simeq \Dmod_*(Y),$$

\medskip 

One shows that the above constructions is canonically independent of the choice of the cover $S\to Y$.

\sssec{}

For a map $f:Y_1\to Y_2$ we have a naturally defined functor
$$f_*:\Dmod_*(Y_1)\to \Dmod_*(Y_2).$$

This functor can be also thought of as obtained from
$$f^!:\Dmod_*(Y_2)\to \Dmod_*(Y_1)$$
by duality. 

\sssec{}

We have a natural action of $\Dmod^!(Y)$ on $\Dmod_*(Y)$. 

\medskip

We shall call an object 
$$\omega_Y^{\on{fake},*}\in \Dmod_*(Y)$$
a \emph{fake} dualizing sheaf, if the functor
$$\Dmod^!(Y)\to \Dmod_*(Y), \quad \CF\mapsto \CF\sotimes \omega_Y^{\on{fake},*}$$
is an equivalence. 

\sssec{}

Let $Y$ be written
\begin{equation} \label{e:sch pres}
Y\simeq \underset{\alpha}{\on{lim}}\, Y_\alpha,
\end{equation} 
where the transition maps
$$Y_\alpha \overset{f_{\alpha,\beta}}\to Y_\beta$$
are affine. 

\medskip

In this case, it is easy to see that the functor
\begin{equation} \label{e:sch pres D!}
\underset{\alpha}{\on{colim}}\, \Dmod^!(Y_\alpha)\to \Dmod^!(Y),
\end{equation} 
defined by !-pullback, is an equivalence. 

\medskip

Similarly, the functor
\begin{equation} \label{e:sch pres D*}
\Dmod_*(Y)\to \underset{\alpha}{\on{lim}}\, \Dmod_*(Y_\alpha),
\end{equation} 
defined by *-pushforward, is an equivalence. 

\sssec{}

Let $f:Y_1\to Y_2$ be a map of finite presentation between schemes. 

\medskip

Using the equivalence \eqref{e:sch pres D!}, we construct
a functor
\begin{equation} \label{e:pushforward sch}
f_*:\Dmod^!(Y_1)\to \Dmod^!(Y_2),
\end{equation} 
which satisfies base change against $!$-pullbacks. 

\medskip

This construction allows us to extend the definition of *-pushforward 
on $\Dmod^!(-)$ for schematic maps between prestacks. 

\medskip

Similarly, we construct a functor 
$$f^!:\Dmod^*(Y_2)\to \Dmod^*(Y_1),$$
which can also be thought of as obtained from \eqref{e:pushforward sch}
by duality. 

\medskip

When $f$ is a closed embedding, the functors
$$f_*:\Dmod_*(Y_1)\rightleftarrows \Dmod_*(Y_2):f^!$$
form an adjoint pair. 

\sssec{}

\label{sss placid}

We shall say that $Y$ is \emph{placid} if there exists a presentation \eqref{e:sch pres}, where:

\begin{itemize}

\item The schemes $Y_\alpha$ are of finite type;

\smallskip

\item The maps $f_{\alpha,\beta}$ are smooth,

\end{itemize}

\medskip

In this case, the transition functors in \eqref{e:sch pres D!} preserve compactness, so $\Dmod^!(Y)$
is compactly generated, and in particular, dualizable. Hence, in this case $\Dmod_*(Y)$ is also compactly
generated and 
$$\Dmod_*(Y)\simeq (\Dmod^!(Y))^\vee.$$

\medskip

For a presentation \eqref{e:sch pres} as above, we can rewrite $\Dmod_*(Y)$ as 
\begin{equation} \label{e:D* placid}
\Dmod_*(Y)\simeq \underset{\alpha}{\on{colim}}\, \Dmod(Y_\alpha),
\end{equation} 
where the limit is taken with respect to the *-pullback functors (they are defined thanks to the smoothness
assumption).

\medskip

In terms of this presentation, the object $\ul{k}_Y$ equals to the image of $\ul{k}_{Y_\alpha}$ for any $\alpha$.

\sssec{}

Suppose that $Y$ is placid, and assume that in the presentation \eqref{e:sch pres}, the transition
maps $f_{\alpha,\beta}$ are equidimensional. 

\medskip

A dimension theory for such a presentation is an assignment
$$\alpha\mapsto d_\alpha\in \BZ$$ such that for a pair of indices and an arrow $\alpha\to \beta$ in the category of indices,
we have
$$d_\alpha-d_\beta=\on{dim.rel.}(f_{\alpha,\beta}).$$

Note that a choice of a dimension theory gives rise to a fake dualizing object $\omega_Y^{\on{fake},*}\in \Dmod_*(Y)$. Namely,
in terms of \eqref{e:D* placid}, $\omega_Y^{\on{fake},*}$ is the image of $\omega_{Y_\alpha}[-2d_\alpha]$ for 
some/any $\alpha$. 

\sssec{} \label{sss:smooth fake}

Note that if the schemes $Y_\alpha$ are smooth and connected, 
we have a distinguished dimension theory, given by $d_\alpha=\dim(Y_\alpha)$. In this case $\omega_Y^{\on{fake},*}=\ul{k}_Y$. 

\ssec{The case of ind-schemes}

\sssec{}

Let $\CY$ be an ind-scheme. Write
\begin{equation} \label{e:pres indsch}
\CY=\underset{i}{``\on{colim}"}\, Y_i,
\end{equation} 
where $Y_i$ are schemes, and the transition maps $Y_i\overset{f_{i,j}}\to Y_j$ are closed embeddings, and
the index category is filtered. 

\medskip

Recall that the category $\Dmod^!(\CY)$ is a priori defined, see \secref{sss:Dmod prestck}. Note, however, that the functor
\begin{equation} \label{e:Dmod! indsch}
\Dmod^!(\CY)\to  \underset{i}{\on{lim}}\, \Dmod^!(Y_i),
\end{equation} 
given by !-pullback is an equivalence, where the limit is formed also using the !-pullback functors. 

\sssec{}

We define
\begin{equation} \label{e:Dmod* indsch}
\Dmod_*(\CY):=\underset{i}{\on{colim}}\, \Dmod_*(Y_i),
\end{equation} 
where the colimit is taken with respect to *-pushforwards. 

\medskip

It is clear that the definition does not depend on the presentation of $\CY$ as in \secref{e:pres indsch}. 

\sssec{}

For a map between ind-schemes 
$$f:\CY_1\to \CY_2,$$
we obtain a well-defined functor
$$f_*:\Dmod_*(\CY_1)\to \Dmod_*(\CY_2).$$

In particular, we have a well-defined functor
$$\on{C}^\cdot(\CY,-):\Dmod(\CY)\to \Dmod(\on{pt})=\Vect.$$

\medskip

Suppose now that $f$ is schematic of finite presentation. In this case we also obtain a functor
$$f^!:\Dmod_*(\CY_2)\to \Dmod_*(\CY_1).$$

When $f$ is a closed embedding, the functors
$$f_*:\Dmod_*(\CY_1)\rightleftarrows \Dmod(\CY_2):f^!$$
form an adjoint pair. 

\sssec{}

We have a naturally defined action of $\Dmod^!(\CY)$ on $\Dmod_*(\CY)$. 

\medskip

We shall call an object 
$$\omega_\CY^{\on{fake},*}\in \Dmod_*(\CY)$$
a \emph{fake} dualizing sheaf, if the functor
$$\Dmod^!(\CY)\to \Dmod_*(\CY), \quad \CF\mapsto \CF\sotimes \omega_\CY^{\on{fake},*}$$
is an equivalence. 

\sssec{}

Following \cite[Sect. 7]{BD2}, we shall say that $\CY$ is \emph{reasonable} if it admits a presentation \eqref{e:pres indsch}, where the transition maps
$f_{i,j}$ are of finite presentation.

\medskip

If $\CY$ is reasonable, and $Z$ is a closed subscheme of $\CY$, we shall say that $Z$ is \emph{reasonable}
if for some/any $i$ such that $Z\subset Y_i$, this closed embedding is of finite presentation.

\medskip

Reasonable subschemes of $\CY$ form a filtered category. A presentation of $\CY$ as in \eqref{e:pres indsch}
with $Y_i$ reasonable will be called a \emph{reasonable presentation}.

\sssec{}

Let \eqref{e:pres indsch} be a reasonable presentation. In this case the functors
$$f_{i,j}^!:\Dmod^!(Y_j)\to \Dmod^!(Y_i)$$
admit left adjoints. 

\medskip

Hence, using the equivalence \eqref{e:Dmod! indsch}, we can rewrite
\begin{equation} \label{e:Dmod! indsch bis}
\Dmod^!(\CY)\simeq \underset{i}{\on{colim}}\, \Dmod^!(Y_i),
\end{equation} 
where the colimit is taken with respect to *-pushforwards. 

\medskip

Similarly, in the formation of the colimit in \eqref{e:Dmod* indsch}, the transition functors $(f_{i,j})_*$
admit right adjoints. Hence, we can rewrite 
\begin{equation} \label{e:Dmod* indsch bis}
\Dmod^*(\CY)\simeq  \underset{i}{\on{lim}}\, \Dmod^*(Y_i),
\end{equation} 
where the limit is formed using the !-pullback functors. 

\medskip

Using \eqref{e:Dmod! indsch bis} and \eqref{e:Dmod* indsch bis}, we obtain that for a reasonable ind-scheme $\CY$,
we still have an identification
$$\on{Funct}_{\DGCat}(\Dmod^!(\CY),\Vect) \simeq \Dmod_*(\CY).$$

\sssec{} \label{sss:ind-placid}

We shall say that an ind-scheme $\CY$ is \emph{ind-placid} if it admits a reasonable presentations 
who terms are placid schemes.

\medskip

Note that using \eqref{e:Dmod! indsch bis} we obtain that in this case $\Dmod^!(\CY)$ is compactly generated
(and hence dualizable) and $\Dmod_*(\CY)$ is also compactly generated, and we have
$$\Dmod_*(\CY)\simeq (\Dmod^!(\CY))^\vee.$$

\ssec{D-modules on the loop and arc groups} \label{ss:arcs and loops}

In this section we let $G$ be a connected affine algebraic group (i.e., we are not assuming $G$ to be reductive).

\medskip 

For the duration of this section we will unburden the notation and replace
$$\fL^+(G)_{x_0}\rightsquigarrow \fL^+(G),\,\, \fL(G)_{x_0}\rightsquigarrow \fL(G),\,\, \Gr_{G,x_0} \rightsquigarrow \Gr_G.$$

\sssec{}

Write 
$$\fL^+(G)\simeq \underset{n}{\on{lim}}\, \fL^+(G)/K_n.$$

This presentation exhibits $\fL^+(G)$ as a placid scheme. Thus we have:
$$\Dmod^!(\fL^+(G))\simeq \underset{n,(-)^!}{\on{colim}}\, \Dmod(\fL^+(G)/K_n)$$
and
$$\Dmod_*(\fL^+(G))\simeq  \underset{n,(-)_*}{\on{lim}}\, \Dmod(\fL^+(G)/K_n)\simeq \underset{n,(-)^*}{\on{colim}}\, \Dmod(\fL^+(G)/K_n).$$

We have a canonical identification
\begin{equation} \label{e:L+G duality}
\Dmod_*(\fL^+(G))\simeq (\Dmod^!(\fL^+(G)))^\vee.
\end{equation}

\sssec{} \label{sss:self-duality L+G}

That said, by \secref{sss:smooth fake}, we have a canonical identification
$$\Dmod^!(\fL^+(G))\overset{\sim}\to \Dmod_*(\fL^+(G)), \quad \omega_{\fL^+(G)}\mapsto \ul{k}_{\fL^+(G)}.$$

\medskip

Using this identification, we will simply write 
$$\Dmod^!(\fL^+(G)=:\Dmod(\fL^+(G)):=\Dmod_*(\fL^+(G)).$$

\sssec{}

The group structure on $\fL^+(G)$ makes $\Dmod^!(\fL^+(G))$ into a commutative Hopf algebra in $\DGCat$ (using !-pullbacks),
and it makes $\Dmod_*(\fL^+(G))$ into a cocommutative Hopf algebra in $\DGCat$ (under *-pushforwards). 

\medskip

The counit and unit in $\Dmod^!(\fL^+(G))$ are given by
$$\Dmod^!(\fL^+(G)) \overset{!\text{-fiber at }1}\longrightarrow \Dmod(\on{pt})=\Vect \text{ and }
\Vect \overset{k\mapsto \omega_{\fL^+(G)}}\longrightarrow \Dmod^!(\fL^+(G)),$$
respectively.

\medskip

The unit and counit in $\Dmod_*(\fL^+(G))$ are given by
$$\Vect \overset{k\mapsto \delta_1}\longrightarrow \Dmod_*(\fL^+(G)) \text{ and }
\Dmod_*(\fL^+(G)) \overset{\on{C}^\cdot(\fL^+(G),-)}\longrightarrow \Vect,$$
respectively, where $\delta_1$ is the *-direct image of $k\in \Dmod(\on{pt})$ under the unit map
$\on{pt}\to \fL^+(G)$. 

\medskip

These two structures are obtained from one another by duality, using \eqref{e:L+G duality}.

\begin{rem}

When using the notation $\Dmod(\fL^+(G))$, we will view it either as $\Dmod^!(\fL^+(G))$
(in the comonoidal incarnation) or $\Dmod_*(\fL^+(G))$ (in the monoidal incarnation), depending on the context. 

\end{rem}

\sssec{}

We now consider the case of $\fL(G)$. First off, we claim that it is ind-placid as an ind-scheme. Indeed, write
$$\Gr_G=\underset{i}{``\on{colim}"}\, Y_i,$$
where $Y_i\subset \Gr_G$ are closed $\fL^+(G)$-invariant subschemes. 

\medskip

Denote by $\wt{Y}_i$ the preimage of $Y_i$ in $\fL(G)$. The closed embeddings 
$$\wt{Y}_i\to \wt{Y}_j,$$
being obtained by base change from $Y_i\to Y_j$, are automatically of finite presentation.

\medskip

We claim that each $\wt{Y}_i$ is placid. Indeed, we can write it as
\begin{equation} \label{e:Yi left}
\underset{n}{\on{lim}}\, \wt{Y}_i/K_n,
\end{equation}
and/or
\begin{equation} \label{e:Yi right}
\underset{n}{\on{lim}}\, K_n\backslash \wt{Y}_i.
\end{equation}

Note that these two inverse families are automatically equivalent: for every $n$ there exists $n'$ such that the projection
$$\wt{Y}_i\to  \wt{Y}_i/K_n$$ factors as 
\begin{equation} \label{e:nn'}
\wt{Y}_i\to K_{n'}\backslash \wt{Y}_i\to \wt{Y}_i/K_n 
\end{equation}
and vice versa.

\sssec{}

Hence, we obtain that the categories 
$$\Dmod^!(\fL(G)) \text{ and } \Dmod_*(\fL(G))$$
are compactly generated and dual to each other.

\medskip

Explicitly,
$$\Dmod^!(\fL(G))\simeq \underset{i,(-)^!}{\on{lim}}\, \Dmod^!(\wt{Y}_i)\simeq \underset{i,(-)_*}{\on{colim}}\,  \Dmod^!(\wt{Y}_i),$$
while for every $i$,
$$\Dmod^!(\wt{Y}_i)\simeq \underset{n,(-)^!}{\on{colim}}\, \Dmod(\wt{Y}_i/K_n),$$
and 
$$\Dmod_*(\fL(G))\simeq \underset{i,(-)^!}{\on{lim}}\, \Dmod_*(\wt{Y}_i)\simeq \underset{i,(-)_*!}{\on{colim}}\,  \Dmod_*(\wt{Y}_i),$$
while for every $i$,
$$\Dmod_*(\wt{Y}_i)\simeq \underset{n,(-)_*}{\on{lim}}\, \Dmod(\wt{Y}_i/K_n)\simeq \underset{n,(-)^*}{\on{colim}}\, \Dmod(\wt{Y}_i/K_n).$$

In the above presentations of $\Dmod^!(\wt{Y}_i)$ and $\Dmod_*(\wt{Y}_i)$, one can replace the family $\wt{Y}_i/K_n$ by 
$K_n\backslash \wt{Y}_i$. 

\sssec{}

Note also that we have
\begin{equation} \label{e:Dmod! LG via Gr}
\Dmod^!(\fL(G))\simeq \underset{n,(-)^!}{\on{colim}}\, \Dmod(\fL(G)/K_n)
\end{equation}
and
\begin{equation} \label{e:Dmod* LG via Gr}
\Dmod_*(\fL(G))\simeq \underset{n,(-)_*}{\on{lim}}\, \Dmod(\fL(G)/K_n)\simeq \underset{n,(-)^*}{\on{colim}}\, \Dmod(\fL(G)/K_n),
\end{equation}
and we can also replace the family
$$n\mapsto \fL(G)/K_n$$
by
$$n\mapsto K_n\backslash \fL(G).$$

\sssec{}

Let
$$\omega^{\on{fake},*,L}_{\fL(G)}\in \Dmod_*(\fL(G))$$
be the object equal to the image of 
$$\omega_{\fL(G)/K_n}[-2\dim(\fL^+(G)/K_n)]\in \Dmod(\fL(G)/K_n)$$
for some/any $n$ under the presentation
$$\Dmod_*(\fL(G))\simeq \underset{n,(-)^*}{\on{colim}}\, \Dmod(\fL(G)/K_n).$$

It is clear that $\omega^{\on{fake},*,L}_{\fL(G)}$ is indeed a fake dualizing sheaf, i.e., it gives rise
to an equivalence
$$\Dmod^!(\fL(G))\to \Dmod_*(\fL(G)), \quad \CF\mapsto \CF\sotimes \omega^{\on{fake},*,L}_{\fL(G)}.$$

We define 
$$\omega^{\on{fake},*,R}_{\fL(G)}\in \Dmod_*(\fL(G))$$
similarly, using the presentation
$$\Dmod_*(\fL(G))\simeq \underset{n,(-)^*}{\on{colim}}\, \Dmod(K_n\backslash \fL(G)).$$

\sssec{}

Let $\mu:G\to \BG_m$ be the modular character (i.e., the determinant of the adjoint action). Let
$\deg(\mu)$ be the corresponding function
$$\pi_0(\fL(G))\overset{\mu}\to \pi_0(\fL(\BG_m))\simeq \BZ.$$

\begin{prop} \label{p:LR omega}
The objects $\omega^{\on{fake},*,L}_{\fL(G)}$ and $\omega^{\on{fake},*,R}_{\fL(G)}[2\deg(\mu)]$ are canonically isomorphic. 
\end{prop}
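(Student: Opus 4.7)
\medskip

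The plan is to exploit the group structure on $\fL(G)$: the object $\omega^{\on{fake},*,L}_{\fL(G)}$ is naturally left-$\fL(G)$-equivariant while $\omega^{\on{fake},*,R}_{\fL(G)}$ is right-$\fL(G)$-equivariant, so the two can be compared on the neutral component $\fL(G)^0$ and then propagated to each connected component at the cost of a twist detected by $\mu$. Since both objects and the shift $[2\deg(\mu)]$ decompose along $\pi_0(\fL(G))$, it suffices to produce a canonical isomorphism on each component $\fL(G)^\lambda$ separately.

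First I would verify that $\omega^{\on{fake},*,L}_{\fL(G)}$ is canonically left-$\fL(G)$-equivariant: for any $g\in\fL(G)$, the left-translation $L_g$ descends to an automorphism of $\fL(G)/K_n$, hence preserves $\omega_{\fL(G)/K_n}$ for every $n$, and these preservations assemble compatibly with the colimit presentation \eqref{e:Dmod* LG via Gr}. The dual statement holds for $\omega^{\on{fake},*,R}_{\fL(G)}$ with respect to right translations. Next, on the neutral component $\fL(G)^0$, the identification of \secref{sss:self-duality L+G} applied to $\fL^+(G)^0\subset\fL(G)^0$ together with left- (or equivalently right-) translation invariance yields a canonical isomorphism $\omega^{\on{fake},*,L}_{\fL(G)}\big|_{\fL(G)^0}\simeq\omega^{\on{fake},*,R}_{\fL(G)}\big|_{\fL(G)^0}$; this is consistent with $\deg(\mu)=0$ on the neutral component.

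To pass to an arbitrary component, pick a representative $t^\lambda\in\fL(G)^\lambda$ of $\lambda\in\pi_0(\fL(G))$, and use the identity $L_{t^\lambda}=R_{t^\lambda}\circ\on{Ad}(t^{-\lambda})$. Left-invariance of $\omega^{\on{fake},*,L}$ and right-invariance of $\omega^{\on{fake},*,R}$ then reduce the comparison on $\fL(G)^\lambda$ to computing the effect of $\on{Ad}(t^{-\lambda})$ on the common restriction $\omega^{\on{fake},*,L}_{\fL(G)}\big|_{\fL(G)^0}$. Working on the finite-dimensional quotients of $\fL^+(G)$ relative to $K_n$ (and passing to a subgroup of finite index preserved by $\on{Ad}(t^\lambda)$ when needed), the automorphism $\on{Ad}(t^{-\lambda})$ acts on these quotients with Jacobian obtained from the determinant of the adjoint action, i.e.~from $\mu(t^\lambda)\in k\ppart^\times$. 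Tracking this Jacobian through the normalizing shift $[-2\dim(\fL^+(G)/K_n)]$ in the definition of the fake dualizing sheaf converts the $t$-adic valuation of $\mu(t^\lambda)$ into precisely the cohomological shift $[2\deg(\mu)(\lambda)]$.

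The main obstacle is the last step: the action of $\on{Ad}(t^\lambda)$ on $\fL^+(G)$ does not preserve any fixed $K_n$, so the Jacobian computation must be carried out over a cofinal family of pairs of congruence subgroups related by $\on{Ad}(t^\lambda)$, and one needs to check that the resulting scalar discrepancies stabilize to a genuine cohomological shift rather than an ambiguous line-bundle twist. This reduces to the statement that the determinant of $\on{Ad}(t^\lambda)$ acting on $\fg\otimes k\qqart/t^n\fg\otimes k\qqart$, modulo units independent of $\lambda$, has $t$-adic valuation equal to $\deg(\mu)(\lambda)$ — an essentially linear-algebraic assertion about $\on{Ad}(t^\lambda)$ on $\fg\otimes k\ppart$.
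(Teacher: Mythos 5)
Your approach takes a genuinely different route from the paper's, organizing the argument around the group structure and the $\pi_0(\fL(G))$-decomposition, whereas the paper avoids $\pi_0$ altogether. The paper restricts to a placid piece $\wt{Y}_i$, chooses $n'$ so that $\wt{Y}_i\to \wt{Y}_i/K_n$ factors through $K_{n'}\backslash\wt{Y}_i$, and then observes that $K_{n'}\backslash\wt{Y}_i\to\wt{Y}_i/K_n$ is smooth of relative dimension $\dim(K_n/K_{n'})+\deg(\mu)$, where $\deg(\mu)$ is a locally constant function; this single dimension count handles every component at once and implicitly rules out a line-bundle discrepancy, because the comparison fibers are torsors under pro-unipotent groups and hence affine spaces with trivial relative canonical. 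Your route makes the appearance of $\deg(\mu)$ conceptually transparent as the Jacobian of $\on{Ad}(t^{-\lambda})$, which is a nice gloss, but it carries a real gap at the base case and does not obviously improve the amount of computation needed.

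The gap: the claimed isomorphism $\omega^{\on{fake},*,L}_{\fL(G)}|_{\fL(G)^0}\simeq\omega^{\on{fake},*,R}_{\fL(G)}|_{\fL(G)^0}$ does \emph{not} follow from the identification on $\fL^+(G)$ plus translation invariance. The object $\omega^{\on{fake},*,L}$ is only left-invariant and $\omega^{\on{fake},*,R}$ is only right-invariant, so agreement of their $!$-restrictions to the closed subscheme $\fL^+(G)$ does not propagate to $\fL(G)^0$ by translating by $g\in\fL(G)^0$ (that would require the \emph{other} invariance); nor can one glue objects of $\Dmod_*(\fL(G)^0)$ from their restrictions to the closed translates $g\,\fL^+(G)$. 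Closing this gap amounts to proving the $\deg(\mu)=0$ instance of the paper's relative-dimension computation — together with the affine-space-fiber remark to exclude a line-bundle twist and a shift check at a point of $\fL^+(G)\cap\wt{Y}_i$ — so the ``neutral component'' step is not really cheaper than the general one. Once it is supplied, your propagation via $L_{t^\lambda}=R_{t^\lambda}\circ\on{Ad}(t^{-\lambda})$ does work, and the stabilization issue you correctly flag in the last paragraph is exactly what the paper's $n\leadsto n'$ device resolves.
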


\begin{proof}

For every $i$, let
\begin{equation} \label{e:fake i}
\omega^{\on{fake},*,L}_{\wt{Y}_i} \text{ and } \omega^{\on{fake},*,R}_{\wt{Y}_i}
\end{equation} 
be the objects of $\Dmod_*(\wt{Y}_i)$ defined by a similar procedure, using the presentation of $\wt{Y}_i$ 
as \eqref{e:Yi left} and \eqref{e:Yi right}, respectively.

\medskip

It suffices to exhibit a compatible family of isomorphisms
\begin{equation} \label{e:LR i}
\omega^{\on{fake},*,L}_{\wt{Y}_i}  \simeq \omega^{\on{fake},*,R}_{\wt{Y}_i}[2\deg(\mu)].
\end{equation}

Note that the objects \eqref{e:fake i} are associated to the two dimension theories on $\wt{Y}_i$: one
attaches to 
$$\wt{Y}_i\to \wt{Y}_i/K_n$$ 
the integer $d_n:=\dim(\fL^+(G)/K_n)$, and another attaches the same integer to
$$\wt{Y}_i\to K_n\backslash \wt{Y}_i.$$

It suffices to show that these two dimension theories are in fact equivalent up to the shift by $\deg(\mu)$. 

\medskip

To do this, fix an integer $n$ and let $n'$ be as in \eqref{e:nn'}. The required equality follows from the fact that the resulting map
$$K_{n'}\backslash \wt{Y}_i\to \wt{Y}_i/K_n$$
is smooth of relative dimension $\dim(K_n/K_{n'})+\deg(\mu)$. 

\end{proof}

\sssec{} \label{sss:fake omega loop}

Assume that $G$ is unimodular. In this case, using the identification of \propref{p:LR omega}, we will use the notation
$$\omega^{\on{fake},*,L}_{\fL(G)}=:\omega^{\on{fake},*}_{\fL(G)}:=\omega^{\on{fake},*,R}_{\fL(G)}.$$

Thus, $\omega^{\on{fake},*}_{\fL(G)}$ gives rise to a canonical identification 
\begin{equation} \label{e:self-duality loop}
\Dmod^!(\fL(G)) \simeq \Dmod^*(\fL(G)).
\end{equation} 

We will use the notation:
$$\Dmod^!(\fL(G))=:\Dmod(\fL(G)):=\Dmod^*(\fL(G)).$$

\sssec{}

The structure of group-object in ind-schemes defines on $\Dmod^!(\fL(G))$ a structure of commutative Hopf algebra,
and on $\Dmod_*(\fL(G))$ a structure of cocommutative Hopf algebra. 

\medskip

These two structures are obtained from one another
by duality.

\begin{rem}

When using the notation $\Dmod(\fL(G))$, we will view it either as $\Dmod^!(\fL(G))$
(in the comonoidal incarnation) or $\Dmod_*(\fL(G))$ (in the monoidal incarnation), depending on the context. 

\end{rem}

\section{Categorical representations of (loop) groups} \label{s:1-cat rep}

In this section we (re)collect some facts pertaining to the notion of action of a group on a category.
We start with groups of finite type, and then develop the theory for the arc and loop groups. 

\ssec{The case of groups of finite type} \label{ss:fd groups}

In this subsection, we let $H$ be an affine algebraic group (of finite type). 

\sssec{}

We consider $\Dmod(H)$ as a monoidal category in $\DGCat$ under convolution. We set
$$H\mmod:=\Dmod(H)\mmod.$$

\medskip

Note that Verdier duality identifies $\Dmod(H)$ with its own dual. This allows us to view $\Dmod(H)$
as comonoidal category via 
$$\Dmod(H)\overset{\on{mult}^!}\longrightarrow \Dmod(H\times H)\simeq \Dmod(H)\otimes \Dmod(H).$$

We can tautologically interpret $H\mmod$ as $\Dmod(H)\commod$ for this structure.

\sssec{} \label{sss:sym mon H-mmod}

Push-forward (resp., pullback) with respect to the diagonal map extends the above monoidal (resp., comonoidal) 
structure on $\DGCat$ to a structure of cocommutative (resp., commutative)  Hopf algebra object in $\DGCat$. 
This structure endows
$H\mmod$ with a symmetric monoidal structure, compatible with the forgetful functor
\begin{equation} \label{e:forget H-mmod}
H\mmod\to \DGCat.
\end{equation} 

\medskip

The unit of this symmetric monoidal structure is a copy of $\Vect$, equipped with the trivial action of $H$, i.e. the coaction map
$$\Vect \to \Dmod(H)\otimes \Vect\simeq \Dmod(H)$$
is the functor $k\mapsto \omega_H$.

\medskip

The corresponding action map 
$$\Dmod(H)\simeq \Dmod(H)\otimes \Vect\to \Vect$$
is the functor of de Rham cochains, denoted $\on{C}^\cdot(H)$. 

\sssec{}

As is the case for modules over Hopf algebras, an object of $H\mmod$ is dualizable if and only if its
image under \eqref{e:forget H-mmod} is dualizable, and the functor \eqref{e:forget H-mmod}, being 
symmetric monoidal, commutes with duality.

\sssec{} \label{sss:adj H-mmod}

Let $F:\bC_1\to \bC_2$ be 1-morphism in $H\mmod$, and suppose that $F$ admits a left (resp., right) adjoint,
when viewed as a 1-morphism in $\DGCat$. We claim that this adjoint then exists in $H\mmod$.

\medskip

Indeed, a priori $F^L$ (resp., $F^R$) will be op-lax (resp., lax) compatible with the action of $\Dmod(H)$.
However, it is easy to see that the fact that $H$ is is a group (as opposed to a monoid) forces any
op-lax (resp., lax) compatible functor between module categories to be strictly compatible.

\medskip

Indeed, for a point $S$-point $h$ of $H$ and a lax-compatible functor $F:\bC_1\to \bC_2$, the morphism
$$F\circ h\overset{\alpha_h}\to h\circ F$$
(viewed as a natural transformation beween functors $\bC_1\otimes \Dmod(S)\rightrightarrows \bC_2\otimes \Dmod(S)$)
admits an inverse given by
$$h\circ F\simeq h\circ F\circ h^{-1}\circ h \overset{\alpha_{h^{-1}}}\to h\circ h^{-1}\circ F\circ h\simeq F\circ h,$$
and similarly for op-lax functors. 

\sssec{}

Note that the Verdier duality identification
\begin{equation} \label{e:Verdier duality H}
\Dmod(H)^\vee\simeq \Dmod(H)
\end{equation}
is compatible with the structure of object of $(H\times H)\mmod$ on the two sides. 

\medskip

In particular, the left and right adjoints of the forgetful functor \eqref{e:forget H-mmod} are canonically isomorphic. 

\begin{rem} \label{r:Frobenius fd}

Note that we can also interpret \eqref{e:Verdier duality H} as follows: the functor
$$\Dmod(H)\otimes \Dmod(H) \overset{\text{convolution}}\longrightarrow \Dmod(H)  
\overset{\text{counit}}\longrightarrow \Vect$$
is a duality pairing, which differs from the Verdier one by the inversion operation on $H$.

\medskip

This pairing makes $\Dmod(H)$ into a Frobenius algebra in $\DGCat$. 

\end{rem} 

\sssec{}

For $\bC\in H\mmod$, set
$$\bC^H:=\on{Funct}_{H\mmod}(\Vect,\bC).$$

We have a pair of adjoint functors
$$\oblv_H:\bC^H\rightleftarrows \bC:\on{Av}^H_*.$$

\sssec{}

Consider the category $\Vect$. The forgetful functor
$$\oblv_H:\Vect^H\to \Vect$$
is comonadic with the comonad in question is given by tensor product with $\on{C}^\cdot(H)$, on
which the coalgebra structure is induced by the group structure on $H$.

\medskip

Equivalently,
$$\Vect^H\simeq \on{C}_\cdot(H)\mod,$$
where $\on{C}_\cdot(H)$ is an algebra via the group structure on $H$.

\medskip

The category $\Vect^H$ carries a natural (symmetric) monoidal structure. It corresponds to the structure
on $\on{C}^\cdot(H)$ (resp., $\on{C}_\cdot(H)$) of commutative (resp., cocommutative) Hopf algebra.

\sssec{}

For any $\bC$, the category $\bC^H$ is naturally tensored over $\Vect^H$. The comonad 
$$\oblv_H\circ \on{Av}^H_*$$
on $\bC^H$ is given by tensoring with the coalgebra object 
$$ \on{Av}^H_*(k)\in \Vect^H.$$

Note that the underlying object of $\Vect$, i.e., $\oblv_H\circ \on{Av}^H_*(k)$, identifies with $\on{C}^\cdot(H)$. 

\medskip

In particular, this implies that the functor $\oblv_H:\bC^H\to \bC$ is fully faithful if $H$ is unipotent.

\sssec{}

For $\bC\in H\mmod$, let $\bC_H\in \DGCat$ be the object such that
$$\on{Funct}_{\DGCat}(\bC_H,\bC_0)=\on{Funct}_{H\mmod}(\bC,\bC_0), \quad \bC_0\in \DGCat,$$
where $H$ acts trivially on $\bC_0$.

\medskip

Note that the functor
$$\on{Av}^H_*:\bC\to \bC^H$$
is $H$-invariant, and hence gives rise to a functor
\begin{equation} \label{e:inv to coinv}
\bC_H\to \bC^H.
\end{equation}

\sssec{}

We have the following basic assertion, see, e.g., \cite[Theorem B.1.2]{Ga4}:
\begin{prop}
The functor \eqref{e:inv to coinv} is an equivalence.
\end{prop}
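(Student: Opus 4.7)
The plan is to leverage the fact, recorded in Remark \ref{r:Frobenius fd}, that $\Dmod(H)$ is a Frobenius algebra in $\DGCat$. In general, for a Frobenius algebra $\bA$ in $\DGCat$ equipped with an augmentation $\bA \to \Vect$, the Frobenius structure forces invariants and coinvariants of any $\bA$-module category to canonically agree.

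First, I would use the Frobenius structure (equivalently, the identification $\Dmod(H) \simeq \Dmod(H)^\vee$ of $(H\times H)$-modules recorded in the text) to exhibit $\Vect$, viewed as an object of $\Dmod(H)\mmod$ via the augmentation $\on{C}^\cdot(H,-): \Dmod(H) \to \Vect$, as \emph{self-dual} in the 2-category $\Dmod(H)\mmod$. The coevaluation $\Vect \to \Vect \otimes_{\Dmod(H)} \Vect \simeq \Vect^H$ is given by the object $\on{Av}^H_*(k) \in \Vect^H$, while the evaluation $\Vect^H \simeq \Vect \otimes_{\Dmod(H)} \Vect \to \Vect$ is the functor $\oblv_H$. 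The triangle identities amount to the Frobenius axiom for $\Dmod(H)$, i.e., compatibility of the Verdier pairing with the coproduct coming from the diagonal $H \to H \times H$; this should reduce to a base-change/projection-formula calculation on $H$, using the ambidextrous adjunction of \secref{sss:adj H-mmod}.

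Second, granted this self-duality, the standard Hom-tensor formula
$$\Vect \otimes_{\Dmod(H)} \bC \simeq \on{Funct}_{\Dmod(H)\mmod}(\Vect^\vee, \bC)$$
together with $\Vect^\vee \simeq \Vect$ yields a canonical equivalence $\bC_H \simeq \bC^H$. Unwinding the construction and substituting the explicit coevaluation, the resulting functor sends $\bc \in \bC$ to $\on{Av}^H_*(\bc) \in \bC^H$, which identifies it with \eqref{e:inv to coinv}.

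The main obstacle will be the bookkeeping in verifying that this abstractly-constructed equivalence recovers \eqref{e:inv to coinv} on the nose, which requires tracking the $\on{Av}^H_*(k)$ coevaluation through the bimodule Hom-tensor adjunction. If this proves cumbersome, a fallback is to observe that both $\bC \mapsto \bC_H$ and $\bC \mapsto \bC^H$ preserve colimits (the latter because $\Vect$ is compact in $\Dmod(H)\mmod$ by the self-duality established above), so it suffices to check that \eqref{e:inv to coinv} is an isomorphism on a collection of generators such as the free modules $\Dmod(H) \otimes \bC_0$, where both sides compute to $\bC_0$ and the natural map identifies with the identity via the $(\oblv_H, \on{Av}^H_*)$ adjunction.
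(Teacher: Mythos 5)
The paper itself does not supply a proof of this proposition: it cites \cite[Theorem B.1.2]{Ga4} without further argument, so there is nothing in the text to compare against. Your plan, via self-duality of $\Vect$ in $\Dmod(H)\mmod$ followed by Hom-tensor, is a correct and essentially standard route to the statement.

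One point is stated a bit too strongly: that ``for a Frobenius algebra $\bA$ \dots the Frobenius structure forces invariants and coinvariants of any $\bA$-module category to canonically agree.'' The Frobenius isomorphism $\bA\simeq\bA^\vee$ of bimodules does not by itself entail dualizability of the augmentation module $\Vect$ in $\bA\mmod$; that requires a separate input, namely the \emph{rigidity} of $\Dmod(H)$ (compactly generated, with $\delta_1$ compact and compact objects dualizable under convolution), which implies that any $\Dmod(H)$-module whose underlying DG category is dualizable --- in particular $\Vect$ --- is automatically dualizable in $\Dmod(H)\mmod$. Only after that does the Frobenius/Verdier structure enter, to identify
$\Vect^\vee \simeq \on{Funct}_{H\mmod}(\Vect,\Dmod(H)) \simeq \Dmod(H)^H \simeq \Dmod(\on{pt}\backslash H) \simeq \Vect$
with trivial residual action, from which $\bC^H\simeq\Vect^\vee\otimes_{\Dmod(H)}\bC\simeq\bC_H$. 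You do in effect reach both facts through your explicit coevaluation and evaluation, and your expectation that the triangle identities reduce to a base-change computation on $H$ is right --- but the relevant coproduct is the convolution coproduct $m^!:\Dmod(H)\to\Dmod(H)\otimes\Dmod(H)$ coming from the multiplication map $H\times H\to H$, not the $\boxtimes$-diagonal; the geometric content is Poincar\'e duality on $H$. (For non-unimodular $H$ a modular-character twist appears in the bimodule self-duality of $\Dmod(H)$, but the cohomological shifts cancel in the D-module setting, so the statement holds for all $H$ of finite type.) Your fallback is sound and perhaps the cleanest way to finish: once dualizability of $\Vect$ is secured so that $\bC\mapsto\bC^H$ commutes with colimits, checking on free modules $\Dmod(H)\otimes\bC_0$ is elementary --- $(\Dmod(H)\otimes\bC_0)^H$ is computed directly as the objects of the form $\omega_H\otimes\bc_0$, $\on{Av}^H_*$ on a free module unwinds to $\on{C}^\cdot(H,-)\otimes\on{Id}$, which is the same as the coinvariants projection, so the induced map $\bC_H\to\bC^H$ is the identity.
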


\begin{cor} \label{c:inv self-dual} 
Let $\bC\in H\mmod$ be dualizable as a DG category. Then $\bC^H$ is also dualizable and 
we have a canonical equivalence
$$(\bC^H)^\vee\simeq (\bC^\vee)^H.$$
\end{cor}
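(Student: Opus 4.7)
The plan is to deduce everything from the invariants–coinvariants identification $\bC^H\simeq \bC_H$ of the preceding Proposition, combined with the Frobenius (self-duality) structure on $\Dmod(H)$ noted in Remark \ref{r:Frobenius fd}. The key point is that, over a Frobenius monoidal category, dualizability is detected on underlying DG categories, so the hypothesis that $\bC\in H\mmod$ is dualizable in $\DGCat$ automatically upgrades to dualizability of $\bC$ \emph{inside} $H\mmod$; concretely, the DG-dual $\bC^\vee$ acquires an $H$-action (the contragredient one), and the evaluation/coevaluation pairings are $H$-equivariant. This is essentially a formal consequence of the symmetric monoidal structure on $H\mmod$ compatible with \eqref{e:forget H-mmod} (\secref{sss:sym mon H-mmod}).

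With this in hand, I would compute, for an arbitrary $\bD\in\DGCat$ (equipped with the trivial $H$-action),
\[
\on{Funct}_{\DGCat}(\bC_H,\bD)\;=\;\on{Funct}_{H\mmod}(\bC,\bD)\;\simeq\;\on{Funct}_{H\mmod}(\Vect,\bC^\vee\otimes\bD)\;=\;(\bC^\vee\otimes\bD)^H,
\]
using the universal property of $(-)_H$, the dualizability of $\bC$ in $H\mmod$, and the definition of invariants. To finish, I would identify $(\bC^\vee\otimes\bD)^H$ with $(\bC^\vee)^H\otimes\bD$: since the $H$-action on $\bC^\vee\otimes\bD$ factors through the first tensor factor, this is a statement that $(-)^H$ commutes with tensoring by a trivial-action category. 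Chaining the isomorphisms gives $\on{Funct}_{\DGCat}(\bC^H,\bD)\simeq (\bC^\vee)^H\otimes\bD$, which exhibits $\bC^H$ as dualizable with dual $(\bC^\vee)^H$.

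The main obstacle will be the final identification $(\bC^\vee\otimes\bD)^H\simeq (\bC^\vee)^H\otimes\bD$. Invariants, \emph{a priori}, are a limit and need not commute with tensor products; however, the preceding Proposition lets us rewrite $(-)^H$ as $(-)_H$, and coinvariants, being a left adjoint (they are defined by the universal property mapping into trivial-action categories), visibly do commute with such external tensor products. The rest—detection of dualizability on the underlying DG category, and $H$-equivariance of the induced pairings—is standard for modules over a rigid/Frobenius monoidal category in $\DGCat$ and does not present additional difficulty.
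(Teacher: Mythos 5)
Your proof is correct, and it is the natural argument the paper has in mind (the Corollary is stated without proof). All of the steps check out: dualizability of $\bC$ inside $H\mmod$ follows from the fact, stated explicitly after \secref{sss:sym mon H-mmod}, that dualizability in $H\mmod$ is detected by the symmetric monoidal forgetful functor \eqref{e:forget H-mmod}; the universal property of $\bC_H$ gives $\on{Funct}_{\DGCat}(\bC_H,\bD)=\on{Funct}_{H\mmod}(\bC,\bD)$; dualizability of $\bC$ in $H\mmod$ converts this to $(\bC^\vee\otimes\bD)^H$; and you correctly observe that the last identification $(\bC^\vee\otimes\bD)^H\simeq(\bC^\vee)^H\otimes\bD$ is where the Proposition is genuinely used, since coinvariants (being $-\underset{\Dmod(H)}\otimes\Vect$, a colimit) visibly commute with $\otimes\bD$ whereas invariants a priori do not. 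One small quibble: the phrase ``over a Frobenius monoidal category, dualizability is detected on underlying DG categories'' is not quite the right slogan — the relevant structure is that $\Dmod(H)$ is a (cocommutative) Hopf algebra in $\DGCat$, which is what makes $H\mmod$ symmetric monoidal with a symmetric monoidal forgetful functor. The Frobenius structure of Remark~\ref{r:Frobenius fd} is a separate piece of data and isn't what's being used for this detection; your own second sentence gives the correct attribution, so this is only a terminological slip, not a gap in the argument.
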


\begin{cor} \label{c:ten vs coten over H}
For $\bC_1,\bC_2\in H\mmod$, we have a canonical equivalence
$$\bC_1\underset{\Dmod(H)}\otimes \bC_2\simeq (\bC_1\otimes \bC_2)^H.$$
\end{cor}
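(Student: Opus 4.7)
The strategy is to identify $\bC_1\underset{\Dmod(H)}\otimes\bC_2$ with the coinvariants $(\bC_1\otimes\bC_2)_H$ for the diagonal $H$-action, and then to invoke the preceding proposition identifying coinvariants with invariants.

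First, I would unwind the two universal properties. On one hand, by definition of the relative tensor product,
$$\on{Funct}_{\DGCat}\bigl(\bC_1\underset{\Dmod(H)}\otimes\bC_2,\bD\bigr)\simeq\on{Funct}_{\Dmod(H),\Dmod(H)}(\bC_1\otimes\bC_2,\bD),$$
i.e.\ the category of functors compatible with the right $\Dmod(H)$-action on $\bC_1$ and the left $\Dmod(H)$-action on $\bC_2$, with $\bD$ carrying no action. On the other hand, the defining universal property of coinvariants yields
$$\on{Funct}_{\DGCat}\bigl((\bC_1\otimes\bC_2)_H,\bD\bigr)\simeq\on{Funct}_{H\mmod}(\bC_1\otimes\bC_2,\bD_{\on{triv}}),$$
where $\bC_1\otimes\bC_2$ is equipped with the diagonal $H$-action from \secref{sss:sym mon H-mmod}.

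Next, I would produce a canonical equivalence between these two functor categories. The key point is that $\Dmod(H)$ is a Hopf algebra (equivalently, $H$ is a \emph{group}, so inversion provides an antipode on $\Dmod(H)$). Via the antipode, one of the two one-sided $\Dmod(H)$-actions on $\bC_1\otimes\bC_2$ can be absorbed into the other, organizing the two-sided bar complex defining $\otimes_{\Dmod(H)}$ into the bar complex computing coinvariants for the single diagonal action. This is exactly the standard Hopf-algebra manipulation $M\otimes_A N\simeq(M\otimes N)_A$, upgraded to the homotopy-coherent $\infty$-setting, where the coproduct of $\Dmod(H)$ is implemented geometrically by $!$-pullback along $\Delta_H$ (or $*$-pushforward along $\on{mult}:H\times H\to H$) as in \secref{sss:sym mon H-mmod}. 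Combining these identifications gives $\bC_1\underset{\Dmod(H)}\otimes\bC_2\simeq(\bC_1\otimes\bC_2)_H$; applying the preceding proposition then yields $(\bC_1\otimes\bC_2)_H\simeq(\bC_1\otimes\bC_2)^H$, as desired.

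The main obstacle is the coherent verification of the step ``$\Dmod(H)$-bilinear to the trivial module $=$ $H$-equivariant for the diagonal'': producing this not as an ad hoc comparison of objects but as an equivalence of the corresponding simplicial bar constructions, naturally in $\bD$. Once the inversion map is packaged as a symmetric-monoidal antipode data on $\Dmod(H)$ and the two descriptions of the bar complex are identified, the rest of the argument is purely formal. No additional geometric input is required beyond the two facts already in hand: the Hopf structure on $\Dmod(H)$ (\secref{sss:sym mon H-mmod}) and the coinvariants-equals-invariants identification recorded just above.
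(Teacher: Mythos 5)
Your proposal is correct and follows the argument the paper has in mind: the paper gives no explicit proof, stating this as an immediate corollary of the preceding proposition that $\bC_H\simeq\bC^H$, and the missing step is exactly the Hopf-algebra identification $\bC_1\underset{\Dmod(H)}\otimes\bC_2\simeq(\bC_1\otimes\bC_2)_H$ you supply via the antipode (inversion on $H$). One small remark: when you write "$\on{Funct}_{\Dmod(H),\Dmod(H)}(\bC_1\otimes\bC_2,\bD)$" for the universal property of the relative tensor product, what you mean is the category of $\Dmod(H)$-\emph{balanced} functors $\bC_1\otimes\bC_2\to\bD$, not bimodule functors to a trivial bimodule; with that reading, the comparison of universal properties is precisely the classical computation that a functor satisfying $F(a^{-1}x\otimes y)\simeq F(x\otimes ay)$ is the same as one invariant under the diagonal $H$-action, and the coherence upgrade is routine given that the Hopf/antipode data on $\Dmod(H)$ is geometric (coming from the group structure on $H$), as recorded in the discussion around \secref{sss:sym mon H-mmod} and Remark \ref{r:Frobenius fd}.
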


\begin{cor} \label{c:inv and colimits}
The functor 
$$\bC\mapsto \bC^H, \quad H\mmod\to \DGCat$$
commutes with colimits and tensor products by objects of $\DGCat$.
\end{cor}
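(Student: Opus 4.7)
The plan is to deduce both statements by passing from invariants to \emph{coinvariants}, using the preceding Proposition which asserts that the natural map $\bC_H \to \bC^H$ is an equivalence. Once we make that translation, the content of Corollary \ref{c:inv and colimits} becomes a completely formal statement about relative tensor products in $\DGCat$, which is the unit of a symmetric monoidal structure in which $\Dmod(H)$ is an algebra object.

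Concretely, I would first recall that by construction
\[
\bC_H \;\simeq\; \bC \underset{\Dmod(H)}\otimes \Vect,
\]
where $\Vect$ carries the trivial $\Dmod(H)$-action (via counit, i.e.\ $k \mapsto \omega_H$ as in \secref{sss:sym mon H-mmod}). The relative tensor product in $\DGCat$ preserves colimits separately in each of its inputs, since it can be computed as the geometric realization of the usual bar construction, and geometric realizations and the absolute tensor product in $\DGCat$ both commute with colimits. Applying the invariants$\,=\,$coinvariants identification, this gives the commutation of $\bC \mapsto \bC^H$ with colimits in $\bC$.

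For the compatibility with tensor products by an object $\bC_0 \in \DGCat$, I would observe that $\bC \otimes \bC_0$ (with $H$ acting through the first factor) is exactly the relative tensor product of $\bC$ with $\bC_0$ where $\bC_0$ is viewed in $\Dmod(H)\mmod$ through the trivial action. Using the associativity of the relative tensor product and the fact that tensor product over $\Vect$ is absolute, one computes
\[
(\bC \otimes \bC_0)_H
\;\simeq\; (\bC \otimes \bC_0)\underset{\Dmod(H)}\otimes \Vect
\;\simeq\; \bC_0 \otimes \Bigl(\bC\underset{\Dmod(H)}\otimes \Vect\Bigr)
\;\simeq\; \bC_0 \otimes \bC_H,
\]
and translating back to invariants gives $(\bC \otimes \bC_0)^H \simeq \bC_0 \otimes \bC^H$, as desired.

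There is not really a hard step here: the preceding Proposition (invariants$\,=\,$coinvariants) does all of the real work, and once we have it, both claims reduce to bookkeeping with the symmetric monoidal structure on $\DGCat$ and the definition of $(-)_H$ as a relative tensor product. The only point where one should be slightly careful is checking that the coinvariants presentation $\bC_H \simeq \bC \underset{\Dmod(H)}\otimes \Vect$ is the one compatible with the functoriality used in the statement of the corollary, so that ``commutes with colimits'' on one side genuinely matches ``commutes with colimits'' on the other; this is immediate from the universal property of coinvariants spelled out just before \eqref{e:inv to coinv}.
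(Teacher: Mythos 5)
Your proof is correct and is essentially the argument the paper intends: the Corollary is listed immediately after the invariants $=$ coinvariants Proposition precisely because, once one has $\bC^H \simeq \bC_H \simeq \bC \underset{\Dmod(H)}\otimes \Vect$, both claims reduce to the fact that relative tensor products in $\DGCat$ commute with colimits and satisfy the obvious associativity/commutativity against $-\otimes \bC_0$. The same computation also recovers the paper's \corref{c:ten vs coten over H} when one replaces $\Vect$ by a general $\bC_2$.
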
 

\ssec{The case of arc groups}

In this and the next subsections we keep the notational change from \secref{ss:arcs and loops}. 

\sssec{}

We set
$$\fL^+(G)\mmod:=\Dmod^!(\fL^+(G))\commod \simeq \Dmod_*(\fL^+(G))\mmod.$$

The entire discussion in \secref{ss:fd groups} is applicable in this case. Moreover, as we shall 
see below, the study of $\fL^+(G)\mmod$ reduces to that of $H\mmod$ for finite-dimensional
quotients $\fL^+(G)\rightrightarrows H$. 

\sssec{}

For $\bC\in \fL^+(G)\mmod$ and an integer $n$, let $\be_n$ be the endofunctor of $\bC$
equal to $\oblv_{K_n}\circ \on{Av}^{K_n}_*$. 

\medskip

Note that $\be_n$ can be thought of as the action of 
$$\ul{k}_{K_n}\in \Dmod_*(\fL^+(G)).$$

\medskip

The essential image of $\be_n$ is the \emph{full subcategory}
$$\bC^{K_n}\subset \bC.$$

Since $K_n$ is normal in $\fL^+(G)$, the category $\bC^{K_n}$ is stable under the $\fL^+(G)$-action, i.e., forms
a subobject in $\fL^+(G)\mmod$. Moreover, the action of $\fL^+(G)$ on $\bC^{K_n}$ factors through 
$\fL^+(G)/K_n$. 

\sssec{}

Consider the colimit
\begin{equation} \label{e:inv under congr}
\underset{n}{\on{colim}}\, \bC^{K_n},
\end{equation}
formed in $\fL^+(G)\mmod$, where the transition functors are the natural inclusions. 

\medskip

Since the index family (i.e., $\bZ^{\geq 0}$) is filtered, the underlying DG category of \eqref{e:inv under congr} 
is a similar colimit taken in $\DGCat$.

\medskip

Note also that since for $n_1\leq n_2$, the embedding
$$\bC^{K_{n_1}}\hookrightarrow \bC^{K_{n_2}}$$
admits a right adjoint, we can rewrite \eqref{e:inv under congr} also as
\begin{equation} \label{e:inv under congr bis}
\underset{n}{\on{lim}}\, \bC^{K_n},
\end{equation}
where the limit is taken with respect to the above averaging functors.

\sssec{}

We have a tautologically defined map in $\fL^+(G)\mmod$
\begin{equation} \label{e:inv under congr to C}
\underset{n}{\on{colim}}\, \bC^{K_n} \to \bC.
\end{equation} 

We claim:

\begin{prop}
The map \eqref{e:inv under congr to C} is an isomorphism.
\end{prop}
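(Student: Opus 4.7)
The strategy is to reduce the claim, by naturality and colimit-preservation, to the universal case $\bC = \Dmod_*(\fL^+(G))$ acted on by left translation, where it becomes a direct restatement of the placid-scheme structure of $\fL^+(G)$.

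First, I would reduce to checking that the comparison map is an equivalence of underlying DG categories. Indeed, an isomorphism in $\fL^+(G)\mmod$ is detected at the DG level, and the $\fL^+(G)$-equivariance of the transition inclusions $\bC^{K_{n_1}} \hookrightarrow \bC^{K_{n_2}}$ (for $n_1 \le n_2$) ensures that the filtered colimit \eqref{e:inv under congr}, computed in $\fL^+(G)\mmod$, agrees with the one computed in $\DGCat$. Both functors $\bC \mapsto \bC$ and $\bC \mapsto \underset{n}{\on{colim}}\,\bC^{K_n}$, viewed as functors $\fL^+(G)\mmod \to \DGCat$, preserve colimits in $\bC$: for the latter, one combines colimit-preservation of $(-)^{K_n}$ — reduced to the finite-type statement of \corref{c:inv and colimits} via the identification $\bC^{K_n} \simeq (\bC^{K_N})^{K_n/K_N}$ for $N \geq n$, using normality of $K_N$ in $\fL^+(G)$ — with the commutation of filtered colimits with all colimits. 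By monadicity of the forgetful functor $\fL^+(G)\mmod \to \DGCat$, every object of $\fL^+(G)\mmod$ is a colimit (coming from the bar resolution) of free modules of the form $\Dmod_*(\fL^+(G)) \otimes \bc_0$ with $\bc_0 \in \DGCat$; by naturality of \eqref{e:inv under congr to C} in $\bC$, it therefore suffices to verify the claim on such frees, and hence — after a tensor factorization — on $\bC = \Dmod_*(\fL^+(G))$ itself.

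In this universal case, descent along the smooth projection $\fL^+(G) \to K_n \backslash \fL^+(G)$ identifies $(\Dmod_*(\fL^+(G)))^{K_n}$ with $\Dmod(K_n\backslash\fL^+(G))$, and the transition functors are the $*$-pullbacks along the smooth surjections $K_{n_2}\backslash\fL^+(G) \twoheadrightarrow K_{n_1}\backslash\fL^+(G)$ for $n_1 \leq n_2$. The desired equivalence
$$\underset{n, (-)^*}{\on{colim}}\, \Dmod(K_n\backslash\fL^+(G)) \xrightarrow{\sim} \Dmod_*(\fL^+(G))$$
is then exactly the placid-scheme presentation \eqref{e:D* placid} arising from $\fL^+(G) \simeq \underset{n}{\on{lim}}\, K_n\backslash\fL^+(G)$, combined with the self-duality of \secref{sss:self-duality L+G} to match the left- and right-action conventions.

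The step I expect to be the main obstacle is verifying colimit-preservation of $(-)^{K_n}$ in the pro-algebraic setting. The inductive reduction via $\bC^{K_n} \simeq (\bC^{K_N})^{K_n/K_N}$ shifts the difficulty to a single application of $(-)^{K_N}$ for one sufficiently small (pro-unipotent) $K_N$; this should follow from the fact that $\oblv_{K_N}: \bC^{K_N} \to \bC$ is a fully faithful left adjoint with continuous right adjoint $\on{Av}^{K_N}_*$, so $\bC^{K_N}$ sits as a colocalization of $\bC$ functorially in the $\fL^+(G)$-module structure. Once this technical point is in place, the three reduction steps above combine to give the proposition.
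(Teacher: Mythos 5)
Your proof is correct in outline but takes a genuinely different route from the paper's, and one step is left at the level of a sketch.

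The paper's argument is considerably more economical and sidesteps the two reductions you perform. After observing (as you do) that conservativity of $\fL^+(G)\mmod\to\DGCat$ reduces the claim to an equivalence of underlying DG categories, the paper notes that fully faithfulness of \eqref{e:inv under congr to C} is automatic: the index poset is filtered and each $\bC^{K_n}\hookrightarrow\bC$ is fully faithful with continuous right adjoint. It then establishes essential surjectivity \emph{object by object}: for $\bc\in\bC$, the canonical map $\underset{n}{\on{colim}}\,\be_n(\bc)\to\bc$ is an isomorphism, because $\be_n$ is the action of $\ul{k}_{K_n}\in\Dmod_*(\fL^+(G))$ and $\underset{n}{\on{colim}}\,\ul{k}_{K_n}\simeq\delta_1$ (the monoidal unit) in $\Dmod_*(\fL^+(G))$. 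This is the same placid-presentation fact you exploit, but packaged as a single colimit in the Hecke algebra $\Dmod_*(\fL^+(G))$ and applied directly, with no need to pass through monadicity, the bar resolution, or colimit-preservation of the functor $\bC\mapsto\bC^{K_n}$.

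The place your argument is not fully watertight is exactly where you flag it. \corref{c:inv and colimits} is stated for algebraic groups $H$ of finite type, so the reduction $\bC^{K_n}\simeq(\bC^{K_N})^{K_n/K_N}$ only transfers the problem to colimit-preservation of $(-)^{K_N}$ for a pro-unipotent $K_N$, and this is not covered by the paper's cited results (neither \propref{e:inv to coinv}, which is finite type, nor \propref{p:inv vs coinv loop}, which is $\fL(G)$ for reductive $G$). Your ``colocalization'' observation does not directly give colimit-preservation of $\bC\mapsto\bC^{K_N}$: being a fully faithful left adjoint \emph{inside each $\bC$} is a property of the unit/counit, not of the functor $\bC\mapsto\bC^{K_N}$ between categories of modules. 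The correct argument is rather that, since $K_N$ is pro-unipotent (hence contractible), invariants and coinvariants agree — $\bC_{K_N}\xrightarrow{\sim}\bC^{K_N}$ — and $\bC_{K_N}$ is a relative tensor product, which manifestly preserves colimits. This is true and standard, but it is an additional input not established in the paper; your proof as written conflates it with the colocalization structure. Once that lemma is in hand, your reduction to the universal case $\bC=\Dmod_*(\fL^+(G))$ and the identification via the placid presentation \eqref{e:D* placid} are correct, and the two approaches converge on the same geometric content.
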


\begin{proof}

Since the forgetful functor
$$\fL^+(G)\mmod\to \DGCat$$ 
is conservative, it suffices to show that the functor \eqref{e:inv under congr to C} is an equivalence
of DG categories. 

\medskip

Since the category of indices is filtered, and each $\bC^{K_n}\to \bC$ is fully faithful
admitting a colimit-preserving right adjoint, it follows automatically that the functor
\eqref{e:inv under congr to C} is fully faithful.  

\medskip

Hence, it remains to show that it is essentially surjective. I.e., it suffices to exhibit any object
of $\bC$ as a colimit of objects belonging to $\bC^{K_n}$ for some $n$. In fact, we claim that 
for any $\bc\in \bC$, the naturally defined map
\begin{equation} \label{e:object as limit}
\underset{n}{\on{colim}}\, \be_n(\bc)\to \bC
\end{equation} 
is an isomorphism.

\medskip

Indeed, \eqref{e:object as limit} follows from the isomorphism
$$\underset{n}{\on{colim}}\, \ul{k}_{K_n}\simeq \delta_1$$
in $\Dmod_*(\fL^+(G))$. 

\end{proof} 

\ssec{The case of loop groups} \label{ss:cat rep loop group}

We finally consider the case of $\fL(G)$, and define what we mean by categories acted on by it.

\sssec{}

We define 
$$\fL(G)\mmod:=\Dmod^!(\fL(G))\commod=\Dmod_*(\fL(G))\mmod.$$

Remarks as in \secref{sss:sym mon H-mmod}-\ref{sss:adj H-mmod} apply
equally well to $\fL(G)\mmod$.

\sssec{}

For $\bC\in \fL(G)\mmod$, set
$$\bC^{\fL(G)}:=\on{Funct}_{\fL(G)\mmod}(\Vect,\bC).$$

In particular, we can consider the (symmetric monoidal) category $\Vect^{\fL(G)}$. 

\sssec{}

We claim:

\begin{lemconstr}
There is a canonical equivalence of symmetric monoidal categories
$$\Vect^{\fL(G)}\simeq \on{C}_\cdot(\fL(G))\mod,$$
compatible with the forgeftul functors to $\Vect$. 
\end{lemconstr}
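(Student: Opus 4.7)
The plan is to establish the equivalence by a monadic Barr--Beck argument, analogous to the finite-dimensional case treated in the Remark after Proposition~\ref{sss:BH}, while keeping careful track of the symmetric monoidal structures through the Hopf structure on $\Dmod_*(\fL(G))$ developed in Section~\ref{ss:arcs and loops}.

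First, I would recall that $\Vect^{\fL(G)}=\on{Funct}_{\fL(G)\mmod}(\Vect,\Vect)$ inherits a symmetric monoidal structure from the one on $\fL(G)\mmod$ of Section~\ref{sss:sym mon H-mmod}, and that the forgetful functor $\oblv_{\fL(G)}:\Vect^{\fL(G)}\to \Vect$ is symmetric monoidal and conservative, with right adjoint $\on{Av}^{\fL(G)}_*$. I would then construct a canonical symmetric monoidal comparison functor
$$\Phi:\on{C}_\cdot(\fL(G))\mod\longrightarrow \Vect^{\fL(G)}$$
using the following input: the associative (Hopf) algebra structure on $\on{C}_\cdot(\fL(G))$ is obtained by applying the augmentation $\pi_*:\Dmod_*(\fL(G))\to \Vect$ (the unique monoidal functor taking $\delta_1$ to $k$) to the Hopf algebra $\Dmod_*(\fL(G))$; functoriality of $\pi_*$ with respect to the action on any comodule category then equips every $\on{C}_\cdot(\fL(G))$-module with a canonical $\fL(G)$-equivariant structure, compatible with $\oblv_{\fL(G)}$.

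Next, I would prove that $\Phi$ is an equivalence by identifying it with the comparison induced by comonadic Barr--Beck--Lurie. Both forgetful functors to $\Vect$ are symmetric monoidal, conservative, and admit continuous right adjoints; hence both sides are comonadic over $\Vect$, and the equivalence reduces to matching the two comonads. Concretely, I would show
$$\oblv_{\fL(G)}\circ\on{Av}^{\fL(G)}_*(k)\simeq \on{C}^\cdot(\fL(G))$$
as coalgebras in $\Vect$, where the right-hand side carries the coalgebra structure dual to the Pontrjagin product on $\on{C}_\cdot(\fL(G))$. This identification uses the fake dualizing sheaf $\omega^{\on{fake},*}_{\fL(G)}$ from Section~\ref{sss:fake omega loop} and the reasonable ind-placid presentation $\fL(G)\simeq\underset{i}{``\on{colim}"}\,\wt{Y}_i$ to reduce the identification to the finite-type pieces $\wt{Y}_i/K_n$, where the analogous computation $\on{Av}^H_*(k)\simeq \on{C}^\cdot(H)$ is standard.

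Finally, the symmetric monoidal upgrade is immediate once the underlying equivalence is established: the symmetric monoidal structure on $\Vect^{\fL(G)}$ is governed by the cocommutative comultiplication of the Hopf algebra $\Dmod_*(\fL(G))$ (pushforward along the diagonal), while the one on $\on{C}_\cdot(\fL(G))\mod$ is governed by the Hopf coproduct on $\on{C}_\cdot(\fL(G))$; these match via the naturality of $\pi_*$. The principal technical obstacle is the coalgebra identification in the previous paragraph, which requires verifying that the ind-placid approximations are compatible with all three pieces of structure (the monoidal, comonoidal, and antipode structures of the Hopf algebra) --- this is a bookkeeping exercise once one commits to the presentations of $\Dmod_*(\fL(G))$ from Section~\ref{ss:arcs and loops}, but the interaction between limits/colimits and tensor products in $\DGCat$ must be handled with some care.
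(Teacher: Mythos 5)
Your proposal takes a genuinely different route from the paper, which proves the lemma by directly unwinding the totalization $\on{Tot}(\Dmod^!(\fL(G)^\bullet))$ defining $\Vect^{\fL(G)}$: the descent data boils down to an isomorphism $\omega_{\fL(G)}\otimes V \simeq \omega_{\fL(G)}\otimes V$ with associativity; one direction of this isomorphism is, by the adjunction $(\on{C}^\cdot_c(\fL(G),-),\ \omega_{\fL(G)}\otimes -)$, the same data as a map $\on{C}_\cdot(\fL(G))\otimes V\to V$ plus associativity, i.e. a module structure; and, conversely, associativity of a module structure forces the corresponding one-sided map to actually be invertible. You propose instead to match the two sides by Barr--Beck over $\Vect$.

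The Barr--Beck idea can be made to work, but your execution has a gap in the choice of adjoint and, consequently, in what you are allowed to conclude from the identification at $k$. You run the \emph{comonadic} version with the right adjoints, and you want to reduce the comparison to the isomorphism of coalgebras $\oblv_{\fL(G)}\circ\on{Av}^{\fL(G)}_*(k)\simeq \on{C}^\cdot(\fL(G))$. This reduction is only valid if the comonad in question is of the ``tensor with a fixed coalgebra'' type, so that it is determined by its value on the unit. But the forgetful functor $\on{C}_\cdot(\fL(G))\mod\to\Vect$ has right adjoint given by \emph{coinduction}, so the resulting comonad is $V\mapsto \Hhom(\on{C}_\cdot(\fL(G)),V)$, not $V\mapsto \on{C}^\cdot(\fL(G))\otimes V$. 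These two functors agree only when $\on{C}_\cdot(\fL(G))$ is a dualizable (perfect) complex, which fails here: $\fL(G)$ is infinite-dimensional, so $\on{C}^\cdot(\fL(G))$ is not finite-dimensional. For the same reason, comodules over the coalgebra $\on{C}^\cdot(\fL(G))$ are \emph{not} the same as modules over $\on{C}_\cdot(\fL(G))$ --- the two categories coincide only in the dualizable case (cf.\ Section~\ref{sss:fin dim Hopf}, which is where the paper uses exactly this identification for finite-dimensional $H$). So even if you establish the isomorphism of coalgebras at $k$, you neither pin down the comonad nor land in the right category.

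The fix is to run the \emph{monadic} Barr--Beck instead, using the left adjoint $\on{Av}^{\fL(G)}_!$ of $\oblv_{\fL(G)}$ (which does exist; cf.\ \secref{sss:need monad}). The resulting monad on $\Vect$ is tensoring with $\on{C}_\cdot(\fL(G))=\on{C}^\cdot_c(\fL(G),\omega_{\fL(G)})$, which \emph{is} of tensoring type and hence is determined, together with its algebra structure, by its value on $k$. This is the monad for $\on{C}_\cdot(\fL(G))\mod$ on the nose, so Barr--Beck gives the equivalence. You would still need to check that $\oblv_{\fL(G)}$ satisfies the monadic hypotheses (conservativity and preservation of $\oblv$-split geometric realizations), but that part of your outline is fine. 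You should also be aware that the paper's direct argument quietly handles a point your approach outsources to Barr--Beck: the descent datum a priori demands an \emph{isomorphism} $\omega_{\fL(G)}\otimes V\to\omega_{\fL(G)}\otimes V$, whereas the module axioms only give a map; the paper observes that associativity makes the map automatically invertible. This is implicit in the monadicity of $\oblv_{\fL(G)}$, so it is not an additional gap, but it is worth noting as the nontrivial content that the two proofs encode differently.
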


\begin{proof}

For $V\in \Vect$, its lift to an object of $\Vect^{\fL(G)}$ is a datum of structure of an object of
$$\on{Tot}(\Dmod^!(\fL(G)^\bullet))$$
with terms $\omega_{\fL(G)^\bullet}\otimes V$, where $\fL(G)^\bullet$ is the simplicial ind-scheme
constructed out of $\fL(G)$, viewed as a group-object in ind-schemes.

\medskip

Such a datum amounts to an isomorphism
\begin{equation} \label{e:V omega}
\omega_{\fL(G)}\otimes V\simeq \omega_{\fL(G)}\otimes V,
\end{equation} 
in $\Dmod^!(\fL(G))$, satisfying the natural associativity conditions.

\medskip

The map $\to$ in \eqref{e:V omega} gives rise by adjunction to a map
\begin{equation} \label{e:V omega bis}
\on{C}_\cdot(\fL(G))\otimes V=\on{C}^\cdot_c(\fL(G),\omega_{\fL(G)})\otimes V\to V,
\end{equation} 
and the associativity datum on \eqref{e:V omega} is equivalent to the associativity datum for
\eqref{e:V omega bis}.

\medskip

Vice versa, starting from \eqref{e:V omega bis} we construct a map $\to$ \eqref{e:V omega},
and the associativity for \eqref{e:V omega bis} implies that \eqref{e:V omega} is automtically
an isomorphism.

\end{proof} 

\sssec{}

Consider $\Dmod_*(\fL(G))$ as an object of $(\fL(G)\times \fL(G))\mmod$. This can be either done by viewing
$\Dmod_*(\fL(G))$ as a bimodule over itself. Or, equivalently, we can view $\Dmod_*(\fL(G))$ as the dual 
(inside $(\fL(G)\times \fL(G))\mmod$) of $\Dmod^!(\fL(G))$, viewed as a bi-comodule over itself. 

\medskip

Let us now assume that $G$ is unimodular. Recall the object 
$$\omega^{\on{fake},*}_{\fL(G)}\in \Dmod_*(\fL(G)),$$
see \secref{sss:fake omega loop}.

\medskip

Its interpretation as $\omega^{\on{fake},*,L}_{\fL(G)}$ implies that it is naturally an object of
$$\Dmod_*(\fL(G))^{\fL(G)-L},$$
where the superscript ``L" refers to the left action of $\fL(G)$ on $\Dmod_*(\fL(G))$.

\medskip

Similarly, its interpretation as $\omega^{\on{fake},*,R}_{\fL(G)}$ implies that it is naturally an object of
$$\Dmod_*(\fL(G))^{\fL(G)-R}.$$

\medskip

However, we claim:

\begin{lem} \label{l:fake LG bi inv}
The object $\omega^{\on{fake},*}_{\fL(G)}$ naturally lifts to an object of
$$\Dmod_*(\fL(G))^{\fL(G)\times \fL(G)}.$$
\end{lem}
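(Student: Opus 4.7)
The plan is to construct the bi-$\fL(G)$-equivariance on $\omega^{\on{fake},*}_{\fL(G)}$ directly, by combining the existing one-sided structures. The key conceptual input is that the left and right translation actions of $\fL(G)$ on itself commute, and that unimodularity guarantees that $\omega^{\on{fake},*,L}_{\fL(G)}$ and $\omega^{\on{fake},*,R}_{\fL(G)}$ agree as underlying objects of $\Dmod_*(\fL(G))$, not just up to a shift.

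First I would reduce the problem to constructing coherent bi-equivariance structures at finite-dimensional quotients. Given the presentation $\Dmod_*(\fL(G)) \simeq \underset{n,(-)^*}{\on{colim}}\, \Dmod(\fL(G)/K_n)$, and the parallel presentation through the right quotients $K_m\backslash\fL(G)$, one can test bi-equivariance by working with double quotients. Concretely, for every ind-placid piece $\wt{Y}_i \subset \fL(G)$ and $n, m$ large enough, the double quotient $K_m \backslash \wt{Y}_i / K_n$ is a scheme of finite type on which the commuting residual actions of $\fL^+(G)/K_m$ and $\fL^+(G)/K_n$ are by finite-type algebraic groups, and the restriction of $\omega^{\on{fake},*}_{\fL(G)}$ to it is represented by $\omega[-2(d_n+d_m)]$ for the appropriate dimension shift.

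On each such finite-type piece, the sought-for bi-equivariance is essentially automatic: for commuting smooth actions of two smooth algebraic groups $H_1, H_2$ on a smooth scheme $Y$, the shifted dualizing sheaf $\omega_Y$ is canonically $(H_1\times H_2)$-equivariant, since the action map $H_1\times H_2\times Y \to Y$ and the third projection are both smooth of the same relative dimension, so their $!$-pullbacks of $\omega$ agree canonically. Iterating this with the left and right translation actions at each level gives bi-equivariance locally.

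The main obstacle will be the coherent assembly of these local bi-equivariance structures across the filtered system indexed by $(i, n, m)$, including the higher coherence data in the $\infty$-categorical sense. The issue is that the transition maps between different double quotients involve cohomological shifts on both sides, and these shifts must balance for the system to glue. It is precisely at this step that unimodularity is used via \propref{p:LR omega}: vanishing of $\deg(\mu)$ ensures that the left and right dimension theories are not merely equivalent but literally agree, so the shifts on the two sides cancel coherently and the transition maps preserve bi-equivariance. Once this compatibility is verified, the higher $\infty$-categorical coherences in $\Dmod_*(\fL(G))^{\fL(G)\times\fL(G)}$ follow by transporting the analogous structure in the finite-type setting through the colimit-limit presentation, yielding the desired lift of $\omega^{\on{fake},*}_{\fL(G)}$.
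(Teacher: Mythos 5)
Your plan of reducing to finite-type double quotients and then assembling is a reasonable starting point, but it leaves the actual crux of the lemma unaddressed. The lemma is not about constructing bi-equivariance on finite-type pieces (which, as you say, is essentially automatic there); it is about the \emph{infinite tower of coherence data} that constitutes a bi-equivariance structure in the $\infty$-categorical sense, and about transporting that tower through the colimit-limit presentation of $\Dmod_*(\fL(G))$. You flag this as "the main obstacle" and then assert that the coherences "follow by transporting the analogous structure in the finite-type setting through the colimit-limit presentation"—but this is precisely the statement that requires proof, and nothing in your argument supplies it. A bi-equivariance structure is itself built as a totalization (a limit over a cosimplicial diagram encoding the group actions), and there is no reason a priori that this limit commutes with the limits and colimits appearing in the definition of $\Dmod_*(\fL(G))$, nor that the bi-equivariance data you constructed at the finite-type levels assembles into a coherent diagram indexed by $(i,n,m)$.

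The paper's proof avoids this entirely by a short, targeted trick. Given the two separate one-sided equivariance structures (which you also take as input), the space of ways to upgrade them to a single bi-equivariance structure is controlled by the mapping object $\CHom(\omega^{\on{fake},*,L}_{\fL(G)},\omega^{\on{fake},*,L}_{\fL(G)})\simeq \on{C}^\cdot(\fL(G))$, which is \emph{coconnective}. This kills all the higher coherences at a stroke: the obstruction groups all vanish, and it remains only to check a single commutativity-up-to-homotopy diagram \eqref{e:fake LG bi inv}, which follows directly from the construction of the isomorphism in \propref{p:LR omega}. Your approach, if pushed through, would need either a mechanism of this kind or a much more elaborate verification of coherence across the index category; as written, it has a genuine gap at the step you yourself identify as the hard one.
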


\begin{proof} 

We have to show that for points $g_1,g_2\in \fL(G)$, the diagram
\begin{equation} \label{e:fake LG bi inv}
\CD
(g_1\cdot \omega^{\on{fake},*}_{\fL(G)})\cdot g_2 @>{\sim}>> g_1\cdot (\omega^{\on{fake},*}_{\fL(G)})\cdot g_2) \\
@V{\sim}VV @VV{\sim}V \\
(g_1\cdot \omega^{\on{fake},*,L}_{\fL(G)})\cdot g_2 & & g_1\cdot (\omega^{\on{fake},*,R}_{\fL(G)})\cdot g_2) \\
@V{\sim}VV @VV{\sim}V \\
\omega^{\on{fake},*,L}_{\fL(G)}\cdot g_2 & & g_1\cdot \omega^{\on{fake},*,R}_{\fL(G)} \\
@V{\sim}VV @VV{\sim}V \\
\omega^{\on{fake},*,R}_{\fL(G)}\cdot g_2 & & g_1\cdot \omega^{\on{fake},*,L}_{\fL(G)}  \\
@V{\sim}VV  @VV{\sim}V \\
\omega^{\on{fake},*,R}_{\fL(G)} @>>>  \omega^{\on{fake},*,L}_{\fL(G)} 
\endCD
\end{equation}
commutes, along with the higher compatibilities. 

\medskip

First, we claim that the higher compatibilities hold automatically, since 
$$\CHom(\omega^{\on{fake},*,L}_{\fL(G)},\omega^{\on{fake},*,L}_{\fL(G)})\simeq \on{C}^\cdot(\fL(G))$$
is coconnective. 

\medskip

Hence, we only need to check the commutation of \eqref{e:fake LG bi inv} up to homotopy. However, the latter 
follows immediately from the construction of the isomorphism of \propref{p:LR omega}.

\end{proof}

As a corollary, we obtain: 

\begin{cor}
The isomorphism \eqref{e:self-duality loop} lifts to an identification of objects in $(\fL(G)\times \fL(G))\mmod$.
\end{cor}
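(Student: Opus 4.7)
My plan is to deduce the corollary directly from \lemref{l:fake LG bi inv} together with the explicit description of the equivalence \eqref{e:self-duality loop} as $\sotimes$-multiplication by $\omega^{\on{fake},*}_{\fL(G)}$.

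First I would unpack \eqref{e:self-duality loop} as the explicit functor
$$\CF \mapsto \CF \sotimes \omega^{\on{fake},*}_{\fL(G)}, \quad \Dmod^!(\fL(G)) \to \Dmod_*(\fL(G)),$$
and observe that the $\sotimes$-operation, being an intrinsic pointwise $!$-tensor construction in the sheaf formalism, automatically intertwines the left and right translation actions of $\fL(G)\times\fL(G)$ on the two sides. Concretely, for any test $(g_1,g_2)$-point of $\fL(G) \times \fL(G)$ there are tautological natural isomorphisms
$$(g_1 \cdot \CF \cdot g_2^{-1}) \sotimes (g_1 \cdot \CG \cdot g_2^{-1}) \simeq g_1 \cdot (\CF \sotimes \CG) \cdot g_2^{-1},$$
for $\CF \in \Dmod^!(\fL(G))$ and $\CG \in \Dmod_*(\fL(G))$.

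Next I would combine this with \lemref{l:fake LG bi inv}, which promotes $\omega^{\on{fake},*}_{\fL(G)}$ to an $(\fL(G)\times \fL(G))$-equivariant object, i.e., supplies canonical identifications $g_1 \cdot \omega^{\on{fake},*}_{\fL(G)} \cdot g_2^{-1} \simeq \omega^{\on{fake},*}_{\fL(G)}$ together with their cocycle compatibilities. Composing these with the above equivariance of $\sotimes$ endows the functor $\CF \mapsto \CF \sotimes \omega^{\on{fake},*}_{\fL(G)}$ with a canonical $(\fL(G)\times \fL(G))$-equivariant structure, thus realising the equivalence as a 1-morphism in $(\fL(G)\times \fL(G))\mmod$. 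Since the functor in question was already established to be an equivalence at the level of plain DG categories, this is then automatically an equivalence inside $(\fL(G) \times \fL(G))\mmod$.

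The one subtlety I anticipate is upgrading the 1-morphism-level equivariance data to a fully homotopy-coherent descent datum on the simplicial bar construction of $\fL(G)\times\fL(G)$. I expect this to be automatic by the same mechanism exploited in the proof of \lemref{l:fake LG bi inv}: the higher coherences take values in mapping spaces controlled by the coconnective algebra $\on{C}^\cdot((\fL(G)\times\fL(G))^\bullet)$, so all potential obstructions vanish and the coherence data is determined by the underlying 1-morphism isomorphism. Beyond this mild homotopical book-keeping, no substantive obstacle is anticipated.
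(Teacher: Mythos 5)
Your argument is correct and is essentially the one the paper intends: the corollary is stated without proof immediately after Lemma \ref{l:fake LG bi inv}, precisely because the equivalence \eqref{e:self-duality loop} is $\CF \mapsto \CF \sotimes \omega^{\on{fake},*}_{\fL(G)}$, and the $(\fL(G)\times\fL(G))$-equivariance of $\omega^{\on{fake},*}_{\fL(G)}$ supplied by the lemma, together with the tautological equivariance of $\sotimes$ under the translation actions, makes this a 1-morphism in $(\fL(G)\times\fL(G))\mmod$, which is then automatically an equivalence there since the forgetful functor to $\DGCat$ is conservative. Your observation that the higher coherences are automatic by coconnectivity of $\on{C}^\cdot(\fL(G))$ is the same mechanism the paper already invokes inside the proof of Lemma \ref{l:fake LG bi inv}.
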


\begin{cor}
The left and right adjoints to the forgetful functor 
$$\fL(G)\mmod\to \DGCat$$
are canonically isomorphic.
\end{cor}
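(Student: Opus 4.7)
The plan is to recognize both adjoints as ``induction from the trivial action,'' but packaged with the two different self-bimodule incarnations of $\Dmod(\fL(G))$, and then reduce the comparison to the preceding corollary.

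First I would identify the left adjoint $L$ of $\oblv:\fL(G)\mmod \to \DGCat$ as the free-module functor
$$
L:\bC_0\mapsto \Dmod_*(\fL(G))\otimes \bC_0,
$$
where the $\fL(G)$-action comes from the monoidal structure on $\Dmod_*(\fL(G))$ applied to the first factor. This is the standard free/forget adjunction for modules over a monoidal DG category. Dually, using the incarnation $\fL(G)\mmod \simeq \Dmod^!(\fL(G))\commod$, the right adjoint $R$ is the cofree-comodule functor
$$
R:\bC_0\mapsto \Dmod^!(\fL(G))\otimes \bC_0,
$$
with the comodule structure induced by the comonoidal structure on $\Dmod^!(\fL(G))$ (i.e., $!$-pullback along multiplication) on the first factor. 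In both cases, the commuting second action/coaction of $\fL(G)$ on the first factor of $\Dmod(\fL(G))$ is untouched, so $L(\bC_0)$ and $R(\bC_0)$ are each naturally obtained from an object of $(\fL(G)\times\fL(G))\mmod$ by tensoring with $\bC_0$ and then forgetting the second $\fL(G)$-action.

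Next I would invoke the preceding corollary, which promotes the self-duality \eqref{e:self-duality loop} to an equivalence of objects
$$
\Dmod^!(\fL(G))\simeq \Dmod_*(\fL(G))
$$
in $(\fL(G)\times\fL(G))\mmod$. Tensoring with $\bC_0$ and forgetting one $\fL(G)$-action then yields a canonical isomorphism $R(\bC_0)\simeq L(\bC_0)$, natural in $\bC_0$, which is the desired identification of adjoints.

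The only real content is thus already encoded in Lemma~\ref{l:fake LG bi inv} and the corollary deduced from it; the main obstacle was ensuring that the two independently constructed $\fL(G)$-equivariance data on $\omega^{\on{fake},*}_{\fL(G)}$ are compatible, i.e., the commutation of diagram \eqref{e:fake LG bi inv}, and this was handled there by appealing to the coconnectivity of $\on{C}^\cdot(\fL(G))$ to rule out higher homotopical obstructions. Consequently the present corollary is a formal consequence of that bi-equivariance, modulo unwinding the free/cofree descriptions of $L$ and $R$.
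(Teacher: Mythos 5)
Your proposal is correct and follows the paper's (implicit) line of reasoning: the paper presents this corollary without proof, as a direct consequence of the preceding corollary on lifting the self-duality $\Dmod^!(\fL(G))\simeq \Dmod_*(\fL(G))$ to $(\fL(G)\times\fL(G))\mmod$, which in turn rests on Lemma~\ref{l:fake LG bi inv}. You correctly unwind the deduction by recognizing the left adjoint as the free-module functor (via the monoidal incarnation $\Dmod_*(\fL(G))$) and the right adjoint as the cofree-comodule functor (via the comonoidal incarnation $\Dmod^!(\fL(G))$), both arising by tensoring with the respective self-bimodule and forgetting the outer action, so the bi-equivariant identification of the two bimodules is exactly what is needed.
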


\begin{rem} \label{r:Frobenius loop}

Remark \ref{r:Frobenius fd} applies here as well for the pairing
$$\Dmod_*(\fL(G))\otimes \Dmod_*(\fL(G))\overset{\text{convolution}}\longrightarrow \Dmod_*(\fL(G))
\simeq \Dmod^!(\fL(G))\overset{\on{counit}}\longrightarrow \Vect.$$

This pairing makes $\Dmod_*(\fL(G))$ into a Frobenius algebra in $\DGCat$.

\end{rem}

\ssec{The case of a reductive \texorpdfstring{$G$}{G}}

Up to now, the discussion of $\fL(G)$ was valid for \emph{any} affine algebraic group $G$. We will now assume that
$G$ is reductive, and exploit some special features of this case. The key to what we are about to say is that for $G$ 
reductive, the affine Grassmannian
$$\Gr_G:=\fL(G)/\fL^+(G)$$
is ind-proper.

\sssec{} \label{sss:Av Gr App}

We claim:

\begin{prop} 
The forgetful functor 
$$\oblv_{\fL(G)\to \fL^+(G)}:\bC^{\fL(G)}\to \bC^{\fL^+(G)}$$
admits a left adjoint, to be denoted $\on{Av}^{\fL^+(G)\to \fL(G)}_!$. 
\end{prop}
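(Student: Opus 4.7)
The plan is to construct $\on{Av}^{\fL^+(G)\to \fL(G)}_!$ by exploiting the ind-properness of $\Gr_G = \fL(G)/\fL^+(G)$, which holds precisely because $G$ is assumed reductive. The strategy is to recast the forgetful functor in terms of a $\Dmod(\Gr_G)$-action on $\bC^{\fL^+(G)}$ and then transport the left adjoint constructed at the level of $\Dmod(\Gr_G)$-modules.

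First, I would observe that $\bC^{\fL^+(G)}$ carries a natural left convolution action of the monoidal category $\Dmod(\Gr_G)$, obtained from the $\fL(G)$-action on $\bC$ combined with the fact that $\fL(G)\to \Gr_G$ is a $\fL^+(G)$-torsor. Under this description, the invariants $\bC^{\fL(G)}$ identify with $\on{Funct}_{\Dmod(\Gr_G)\mmod}(\Vect, \bC^{\fL^+(G)})$, where $\Vect$ is regarded as a $\Dmod(\Gr_G)$-module via $\pi_!: \Dmod(\Gr_G)\to \Vect$ (which is well-defined and coincides with $\pi_*$ up to shift precisely because $\Gr_G$ is ind-proper). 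Under this identification, the forgetful functor $\oblv_{\fL(G)\to \fL^+(G)}$ becomes evaluation at the unit, i.e., the forgetful functor for a $\Dmod(\Gr_G)$-module structure.

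Next, I would use ind-properness to produce the desired left adjoint. The functor $\pi_!$ admits a continuous left adjoint $\pi^!$ sending $k\mapsto \omega_{\Gr_G}$, and ind-properness implies that both $\pi_!$ and $\pi^!$ are compatible with the convolution monoidal structure in a way that makes $\Vect$ \emph{dualizable} as a $\Dmod(\Gr_G)$-module (self-dual up to shift by connected components, cf.\ \propref{p:LR omega}). Dualizability then yields, by general nonsense about module forgetful functors, a left adjoint to the forgetful functor from $\Dmod(\Gr_G)$-invariants. Concretely, the left adjoint is given by the formula
\[
    \on{Av}^{\fL^+(G)\to \fL(G)}_!(\bc) \;=\; k_{\Gr_G} \star \bc,
\]
where $k_{\Gr_G}$ denotes the object corresponding to $\pi_!^L(k)$ under the identification and $\star$ is the Hecke convolution action; ind-properness ensures this convolution is well-defined and lands in $\bC^{\fL(G)}$.

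The main obstacle is verifying the identification of $\bC^{\fL(G)}$ with $\Dmod(\Gr_G)$-invariants of $\bC^{\fL^+(G)}$. This requires a \v{C}ech-descent argument for the $\fL^+(G)$-torsor $\fL(G)\to \Gr_G$, together with the compatibility between invariants and coinvariants for $\fL^+(G)$ that passes from each finite-type congruence quotient $\fL^+(G)/K_n$ (where \corref{c:inv self-dual} applies) to the inverse limit. Once this compatibility is in hand, the existence of the left adjoint is a formal consequence of the self-duality of $\Vect \in \Dmod(\Gr_G)\mmod$ established via ind-properness.
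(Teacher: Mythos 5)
Your approach is genuinely different from the paper's, and as written it has real gaps. The paper's proof is a one-liner that bypasses Hecke-theoretic considerations entirely: it interprets $\bC^{\fL^+(G)}$ as $(\Dmod(\Gr_G)\otimes\bC)^{\fL(G)}$, observes that under this identification $\oblv_{\fL(G)\to\fL^+(G)}$ is induced (by passing to $\fL(G)$-invariants) by the $\fL(G)$-equivariant functor $\Vect\to\Dmod(\Gr_G)$, $k\mapsto\omega_{\Gr_G}$, and notes that this functor has an obvious left adjoint $\on{C}^\cdot_c(\Gr_G,-)\simeq\on{C}^\cdot(\Gr_G,-)$ (equality using ind-properness). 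The adjoint then automatically upgrades to an adjoint in $\fL(G)\mmod$ by the general principle that lax/op-lax equivariance of adjoints is automatically strict for actions of group-objects (see \secref{sss:adj H-mmod}), and hence descends to invariants. No spherical generation or Hecke-module machinery is required.

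By contrast, you propose to route through the $\Sph_G$-module structure on $\bC^{\fL^+(G)}$. The setup as you state it is not quite right: $\Dmod(\Gr_G)$ itself does not carry a convolution monoidal structure --- the spherical Hecke category $\Sph_G = \Dmod(\fL(G))^{\fL^+(G)\times\fL^+(G)}$ does, and $\Dmod(\Gr_G)$ is only a module over it. Correspondingly, the object playing the role of ``$\Vect$'' in your identification should really be $\Vect^{\fL^+(G)}\simeq\Vect^{G}$, and its $\Sph_G$-module structure comes from viewing $\Vect$ as the trivial $\fL(G)$-module, not directly ``via $\pi_!$'' (though these are related: the resulting action of $\Sph_G$ on $\Vect^G$ is indeed via the fiber functor $\on{C}^\cdot(\Gr_G,-)$, which is monoidal because $\Gr_G$ is ind-proper).

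The more serious gap is that your key identification
$$\bC^{\fL(G)}\;\simeq\;\on{Funct}_{\Sph_G\mmod}\bigl(\Vect^{\fL^+(G)},\,\bC^{\fL^+(G)}\bigr)$$
requires the trivial $\fL(G)$-module $\Vect$ to be \emph{spherically generated}, i.e. that $\Dmod(\Gr_G)\underset{\Sph_G}\otimes\Vect^{G}\to\Vect$ be an equivalence. This is a genuine statement that neither you verify nor reduce to anything elementary; your concluding sentence (``once this compatibility is in hand, the existence of the left adjoint is a formal consequence'') is where the work actually is. By contrast, the paper's induction isomorphism $\bC^{\fL^+(G)}\simeq(\Dmod(\Gr_G)\otimes\bC)^{\fL(G)}$ is unconditional. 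Note also that the paper's own Hecke-theoretic description of $\bC^{\fL(G)}$ (as modules over $\omega^{\on{fake},*}_{\on{Hecke}}\in\Sph_G$, see \secref{sss:conv omega}) is derived \emph{from} the existence of $\on{Av}_!^{\fL^+(G)\to\fL(G)}$ via Barr--Beck--Lurie, so trying to go the other way --- deducing the adjoint from a Hecke-module picture --- risks circularity unless the spherical-generation input is supplied independently.
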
 

\begin{proof}

Note that we can interpret $\bC^{\fL^+(G)}$ as
$$(\Dmod(\Gr_G)\otimes \bC)^{\fL(G)}.$$

Hence, it suffices to show that the functor
$$\Vect\to \Dmod(\Gr_G), \quad k\mapsto \omega_{\Gr_G}$$ 
admits a left adjoint in $\fL(G)\mmod$, or equivalently, in $\Vect$.

\medskip

However, the left adjoint in question is provided by $\on{C}^\cdot(\Gr_G,-)$.

\end{proof} 

\sssec{}

Set
$$\Sph_G:=\Dmod_*(\fL(G))^{\fL^+(G)\times \fL^+(G)}.$$

The usual Hecke algebra construction shows that the monoidal structure on $\Dmod_*(\fL(G))$ induces 
one on $\Sph_G$. Moreover, for every $\bC\in \fL(G)\mmod$, we have a canonical action of $\Sph_G$ on
$\bC^{\fL^+G}$. 

\sssec{} \label{sss:conv omega}

Note that we can consider $\omega^{\on{fake},*}_{\fL(G)}$ as an associative algebra object of $\Sph_G$; 
when viewed as such, let us denote it by $\omega^{\on{fake},*}_{\on{Hecke}}$.

\medskip

Unwinding, we obtain that the monad
\begin{equation} \label{e:Av ! monad}
\oblv_{\fL(G)\to \fL^+(G)}\circ \on{Av}^{\fL^+(G)\to \fL(G)}_!
\end{equation} 
acting on $\bC^{\fL^+(G)}$ is given by the action of $\omega^{\on{fake},*}_{\on{Hecke}}$ . 

\medskip

Therefore, by the Barr-Beck-Lurie theorem, we can identify
$$\bC^{\fL(G)}\simeq \omega^{\on{fake},*}_{\on{Hecke}}\mod(\bC^{\fL^+(G)}).$$

%
%
%

\sssec{}

Let $\bC$ be an object $\fL(G)\mmod$. Let $\bC_{\fL(G)}$ be the object of $\DGCat$ such that
$$\on{Funct}_{\DGCat}(\bC_{\fL(G)},\bC_0)\simeq \on{Funct}_{\fL(G)\mmod}(\bC,\bC_0), \quad \bC_0\in \DGCat,$$
where in the right-hand side, $\bC_0$ is considered as equipped with the trivial $\fL(G)$-action. 

\sssec{}

For $\bC\in \fL(G)\mmod$ and $\bC_0\in \DGCat$, let us view $\on{Funct}_{\DGCat}(\bC,\bC_0)$
as an object of $\fL(G)\mmod$ via the action on the source. 

\medskip

By \propref{e:inv to coinv} (applied to $\fL^+(G)$), we have
\begin{equation} \label{e:Sph in Hom}
\on{Funct}_{\DGCat}(\bC,\bC_0)^{\fL^+(G)}\simeq \on{Funct}_{\DGCat}(\bC^{\fL^+(G)},\bC_0).
\end{equation} 

Unwinding, we obtain that the action of $\omega^{\on{fake},*}_{\on{Hecke}}$ on the left-hand side
corresponds to the action of $\omega^{\on{fake},*}_{\on{Hecke}}$ on the source in the right-hand side.

\sssec{}

Take $\bC_0=\bC^{\fL(G)}$, and consider the object of the category $\on{Funct}_{\DGCat}(\bC^{\fL^+(G)},\bC^{\fL(G)})$
given by $\on{Av}^{\fL^+(G)\to \fL(G)}_!$. Since the functor $\on{Av}^{\fL^+(G)\to \fL(G)}_!$ is acted on by
the monad \eqref{e:Av ! monad}, we obtain that the above object lifts to an object of
\begin{multline*} 
\omega^{\on{fake},*}_{\on{Hecke}}\mod(\on{Funct}_{\DGCat}(\bC^{\fL^+(G)},\bC_0))\simeq \\
\simeq \omega^{\on{fake},*}_{\on{Hecke}}\mod(\on{Funct}_{\DGCat}(\bC,\bC_0)^{\fL^+(G)})\simeq 
\on{Funct}_{\DGCat}(\bC,\bC_0)^{\fL(G)}.
\end{multline*} 

Thus, the functor $\on{Av}^{\fL^+(G)\to \fL(G)}_!$ may be viewed as a point in
$$\on{Funct}_{\fL(G)\mmod}(\bC,\bC^{\fL(G)}) \simeq \on{Funct}(\bC_{\fL(G)},\bC^{\fL(G)}).$$

\sssec{}

We have (see \cite[Theorem D.1.4(b)]{Ga4}):

\begin{prop} \label{p:inv vs coinv loop}
The above functor
$$\bC_{\fL(G)}\to \bC^{\fL(G)}.$$
is an equivalence. 
\end{prop}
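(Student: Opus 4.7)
The plan is to identify both $\bC_{\fL(G)}$ and $\bC^{\fL(G)}$ with the same Barr--Beck--Lurie style description, namely modules over the algebra $\omega^{\on{fake},*}_{\on{Hecke}} \in \Sph_G$ acting on $\bC^{\fL^+(G)}$. The invariant side of this identification is already recorded just before the proposition: from the adjunction $(\on{Av}^{\fL^+(G)\to\fL(G)}_!,\oblv_{\fL(G)\to\fL^+(G)})$, whose left adjoint exists precisely because $\Gr_G$ is ind-proper (reductivity of $G$), one deduces via Barr--Beck--Lurie that $\bC^{\fL(G)} \simeq \omega^{\on{fake},*}_{\on{Hecke}}\mod(\bC^{\fL^+(G)})$. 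The task is thus to realize a parallel description for coinvariants and then to check that the two composites constitute the canonical comparison map.

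First I would reduce to the ``Hecke level'' on both sides. Writing $\bC=\on{colim}_n \bC^{K_n}$ and applying the finite-dimensional equality of invariants and coinvariants (\corref{c:ten vs coten over H}) to each algebraic quotient $\fL^+(G)/K_n$, then passing to the filtered colimit, one obtains a canonical equivalence $\bC_{\fL^+(G)}\simeq \bC^{\fL^+(G)}$. Under this equivalence, the canonical map of the proposition factors as a comparison $(\bC^{\fL^+(G)})_{\on{Hecke}} \to (\bC^{\fL^+(G)})^{\on{Hecke}}$ between coinvariants and invariants for the residual Hecke action, computed as a relative tensor product against $\Sph_G$ versus internal Hom out of the augmentation module.

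Next I would import the Frobenius structure. By \lemref{l:fake LG bi inv} (which crucially invokes the unimodularity of $G$), $\Dmod_*(\fL(G))$ is self-dual as an object of $(\fL(G)\times\fL(G))\mmod$; equivalently (Remark \ref{r:Frobenius loop}), convolution-then-counit exhibits $\Dmod_*(\fL(G))$ as a Frobenius algebra in $\DGCat$. Taking $\fL^+(G)\times\fL^+(G)$-invariants of this pairing — a step requiring ind-properness of $\Gr_G$ so the trace map is defined — descends it to a Frobenius structure on $\omega^{\on{fake},*}_{\on{Hecke}}$ as an algebra in $\Sph_G$. For any such Frobenius algebra $A$ in $\Sph_G$ acting on a module category $\bD$, the natural map from coinvariants $\Vect\otimes_A \bD$ to modules $A\mod(\bD)$ is an equivalence (the comodule/module equivalence); applying this with $\bD = \bC^{\fL^+(G)}$ produces the desired second identification $\bC_{\fL(G)} \simeq \omega^{\on{fake},*}_{\on{Hecke}}\mod(\bC^{\fL^+(G)})$.

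Finally I would check that the composite equivalence agrees with the canonical functor $\bC_{\fL(G)}\to \bC^{\fL(G)}$ constructed via $\on{Av}^{\fL^+(G)\to\fL(G)}_!$. This is a diagram-chase: both the Frobenius coevaluation (which produces the inverse of the module/comodule equivalence) and the left adjoint $\on{Av}^{\fL^+(G)\to\fL(G)}_!$ are encoded by the same piece of data, namely the trace map on $\Dmod_*(\fL(G))$, so their induced comparison maps coincide. The principal obstacle in the above outline is the rigorous articulation, at the $\infty$-categorical level, of the Frobenius structure on $\omega^{\on{fake},*}_{\on{Hecke}}$: one must verify that the pairing $\Sph_G\otimes \Sph_G\to\Vect$ given by convolution followed by $\on{C}^\cdot(\fL^+(G)\backslash\fL(G)/\fL^+(G),-)$ is nondegenerate in the $A$-bimodule sense — this is exactly where reductivity of $G$ (equivalently, ind-properness of $\Gr_G$, which makes the trace continuous and finite) is indispensable and cannot be dispensed with.
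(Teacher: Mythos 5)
The paper itself does not prove this proposition; it is cited from \cite[Theorem~D.1.4(b)]{Ga4}, so there is no in-house argument to compare yours against. That said, the paper's remark immediately after the corollaries of the proposition is designed precisely to preempt the kind of argument you are proposing, and reading your outline against it reveals a genuine gap.

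The problematic step is your ``comodule/module equivalence'': you assert that for a Frobenius algebra $A$ in $\Sph_G$ acting on a module category $\bD$, coinvariants and invariants with respect to $A$ agree. This is not a general theorem. A Frobenius (i.e.\ self-dual as bimodule) structure on an augmented algebra produces a natural transformation $M_A\to M^A$, but it does not make it an equivalence; the bar and cobar constructions need not converge to the same thing. The paper makes this explicit in the remark: $\Dmod_*(\fL(G))$ is itself a Frobenius algebra in $\DGCat$ for any \emph{unimodular} $G$ (\lemref{l:fake LG bi inv}), the canonical map $\bC_{\fL(G)}\to\bC^{\fL(G)}$ is then available, and yet for $G=N$ unipotent and $\bC=\Vect$ this map is zero. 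So ``Frobenius + a pairing'' cannot be where the argument lives; descending to the Hecke level repackages the same data and does not by itself fix the convergence. Your closing paragraph points at nondegeneracy of the pairing $\Sph_G\otimes\Sph_G\to\Vect$ as the residual obstacle, but (a) as stated the pairing should land in the monoidal unit $\delta_1\in\Sph_G$, not in $\Vect$, and (b) even a nondegenerate pairing on the object $\omega^{\on{fake},*}_{\on{Hecke}}$ does not entail the bar/cobar comparison you want.

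What actually drives the equivalence is a \emph{categorical} rigidity statement, not an object-level Frobenius one: $\Sph_G$ is a rigid monoidal DG category (this uses that its compact objects are dualizable, which is where ind-properness of $\Gr_G$ enters irreplaceably), and the $\Sph_G$-module $\Vect^{\fL^+(G)}$ is dualizable, even self-dual, as a $\Sph_G$-module. With these in hand, the relative tensor $\Vect^{\fL^+(G)}\underset{\Sph_G}\otimes\bC^{\fL^+(G)}$ and the functor category $\on{Funct}_{\Sph_G}(\Vect^{\fL^+(G)},\bC^{\fL^+(G)})$ agree, which (after identifying $\bC_{\fL^+(G)}\simeq\bC^{\fL^+(G)}$ as you do in your first step — this part is fine) yields $\bC_{\fL(G)}\simeq\bC^{\fL(G)}$. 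For $G=N$, $\Gr_N$ is ind-affine of infinite type and the spherical category is not rigid in this sense, which is exactly why the Frobenius structure on $\Dmod_*(\fL(N))$ fails to upgrade to an equivalence. Your instinct to move to the Hecke level and to flag reductivity as essential is correct in spirit, but the ``comodule/module equivalence'' needs to be replaced by this rigidity/dualizability argument for the outline to become a proof.
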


\sssec{}

From \propref{p:inv vs coinv loop} we obtain:

\begin{cor}
Let $\bC\in \fL(G)\mmod$ be dualizable as a category. Then $\bC^{\fL(G)}$ is also dualizable and 
we have a canonical equivalence
$$(\bC^{\fL(G)})^\vee\simeq (\bC^\vee)^{\fL(G)}.$$
\end{cor}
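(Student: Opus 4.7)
The plan is to deduce this as a direct corollary of \propref{p:inv vs coinv loop}, mimicking the strategy used in the finite-dimensional case (\corref{c:inv self-dual}). First note that since the symmetric monoidal structure on $\fL(G)\mmod$ is compatible with the forgetful functor to $\DGCat$, if $\bC$ is dualizable as a DG category then it is automatically dualizable as an object of $\fL(G)\mmod$, with dual $\bC^\vee$ carrying its natural transported $\fL(G)$-action. In particular, I may apply \propref{p:inv vs coinv loop} to both $\bC$ and $\bC^\vee$ to get canonical identifications
\[
\bC^{\fL(G)}\simeq \bC_{\fL(G)} \quad\text{and}\quad (\bC^\vee)^{\fL(G)}\simeq (\bC^\vee)_{\fL(G)}.
\]
So it suffices to exhibit a canonical equivalence $(\bC_{\fL(G)})^\vee\simeq (\bC^\vee)_{\fL(G)}$.

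Unwinding the definition, $\bC_{\fL(G)}\simeq \Vect\underset{\Dmod_*(\fL(G))}\otimes \bC$, where $\Vect$ is the unit of the symmetric monoidal 2-category $\fL(G)\mmod$. The key input is Remark~\ref{r:Frobenius loop}, which exhibits $\Dmod_*(\fL(G))$ as a Frobenius algebra in $\DGCat$ via the pairing
\[
\Dmod_*(\fL(G))\otimes \Dmod_*(\fL(G))\xrightarrow{\;\on{conv}\;}\Dmod_*(\fL(G))\simeq \Dmod^!(\fL(G))\xrightarrow{\;\on{counit}\;}\Vect.
\]
This Frobenius structure exhibits $\Vect$ as self-dual in $\fL(G)\mmod$: the unit and counit in $\fL(G)\mmod$ coincide (up to the inversion twist), so they provide coevaluation and evaluation maps for a duality datum $\Vect\simeq \Vect^\vee$ internal to $\fL(G)\mmod$.

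Given this self-duality, the standard computation of duals of relative tensor products yields a canonical identification
\[
(\Vect\underset{\Dmod_*(\fL(G))}\otimes \bC)^\vee \simeq \Vect^\vee \underset{\Dmod_*(\fL(G))}\otimes \bC^\vee \simeq \Vect\underset{\Dmod_*(\fL(G))}\otimes \bC^\vee,
\]
which gives the sought-for $(\bC_{\fL(G)})^\vee\simeq (\bC^\vee)_{\fL(G)}$, and in particular shows that $\bC_{\fL(G)}\simeq \bC^{\fL(G)}$ is dualizable. The only subtle point — and the part of the argument that actually requires input from the loop-group setting rather than being purely formal — is verifying that the Frobenius pairing of Remark~\ref{r:Frobenius loop} really does witness $\Vect$ as self-dual inside $\fL(G)\mmod$ (and not merely in $\DGCat$). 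This amounts to checking the compatibility of the counit $\on{C}^\cdot_c(\fL(G),-)$ and unit $k\mapsto \omega^{\on{fake},*}_{\fL(G)}$ with the bimodule structure on $\Dmod_*(\fL(G))$, which is precisely what \lemref{l:fake LG bi inv} establishes by exhibiting $\omega^{\on{fake},*}_{\fL(G)}$ as an object of $\Dmod_*(\fL(G))^{\fL(G)\times \fL(G)}$.
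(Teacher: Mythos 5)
Your overall strategy — reduce to showing $(\bC_{\fL(G)})^\vee\simeq(\bC^\vee)_{\fL(G)}$ after applying \propref{p:inv vs coinv loop} twice — is the right one, and the claim that dualizability of $\bC$ in $\DGCat$ lifts to $\fL(G)\mmod$ is fine. However, the middle of your argument has a genuine gap.

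The claim that the Frobenius algebra structure on $\Dmod_*(\fL(G))$ ``exhibits $\Vect$ as self-dual inside $\fL(G)\mmod$'' is where the argument breaks. Read literally, $\Vect$ is the \emph{unit} of the symmetric monoidal 2-category $\fL(G)\mmod$, so it is tautologically self-dual and nothing is gained. What you actually need for ``the standard computation of duals of relative tensor products'' is that $\Vect$ is dualizable \emph{as a module over the algebra} $\Dmod_*(\fL(G))$ (in the Morita sense), and that is emphatically not a consequence of $\Dmod_*(\fL(G))$ being a Frobenius algebra. A Frobenius algebra $A$ being self-dual as a bimodule says nothing about whether an \emph{arbitrary} $A$-module — and in particular the augmentation module $\Vect$ — is Morita-dualizable. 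The basic example to keep in mind is the classical Koszul duality picture for $A=k[\epsilon]/\epsilon^2$: $A$ is Frobenius, yet $k\otimes^L_A k$ and $\RHom_A(k,k)$ are very different objects (one sits in non-positive degrees, the other in non-negative), so ``coinvariants equals invariants'' already fails for the trivial module. What \propref{p:inv vs coinv loop} asserts is precisely the non-formal fact that for $\Dmod_*(\fL(G))$ this kind of pathology does not occur, and Remark~\ref{r:Frobenius loop} is not a substitute for it.

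More precisely, the formula you invoke, $(\Vect\otimes_A\bC)^\vee\simeq\Vect^\vee\otimes_A\bC^\vee$, identifies the \emph{candidate} dual $\on{Funct}_{\DGCat}(\bC_{\fL(G)},\Vect)\simeq(\bC^\vee)^{\fL(G)}$ — and this much is automatic, since dualization sends the Bar colimit to the Cobar limit term by term. What is not automatic is that this candidate dual is an actual dual: you still need to produce the coevaluation $\Vect\to\bC^{\fL(G)}\otimes(\bC^\vee)^{\fL(G)}$, and this is exactly the step your argument elides. The clean way to do it — and this is the route intended by the paper, paralleling \corref{c:inv self-dual} — is to note that $\on{inv}_{\fL(G)}$, being the right adjoint of the symmetric monoidal functor $\on{triv}$, is lax monoidal, while $\on{coinv}_{\fL(G)}$, being the left adjoint, is colax monoidal. \propref{p:inv vs coinv loop} makes these the \emph{same} functor, and the resulting compatible lax/colax structures make $\on{inv}_{\fL(G)}$ into a Frobenius monoidal functor. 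Frobenius monoidal functors send duality data to duality data (the evaluation uses the lax structure and the colax counit $F(\Vect)\to\Vect$; the coevaluation uses the lax unit $\Vect\to F(\Vect)$ and the colax structure; the triangle identities are the Frobenius compatibility). Thus the lax/colax ambidexterity of invariants — not the Frobenius \emph{algebra} structure on $\Dmod_*(\fL(G))$ — is the input that upgrades the candidate dual to a duality datum.
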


\begin{cor} \label{c:tensor over LG}
For $\bC_1,\bC_2\in \fL(G)\mmod$, we have a canonical isomorphism
$$\bC_1\underset{\Dmod_*(\fL(G))}\otimes \bC_2\simeq (\bC_1\otimes \bC_2)^{\fL(G)}.$$
\end{cor}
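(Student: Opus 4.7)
The plan is to deduce this corollary from \propref{p:inv vs coinv loop} by the same mechanism as in the finite-dimensional analogue \corref{c:ten vs coten over H}, the key input being the Hopf-algebra/symmetric-monoidal structure on $\fL(G)\mmod$ discussed in \secref{sss:sym mon H-mmod} and its loop-group counterpart.

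First I would reinterpret the relative tensor product as coinvariants for the diagonal action. Namely, because $\Dmod_*(\fL(G))$ is a cocommutative Hopf algebra object in $\DGCat$ (cf.\ Remark \ref{r:Frobenius loop} and \lemref{l:fake LG bi inv}, which supplies the antipode/inversion compatibility needed to convert between left and right actions), the object $\bC_1\otimes \bC_2$ inherits from the external $\fL(G)\times \fL(G)$-action a \emph{diagonal} $\fL(G)$-action, making it into an object of $\fL(G)\mmod$, and the standard bar-resolution argument identifies
$$
\bC_1\underset{\Dmod_*(\fL(G))}\otimes \bC_2 \;\simeq\; (\bC_1\otimes \bC_2)_{\fL(G)},
$$
where the right-hand side denotes coinvariants of the diagonal action (this is completely formal, depending only on the Hopf structure). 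Equivalently, by definition of coinvariants, both sides satisfy the same universal property with respect to functors to trivial $\fL(G)$-module categories.

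Next I would apply \propref{p:inv vs coinv loop} to the $\fL(G)$-module category $\bC_1\otimes \bC_2$ (with diagonal action), which gives the canonical equivalence
$$
(\bC_1\otimes \bC_2)_{\fL(G)} \;\simeq\; (\bC_1\otimes \bC_2)^{\fL(G)}.
$$
Chaining the two equivalences yields the desired isomorphism. I would check naturality of the composite in $\bC_1$ and $\bC_2$ using that both identifications are constructed functorially (the first from the bar complex/Hopf structure, the second from \propref{p:inv vs coinv loop}, which is itself natural by the construction using $\on{Av}^{\fL^+(G)\to \fL(G)}_!$).

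The only non-formal step is the appeal to \propref{p:inv vs coinv loop}; the rest is bookkeeping about the Hopf/symmetric-monoidal structure on $\fL(G)\mmod$. I do not expect a genuine obstacle, but the one point worth care is the conversion between the ``one-sided module'' viewpoint (needed to write the relative tensor product $\bC_1\otimes_{\Dmod_*(\fL(G))} \bC_2$) and the ``diagonal action'' viewpoint (needed to invoke \propref{p:inv vs coinv loop}); this conversion requires the bi-invariant structure on the fake dualizing sheaf $\omega^{\on{fake},*}_{\fL(G)}$ supplied by \lemref{l:fake LG bi inv}, which ensures that the inversion/antipode behaves coherently and that the two constructions really do agree.
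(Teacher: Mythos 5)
Your proof is correct and is exactly the (implicit) argument the paper has in mind: identify the relative tensor product with diagonal coinvariants via the Hopf-algebra structure on $\Dmod_*(\fL(G))$, then invoke \propref{p:inv vs coinv loop}, just as \corref{c:ten vs coten over H} is deduced from its finite-dimensional analogue. One small inaccuracy: the antipode needed to convert a left module to a right module is supplied directly by $*$-pushforward along the inversion map of $\fL(G)$ (part of the Hopf structure coming from the group structure), not by \lemref{l:fake LG bi inv} — the latter is used to make the Frobenius self-duality coherent, which enters only indirectly through the proof of \propref{p:inv vs coinv loop}.
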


\begin{cor} \label{c:inv and colimits LG}
The functor 
$$\bC\mapsto \bC^{\fL(G)}, \quad \fL(G)\mmod\to \DGCat$$
commutes with colimits and tensor products by objects of $\DGCat$.
\end{cor}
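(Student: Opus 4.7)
The plan is to deduce both statements from \propref{p:inv vs coinv loop}, which identifies $\bC^{\fL(G)}$ with the coinvariants $\bC_{\fL(G)}$, together with the universal property defining the latter.

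First I would reinterpret the coinvariants functor. By construction, $\bC \mapsto \bC_{\fL(G)}$ is the left adjoint to the functor $\DGCat \to \fL(G)\mmod$ that equips an object of $\DGCat$ with the trivial $\fL(G)$-action. In particular, being a left adjoint, it preserves all colimits. Composing with the equivalence $\bC_{\fL(G)} \simeq \bC^{\fL(G)}$ of \propref{p:inv vs coinv loop}, we conclude that the invariants functor $\bC \mapsto \bC^{\fL(G)}$ preserves colimits.

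For compatibility with tensor products by objects of $\DGCat$, I would again pass to coinvariants. Unfolding the universal property (or equivalently using the bar resolution), we can write
\[
\bC_{\fL(G)} \simeq \Vect \underset{\Dmod_*(\fL(G))}{\otimes} \bC,
\]
where $\Vect$ carries the trivial $\fL(G)$-action. Given $\bD \in \DGCat$, the category $\bC \otimes \bD$ carries the $\fL(G)$-action induced from $\bC$, and the associativity of relative tensor products in the symmetric monoidal $\infty$-category $\DGCat$ yields
\[
(\bC \otimes \bD)_{\fL(G)} \simeq \Vect \underset{\Dmod_*(\fL(G))}{\otimes} (\bC \otimes \bD) \simeq \Bigl(\Vect \underset{\Dmod_*(\fL(G))}{\otimes} \bC\Bigr) \otimes \bD \simeq \bC_{\fL(G)} \otimes \bD.
\]
Applying \propref{p:inv vs coinv loop} on both ends then gives $(\bC \otimes \bD)^{\fL(G)} \simeq \bC^{\fL(G)} \otimes \bD$, as desired.

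There is no real obstacle in this deduction; the entire content is absorbed into \propref{p:inv vs coinv loop}, which is quoted from \cite[Theorem D.1.4(b)]{Ga4}. The only thing worth double-checking is that the identification $\bC_{\fL(G)} \simeq \bC^{\fL(G)}$ is natural in $\bC$, so that the isomorphisms above are functorial; this is built into the construction of the comparison map via $\on{Av}^{\fL^+(G) \to \fL(G)}_!$ used in the proof of \propref{p:inv vs coinv loop}, so no additional work is required.
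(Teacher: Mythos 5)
Your proof is correct and follows the same route the paper implicitly takes: everything reduces to the equivalence $\bC_{\fL(G)} \simeq \bC^{\fL(G)}$ of \propref{p:inv vs coinv loop}, after which the coinvariants functor — being a left adjoint and a relative tensor product $\Vect \underset{\Dmod_*(\fL(G))}\otimes (-)$ — visibly commutes with colimits and with tensoring by a trivially-acted-on $\bD$. Your second step is exactly the special case of \corref{c:tensor over LG} where one factor has trivial action, so you are proving the same chain of corollaries the paper derives from \propref{p:inv vs coinv loop}.
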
 

\begin{rem}

One can rewrite the natural transformation in \propref{p:inv vs coinv loop} also as follows.

\medskip

Let $G$ be an arbitrary unimodular group. As is the case for an arbitrary augmented algebra $A$,
which is isomorphic to its own dual as a bimodule, there is a canonical natural transformation
\begin{equation} \label{e:inv to coinv map A}
M_A\to M^A, \quad M\in A\mod.
\end{equation} 

Unwinding, one can see that \eqref{e:inv to coinv loop} is the map \eqref{e:inv to coinv loop} for
$A:=\Dmod_*(\fL(G))\in \DGCat$. 

Note, however, that the resulting natural transformation
\begin{equation} \label{e:inv to coinv loop}
\bC_{\fL(G)}\to \bC^{\fL(G)}
\end{equation} 
will \emph{not} in general be an isomorphism, unless $G$ is reductive. 

\medskip

For example, if $G=N$ is unipotent and $\bC=\Vect$, the natural transformation \eqref{e:inv to coinv loop}
will be zero.

\end{rem}

\newpage 

\section{Factorization categories and modules} \label{s:fact cat} Most of the material in this appendix, except \secref{ss proof of thm the fibration} - \secref{ss fact via operad}, follows \cite[Sect. C]{GLC2}. The reader is referred to \emph{loc.cit.} for more details.

\ssec{Unital factorization spaces}

\label{ss unital factspc}

\sssec{}
\label{sss categorical prestack}

A \emph{categorical prestack} is a functor 
\[
    (\on{Sch}^{\on{aff}})^{\on{op}} \to \on{Cat}.
\]
Let 
\[
    \on{CatPreStk} := \on{Fun}(  (\on{Sch}^{\on{aff}})^{\on{op}},\on{Cat} )
\]
be the \emph{2-category} of categorical prestacks.

\sssec{}
  \label{sss maximal subprestack}
Given a categorical prestack $\CY$, its value at an affine scheme $S$ is denoted by $\CY(S)$. Let $\CY(S)^\simeq$ be the maximal subgroupoid of $\CY(S)$. We see that $S\mapsto \CY(S)^\simeq$ defines a (non-categorical) prestack
\[
    \CY^\simeq \in \on{PreStk} := \on{Fun}(  (\on{Sch}^{\on{aff}})^{\on{op}},\on{Spc} ).
\]

\medskip 

By Yoneda's lemma, objects in $\CY(S)$ are identified with morphisms $S\to \CY$, while morphisms in $\CY(S)$ can be identified with 2-morphisms
\[
      \begin{tikzcd}
    S \ar[r, bend  left=60, ""{name=U}]
      \ar[r, bend right=60, ""{name=D,below}]
            & \CY \ar[Rightarrow, from=U, to=D, "\varphi" description]
    \end{tikzcd}
\]
in $\on{CatPreStk}$.

\sssec{} \label{sss:unital Ran space}

The \emph{unital Ran space} is the categorical prestack $\Ran^\untl$ that attaches to an affine test scheme $S$ the \emph{category} of finite subsets of $\Hom(S,X)$, where morphisms are given by inclusions of subsets.

\medskip
In particular, a $k$-point $\underline{x}\in \Ran^\untl$ is just a finite subset of closed points of $X$. 

\sssec{Remark}
We warn the readers that we do not require $\underline{x}$ to be nonempty. In particular, there is a canonical point $\emptyset\in \Ran^\untl$ corresponding to the empty subset. See \secref{ss fact via operad} for the reason we make this choice.

\medskip
Note that $ \Ran^\untl$ is denoted $\Ran^{\on{untl},*}$ in \cite[Sect. C.5]{GLC2}.

\sssec{}
  \label{sss Cart catprestk}
Let $\CY$ be a categorical prestack. A \emph{coCartesian space over $\CY$} is a categorical prestack $\CZ \to \CY$ such that for any affine scheme $S$, the functor $\CZ(S) \to \CY(S)$ is a left fibration, i.e., a coCartesian fibration in groupoids. We also say $\CZ\to \CY$ is a \emph{coCartesian morphism of categorical prestacks}.

\medskip 
Dually, we define the notion of \emph{Cartesian spaces over $\CY$}. 

\sssec{Remark}
	Roughly speaking, a coCartesian space $\CT_{\Ran^\untl}$ over $\Ran^\untl$ is an assignment as follows.
	\begin{itemize}
		\item 
			For any point $\ul{x}\in \Ran^\untl$, a (usual) prestack $\CT_{\ul{x}}$;
		\item 
			For any $\ul{x}\subseteq \ul{x}'$, a morphism $\CT_{\ul{x}}\to \CT_{\ul{x}'}$
			that is compatible with compositions.
	\end{itemize}
	The above data should depend ``algebraically'' on $\underline{x}$ and $\underline{x}'$.

\sssec{Example}

  \label{sss GrG untl}
	We have a coCartesian space $\Gr_{G,\Ran^\untl}$ over $\Ran^\untl$ such that $\Gr_{G,\ul{x}}$ is as in \secref{sss GrG factspc}, while the morphism
	\[
		\Gr_{G,\ul{x}} \to \Gr_{G,\ul{x}'}\;
	\]
	(for $\ul{x}\subseteq\ul{x}'$) sends $(\CP_G^{\on{glob}},\beta)$ to $(\CP_G^{\on{glob}},\beta|_{X\setminus \ul{x}'})$.

\sssec{Example}

	We have a Cartesian space $\fL^+(G)_{\Ran^\untl}$ over $\Ran^\untl$ such that $\fL^+(G)_{\ul{x}}:=G( \mathcal{D}_{\ul{x}}  )$, while the morphism
	\[
		\fL^+(G)_{\ul{x}'} \to \fL^+(G)_{\ul{x}}
	\]
	(for $\ul{x}\subseteq\ul{x}'$) is induced by restriction along $\mathcal{D}_{\ul{x}} \subseteq \mathcal{D}_{\ul{x}'}  $.

\sssec{}

Let $\CY$ be a categorical prestack. There is a common generalization of coCartesian spaces and Cartesian spaces over $\CY$. A \emph{corr-space over $\CY$} is a categorical prestack $\CZ \to \CY$ such that for any affine scheme $S$, the functor $\CZ(S) \to \CY(S)$ is a \emph{fibration in correspondences} (see \cite[C.10]{GLC2}).

\sssec{Remark}
	\label{sss corrspace}
Roughly speaking, a corr-space $\CT_{\Ran^\untl}$ over $\Ran^\untl$ is an assignment as follows.
	\begin{itemize}
		\item 
			For any point $\ul{x}\in \Ran^\untl$, a (usual) prestack $\CT_{\ul{x}}$;
		\item 
			For any $\ul{x}\subseteq \ul{x}'$, a correspondence $\CT_{\ul{x}}\gets \CT_{\ul{x}\subseteq \ul{x}'} \to \CT_{\ul{x}'}$ or prestacks that is compatible with compositions.
	\end{itemize}
	In particular, we have
	\[
		\CT_{\ul{x}} \simeq \CT_{\ul{x}\subseteq \ul{x}}
	\]
	and
	\[
		\CT_{\ul{x}\subseteq \ul{x}'} \times_{\CT_{\ul{x}'}} \CT_{\ul{x}'\subseteq \ul{x}''} \simeq \CT_{\ul{x}\subseteq \ul{x}''}.
	\]

\medskip 
	The corr-space $\CT_{\Ran^\untl}$ is coCartesian (resp. Cartesian) over $\Ran^\untl$ when the morphisms $\CT_{\ul{x}}\gets \CT_{\ul{x}\subseteq \ul{x}'}$ (resp. $\CT_{\ul{x}\subseteq \ul{x}'} \to \CT_{\ul{x}'}$) are invertible.

\sssec{Example}

	We have a corr-space $\fL(G)_{\Ran^\untl}$ over $\Ran^\untl$ such that $\fL(G)_{\ul{x}}:=G( \ocD_{\ul{x}}  )$, $\fL(G)_{\ul{x}\subseteq \ul{x}'} := G( \cD_{\ul{x}'}\setminus\ul{x}  )$, while the correspondence
	\[
		\fL(G)_{\ul{x}} \gets \fL(G)_{\ul{x}\subseteq \ul{x}'} \to \fL(G)_{\ul{x}'}
	\]
	(for $\ul{x}\subseteq\ul{x}'$) is induced by restriction along
	\[
		 \ocD_{\ul{x}} \to \cD_{\ul{x}'}\setminus\ul{x} \gets \ocD_{\ul{x}'}.
	\]

\sssec{} 
\label{sss Ran untl monoid}

Note that $\Ran^\untl$ is an abelian monoid object in $\on{CatPreStk}$, with the addition map given by
\[
	\on{union}: \Ran^\untl\times \Ran^\untl \to \Ran^\untl,\; (\ul{x},\ul{y}) \mapsto \ul{x}\cup\ul{y}.
\]
For any finite set $I$, we obtain a map
\[
	\on{union}_I: \prod_{i\in I} \Ran^\untl \to \Ran^\untl.
\]
Note that when $I=\emptyset$, this is the map $\emptyset:\on{pt} \to \Ran^\untl$.

\sssec{}
\label{sss Ran untl disjoint loci}

As in the non-unital case, we consider the disjoint locus
\[
	(\Ran^\untl \times \Ran^\untl)_\disj \subseteq \Ran^\untl\times \Ran^\untl,
\]
which sends an affine scheme $S$ to the category 
\[
	\{ (\ul{x},\ul{y})\in (\Ran^\untl\times \Ran^\untl)(S)\;\vert\; \on{Graph}_{\ul{x}}\cap \on{Graph}_{\ul{y}} = \emptyset\}.
\]
Similarly, for any finite set $I$, we can define
\[
	 \big (\prod_{i\in I} \Ran^\untl\big)_\disj \subseteq  \prod_{i\in I} \Ran^\untl.
\]

We record the following result for future references.

\begin{lem}
  \label{lem-union-local-iso}
    For any finite set $I$,
    \[
        \on{union}_I: \big (\prod_{i\in I} \Ran^\untl\big)_\disj \to \Ran^\untl
    \]
    is a Cartesian morphism of categorical prestacks (See \secref{sss Cart catprestk}). For any affine test scheme $S\to \Ran^\untl$, the base-change
    \begin{equation}
      \label{eqn-fiber-of-union}
        \big (\prod_{i\in I} \Ran^\untl\big)_\disj\times_{ \Ran^\untl } S
    \end{equation}
    is a finite coproduct of open subschemes of $S$ (taking in the category of prestacks). 
\end{lem}

\proof 
  It is enough to treat the case $I=\emptyset$ and $I=\{1,2\}$. In both cases, the first claim is obvious. For the second claim, when $I=\emptyset$, the base-change \eqref{eqn-fiber-of-union} is either empty or $S$. When $I=\{1,2\}$, we only need to show that
  \[
      (\Ran^\untl \times \Ran^\untl)_\disj\times_{ \Ran^\untl } X^J
  \]
  is a finite coproduct of open subschemes of $X^J$. Indeed, it is given by
  \[
      \bigsqcup_{J_1\sqcup J_2=J} (X^{J_1}\times X^{J_2})_{\on{disj}},
  \]
  where $(X^{J_1}\times X^{J_2})_{\on{disj}}\subseteq X^{J_1}\times X^{J_2}$ is the intersection of the preimages of $(X^{\{j_1\}} \times X^{\{j_2\}})\setminus \Delta$ for all the pairs $(j_1,j_2)\in J_1\times J_2$.

\qed

\sssec{}
	Now a \emph{unital} (resp. \emph{counital}, \emph{corr-unital}) factorization space $\CT$ is a \emph{coCartesian space} (resp. \emph{Cartesian space}, \emph{corr-space}) $\CT_{\Ran^\untl}$ over $\Ran^\untl$ equipped with a \emph{multiplicative structure over the disjoint loci}. In other words, for any finite set $I$, we have an isomorphism
	\[
		\on{union_I}^{-1}(  \CT_{\Ran^\untl} ) |_{   \big (\prod_{i\in I} \Ran^\untl\big)_\disj  } \simeq ( \prod_{i\in I}\CT_{\Ran^\untl} ) |_{ \big (\prod_{i\in I} \Ran^\untl\big)_\disj }
	\]
and a homotopy-coherent datum of associativity and commutativity. Here $\on{union}_I^{-1}$ is the change-of-base along the map $\on{union}_I$.

\sssec{Remark}
	Roughly speaking, a corr-unital factorization space $\CT$ consists of the following data
	\begin{itemize}
		\item 
			Prestacks $\CT_{\ul{x}}$ and correspondences $\CT_{\ul{x}}\gets \CT_{\ul{x}\subseteq \ul{x}'} \to \CT_{\ul{x}'}$ as in \secref{sss corrspace};
		\item 
			For any finite collection $\ul{x}_i\subseteq \ul{x}_i'$ ($i\in I$) such that $\ul{x}_i'\cap \ul{x}_j'=\emptyset$ ($i\neq j$), ca ommutative diagram
			\[
				\xymatrix{
					\prod_{i\in I} \CT_{\ul{x}_i} \ar[d]_-{\on{mult}_{(\ul{x}_i)}} 
					& \prod_{i\in I} \CT_{\ul{x}_i\subseteq \ul{x}_i'} \ar[l] \ar[r] \ar[d]^-{\on{mult}_{(\ul{x}_i\subseteq \ul{x}_i')}}
					& \prod_{i\in I} \CT_{\ul{x}_i'} \ar[d]^-{\on{mult}_{(\ul{x}_i')}}  \\ 
					 \CT_{\sqcup \ul{x}_i} &
					  \CT_{\sqcup \ul{x}_i\subseteq \sqcup \ul{x}_i'}
					  \ar[l] \ar[r]
					  &  \CT_{ \sqcup \ul{x}_i'}
				}
			\]
			that is compatible with compositions.
	\end{itemize}

	\medskip
	Such a $\CT$ is unital (resp. counital) means when the leftward (resp. rightward) morphisms are invertible.

\sssec{Example}
  \label{sss unital factspc example}
	The corr-space $\fL(G)_{\Ran^\untl} \to \Ran^\untl$ has a natural corr-unital factorization structure. We denote the resulting corr-unital factorization space by $\fL(G)$.

	\medskip 
	Similarly, we have a unital factorization space $\Gr_G$ and a counital factorization space $\fL^+(G)$.

\ssec{Crystal of categories over the unital Ran space}

\sssec{}

Given a categorical prestack $\CY$, a \emph{crystal of categories} $\underline{\bC}$ over $\CY$ is an assignment as follows.
\begin{itemize}
    \item For an affine scheme $S$ and a morphism $y:S\to \CY$, assign a $\Dmod(S)$-module category $\bC_y$;
    \item For affine schemes $S_1$, $S_2$ and a 2-morphism        
            \begin{equation}
                \label{eqn-2-morphism-cat-prestk}
                \xymatrix{
                    & S_1 \ar[rd]^-{y_1} \ar@{=>}[d]^-\alpha \\ 
                    S_2 \ar[rr]_-{y_2} \ar[ru]^-f & & \CY,
                    }
            \end{equation}
            (i.e. a morphism $\alpha:y_1\circ f\to y_2$ in $\CY(S_2)$), assign a functor 
            \[
                \bC_\alpha:\bC_{y_1} \to \bC_{y_2}
            \]
            intertwining the action of $f^!:\Dmod(S_1)\to \Dmod(S_2)$, such that the induced functor
            \[
                \bC_{y_1}\otimes_{\Dmod(S_1)} \Dmod(S_2) \to \bC_{y_2}
            \]
            is an equivalence when $\alpha$ is invertible.
    \item A homotopy-coherent system of compatibilities for compositions.
\end{itemize}
    
\sssec{Remark} \label{sss actual defn of cryscat:label}
Let 
\[
    \widetilde{\CY} \to (\on{Sch}^{\on{aff}})^{\on{op}}  
\] 
be the coCartesian fibration corresponding to the functor $\CY$. Let 
\[
    \widetilde{\mathbf{CrysCat}} \to (\on{Sch}^{\on{aff}})^{\on{op}}
\] 
be the coCartesian fibration of 2-categories corresponding to the functor $S\mapsto \Dmod(S)\on{-mod}$. Then a crystal of categories $\underline{\bC}$ over $\CY$ is \emph{defined} to be a $(\on{Sch}^{\on{aff}})^{\on{op}}$-functor $\widetilde{\CY} \to \widetilde{\mathbf{CrysCat}}$ preserving coCartesian arrows.

\sssec{Example}
    We have the \emph{constant crystal of categories} $\underline{\Dmod}(\CY)$ over $\CY$ which assignes to $y\in \CY(S)$ the cateogry $\Dmod(S)$ and assignes to a 2-morphism $\alpha$ the functor $f^!:\Dmod(S_1)\to \Dmod(S_2)$.

\sssec{Remark}
    \label{rem cryscat on Ran:label}
    Roughly speaking, a crystal of categories $\bA$ over $\Ran^\untl$ is an assignment as follows.
    \begin{itemize}
        \item For any point $\underline{x}\in \Ran^\untl$, assign a DG category $\bA_{\underline{x}}$;
        \item For any inclusion $\underline{x}\subseteq \underline{x}'$, assign a functor
        \[
            \on{ins}_{\underline{x}\subseteq \underline{x}'}: \bA_{\underline{x}}\to \bA_{\underline{x}'}
        \]
        that is compatible with compositions.
    \end{itemize}
    The above data should depend ``algebraically'' on $\underline{x}$ and $\underline{x'}$. This means we also allow $\ul{x}$ to be affine points $\Ran^\untl(S)$, and the above data should be contravariantly functorial in $S$.

\sssec{}
	Given a corr-space $\CT_{\Ran^\untl}$ over $\Ran^\untl$, under some mild finiteness assumptions, we can construct a crystal of categories $\ul{\Dmod}(\CT_{\Ran^\untl})$ over $\Ran^\untl$ such that
	\begin{itemize}
		\item For any $\ul{x}\in \Ran^\untl$, 
		\[
			\ul{\Dmod}(\CT_{\Ran^\untl})_{\ul{x}} \simeq \Dmod(\CT_{\ul{x}})
		\]
		\item For any inclusion $\underline{x}\subseteq \underline{x}'$, the functor
		\[
			\on{ins}_{\ul{x}\subseteq \ul{x}'}: \ul{\Dmod}(\CT_{\Ran^\untl})_{\ul{x}}\to \ul{\Dmod}(\CT_{\Ran^\untl})_{\ul{x}'}
		\]
		is given by $!$-pull-$*$-push along the correspondence
		\[
			\CT_{\ul{x}} \gets \CT_{\ul{x}\subseteq \ul{x}'} \to \CT_{\ul{x}'}.
		\]
	\end{itemize}
	Here the finiteness assumptions are required such that: 
	\begin{itemize}
		\item The category $\Dmod(\CT_{\ul{x}})$ is well-defined\footnote{In fact, we need to consider all finite type affine points $\ul{x}:S\to \Ran^\untl$.};
		\item The $!$-pullback functor $\Dmod(\CT_{\ul{x}}) \to \Dmod(\CT_{\ul{x}\subseteq \ul{x}'})$ is well-defined, and the $*$-pushforward functor $\Dmod(\CT_{\ul{x}\subseteq \ul{x}'})\to \Dmod(\CT_{\ul{x}'})$ is well-defined.
		\item The above $!$-pullback functors and $*$-pushforward functors have base-change isomorphisms.
	\end{itemize}

\sssec{Example}
  \label{exam-LG-L+G-GrG}
	The coCartesian space $\Gr_{G,\Ran^\untl}\to \Ran^\untl$ produces a crystal of categories $\ul{\Dmod}(\Gr_G)$ over $\Ran^\untl$, due to the fact that each $\Gr_{G,\ul{x}}$ is ind-finite type.

	\medskip 
	The Cartesian space $\fL^+(G)_{\Ran^\untl} \to \Ran^\untl$ produces crystals of categories $\ul{\Dmod}^!(\fL^+(G))$ and $\ul{\Dmod}_*(\fL^+(G))$, due to the fact that each $\fL^+(G)_{\ul{x}}$ is placid (see \secref{sss placid}). Note however that we can write
	\[
		\ul{\Dmod}^!(\fL^+(G))=: \ul{\Dmod}(\fL^+(G)):= \ul{\Dmod}_*(\fL^+(G))
	\]
	because of \secref{sss:self-duality L+G}.

	\medskip 
	Similarly, the corr-space $\fL(G)_{\Ran^\untl} \to \Ran^\untl$ produces a crystal of categories $\ul{\Dmod}(\fL(G))$.

\ssec{Morphisms between crystals of categories}
    There are two notions of morphisms between crystals of categories over a categorical prestack $\CY$: \emph{lax functors} and \emph{strict functors}.

\sssec{}
    A \emph{lax functor} $F: \bC\to \bC'$ is an assignment as follows.
\begin{itemize}
    \item For any $y:S\to \CY$, assign a $\Dmod(S)$-linear functor $F_y: \bC_y \to \bC_y'$;
    \item For any 2-morphism \eqref{eqn-2-morphism-cat-prestk}, assign a $\Dmod(S)$-linear natural transformation
            \[
                \xymatrix{
                    \bC_y \ar[r]^-{\bC_\alpha} \ar[d]_-{F_y}
                    & \bC_{y'} \ar[d]^-{F_{y'}} \\ 
                    \bC'_y \ar[r]_-{\bC'_\alpha} \ar@{=>}[ru]^-{F_\alpha}
                    & \bC'_{y'},
                }
            \]
            such that it is invertible if $\alpha$ is so.
    \item A homotopy-coherent system of compatibilities for compositions.
\end{itemize}

    \medskip
    A lax functor $F: \bC\to \bC'$ is \emph{strict} if the above natural transformations are all invertible. 

    \medskip
    The totality of crystals of categories over $\CY$ and lax functors gives a 2-category $\mathbf{CrysCat}^{\on{lax}}(\CY)$. There is a 1-full subcategory 
    \[
        \mathbf{CrysCat}^{\on{strict}}(\CY) \subseteq \mathbf{CrysCat}^{\on{lax}}(\CY)
    \]
    with morphisms being strict functors.

    \medskip 
    We equip $\mathbf{CrysCat}^{\on{lax}}(\CY)$ with the natural symmetric monoidal structure given by the formula
    \[
        (\bC\otimes\bD)_y := \bC_y \otimes_{\Dmod(S)} \bD_y,\; (y:S\to Y).
    \]

\sssec{Remark} 
    Following \secref{sss actual defn of cryscat:label}, a lax functor $F: \bC\to \bC'$ is \emph{defined} to be a (right-)lax natural transformation (over $(\on{Sch}^{\on{aff}})^{\on{op}}$) between the corresponding functors
    \[
        \bC,\bC':\widetilde{\CY} \to \widetilde{\mathbf{CrysCat}}
    \]
    such that its value at any coCartesian arrow in $\widetilde{\CY}$ is strict, while a strict functor $F: \bC\to \bC'$ is just a strict natural transformation.

    \medskip
    A 2-morphism in $\mathbf{CrysCat}^{\on{lax}}(\CY)$ is \emph{defined} to be a (strict) \emph{modification}\footnote{There is no room for laxness for modifications because 3-morphisms in $\widetilde{\mathbf{CrysCat}}$ are invertible.} between such lax natural transformations. 
    In other words, they are 2-morphisms in $\on{Fun}_{(\on{Sch}^{\on{aff}})^{\on{op}}}(  \widetilde{\CY}, \widetilde{\mathbf{CrysCat}} )$. 

\sssec{Example}
	\label{sss lax global section}

    A lax functor $\underline{\Dmod}(\CY) \to \bC$ is called a \emph{lax global section} of $\bC$. It is an assignment as follows.
    \begin{itemize}
        \item For any $y:S\to \CY$, assign an object $\CF_y \in \bC_y$;
        \item For any 2-morphism \eqref{eqn-2-morphism-cat-prestk}, assign a morphism $\CF_\alpha:\bC_\alpha(\CF_y) \to \CF_{y'}$ in $\bC_{y'}$ such that it is invertible if $\alpha$ is so.
        \item A homotopy-coherent system of compatibilities for compositions.
    \end{itemize}
    The collection of lax global sections of $\bC$ form a DG category $\bC_\CY^{\on{lax}}$, which is denoted $\Gamma^{\on{lax}}(\CY,\bC)$ in \cite[Sect. C.2]{GLC2}.

    \medskip
    Similarly, a strict functor $\underline{\Dmod}(\CY) \to \bC$ is called a \emph{strict global section} of $\bC$. It is an assignment as above such that $\CF_\alpha$ is \emph{always} invertible. The collection of strict global sections of $\bC$ form a full subcategory $\bC_\CY^{\on{strict}} \subseteq \bC_\CY^{\on{lax}}$, which is denoted by $\Gamma^{\on{strict}}(\CY,\bC)$ in \emph{loc.cit.}.

\sssec{Example}
  \label{sss lax D-module}
    Objects in 
    \[
        \Dmod^{\on{lax}}(\CY) := \underline{\Dmod}(\CY)_\CY^{\on{lax}} \simeq \Gamma^{\on{lax}}(\CY,\underline{\Dmod}(\CY))
    \]
    are called \emph{lax D-modules} on $\CY$, while those in
    \[
        \Dmod^{\on{strict}}(\CY) := \underline{\Dmod}(\CY)_\CY^{\on{strict}} \simeq \Gamma^{\on{strict}}(\CY,\underline{\Dmod}(\CY))
    \]
    are \emph{strict D-modules} on $\CY$.

\sssec{Remark}
	\label{rem description lax unital morphism over Ran}
    Following \secref{rem cryscat on Ran:label}, roughly speaking, a lax (resp. strict) functor $F:\bA \to \bA'$ over $\Ran^\untl$ consists of the following data:
    \begin{itemize}
        \item For any point $\underline{x}\in \Ran^\untl$, assign a functor $F_{\ul{x}}:\bA_{\underline{x}} \to \bA_{\underline{x}}'$;
        \item For any inclusion $\underline{x}\subseteq \underline{x}'$, assign a natural transformation (resp. isomorphism)
            \[
                \xymatrix{
                    \bA_{\ul{x}} \ar[rr]^-{\on{ins}_{\ul{x}\subseteq\ul{x}'}} \ar[d]_-{F_{\ul{x}}}
                    & &\bA_{\ul{x}'} \ar[d]^-{F_{\ul{x}'}} \\ 
                    \bA'_{\ul{x}} \ar[rr]_-{\on{ins}_{\ul{x}\subseteq\ul{x}'}} \ar@{=>}[rru]^-{F_{ \ul{x}\subseteq \ul{x}' }}
                    & &\bA'_{\ul{x}'},
                }
            \]
        that is compatible with compositions.
    \end{itemize}
 
 	\medskip
 	In particular, a lax (resp. strict) global section $\CA$ of $\bA$ over $\Ran^\untl$ consists of the following data:
 	\begin{itemize}
 		\item 
 			For any point $\underline{x}\in \Ran^\untl$, assign an object $\CA_{\ul{x}} \in \bA_{\ul{x}}$;
 		\item 
 			For any inclusion $\underline{x}\subseteq \underline{x}'$, assign a morphism (resp. isomorphism)
 			\[
 				\CA_{\ul{x}\subseteq \ul{x}'}:\on{ins}_{\ul{x}\subseteq \ul{x}'} (\CA_{\ul{x}}) \to \CA_{\ul{x}'}
 			\]
 			that is compatible with compositions.
 	\end{itemize}

\sssec{}
    Given a morphism $f:\CY \to \CZ $ between categorical prestacks, there is a symmetric monoidal functor
    \[
    	f^*: \mathbf{CrysCat}^{\on{lax}}(\CZ) \to \mathbf{CrysCat}^{\on{lax}}(\CY)
    \]
    given by the formula
    \[
    	(f^*\bC)_y \simeq \bC_{f(y)},
    \]
    where $y:S\to \CY$ is an affine point of $\CY$ and $f(y)=f\circ y$ is an affine point of $\CZ$. Note that $f^*$ preserves strict functors, i.e., we have a functor
    \[
    	f^*: \mathbf{CrysCat}^{\on{strict}}(\CZ) \to \mathbf{CrysCat}^{\on{strict}}(\CY)
    \]

\sssec{}
	\label{sss !-pullback lax global section}
    In particular, for any $\bC \in \mathbf{CrysCat}^{\on{lax}}(\CZ)$, we obtain a functor
    \[
    	f^!:\bC_\CZ^{\on{lax}} \to (f^*\bC)^{\on{lax}}_\CY
    \]
    given by
    \[
    	\on{Fun}_{\mathbf{CrysCat}^{\on{lax}}(\CZ)}( \ul{\Dmod}(\CZ), \bC  ) \to \on{Fun}_{\mathbf{CrysCat}^{\on{lax}}(\CY)}( f^*\ul{\Dmod}(\CZ), f^*\bC  ) \simeq \on{Fun}_{\mathbf{CrysCat}^{\on{lax}}(\CY)}( \ul{\Dmod}(\CY), f^*\bC  ).
    \]
    Note that $f^!$ sends $\bC_\CZ^{\on{strict}}$ into $(f^*\bC)^{\on{strict}}_\CY$.

    \medskip
    When $\bC = \ul{\Dmod}(\CZ)$, the functor $f^!$ sends (lax) D-modules of $\CZ$ to (lax) D-modules of $\CY$. This construction generalizes the $!$-pullback functors for usual (i.e. non-categorical) prestacks.

\sssec{Remark}
	Following \secref{sss actual defn of cryscat:label}, the functor $f^*$ is given by precomposing with $\widetilde{\CY} \to \widetilde{\CZ}$.

\sssec{Example}
	There is an obvious map $\Ran \to \Ran^\untl$, which induces a restriction functor
	\[
		\mathbf{CrysCat}^{\on{lax}}(\Ran^\untl) \to \mathbf{CrysCat}(\Ran).
	\]
	Here we do not need to distinguish $\mathbf{CrysCat}^{\on{lax}}(\Ran)$ and $\mathbf{CrysCat}^{\on{strict}}(\Ran)$ because $\Ran$ is a usual prestack. 

\sssec{}
  Let $\CY$ and $\CZ$ be categorical prestacks. The \emph{external tensor product} functor
  \[
      -\boxtimes-: \mathbf{CrysCat}^{\on{lax}}(\CY)\times  \mathbf{CrysCat}^{\on{lax}}(\CZ) \to  \mathbf{CrysCat}^{\on{lax}}(\CY\times \CZ)
  \]
  is defined to be
  \[
    \CC\boxt \mathcal{D} := \on{pr}_1^*(\CC) \otimes \on{pr}_2^*(\mathcal{D}).
  \]
  Note that
  \[
    (\CC \boxt \mathcal{D})_{(y,z)}\simeq \CC_y \otimes \mathcal{D}_z
  \]
  for $y:S\to \CY$ and $z:T\to \CZ$.

\sssec{Warning}
  For a 2-morphism $f_1\to f_2: \CY\to \CZ$ in $\mathbf{CatPreStk}$, we only have a \emph{left-lax} natural transformation
  \[
    f_1^* \to f_2^*: \mathbf{CrysCat}^{\on{lax}}(\CZ) \to \mathbf{CrysCat}^{\on{lax}}(\CY).
  \] 
  In other words, given a morphism $F:\bC\to \bC'$ in $\mathbf{CrysCat}^{\on{lax}}(\CY)$, we have a canonical 2-morphism
  \[
    \xymatrix{
        f_1^*(\bC) \ar[r] \ar[d]
        & f_2^*(\bC) \ar[d] \\
        f_1^*(\bC') \ar[r] \ar@{=>}[ru]
        & f_2^*(\bC').
    }
  \]

\ssec{Unital factorization categories}

\sssec{}
\label{sss:unital factcat}
Recall that $\Ran^\untl$ is an abelian monoid object in $\on{CatPreStk}$ (see \secref{sss Ran untl monoid}).

\medskip
A \emph{unital factorization category} $\bA$ is a crystal of categories $\ul{\bA}$ over $\Ran^\untl$ equipped with a \emph{multiplicative structure over the disjoint loci}. In other words, for any finite set $I$, we have an equivalence
\begin{equation}
	\label{eqn-defn-fact-cat}
	\on{mult}_I: ( \boxt_{i\in I} \ul{\bA}  )|_{ \on{disj} }
	 \xrightarrow{\simeq} 
	 \on{union}_I^*(\ul{\bA})|_{ \on{disj} }
\end{equation}
and a homotopy-coherent datum of associativity and commutativity. Here $(-)|_\disj$ means restriction along 
\[
	 \big (\prod_{i\in I} \Ran^\untl\big)_\disj  \to  \prod_{i\in I} \Ran^\untl,
\]
see \secref{sss Ran untl disjoint loci}.

\sssec{Remark}
	\label{sss unital factcat explicit}
	Roughly speaking, a unital factorization category consists of the following data:
	\begin{itemize}
		\item 
			DG categories $\bA_{\ul{x}}$ and functors $\on{ins}_{\ul{x}\subseteq \ul{x}'}$ as in \secref{rem cryscat on Ran:label};
		\item 
			For any finite collection of \emph{disjoint} points in $\Ran^\untl$, i.e., 
			\[
				(\ul{x}_i)_{i\in I}\in  \big (\prod_{i\in I} \Ran^\untl\big)_\disj,\; |I|<\infty
			\] 
			assign an equivalence
			\begin{equation}
				\label{eqn-factcat-structure-fiber}
				\on{mult}_{(\ul{x}_i)}: \otimes \bA_{\ul{x}_i} \simeq \bA_{\bigsqcup \ul{x}_i}
			\end{equation}
			(and a datum of associativity and commutatitvity)
	\end{itemize}
	such that for two collections $(\ul{x}_i)_{i\in I}$, $(\ul{x}_i')_{i\in I}$ of disjoint points satisfying $\ul{x}_i\subseteq \ul{x}_i'$, the following diagram commutes\footnote{This is also a structure rather than a property, and there is a datum of compatibility with associativity and commutatitvity.}
	\begin{equation}
		\label{eqn-unital-factcat-explicit}
		\xymatrix{
			\otimes \bA_{\ul{x}_i} 
			\ar[rr]^-{\on{mult}_{(\ul{x}_i)}}_\simeq  \ar[d]_-{\otimes  \on{ins}_{\ul{x}_i\subseteq \ul{x}_i' }}
			& &  \bA_{\bigsqcup \ul{x}_i} 
			\ar[d]^-{ \on{ins}_{ \sqcup \ul{x}_i \subset  \sqcup \ul{x}_i' }  } \\ 
			\otimes \bA_{\ul{x}_i'} 
			\ar[rr]^-\simeq_-{\on{mult}_{(\ul{x}_i')}} 
			&&  \bA_{\bigsqcup \ul{x}_i'}.
		}
	\end{equation}

	\medskip 
	Note that for $I=\emptyset$, we have a canonical equivalence $\on{mult}_{\emptyset}: \Vect \simeq \bA_\emptyset$\footnote{Note that for $I=\{1\}$ and $\ul{x}_1=\emptyset$, we also have an equivalence $\on{mult}_{(\emptyset)}: \bA_\emptyset \simeq \bA_\emptyset$, which is just the identity functor.}.

\sssec{Example}
	\label{exam unital factcat Vect}
	The constant crystal $\ul{\Dmod}(\Ran^\untl)$ is a unital factorization cateogry by identifying both sides of \eqref{eqn-defn-fact-cat} with 
	\[
		\ul{\Dmod}\big(  \big (\prod_{i\in I} \Ran^\untl\big)_\disj  \big).
	\]
	We often denote this factorization category by $\Vect$ (because its fiber at any $\ul{x}\in \Ran^\untl$ is $\Vect$).

\sssec{Example}
  \label{exam-LG-L+G-GrG-fact}
	The factorization structures on $\Gr_{G,\Ran^\untl}$, $\fL^+(G)_{\Ran^\untl}$ and $\fL(G)_{\Ran^\untl}$ upgrade the crystals of categories $\ul{\Dmod}(\Gr_G)$, $\ul{\Dmod}(\fL^+(G))$, $\ul{\Dmod}(\fL(G))$ to unital factorization categories.

\sssec{Variant}
	\label{sss lax unital factcat}
	We define a unital \emph{lax-factorization} cateogry to be a crystal of categories $\ul{\bA}$ over $\Ran^\untl$ equipped with a \emph{(right-)lax} multiplicative structure over the disjoint loci. This means we replace the equivalence \eqref{eqn-defn-fact-cat} with a morphism in $\mathbf{CrysCat}^{\on{strict}}(  \big (\prod_{i\in I} \Ran^\untl\big)_\disj )$:
	\begin{equation}
	\label{eqn-defn-laxfact-cat}
		\on{mult}_I: ( \boxt_{i\in I} \ul{\bA}  )|_{ \on{disj}} \to \on{union}_I^*(\ul{\bA})|_{ \on{disj}  }.
	\end{equation}

	\medskip
	One may also consider an even weaker notion, where \eqref{eqn-defn-laxfact-cat} is only required to be a morphism in $\mathbf{CrysCat}^{\on{lax}}(  \big (\prod_{i\in I} \Ran^\untl\big)_\disj )$. However, we do not see any application of such a structure.

\sssec{Remark}

	The restriction functor $\mathbf{CrysCat}^{\on{lax}}(\Ran^\untl) \to \mathbf{CrysCat}(\Ran)$ sends unital factorization categories to non-unital ones.

\ssec{Morphisms between unital factorization categories} 
\label{ss Mor factalgcat}
There are (at least) two notions of morphisms between unital factorization categories: \emph{lax-unital functors} and \emph{(strictly) unital functors}.

\sssec{}
	\label{sss:lax unital factfun}
	Let $\bA$ and $\bA'$ be unital factorization categories. A \emph{lax-unital factorization functor} $F:\bA \to \bA'$ is a morphism $\ul{F}: \ul{\bA}\to \ul{\bA}'$ in $\mathbf{CrysCat}^{\on{lax}}(\Ran^\untl)$ equipped with commutative diagrams (for any finite set $I$)
	\begin{equation}
		\label{eqn-factfun-defn}
		\xymatrix{
			( \boxt_{i\in I} \ul{\bA}  )|_{ \on{disj} } 
			\ar[d]_-{ (\boxt \ul{F})|_\disj  }
			\ar[r]^-\simeq 
			& \on{union}_I^*(\ul{\bA})|_{ \on{disj} } 
			\ar[d]^-{ \on{union}_I^*(\ul{F})|_\disj  } \\
			( \boxt_{i\in I} \ul{\bA}'  )|_{ \on{disj} }
			\ar[r]^-\simeq 
			& \on{union}_I^*(\ul{\bA}')|_{\on{disj} } 
		}
	\end{equation}
	and a homotopy-coherent datum of associativity and commutativity. Here the horizontal arrows are the structural equivalences for $\bA$ and $\bA'$ (see \eqref{eqn-defn-fact-cat}).

	\medskip 
	We say $F$ is (strictly) unital if $\ul{F}$ is contained in $\mathbf{CrysCat}^{\on{strict}}(\Ran^\untl)$.

\sssec{}
	Unital factorization categories and lax-unital factorization functors between them form a 2-category, which is denoted by
	\[
		\mathbf{UntlFactCat}^{\on{lax-untl}}.
	\]
	The 1-full subcategory of strictly unital factorization functors is denoted by
	\[
		\mathbf{UntlFactCat} \subseteq \mathbf{UntlFactCat}^{\on{lax-untl}}.
	\]

\sssec{}
  \label{sss defn factalg in factcat}
	Let $\bA$ be a unital factorization category. A \emph{(unital)\footnote{When talking about factorization algebras in a \emph{unital} factorization category, we always assume the algebra is unital.} factorization algebra in $\bA$} is \emph{defined} to be a lax-unital factorization functor $\CA:\Vect \to \bA$, where $\Vect$ is the unital factorization category defined in \secref{exam unital factcat Vect}.

	\medskip 
	Factorization algebras in $\bA$ form a category, which is denoted by $\on{FactAlg}(\bA)$.

\sssec{}
  \label{sss traddesc factalg}
	Below is a concrete description of factorization algebras $\CA$ in a unital factorization category $\bA$.

	\medskip
	For each finite set $I$, we have a functor
	\[
		\prod_{i\in I}\Gamma^{\on{lax}}(\Ran^\untl, \ul{\bA})
		\to \Gamma^{\on{lax}}( \prod_{i\in I}  \Ran^\untl , \boxt_{i\in I} \ul{\bA} ) 
		\to \Gamma^{\on{lax}}\big(  \big(\prod_{i\in I}  \Ran^\untl\big)_\disj , (\boxt_{i\in I} \ul{\bA})|_\disj \big) 
	\]
	that sends
	\[
		(\CM_i)_{i\in I} \to (\boxt \CM_i)|_\disj,
	\]
	and a functor
	\[
		\Gamma^{\on{lax}}\big(  \Ran^\untl , \bA \big) \to \Gamma^{\on{lax}}\big(  \big(\prod_{i\in I}  \Ran^\untl\big)_\disj , \on{union}_I^*(\bA)|_\disj \big)
	\]
	that in turn sends
	\[
		\CN \mapsto \on{union}_I^!(\CN)|_\disj,
	\]
	see \secref{sss !-pullback lax global section}. Note that the structural equivalence \eqref{eqn-defn-fact-cat} induces an equivalence
	\[
		\on{mult}_I: \Gamma^{\on{lax}}\big(  \big(\prod_{i\in I}  \Ran^\untl\big)_\disj , (\boxt_{i\in I} \ul{\bA})|_\disj \big)  \xrightarrow{\simeq}  \Gamma^{\on{lax}}\big(  \big(\prod_{i\in I}  \Ran^\untl\big)_\disj , \on{union}_I^*(\bA)|_\disj \big).
	\]

	\medskip
	Unwinding the definitions, a factorization algebra $\CA$ in $\bA$ is an object
	\[
		\ul{\CA} \in \Gamma^{\on{lax}}(\Ran^\untl, \ul{\bA})=:\ul{\bA}_{\Ran^\untl}^{\on{lax}}
	\]
	equipped with isomorphisms (for any finite set $I$)
  \begin{equation}
    \label{eqn-structure-iso-factalg}
    \on{mult}_I(  (\boxt_{i\in I} \ul{\CA})|_\disj ) \xrightarrow{\simeq} \on{union}_I^!(\ul{\CA})|_\disj
  \end{equation}
	and a homotopy-coherent datum of associativity and commutativity.

\sssec{Remark} 
	\label{sss unital factalg explicit}
	Roughly speaking, a factorization algebra $\CA$ in a given unital factorization category $\bA$ consists of the following data:
	\begin{itemize}
		\item[(i)]
			Objects $\CA_{\ul{x}}\in \bA_{\ul{x}}$ and morphisms 
			\[
				\CA_{\ul{x}\subseteq \ul{x}'}: \on{ins}_{\ul{x}\subseteq \ul{x}'}(\CA_{\ul{x}}) \to \CA_{\ul{x}'}
			\]
			as in \secref{rem description lax unital morphism over Ran};
		\item[(ii)]
			For any collection $(\ul{x}_i)_{i\in I}$ of \emph{disjoint} points in $\Ran^\untl$, an isomorphism
			\begin{equation}
        \label{eqn-structure-iso-factalg-fiber}
				m_{(\ul{x}_i)}: \on{mult}_{(\ul{x}_i)}(\boxt \CA_{\ul{x}_i}) \xrightarrow{\simeq}\CA_{\bigsqcup \ul{x}_i}
			\end{equation}
      that is associative and commutative, where the (invertible) functor $\on{mult}_{(\ul{x}_i)}$ is as in \secref{sss unital factcat explicit};
	\end{itemize}
	such that for two collections $(\ul{x}_i)_{i\in I}$, $(\ul{x}_i')_{i\in I}$ of disjoints points satisfying $\ul{x}_i\subseteq \ul{x}_i'$, the following diagram commutes
	\[
		\xymatrix{
			\on{ins}_{ \sqcup \ul{x}_i \subset  \sqcup \ul{x}_i' } \circ \on{mult}_{(\ul{x}_i)}(\boxt \CA_{\ul{x}_i}) 
			\ar[d]^-\simeq 
			\ar[rr]^-\simeq
			& & \on{ins}_{ \sqcup \ul{x}_i \subset  \sqcup \ul{x}_i' } (\CA_{\bigsqcup \ul{x}_i}) \ar[d]
			\\ 
			\on{mult}_{(\ul{x}_i')}( \boxt \on{ins}_{\ul{x}_i\subseteq \ul{x}_i' } (\CA_{\ul{x}_i})  ) 
			\ar[r]
			& \on{mult}_{(\ul{x}_i')}( \boxt  \CA_{\ul{x}_i'})
			\ar[r]_-\simeq
			& \CA_{\bigsqcup \ul{x}_i'},
		}
	\]
	where
	\begin{itemize}
		\item The left vertical isomorphism is due to \eqref{eqn-unital-factcat-explicit};
		\item The two horizontal isomorphisms are induced by those in (ii);
		\item The remaining two arrows are induced by those in (i).
	\end{itemize}

	\medskip 
	Note that for $I=\emptyset$, we have a canonical isomorphism
	\[
		m_\emptyset: \on{mult}_\emptyset(k) \xrightarrow{\simeq} \CA_\emptyset,
	\]
	where $\on{mult}_\emptyset$ is the canonical identification $\Vect\simeq \bA_\emptyset$.

\sssec{Example}

	For any unital factorization category $\bA$, there is a unique \emph{unital} factorization functor 
	\[
		\CA: \Vect \to \bA.
	\]
	Namely, unitality implies each $\CA_{\ul{x}} \in \bA_{\ul{x}}$ is canonically identified with $\on{ins}_{\emptyset\subseteq \ul{x}}(\CA_\emptyset)$, while $\CA_\emptyset$ is canonically identified with $k$ via the equivalence $\bA_\emptyset \simeq \Vect$.

	\medskip
	We denote this unital factorization functor by
	\[
		\on{unit}_\bA: \Vect \to \bA
	\]
	and view it as an object
	\[
		\on{unit}_\bA \in \on{FactAlg}(\bA).
	\]

  \medskip 
  The following result is obvious modulo homotopy coherence. A rigorious proof will be provided in \cite{CFZ}.

	\begin{lem}
		\label{lem-unit-is-initial}
		Let $\bA$ be a unital factorization category. Then $\on{unit}_\bA$ is an initial object in $\on{FactAlg}(\bA)$.
	\end{lem}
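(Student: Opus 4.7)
The plan is to show that for every $\CA \in \on{FactAlg}(\bA)$, the mapping space $\on{Maps}_{\on{FactAlg}(\bA)}(\on{unit}_\bA, \CA)$ is contractible, by reducing the problem to a rigid calculation at the base point $\emptyset \in \Ran^\untl$. Using the concrete description in \secref{sss unital factalg explicit}, the data of a morphism $\phi: \on{unit}_\bA \to \CA$ is a collection of morphisms $\phi_{\ul x}: (\on{unit}_\bA)_{\ul x} \to \CA_{\ul x}$ together with coherent compatibilities against the lax-unital structure maps $\CA_{\ul x \subseteq \ul x'}$ and the factorization isomorphisms $m_{(\ul x_i)}$.

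First I would pin down $\phi_\emptyset$. The structural isomorphism $m_\emptyset^\CA: \on{mult}_\emptyset(k) \xrightarrow{\simeq} \CA_\emptyset$ canonically identifies $\CA_\emptyset$ with $k \in \Vect \simeq \bA_\emptyset$, which is also $(\on{unit}_\bA)_\emptyset$ (with $m_\emptyset^{\on{unit}_\bA}$ being the identity). The required compatibility of $\phi_\emptyset$ with $m_\emptyset$ then forces $\phi_\emptyset = \on{id}_k$, rigidly. Next, for any $\ul x \in \Ran^\untl$, the identities $(\on{unit}_\bA)_{\ul x} = \on{ins}_{\emptyset \subseteq \ul x}(k)$ and $(\on{unit}_\bA)_{\emptyset \subseteq \ul x} = \on{id}$ combine with the compatibility of $\phi$ with unitality maps to force
\[
\phi_{\ul x} = \CA_{\emptyset \subseteq \ul x}: \on{ins}_{\emptyset \subseteq \ul x}(\CA_\emptyset) \to \CA_{\ul x},
\]
after invoking the canonical identification above. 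For existence, I would take this formula as the definition and use that the collection $\{\CA_{\emptyset \subseteq \ul x}\}_{\ul x \in \Ran^\untl}$ is already part of the data of $\CA$ as a lax-unital factorization functor: functoriality in $\ul x \subseteq \ul x'$ is built into $\CA$ as a (lax) natural transformation, and compatibility with $m_{(\ul x_i)}$ follows from the coherence diagrams relating $\on{ins}$, $m$, and the lax-unital maps in the definition of a factorization algebra.

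The main obstacle is homotopy coherence: since $\on{FactAlg}(\bA)$ is an $\infty$-category defined in terms of lax functors of crystals of categories over $\Ran^\untl$, contractibility of the mapping space requires upgrading this pointwise analysis to coherent higher data. The cleanest formulation is to realize $\on{unit}_\bA$ as a corepresenting object via a universal property along the pointed map $\emptyset: \on{pt} \to \Ran^\untl$: one shows that morphisms $\on{unit}_\bA \to \CA$ in $\on{FactAlg}(\bA)$ are equivalent to morphisms $k \to \CA_\emptyset$ in $\bA_\emptyset \simeq \Vect$ respecting $m_\emptyset$, and the latter space is contractible because $m_\emptyset^\CA$ is a canonical identification. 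Carrying out this adjunction-style reduction rigorously, together with the verification of all higher coherences, is the technical heart of the argument and, per the remark following the statement, is left to \cite{CFZ}.
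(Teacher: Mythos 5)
The paper itself gives no proof of this lemma: it simply asserts that the result is ``obvious modulo homotopy coherence'' and defers a rigorous argument to \cite{CFZ}. Your sketch is correct and fills in precisely the informal content that the paper is gesturing at: the canonical identification $m_\emptyset^\CA\colon k\simeq\CA_\emptyset$ pins down $\phi_\emptyset$, the unitality of $\on{unit}_\bA$ together with the lax-structure maps $\CA_{\emptyset\subseteq\ul{x}}$ force (and supply) $\phi_{\ul{x}}$, and the factorization compatibility is extracted from the coherence square relating $\on{ins}$, $\on{mult}$, and the lax-unital data in \secref{sss unital factalg explicit}. You also correctly identify the genuine gap — upgrading the pointwise rigidity to contractibility of the mapping space in the $\infty$-categorical sense — and defer it to \cite{CFZ}, which is exactly what the paper does. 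So this matches the paper's (implicit) reasoning; there is no alternative route to compare against, since no argument is actually spelled out in the text.
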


\sssec{}
	By definition, for any lax unital factorization functor $F:\bA \to \bA'$, we have a functor
	\begin{equation}
    \label{eqn-laxuntl-factalg-to-factalg}
		\on{FactAlg}(\bA) \to \on{FactAlg}(\bA'),\; \CA \mapsto F\circ \CA.
	\end{equation}
	We also write $F(\CA):=F\circ \CA$.

	\medskip
	In particular, we obtain an object $F(\on{unit}_\bA) \in \on{FactAlg}(\bA')$. By Lemma \ref{lem-unit-is-initial}, there is a unique morphism
	\[
		\on{unit}_{\bA'} \to F(\on{unit}_\bA) .
	\]

  \medskip
	The following result is obvious modulo homotopy coherence. A rigorious proof will be provided in \cite{CFZ}.

	\begin{lem}
    \label{lem-criterion-for-unital-factfun}
		Let $F:\bA\to \bA'$ be a lax-unital factorization functor between unital factorization categories. Then $F$ is strictly unital iff the canonical morphism $\on{unit}_{\bA'} \to F(\on{unit}_\bA)$ is invertible.
	\end{lem}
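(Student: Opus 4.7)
The plan is to prove both directions by using the multiplicative compatibility \eqref{eqn-factfun-defn} of $F$ together with the unitality identifications $\on{ins}_{\emptyset\subseteq \ul{x}}$, so that the lax structure natural transformations of $\ul F$ become determined by their value on $\on{unit}_\bA$. The combinatorial content is indeed straightforward, as the authors indicate; what will require genuine care (and is deferred to \cite{CFZ}) is packaging this into the fully homotopy-coherent statement.

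For the forward direction, if $F$ is strictly unital, then $F\circ \on{unit}_\bA : \Vect \to \bA'$ is a composition of strictly unital factorization functors and so is itself strictly unital. By the uniqueness statement of Lemma~\ref{lem-unit-is-initial} (applied within the 1-full subcategory of strictly unital realizations, where initiality forces any two objects to be uniquely isomorphic), $F(\on{unit}_\bA)$ is canonically identified with $\on{unit}_{\bA'}$, and this identification is necessarily the canonical map guaranteed by initiality. Pointwise one sees this very concretely: strict unitality of $\ul F$ at the morphism $\emptyset \subseteq \ul x$ gives $F_{\ul x}(\on{ins}_{\emptyset\subseteq \ul x}(k)) \simeq \on{ins}_{\emptyset\subseteq \ul x}(F_\emptyset(k))$, while strict compatibility of $F$ with $\on{mult}_\emptyset$ identifies $F_\emptyset(k)$ with the unit of $\bA'_\emptyset$.

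For the backward direction, assume the canonical map is invertible. Given an inclusion $\ul x \subseteq \ul x'$ and $a\in\bA_{\ul x}$, we want to show the lax structural transformation $F_{\ul x \subseteq \ul x'}(a)$ is an isomorphism. Work over the disjoint locus where $\ul y := \ul x' \setminus \ul x$ is disjoint from $\ul x$. Unitality gives
\[
\on{ins}_{\ul x \subseteq \ul x'}(a) \;\simeq\; \on{mult}_{(\ul x,\ul y)}\!\bigl(a \boxt \on{ins}_{\emptyset\subseteq \ul y}(k)\bigr),
\]
and applying $F_{\ul x'}$ and then transporting through \eqref{eqn-factfun-defn} rewrites $F_{\ul x'}(\on{ins}_{\ul x \subseteq \ul x'}(a))$ as
\[
\on{mult}_{(\ul x,\ul y)}\!\bigl(F_{\ul x}(a) \boxt F_{\ul y}(\on{ins}_{\emptyset\subseteq \ul y}(k))\bigr).
\]
By hypothesis $F_{\ul y}(\on{ins}_{\emptyset\subseteq\ul y}(k)) = F(\on{unit}_\bA)_{\ul y}$ is identified with $(\on{unit}_{\bA'})_{\ul y} = \on{ins}_{\emptyset\subseteq \ul y}(k')$, and undoing the unitality identification on $\bA'$ reproduces $\on{ins}_{\ul x \subseteq \ul x'}(F_{\ul x}(a))$. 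A bookkeeping check, using the compatibility of $F_{\ul x \subseteq \ul x'}$ with $\on{mult}$ in the factorization structure of $\ul F$, verifies that this chain of identifications is the inverse of $F_{\ul x \subseteq \ul x'}(a)$.

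The main obstacle, and the reason the full proof is deferred to \cite{CFZ}, is twofold. First, one must extend the invertibility on the disjoint locus to arbitrary inclusions $\ul x \subseteq \ul x'$ coming from $S$-points where the graphs of points in $\ul y$ may collide with those in $\ul x$; this requires the lax structure of $\ul F$ to be governed by its restriction to the disjoint stratum, which is formally guaranteed by the factorization structure but needs justification at the level of categorical prestacks. Second, and more seriously, the reductions above must be upgraded to a homotopy-coherent statement: the bookkeeping check needs to be promoted from a computation with single 2-cells to an equivalence of full coherent structures, drawing on the associativity/commutativity data on $\bA$, $\bA'$, and on $F$ itself. This coherence step, though essentially formal, is where the actual technical work lies.
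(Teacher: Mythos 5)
The paper gives no proof of this lemma: it is stated with the remark that it is ``obvious modulo homotopy coherence'' and deferred to \cite{CFZ}. Your sketch is correct, and moreover it matches the strategy the paper \emph{does} spell out for the structurally parallel Lemma~\ref{lem-laxun-mod-is-untl} just below: stratify $S$ into locally closed pieces on which the inclusion $\ul x\subseteq\ul x'$ is split as a disjoint union $\ul x\sqcup\ul z$, then use the factorization compatibility \eqref{eqn-factfun-defn} to identify $F_{\ul x\subseteq \ul x'}$ with $F_{\emptyset\subseteq\ul z}\otimes\on{Id}$, which is invertible by hypothesis. Your final paragraph correctly names the two places where the genuine work lives. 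Two small remarks on presentation. In the forward direction your invocation of Lemma~\ref{lem-unit-is-initial} is slightly indirect; the cleaner way to say it is that $F\circ\on{unit}_{\bA}$ is strictly unital, that there is a \emph{unique} strictly unital functor $\Vect\to\bA'$ (stated right before Lemma~\ref{lem-unit-is-initial}), so $F(\on{unit}_\bA)\simeq\on{unit}_{\bA'}$, and then initiality forces the canonical map to be this isomorphism. In the backward direction you should note explicitly that invertibility of $\on{unit}_{\bA'}\to F(\on{unit}_\bA)$ at $\ul y$ a priori gives invertibility of $F_{\emptyset\subseteq\ul y}$ only on the generator $k\in\bA_\emptyset\simeq\Vect$, and that this suffices because the structure map is colimit-preserving; this is a half-line to add but worth making.
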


\sssec{Variant}
  \label{sss laxfact functors}
	The notion of (lax-)unital factorization functors makes sense also for unital lax-factorization categories (see \secref{sss lax unital factcat}). In particular, we can define the object $\on{unit}_\bA$ for any unital lax-factorization cateogry $\bA$\footnote{Warning: Lemma \ref{lem-criterion-for-unital-factfun} needs $\bA$ to be a unital (strict-)factorization category, although $\bA'$ can be lax.}.
	
	\medskip 
	One can consider an even weaker notion of factorization functors, where the commutative diagram \eqref{eqn-factfun-defn} is replaced with a 2-morphism
	\[
		\xymatrix{
			( \boxt_{i\in I} \ul{\bA}  )|_{ \on{disj} } 
			\ar[d]_-{ (\boxt \ul{F})|_\disj  }
			\ar[r]
			& \on{union}_I^*(\ul{\bA})|_{ \on{disj} } 
			\ar[d]^-{ \on{union}_I^*(\ul{F})|_\disj  } \\
			( \boxt_{i\in I} \ul{\bA}'  )|_{ \on{disj} }
			\ar[r] \ar@{=>}[ru]
			& \on{union}_I^*(\ul{\bA}')|_{\on{disj} }.
		}
	\]
	A morphism $\ul{F}:\ul{\bA} \to \ul{\bA}'$ equipped with such a structure is called a \emph{lax-unital lax-factorization functor}. Taking $\bA:=\Vect$, we obtain the notion of \emph{lax-factorization algebras} in a unital lax-factorization category $\bA'$. One can describe these objects as in \secref{sss unital factalg explicit} by replacing the isomorphism $m_{(\ul{x}_i)_{i\in I}}$ with a morphism
	 \[
		m_{(\ul{x}_i)}: \on{mult}_{(\ul{x}_i)}(\boxt \CA_{\ul{x}_i}) \to \CA_{\bigsqcup \ul{x}_i}.
	\]

\ssec{Unital factorization module categories}
\label{ss factmodcat}

\sssec{} \label{sss:marked unital Ran}
	Let $\ul{x}_0\in \Ran^\untl(S_0)$ be an affine point. The \emph{$\ul{x}_0$-marked unital Ran space} is the category prestack $\Ran_{\ul{x}_0}^\untl$ over $S_0$ that attaches to an affine test $S_0$-scheme $S$ the category of finite subsets $\ul{y}\subseteq\Hom(S,X)$ that contain the image of $\ul{x}_0$ under the restriction map
	\[
		\Hom(S_0,X) \to \Hom(S, X).
	\]

	\medskip 
	Note that there is a canonical morphism $S_0 \to \Ran_{\ul{x}_0}$ corresponding to $\ul{x}_0\subseteq \Hom(S_0,X)$.

	\medskip 
	Also note that $\Ran_{\emptyset}^\untl=\Ran^\untl$.

\sssec{Remark}
	Let $\widetilde{\Ran^\untl} \to (\on{Sch}^{\on{aff}})^{\on{op}}$ be the coCartesian fibration corresponding to $\Ran^\untl$ (see \secref{sss actual defn of cryscat:label}). We can view $\ul{x}_0$ as an object $\widetilde{\Ran^\untl}$ lying over $S_0$. By definition, $\Ran_{\ul{x}_0}^\untl$ is the functor corresponding to the coCartesian fibration
	\[
		\big( \widetilde{\Ran^\untl}\big)_{\ul{x}_0/} \to (\on{Sch}^{\on{aff}})^{\on{op}},
	\]
	where the source is the slice (a.k.a. comma) category of arrows out of $\ul{x}_0$.

\sssec{}
  \label{sss markRan module}
	Recall $\Ran^\untl$ is an abelian monoid object in $\on{CatPreStk}$, with addition morphism given by $(\ul{x},\ul{y}) \mapsto \ul{x}\cup \ul{y}$. It is clear that $\Ran_{\ul{x}_0}^\untl$ is a $\Ran^\untl$-module object in $\on{CatPreStk}$, with the action morphism given by the same formula
	\[
		\on{union}: \on{Ran}^\untl \times \on{Ran}^\untl_{\ul{x}_0} \to \on{Ran}^\untl_{\ul{x}_0},\; (\ul{x},\ul{y}) \mapsto \ul{x}\cup \ul{y}.
	\]
	This morphism is well-defined because if $\ul{y}$ contains (the image of) $\ul{x}_0$, so does $ \ul{x}\cup \ul{y}$.

	\medskip 
	As in the non-marked case, for any \emph{marked} finite set $I=I^\circ\sqcup\{*\}$, we define the subfunctor
	\begin{equation}
		\label{eqn-disj-loci-marked-Ran}
		\big(\big( \prod_{i\in I^\circ}\on{Ran}^\untl\big) \times \on{Ran}^\untl_{\ul{x}_0} \big)_\disj \subseteq  \big( \prod_{i\in I^\circ}\on{Ran}^\untl\big) \times \on{Ran}^\untl_{\ul{x}_0} 
	\end{equation}
	that contains those $(\ul{x}_i, \ul{y})$ such that any pair of graphs of $\ul{x}_i$ and $\ul{y}$ is disjoint.

\sssec{} \label{sss:unital factmodcat}
	Let $\bA$ be a unital factorization category. Recall its underlying crystal of categories $\ul{\bA}$ over $\Ran^\untl$ has a multiplicative structure over the disjoint loci.

	\medskip 
	A \emph{unital factorization $\bA$-module category $\bC$ at $\ul{x}_0$} is a crystal of categories $\ul{\bC}$ over $\Ran^\untl_{\ul{x}_0}$ equipped with a \emph{multiplicative $\ul{\bA}$-module structure over the disjoint loci, with respect to the $\Ran^\untl$-module structure on $\Ran^\untl_{\ul{x}_0}$.}

  \medskip
  In other words, for any marked finite set $I=I^\circ\sqcup\{0\}$, we have an isomorphism:
	\begin{equation}
    \label{eqn-structure-equivalence-factmodcat}
		\on{act}_{I}: \big((\boxt_{i\in I^\circ} \ul{\bA})\boxt \ul{\bC}\big)|_\disj \xrightarrow{\simeq} \on{union}_I^*(\ul{\bC})|_\disj,
	\end{equation}
	and a homotopy-coherent datum of compatibility with \eqref{eqn-defn-fact-cat}. Here
	\[
		\on{union}_I: \big( \prod_{i\in I^\circ}\on{Ran}^\untl\big) \times \on{Ran}^\untl_{\ul{x}_0}  \to  \on{Ran}^\untl_{\ul{x}_0} 
	\]
	is the map $(\ul{x}_i, \ul{y})\mapsto (\cup \ul{x}_i) \cup \ul{y}$, and $(-)|_\disj$ means restriction along \eqref{eqn-disj-loci-marked-Ran}.

  \medskip

\sssec{Remark}
  \label{sss untl factmodcat desc}
	Roughly speaking, a unital factorization $\bA$-module category consists of the following data\footnote{To simplify the notations, in below we pretend $\ul{x}_0$ is a $k$-point. Otherwise certain base-changes are necessary.}:
	\begin{itemize}
		\item For any $\ul{y}\in \Ran_{\ul{x}_0}^\untl$, assign a DG category $\bC_{\ul{y}}$;
		\item For any $\ul{y}\subseteq \ul{y}'$ in $\Ran_{\ul{x}_0}^\untl$, assign a functor $\on{ins}_{\ul{y}\subseteq \ul{y}'}: \bC_{\ul{y}} \to \bC_{\ul{y}'}$;
		\item For any finite collection of \emph{disjoint} points $(\ul{x}_i)_{i\in I^\circ}$ in $\Ran^\untl$ and $\ul{y}$ in $\Ran_{\ul{x}_0}^\untl$, assign an equivalence
		\[
			\on{act}_{ (\ul{x}_i,\ul{y})  }: \big(\otimes \bA_{\ul{x}_i}\big) \otimes \bC_{\ul{y}} \xrightarrow{\simeq} \bC_{ (\sqcup \ul{x}_i) \sqcup \ul{y}  }
		\]
		compatible with the equivalences \eqref{eqn-factcat-structure-fiber}.
		\item Commutative squares similar to \eqref{eqn-unital-factcat-explicit}.
		\item Datum of higher compatibilities.
	\end{itemize}

\sssec{Example}
  \label{exam-module-at-empty-set}
  Note that the above definitions make sense even for $\ul{x}_0=\emptyset$. In this case, it is easy to see the following data are equivalent:
  \begin{itemize}
    \item[(i)] A unital factorization $\bA$-module category $\bM$;
    \item[(ii)] A (plain) DG category $\bM_0$.
  \end{itemize}
  Indeed, given $\bM$, we can consider its fiber at $\emptyset\in \Ran_\emptyset^\untl$, which is a DG category; conversely given $\bM_0$, the tensor product 
  \[
      \ul{\bA}\otimes \bM_0  \in \mathbf{CrysCat}(\Ran^\untl) \simeq  \mathbf{CrysCat}(\Ran^\untl_\emptyset)
  \]
  has an obvious unital factorization $\bA$-module structure. One can check these two constructions are inverse to each other.

\sssec{Example}
  \label{sss vacuum factmod cat}
	Let $\bA$ be a unital factorization category. The pullback of $\ul{\bA}$ along the morphism $\Ran_{\ul{x}_0} \to \Ran$ is naturally a factorization $\bA$-module at $\ul{x}_0$. We denote the resulting object by $\bA^{\on{fact}_{\ul{x}_0}}$.

\sssec{Variant}
  \label{sss laxfact modcat}
  As in \secref{sss lax unital factcat}, for a unital lax-factorization category $\bA$, we can define the notion of unital lax-factorization $\bA$-module categories by allowing $\on{act}_{I}$ (see \eqref{eqn-structure-equivalence-factmodcat}) to be non-invertible.

\ssec{Morphisms between unital factorization module categories}
\label{ss factfun factmodcat}
\sssec{}
  \label{sss:lax unital factfun between modules}
	Let $F:\bA\to \bA'$ be a lax-unital factorization functor between unital factorization categories. For unital factorization $\bA$-module category $\bC$ and $\bA'$-module cateogry $\bC'$ at $\ul{x}_0$, a \emph{lax-unital $F$-linear factorization functor $G:\bC \to \bC'$} is a morphism $\ul{G}: \ul{\bC} \to \ul{\bC}'$ in $\mathbf{CrysCat}^{\on{lax}}(\Ran_{\ul{x}_0}^\untl)$ equipped with commutative diagrams (for any marked finite set $I=I^\circ \sqcup\{0\}$)
	\[
		\xymatrix{
			\big((\boxt_{i\in I^\circ} \ul{\bA})\boxt \ul{\bC}\big)|_\disj
			\ar[d]_-{ ((\boxt \ul{F})\boxt \ul{G})|_\disj  }
			\ar[r]^-\simeq 
			& \on{union}_I^*(\ul{\bC})|_\disj
			\ar[d]^-{ \on{union}_I^*(\ul{G})|_\disj  } \\
			\big((\boxt_{i\in I^\circ} \ul{\bA}')\boxt \ul{\bC}'\big)|_\disj
			\ar[r]^-\simeq 
			& \on{union}_I^*(\ul{\bC}')|_\disj
		}
	\]
	and a homotopy-coherent datum of compatibility with \eqref{eqn-factfun-defn}.

	\medskip 
	When $F$ is (strictly) unital, we say $G$ is (strictly) unital if $\ul{G}$ is contained in $\mathbf{CrysCat}^{\on{strict}}(\Ran_{\ul{x}_0}^\untl)$. In fact, $G$ is \emph{automatically} unital.

\begin{lem}
  \label{lem-laxun-mod-is-untl}
  Let $F:\bA\to \bA'$ be a unital factorization functor, and $G:\bC\to \bC'$ be a lax-unital $F$-linear factorization functor between unital factorization module categories at $\ul{x}_0$. Then $G$ is strictly unital.
\end{lem}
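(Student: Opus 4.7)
The plan is to unwind what strict unitality of $G$ means and then exhibit each potentially non-invertible piece as an isomorphism by using the $F$-linear factorization structure together with the strict unitality of $F$.

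First, recall from the definitions in \secref{ss factfun factmodcat} that $G$ being lax-unital means $\ul{G}:\ul{\bC}\to\ul{\bC}'$ is a morphism in $\mathbf{CrysCat}^{\on{lax}}(\Ran^\untl_{\ul{x}_0})$, while $G$ being strictly unital demands that $\ul{G}$ lies in $\mathbf{CrysCat}^{\on{strict}}(\Ran^\untl_{\ul{x}_0})$. Concretely, following \secref{sss untl factmodcat desc}, we have to show that for every inclusion $\ul{y}\subseteq\ul{y}'$ in $\Ran^\untl_{\ul{x}_0}$ the coherence 2-cell
\[
G_{\ul{y}\subseteq\ul{y}'}:\;\on{ins}^{\bC'}_{\ul{y}\subseteq\ul{y}'}\circ G_{\ul{y}}\;\Longrightarrow\;G_{\ul{y}'}\circ\on{ins}^{\bC}_{\ul{y}\subseteq\ul{y}'}
\]
is invertible (plus the analogous invertibility for the higher-cell data, which will follow by the same argument).

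The key observation is that, on the disjoint locus, the insertion functors for $\bC$ and $\bC'$ are completely governed by the $\bA$-module (resp.\ $\bA'$-module) structure and by the factorization unit. More precisely, suppose that $\ul{y}'=\ul{x}\cup\ul{y}$ with $\ul{x}\in\Ran^\untl$ disjoint from $\ul{y}\in\Ran^\untl_{\ul{x}_0}$. Then, using the structural equivalence \eqref{eqn-structure-equivalence-factmodcat} (applied to the marked finite set $\{1\}\sqcup\{0\}$) together with the initial map from $\on{unit}_\bA$, one obtains a canonical identification
\[
\on{ins}^{\bC}_{\ul{y}\subseteq\ul{y}'}\;\simeq\;\on{act}_{(\ul{x},\ul{y})}\circ\bigl((\on{unit}_{\bA})_{\ul{x}}\otimes(-)\bigr),
\]
and similarly for $\bC'$ with $\on{unit}_{\bA'}$ in place of $\on{unit}_\bA$. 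Under this identification, the compatibility square for $G$ with $\on{act}$ provided by lax-unital $F$-linear factorization structure becomes a diagram whose top and bottom horizontal arrows are invertible, so that the comparison 2-cell $G_{\ul{y}\subseteq\ul{y}'}$ is identified with
\[
F\bigl((\on{unit}_{\bA})_{\ul{x}}\bigr)\otimes G_{\ul{y}}\;\longrightarrow\;(\on{unit}_{\bA'})_{\ul{x}}\otimes G_{\ul{y}}
\]
induced by the canonical morphism $\on{unit}_{\bA'}\to F(\on{unit}_\bA)$ of \secref{sss defn factalg in factcat}. By Lemma~\ref{lem-criterion-for-unital-factfun}, strict unitality of $F$ is equivalent to this morphism being invertible; hence $G_{\ul{y}\subseteq\ul{y}'}$ is invertible on the disjoint locus.

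It remains to remove the disjointness hypothesis. For a closed point inclusion $\ul{y}\subseteq\ul{y}'$ one can simply set $\ul{x}:=\ul{y}'\setminus\ul{y}$ (which, as a set of closed points of $X$, is automatically disjoint from $\ul{y}$), so the local argument applies verbatim. For a general $S$-point inclusion one stratifies $S$ by the incidence pattern of the graphs of $\ul{y}$ and $\ul{y}'$: over each stratum the difference can be split off as an $\ul{x}$ disjoint from $\ul{y}$, and the statement of invertibility of the 2-cell is detected stratum by stratum. The main obstacle I anticipate is keeping track of the homotopy-coherent compatibilities with the associativity/commutativity data of the factorization module structure (so that higher cells beyond the single 2-cell $G_{\ul{y}\subseteq\ul{y}'}$ are handled); this however is a formal iteration of the same mechanism, since in each instance the relevant coherence cell can be expressed through the action datum and the map $\on{unit}_{\bA'}\to F(\on{unit}_\bA)$, which is an isomorphism by hypothesis.
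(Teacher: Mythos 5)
Your argument is correct and follows the same route as the paper's proof: stratify $S$ to reduce to the case $\ul{y}'=\ul{z}\sqcup\ul{y}$ with $\ul{z}$ disjoint from $\ul{y}$, then use the factorization module structure to identify the lax 2-cell $G_{\ul{y}\subseteq\ul{y}'}$ with tensoring by the coherence cell of $F$ at $\emptyset\subseteq\ul{z}$, which is invertible precisely because $F$ is strictly unital (equivalently, by \lemref{lem-criterion-for-unital-factfun}, because $\on{unit}_{\bA'}\to F(\on{unit}_\bA)$ is invertible). One small slip: the arrow you display should go $(\on{unit}_{\bA'})_{\ul{x}}\otimes G_{\ul{y}}\to F\bigl((\on{unit}_\bA)_{\ul{x}}\bigr)\otimes G_{\ul{y}}$, matching the direction of $\on{unit}_{\bA'}\to F(\on{unit}_\bA)$ and the lax orientation $\on{ins}^{\bC'}\circ G\Rightarrow G\circ\on{ins}^{\bC}$; since the map is invertible this is harmless for the conclusion, but it is worth getting the orientation right.
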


\proof[Sketch]
    For any affine points $\ul{y},\ul{y}':S \to \Ran_{\ul{x}_0}^\untl$ such that $\ul{y}\subseteq \ul{y}'$, we need to show the following natural transformation is invertible:
    \[
        \xymatrix{
            \bC_{\ul{y}} 
            \ar[rr]^-{\on{ins}_{\ul{y}\subseteq \ul{y}'}}
            \ar[d]_-{G_{\ul{y}}}
            & & 
            \bC_{\ul{y}'} 
            \ar[d]^-{G_{\ul{y}'}}\\ 
            \bC'_{\ul{y}} 
            \ar[rr]_-{\on{ins}_{\ul{y}\subseteq \ul{y}'}}
            \ar@{=>}[rru]^-{G_{\ul{y}\subseteq \ul{y}'}}
            & & 
            \bC'_{\ul{y}'}.
        }
    \]

    \medskip
    Note that if $S=\cup S_\alpha$ is a finite covering by locally closed subschemes such that $G_{\ul{y}|_{S_\alpha}\subseteq \ul{y}'|_{S_\alpha}}$ is invertible for each $\alpha$, then $G_{\ul{y}\subseteq \ul{y}'}$ is also invertible. Hence without lose of generality, we can assume $\ul{y'} = \ul{y}\sqcup \ul{z}$ such that $\ul{y}\cap \ul{z}=\emptyset$. Using the factorization structure, the natural transformation $G_{\ul{y}\subseteq \ul{y}'}$ can be identified with
    \[
        \xymatrix{
            \bA_{\emptyset}\otimes\bC_{\ul{y}} 
            \ar[rr]^-{\on{ins}_{\emptyset\subseteq \ul{z}} \otimes \on{Id}}
            \ar[d]_-{F_\emptyset \otimes G_{\ul{y}}}
            & & 
            \bA_{\ul{z}}\otimes  \bC_{\ul{y}} 
            \ar[d]^-{F_{\ul{z}}\otimes G_{\ul{y}}}\\ 
            \bA'_{\emptyset}\otimes\bC'_{\ul{y}} 
            \ar[rr]_-{\on{ins}_{\emptyset\subseteq \ul{z}} \otimes \on{Id}}
            \ar@{=>}[rru]^-{F_{\emptyset\subseteq \ul{z}} \otimes\on{Id} }
            & & 
            \bA'_{\ul{z}}\otimes  \bC'_{\ul{y}} .
        }
    \]
    This implies $G_{\ul{y}\subseteq \ul{y}'}$ is invertible because $F_{\emptyset\subseteq \ul{z}}$ is invertible by assumption.

\qed

\sssec{}
  \label{sss the fibration}
	Let 
	\[
	     \mathbf{UntlFactModCat}^{\on{lax-untl}}_{\ul{x}_0}.
	\]
	be the 2-category such that:
	\begin{itemize}
		\item An object is a pair $(\bA,\bC)$, where $\bA$ is a unital factorization category and $\bC$ is a unital factorization $\bA$-module category at $\ul{x}_0$;
    \item A morphism $(\bA,\bC)\to (\bA',\bC')$ is a pair $(F,G)$, where $F$ is a lax-unital factorization functor, and $G$ is a lax-unital $F$-linear factorization functor.
	\end{itemize}
  Let
  \[
      \mathbf{UntlFactModCat}_{\ul{x}_0} \subseteq \mathbf{UntlFactModCat}^{\on{lax-untl}}_{\ul{x}_0}
  \]
  be the 1-full subcategory such that morphisms are strictly unital factorization functors.

  \medskip 
  We have a forgetful functor
  \begin{equation}
    \label{eqn-factmodcatpair-forget-factalgcat}
      \mathbf{UntlFactModCat}^{\on{lax-untl}}_{\ul{x}_0} \to \mathbf{UntlFactCat}^{\on{lax-untl}},\; (\bA,\bC) \mapsto \bA.
  \end{equation}
  The fiber of this functor at $\bA$ is
  \[
      \bA\mathbf{-mod}^\onfact_{\ul{x}_0},
  \]
  the 2-category of unital factorization $\bA$-module categories at $\ul{x}_0$. Note that this category is also the fiber of the forgetful functor
  \[
      \mathbf{UntlFactModCat}_{\ul{x}_0} \to \mathbf{UntlFactCat},\; (\bA,\bC) \mapsto \bA.
  \]
  See Lemma \ref{lem-laxun-mod-is-untl}.

\sssec{}
  \label{sss cryscat of base act}
  Note that $\Ran_{\ul{x}_0}^\untl$, together with its $\Ran^\untl$-module structure, is defined over $S_0$. It follows that the symmetric monoidal 2-category
  \[
      \mathbf{CrysCat}(S_0) \simeq \Dmod(S_0)\mathbf{-mod}
  \]
  acts on the fibers of \eqref{eqn-factmodcatpair-forget-factalgcat}. In particular, it acts on $\bA\mathbf{-mod}^\onfact_{\ul{x}_0}$.

\sssec{}
  Let $\bA$ be a unital factorization category and $\bC$ be a unital factorization $\bA$-module category at $\ul{x}_0$. Recall a factorization algebra $\CA$ in $\bA$ is a lax-unital factorization functor $\CA:\Vect \to \bA$ (see \secref{sss defn factalg in factcat}).

  \medskip
  A \emph{factorization $\CA$-module $\CC$ in $\bC$} is \emph{defined} to be a lax-unital factorization $\CA$-linear functor 
  \[
      \CC:\Vect^{\on{fact}_{\ul{x}_0}} \to \bC.
  \]
  Here $\Vect^{\on{fact}_{\ul{x}_0}}$ is the unital factorization $\Vect$-module category in \secref{sss vacuum factmod cat}.

  \medskip
  Pairs $(\CA,\CC)$ of factorization algebras and modules in $(\bA,\bC)$ form a category, which is denoted by $\on{FactMod}(\bA,\bC)_{\ul x_0}$. There is a forgetful functor
  \begin{equation}
    \label{eqn factmodpair to factalg}
    \on{FactMod}(\bA,\bC)_{\ul{x}_0} \to \on{FactAlg}(\bA),\; (\CA,\CC)\mapsto \CA
  \end{equation}
  whose fiber at $\CA$ is the category
  \[
    \CA\on{-mod}^\onfact(\bC)_{\ul{x}_0}
  \]
   of factorization $\CA$-modules in $\bC$.

\sssec{}
  The action in \secref{sss cryscat of base act} induces an action of $\Dmod(S_0)$ on the fibers of the functor \eqref{eqn factmodpair to factalg}. In particular $\Dmod(S_0)$ acts on $\CA\on{-mod}^\onfact(\bC)_{\ul{x}_0}$.

\sssec{}
  For $\bC=\bA^{\on{fact}_{\ul{x}_0}}$, we write
  \[
    \CA\on{-mod}^\onfact_{\ul{x}_0}:= \CA\on{-mod}^\onfact(\bA^{\on{fact}_{\ul{x}_0}})_{\ul{x}_0}.
  \]
  See \secref{sss vacuum factmod cat}.

\sssec{}
  \label{sss trad factmod object}
  Below is a concrete description of factorization $\CA$-modules in a unital factorization $\bA$-module category $\bC$.

  \medskip
  As in \secref{sss traddesc factalg}, for each marked finite set $I=I^\circ \sqcup\{0\}$, we have a functor
  \[
    \big(\prod_{i\in I^\circ}\Gamma^{\on{lax}}(\Ran^\untl, \ul{\bA})\big) \times \Gamma^{\on{lax}}(\Ran^\untl_{\ul{x}_0}, \ul{\bC})
    \to \Gamma^{\on{lax}}\big(  \big( (\prod_{i\in I}  \Ran^\untl) \times \Ran^\untl_{\ul{x}_0}\big)_\disj , \big((\boxt_{i\in I} \ul{\bA}) \boxt \ul{\bC}\big) |_\disj \big) 
  \]
  that sends
  \[
    (\CM_i, \CF) \to \big((\boxt \CM_i)\boxt \CF\big)|_\disj,
  \]
  and a functor
  \[
    \Gamma^{\on{lax}}(\Ran^\untl_{\ul{x}_0}, \ul{\bC}) \to \Gamma^{\on{lax}}\big(  \big( (\prod_{i\in I}  \Ran^\untl) \times \Ran^\untl_{\ul{x}_0}\big)_\disj , \on{union}_I^*(\bC)|_\disj \big) 
  \]
  that sends
  \[
    \CN \mapsto \on{union}_I^!(\CN)|_\disj.
  \]
  The factorization structure on $\bC$ provides an equivalence
  \[
    \begin{aligned}
    \on{act}_I: \Gamma^{\on{lax}}\big(  \big( (\prod_{i\in I}  \Ran^\untl) \times \Ran^\untl_{\ul{x}_0}\big)_\disj , \big((\boxt_{i\in I} \ul{\bA}) \boxt \ul{\bC}\big) |_\disj \big)  \to  \\
    \to \Gamma^{\on{lax}}\big(  \big( (\prod_{i\in I}  \Ran^\untl) \times \Ran^\untl_{\ul{x}_0}\big)_\disj , \on{union}_I^*(\bC)|_\disj \big).
    \end{aligned}
  \]
  Then a factorization $\CA$-module $\CC$ in $\bC$ is an object
  \[
    \ul{\CC} \in  \Gamma^{\on{lax}}(\Ran^\untl_{\ul{x}_0}, \ul{\bC}) =: \ul{\bC}_{\Ran^\untl_{\ul{x}_0}}^{\on{lax}}
  \]
  equipped with isomorphisms (for any marked finite set $I=I^\circ \sqcup\{0\}$)
  \[
    \on{act}_I\big(  \big((\boxt_{i\in I^\circ} \ul{\CA})\boxt \ul{\CC}\big) |_\disj \big) \xrightarrow{\simeq} \on{union}_I^!(\ul{\CC})|_\disj
  \]
  and a homotopy-coherent datum of compatibility with \eqref{eqn-structure-iso-factalg}.

\sssec{}
  Unwinding the definitions, we have the following result. A rigorious proof will be provided in \cite{CFZ}.

\begin{lem}
  Let $\bA$ be a unital factorization category and $\bC$ be a unital factorization $\bA$-module category at $\ul{x}_0$. For a factorization algebra $\CA$ in $\bA$, the category $\CA\on{-mod}^\onfact(\bC)_{\ul{x}_0}$ is naturally a DG $\Dmod(S_0)$-module category\footnote{In other words, $\CA\on{-mod}^\onfact(\bC)_{\ul{x}_0}$ is presentable and stable, and the acting functors are compatible with colimits.} and the forgetful functors
  \[
      \CA\on{-mod}^\onfact(\bC)_{\ul{x}_0} \to \Gamma^{\on{lax}}(\Ran^\untl_{\ul{x}_0}, \ul{\bC}) \to  \bC_{\ul{x}_0},\; \CC\mapsto \ul{\CC} \mapsto \CC_{\ul{x}_0}
  \]
  are $\Dmod(S_0)$-linear and colimit-preserving.
\end{lem}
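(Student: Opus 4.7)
The plan is to present $\CA\on{-mod}^\onfact(\bC)_{\ul{x}_0}$ as a limit of $\Dmod(S_0)$-module categories constructed from the various categories $\Gamma^{\on{lax}}$ appearing in the concrete description of \secref{sss trad factmod object}, and to then read off presentability, stability, $\Dmod(S_0)$-linearity and colimit-preservation of the forgetful functors from this presentation. Throughout, the $\Dmod(S_0)$-action is the one induced from the fact that $\Ran^\untl_{\ul{x}_0}$ and all the relevant auxiliary prestacks $\big((\prod_{i\in I^\circ}\Ran^\untl) \times \Ran^\untl_{\ul{x}_0}\big)_\disj$ are defined over $S_0$, so that $\mathbf{CrysCat}(S_0) \simeq \Dmod(S_0)\mathbf{-mod}$ acts on them as in \secref{sss cryscat of base act}.

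First I would handle $\Gamma^{\on{lax}}(\Ran^\untl_{\ul{x}_0}, \ul{\bC})$. By definition this is the end of a diagram of $\Dmod(S_0)$-linear presentable DG categories $\bC_{\ul y}$ whose transition functors (the structural functors on $\ul{\bC}$) are colimit-preserving. Such an end is itself presentable, stable and $\Dmod(S_0)$-linear, with the property that each evaluation functor $\ul{\CC}\mapsto \CC_{\ul y}$, and in particular the evaluation at $\ul{x}_0$, preserves colimits. This already takes care of the second forgetful arrow in the displayed composition.

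Next I would package the coherences of \secref{sss trad factmod object} into a cosimplicial diagram whose $n$-th term is the category $\Gamma^{\on{lax}}\big(((\Ran^\untl)^n\times \Ran^\untl_{\ul{x}_0})_\disj, (\ul{\bA}^{\boxt n}\boxt \ul{\bC})|_\disj\big)$, with coface maps encoding the structural isomorphisms $\on{act}_I$, $\on{union}_I^!$ and their compatibility with the factorization structure of $\CA$. The coface maps are obtained by combining $!$-pullback, $\boxt$-insertion of copies of $\ul{\CA}$ and the structural equivalences of $\bC$, all of which are $\Dmod(S_0)$-linear and colimit-preserving on each term (again by the end-formula of the first step). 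The totalization of this cosimplicial diagram in $\Dmod(S_0)\mathbf{-mod}$ agrees by inspection with the category of data described in \secref{sss trad factmod object}, giving $\CA\on{-mod}^\onfact(\bC)_{\ul{x}_0}$ as a presentable stable $\Dmod(S_0)$-module category whose projection to the $0$-th term recovers the forgetful functor to $\Gamma^{\on{lax}}(\Ran^\untl_{\ul{x}_0}, \ul{\bC})$.

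The hard part is verifying that this forgetful functor preserves \emph{colimits} (a totalization a priori only preserves limits). The idea is that because all the structure isomorphisms $\on{act}_I$ in the definition of a factorization module category are genuine equivalences (not merely lax), the forgetful functor admits a left adjoint by a standard free-module / induction construction, so its right adjoint character coexists with a left adjoint character and colimits are therefore preserved. Concretely, one exhibits $\CA\on{-mod}^\onfact(\bC)_{\ul{x}_0}$ as comodules over a colimit-preserving comonad on $\Gamma^{\on{lax}}(\Ran^\untl_{\ul{x}_0}, \ul{\bC})$ whose underlying endofunctor is assembled from $\on{union}_I^!((-))|_\disj$ and the structural equivalences, and applies Barr--Beck--Lurie to conclude. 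Composing with the evaluation at $\ul{x}_0$ from Step~1 then gives the colimit-preserving $\Dmod(S_0)$-linear forgetful functor $\CA\on{-mod}^\onfact(\bC)_{\ul{x}_0} \to \bC_{\ul{x}_0}$, completing the proof.
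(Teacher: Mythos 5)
The paper itself does not prove this lemma; it is stated after \secref{sss trad factmod object} with the remark ``Unwinding the definitions, we have the following result. A rigorous proof will be provided in \cite{CFZ}.'' So your proposal is not being compared against a proof in this paper, but rather against what a correct argument would have to look like given the surrounding material. Your first two steps (exhibiting $\Gamma^{\on{lax}}(\Ran^\untl_{\ul{x}_0},\ul{\bC})$ and then $\CA\on{-mod}^\onfact(\bC)_{\ul{x}_0}$ as limits of $\Dmod(S_0)$-linear, colimit-preserving diagrams of presentable categories) are reasonable in outline and would indeed give presentability, stability, and $\Dmod(S_0)$-linearity.

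The gap is in the colimit-preservation step. You claim that the forgetful functor ``admits a left adjoint by a standard free-module / induction construction.'' This is contradicted by the paper's own Warning in \secref{ss induced modules}: for general $\CA$, the functor $\oblv_\CA$ does \emph{not} preserve limits (because external tensor products and $!$-pullbacks do not), and hence does \emph{not} admit a left adjoint; the induction functor $\ind_\CA$ is only \emph{partially} defined (\propref{prop fact ind}), on objects adapted to induction. Moreover, even if a left adjoint existed, the logic is inverted: admitting a left adjoint means the functor is a right adjoint, hence preserves limits; admitting a \emph{right} adjoint is what gives colimit preservation. The subsequent appeal to ``comodules over a colimit-preserving comonad'' and Barr--Beck--Lurie is not justified either: the structure maps in the definition of a factorization $\CA$-module are equivalences, not a coaction, and it is not clear (and probably false in this generality) that this data can be encoded by a comonad on $\Gamma^{\on{lax}}(\Ran^\untl_{\ul{x}_0},\ul{\bC})$.

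The correct and much more elementary point is the following. First, colimits in $\Gamma^{\on{lax}}(\Ran^\untl_{\ul{x}_0},\ul{\bC})$ are computed termwise because the structural functors $\on{ins}_{\ul{y}\subseteq\ul{y}'}$ of $\ul{\bC}$ preserve colimits; hence the evaluation at $\ul{x}_0$ preserves colimits. Second, the extra data making a lax section $\ul{\CC}$ into a factorization $\CA$-module consists of the \emph{isomorphisms} of \secref{sss trad factmod object}, and all the functors appearing on both sides of these isomorphisms (namely $\on{act}_I\big((\boxt_{i\in I^\circ}\ul{\CA})\boxt(-)\big)|_\disj$ and $\on{union}_I^!(-)|_\disj$) preserve colimits. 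Therefore, given a diagram of factorization $\CA$-modules, the termwise colimit of the underlying lax sections acquires the structure isomorphisms by passing to the colimit of the given ones, and this object is the colimit in $\CA\on{-mod}^\onfact(\bC)_{\ul{x}_0}$. In other words, colimits in $\CA\on{-mod}^\onfact(\bC)_{\ul{x}_0}$ are created by the forgetful functor to lax sections; no adjoint or monadicity statement is needed, nor is one available in general.
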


\sssec{Example}
  \label{sss vac factmod}
  Let $(\bA,\bA^{\on{fact}_{\ul{x}_0}})$ be as in \secref{sss vacuum factmod cat}. For a factorization algebra $\CA$ in $\bA$, its restriction along $\Ran_{\ul{x}_0}^\untl \to \Ran^\untl$ defines a factorization $\CA$-module in $\bA^{\on{fact}_{\ul{x}_0}}$. We denote this object by
  \[
      \CA^{\on{fact}_{\ul{x}_0}} \in \CA\on{-mod}^\onfact(\bC)_{\ul{x}_0}.
  \]

\sssec{}
  The following result appears in \cite[Lemma C.14.10]{GLC2}. We will provide a more homotopy-coherent proof in \cite{CFZ}.

  \begin{lem}
    \label{lem module for unit is nothing}
    Let $\bA$ be a unital factorization category and $\bC$ be a unital factorization $\bA$-module category at $\ul{x}_0$. Then the forgetful functor
    \[
      \on{unit}_\bA\on{-mod}^\onfact(\bC)_{\ul{x}_0} \to  \bC_{\ul{x}_0},\; \CC\mapsto \CC_{\ul{x}_0}
    \]
    is an equivalence.
  \end{lem}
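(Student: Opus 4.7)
I would prove the lemma by producing an explicit candidate inverse $\Psi$ to the forgetful functor and checking that it is indeed a quasi-inverse. Given $\bc \in \bC_{\ul{x}_0}$, define the underlying lax global section of $\Psi(\bc)$ by the formula
\[
\Psi(\bc)_{\ul{y}} := \on{ins}_{\ul{x}_0 \subseteq \ul{y}}(\bc), \quad \ul{y} \in \Ran^\untl_{\ul{x}_0},
\]
with transition morphisms supplied by the composition constraints of the unital structure on $\bC$. Using the canonical identification $\on{unit}_{\bA,\ul{z}} \simeq \on{ins}_{\emptyset \subseteq \ul{z}}(\on{unit}_{\bA,\emptyset})$ (which follows from $\on{unit}_\bA$ being a strictly unital factorization functor, see \secref{sss defn factalg in factcat}), the factorization $\bA$-module structure of $\bC$ restricts to a factorization $\on{unit}_\bA$-module structure on $\Psi(\bc)$. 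This makes $\Psi$ well-defined.

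The identification $\on{Oblv} \circ \Psi \simeq \on{Id}$ is immediate from $\on{ins}_{\ul{x}_0 \subseteq \ul{x}_0} = \on{Id}$. For the opposite direction, take any factorization $\on{unit}_\bA$-module $\CC$ in $\bC$ and consider the lax structure morphisms
\[
\CC_{\ul{y}\subseteq \ul{y}'}: \on{ins}_{\ul{y}\subseteq \ul{y}'}(\CC_{\ul{y}}) \to \CC_{\ul{y}'}.
\]
The key sublemma is that these are always invertible; this is the analogue (for modules) of Lemma \ref{lem-laxun-mod-is-untl}. One mimics the proof of that lemma: by a covering of the base by locally closed subschemes one reduces to the case $\ul{y}' = \ul{y} \sqcup \ul{z}$ where $\ul{y}$ and $\ul{z}$ have disjoint graphs. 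There the factorization $\on{unit}_\bA$-module structure identifies $\CC_{\ul{y}'}$ with $\on{act}_{(\ul{z},\ul{y})}(\on{unit}_{\bA,\ul{z}} \boxt \CC_{\ul{y}})$, and under the identification of $\on{unit}_{\bA,\ul{z}}$ above, together with the compatibility between $\on{act}$ and $\on{ins}$ in the definition of a factorization module category, this becomes exactly $\on{ins}_{\ul{y}\subseteq \ul{y}'}(\CC_{\ul{y}})$. Consequently $\ul{\CC}$ is in fact a strict global section, and iterating the identification from the base point $\ul{x}_0 \subseteq \ul{y}$ yields a canonical equivalence $\Psi(\CC_{\ul{x}_0}) \xrightarrow{\simeq} \CC$, respecting the factorization $\on{unit}_\bA$-module structure.

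\textbf{Main obstacle.} The pointwise content of the argument is easy once one has identified what $\on{unit}_\bA$-modules look like: they are automatically ``strict'' global sections of $\ul{\bC}$ over $\Ran^\untl_{\ul{x}_0}$, and by unitality of $\bC$ any such strict section is rigidly determined by its value at $\ul{x}_0$. The real work, and the reason this is deferred to \cite{CFZ}, is organizing this into an $\infty$-categorical equivalence: one must verify that the chosen isomorphisms are natural in $\ul{y}$, respect the full tower of structural equivalences $\on{act}_I$ for every marked finite set $I$, and are compatible with the homotopy-coherent associativity and commutativity data. In particular, identifying $\Psi$ as a quasi-inverse at the level of $\on{FactMod}(\bA,\bC)_{\ul{x}_0}$ as opposed to the level of objects requires bookkeeping for all higher compatibilities among the $\on{act}_I$'s and the unitality constraints, which is precisely the homotopy-coherent content that the informal argument suppresses.
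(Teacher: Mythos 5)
The paper does not give its own proof of this lemma: it asserts the statement with a citation to \cite[Lemma C.14.10]{GLC2}, adding that a more homotopy-coherent proof is deferred to \cite{CFZ}. So there is no in-paper argument to compare against, and your proposal is a genuine attempt at filling in a deferred proof.

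Your outline is morally correct and matches the standard intuition: $\on{unit}_\bA$-modules in $\bC$ are forced (by unitality of $\bA$ and $\bC$) to be \emph{strict} global sections of $\ul{\bC}$ over $\Ran^\untl_{\ul{x}_0}$, and any strict global section of a crystal of categories over the marked Ran space is freely generated from its fiber at $\ul{x}_0$ by the insertion functors. One simplification you missed: your ``key sublemma'' is not merely an analogue of Lemma \ref{lem-laxun-mod-is-untl} — it \emph{is} that lemma, applied directly. A factorization $\on{unit}_\bA$-module $\CC$ in $\bC$ is, by definition (\secref{sss trad factmod object}), a lax-unital factorization $\on{unit}_\bA$-linear functor $\Vect^{\on{fact}_{\ul{x}_0}}\to\bC$, i.e., a morphism in $\mathbf{UntlFactModCat}^{\on{lax-untl}}_{\ul{x}_0}$ over the \emph{strictly unital} functor $\on{unit}_\bA\colon\Vect\to\bA$. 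Lemma \ref{lem-laxun-mod-is-untl} then says this morphism is automatically strict, which is exactly the invertibility of the lax structure maps $\CC_{\ul{y}\subseteq\ul{y}'}$. No re-proof is needed; you can simply invoke it.

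The honest gap remains the one you identify yourself in the last paragraph: producing $\Psi$ as a genuine functor of $\infty$-categories, checking that the pointwise isomorphisms $\Psi(\CC_{\ul{x}_0})_{\ul{y}}\to\CC_{\ul{y}}$ assemble into an equivalence of lax global sections naturally in $\ul{y}$ and in $\CC$, and then showing compatibility with the full tower of $\on{act}_I$'s and the higher coherence data. This is exactly the part that both \cite{GLC2} and the present paper decline to spell out and push to \cite{CFZ}. At the level of rigor the paper itself employs here, your sketch is the right one.
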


\sssec{} \label{sss:lax unital sends factmod to factmod}
  By definition, for any lax unital factorization functors $(F,G):(\bA,\bC)\to (\bA',\bC')$ between unital factorization categories and their modules, we have a functor
  \[
      \on{FactMod}(\bA,\bC)_{\ul{x}_0} \to \on{FactMod}(\bA',\bC')_{\ul{x}_0}, \; (\CA,\CC) \mapsto (F\circ \CA,G\circ \CC)
  \]
  compatible with \eqref{eqn-laxuntl-factalg-to-factalg}. In particular, we obtain a functor
  \[
      \CA\on{-mod}^\onfact(\bC)_{\ul{x}_0} \to F(\CA)\on{-mod}^\onfact(\bC')_{\ul{x}_0},\; \CC \mapsto G\circ \CC. 
  \]
  We also write $G(\CC):=G\circ \CC$.

\sssec{Variant}
  As in \secref{sss laxfact functors}, we can define \emph{lax-factorization functors} between unital factorization module categories, and use such a notion to define \emph{lax-factorization module objects}.

\ssec{Change of base}
  \label{ss change of base}
  In \secref{ss factmodcat} and \secref{ss factfun factmodcat}, we introduced factorization \emph{module} structures at a fixed affine point $\ul{x}_0\in \Ran^\untl(S_0)$. In this subsection, we explian how such structures depend on $\ul{x}_0$.

\sssec{}
  Throughout this subsection, we fix the following notations.

  \medskip
  Let $\ul{x}_0\in \Ran^\untl(S_0)$ and $\ul{x}_0'\in \Ran^\untl(S_0')$ be two affine points and 
  \begin{equation}
    \label{eqn-2-cell-change-of-base}
      \xymatrix{
          & S_0 \ar[rd]^-{\ul{x}_0} \ar@{=>}[d]^-\alpha \\ 
          S_0' \ar[rr]_-{\ul{x}_0'} \ar[ru]^-{f_\alpha} & & \Ran^\untl
      }
  \end{equation}
  be a 2-morphism in $\on{CatPreStk}$. In other words, $\alpha$ is a morphism in $\widetilde{ \Ran^\untl  }$ (see \secref{sss actual defn of cryscat:label}).

\sssec{}
  \label{sss propogation factmodcat}
By construction, there is a canonical morphism
  \[
      \Ran_\alpha: \Ran_{\ul{x}_0'}^\untl \to \Ran_{\ul{x}_0}^\untl
  \]
  defined over the morphism $f_\alpha:S_0'\to S_0$ and compatible with the forgetful morphisms to $\Ran^\untl$. Moreover, this morphism is compatible with the $\Ran^\untl$-module structures on $\Ran_{\ul{x}_0}^\untl$ and $\Ran_{\ul{x}_0'}^\untl$ (see \secref{sss markRan module}).

  \medskip
  It follows that pullback along $\Ran_{\ul{x}_0'}^\untl \to \Ran_{\ul{x}_0}^\untl$ defines a functor
  \begin{equation}
    \label{eqn-change-of-base-factmodcat}
      \alpha_\dagger:\mathbf{UntlFactModCat}_{\ul{x}_0}^{\on{lax-untl}} \to \mathbf{UntlFactModCat}_{\ul{x}'_0}^{\on{lax-untl}}
  \end{equation}
  compatible with the forgetful functors to $\mathbf{UntlFactCat}^{\on{lax-untl}}$\footnote{Note that the definition of factorization (algebra) categories is independent of the points $\ul{x}_0$.}. 

  \medskip
  In particular, for $\bA\in \mathbf{UntlFactCat}^{\on{lax-untl}}$, we obtain a functor
  \[
    \alpha_\dagger: \bA\mathbf{-mod}^\onfact_{\ul{x}_0} \to \bA\mathbf{-mod}^\onfact_{\ul{x}_0'}.
  \]
  We write
  \[
      \bC|_{\Ran_{\ul x_0'}^\untl} := \alpha_\dagger(\bC).
  \]

  \medskip 
  In the case when $f_\alpha=\on{id}_{S_0}$ is the identity morphism, we write
   \[
      \mathbf{prop}_{\ul{x}_0 \subseteq \ul{x}_0'}:= \alpha_\dagger: \bA\mathbf{-mod}^\onfact_{\ul{x}_0} \to \bA\mathbf{-mod}^\onfact_{\ul{x}_0'}
   \]
   and call $\mathbf{prop}_{\ul{x}_0 \subseteq \ul{x}_0'}(\bC)$ the \emph{propogation of $\bC$ along $\ul{x}_0 \subseteq \ul{x}_0'$}.

\sssec{Remark}
  The construction $\alpha\mapsto \alpha_\dagger$ is compatible with compositions. In fact, one can construct a coCartesian fibration of 2-categories
  \[
      \mathbf{UntlFactModCat}^{\on{lax-untl}} \to \widetilde{ \Ran^\untl  }
  \]
  such that its fiber at $\ul{x}_0$ is $\mathbf{UntlFactModCat}_{\ul{x}_0}^{\on{lax-untl}} $, and the covariant transport functor along $\alpha$ is $\alpha_\dagger$. A rigorious construction of this coCartesian fibration will be provided in \cite{CFZ}.
  
\sssec{Example}
  Let $\bA$ be a unital factorization category. The functor
  \[
      \alpha_\dagger: \bA\mathbf{-mod}^\onfact_{\ul{x}_0} \to \bA\mathbf{-mod}^\onfact_{\ul{x}_0'}
  \]
  sends $\bA^{\on{fact}_{\ul{x}_0}}$ to $\bA^{\on{fact}_{\ul{x}_0'}}$.   

\sssec{}
  Let $\bA$ be a unital factorization category and $\bC\in \bA\mathbf{-mod}^\onfact_{\ul{x}_0}$. Write 
  \[
    \bC|_{ \Ran_{\ul{x}_0'}^\untl  }:=\alpha_\dagger(\bC) \in \bA\mathbf{-mod}^\onfact_{\ul{x}_0'}.
  \]
  The functor \eqref{eqn-change-of-base-factmodcat} induces a functor
  \[
    \begin{aligned}
    \alpha_\dagger: \on{Fun}_{\mathbf{UntlFactModCat}_{\ul{x}_0}^{\on{lax-untl}}}( (\Vect,\Vect^{\on{fact}_{\ul{x}_0}}) ,(\bA,\bC) ) \to \\
    \to \on{Fun}_{\mathbf{UntlFactModCat}_{\ul{x}'_0}^{\on{lax-untl}}}( (\Vect,\Vect^{\on{fact}_{\ul{x}_0'}}) ,(\bA,\bC|_{ \Ran_{\ul{x}_0'}^\untl  }) ).
    \end{aligned}
  \]
  By definition, this is a functor
  \begin{equation}
    \label{eqn-change-of-base-factmod}
      \alpha_\dagger:\on{FactMod}(\bA,\bC)_{\ul{x}_0} \to \on{FactMod}(\bA,\bC|_{ \Ran_{\ul{x}_0'}^\untl  })_{\ul{x}'_0}
  \end{equation}
  compatible with forgetful functors to $\on{FactAlg}(\bA)$. 

  \medskip 
  In particular, for $\CA\in \on{FactAlg}(\bA)$, we obtain a functor
  \begin{equation}
    \label{eqn-change-of-base-factmod-fixalg}
      \alpha_\dagger: \CA\on{-mod}^\onfact(\bC)_{\ul{x}_0} \to \CA\on{-mod}^\onfact(\bC|_{ \Ran_{\ul{x}_0'}^\untl  })_{\ul{x}'_0}.
  \end{equation}
  When $\bC=\bA^{\on{fact}_{\ul x_0}}$, this gives 
  \begin{equation}
    \label{eqn-change-of-base-factmod-fixalg-vac}
      \alpha_\dagger: \CA\on{-mod}^\onfact_{\ul{x}_0} \to \CA\on{-mod}^\onfact_{\ul{x}'_0}
  \end{equation}

  \medskip
  In the case when $f_\alpha=\on{id}_{S_0}$ is the identity morphism, recall 
  \[
    \mathbf{prop}_{\ul{x}_0 \subseteq \ul{x}_0'}(\bC):=\bC|_{ \Ran_{\ul{x}_0'}^\untl  }.
  \]
  Hence we also denote the functor \eqref{eqn-change-of-base-factmod-fixalg} by
   \[
      \on{prop}_{\ul{x}_0 \subseteq \ul{x}_0'}:= \alpha_\dagger: \CA\on{-mod}^\onfact(\bC)_{\ul{x}_0} \to \CA\on{-mod}^\onfact(\mathbf{prop}_{\ul{x}_0 \subseteq \ul{x}_0'}(\bC))_{\ul{x}'_0}
   \]
   and call $\on{prop}_{\ul{x}_0 \subseteq \ul{x}_0'}(\CC)$ the \emph{propogation of $\CC$ along $\ul{x}_0 \subseteq \ul{x}_0'$}.

\sssec{Example}
  The functor \eqref{eqn-change-of-base-factmod-fixalg-vac} sends $\CA^{\on{fact}_{\ul{x}_0}}$ to $\CA^{\on{fact}_{\ul{x}_0'}}$. See \secref{sss vac factmod}.

\sssec{}
  By construction, the functor \eqref{eqn-change-of-base-factmod-fixalg} fits into a canonical commutative diagram
  \[
      \xymatrix{
          \CA\on{-mod}^\onfact(\bC)_{\ul{x}_0}  \ar[r] \ar[d]
          & \CA\on{-mod}^\onfact(\bC|_{ \Ran_{\ul{x}_0'}^\untl  })_{\ul{x}'_0} \ar[d] \\
          \Gamma^{\on{lax}}(\Ran^\untl_{\ul x_0},  \ul\bC) \ar[r]^-{!\on{-pull}}
          &
          \Gamma^{\on{lax}}(\Ran^\untl_{\ul x_0'}, \ul\bC|_{\Ran^\untl_{\ul x_0'}}),
      }
  \]
  where the vertical arrows are the forgetful functors. 

  \medskip
  On the other hand, the 2-morphim \eqref{eqn-2-cell-change-of-base} induces a 2-morphism
  \[
      \xymatrix{
          S_0' \ar[r]^-{f_\alpha} \ar[d]_-{\ul{x}_0'}
          & S_0  \ar[d]^-{\ul{x}_0} \ar@{=>}[ld] \\ 
          \Ran^\untl_{\ul x_0'} \ar[r]_-{\Ran_\alpha}
          & \Ran^\untl_{\ul x_0},
      }
  \]
  which induces a natural transformation
  \[
      \xymatrix{
          \Gamma^{\on{lax}}(\Ran^\untl_{\ul x_0}, \ul\bC) \ar[r]^-{!\on{-pull}} \ar[d]_-{!\on{-pull}}
          &
          \Gamma^{\on{lax}}(\Ran^\untl_{\ul x_0'}, \ul\bC|_{\Ran^\untl_{\ul x_0'}}) \ar[d]^-{!\on{-pull}} \\ 
          \bC_{\ul x_0} \ar[r]_-{\bC_{\ul{x}_0\subseteq \ul x_0'}} \ar@{=>}[ru] & \bC_{\ul x_0'}
      }
  \]
  (by the definition of lax sections). Combining these squares, we obtain a canonical natural transformation
  \begin{equation}
    \label{eqn-A-modfact-is-lax-unital}
      \xymatrix{
          \CA\on{-mod}^\onfact(\bC)_{\ul{x}_0} \ar[r] \ar[d]
          & \CA\on{-mod}^\onfact(\bC|_{ \Ran_{\ul{x}_0'}^\untl  })_{\ul{x}'_0} \ar[d] \\
          \bC_{\ul x_0} \ar[r]_-{\bC_{\ul{x}_0\subseteq \ul x_0'}} \ar@{=>}[ru] & \bC_{\ul x_0'}
      }
  \end{equation}
  such that the vertical arrows are the forgetul functors.

\sssec{Remark}
  The natural transformation \eqref{eqn-A-modfact-is-lax-unital} can be concretely described as follows. For simplicity, we assume $S_0=S_0'=\on{pt}$.

  \medskip
  Let $\CC\in \CA\on{-mod}^\onfact(\bC)_{\ul{x}_0} $ be a factorization $\CA$-module in $\bC$.

  \medskip 
  The top horizontal arrow sends $\CC$ to its $!$-pullback $\CC|_{\Ran_{\ul{x}_0'}^\untl}$, which is lax global section $\bC|_{ \Ran_{\ul{x}_0'}^\untl  }$ equipped with a canonical factorization $\CA$-module structure. Hence the clockwise arch sends $\CC$ to its fiber 
  \[
      \ul\CC|_{\ul{x}_0}' \in \bC_{\ul{x}_0'}
  \]
  at the point $\ul x_0'\in \Ran_{\ul{x}_0}$

  \medskip 
  On the other hand, the counterclockwise arch sends $\CC$ to the object
  \[
      \on{ins}_{\ul{x}_0\subseteq \ul{x}_0'}( \ul\CC_{\ul{x}_0}  )\in \bC_{\ul{x}_0'},
  \]
  where $\on{ins}_{\ul{x}_0\subseteq \ul{x}_0'}:\bC_{\ul{x}_0} \to \bC_{\ul{x}_0}'$ is part of the structure of $\ul\bC$ as a crystal of categories (see \secref{rem cryscat on Ran:label}).

  \medskip
  Now the value of \eqref{eqn-A-modfact-is-lax-unital} at $\CC$ is given by the morphism
  \[
    \on{ins}_{\ul{x}_0\subseteq \ul{x}_0'}( \ul\CC_{\ul{x}_0}  )\to \ul\CC|_{\ul{x}_0}',
  \]
  which is part of the structure of $\ul \CC$ as a lax global section (see \secref{rem description lax unital morphism over Ran}).

\sssec{}
  Since the morphism 
  \[
      \Ran_\alpha: \Ran_{\ul{x}_0'}^\untl \to \Ran_{\ul{x}_0}^\untl
  \]
  is defined over the morphism $f_\alpha:S_0'\to S_0$. The functor \eqref{eqn-change-of-base-factmodcat} intertwines the action of the symmetric monoidal functor
  \[
    f_\alpha^*: \mathbf{CrysCat}(S_0) \to \mathbf{CrysCat}(S_0'),
  \]
  see \secref{sss cryscat of base act}. It follows that the functor \eqref{eqn-change-of-base-factmod-fixalg} intertwines the action of the symmetric monoidal functor
  \[
    f_\alpha^!: \Dmod(S_0) \to \Dmod(S_0').
  \]
  In particular, we have a canonical functor
  \begin{equation}
    \label{eqn-change-of-base-factmod-fixalg-tensor}
    \CA\on{-mod}^\onfact(\bC)_{\ul{x}_0}\otimes_{\Dmod(S_0)} \Dmod(S_0') \to \CA\on{-mod}^\onfact(\bC|_{ \Ran_{\ul{x}_0'}^\untl  })_{\ul{x}'_0}.
  \end{equation}

  \medskip 
  The following result is stated without proof in \cite[Sect. C.11.9]{GLC2} (but its non-unital analog is proved, see \cite[Lemma B.9.11]{GLC2}). We will provide a proof in \cite{CFZ}.

  \begin{lem}
    \label{lem change of base factcat mod}
      In the above setting, the functor \eqref{eqn-change-of-base-factmod-fixalg-tensor} is invertible if $\alpha$ is so. In particular, the functor
      \[
          \widetilde{ \Ran^\untl_{\ul x_0}} \to  \widetilde{\mathbf{CrysCat}},\; \ul{y} \mapsto \CA\on{-mod}^\onfact(\bC|_{ \Ran_{\ul{y}}^\untl  })_{\ul{y}}
      \]
      defines a crystal of category
      \[
          \CA\ul{\on{-mod}}^\onfact(\bC)
      \]
      over $\Ran^\untl_{\ul x_0}$ (see \secref{sss actual defn of cryscat:label}).
  \end{lem}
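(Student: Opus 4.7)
Invertibility of $\alpha$ means $\ul x_0' \simeq f_\alpha^*(\ul x_0)$ in $\Ran^\untl(S_0')$, which by the universal property of the marked Ran space (\secref{sss:marked unital Ran}) yields a canonical identification
\[
\Ran^\untl_{\ul x_0'} \simeq S_0' \underset{S_0}\times \Ran^\untl_{\ul x_0}
\]
in $\on{CatPreStk}$, compatible with the $\Ran^\untl$-module structures; the same holds for each of the disjoint loci $\bigl(\bigl(\prod_{i\in I^\circ}\Ran^\untl\bigr)\times \Ran^\untl_{\ul x_0'}\bigr)_\disj$ appearing in the definition of factorization modules. In particular $\ul\bC|_{\Ran^\untl_{\ul x_0'}}$ is the pullback of $\ul\bC$ along the base change $S_0' \to S_0$. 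Under this identification, the first claim becomes the base-change statement
\[
\CA\on{-mod}^\onfact(\bC)_{\ul{x}_0}\underset{\Dmod(S_0)}\otimes \Dmod(S_0') \xrightarrow{\simeq} \CA\on{-mod}^\onfact(\bC|_{\Ran_{\ul{x}_0'}^\untl})_{\ul{x}'_0}.
\]

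The strategy is to use the description in \secref{sss trad factmod object}: both sides of the arrow are limits of diagrams consisting of the lax-section category $\ul\bC^\lax_{\Ran^\untl_{\ul x_0}}$ and analogous lax-section categories over the disjoint loci, connected by $\Dmod(S_0)$-linear factorization constraints (the structural isomorphisms \eqref{eqn-structure-iso-factalg-fiber}). Since those constraints are preserved by $-\underset{\Dmod(S_0)}\otimes\Dmod(S_0')$, the claim reduces to the corresponding base-change identity for lax global sections of a crystal of categories over each categorical prestack appearing in the diagram. Presenting $\Ran^\untl_{\ul x_0}$ as a filtered colimit, in $\on{CatPreStk}$ over $S_0$, of its finite-type affine points $T \to \Ran^\untl_{\ul x_0}$, lax sections become a limit of the $\Dmod(T)$-linear categories $\bC_T$; termwise, base change from $T$ to $T\times_{S_0}S_0'$ commutes with forming $\bC_T$ by the very definition of a crystal of categories.

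The main obstacle is then to commute the resulting limit over $\widetilde{\Ran^\untl_{\ul x_0}}$ with the tensor product $-\underset{\Dmod(S_0)}\otimes\Dmod(S_0')$; limits of DG categories do not in general commute with tensor products. This is precisely the content of the non-unital analog \cite[Lemma B.9.11]{GLC2}, whose proof rests on the $1$-affineness of $\Ran_\dr$ (\cite[Lemma B.8.15]{GLC2}). I would adapt that argument, upgrading it to the unital, relative-over-$S_0$, categorical-prestack setting by exhibiting a cofinal system of affine points of $\Ran^\untl_{\ul x_0}$ of finite type over $S_0$ and invoking the $1$-affineness of $(\Ran^\untl_{\ul x_0})_\dr$ over $(S_0)_\dr$ (itself a consequence of the non-marked version by a standard slice argument). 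Granted the first assertion, the crystal-of-categories statement is formal: in $\widetilde{\Ran^\untl_{\ul x_0}}$, every morphism factors through a coCartesian arrow, and the required base-change equivalence along such an arrow is exactly the first assertion applied to an invertible $\alpha$.
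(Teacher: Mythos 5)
The paper itself does not prove this lemma; it only cites the non-unital analog \cite[Lemma B.9.11]{GLC2} and defers the full proof to \cite{CFZ}, so there is no paper-internal proof to compare against. Judged on its own merits, your proposal has the right skeleton — the reduction of the crystal-of-categories claim to base change along an invertible $\alpha$, the identification $\Ran^\untl_{\ul x_0'}\simeq S_0'\underset{S_0}\times\Ran^\untl_{\ul x_0}$, the recognition that the content is a ``limit commutes with $-\underset{\Dmod(S_0)}\otimes\Dmod(S_0')$'' statement, and the citation of \cite[Lemma B.9.11]{GLC2} as the model are all correct — but the central step is left with a genuine gap.

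Concretely, the claim that $\Ran^\untl_{\ul x_0}$ is a ``filtered colimit of its finite-type affine points'' is wrong: unlike the non-unital $\Ran$, which is a colimit of $X^I$ as a usual prestack, $\Ran^\untl_{\ul x_0}$ is a categorical prestack — it assigns to each affine $S$ an honest category, not a groupoid — and a colimit of affine schemes in $\on{CatPreStk}$ could never recover that non-invertible structure. Correspondingly, $\Gamma^\lax(\Ran^\untl_{\ul x_0},\ul\bC)$ is not a limit indexed by a poset of affine points; it is a category of lax natural transformations, i.e.\ a limit over (something like) the twisted arrow category of $\widetilde{\Ran^\untl_{\ul x_0}}$. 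This difference is exactly where the work lies. Likewise, ``$1$-affineness of $(\Ran^\untl_{\ul x_0})_\dr$'' is invoked without a definition: the paper (following \cite{Ga3}) defines $1$-affineness only for de Rham prestacks of usual prestacks, and there is no statement in the paper that extends this to categorical prestacks. A ``standard slice argument'' from the non-marked version would need to negotiate the extra non-invertible $2$-morphisms, and it is not at all clear that this reduces to $1$-affineness of $\Ran_\dr$.

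A more robust route, hinted at in \secref{ss proof of thm the fibration}, is to exploit the $\sharp$-construction and Lemma \ref{lem-sharp-is-stable}: the value $\CA\on{-mod}^\onfact(\bC|_{\Ran^\untl_{\ul y}})_{\ul y}$ over a given affine point $\ul y$ stabilizes after finitely many $\sharp$-applications, and each stage is built from the fibers of $\ul\bC$ by finite limits along pseudo-proper maps between finite products of the Ran space (as in Remark \ref{rem descrp restriction}). These finite limits manifestly commute with $-\underset{\Dmod(S_0)}\otimes\Dmod(S_0')$ because the relevant pullback/pushforward functors are $\Dmod(S_0)$-linear and the resulting diagrams are finite. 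This avoids invoking $1$-affineness of a categorical prestack entirely. I would recommend either developing the categorical-prestack $1$-affineness input you need as a separate lemma with proof, or switching to the explicit $\sharp$-based description of the relevant limits; as it stands, the passage from the non-unital argument to the unital one is asserted rather than carried out.
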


\sssec{}
  By construction, \eqref{eqn-A-modfact-is-lax-unital} provides a morphism
  \begin{equation}
    \label{eqn-oblv-CA-is-lax-morphism}
      \ul\oblv_\CA:\CA\ul{\on{-mod}}^\onfact(\bC) \to \ul{\bC}
  \end{equation}
  in $\mathbf{CrysCat}^{\on{lax}}(\Ran^\untl_{\ul x_0})$.

\sssec{}
  \label{sss factmod form cryscat}
  In particular, for $\ul{x}_0=\emptyset\in \Ran^\untl$, we obtain a crystal of category
  \[
      \CA\ul{\on{-mod}}^\onfact:= \CA\ul{\on{-mod}}^\onfact( \bA^{\on{fact}_\emptyset}  )
  \]
  over $\Ran^\untl$, equipped with a forgetful morphism
  \[
    \ul\oblv_\CA: \CA\ul{\on{-mod}}^\onfact\to \ul\bA.
  \]

\ssec{External fusion}
  \label{ss external fusion}
  The main goal of this subsection is to explain $\CA\ul{\on{-mod}}^\onfact$ (see Lemma \ref{lem change of base factcat mod}) is naturally a unital \emph{lax-factorization} category via \emph{external fusion}.

  \medskip
  The \emph{construction} of external fusion for factorization modules was sketched in \cite[Sect. 6.22]{Ra} and \cite[Sect. B.11.14]{GLC2}. However, to work with external fusion, especially in a homotopy-coherent way, it is better to characterize them via universal properties. Recall the tensor product of usual modules can be \emph{defined} as the object that corepresents multilinear morphisms. Following this idea, we will \emph{define} the fusion product of factorization modules as the object that corepresents \emph{factorization multi-functors}. In fact, this approach to external fusion was alluded to in \cite[Sect. 6.26]{Ra}.

\sssec{}
  Let $S_0$ be an affine scheme and $\ul{x}_i \in \Ran^\untl(S_0)$ ($i\in I$) be a finite collection of \emph{disjoint} $S_0$-points. Consider the $S_0$-morphism
  \begin{equation}
    \label{eqn fusion Ran}
    \on{union}_{(\ul{x}_i)}:\big(\prod_{i\in I} \Ran^\untl_{\ul{x}_i}\big)_{/S_0} \to \Ran^\untl_{\sqcup \ul{x}_i},\; (\ul{y}_i) \mapsto \cup \ul{y}_i,
  \end{equation}
  where the source is the fiber product of $\Ran^\untl_{\ul{x}_i}$ relative to $S_0$. By definition, \eqref{eqn fusion Ran} is a $\Ran^\untl$-multilinear morphism, i.e., it is $\Ran^\untl$-linear in each factor of the source.

  \medskip
  Let
  \begin{equation}
    \label{eqn fusion Ran disj}
       \big( \big(\prod_{i\in I} \Ran^\untl_{\ul{x}_i}\big)_{/S_0}  \big)_\disj \subseteq \big(\prod_{i\in I} \Ran^\untl_{\ul{x}_i}\big)_{/S_0}
  \end{equation}
  be the subfunctor containing \emph{disjoint} points $(\ul{y}_i)_{i\in I}$.

\sssec{}
    Let $\bA$ and $\bA'$ be unital factorization categories and
    \[
         \bC_i \in \bA\mathbf{-mod}^\onfact_{\ul{x}_i}\;\;(i\in I), \;\; \bC'\in \bA'\mathbf{-mod}^\onfact_{\sqcup \ul{x}_i},
    \]
    where $I$ is a finite set.

    \medskip
    Consider the external product of $\ul{\bC}_i$ relative to $S_0$:
  \[
      (\boxt \ul{\bC}_i)_{/S_0}:= \otimes \on{pr}_i^*( \ul{\bC}_i  ) \in \mathbf{CrysCat}^{\on{lax}}\big( \big(\prod_{i\in I} \Ran^\untl_{\ul{x}_i}\big)_{/S_0}  \big)
  \]
  and its restriction to the disjoint locus \eqref{eqn fusion Ran disj}:
  \[
      \big((\boxt \ul{\bC}_i)_{/S_0}\big)|_\disj.
  \]
  Note that for each $i\in I$, the factorization $\bA$-module structure on $\ul{\bC}_i$ induces a factorization $\bA$-module structure on $(\boxt \ul{\bC}_i)_{/S_0}$ with respect to the $\Ran^\untl$-module structure on 
  \[
      \big( \big(\prod_{i\in I} \Ran^\untl_{\ul{x}_i}\big)_{/S_0}  \big)_\disj
  \]
  that comes from the $i$-th factor\footnote{Using the language in \secref{sss:unital factmodcat}, this means $(\boxt \ul{\bC}_i)_{/S_0}$ has a multiplicative $\ul{\bA}$-module structure over the disjoint loci with respect to the $i$-th $\Ran^\untl$-module structure on $\big( \big(\prod_{i\in I} \Ran^\untl_{\ul{x}_i}\big)_{/S_0}  \big)_\disj$.}.

  \medskip
  Similarly, for each $i\in I$, the factorization $\bA'$-module structure on $\ul{\bC}'$ induces a same-typed structure on
  \[
      \on{union}_{(\ul{x}_i)}^*(\ul{\bC}')|_\disj 
  \]
  because the map \eqref{eqn fusion Ran} is $\Ran^\untl$-multilinear.

\sssec{}
  A \emph{lax-unital factorization multifunctor}
  \[
      (F,G): (\bA,(\bC_i)_{i\in I}) \to (\bA',\bC')
  \] 
  consists of the following data:
  \begin{itemize}
    \item 
        A lax-unital factorization functor $F:\bA\to \bA'$;
    \item 
        A morphism
        \[
      \ul{G}: \big((\boxt \ul{\bC}_i)_{/S_0}\big)|_\disj\to \on{union}_{(\ul{x}_i)}^*(\ul{\bC}')|_\disj 
  \]
  in 
  \[
       \mathbf{CrysCat}^{\on{lax}}\big( \big( \big(\prod_{i\in I} \Ran^\untl_{\ul{x}_i}\big)_{/S_0}  \big)_\disj \big),
  \]
  such that for each $i\in I$, $\ul{G}$ is a factorization $F$-linear functor\footnote{This is a structure rather than property.} with respect to the $i$-th factorization module structures on the source and the target.
  \end{itemize}
  For a fixed $F$, we call 
  \[
      G : (\bC_i)_{i\in I} \to \bC' 
  \]
  as above a \emph{lax-unital factorization $F$-linear multi-functor}.

  \medskip 
  We say $(F,G)$ is (strictly) unital if $\ul{F}$ and $\ul{G}$ are strict morphisms.

\sssec{}
  One can mimic the definition of compositions of usual multilinear maps to define compositions of \emph{factorization} multilinear functors. Namely, for a given map $\phi:I\to I'$ between finite sets and (lax-)unital factorization multifunctors
  \[
      (F,G_{i'}): (\bA,(\bC_i)_{i\in \phi^{-1}(i')}) \to (\bA',\bC'_{i'}) \;\; i'\in I'
  \]
  and
  \[
      (F',G'): (\bA',(\bC'_{i'})_{i'\in I'}) \to (\bA'',\bC''),
  \]
  there is a canonical (lax-)unital factorization multifunctor
  \[
      \big(F'\circ F, G'\circ_\phi(G_{i'}) \big) :(\bA,(\bC_i)_{i\in I}) \to   (\bA'',\bC'').
  \]
  A homotopy-coherent construction of these compositions will be provided in \cite{CFZ}.

\sssec{Remark}
  The notion of $\bA$-multilinear functors and their compositions is closely related to the framework of \emph{pseudo-tensor categories} in \cite{BD1}. See \secref{ss fact via operad} for more details.

\sssec{Remark}
  Roughly speaking, a lax-unital factorization $F$-multilinear functor $ G: (\bC_i)_{i\in I} \to \bC'$ consists of the following data:
  \begin{itemize}
    \item 
      For any disjoint collection of points $\ul{y}_i \in \Ran^\untl_{\ul{x}_i}$ ($i\in I$), assign a functor
      \[
          G_{(\ul{y}_i)}: \otimes (\bC_i)_{\ul{y}_i} \to \bC'_{\sqcup \ul{y}_i};
      \]
    \item 
      For two collections $(\ul{y}_i)$ and $(\ul{y}_i')$ as above such that $\ul{y}_i\subseteq \ul{y}_i'$, assign a natural transformation
      \[
          \xymatrix{
              \otimes (\bC_i)_{\ul{y}_i} \ar[rr]^-{\otimes \on{ins}_{\ul{y}_i\subseteq \ul{y}_i'}}
              \ar[d]_-{G_{(\ul{y}_i)}}
              & & \otimes (\bC_i)_{\ul{y}_i'} 
              \ar[d]^-{G_{(\ul{y}_i')}} \\ 
              \bC'_{\sqcup \ul{y}_i} \ar@{=>}[rru]^-{G_{(\ul{y}_i\subseteq \ul{y}_i')}}
              \ar[rr]_-{\on{ins}_{\sqcup \ul{y}_i\subseteq \sqcup \ul{y}_i'}}
              & & \bC'_{\sqcup \ul{y}_i'};
          }
      \]
      \item
        For any disjoint collection of points $\ul{y}_i \in \Ran^\untl_{\ul{x}_i}$ ($i\in I$), $\ul{z}_j \in \Ran^\untl$ ($j\in J$) and a map $\phi:J\to I$, assign a commutative diagram
        \[
          \xymatrix{
            (\otimes_{j} \bA_{\ul z_j}) \bigotimes (\otimes_i (\bC_i)_{\ul{y}_i}) \ar[r]^-\simeq 
            \ar[d]_-{ (\otimes F_{\ul{z}_j}) \otimes G_{(\ul{y}_i)}}
            & 
            \bigotimes_i \big( (\otimes_{j\in \phi^{-1}(i)} \bA_{\ul z_j}) \otimes (\bC_i)_{\ul{y}_i} \big) 
            \ar[r]^-\simeq
            & 
            \bigotimes_i  (\bC_i)_{(\sqcup_{j\in \phi^{-1}(i)} \ul z_j) \sqcup  \ul{y}_i} 
            \ar[d]^-{ G_{  (\sqcup_{j\in \phi^{-1}(i)} \ul z_j) \sqcup  \ul{y}_i    }  } \\ 
            (\otimes_{j} \bA'_{\ul z_j}) \otimes \bC'_{\sqcup \ul{y}_i}
            \ar[rr]_-\simeq & &
            \bC'_{ (\sqcup \ul z_j) \bigsqcup (\sqcup \ul y_i ) },
          }
        \]
        where the horizontal equivalences come from the factorization module structures on $\bC_i$ and $\bD$.
      \item 
        Higher compatibilities between the above structures.
  \end{itemize}

\sssec{Example}
  \label{exam fusion with vac}
  Let $\bA$ be a unital factorization category and $\bC\in \bA\mathbf{-mod}^\onfact_{\ul{x}_0}$. For $\ul{x}\in \Ran^\untl(S_0)$ such that $\ul{x}\cap \ul{x}_0=\emptyset$, consider 
  \[
      \mathbf{prop}_{\ul{x}_0\subseteq \ul{x}_0\sqcup \ul{x}}(\bC) \in \bA\mathbf{-mod}^\onfact_{\ul{x}_0\sqcup\ul{x}}
  \]
  (see \secref{sss propogation factmodcat}). There is a canonical unital factorization $\bA$-linear bifunctor
  \[
      ( \bA^{\on{fact}_{\ul{x}}}, \bC  ) \to \mathbf{prop}_{\ul{x}_0\subseteq \ul{x}_0\sqcup \ul{x}}(\bC)
  \]
  constructed as follows. 

  \medskip
  By definition, we need to define a strict morphism 
  \begin{equation}
    \label{eqn-bilinear}
      (\ul\bA|_{\Ran^\untl_{\ul{x}}} \boxt_{S_0} \ul\bC)|_\disj \to \ul\bC|_{(\Ran_{\ul{x}}^\untl \times_{S_0} \Ran_{\ul{x}_0}^\untl)_\disj  )}
  \end{equation}
  between crystal of categories over $(\Ran_{\ul{x}}^\untl \times_{S_0} \Ran_{\ul{x}_0}^\untl)_\disj  )$, such that it is compatible with the factorization $\bA$-module structures coming from both factors. Here the RHS means pullback of $\ul{\bC}$ along the map
  \[
    (\Ran_{\ul{x}}^\untl \times_{S_0} \Ran_{\ul{x}_0}^\untl)_\disj   \to \Ran_{\ul{x}_0}^\untl,\; (\ul{y},\ul{y}_0) \to \ul y \sqcup \ul y_0.
  \]
  Note that this map factors as
  \[
     (\Ran_{\ul{x}}^\untl \times_{S_0} \Ran_{\ul{x}_0}^\untl)_\disj \to (\Ran^\untl \times \Ran_{\ul{x}_0}^\untl)_\disj \xrightarrow{\on{union}} \Ran_{\ul{x}_0}^\untl.
  \]
  Hence to construct \eqref{eqn-bilinear}, we only need a strict morphism
  \begin{equation}
    \label{eqn-bilinear-2}
     (\ul\bA \boxt \ul\bC)|_\disj \to \ul\bC|_{(\Ran^\untl \times \Ran_{\ul{x}_0}^\untl)_\disj}
  \end{equation}
  compatible with the factorization $\bA$-module structures coming from both factors. However, the factorization $\bA$-module structure of $\bC$ implies there is a canonical \emph{isomorphism} \eqref{eqn-bilinear-2}.

\sssec{}
  Let $\bA$ be a unital factorization category and
  \[
      \bC_i \in \bA\mathbf{-mod}^\onfact_{\ul{x}_i}\;\;(i\in I),\;\; \bD\in \bA\mathbf{-mod}^\onfact_{\sqcup\ul{x}_i}
  \]
  with $I$ being finite. We say a \emph{unital} factorization $\bA$-linear multi-functor
  \[
      G: (\bC_i)_{i\in I} \to \bD
  \]
  \emph{exhibits $\bC'$ as the fusion product of $\bC_i$} if pre-composing with $(\on{id}_\bA,G)$ induces an equivalence between
  \begin{itemize}
    \item the category of lax-unital factorization functors
    \[
      (\bA,\bD) \to (\bA',\bD')
    \] 
    \item the category of lax-unital factorization multi-functors
    \[
      (\bA, (\bC_i)_{i\in I}) \to (\bA',\bD')
    \]
  \end{itemize}
  for any unital factorization category $\bA'$ and $\bD'\in \bA'\mathbf{-mod}^\onfact_{\sqcup \ul{x}_i}$.

  \medskip
  Note that $\bD$, equipped with the multi-functor $G$, is essentially unique if exists. We write
  \[
      \boxt_{\bA}^\onfact \bC_i
  \] 
  for this object.

\begin{thm}
  \label{thm-fusion-exist}
   Let $\bA$ be a unital factorization category and 
   \[
      \bC_i \in \bA\mathbf{-mod}^\onfact_{\ul{x}_i},\;\;i\in I 
   \]
   with $I$ being finite. Then the external fusion product 
   \[
      \boxt_{\bA}^\onfact \bC_i \in \bA\mathbf{-mod}^\onfact_{\sqcup\ul{x}_i}
    \]
  exists, and the structural functor
  \[
      \big((\boxt \ul{\bC}_i)_{/S_0}\big)|_\disj\to \on{union}_{(\ul{x}_i)}^*(\ul{ \boxt_{\bA}^\onfact \bC_i})|_\disj 
  \]
  is an equivalence.

\end{thm}

\sssec{}
  \secref{sss:proof_fusion_start}-\secref{sss:proof_fusion_end} are devoted to the proof of the theorem. It is enough to treat the case $I=\emptyset$ and $I=\{1,2\}$. 
  
\sssec{}  \label{sss:proof_fusion_start}

    We first consider the case $I=\emptyset$. Unwinding the definitions, we see that for any test unital factorization category $\bA'$ and $\bD'\in \bA'\mathbf{-mod}^\onfact_{\emptyset}$, a lax-unital factorization multi-functor
    \[
      (\bA, (\bC_i)_{i\in \emptyset}) \to (\bA',\bD')
    \]
    consists of the following data:
    \begin{itemize}
      \item A lax-unital factorization functor $\bA \to \bA'$;
      \item A morphism $\ul{\Dmod}(S_0) \to \ul\bD'_{\emptyset}$ in $\mathbf{CrysCat}(S_0)$.
    \end{itemize}
    Consider the vacuum object
    \[
       \bA^{\onfact_\emptyset} \in \bA\mathbf{-mod}^\onfact_{\emptyset}.
    \]
    Note that there is a canonical identification $\ul{\Dmod}(S_0)\simeq \ul{\bA^{\onfact_\emptyset}}_\emptyset$. It follows that $\bA^{\onfact_\emptyset} $ is the empty external fusion product.

\sssec{}

    We now consider the case $I=\{1,2\}$. To simplify the notations, we asssume $S_0=\on{pt}$ (otherwise one just replace absolute products, tensor products below by relative ones).

    \medskip
    Consider the Bar simplicial diagram 
    \[
      \cdots \Ran^\untl_{\ul x_1} \times \Ran^\untl \times \Ran^\untl_{\ul x_2} \rightrightarrows \Ran^\untl_{\ul x_1} \times  \Ran^\untl_{\ul x_2} 
    \]
    associated to the $\Ran^\untl$-modules $\Ran^\untl_{\ul x_i}$. We can restrict to the disjoint loci and obtain a simplicial diagram
    \begin{equation}
      \label{eqn-hyperdescent-Ran}
      \cdots (\Ran^\untl_{\ul x_1} \times \Ran^\untl \times \Ran^\untl_{\ul x_2})_\disj \rightrightarrows (\Ran^\untl_{\ul x_1} \times  \Ran^\untl_{\ul x_2})_\disj.
    \end{equation}
    For $n\ge 0$, let 
    \begin{equation}
      \label{eqn-hyperdescent-Ran-6}
        u_n: (\Ran^\untl_{\ul x_1} \times (\Ran^\untl)^{\times n} \times \Ran^\untl_{\ul x_2})_\disj \to \Ran^\untl_{\ul x_1\sqcup \ul x_2}
    \end{equation}
    be the union map. Note that these maps provide an augmentation of the diagram \eqref{eqn-hyperdescent-Ran}. We denote this augmented simplicial diagram by
    \begin{equation}
      \label{eqn-hyperdescent-Ran-3}
      \cdots \CY_1 \rightrightarrows \CY_0 \to \CY_{-1}.
    \end{equation}

\sssec{Warning} 
  In a previous version of this paper, we made a false claim that \eqref{eqn-hyperdescent-Ran-3} is an \'etale hypercover. It is false for two reasons: (i) The connecting morphisms $\CY_i \to \CY_j$ are not schematic; (ii) The canonical morphism $\CY_1 \to \CY_0\times_{\CY_{-1}}\CY_0$ is not surjective even on $k$-points. We warn the readers that there are similar mistakes in \cite[Sect. 6.23]{Ra} and \cite[Sect. B.11.14]{GLC2}.

  \medskip
  Nevertheless, we have the following descent result for crystals of categories on \eqref{eqn-hyperdescent-Ran-3}.

\begin{lem}
  The augmented simplicial diagram \eqref{eqn-hyperdescent-Ran-3} induces an equivalence
    \begin{equation}
      \label{eqn-hyperdescent-Ran-4}
        \mathbf{CrysCat}^{\on{strict}}(\CY_{-1}) \xrightarrow{\simeq} \underset{[n]\in \Delta}\lim \mathbf{CrysCat}^{\on{strict}}(\CY_n).
    \end{equation}
\end{lem}

\proof
  
    Lemma \ref{lem-union-local-iso} implies that:
    \begin{itemize}
      \item[(1)]
          Each arrow $u_\alpha:\CY_i \to \CY_j$ is a Cartesian morphism of categorical prestacks (see \secref{sss Cart catprestk});
      \item[(2)] 
          For any affine test scheme $S$ over $\CY_j$ and any arrow $u_\alpha:\CY_i\to \CY_j$, the fiber product $S\times_{\CY_j} \CY_i$ is a finite coproduct of open subschemes of $S$ (taking in the category of prestacks).
    \end{itemize}
    It follows that the functor
    \[
        u_\alpha^*:\mathbf{CrysCat}^{\on{strict}}(\CY_j) \to \mathbf{CrysCat}^{\on{strict}}(\CY_i)
    \]
    admits a right adjoint
    \[
        u_{\alpha,*}:\mathbf{CrysCat}^{\on{strict}}(\CY_i) \to \mathbf{CrysCat}^{\on{strict}}(\CY_j)
    \]
    such that for any affine test scheme $S$ over $\CY_j$, we have
    \[
        \Gamma(S, u_{\alpha,*}(\bC)) \simeq \Gamma(S\times_{\CY_j} \CY_i, \bC).
    \]
    This implies the functor \eqref{eqn-hyperdescent-Ran-4} has a right adjoint sending an object
    \[
        (\bC_n)_{[n]\in \Delta} \in  \underset{[n]\in \Delta}\lim \mathbf{CrysCat}^{\on{strict}}(\CY_n)
    \]
    to 
    \[
        \underset{[n]\in \Delta}\lim  u_{n,*}(\bC_n)  \in \mathbf{CrysCat}^{\on{strict}}(\CY_{-1}),
    \]
    where $u_n: \CY_n \to \CY_{-1}$ is the morphism \eqref{eqn-hyperdescent-Ran-6}. Note that for any categorical prestack $\CY$, the forgetful functor
    \[
        \mathbf{CrysCat}^{\on{strict}}(\CY)  \to \mathbf{CrysCat}(\CY^\simeq) 
    \]
    is conservative, where $\CY^\simeq$ is the maximal non-categorical subprestack of $\CY$ (see \secref{sss maximal subprestack}). Hence to show that the obtained adjoint functors
    \[
        \mathbf{CrysCat}^{\on{strict}}(\CY_{-1}) \rightleftarrows \underset{[n]\in \Delta}\lim \mathbf{CrysCat}^{\on{strict}}(\CY_n)
    \]
    are inverse to each other, we only need to show the induced adjoint functors
    \[
        \mathbf{CrysCat}(\CY_{-1}^\simeq) \rightleftarrows \underset{[n]\in \Delta}\lim \mathbf{CrysCat}(\CY_n^\simeq)
    \]
    are inverse to each other. This reduces the claim to the following Lemma \ref{lem-descent-crystal-Ran}.

\qed

\begin{lem}
  \label{lem-descent-crystal-Ran}

  The augmented simplicial diagram \eqref{eqn-hyperdescent-Ran-3} induces an equivalence
    \begin{equation}
      \label{eqn-hyperdescent-Ran-7}
        \mathbf{CrysCat}(\CY_{-1}^\simeq) \xrightarrow{\simeq} \underset{[n]\in \Delta}\lim \mathbf{CrysCat}(\CY_n^\simeq).
    \end{equation}
\end{lem}

\proof
    By Zariski descents of crystal of categories (see e.g. \cite[Theorem 1.5.7]{Ga3}), we only need to show
    \[
        \underset{[n]\in \Delta^{\on{op}}}{\on{colim}} \,\CY_n^\simeq \to \CY_{-1}^\simeq
    \]
    becomes an isomorphism after Zariski sheafification.

    \medskip
    Note that $\CY_0^\simeq \to \CY_{-1}^\simeq$ induces a surjection on $k$-points. Hence by Lemma \ref{lem-union-local-iso}, it is an effective epimorphism for the Zariski topology. Hence by \cite[Lemma 6.2.3.16]{Lu0}, we only need to show
    \[
        \underset{[n]\in \Delta^{\on{op}}}{\on{colim}} \,\CY_n^\simeq \times_{\CY_{-1}^\simeq} \CY_0^\simeq \to \CY_{0}^\simeq
    \]
    becomes an isomorphism after Zariski sheafification. In fact, we claim this morphism is an isomorphism even before sheafification. 

    \medskip
    To prove claim, we only need to show that for any affine test scheme $S$, and $\ul{z} \in \on{Im}\big(\CY_0^\simeq(S) \to \CY_{-1}^\simeq(S)\big)$, the groupoid
    \[
        \underset{[n]\in \Delta^{\on{op}}}{\on{colim}} \, \CY_n^\simeq (S)\underset{\CY_{-1}^\simeq (S)}\times \{\ul z\}
    \]
    is contractible. Recall $\ul{z}$ is a finite subset of $X(S)$ that contains $\ul{x}_1|_S$ and $\ul{x}_2|_S$. We define an equivalence relation on $\ul{z}$ such that $u\sim u'$ iff there exists a sequence $u=u_1,u_2,\cdots,u_n=u'$ in $\ul{z}$ such that the intersection of the graphs of $u_k$ and $u_{k+1}$ is nonempty. Let $J:=\ul{z}/\sim$ be the set of equivalence classes. Let $J_1$ and $J_2$ be the images of the subsets $\ul{x}_1|_S, \ul{x}_2|_S\subseteq \ul{z}$ under the map $\ul{z} \to J$. Since $\ul{z}$ is in the image of $\CY_0^\simeq(S) \to \CY_{-1}^\simeq(S)$, we see that $J_1\cap J_2=\emptyset$. Unwinding the definitions, for $[n]\in \Delta$, 
    \[
        \CY_n^\simeq (S)\underset{\CY_{-1}^\simeq (S)}\times \{\ul z\}
    \]
    can be identified with the \emph{set} $A_{J,J_1,J_2,n}$ of maps $\phi: J\to  \{-\infty,1,\cdots,n,\infty\} $ such that $J_1\subseteq \phi^{-1}(-\infty)$ and $J_2\subseteq \phi^{-1}(\infty)$. Hence we only need to show the simplicial set $A_{J,J_1,J_2,\bullet}$ is weakly contractible. Note that we have
    \[
        A_{J,J_1,J_2,\bullet} \simeq A_{J\setminus(J_1\cup J_2),\emptyset,\emptyset,\bullet} \simeq \prod_{j\in J\setminus(J_1\cup J_2)} A_{\{j\},\emptyset,\emptyset,\bullet}.
    \]
    Hence we only need to show $A_{\{*\},\emptyset,\emptyset,\bullet}$ is weakly contratible. However, it is easy to see this simplicial set is isomorphic to $\Delta^1$.

\qed

\sssec{} \label{sss:proof_fusion_end}
  Return to the proof of Theorem \ref{thm-fusion-exist}. The factorization structures on $\bA$ and $\bC_i$ implies the objects
    \begin{equation}
      \label{eqn-hyperdescent-Ran-2}
        (\ul\bC_1 \boxt \ul\bA^{\boxtimes n} \boxt \ul\bC_2)|_{\disj} \in \mathbf{CrysCat}^{\on{strict}}( (\Ran^\untl_{\ul x_1} \times (\Ran^\untl)^{\times n} \times \Ran^\untl_{\ul x_2})_\disj  ).
    \end{equation}
    are compatible with \eqref{eqn-hyperdescent-Ran} and pullback functors. Hence by the equivalence \eqref{eqn-hyperdescent-Ran-4}, there is a unique object
    \[
        \ul{\bC} \in \mathbf{CrysCat}^{\on{strict}}( (\Ran^\untl_{\ul x_1}  \times \Ran^\untl_{\ul x_2})_\disj  )
    \]
    such that $u_\bullet^*(\ul\bC) \simeq  (\ul\bC_1 \boxt \ul\bA^{\boxtimes \bullet} \boxt \ul\bC_2)|_{\disj}$. It is easy to see $\ul{\bC}$ has a natural factorization $\bA$-module structure and the equivalence
    \[
        (\ul\bC_1\boxt \ul\bC_2)|_\disj \xrightarrow{\simeq} u_0^*\ul{\bC}
    \]
    exhibits $\bC$ as the fusion product of $\bC_1$ and $\bC_2$.

\qed[Theorem \ref{thm-fusion-exist}]

\sssec{Example}
  \label{sss fusion vacuum factmodcat}
    It is easy to see (for example from \secref{sss:proof_fusion_end}) that the $\bA$-linear bifunctor
  \[
      ( \bA^{\on{fact}_{\ul{x}}}, \bC  ) \to \mathbf{prop}_{\ul{x}_0\subseteq \ul{x}_0\sqcup \ul{x}}(\bC)
  \]
    constructed in \secref{exam fusion with vac} induces an equivalence
    \[
        \bA^{\on{fact}_{\ul{x}}} \boxt_{\bA}^\onfact \bC \simeq \mathbf{prop}_{\ul{x}_0\subseteq \ul{x}_0\sqcup \ul{x}}(\bC).
    \]
    In particular,
    \[
        \bA^{\on{fact}_{\ul{x}}} \boxt_{\bA}^\onfact \bA^{\on{fact}_{\ul{x}'}} \simeq \bA^{\on{fact}_{\ul{x}\sqcup \ul{x}'}}.
    \]

\sssec{}
  The details for the remaining part of this subsection will be provided in \cite{CFZ}.

\sssec{}

  Let $\bA$ be a unital factorization category and $\CA$ be a factorization algebra in $\bA$. For a finite collection of disjoint points $\ul{x}_i \in \Ran^\untl(S_0)$, $i\in I$, there is a canonical functor
  \begin{equation}
    \label{eqn-external-fusion-factmod}
      \prod \CA\on{-mod}^\onfact_{\ul x_i} \to \CA\on{-mod}^\onfact_{\sqcup \ul x_i} ,\; (\CC_i)_{i\in I} \mapsto \boxt_{\CA}^\onfact \CC_i
  \end{equation}
  that sends $(\CC_i)_{i\in I}$ to the lax-unital factorization $\CA$-linear functor
  \[
      \Vect^{\on{fact}_{\sqcup \ul x_i} } \to \bA^{\on{fact}_{\sqcup \ul x_i} }
  \]
  corresponding to the lax-unital factorization $\CA$-linear \emph{multi-functor}
  \[
      (  \Vect^{\on{fact}_{\ul x_i} } )_{i\in I} \xrightarrow{(\CC_i)} (  \bA^{\on{fact}_{\ul x_i} } )_{i\in I} \to \bA^{\on{fact}_{\sqcup \ul x_i} }.
  \]

  \medskip
  One can show the functor \eqref{eqn-external-fusion-factmod} is $\Dmod(S_0)$-multilinear. Hence we obtain a functor
  \begin{equation}
    \label{eqn-external-fusion-factmod-tensor}
    \bigotimes_{\Dmod(S_0)} \CA\on{-mod}^\onfact_{\ul x_i} \to \CA\on{-mod}^\onfact_{\sqcup \ul x_i} ,\; (\CC_i)_{i\in I} \mapsto \boxt_{\CA}^\onfact \CC_i,
  \end{equation}
  which is called the \emph{external fusion functor} for factorization $\CA$-modules. By construction, we have a canonical commutative diagram:
  \begin{equation}
    \label{eqn-external-fusion-factmod-oblv}
    \xymatrix{
        \bigotimes_{\Dmod(S_0)} \CA\on{-mod}^\onfact_{\ul x_i}  \ar[r] \ar[d]
        &  \CA\on{-mod}^\onfact_{\sqcup \ul x_i} \ar[d] \\ 
        \bigotimes_{\Dmod(S_0)} \bA_{\ul{x}_i} \ar[r]^-\simeq
        & \bA_{\sqcup\ul{x}_i},
    }
  \end{equation}
  where the vertical arrows are the forgetful functors, and the bottom equivalence is due to the factorization structure on $\bA.$

\sssec{} 
   Moreover, using the universal propery of external fusion, one can show the functors \eqref{eqn-external-fusion-factmod-tensor} are compatible with the change-of-base functors \eqref{eqn-change-of-base-factmod-fixalg}. In other words, for any finite set $I$, we obtain a strict morphism
  \[
      \on{mult}_I:(\boxt_{i\in I} \CA\ul{\on{-mod}}^\onfact)|_\disj \to \on{union}_I^*(\CA\ul{\on{-mod}}^\onfact)|_\disj
  \]
  in 
  \[
      \mathbf{CrysCat}^{\on{strict}}( (\prod_{i\in I} \Ran^\untl)_\disj  ).
  \]
  Finally, using the universal propery of external fusion, one can supply a datum of associativity and commutativity for the functors $\on{mult}_I$. In other words, we obtain a structure of unital \emph{lax-factorization} category (see \secref{sss lax unital factcat}) on $\CA\ul{\on{-mod}}^\onfact$. We denote it just by 
  \[
      \CA{\on{-mod}}^\onfact \in \mathbf{UntlFactCat}.
  \]
  Moreover, the \eqref{eqn-oblv-CA-is-lax-morphism} provides a \emph{lax-unital} factorization functor
  \[
    \oblv_\CA: \CA{\on{-mod}}^\onfact \to \bA.
  \] 
  By construction, it sends the unit
  \[
      \on{unit}_{\CA{\on{-mod}}^\onfact}
  \]
   of $\CA{\on{-mod}}^\onfact$ to $\CA\in \on{FactAlg}(\bA)$. Therefore we write
  \[
      \CA^{\on{enh}} :=\on{unit}_{\CA{\on{-mod}}^\onfact} \in \on{FactAlg}(  \CA{\on{-mod}}^\onfact  ). 
  \]

\sssec{}
  \label{sss universal property of A-factmod}
  The construction $(\bA,\CA)\mapsto \CA{\on{-mod}}^\onfact$ is functorial in $\bA$. In other words, for any lax-unital factorization functor $F:\bA\to \bA'$, we have a canonical \emph{unital}\footnote{The functor below is strictly unital by Lemma \ref{lem-criterion-for-unital-factfun}, which is also true for unital lax-factorization categories (including $\CA{\on{-mod}}^\onfact$).} factorization functor
  \[
      F^{\on{enh}}:\CA{\on{-mod}}^\onfact  \to F(\CA){\on{-mod}}^\onfact
  \]
  compatible with $\oblv_\CA$, $\oblv_{F(\CA)}$ and $F$.

  \medskip
  Combining with Lemma \ref{lem module for unit is nothing}, we obtain a canonical unital factorization functor
  \[
      F^{\on{enh}}:\bA \to F(\on{unit}_\bA){\on{-mod}}^\onfact
  \]
  such that $\oblv_{F(\on{unit}_\bA)} \circ F^{\on{enh}}\simeq F$.

\sssec{Variant}
  \label{sss universal property of A-factmod modversion}
  Let $\bA$ be a unital factorization category and $\CA$ be a factorization algebra in $\bA$. For any $\bC\in \bA\mathbf{-mod}^\onfact_{\ul{x}_0}$, one can similarly construct a unital \emph{lax-factorization} $(\CA{\on{-mod}}^\onfact)$-module category at $\ul{x}_0$, denoted by
  \[
      \CA{\on{-mod}}^\onfact(\bC),
  \]
  such that its fiber at $\ul{x}_0$ is the DG category of factorization $\CA$-modules in $\bC$ (see Lemma \ref{lem change of base factcat mod}). It is equipped with a lax-unital $\oblv_\CA$-linear factorization functor
  \[
      \oblv_{\CA,\bC}: \CA{\on{-mod}}^\onfact(\bC) \to \bC.
  \]

\sssec{}
  \label{sss universal property of A-factmod modversion-2}
  Moreover, for a lax-unital factorization functor $(F,G): (\bA,\bC) \to (\bA',\bC')$, we have a \emph{unital} factorization $ F^{\on{enh}}$-linear functor
  \[
      G^{\on{enh}}: \CA{\on{-mod}}^\onfact(\bC) \to F(\CA){\on{-mod}}^\onfact(\bC').
  \]
  In particular, we have a unital factorization $ F^{\on{enh}}$-linear functor
  \[
      G^{\on{enh}}: \bC \to F(\on{unit}_\bA){\on{-mod}}^\onfact(\bC').
  \]

  \medskip
  Conversely, given any $G^{\on{enh}}$ as above, one can recover $G$ as $\oblv_{F(\on{unit}_\bA),\bC} \circ G^{\on{enh}}$. One can check these two constructions are inverse to each other. In other words, for fixed $F:\bA\to \bA'$ and modules $\bC$ and $\bC'$, the following two data are equivalent:
  \begin{itemize}
    \item A unital factorization $ F^{\on{enh}}$-linear functor
      \[
          G^{\on{enh}}: \bC \to F(\on{unit}_\bA){\on{-mod}}^\onfact(\bC').
      \]
    \item A lax unital factorization $F$-linear functor
    \[
        G:\bC \to \bC'.
    \]
  \end{itemize}

\ssec{Restrictions of factorization modules}

\sssec{}
  \label{sss 1-2-Cart fib}
  We say a functor $\bE \to \bB$ between 2-categories is a (1,2)-Cartesian fibration if
  \begin{itemize}
    \item 
      There are enough Cartesian 1-morphisms. In other words, for any morphism $f:u\to v$ in $\bB$ and a lifting $V\in \bE$ of $v$, there exists a lifting $F:U\to V$ of $f$ such that for any $W\in \bE$ over $w\in \bB$, the following square of categories is Cartesian:
      \[
        \xymatrix{
            \Maps_\bE(W,U) \ar[r]^-{F\circ-} \ar[d]
            & \Maps_\bE(W,V) \ar[d] \\
            \Maps_\bB(w,u) \ar[r]^-{f\circ-}
            & \Maps_\bB(w,v)
        }
      \]
      \item 
        There are enough Caresian 2-morphisms. In other words, for objects $U,V\in \bE$ over $u,v\in \bB$, the functor
        \[
            \Maps_\bE(U,V) \to \Maps_\bB(u,v)
        \]
        is a Cartesian fibration.
      \item
        The collection of Cartesian 2-morphisms are closed under horizontal compositions.
  \end{itemize}

  \sssec{Remark}
    \label{sss strictening}
    The theory of (1,2)-Cartesian fibrations is developed in \cite{GHL}, \cite{AS1} and \cite{AS2} under the name of \emph{inner 2-Cartesian fibrations}. In particular, there is a Grothendieck cosntruction in \cite{AS2} which says knowing a (1,2)-Cartesian fibration $\bE\to \bB$ is equivalent to knowing a functor $\bB^{1\on{-op},2\on{-op}} \to 2-\mathbf{Cat}$.

  \sssec{}
    The following result will be one of the main theorems for \cite{CFZ}. 
  \begin{thm}
    \label{thm-the-fibration}
      The forgetful functor
      \begin{equation}
        \label{eqn-the-fibration}
           \mathbf{UntlFactModCat}^{\on{lax-untl}}_{\ul{x}_0} \to \mathbf{UntlFactCat}^{\on{lax-untl}}
      \end{equation}
      is a (1,2)-Cartesian fibration. 
  \end{thm}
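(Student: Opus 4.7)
\proof[Plan of proof]
The strategy is to produce Cartesian 1-morphisms via the factorization restriction functor $\Rres_F$ referenced in \secref{sss:fact Rres}, whose construction reduces to the enhancement formalism of \secref{sss universal property of A-factmod modversion-2}. Concretely, given a lax-unital factorization functor $F:\bA \to \bA'$ and an object $\bC'\in \bA'\mathbf{-mod}^\onfact_{\ul{x}_0}$, I would define
\[
\Rres_F(\bC'):= F(\on{unit}_\bA)\on{-mod}^\onfact(\bC'),
\]
viewed as an object of $\bA\mathbf{-mod}^\onfact_{\ul{x}_0}$ via the strictly unital factorization functor $F^{\on{enh}}:\bA \to F(\on{unit}_\bA)\on{-mod}^\onfact$ of \secref{sss universal property of A-factmod}. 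The forgetful morphism $\oblv_{F(\on{unit}_\bA),\bC'}$ provides a canonical lax-unital $F$-linear factorization functor $\varepsilon_F:\Rres_F(\bC')\to \bC'$, yielding the candidate 1-morphism $(F,\varepsilon_F):(\bA,\Rres_F(\bC'))\to (\bA',\bC')$.

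The key step is verifying that $(F,\varepsilon_F)$ is Cartesian. For an arbitrary $(\bA'',\bD)$ and a factorization $G = F\circ H$ with $H:\bA''\to\bA$, one needs the square of mapping categories identifying pairs $(H,K):(\bA'',\bD)\to (\bA,\Rres_F(\bC'))$ with pairs $(F\circ H, F\circ K \text{ composed with }\varepsilon_F)$. This is precisely the content of the equivalence in \secref{sss universal property of A-factmod modversion-2}, applied after observing that the composite $F\circ H$ satisfies $(F\circ H)(\on{unit}_{\bA''}) \simeq H^{\on{enh}}(F(\on{unit}_\bA))$, so that passing through the enhancement construction twice reconstructs the same factorization module category. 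Lemma \ref{lem-laxun-mod-is-untl} plays a critical role here: it ensures that the automatic strictness of unital structures on the module side makes the correspondence between lax-unital $F$-linear functors and unital $F^{\on{enh}}$-linear functors an equivalence of entire mapping \emph{categories}, not merely of mapping spaces.

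For Cartesian 2-morphisms, fix objects $(\bA,\bC)$ and $(\bA',\bC')$ and a 2-morphism $\alpha: F_1\to F_2$ in $\mathbf{UntlFactCat}^{\on{lax-untl}}$ together with a morphism $(F_2,G_2):(\bA,\bC)\to (\bA',\bC')$. The desired Cartesian lift $(F_1,G_1)\to (F_2,G_2)$ is produced by pre-whiskering $G_2$ with the modification data supplied by $\alpha$: concretely, $G_1$ is obtained as the composition of $G_2$ with the natural transformation $\ul{\bA}\otimes \ul{\bC}\to \ul{\bA}\otimes \ul{\bC}$ induced by $F_1\to F_2$ on the $\bA$-factor, together with the lax compatibility constraints. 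The universal property is then immediate from the pointwise construction of modifications in $\mathbf{CrysCat}^{\on{lax}}(\Ran^\untl_{\ul{x}_0})$. Closure under horizontal composition (the third axiom) is then automatic since both horizontal and vertical compositions of modifications are computed pointwise on affine test schemes, and Cartesianness is a pointwise condition.

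The main obstacle will be Step 1, specifically assembling the homotopy-coherent data that upgrades the bijective equivalence \secref{sss universal property of A-factmod modversion-2} into a Cartesian lift in the sense of \secref{sss 1-2-Cart fib}. Via the Grothendieck construction of \cite{AS2} recalled in Remark \ref{sss strictening}, this amounts to showing that $\bA\mapsto \bA\mathbf{-mod}^\onfact_{\ul{x}_0}$ extends to a genuine 2-functor out of $(\mathbf{UntlFactCat}^{\on{lax-untl}})^{1\on{-op},2\on{-op}}$, so the real work is to propagate the $\Rres_F$ construction coherently under composition of lax-unital factorization functors, using the associativity of the enhancement $F\mapsto F^{\on{enh}}$ together with the external-fusion description of $F(\on{unit}_\bA)\on{-mod}^\onfact$ from \secref{ss external fusion} to pin down the required coherences.
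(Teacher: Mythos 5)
Your central construction, $\Rres_F(\bC'):= F(\on{unit}_\bA)\on{-mod}^\onfact(\bC')$ viewed via $F^{\on{enh}}$, does not land where you claim it does: the paper is explicit (\secref{ss external fusion}, \secref{sss universal property of A-factmod modversion}) that $F(\on{unit}_\bA)\on{-mod}^\onfact$ is a unital \emph{lax-factorization} category, and $F(\on{unit}_\bA)\on{-mod}^\onfact(\bC')$ is a unital \emph{lax-factorization} module category. The external-fusion structure morphisms \eqref{eqn-external-fusion-factmod-tensor} are not invertible in general, so restricting along the (strict) factorization functor $F^{\on{enh}}$ produces a lax-factorization $\bA$-module, not an object of $\bA\mathbf{-mod}^\onfact_{\ul{x}_0}$. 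Your candidate Cartesian lift therefore lies in the wrong fiber. This is not a homotopy-coherence subtlety; it is a failure of the object to have the required structure at all.

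The paper's proof recognizes exactly this and splits the work into two claims: (i) the enlarged fibration $\mathbf{UntlLaxFactModCat}_{\ul{x}_0}\to \mathbf{UntlLaxFactCat}$ has enough locally Cartesian 1-morphisms, via the essentially tautological lax-restriction $\Rres^{\on{lax-fact}}_\Phi$; and (ii) the inclusion $\bA\mathbf{-mod}^\onfact_{\ul{x}_0}\hookrightarrow \bA\mathbf{-mod}^{\on{laxfact}}_{\ul{x}_0}$ admits a right adjoint $\mathbf{Str}_\bA$, compatible with the corresponding right adjoint on categories in a Beck--Chevalley sense. The Cartesian lift you want is $\mathbf{Str}_\bA$ applied to your candidate. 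Claim (ii) is the crux of the theorem: it is proved by building the $\sharp$-construction (\secref{sss defn sharp}), taking a sequential limit $\mathbf{Str}(\bA,\bM):=\lim_n (\bA^{n\sharp},\bM^{n\sharp})$, and then verifying strictness of the limit by an induction on $m$-strictness that exploits the filtered structure of $\Ran^\untl$ (Lemma \ref{lem-sharp-is-stable}). Your last paragraph identifies the ``main obstacle'' as coherence of the Grothendieck construction, but that is downstream of the real obstacle, which is the strictening problem; without it, the object you write down is simply not in $\bA\mathbf{-mod}^\onfact_{\ul{x}_0}$, so there is nothing to assemble coherently. Your treatment of Cartesian 2-morphisms is in spirit what the paper does (it calls the 2-morphism case ``similar but much simpler''), but the 1-morphism gap is fatal as stated.
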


  \sssec{}
    We will explain the main ideas of the proof in the next subsection. For now, we deduce some useful results from it.

  \sssec{}  \label{sss:unital restriction}

    The Grothendieck construction in \cite{AS2} provides a functor
    \begin{equation}
      \label{eqn-the-restriction}
         (\mathbf{UntlFactCat}^{\on{lax-untl}})^{2\on{-op},1\on{-op}} \to 2-\mathbf{Cat}.
    \end{equation}
    In other words, for a \emph{lax-unital} factorization functor $\Phi:\bA\to \bB$, we have a contravariant transport functor between the fibers of \eqref{eqn-the-fibration}:
    \[
        \Rres_\Phi^\untl: \bB\mathbf{-mod}^\onfact_{\ul{x}_0} \to  \bA\mathbf{-mod}^\onfact_{\ul{x}_0}.
    \] 
    The construction $\Phi\mapsto \Rres_\Phi^\untl$ is contravariant. In other words, for a 2-morphism $\Phi\to \Phi'$, we have a natural transformation
    \[
        \Rres_{\Phi'}^\untl \to \Rres_{\Phi}^\untl.
    \]
    When $\Phi$ is unital, we also write $\Rres_\Phi:=\Rres_\Phi^\untl$. 

    \sssec{}
      Recall that inside any 2-category, there is a notion of \emph{adjoint pair of 1-morphisms}. Moreover, a functor between 2-categories always sends adjoint pairs to adjoint pairs. Applying to the functor to \eqref{eqn-the-restriction}, we obtain:

  \begin{prop} \label{prop:basic adjunction}
      Suppose $\Phi: \bA \rightleftarrows \bB:\Psi$ is an adjoint pair in $\mathbf{UntlFactCat}^{\on{lax-untl}}$, then we have an adjoint pair 
      \[
        \Rres_\Phi^\untl: \bB\mathbf{-mod}^\onfact_{\ul{x}_0} \rightleftarrows   \bA\mathbf{-mod}^\onfact_{\ul{x}_0}:  \Rres_\Psi^\untl
      \]
      in $2-\mathbf{Cat}$. In particular, for $\bM\in  \bA\mathbf{-mod}^\onfact_{\ul{x}_0}$ and $\bN\in\bB\mathbf{-mod}^\onfact_{\ul{x}_0}  $, we have a canonical equivalence between the following categories:
      \begin{itemize}
        \item The category of unital factorization $\bA$-linear functors $\Rres_\Phi^\untl(\bN) \to  \bM$;
        \item The category of unital factorization $\bB$-linear functors $\bN \to \Rres_\Psi^\untl(\bM) $.
      \end{itemize}

  \end{prop}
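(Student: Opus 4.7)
The plan is to deduce the proposition as a formal consequence of Theorem~\ref{thm-the-fibration}. Specifically, by that theorem the forgetful functor \eqref{eqn-the-fibration} is a $(1,2)$-Cartesian fibration, so the Grothendieck construction of \cite{AS2} (see Remark~\ref{sss strictening}) produces the 2-functor \eqref{eqn-the-restriction}
\[
(\mathbf{UntlFactCat}^{\on{lax-untl}})^{2\on{-op},1\on{-op}} \to 2\text{-}\mathbf{Cat},
\]
whose value on $\bA$ is the fiber $\bA\mathbf{-mod}^\onfact_{\ul{x}_0}$ and which sends a (lax-unital) factorization functor $\Phi:\bA\to\bB$ to $\Rres_\Phi^{\untl}:\bB\mathbf{-mod}^\onfact_{\ul{x}_0}\to \bA\mathbf{-mod}^\onfact_{\ul{x}_0}$.

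Now I would invoke the general principle that a $2$-functor between $2$-categories carries adjoint pairs of $1$-morphisms to adjoint pairs of $1$-morphisms. The two-fold opping in $(-)^{2\on{-op},1\on{-op}}$ is important here: taking $1$-op reverses the direction of $1$-morphisms (so the adjunction $\Phi\dashv\Psi$ becomes $\Psi\dashv\Phi$), while taking $2$-op additionally reverses the direction of the unit and counit, and the combined effect is to restore the original handedness. Thus an adjunction $\Phi\dashv\Psi$ in $\mathbf{UntlFactCat}^{\on{lax-untl}}$ becomes an adjunction $\Phi\dashv\Psi$ in $(\mathbf{UntlFactCat}^{\on{lax-untl}})^{2\on{-op},1\on{-op}}$. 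Applying the 2-functor \eqref{eqn-the-restriction} then yields the asserted adjunction
\[
\Rres_\Phi^{\untl}\;\dashv\; \Rres_\Psi^{\untl}
\]
in $2\text{-}\mathbf{Cat}$. The second assertion of the proposition is simply the unpacking of this $2$-categorical adjunction at the level of $\Maps$-categories between $\bM\in\bA\mathbf{-mod}^\onfact_{\ul{x}_0}$ and $\bN\in\bB\mathbf{-mod}^\onfact_{\ul{x}_0}$.

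The main obstacle is therefore entirely contained in Theorem~\ref{thm-the-fibration}; once the $(1,2)$-Cartesian fibration is established (the homotopy-coherent verification being deferred to \cite{CFZ}), everything else here is a purely formal consequence of the general theory of $2$-functors and adjunctions. Concretely, to supply an ``elementary'' check one would only need to exhibit the unit and counit and verify the triangle identities at the level of the $\Maps$-categories appearing in Definition~\ref{sss 1-2-Cart fib}, but this is precisely the data organized by the Grothendieck construction.
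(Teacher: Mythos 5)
Your proof follows exactly the same approach as the paper: Theorem~\ref{thm-the-fibration} gives the $(1,2)$-Cartesian fibration, the Grothendieck construction yields the 2-functor \eqref{eqn-the-restriction}, and one invokes the standard fact that 2-functors preserve adjoint pairs. Your careful bookkeeping of the direction-reversals under $(-)^{2\on{-op},1\on{-op}}$ — showing that $\Phi\dashv\Psi$ survives the double op with the correct handedness so that $\Rres_\Phi^\untl$ comes out as the left adjoint — is a helpful elaboration of a point the paper leaves implicit, but the argument is substantively the same.
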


  \sssec{}
    For $\bN\in \bB\mathbf{-mod}^\onfact_{\ul{x}_0} $, the object $\Rres_\Phi^\untl(\bN)$ is called the \emph{restriction of $\bD$ along $\Phi$}. By definition, it is equipped with a lax-unital $\Phi$-linear factorization functor
    \[
         \Rres_\Phi^\untl(\bN) \to \bN,
    \]
    which is a Cartesian lifting of $\Phi$ in $\mathbf{UntlFactModCat}^{\on{lax-untl}}_{\ul{x}_0}$.

    \medskip
    In other words, for any test lax-unital factorization functor $F:\bA'\to \bA$ and test object $\bM'\in \bA'\mathbf{-mod}^\onfact_{\ul{x}_0}$, pre-composing with $ \Rres_F^\untl(\bN)\to \bN$ induces an equivalence between the following categories:
    \begin{itemize}
      \item The category of lax-unital $\Phi\circ F$-linear factorization functors $\bM' \to \bN$;
      \item The category of lax-unital $F$-linear factorization functors $\bM' \to  \Rres_F^\untl(\bN)$.
    \end{itemize}
    Taking $\bA'=\Vect$ and $\bM'=\Vect^{\on{fact}_{\ul x_0}}$, we obtain the following result.

  \begin{prop} \label{prop:fiber of unital restriction}
    Let $\Phi:\bA\to \bB$ be a lax-unital factorization functor and $\bN\in  \bB\mathbf{-mod}^\onfact_{\ul{x}_0}$. Then for any $\CA\in \on{FactAlg}(\bA)$, pre-composing with $ \Rres_F^\untl(\bN)\to \bN$ induces an equivalence
    \[
        \CA\on{-mod}^\onfact( \Rres_F^\untl(\bN) )_{\ul{x}_0} \xrightarrow{\simeq} F(\CA)\on{-mod}^\onfact(\bN)_{\ul{x}_0}.
    \]
  \end{prop}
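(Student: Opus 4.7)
The plan is to reduce the proposition directly to the universal property of the Cartesian lift $\Rres_\Phi^\untl(\bN) \to \bN$ spelled out in the paragraph just above the statement (I read the symbol ``$F$'' in the displayed equivalence as standing for the lax-unital factorization functor $\Phi$ in the hypothesis, a minor typographical slip).

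The key step is to repackage factorization modules as algebra-linear factorization functors out of the vacuum. Recall from \secref{sss defn factalg in factcat} that a factorization algebra $\CA \in \on{FactAlg}(\bA)$ is the same datum as a lax-unital factorization functor $\CA : \Vect \to \bA$; and, as indicated in the ``Variant'' at the end of \secref{ss factfun factmodcat}, a factorization $\CA$-module at $\ul{x}_0$ in an $\bA$-module category $\bM$ is correspondingly the same datum as a lax-unital factorization $\CA$-linear functor $\Vect^{\on{fact}_{\ul{x}_0}} \to \bM$. Under these identifications, the two sides of the desired equivalence become, respectively, the category of lax-unital $\CA$-linear factorization functors $\Vect^{\on{fact}_{\ul{x}_0}} \to \Rres_\Phi^\untl(\bN)$ and the category of lax-unital $(\Phi \circ \CA)$-linear factorization functors $\Vect^{\on{fact}_{\ul{x}_0}} \to \bN$, with the comparison arrow being induced by pre-composition with $\Rres_\Phi^\untl(\bN) \to \bN$.

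But this is exactly the content of the universal property of the Cartesian lift quoted immediately above the proposition, applied to the test data $\bA' := \Vect$, $F := \CA$, $\bM' := \Vect^{\on{fact}_{\ul{x}_0}}$. Thus the proof of the proposition reduces to citing that universal property, together with unwinding the above definitional repackaging on both sides.

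All substantive work is therefore upstream of this proposition: it is housed in Theorem \ref{thm-the-fibration}, which provides the (1,2)-Cartesian fibration structure on $\mathbf{UntlFactModCat}^{\on{lax-untl}}_{\ul{x}_0} \to \mathbf{UntlFactCat}^{\on{lax-untl}}$, from which the Cartesian lift $\Rres_\Phi^\untl$ and its universal property are produced via the Grothendieck construction for $(1,2)$-Cartesian fibrations. The main obstacle, deferred to \cite{CFZ}, is establishing this fibration: producing enough Cartesian $1$-morphisms and Cartesian $2$-morphisms in the forgetful functor \eqref{eqn-the-fibration} and verifying that Cartesian $2$-morphisms are closed under horizontal composition. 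Once that is in hand, the present proposition is a one-line specialization.
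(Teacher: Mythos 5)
Your proof is correct and is precisely the paper's argument: the paragraph preceding the proposition spells out the universal property of the Cartesian lift, and the paper's entire proof consists of the sentence ``Taking $\bA'=\Vect$ and $\bM'=\Vect^{\on{fact}_{\ul x_0}}$, we obtain the following result,'' which is exactly your specialization (the identification of $\CA$-modules with lax-unital $\CA$-linear functors out of $\Vect^{\on{fact}_{\ul x_0}}$ is the definition given in the body of the subsection, not the ``Variant'', but that is a citation slip, not a gap). You also correctly observe that the real work is Theorem~\ref{thm-the-fibration} and that the $F$ in the displayed equivalence is a typo for $\Phi$.
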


  \sssec{}
    Combining with Lemma \ref{lem module for unit is nothing}, we obtain the following result.

    \begin{cor}
      \label{cor fiber unital restriction}
    Let $\Phi:\bA\to \bB$ be a lax-unital factorization functor and $\bN\in  \bB\mathbf{-mod}^\onfact_{\ul{x}_0}$. There is a canonical dotted equivalence making the following diagram commute
    \[
      \xymatrix{
        ( \Rres_F^\untl(\bN) )_{\ul{x}_0} \ar@{.>}[r]^-\simeq  \ar[d]
        & F(\on{unit}_\bA)\on{-mod}^\onfact(\bN)_{\ul{x}_0} \ar[d]^-{\on{oblv}_{\ul x_0}} \\ 
        \bN_{\ul{x}_0} \ar@{=}[r] &  \bN_{\ul{x}_0}.
      }
    \]
  \end{cor}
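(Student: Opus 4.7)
The plan is to deduce the corollary by combining the two results cited in the statement, namely Proposition \ref{prop:fiber of unital restriction} and Lemma \ref{lem module for unit is nothing}, applied to the special factorization algebra $\CA := \on{unit}_\bA \in \on{FactAlg}(\bA)$.

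First, I would specialize Proposition \ref{prop:fiber of unital restriction} to $\CA = \on{unit}_\bA$. This yields a canonical equivalence
\[
\on{unit}_\bA\on{-mod}^\onfact(\Rres_F^\untl(\bN))_{\ul{x}_0} \xrightarrow{\simeq} F(\on{unit}_\bA)\on{-mod}^\onfact(\bN)_{\ul{x}_0},
\]
induced by pre-composing with the Cartesian lifting $\Rres_F^\untl(\bN) \to \bN$. Next, Lemma \ref{lem module for unit is nothing}, applied to the unital factorization category $\bA$ and the module $\Rres_F^\untl(\bN) \in \bA\mathbf{-mod}^\onfact_{\ul{x}_0}$, provides a canonical equivalence
\[
\on{unit}_\bA\on{-mod}^\onfact(\Rres_F^\untl(\bN))_{\ul{x}_0} \xrightarrow{\simeq} (\Rres_F^\untl(\bN))_{\ul{x}_0},
\]
realized by the forgetful functor $\CC \mapsto \CC_{\ul{x}_0}$. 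Composing the inverse of this equivalence with the first one gives the dotted arrow in the statement.

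The only remaining task is to check that this composite fits into the advertised commutative square, i.e., that after forgetting to $\bN_{\ul{x}_0}$ on the right via $\oblv_{\ul{x}_0}$, we recover the functor on fibers at $\ul{x}_0$ induced by the Cartesian map $\Rres_F^\untl(\bN) \to \bN$. This is essentially tautological: unwinding the construction in Proposition \ref{prop:fiber of unital restriction}, the equivalence on factorization modules is induced by the lax-unital $F$-linear factorization functor $\Rres_F^\untl(\bN) \to \bN$ itself, so applying $\oblv_{\ul{x}_0}$ on both sides gives the map on fibers produced by the same morphism, which is exactly the left vertical arrow in the diagram. The compatibility of the forgetful functors $\oblv_{\on{unit}_\bA}$ and $\oblv_{F(\on{unit}_\bA)}$ with taking fibers at $\ul{x}_0$ (a general property of the forgetful functor of \secref{sss:notation for intermal factmod}) ensures that the two squares involved commute coherently.

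I do not anticipate a serious obstacle: both inputs are already established in the paper, and the compatibility check reduces to the naturality of the forgetful-fiber construction with respect to the Cartesian lifting, which is built into the definition of $\Rres_F^\untl$. The one point that requires a bit of care is verifying the coherence of the identification $\on{unit}_\bA\on{-mod}^\onfact(-)_{\ul{x}_0} \simeq (-)_{\ul{x}_0}$ of Lemma \ref{lem module for unit is nothing} as a \emph{natural} transformation of functors in the module variable, which is in turn covered by the homotopy-coherent statement promised in \cite{CFZ}.
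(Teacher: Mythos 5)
Your argument is exactly the paper's: specialize Proposition \ref{prop:fiber of unital restriction} to $\CA=\on{unit}_\bA$ and combine with the identification of Lemma \ref{lem module for unit is nothing} applied to $\Rres_F^\untl(\bN)$. The compatibility check you flag is indeed the only content beyond putting the two citations side by side, and your reasoning for it is the intended one.
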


  \sssec{}
    In particular, for $\bA=\Vect$, we obtain:

  \begin{cor}
    Let $\bB$ be a unital factorization category and $\CB\in \on{FactAlg}(\bB)$. For any $\bN\in \bB\mathbf{-mod}^\onfact_{\ul{x}_0}$, there is a canonical equivalence
    \[
       ( \Rres_\CB^\untl(\bN) )_{\ul{x}_0} \simeq \CB\on{-mod}^\onfact(\bN)_{\ul{x}_0}.
    \]
  \end{cor}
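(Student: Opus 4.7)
The plan is to deduce this statement as a direct specialization of the preceding Corollary \ref{cor fiber unital restriction}, where we take the source unital factorization category to be $\Vect$. The main observation is that, by the very definition given in \secref{sss defn factalg in factcat}, a factorization algebra $\CB$ in $\bB$ is the same datum as a lax-unital factorization functor $\CB: \Vect \to \bB$; under this identification the two uses of the symbol $\CB$ (as an algebra and as a functor) carry the same information.

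With this in mind, I would apply the preceding corollary to the lax-unital factorization functor $F := \CB: \Vect \to \bB$, and to the object $\bN \in \bB\mathbf{-mod}^\onfact_{\ul{x}_0}$. The corollary produces a canonical equivalence
\[
\bigl(\Rres^\untl_{\CB}(\bN)\bigr)_{\ul{x}_0} \;\simeq\; F(\on{unit}_\Vect)\on{-mod}^\onfact(\bN)_{\ul{x}_0},
\]
compatible with the forgetful functors to $\bN_{\ul{x}_0}$. It remains only to identify the factorization algebra $F(\on{unit}_\Vect) \in \on{FactAlg}(\bB)$ with $\CB$ itself. This is immediate: the unit $\on{unit}_\Vect \in \on{FactAlg}(\Vect)$ is, by Lemma \ref{lem-unit-is-initial}, the initial factorization algebra in $\Vect$, and under the tautological identification $\on{FactAlg}(\Vect) \simeq \mathbf{UntlFactCat}^{\on{lax-untl}}(\Vect,\Vect)$ it corresponds to the identity functor $\on{id}_\Vect$. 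Consequently, $F(\on{unit}_\Vect) = \CB \circ \on{id}_\Vect = \CB$ as an object of $\on{FactAlg}(\bB)$, via the functoriality \eqref{eqn-laxuntl-factalg-to-factalg}.

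Substituting this identification into the equivalence above yields the desired
\[
\bigl(\Rres^\untl_{\CB}(\bN)\bigr)_{\ul{x}_0} \;\simeq\; \CB\on{-mod}^\onfact(\bN)_{\ul{x}_0},
\]
which is canonical and compatible with the forgetful functors to $\bN_{\ul{x}_0}$. No obstacles of substance are expected here; the statement is essentially a repackaging of the previous corollary once one unpacks that $\Vect$-valued lax-unital factorization functors out of $\Vect$ are the same thing as factorization algebras. All the serious work — namely, the existence of unital restriction $\Rres^\untl_{(-)}$ as a functorial construction and the identification of its fiber at $\ul{x}_0$ with a category of factorization modules — is done in Proposition \ref{prop:fiber of unital restriction} and Corollary \ref{cor fiber unital restriction}, both of which rest on Theorem \ref{thm-the-fibration}.
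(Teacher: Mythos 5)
Your proposal is correct and is precisely the paper's own argument: the corollary appears in the paper immediately after the sentence ``In particular, for $\bA=\Vect$, we obtain:'' following Corollary \ref{cor fiber unital restriction}, so the intended proof is exactly the specialization $\bA = \Vect$, $F = \CB$. Your extra step of verifying that $\CB(\on{unit}_\Vect) \simeq \CB$ via $\on{unit}_\Vect = \on{id}_\Vect$ (uniqueness of the unital factorization functor out of $\Vect$, or equivalently Lemma \ref{lem-unit-is-initial}) is a detail the paper leaves implicit but which you have filled in correctly.
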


  \sssec{Remark}
    Let $\Phi:\bA\to \bB$ be a \emph{unital} factorization functor and $\bM\in \bB\mathbf{-mod}^\onfact_{\ul{x}_0}$. Then the underlying crystal of categories for $\Rres_\Phi(\bM)$ can be explicitly calculated as a limit (see \secref{sss sharp} and \secref{sss defn sharp}). For example, the restriction of $\Rres_\Phi(\bM)$ along the map
      \[
          X\times S_0 \to \Ran^{\untl}_{\ul{x}_0},\; y\mapsto y\cup \ul{x}_0
      \]
      fits into the following Cartesian square (\secref{rem descrp restriction}):
      \[
          \xymatrix{
              \Rres_\Phi(\bM)|_{X\times S_0} \ar[r] \ar[d]
              & \bM|_{X\times S_0} \ar[d] \\ 
              j_*j^*(\bA|_X \boxt \bM|_{\ul{x}_0}) \ar[r]
              & j_*j^*(\bM|_{X\times S_0}),
          }
      \]
  where
  \begin{itemize}
    \item $j:(X\times S_0)\setminus \on{graph}_{\ul x_0}\to X\times S_0$ is the complement of the union of the graphs for elements in $\ul{x}_0\subseteq X(S_0)$;
    \item the bottom horizontal functor is provided by the factorization $\bB$-module structure on $\ul{\bM}$ and the functor $\Phi$.
  \end{itemize}

  \sssec{}
    Let $g:\CB\to \CB'$ be a morphism in $\on{FactAlg}(\bB)$. The natural transformation $\Rres_{g}^\untl:\Rres_{\CB'}^\untl\to \Rres_\CB^\untl$ provides a functor
    \[
         ( \Rres_{\CB'}^\untl(\bN) )_{\ul{x}_0} \to  ( \Rres_\CB^\untl(\bN) )_{\ul{x}_0}.
    \]
    By the above corollary, we obtain a canonical functor
    \[
        \on{Res}_g: \CB'\on{-mod}^\onfact(\bN)_{\ul{x}_0} \to  \CB\on{-mod}^\onfact(\bN)_{\ul{x}_0}.
    \]
    For $\CN\in \CB'\on{-mod}^\onfact(\bN)_{\ul{x}_0}$, its image $\on{Res}_g(\CN)$ is called the \emph{restriction of $\CN$ along $g$}.

  \sssec{Remark}
    \label{sss the fibration 1-cat down}
    By definition, a (1,2)-Cartesian fibration $\pi:\bE\to \bF$ induces a Cartesian fibrations 
    \[
        \Maps_\bE(u,v) \to \Maps_\bF(\pi(u),\pi(v))
    \]
    of 1-categories. Hence Theorem \ref{eqn-the-fibration} implies the forgetful functor
    \[
        \on{FactMod}(\bB,\bN)_{\ul x_0} \to \on{FactAlg}(\bB)
    \]
    is a Cartesian fibration between 1-categories. It follows from construction that the functor $\on{Res}_g$ is the contravariant transport functor for this Cartesian fibration.

  \sssec{}
    We now provide a useful criterion to check whether a factorization module category is obtained via restriction. We need the following lemma.

    \begin{lem}
      \label{lem factmodcat 2-conservative}
      The forgetful functor
      \[
          \bA\mathbf{-mod}^\onfact_{\ul{x}_0} \to \mathbf{CrysCat}(S_0),\; \bM \mapsto \bM_{\ul{x}_0}
      \]
      is conservative on 2-morphisms.
    \end{lem}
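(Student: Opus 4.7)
The plan is to show that if a 2-morphism $\alpha : F \Rightarrow G$ between 1-morphisms $F, G : \bM_1 \to \bM_2$ in $\bA\mathbf{-mod}^\onfact_{\ul{x}_0}$ has invertible image $\alpha_{\ul{x}_0}$ under the forgetful functor, then $\alpha$ itself is invertible. Since $\alpha$ is in particular a 2-morphism in $\mathbf{CrysCat}^{\on{strict}}(\Ran^\untl_{\ul x_0})$, and invertibility of such a 2-morphism is detected on the values at all affine points of $\Ran^\untl_{\ul x_0}$, the problem reduces to showing that $\alpha_y$ is invertible for every affine point $y : S \to \Ran^\untl_{\ul x_0}$.

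The key step will be a Zariski-local decomposition of such a $y$. Writing $y = \ul{x}_0|_S \cup w$ where $w := y \setminus \ul{x}_0|_S$ is viewed as an $S$-point of $\Ran^\untl$, I would stratify $S$ by locally closed subschemes according to the intersection pattern of the graphs of the points of $w$ with those of $\ul{x}_0|_S$. Using that invertibility of a 2-morphism in $\mathbf{CrysCat}(S)$ can be tested after $!$-pullback to any finite stratification of $S$ by locally closed subschemes, this reduces the verification to the case where $w = \ul z : S \to \Ran^\untl$ is disjoint from $\ul x_0|_S$, so that $y$ corresponds to an $S$-point of $(\Ran^\untl \times \Ran^\untl_{\ul x_0})_\disj$.

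Under this disjointness assumption, I would invoke the factorization $\bA$-module structures on $\bM_1$ and $\bM_2$: the equivalence \eqref{eqn-structure-equivalence-factmodcat} yields canonical identifications
\[
\bM_{i,y} \simeq \bA_{\ul z} \otimes_{\Dmod(S)} \bM_{i, \ul x_0|_S}, \quad i = 1,2,
\]
and the factorization $\bA$-linearity of $F$ and $G$ translates into $F_y \simeq \on{id}_{\bA_{\ul z}} \otimes F_{\ul x_0|_S}$ and $G_y \simeq \on{id}_{\bA_{\ul z}} \otimes G_{\ul x_0|_S}$. The compatibility of $\alpha$ with the factorization structure, i.e., the fact that $\alpha$ is a modification of factorization $\bA$-linear functors, then forces
\[
\alpha_y \simeq \on{id}_{\bA_{\ul z}} \otimes \alpha_{\ul x_0|_S}.
\]
By Lemma \ref{lem change of base factcat mod} applied to the structure morphism $f : S \to S_0$, the 2-morphism $\alpha_{\ul x_0|_S}$ is obtained from $\alpha_{\ul x_0}$ by the action of $f^!$, which preserves invertibility. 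Hence $\alpha_{\ul x_0|_S}$ is invertible, and therefore so is $\on{id}_{\bA_{\ul z}} \otimes \alpha_{\ul x_0|_S} \simeq \alpha_y$.

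The main obstacle will be the first step: justifying precisely that a general affine point $y : S \to \Ran^\untl_{\ul x_0}$ admits a finite stratification of $S$ by locally closed subschemes on each of which it becomes a disjoint union $\ul x_0|_{S_i} \sqcup \ul z_i$, and that invertibility of a 2-morphism in $\mathbf{CrysCat}(S)$ descends from such a stratification. Both points are standard manipulations with the marked unital Ran space and with $\mathbf{CrysCat}$, closely parallel to the local decomposition argument already used in the proof of Lemma \ref{lem-laxun-mod-is-untl}.
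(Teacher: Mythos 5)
Your proposal follows the same strategy as the paper's sketch: reduce to checking invertibility at each affine point $\ul{x}:S\to \Ran^\untl_{\ul{x}_0}$, stratify $S$ by locally closed subschemes to reach the case $\ul{x}=\ul{x}_0|_S\sqcup \ul{z}$ with the two parts disjoint, and then use the factorization module structure to identify $\alpha_{\ul{x}}$ with $\on{Id}_{\bA_{\ul{z}}}\otimes \alpha_{\ul{x}_0|_S}$. Your additional remark invoking Lemma~\ref{lem change of base factcat mod} to pass from $\alpha_{\ul{x}_0}$ to $\alpha_{\ul{x}_0|_S}$ just spells out a step the paper leaves implicit; it is the same argument.
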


    \proof[Sketch]
        Let $G_1,G_2:\bM\to \bM'$ be factorization $\bA$-linear functors and $\alpha:G_1\to G_2$ be a 2-morphism between them such that $\alpha_{\ul{x}_0}$ is invetible. For any affine test scheme $\ul{x}:S\to \Ran^\untl_{\ul{x}_0}$, we need to show $\alpha_{\ul{x}}$ is also invertible. 

        \medskip
        Suppose $\ul{x} = \ul{x}_0|_S \sqcup \ul{y}$ can be written as a disjoint union of subsets. Then the factorization structure implies 
        \[
            (G_i)_{\ul{x}}: \bM_{\ul{x}} \to \bM'_{\ul{x}} 
        \]
        can be identified with
        \[
            \on{Id} \otimes (G_i)_{\ul{x}_0}: \bA_{\ul{y}} \otimes_{\Dmod(S)} \bM_{\ul{x}_0|_S} \to \bA_{\ul{y}} \otimes_{\Dmod(S)} \bM_{\ul{x}_0|_S}',
        \]
        and the 2-morphism $\alpha_{\ul{x}}$ can be identified with $\on{Id}\otimes \alpha_{\ul{x}_0|_S}$. This implies $\alpha_{\ul{x}}$ is invertible because $\alpha_{\ul{x}_0}$ is so.

        \medskip 
        For the general case, we can replace $S$ with a covering of locally closed subschemes such that the above property holds on each subscheme. This reduces the general case to the above case.

    \qed

    \begin{prop} \label{sss:adj test for factres}
        Let
        \[
          (\Phi,\Phi^m): (\bA,\bM) \rightleftarrows (\bB,\bN): (\Psi,\Psi^m)
        \]
        be an adjoint pair in $\mathbf{UntlFactModCat}^{\on{lax-untl}}_{\ul{x}_0}$ such that:
        \begin{itemize}
          \item[(i)] The left adjoint $\Phi$ is unital\footnote{In fact, a standard argument shows that this is automatic.};
          \item[(ii)] It induces an equivalence
            \[
                \Phi^m_{\ul x_0}: \bM_{\ul x_0} \rightleftarrows \bN_{\ul x_0}: \Psi^m_{\ul x_0}
            \]
          in $\mathbf{CrysCat}(S_0)$.
        \end{itemize}
          Then the canonical factorization $\bA$-linear functor
          \[
              \bM \to \Rres_\Phi^\untl(\bN)
          \]
          is an equivalence.
    \end{prop}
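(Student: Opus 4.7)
The canonical factorization $\bA$-linear functor $F: \bM \to \Rres_\Phi^\untl(\bN)$ arises from the universal property of Proposition \ref{prop:basic adjunction} (applied with the pair $(\Vect, \Vect^{\on{fact}_{\ul x_0}})$ replaced by $(\bA, \bM)$): the lax-unital $\Phi$-linear factorization functor $\Phi^m: \bM \to \bN$ corresponds to the unique unital $\bA$-linear $F$ whose post-composition with the tautological Cartesian morphism $\tilde\Phi: \Rres_\Phi^\untl(\bN) \to \bN$ is $\Phi^m$. The plan is to show $F$ is an equivalence by invoking the lemma immediately following \eqref{e:oblv fact mod cat}, which reduces the problem to checking two things: (a) $F$ admits an adjoint in $\bA\mmod^\onfact_{\ul x_0}$, and (b) the fiber $F_{\ul x_0}$ at $\ul x_0$ is an equivalence.

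For (b), assumption (i) combined with Lemma \ref{lem-criterion-for-unital-factfun} yields an isomorphism $\Phi(\on{unit}_\bA) \simeq \on{unit}_\bB$ of factorization algebras in $\bB$. Chaining Corollary \ref{cor fiber unital restriction} with Lemma \ref{lem module for unit is nothing}, we obtain natural equivalences
$$\Rres_\Phi^\untl(\bN)_{\ul x_0} \simeq \Phi(\on{unit}_\bA)\mod^\onfact(\bN)_{\ul x_0} \simeq \on{unit}_\bB\mod^\onfact(\bN)_{\ul x_0} \simeq \bN_{\ul x_0},$$
under which $\tilde\Phi_{\ul x_0}$ is identified with the final equivalence. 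Hence $F_{\ul x_0}$ is identified with $\Phi^m_{\ul x_0}$, which is an equivalence by assumption (ii).

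For (a), we construct a right adjoint $F^R: \Rres_\Phi^\untl(\bN) \to \bM$ as follows. The lax-unital $\Psi$-linear factorization functor $\Psi^m: \bN \to \bM$ corresponds, by the universal property of $\Rres_\Psi^\untl$, to a unital $\bB$-linear factorization functor $\bar\Psi^m: \bN \to \Rres_\Psi^\untl(\bM)$; applying the adjunction $\Rres_\Phi^\untl \dashv \Rres_\Psi^\untl$ of Proposition \ref{prop:basic adjunction} converts $\bar\Psi^m$ to the desired unital $\bA$-linear $F^R$. The adjunction $F \dashv F^R$ is obtained by transporting the adjunction $\Phi^m \dashv \Psi^m$ through the universal-property identifications: for any test $\bM' \in \bA\mmod^\onfact_{\ul x_0}$, the equivalence $\Maps_\bA(\bM', \Rres_\Phi^\untl(\bN)) \simeq \Maps_\Phi(\bM', \bN)$ identifies post-composition with $F$ and post-composition with $\Phi^m$; dually, post-composition with $F^R$ is governed by $\Psi^m$ via the adjunction in Proposition \ref{prop:basic adjunction}. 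The unit and counit for $F \dashv F^R$ are then extracted, naturally in $\bM'$, from those of $\Phi^m \dashv \Psi^m$.

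The main obstacle is the homotopy-coherent verification of the adjunction in (a)---namely, that the unit and counit so constructed satisfy the triangle identities in the 2-category $\bA\mmod^\onfact_{\ul x_0}$. This is ensured by the (1,2)-Cartesian fibration structure of Theorem \ref{thm-the-fibration}: the general formalism of such fibrations (developed in the references cited in \secref{sss strictening}) guarantees that adjunctions in the total space $\mathbf{UntlFactModCat}^{\on{lax-untl}}_{\ul x_0}$ are compatible with the vertical-Cartesian factorization, and that the assignment $\Phi \mapsto \Rres_\Phi^\untl$ (viewed as a contravariant 2-functor on $\mathbf{UntlFactCat}^{\on{lax-untl}}$) preserves adjoint pairs, forcing the required triangle identities to hold.
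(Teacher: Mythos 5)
Your proof follows the same overall strategy as the paper's: construct $F$ via the universal property of $\Rres_\Phi^\untl(\bN)$, show that $F_{\ul{x}_0}$ is an equivalence using Lemma \ref{lem-criterion-for-unital-factfun}, Corollary \ref{cor fiber unital restriction}, Lemma \ref{lem module for unit is nothing} and assumption (ii), produce an adjoint to $F$ in $\bA\mmod^\onfact_{\ul{x}_0}$, and conclude by 2-conservativity of the fiber functor. Steps (b) and the final conclusion are essentially verbatim the paper's.

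The divergence is in step (a). The paper disposes of it in a single sentence: the given adjoint pair $(\Phi,\Phi^m)\rightleftarrows(\Psi,\Psi^m)$ factors, via the universal property of the Cartesian lift, as the composition of a \emph{vertical} adjoint pair $(\bA,\bM)\rightleftarrows(\bA,\Rres_\Phi^\untl(\bN))$ with the Cartesian pair $(\bA,\Rres_\Phi^\untl(\bN))\rightleftarrows(\bB,\bN)$. This produces $F\dashv F^R$ together with its unit, counit and triangle identities all at once, as a formal consequence of the (1,2)-Cartesian fibration structure of Theorem \ref{thm-the-fibration}. You instead build $F^R$ explicitly by passing through $\Rres_\Psi^\untl(\bM)$ and the adjunction $\Rres_\Phi^\untl\dashv\Rres_\Psi^\untl$, and then attempt to verify $F\dashv F^R$ by hand. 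This is not wrong (by uniqueness of adjoints your $F^R$ must agree with the paper's), but the detour is more work, and your closing justification does not actually close the loop you identify: the statement that $\Phi\mapsto\Rres_\Phi^\untl$ preserves adjoint pairs yields precisely $\Rres_\Phi^\untl\dashv\Rres_\Psi^\untl$ as 2-functors between fibers — that is Proposition \ref{prop:basic adjunction} — and it does not by itself deliver the triangle identities for the \emph{1-morphisms} $F$ and $F^R$ inside a single fiber. What supplies those is the factoring of the total adjoint pair through the Cartesian lift, which is the lemma about (1,2)-Cartesian fibrations the paper is implicitly invoking. Swapping your appeal to ``$\Rres$ preserves adjoints'' for this factoring statement would both shorten your argument and fill the gap you flag.
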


    \proof[Sketch]
        Using the universal property of $\Rres_\Phi^\untl$, it is easy to show the given adjoint pair can be written as the composition of\footnote{Similar claim is true for any (1,2)-Cartesian fibration.}
        \[
          (\bA,\bM) \rightleftarrows (\bA,\Rres_\Phi^\untl(\bN))
        \]
        and
        \[
          (\bA,\Rres_\Phi^\untl(\bN)) \rightleftarrows (\bB,\bN).
        \]
        Moreover, by Corollary \ref{cor fiber unital restriction} and assumption (i), the second pair induces an equivalence
        \[
          \Rres_\Phi^\untl(\bN)_{\ul x_0} \rightleftarrows \bN_{\ul x_0}.
        \]
        Hence by assumption (ii), the first pair also induces an equivalence
        \[
          \bM_{\ul x_0} \rightleftarrows\Rres_\Phi^\untl(\bN)_{\ul x_0}.
        \]
        Now the claim follows formally from Lemma \ref{lem factmodcat 2-conservative}.

    \qed

\ssec{Sketch of Theorem \ref{thm-the-fibration}}
  \label{ss proof of thm the fibration}
  In this subsection, we explain the main ideas in the proof of Theorem \ref{thm-the-fibration}. A detailed proof will be provided in \cite{CFZ}.

 \sssec{}
  Let $\bE\to \bB$ be a functor between 2-categories. To show it is a (1,2)-Cartesian fibration, one only needs to show:
  \begin{itemize}
     \item[(a)]
        There are enough \emph{locally Cartesian 1-morphisms}. By definition, this means for any arrow $\Delta^1 \to \bB$, the base-change $\bE\times_\bB \Delta^1 \to \Delta^1$ has enough Cartesian 1-morphisms.
      \item[(b)]
        There are enough \emph{locally Cartesian 2-morphisms}.
      \item[(c)]
        The collection of locally Cartesian 1-morphisms are closed under compositions.
      \item[(d)]
        The collection of locally Cartesian 2-morphisms are closed under both horizontal and vertical compositions.
   \end{itemize} 

   \medskip
   In \cite{CFZ}, we will provide a \emph{constructive} proof for (a) and (b), and use these explicit constructions to verify (c) and (d). In this subsection, we only explain the construction of locally Cartsian 1-morphisms in 
    \begin{equation}
        \label{eqn-the-fibration-avatar}
           \mathbf{UntlFactModCat}^{\on{lax-untl}}_{\ul{x}_0} \to \mathbf{UntlFactCat}^{\on{lax-untl}}
      \end{equation}
  The construction for locally Cartesian 2-morphisms is similar but much simpler.

\sssec{}
  Let $\Phi:\bA \to \bB$ be a lax-unital factorization functor, i.e., a moprhism in the base of \eqref{eqn-the-fibration-avatar}. Let $\bN$ be a unital factorization $\bB$-module at $\ul{x}_0$, i.e., an object in the fiber of \eqref{eqn-the-fibration-avatar} over $\bB$. We will construct a locally coCartesian arrow $\Phi_m:\bM \to \bN$ lying over $\Phi$.

  \medskip
  We can enlarge \eqref{eqn-the-fibration-avatar} to allow \emph{lax-factorization} (module)-cateogires:
  \begin{equation}
        \label{eqn-the-fibration-lax}
    \mathbf{UntlLaxFactModCat}^{\on{lax-untl}}_{\ul{x}_0} \to \mathbf{UntlLaxFactCat}^{\on{lax-untl}}.
  \end{equation}
  By \secref{sss universal property of A-factmod} and \secref{sss universal property of A-factmod modversion}, the morphism $\Phi$ factors as
  \[
    \bA \xrightarrow{\Phi^{\on{enh}}} \Phi(\on{unit}_\bA)\on{-mod}^\onfact \xrightarrow{\on{oblv}_{\on{unit}_\bA}} \bB
  \]
  in the base of \eqref{eqn-the-fibration-lax}, and there is a canonical morphism in the source of \eqref{eqn-the-fibration-lax}
  \[
    \oblv_{\on{unit}_\bA,\bN}:\Phi(\on{unit}_\bA)\on{-mod}^\onfact(\bN) \to \bN
  \]
  that lifts the morphism $\on{oblv}_{\on{unit}_\bA}$. 

  \medskip
  By the universal property in \secref{sss universal property of A-factmod modversion-2}, we only need to show there exists an arrow in \eqref{eqn-the-fibration-lax}
  \begin{equation}
    \label{eqn-unital-res-allow-lax}
    \bM\to \Phi(\on{unit}_\bA)\on{-mod}^\onfact(\bN) 
  \end{equation}
  that lifts $\Phi^{\on{enh}}$ such that for any test object $\bM'\in \bA\mathbf{-mod}^\onfact_{\ul x_0}$, it induces an equivalence between:
  \begin{itemize}
    \item 
      The category of unital factorization $\bA$-linear functors $\bM'\to \bM$;
    \item
      The category of unital factorization $\Phi^{\on{enh}}$-linear functors $\bM'\to \Phi(\on{unit}_\bA)\on{-mod}^\onfact(\bN) $.
  \end{itemize}

\sssec{}
  Roughly speaking, the above reduction allows us to get rid of lax-unital functors, with the caveat that $\bB$ (and $\bN$) is allowed to be \emph{lax-factorization}. 

\sssec{}
  Now the desired claim (about existence of \eqref{eqn-unital-res-allow-lax}) follows formally from the following two claims:
  \begin{itemize}
     \item[(i)] The functor
      \begin{equation}
        \label{eqn-the-fibration-lax-but-unital}
        \mathbf{UntlLaxFactModCat}_{\ul{x}_0} \to \mathbf{UntlLaxFactCat}
      \end{equation}
      has enough locally Cartesian 1-morphisms.
      \item[(ii)] Consider the embedding
      \[
         \mathbf{UntlFactModCat}_{\ul{x}_0} \to \mathbf{UntlLaxFactModCat}_{\ul{x}_0}
      \]
      and its fiber at an object $\bA\in \mathbf{UntlFactCat}$:
      \[
          \bA\mathbf{-mod}^\onfact_{\ul x_0} \to \bA\mathbf{-mod}^{\on{laxfact}}_{\ul x_0}.
      \]
      The latter functor admits a right adjoint.
   \end{itemize} 

\sssec{}
  Claim (i) is obvious modulo homotopy-coherent issues. 

  \medskip
  Namely, for any morphism $\Phi:\bA\to \bB$ in $\mathbf{UntlLaxFactCat}$ and an object $\bN \in \bB\mathbf{-mod}^{\on{laxfact}}_{\ul x_0}$, the underlying crystal of categories $\ul\bN$ has a unital \emph{lax-}factorization $\bA$-module structure given by
  \[
    \big((\boxt_{i\in I^\circ} \ul{\bA})\boxt \ul{\bN}\big)|_\disj \to \big((\boxt_{i\in I^\circ} \ul{\bB})\boxt \ul{\bN}\big)|_\disj \to \on{union}_I^*(\ul{\bN})|_\disj,
  \]
  where the first functor is given by $\Phi$, and the second functor is the lax-factorization $\bB$-module structure on $\ul\bN$ (see \secref{sss laxfact modcat}). In other words, we obtain a canonical object
  \[
     \Rres^{\on{lax-fact}}_\Phi(\bN) \in \bA\mathbf{-mod}^{\on{laxfact}}_{\ul x_0}
  \]
  equipped with a unital factorization $\Phi$-linear functor
  \[
     \Rres^{\on{lax-fact}}_\Phi(\bN)\to \bN.
  \]
  One can check this is a locally Cartesian 1-morphism in \eqref{eqn-the-fibration-lax-but-unital}. A homotopy-coherent proof using the language of (generalized) operads will be provided in \cite{CFZ}.

\sssec{}
  \label{sss strictening fact}
 Claim (ii) is proved via an explicit construction of the desired right adjoint \emph{strictening} functor
 \[
    \mathbf{Str}_\bA: \bA\mathbf{-mod}^{\on{laxfact}}_{\ul x_0} \to \bA\mathbf{-mod}^\onfact_{\ul x_0}.
 \]
 In fact, for future reference, we will show the diagram
 \[
   \xymatrix{
      \mathbf{UntlFactModCat}_{\ul{x}_0} \ar[r]^-\subseteq \ar[d]
      & \mathbf{UntlLaxFactModCat}_{\ul{x}_0} \ar[d] \\
      \mathbf{UntlFactCat} \ar[r]^-\subseteq 
      & \mathbf{UntlLaxFactCat}
   }
 \] 
 is right adjiontable along the horizontal directions. In other words, the horizontal functors admit right adjoints, and the Beck--Chevalley transformation is invertible:
  \[
   \xymatrix{
      \mathbf{UntlFactModCat}_{\ul{x}_0} \ar[d]
      & \mathbf{UntlLaxFactModCat}_{\ul{x}_0} \ar[d] \ar[l]_-{\mathbf{Str}}  \\
      \mathbf{UntlFactCat}  
      & \mathbf{UntlLaxFactCat} \ar[l]_-{\mathbf{Str}}.
   }
 \] 
 Note that the desired functor $\mathbf{Str}_\bA$ can be given by the restriction of the top horizontal functor on the fiber over $\bA$.

\sssec{}
  \label{sss sharp}
    We will construct an endo-functor 
    \[
      \sharp:  \mathbf{UntlLaxFactModCat}_{\ul{x}_0}\to  \mathbf{UntlLaxFactModCat}_{\ul{x}_0}
    \]
    equipped with a natural transformation $\mu:\sharp \to \on{Id}$ and define
    \[
        \mathbf{Str}(\bA,\bM) := \lim \big( \cdots\to  (\bA^{\sharp\sharp},\bM^{\sharp\sharp})  \xrightarrow{\mu (\bA^\sharp,\bM^\sharp) }  (\bA^\sharp,\bM^\sharp) \to (\bA,\bM) \big)
    \]
    to be the sequential limit of the $\sharp$-construction. We will show
 \begin{itemize}
   \item[(1)] The objects $\mathbf{Str}(\bA,\bM)$ is contained in $\mathbf{UntlFactModCat}_{\ul{x}_0}$;
   \item[(2)] The functor
   \[
        \on{Fun}(-,\bA^\sharp) \to \on{Fun}(-,\bA)
   \]
   is invertible when restricted to $\mathbf{UntlFactCat}$;
   \item[(3)] The functor
   \[
        \on{Fun}(-,(\bA^\sharp,\bM^\sharp) ) \to \on{Fun}(-,(\bA,\bM))
   \]
   is invertible when restricted to $\mathbf{UntlFactModCat}_{\ul{x}_0}$.
 \end{itemize}
 It is clear that these properties imply the claim in \secref{sss strictening fact}. 

\sssec{Remark}
  The definition of $\sharp$ below might look mysterious, but in fact, it comes from a general construction about operads once we reformulate factorization structures using the language in \secref{ss fact via operad}.

\sssec{}
  Let $(\bA,\bM)\in \mathbf{UntlLaxFactModCat}_{\ul{x}_0}$ be a pair. To define its image under $\sharp$, we need some notations.

  \medskip
  For any finite set $I\in \on{Fin}$, we write 
  \[
      \CR_I:= \big(\prod_{i\in I} \Ran^\untl\big)_\disj
  \]
  and 
  \[
      \ul{\bA}_I:= (\boxt_{i\in I}\ul\bA)|_\disj \in \mathbf{CrysCat}(\CR_I).
  \]
  For any \emph{marked} finite set $I=I^\circ\sqcup\{0\} \in \on{Fin}_*$, we write 
  \[
    \CR_{I}:= \big((\prod_{i\in I^\circ} \Ran^\untl) \times \Ran^\untl_{\ul x_0}\big)_\disj
  \]
  and
  \[
    \ul{\bM}_{I}:=  \big((\boxt_{i\in I^\circ}\ul\bA) \boxt \ul\bM\big)|_\disj \in \mathbf{CrysCat}(\CR_{I}).
  \]
  The readers should be able to distinguish the marked and non-marked notations based on the context.

  \medskip
  For a morphism $\phi:I\to J$ in either $\on{Fin}$ or $\on{Fin}_*$, we have a map
  \[
      \on{union}_{I\to J}:  \CR_I \to \CR_J,\; (\ul{y}_i)_{i\in I} \to (\ul{z}_j)_{j\in J}
  \]
  given by $\ul{z}_j:= \bigsqcup_{i\in \phi^{-1}(j)} \ul{y}_i$. By Lemma \ref{lem-union-local-iso}, $\on{union}_{I\to J}$ is a Cartesian morphism and becomes a quasi-compact open immersion after Zariski sheafification. It follows that the functor
  \[
      T_{I\to J}:= \on{union}_{I\to J}^*: \mathbf{CrysCat}(\CR_J) \to \mathbf{CrysCat}(\CR_I)
  \]
  admits a right adjoint 
  \[
      T_{J\gets I}:= \on{union}_{I\to J,*}: \mathbf{CrysCat}(\CR_I) \to \mathbf{CrysCat}(\CR_J),
  \] 
  and there are base-change isomorphisms between them. In particular, one can prove
  \begin{equation}
    \label{eqn-base-change-cryscat}
      T_{J\to K}\circ T_{K\gets I} \simeq T_{J\gets I\times_K J}\circ T_{I\times_K J \to I}=: T_{J\gets I\times_K J \to I}
  \end{equation}

\sssec{}
  Note that the lax-factorization structure on $(\ul{\bA},\ul{\bM})$ provides canonical morphisms
  \begin{eqnarray}
    \label{eqn-factmod-structure-theta}
      \theta_{I\to J}: \ul{\bA}_I \to T_{I\to J}(\ul{\bA}_J) & \textrm{for} & I,J\in \on{Fin} \\ \nonumber
      \theta_{I\to J}: \ul{\bM}_I \to T_{I\to J}(\ul{\bM}_J) & \textrm{for} & I,J\in \on{Fin}_*.
  \end{eqnarray}

  \medskip
  We have a functor
  \begin{eqnarray}
    \label{eqn-tw-in-factres}
      \on{TwArr}(\on{Fin}) &\to& \mathbf{CryCat}(\Ran^\untl) \\ \nonumber
       (I\xrightarrow{\phi}J) &\mapsto& T_{\{1\}\gets I\to J}(\ul{\bA}_J),
  \end{eqnarray}
  where
  \begin{itemize}
    \item $\on{TwArr}(\on{Fin})$ is the category of twisted arrows in $\on{Fin}$. In other words, an object in $\on{TwArr}(\on{Fin})$ is a morphism $\phi:I\to J$ in $\on{Fin}$, while a morphism in $\on{TwArr}(\on{Fin})$ is a commutative diagram
    \[
      \xymatrix{
          I \ar[r]^-\phi & J \ar[d]^-\beta \\ 
          I' \ar[u]^-\alpha \ar[r]^-{\phi'} & J';
      }
    \]
    \item The functor \eqref{eqn-tw-in-factres} sends the above commutative diagram to the composition
    \[
        T_{\{1\}\gets I\to J}(\ul{\bA}_J)\xrightarrow{T_{\{1\}\gets I\to J}(\theta_{J\to J'})} T_{\{1\}\gets I\to J'}(\ul{\bA}_{J'}) \to T_{\{1\}\gets I'\to J'}(\ul{\bA}_{J'}) ,
    \]
    where the last morphism is induced by the adjunction $(T_{I'\to I}, T_{I\gets I'})$.
  \end{itemize}
  Similarly we have a functor
  \begin{eqnarray}
    \label{eqn-tw-in-factres-mod}
      \on{TwArr}(\on{Fin}_*) &\to& \mathbf{CryCat}(\Ran^\untl_{\ul x_0}) \\ \nonumber
       (I\xrightarrow{\phi}J) &\mapsto& T_{\{0\}\gets I\to J}(\ul{\bM}_J),
  \end{eqnarray}

\sssec{}
  \label{sss defn sharp}
  We now define
  \begin{eqnarray*}
      \ul{\bA^\sharp} := \lim_{I\to J} T_{\{1\}\gets I\to J}(\ul{\bA}_J) & \textrm{indexed by} &  \on{TwArr}(\on{Fin}); \\
      \ul{\bM^\sharp} := \lim_{I\to J} T_{\{0\}\gets I\to J}(\ul{\bM}_J) & \textrm{indexed by} &  \on{TwArr}(\on{Fin}_*).
  \end{eqnarray*}
  Note that there are obvious morphisms
  \begin{equation}
    \label{eqn-sharp-to-id}
      \ul{\bA^\sharp} \to \ul{\bA},\; \ul{\bM^\sharp} \to \ul{\bM}
  \end{equation}
  given by evaluations at $\id_{\{1\}}$ and $\id_{\{0\}}$ respectively.

\sssec{}
  \label{sss fact structure on sharp}
  We now explain that $(\ul{\bA^\sharp},\ul{\bM^\sharp})$ has a canonical unital lax-factorization structure. We will only do this for $\ul{\bA^\sharp}$. The module part can be constructed by replacing non-marked sets with marked ones.

  \medskip
  We will only construct the structure morphisms (for any finite set $K\in \on{Fin}$)
  \begin{equation}
    \label{eqn-lax-on-fix-1}
      (\boxt_{k\in K}\ul\bA^\sharp) |_\disj \to \on{union}_K^*(\ul{\bA^\sharp})|_\disj = T_{K\to \{1\} }(\bA^\sharp),
  \end{equation}
  and leave the higher compatibilities to \cite{CFZ}.

  \medskip
  We have a canonical morphism
  \begin{equation}
    \label{eqn-lax-on-fix-0}
    (\boxt_{k\in K}\ul\bA^\sharp) |_\disj \to  \lim_{(I_k\to J_k)_{k\in K}}  \big(\boxt_{k\in K} T_{\{1\}\gets I_k\to J_k}(\ul{\bA}_{J_k})\big)|_{\disj} 
  \end{equation}
  by exchanging limits with external products and restrictions\footnote{\label{footnote:restriction limit}In fact, the restriction functor $f^*:\mathbf{CrysCat}(\CZ) \to \mathbf{CrysCat}(\CY)$ commutes with limits for any map $f:\CY\to \CZ$ between categorical prestacks. This follows from the fact that $-\otimes_{\Dmod(S_1)}\Dmod(S_2)$ commutes with limits for affine schemes $S_1$ and $S_2$.}. Unwinding the definitions, we have
  \[
     \big(\boxt_{k\in K} T_{\{1\}\gets I_k\to J_k}(\ul{\bA}_{J_k})\big)|_{\disj} \simeq T_{K\gets \sqcup I_k \to \sqcup J_k} \ul\bA_{\sqcup J_k}
  \]
  Hence we obtain a morphism
  \begin{equation}
    \label{eqn-lax-on-fix-2}
       (\boxt_{k\in K}\ul\bA^\sharp) |_\disj \to \lim_{(I_k\to J_k)_{k\in K}} T_{K\gets \sqcup I_k \to \sqcup J_k} \ul\bA_{\sqcup J_k} \simeq \lim_{I\to J\to K} T_{K\gets I\to J} \ul\bA_J,
  \end{equation}
  where the last limit is indexed by $\on{TwArr}(\on{Fin}_{/K})$, which is equivalence to the category of twisted arrows $I\to J$ equipped with a map $J\to K$.

  \medskip
  On the other hand, one can show $T_{K\to \{1\} } $ commutes with limits (see Footnote \ref{footnote:restriction limit}). Hence we have
  \begin{equation}
    \label{eqn-lax-on-fix-3}
      T_{K\to \{1\} }(\bA^\sharp) \simeq \lim_{I\to J} T_{K\to \{1\}}\circ T_{\{1\}\gets I\to J}(\ul{\bA}_J) \simeq \lim_{I\to J} T_{K\gets K\times I \to J} (\ul{\bA}_J),
  \end{equation}
  where the last equivalence is due to the base-change isomorphism \eqref{eqn-base-change-cryscat}. Let $\on{TwArr}(\on{Fin})_K$ be the category of twisted arrows $I\to J$ in $\on{Fin}$ equipped with a map $I\to K$. Note that 
  \[
      \on{TwArr}(\on{Fin}) \to \on{TwArr}(\on{Fin})_K,\; (I\to J) \mapsto (K\gets K\times I \to J)
  \]
  is left adjoint to the forgetful functor. It follows that we have a canonical equivalence
  \[
      \lim_{K\gets I\to J} T_{K\gets I \to J} (\ul{\bA}_J) \xrightarrow{\simeq} \lim_{I\to J} T_{K\gets K\times I \to J} (\ul{\bA}_J),
  \]
  where the first limit is indexed by $\on{TwArr}(\on{Fin})_K$. Combining with \eqref{eqn-lax-on-fix-3}, we obtain an equivalence
  \begin{equation}
    \label{eqn-lax-on-fix-5}
      T_{K\to \{1\} }(\bA^\sharp) \simeq  \lim_{K\gets I\to J} T_{K\gets I \to J} (\ul{\bA}_J).
  \end{equation}

  \medskip
  Finally, the forgetful functor  $\on{TwArr}(\on{Fin}_{/K}) \to  \on{TwArr}(\on{Fin})_K$ admits a right adjoint
  \[
      ( K\gets I \to J ) \mapsto ( I \to K\times J \to K ).
  \]
  This implies we have a canonical equivalence
  \begin{equation}
    \label{eqn-lax-on-fix-4}
      \lim_{K\gets I\to J} T_{K\gets I \to J} (\ul{\bA}_J) \simeq \lim_{I\to J\to K} T_{K\gets I\to J} \ul\bA_J.
  \end{equation}
  Now the desired morphism \eqref{eqn-lax-on-fix-1} is defined to be the composition
  \[
      \eqref{eqn-lax-on-fix-5}^{-1}\circ \eqref{eqn-lax-on-fix-4}^{-1}\circ \eqref{eqn-lax-on-fix-2}.
  \]

  \medskip
  In \cite{CFZ}, we will show these morphisms (when $K$ varies) indeed define an object
  \[
      \bA^\sharp \in \mathbf{UntlLaxFactCat}.
  \]
  Similarly, we have
  \[
      (\bA^\sharp,\bM^\sharp) \in \mathbf{UntlLaxFactModCat}_{\ul x_0}.
  \]
  Moreover, \eqref{eqn-sharp-to-id} can be upgraded to a morphism
  \[
      (\bA^\sharp,\bM^\sharp) \to (\bA,\bM).
  \]

\sssec{}
  We now explain claim (1) in \secref{sss sharp}. We will only do this for 
  \[
      \mathbf{Str}(\bA):= \lim\big( \cdots \to \bA^{\sharp\sharp} \to \bA^\sharp \to \bA \big)
  \] 
  The module part can be constructed by replacing non-marked sets with marked ones.

  \medskip
  For any integer $m$, we say a unital lax-factorization category $\bA$ is \emph{$m$-strict} if for any collection of disjoint affine points $\ul{x}_k\in \Ran^\untl(S)$, $k\in K$ satisfying $|\sqcup \ul{x}_k|\le m$, the structural functor
  \[
      \on{mult}_{(\ul{x}_k)}: \otimes_{\Dmod(S)} \bA_{\ul{x}_k} \to \bA_{\sqcup\ul{x}_k}
  \]
  is invertible. Note that $\bA$ is always $(-1)$-strict. Also note that $\bA$ being $0$-strict is equivalent to $\bA_{\emptyset} \simeq \Vect$.

  \medskip
  To prove claim (1), we only need to show $\bA^\sharp$ is $m$-strict whenever $\bA$ is $(m-1)$-strict. 

  \medskip
  We first consider the case $m=0$. Unwinding the definitions, we have
  \[
      \bA^\sharp_\emptyset \simeq \lim_{I\to J} \big(T_{\{1\}\gets I\to J}(\ul{\bA}_J) \big)_\emptyset \simeq \lim_{I\to J} (\ul{\bA}_J)_{(\emptyset)_{j\in J}} \simeq \lim_{I\to J} (\bA_\emptyset)^{\otimes J}
  \]
  where recall the limit is indexed by $\on{TwArr}(\on{Fin})$. Note that the forgetful functor 
  \[
      \on{TwArr}(\on{Fin})\to \on{Fin},\; (I\to J) \mapsto J
  \]
  is a weak homotopy equivalence. It follows that 
  \[
      \lim_{I\to J} (\bA_\emptyset)^{\otimes J} \simeq \lim_{J\in \on{\on{Fin} }} (\bA_\emptyset)^{\otimes J} \simeq  (\bA_\emptyset)^{\otimes \emptyset} \simeq \Vect.
  \]
  Hence $\bA^\sharp_\emptyset\simeq \Vect$ as desired.

  \medskip
  We now prove the general case when $m>0$. We need to show the structural functor
  \[
      \on{mult}_{(\ul{x}_k)}: \otimes_{\Dmod(S)} \bA_{\ul{x}_k}^\sharp \to \bA_{\sqcup\ul{x}_k}^\sharp
  \]
  is inverible when $|\sqcup \ul{x}_k|\le m$. Since the $m=0$ case is known, we can assume $|K|\ge 2$ and each $\ul{x}_k$ is non-empty. This implies $|\ul{x}_k|< m$.

  \medskip
  By construction in \secref{sss fact structure on sharp}, we only need to show the fiber of \eqref{eqn-lax-on-fix-0} at
  \[
      (\ul{x}_k)_{k\in K} \in \big(\prod_{k\in K} \Ran^\untl\big)_\disj(S) = \CR_K(S)
  \]
  is invertible. In other words, we need to show we can exchange limits with tensor products in the following expression:
  \[
      \bigotimes_{k\in K}   \big(\lim_{I_k\to J_k} \big(T_{ \{k\}\gets I_k \to J_k } \ul\bA_{J_k}  \big)_{\ul x_k}\big)_{/\Dmod(S)} 
  \]
  For this purpose, we prove the following stronger claim: for fixed $k\in K$ and \emph{any} $\Dmod(S)$-module category $\CC$, the functor
  \[
      \CC \otimes_{\Dmod(S)}  \lim_{I_k\to J_k} \big( T_{ \{k\}\gets I_k \to J_k }  \ul\bA_{J_k}  \big)_{\ul{x}_k} \to  \lim_{I_k\to J_k} \CC\otimes_{\Dmod(S)}  \big( T_{ \{k\}\gets I_k \to J_k }  \ul\bA_{J_k}  \big)_{\ul{x}_k} 
  \]
  is invertible. For this purpose, we prove the following stronger claim: for $\ul{x}\in \Ran^\untl(S)$ such that $|\ul{x}|<m$, the functor
  \begin{equation}
    \label{eqn-sharp-construction-stable}
       \lim_{I\to J} \CC\otimes_{\Dmod(S)}  \big( T_{ \{1\}\gets I \to J } \ul{\bA}_J  \big)_{\ul{x}}  \to \CC\otimes_{\Dmod(S)} \bA_{\ul{x}}
  \end{equation}
  is an equivalence.

  \medskip
  Using the assumption that $\bA$ is $(m-1)$-strict, we have
  \[
      \big( T_{ \{1\}\gets I \to J } \ul{\bA}_J   \big)_{\ul{x}} \simeq  \big( T_{ \{1\}\gets I } \ul{\bA}_I   \big)_{\ul{x}}.
  \]
  Since the forgetful functor $\on{TwArr}(\on{Fin})\to \on{Fin}^{\on{op}},\; (I\to J) \mapsto I$ is a weak equivalence, we obtain
  \[
       \lim_{I\to J} \CC\otimes_{\Dmod(S)}  \big( T_{ \{1\}\gets I \to J } \ul{\bA}_J  \big)_{\ul{x}}  \simeq \lim_{I\in \on{Fin}^{\on{op}}} \CC\otimes_{\Dmod(S)} \big( T_{ \{1\}\gets I } \ul{\bA}_I  \big)_{\ul{x}} \simeq \CC\otimes_{\Dmod(S)}\big( T_{ \{1\}\gets \{1\} } \ul{\bA}_{\{1\}}  \big)_{\ul{x}} \simeq  \CC\otimes_{\Dmod(S)} \bA_{\ul{x}}
  \]
  as desired.

\sssec{Remark}
  The above argument is closely related to the notion of \emph{pro-nilpotent operads} in \cite{FG}. We will explain this in \cite{CFZ}.

\sssec{}
  Note that the equivalence \eqref{eqn-sharp-construction-stable} says:

\begin{lem}
  \label{lem-sharp-is-stable}
  Let $\bA$ be a unital $(m-1)$-strict factorization category. Then the functor $\bA^\sharp \to \bA$ induces an equivalence
  \[
      \bA^\sharp_{\ul{x}} \to \bA_{\ul{x}}
  \]
  for any affine point $\ul{x}\in \Ran^\untl(S)$ with $|\ul{x}|<m$.
\end{lem}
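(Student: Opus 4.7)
The plan is to deduce this lemma as a direct specialization of the equivalence \eqref{eqn-sharp-construction-stable} already established in the course of verifying claim (1) of \secref{sss sharp}. Specifically, setting $\CC := \Dmod(S)$ in \eqref{eqn-sharp-construction-stable} and commuting the fiber-at-$\ul{x}$ functor with the limit defining $\bA^\sharp$ yields precisely the statement. So the work is entirely contained in verifying \eqref{eqn-sharp-construction-stable}, which I would organize as follows.

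First, by the definition in \secref{sss defn sharp} and the fact that $!$-fibers commute with arbitrary limits in $\mathbf{CrysCat}$, we have
\[
    \bA^\sharp_{\ul{x}} \simeq \lim_{(I\to J) \in \on{TwArr}(\on{Fin})} \bigl( T_{\{1\}\gets I \to J}(\ul{\bA}_J) \bigr)_{\ul{x}}.
\]
Next, I would show that for any $\ul{x}$ with $|\ul{x}|<m$ the canonical map
\[
    \bigl( T_{\{1\}\gets I} (\ul{\bA}_I) \bigr)_{\ul{x}} \to \bigl( T_{\{1\}\gets I \to J}(\ul{\bA}_J) \bigr)_{\ul{x}}
\]
induced by $\theta_{I \to J}$ (see \eqref{eqn-factmod-structure-theta}) is invertible. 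Unwinding the definitions of the push/pull functors $T_{?\gets?}$ and $T_{?\to ?}$, the fiber of $T_{\{1\}\gets I\to J}(\ul{\bA}_J)$ at $\ul{x}$ is a limit, indexed by the category of tuples $(\ul{y}_i)_{i \in I}$ of pairwise disjoint points with $\sqcup \ul{y}_i = \ul{x}$ together with further decompositions parameterized by the map $I\to J$, of values of $\ul{\bA}_J$. The key point is that every $\ul{y}_i$ (and more generally every disjoint sub-union) has cardinality strictly less than $m$, so $(m-1)$-strictness of $\bA$ turns each multiplicativity morphism $\on{mult}_{(\ul{y}_i)}$ into an equivalence, collapsing the $J$-direction of the limit down to the $I$-level.

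The third step is to use that the forgetful functor $\on{TwArr}(\on{Fin}) \to \on{Fin}^{\on{op}}$, $(I \to J)\mapsto I$, is a weak homotopy equivalence (it is a Cartesian fibration whose fibers $\on{Fin}_{I/}$ have initial object $\mathrm{id}_I$, hence are contractible; then apply Quillen's Theorem A). Combined with step two, this reduces the limit to
\[
    \lim_{I \in \on{Fin}^{\on{op}}} \bigl( T_{\{1\}\gets I}(\ul{\bA}_I) \bigr)_{\ul{x}},
\]
and the evaluation at the initial object $I=\{1\}$ gives $\bA_{\ul{x}}$. Finally, one checks that under these identifications the composite equals the structural morphism $\bA^\sharp \to \bA$ from \eqref{eqn-sharp-to-id}, proving the lemma.

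The main obstacle is step two: carefully tracking the combinatorics of the disjoint-locus push-pull and verifying that $(m-1)$-strictness really suffices. The bound $|\ul{x}|<m$ is sharp precisely because the largest block appearing in any decomposition $\sqcup \ul{y}_i = \ul{x}$ has cardinality at most $|\ul{x}|\le m-1$, so the hypothesis applies to every individual multiplicativity isomorphism that arises; one needs to ensure this uniformly, functorially in $(I\to J) \in \on{TwArr}(\on{Fin})$ and compatibly with base change in $S$. This is exactly the content of the argument sketched in the paragraph surrounding \eqref{eqn-sharp-construction-stable}, and the homotopy-coherent version will be supplied in the companion paper \cite{CFZ}.
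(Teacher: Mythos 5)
Your proposal is correct and follows essentially the same route as the paper: the paper presents Lemma \ref{lem-sharp-is-stable} as the specialization $\CC = \Dmod(S)$ of the equivalence \eqref{eqn-sharp-construction-stable}, and the three steps you give (commuting the fiber with the limit, using $(m-1)$-strictness to make $\theta_{I\to J}$ invertible on the relevant fibers so that $(T_{\{1\}\gets I\to J}\ul{\bA}_J)_{\ul{x}}\simeq (T_{\{1\}\gets I}\ul{\bA}_I)_{\ul{x}}$, and then collapsing the $\on{TwArr}(\on{Fin})$-limit via the weak homotopy equivalence $\on{TwArr}(\on{Fin})\to \on{Fin}^{\on{op}}$ with initial object $\{1\}$) are exactly the paper's argument.
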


\sssec{}
  In particular, we have shown the restriction of the sequence
  \[
      \mathbb{Z}_{\le 0} \to \mathbf{UntlLaxFactCat},\; -n\mapsto \bA^{n\sharp} 
  \] 
  at a point $\ul{x}\in \Ran^\untl(S)$ becomes stable for $-n< -|\ul{x}|$. In particular,
  \[
      \mathbf{Str}(\bA)_{\ul{x}} \simeq (\bA^{n\sharp} )_{\ul x} \textrm{ for } n>|\ul x|.
  \]

\sssec{Remark}
  \label{rem descrp restriction}
  In fact, a more elaborate analysis gives
  \[
      \mathbf{Str}(\bA)_{\ul{x}} \simeq (\bA^{n\sharp} )_{\ul x} \textrm{ for } n\ge |\ul x|-1 \ge 0.
  \]
  For example, a direct calculation shows the restriction $\mathbf{Str}(\bA)$ along $X^2 \to \Ran^\untl$ fits into the following Cartesian square
  \[
      \xymatrix{
          \mathbf{Str}(\bA)|_{X^2} \ar[r] \ar[d]
          & \bA|_{X^2} \ar[d] \\ 
          j_*j^*(\bA|_X\boxt\bA|_X) \ar[r]
          & j_*j^*(\bA|_{X^2}),
      }
  \]
  where
  \begin{itemize}
    \item $j:X^2\setminus X\to X^2$ is the complement of the diagonal embedding;
    \item the bottom horizontal functor is provided by the lax-factorization structure on $\ul{\bA}$.
  \end{itemize}

  \medskip
  Similarly, one can show
  \[
      \mathbf{Str}(\bM)_{\ul{x}} \simeq (\bM^{n\sharp} )_{\ul x} \textrm{ for } n\ge |\ul x|-|\ul{x}_0|.
  \]
  For example, the restriction of $\mathbf{Str}(\bM)$ along the map
  \[
      X\times S_0 \to \Ran^{\untl}_{\ul{x}_0},\; y\mapsto y\cup \ul{x}_0
  \]
  fits into the following Cartesian square
  \[
      \xymatrix{
          \mathbf{Str}(\bM)|_{X\times S_0} \ar[r] \ar[d]
          & \bM|_{X\times S_0} \ar[d] \\ 
          j_*j^*(\bA|_X \boxt \bM|_{\ul{x}_0}) \ar[r]
          & j_*j^*(\bM|_{X\times S_0}),
      }
  \]
  where
  \begin{itemize}
    \item $j:(X\times S_0)\setminus \on{graph}_{\ul x_0}\to X\times S_0$ is the complement of the union of the graphs for elements in $\ul{x}_0\subseteq X(S_0)$;
    \item the bottom horizontal functor is provided by the lax-factorization $\bA$-module structure on $\ul{\bM}$.
  \end{itemize}

\sssec{}
  Finally, we explain claim (2) in \secref{sss sharp}. Claim (3) can be proved similarly by replacing non-marked sets with marked ones.

  \medskip
  Let $\bB$ be any test unital factorization category. We want to show
  \[
      \on{Fun}(\bB,\bA^\sharp) \to \on{Fun}(\bB,\bA)
  \]
  is an equivalence. We only explain how to construct the desired inverse functor
  \begin{equation}
    \label{eqn-universal-property-for-sharp}
      \on{Fun}(\bB,\bA) \to \on{Fun}(\bB,\bA^\sharp),
  \end{equation}
  and leave the verification to \cite{CFZ}.

  \medskip
  Let $\Phi:\bB\to \bA$ be a unital factorization functor. The $\sharp$-construction is functorial, hence we have a functor
  \[
      \Phi^\sharp:\bB^\sharp \to \bA^\sharp.
  \]
  Since $\bB$ is $\infty$-strict by assumption, Lemma implies $\mu_\bB:\bB^\sharp \to \bB$ is an equivalence. We now define the functor \eqref{eqn-universal-property-for-sharp} to be $\Phi\mapsto \mu_\bB^{-1}\circ \Phi^\sharp$.

\qed[Sketch of Theorem \ref{thm-the-fibration}]

\ssec{Induced modules}
  \label{ss induced modules}
  In this subsection, we fix a unital factorization category $\bA$ and a unital factorization $\bA$-module category $\bM$ at $\ul{x}_0$. For simplicity, we assume $\ul{x}_0=x_0$ is a single $k$-point on $X$.

  \medskip
  Let $\CA\in \on{FactAlg}(\bA)$ be a factorization algebra in $\bA$ and
  \[
    \oblv_\CA: \CA\on{-mod}^\onfact(\bM)_{x_0} \to \bM_{x_0}
  \]
  be the forgetful functor. We will study the \emph{partially defined} left adjoint $\ind_\CA$ of this functor.

  \sssec{Warning}
      For general $\CA$, the functor $\oblv_\CA$ does not preserve limits, because general external tensor products and $!$-pullback functors do not preserve limits. In fact, we do not know how to calculate limits in $\CA\on{-mod}^\onfact(\bM)_{x_0}$ (although we know they exist by presentability). As a consequence, $\oblv_\CA$ does not admit a left adjoint.

  \sssec{}
    We are going to provide a sufficient condition on $\CA$ and $V$ such that there exists an \emph{object} $\ind_\CA(V)$ such that
    \[
        \Hom_{\CA\on{-mod}^\onfact(\bM)_{x_0}}(\ind_\CA(V),\CM) \simeq \Hom_{\bM_{x_0}}(V,\CM_{x_0}).
    \]
    Such an object $\ind_\CA(V)$ is called the \emph{induced} (a.k.a. \emph{free}) factorization $\CA$-module in $\bM$. 

  \sssec{Remark}
    More generally, one can ask what are the coCartesian arrows in the Cartesian fibration (see \secref{sss the fibration 1-cat down})
    \[
        \on{FactMod}(\bA,\bM)_{x_0} \to \on{FactAlg}(\bA).
    \]
    We will treat this problem in \cite{CFZ}.

  \sssec{}
    \label{sss punctured Ran}
    To construct $\ind_\CA(V)$, we need some notations.

    \medskip
    Let $X_\circ:=X\setminus x_0$ be the punctured curve and $\Ran_\circ^\untl$ be the unital Ran space for $X_\circ$. Consider the map
    \[
        \j: \Ran_\circ^\untl \times x_0 \to \Ran_{x_0}^\untl, (\ul{y},x_0) \mapsto \ul{y}\sqcup \{x_0\}.
    \]
    Note that $\j$ induces a bijection between $k$-points, but is not an isomorphism. In fact, one can check
    \begin{itemize}
      \item $\Ran_\circ^\untl\times x_0$ is a Cartesian space over $\Ran_{x_0}^\untl$ (see \secref{sss Cart catprestk});
      \item For any affine points $S\to \Ran_{x_0}^\untl$, the base-change of $\j$ is a finite coproduct of locally closed immersions.
    \end{itemize}

    \medskip
    Similarly, for any marked finite set $I=I^\circ \sqcup\{0\}$, we have a map
    \begin{equation}
      \label{eqn-jI}
        \j_I: \big( \prod_{i\in I}  \Ran_\circ^\untl \big)_\disj \times x_0 \to \big( \big( \prod_{i\in I^\circ}  \Ran_\circ^\untl \big)\times \Ran_{x_0}^\untl\big)_\disj \simeq \big( \big( \prod_{i\in I^\circ}  \Ran^\untl \big)\times \Ran_{x_0}^\untl\big)_\disj
    \end{equation}
    given by $((\id)_{i\in I^\circ}, \j)$. To simplify the notations, we write it as
    \[
        \j_I: \CR_{I,\circ} \times x_0 \to \CR_I.
    \]
    Note that we have a functor
    \begin{equation}
      \label{eqn-j!}
      \begin{aligned}
        \j_I^!: \Gamma^\lax( \CR_I, \ul\bM|_{\CR_I} ) \to  
        \Gamma^\lax( \CR_{I,\circ}\times x_0, \ul\bM|_{\CR_{I,\circ}\times x_0} ) \simeq   \\
        \simeq 
        \Gamma^\lax( \CR_{I,\circ}\times x_0, \ul\bA|_{\CR_{I,\circ}}\boxt \bM_{x_0} ) \simeq \Gamma^\lax( \CR_{I,\circ}, \ul\bA|_{\CR_{I,\circ}} ) \otimes \bM_{x_0},
        \end{aligned}
    \end{equation}
    where the first equivalence is given by the factorization $\bA$-module structure on $\bM$.

  \sssec{}
    Note that $\ul{\bA}|_{\Ran_\circ^\untl}$ is a unital factorization category on the punctured curve $X_\circ$. We denote this object by
    \[
        \bA_\circ \in \mathbf{UntlFactCat}(X_\circ),
    \]
    to distinguish it from the object $\bA\in \mathbf{UntlFactCat}(X)$. Note that the latter category is denoted just by $\mathbf{UntlFactCat}$ in the rest of this appendix.

    \medskip
    By restriction, we also obtain an object 
    \[
        \CA_\circ \in \on{FactAlg}(\bA_\circ).
    \]

  \sssec{}
    Similarly, $\ul{\bM}|_{\Ran_\circ^\untl\times x_0}$ is a unital factorization $\bA_\circ$-module category \emph{at the point $\emptyset\in \Ran_\circ^\untl$}. Here we use the identification
    \[
        (\Ran_\circ^\untl)_{\emptyset} \simeq \Ran_\circ^\untl\times x_0,\; \ul{y}\mapsto (\ul{y},x_0).
    \]
    We denote this object by
    \[
        \bM_\circ \in \bA_\circ\mathbf{-mod}^\onfact_{\emptyset}.
    \]
    Via the correspondence in \secref{exam-module-at-empty-set}, $\bM_\circ$ is given by the DG category $\bM_{x_0}$. It follows that
    \begin{equation}
      \label{eqn-factmod-at-empty}
        \CA_\circ\on{-mod}^\onfact(\bM_\circ)_\emptyset:= \on{Fun}_\CA( \Vect^{\onfact_\emptyset}, \bM_\circ  ) \simeq \on{Fun}(\Vect, \bM_{x_0}) \simeq \bM_{x_0}.
    \end{equation}
    By construction, this is just the forgetful functor $\oblv_{\CA_\circ}$

  \sssec{}
    \label{sss: A box V factmod empty}
    The above equivalence can be proved in a more explicit way. 

    \medskip
    Given $\CM\in \CA_\circ\on{-mod}^\onfact(\bM_\circ)_\emptyset$, its fiber $\CM_\emptyset$ is an object in $(\bM_\circ)_\emptyset \simeq \bM_{x_0}$; conversely, given an object $V\in \bM_{x_0}$, the tensor product
    \[
        \ul{\CA}_\circ \boxt V \in \Gamma^\lax(\Ran_\circ^\untl, \ul\bA_\circ)\otimes \bM_{x_0} \simeq \Gamma^\lax( (\Ran_\circ^\untl)_{\emptyset}, \ul\bM_\circ )
    \]
    has a canonical factorization $\bA_\circ$-module structure. One can check these two constructions are inverse to each other.

  \sssec{}
    \label{sss adapt to induction}
    Given an object $V\in \bM_{x_0}$, we say it is \emph{adapted to $\CA$-induction} if it satisfies the following conditions:
    \begin{itemize}
      \item 
        For any marked finite set $I=I^\circ \sqcup\{0\}$, the partially defined left $\j_{I,!}$ to the functor \eqref{eqn-j!} is well-defined at the object $\ul\CA|_{\CR_{I,\circ}} \boxt V$, i.e., the following object exists:
        \[
          \j_{I,!} \big(  (\boxt_{i\in I} \ul\CA_\circ)_\disj \boxt V  \big) \in \Gamma^\lax( \CR_I, \ul\bM|_{\CR_I} ).
        \]
        In particular, we have an object
        \[
            \j_!(\ul\CA_\circ\boxt V) \in \Gamma^\lax( \Ran_{x_0}^\untl ,\ul\bM).
        \]
      \item
        For any marked finite set $I=I^\circ \sqcup\{0\}$, the canonical (Beck--Chevalley) morphism
        \[
            \j_{I,!} \big(  (\boxt_{i\in I} \ul\CA_\circ)_\disj \boxt V  \big) \to \big( (\boxt_{i\in I^\circ } \ul{\CA}_\circ) \boxt \j_!(\ul\CA_\circ\boxt V) \big)|_{\disj}
        \]
        is invertible.
    \end{itemize}

\sssec{}
  \label{sss fact structure on j_!}
    Let $V\in \bM_{x_0}$ be an object adapted to $\CA$-induction. We claim $\j_!(\ul\CA_\circ\boxt V)$ can be canonically upgraded to an object
    \[
        \j_!(\CA_\circ\boxt V) \in \CA\on{-mod}^\onfact(\bM)_{x_0}.
    \]

    \medskip 
    We will only construct the structural isomorphisms (see \secref{sss trad factmod object})
    \begin{equation}
      \label{eqn-structure-iso-j}
      \on{act}_I\big(  \big((\boxt_{i\in I^\circ} \ul{\CA})\boxt \j_!(\ul\CA_\circ\boxt V)\big) |_\disj \big) \xrightarrow{\simeq} \on{union}_I^!(\j_!(\ul\CA_\circ\boxt V))|_\disj,
    \end{equation}
    and leave the higher compatibilities to \cite{CFZ}.
    
    \medskip
    By assumption, we have
    \begin{equation}
      \label{eqn-structure-iso-j-1}
         \big((\boxt_{i\in I^\circ} \ul{\CA})\boxt \j_!(\ul\CA_\circ\boxt V)\big) |_\disj \simeq \big( (\boxt_{i\in I^\circ } \ul{\CA}_\circ) \boxt \j_!(\ul\CA_\circ\boxt V) \big)|_{\disj} \simeq \j_{I,!} \big(  (\boxt_{i\in I} \ul\CA_\circ)_\disj \boxt V  \big).
    \end{equation}

    \medskip
    On the other hand, we have a Cartesian square
    \[
      \xymatrix{
          \CR_{I,\circ} \times x_0
          \ar[r]^-{\j_I} \ar[d]_-{\on{union}_{I,\circ}}
           & \CR_I \ar[d]^-{\on{union}_I} \\
           \Ran_\circ^\untl \times x_0 \ar[r]_-\j
          &  \Ran_{x_0}^\untl
      }
    \]
    such that the vertical arrows satisfy properties similar to those in Lemma \ref{lem-union-local-iso}. This implies the Beck--Chevalley natural transformation
    \[
       \xymatrix{
          \Gamma^\lax(\CR_{I,\circ}\times x_0, \ul\bM|_{\CR_{I,\circ}\times x_0})
           \ar[d]_-{\on{union}_{I,\circ,*}}
           & \Gamma^\lax(\CR_I, \ul\bM|_{\CR_I}) \ar[d]^-{\on{union}_{I,*}} \ar[l]_-{\j_I^!} \\
           \Gamma^\lax(\Ran_\circ^\untl\times x_0 , \ul\bM|_{\Ran_\circ^\untl\times x_0 }) 
          &  \Gamma^\lax(\Ran_{x_0}^\untl, \ul\bM) \ar[l]^-{\j^!}
      }
    \]
    is invertible, where the vertical functors are right adjoint to the $!$-pullback functors. Passing to partially defined left adjoints, we obtain
    \begin{equation}
      \label{eqn-structure-iso-j-2}
        \on{union}_I^!(\j_!(\ul\CA_\circ\boxt V))|_\disj \simeq \j_{I,!} \big( \on{union}_{I,\circ}^!(  \ul\CA_\circ\boxt V )|_\disj\big)
    \end{equation}

    \medskip
    Via the isomorphisms \eqref{eqn-structure-iso-j-1} and \eqref{eqn-structure-iso-j-2}, the desired isomorphism \eqref{eqn-structure-iso-j} is given by applying $\j_{I,!}$ to 
    \[
        \on{act}_{I,\circ}\big((\boxt_{i\in I} \ul\CA_\circ)_\disj \boxt V \big) \xrightarrow{\simeq} \on{union}_{I,\circ}^!(  \ul\CA_\circ\boxt V )|_\disj,
    \]
    which is given by the factorizaiton $\ul{A}_\circ$-module structure on $\ul\CA_\circ\boxt V $ (see \secref{sss: A box V factmod empty}).

\sssec{}
  Modulo issues about homotopy coherence, it is clear
  \[
      \Maps_{\CA\on{-mod}^\onfact(\bM)_{x_0}}( \j_!(\CA_\circ\boxt V), \CM  ) \simeq \Maps_{\CA_\circ\on{-mod}^\onfact(\bM_\circ)_{\emptyset}}( \CA_\circ\boxt V, \CM|_{\Ran_\circ^\untl \times x_0} ).
  \]
  By \eqref{eqn-factmod-at-empty}, the RHS can be identified with
  \[
      \Maps_{\bM_{x_0}}(  V, \CM_{x_0} ).
  \]
  Hence we obtain the following result.

  \begin{prop}
    \label{prop fact ind}
      Let $\bA$, $\bM$, $\CA$ be as in \secref{ss induced modules}. Suppose $V\in \bM_{x_0}$ is adapted to $\CA$-induction (see \secref{sss adapt to induction}), then the partially defined left adjoint $\on{ind}_\CA$ to the forgetful functor
      \[
          \oblv_\CA: \CA\on{-mod}^\onfact(\bM)_{x_0} \to \bM_{x_0}
      \]
      is defined on $V$, and we have
      \[
          \ind_\CA(V) \simeq \j_!(\CA_\circ\boxt V),
      \]
      where the RHS is defined in \secref{sss fact structure on j_!}.
  \end{prop}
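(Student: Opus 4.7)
The plan is to directly verify the universal mapping property
$$\Maps_{\CA\on{-mod}^\onfact(\bM)_{x_0}}(\j_!(\CA_\circ\boxt V),\CM) \simeq \Maps_{\bM_{x_0}}(V,\CM_{x_0}),$$
naturally in $\CM$, by combining the $(\j_!,\j^!)$-adjunction for lax D-modules with the identification of factorization modules at the empty marked point. The object $\j_!(\CA_\circ\boxt V)$ has already been constructed and endowed with a factorization $\CA$-module structure in \secref{sss fact structure on j_!}; the two hypotheses in \secref{sss adapt to induction} are tailored precisely to enabling this construction and the adjunction below.

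First, I would factor $\oblv_\CA$ through the restriction (propagation) functor
$$(-)|_{\Ran_\circ^\untl \times x_0}:\CA\on{-mod}^\onfact(\bM)_{x_0} \to \CA_\circ\on{-mod}^\onfact(\bM_\circ)_\emptyset$$
associated to the canonical morphism $\j:\Ran_\circ^\untl\times x_0 \to \Ran_{x_0}^\untl$ via the change-of-base formalism of \secref{ss change of base}. By \secref{sss: A box V factmod empty}, the target identifies with $\bM_{x_0}$, under which $\CA_\circ\boxt V$ corresponds to $V$ and the restriction functor becomes $\oblv_\CA$. Consequently, the question reduces to showing that $\j_!(\CA_\circ\boxt V)$, as an object of $\CA\on{-mod}^\onfact(\bM)_{x_0}$, is the value at $\CA_\circ\boxt V$ of a left adjoint to this restriction functor.

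Second, to establish this adjunction I would chase mapping spaces through the $(\j_!,\j^!)$-adjunction at the level of lax D-modules. A morphism of factorization $\CA$-modules $\j_!(\CA_\circ\boxt V) \to \CM$ in $\bM$ at $x_0$ is, by \secref{sss trad factmod object}, a morphism $\j_!(\ul\CA_\circ\boxt V) \to \ul\CM$ in $\Gamma^\lax(\Ran_{x_0}^\untl,\ul\bM)$ compatible with the factorization $\CA$-module structure on both sides; by the partial $(\j_!,\j^!)$-adjunction for lax D-modules (which is defined on $\ul\CA_\circ\boxt V$ by the first hypothesis) such a datum is equivalent to a morphism $\ul\CA_\circ\boxt V \to \j^!(\ul\CM)$ on $\Ran_\circ^\untl\times x_0$. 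The second hypothesis in \secref{sss adapt to induction}---that the Beck--Chevalley maps
$$\j_{I,!}\bigl((\boxt_{i\in I} \ul\CA_\circ)|_\disj \boxt V\bigr) \to \bigl((\boxt_{i\in I^\circ}\ul\CA)\boxt \j_!(\ul\CA_\circ\boxt V)\bigr)|_\disj$$
are invertible for every marked $I$---is exactly what translates the compatibility of the first morphism with the factorization structures on both sides into the analogous compatibility for the second, in a manner mirroring the isomorphisms \eqref{eqn-structure-iso-j-1} and \eqref{eqn-structure-iso-j-2} from the construction of the factorization structure on $\j_!(\CA_\circ\boxt V)$. One therefore obtains a morphism of factorization $\CA_\circ$-modules $\CA_\circ\boxt V \to \CM|_{\Ran_\circ^\untl\times x_0}$, which under \eqref{eqn-factmod-at-empty} corresponds to a morphism $V\to \CM_{x_0}$ in $\bM_{x_0}$, as required.

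The main obstacle, as flagged by the ``modulo issues about homotopy coherence'' caveat preceding the statement, is promoting the above pointwise bijection to a genuine equivalence of mapping spaces together with all higher compatibilities. Concretely, a morphism of factorization $\CA$-modules is an infinite tower of coherent compatibilities indexed by twisted arrow categories of marked finite sets (compare the $\sharp$-construction of \secref{sss defn sharp}), and one must show that the Beck--Chevalley isomorphisms provided by the hypothesis assemble into a coherent equivalence of these towers. The cleanest way to handle this is to identify $\j_!(\CA_\circ\boxt V)$ with the coCartesian lifting of the canonical morphism $\on{unit}_\bA \to \CA$ in $\on{FactAlg}(\bA)$ relative to the Cartesian fibration $\on{FactMod}(\bA,\bM)_{x_0} \to \on{FactAlg}(\bA)$ of \secref{sss the fibration 1-cat down}, applied to $V$ viewed, via Lemma \ref{lem module for unit is nothing}, as an $\on{unit}_\bA$-factorization module in $\bM$; the verification of the coCartesian property reduces to exactly the Beck--Chevalley conditions of \secref{sss adapt to induction}, with the homotopy coherence supplied once and for all by the fibrational framework.
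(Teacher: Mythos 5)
Your proposal reproduces the paper's own argument: factor $\oblv_\CA$ through restriction along $\j$ followed by the identification $\CA_\circ\on{-mod}^\onfact(\bM_\circ)_\emptyset\simeq\bM_{x_0}$ of \eqref{eqn-factmod-at-empty}, and then establish the mapping-space equivalence via the partial $(\j_!,\j^!)$-adjunction, with the Beck--Chevalley conditions of \secref{sss adapt to induction} serving to transport the factorization-module compatibilities across the adjunction. The coCartesian-lifting reformulation you offer for the coherence issues is precisely what the paper's Remark in \secref{ss induced modules} alludes to and defers to \cite{CFZ}.
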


\ssec{The Ran operad}
\label{ss fact via operad}
  This subsection serves as an advertisement for \cite{CFZ}, where all the homotopy-coherent difficulties in this appendix (as well as those ignored in \cite[Appendix B, C]{GLC2}) will be treated by the methods developed in Lurie's \emph{Higher Algebra} \cite{Lu}.

  \sssec{}
    In \cite{CFZ}, we will rewrite the foundations of factorization structures\footnote{It is fair to say \cite{Ra} is the only homotopy-coherent foundation of factorization categories that exists in the literature. However, there are several disadvantages in Raskin's approach which makes it hard to prove results claimed in \secref{ss external fusion}.} using the language of \emph{generalized operads} developed in \cite{Lu}. 

  \sssec{}
  In \cite[Chapter 2]{Lu}, Lurie defined an $\infty$-operad as an $\infty$-cateogry $\CO^\otimes \to \on{Fin}_*$ over the category of marked finite sets that satisfies certain conditions. Roughly speaking, an $\infty$-operad is a colored operad (introduced by May in \cite{Ma}) enriched over the $\infty$-category of spaces, except that the collection of colors is allowed to form a category\footnote{It is the category $\CO:= \CO^\otimes \times_{\on{Fin}_*} \{0,1\} $.} rather than a space/set. Equivalently, an $\infty$-operad is a pseudo-tensor category (introduced by Beilinson--Drinfeld in \cite{BD1}), but the underlying category is allowed to be an $\infty$-category. 

  \sssec{}
  A generalized $\infty$-operad should be viewed as a \emph{family of $\infty$-operads} parameterized by some base category $\CC$ (see \cite[Sect. 2.3]{Lu}). Just like the usual theory of various types of algebras and monoidal categories can be developed using the corresponding operads, for any generalized $\infty$-operad $\CO^\otimes \to \on{Fin}_*\times \CC$, one can develop the notion of $\CO$-monoidal categories and $\CO$-algebras in them. 

  \medskip
  The main idea behind \cite{CFZ} is: there should exist a (classical) generalized operad $\Ran^\otimes$ parameterized by the category $\on{Aff}$ of affine schemes, such that
  \begin{equation}
    \label{eqn-laxfact-via-operad}
    \textrm{lax-factorization objects in }\mathcal{D} = \Ran\textrm{-algebras in }\mathcal{D},
  \end{equation}
  where $\mathcal{D}$ is any symmetric monoidal $(\infty,2)$-category. For instance,
  \[
    \textrm{lax-factorization DG categories} = \Ran\textrm{-algebras in }\mathbf{DGCat}.
  \]

  \sssec{}
  The construction of the generalized operad $\Ran^\otimes$ is easy. For any affine scheme $S$, we have a symmetric monoidal category $\Ran^\untl(S)$ with tensor products given by unions of finite sets. In particular, it corresponds to a (classical) $\infty$-operad $\Ran^\untl(S)^{\cup}$. We now define
  \[
      \Ran(S)^{\otimes} \subseteq \Ran^\untl(S)^{\cup}
  \]
  to be the 1-full subcategory containing of those morphisms that corrspond to \emph{disjoint} unions. Alternatively, we equip $\Ran^\untl(S)$ with a structure of pseudo-tensor categories, where a multi-map $\{x_i\}_{i\in I} \to y$ exists iff the points $\{x_i\}_{i\in I}$ are disjoint and $\sqcup x_i \subseteq y$.

  \medskip
  The above construction is contravariantly functorial in $S$. Hence we have a functor from $\on{Aff}^{\on{op}}$ to the category of (classical) $\infty$-operads. Now the generalized operad $\Ran^\otimes$ is defined to be the corresponding coCartesian fibration
  \[
      \Ran^\otimes \to \on{Aff}^{\on{op}}.
  \]

  \sssec{}
  Now for any $\on{Aff}^{\on{op}}$-family of symmetric monoidal $(\infty,2)$-categories $\bD^\otimes \to \on{Aff}^{\on{op}}$, we can \emph{define} a lax-factorization algebra object in $\bD$ to be a functor $\bA:\Ran^\otimes \to \bD^\otimes$ defined over $\on{Fin}_*\times  \on{Aff}^{\on{op}}$ such that 
  \begin{itemize}
    \item $\bA$ preserves inert morphisms;
    \item $\bA$ preserves coCartesian arrows over $\on{Aff}^{\on{op}}$.
  \end{itemize}
  We say $\bA$ is a (\emph{strict}) factorization algebra object in $\bD$ if in addtionally
  \begin{itemize}
    \item $\bA$ preserves coCartesian arrows over $\on{Fin}_*\times  \on{Aff}^{\on{op}}$.
  \end{itemize}
  One can check for $\bD=\mathbf{CrysCat}$, the above notions indeed recover (lax-)factorization DG-categories.

  \medskip 
  In fact, to treat both cases simultaneouly, we will introduce \emph{marked generalized $(\infty,2)$-operads}, which are pairs $(\bO^\otimes,E)$ such that $\bO^\otimes$ is a generalized $(\infty,2)$-operad and $E$ is a class of morphisms in $\bO^\otimes$. Then lax/strict factorization algebra objects are functors out of $(\Ran^\otimes, \on{lax}/\on{strict} )$, where the marked morphisms are coCartesian arrows over $\on{Aff}^{\on{op}}$ and $\on{Fin}_*\times  \on{Aff}^{\on{op}}$ respectively.

  \sssec{}
  The main advantage of $\Ran^\otimes$ is that it provides a natural way to deal with factorization \emph{module} structures. 

  \medskip
  In \cite[Chapter 3]{Lu}, for any \emph{coherent} $\infty$-operad $\CO^\otimes$ and a color $\mathfrak{m}\in \CO$, Lurie defined the notion of $\mathfrak{m}$-type $\CO$-modules for $\CO$-algebras, and provided a framework to deal with restrictions and relative tensor products of such modules, where the higher compatibilities for these constructions are encoded as certain fibrations of $\infty$-categories.

  \medskip
  In \cite{CFZ}, we will generalize the notion of coherence to \emph{marked} generalized $(\infty,2)$-operads, and prove:
  \begin{thm}
    $(\Ran^\otimes,\on{strict})$ is coherent as a marked generalized $(\infty,2)$-operads.
  \end{thm}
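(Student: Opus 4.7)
\medskip

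\noindent\textbf{Proof proposal.} The strategy is to mimic Lurie's verification that the commutative $\infty$-operad $\mathrm{Comm}^\otimes$ (and more generally the little discs operads $\mathbb{E}_k^\otimes$) is coherent, while simultaneously extending the very notion of coherence from the $(\infty,1)$-setting of \cite[Chapter 3]{Lu} to generalized $(\infty,2)$-operads. The first task is therefore definitional: having already generalized coherence to \emph{generalized} operads parametrized by a base category (here $\mathrm{Aff}^{\mathrm{op}}$), we must further relax Lurie's axioms to accommodate non-invertible 2-morphisms. Concretely, the 2-categorical Kan condition on the underlying $\infty$-category should be replaced by the requirement that the underlying $(\infty,2)$-category have enough left-adjointable 1-morphisms, and the pushout axiom for semi-inert morphisms should be formulated via lax-pushouts in the relevant 2-category of simplices over $\Ran^\otimes$.

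Once this definition is in place, the verification breaks into three geometric steps. First, unitality: the point $\emptyset \in \Ran^{\mathrm{untl}}(S)$ provides a two-sided unit for the disjoint-union tensor structure, functorially in $S$, and the canonical morphisms $\emptyset \to \ul{x}$ furnish the required unit multi-maps. Second, the $(\infty,2)$-categorical Kan condition: for any affine $S$ and any inclusion $\ul{x} \subseteq \ul{x}'$ in $\Ran^{\mathrm{untl}}(S)$, one shows that the slice over $\ul{x}'$ in $\Ran^\otimes(S)$ is filtered with contractible nerve, using that the poset of finite subsets of $X(S)$ containing $\ul{x}$ and contained in $\ul{x}'$ is a finite Boolean lattice. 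Third, and most importantly, the semi-inert extension axiom, which in the Ran context reads as follows: given disjoint $\ul{x}_1,\ldots,\ul{x}_n \in \Ran^{\mathrm{untl}}(S)$ and an inclusion $\sqcup \ul{x}_i \subseteq \ul{y}$, the space of factorizations $\ul{y} = \sqcup \ul{y}_i$ with $\ul{x}_i \subseteq \ul{y}_i$ and the $\ul{y}_i$ mutually disjoint is contractible. This is the homotopy-coherent avatar of the elementary combinatorial fact that disjoint finite subsets admit a unique extension to a disjoint decomposition.

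The main obstacle will be the third step, phrased at the level of the full $(\infty,2)$-operad rather than its underlying ordinary operad. The subtlety is that in the $(\infty,2)$-setting, the relevant ``space of factorizations'' is itself a 2-category of simplices, and one must show the appropriate lax/oplax colimit is contractible as a 2-groupoid. We expect to handle this by constructing an explicit deformation retract onto the discrete subcategory of orthogonal decompositions, using the geometry of graphs in $S \times X$: the disjointness condition cuts out an open subfunctor whose fibers are points, and the resulting projection to the set of decompositions is both a trivial Kan fibration on nerves and a locally constant fibration of 2-categories. Together with step two, this gives the extension property required for coherence. Finally, one should check that the resulting coherence theory indeed produces the 2-categories $\bA\mathbf{-mod}^{\mathrm{fact}}_{\ul{x}_0}$ of Section~\ref{ss factmodcat} as the $\ul{x}_0$-type modules, and that the (1,2)-Cartesian fibration of Theorem~\ref{thm-the-fibration} is precisely the module fibration predicted by the general machinery; this consistency check will serve as the sanity test validating the chosen definition of $(\infty,2)$-coherence.
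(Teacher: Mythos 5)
The paper does not contain a proof of this theorem. It is stated as a forward reference to the forthcoming work~\cite{CFZ}, which is cited in the bibliography as ``in preparation''; the preceding sentences explicitly say that \emph{loc.~cit.} will ``generalize the notion of coherence even to generalized $(\infty,2)$-operads, and prove'' the statement. There is therefore nothing in the present paper against which your argument can be compared. Your high-level strategy --- first extend Lurie's coherence axioms to the generalized $(\infty,2)$-operad setting and then verify them for $\Ran^\otimes$ --- is exactly what the paper says the forthcoming reference will do, so the outline is at least consistent with the authors' stated intentions.

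That said, there is a genuine error in your step (3). You claim that, given disjoint $\ul{x}_1,\ldots,\ul{x}_n\in\Ran^{\on{untl}}(S)$ and an inclusion $\bigsqcup\ul{x}_i\subseteq\ul{y}$, the collection of decompositions $\ul{y}=\bigsqcup\ul{y}_i$ with $\ul{x}_i\subseteq\ul{y}_i$ and the $\ul{y}_i$ mutually disjoint is contractible, justified by ``the elementary combinatorial fact that disjoint finite subsets admit a unique extension to a disjoint decomposition.'' This uniqueness is false. Whenever $\ul{y}\setminus\bigsqcup\ul{x}_i$ is nonempty and $n\geq 2$, each leftover point can be assigned to any of the $n$ blocks, so for a $k$-point the set of such decompositions has cardinality $n^{|\ul{y}\setminus\sqcup\ul{x}_i|}$ --- a nonempty finite set, not a point, and in particular not contractible. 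So either the precise axiom you need to check is not the one you wrote down (Lurie's semi-inert extension condition for coherence is formulated as a flatness/pushout condition on the envelope, not as literal uniqueness of decompositions, and the two are not obviously the same thing), or, if it is, the condition simply fails as stated and the definition of $(\infty,2)$-coherence must be set up so that this non-uniqueness is absorbed elsewhere. Either way, this is the step that needs to be repaired before the argument can go through.
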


  \sssec{}
    As a result, we can deal with restrictions and relative tensor products of $\ul{x}$-type $(\Ran^\otimes,\on{strict})$-modules internal to any $\on{Aff}^{\on{op}}$-family of symmetric monoidal $(\infty,2)$-categories, such as $\mathbf{CrysCat}$, in a way similar to \cite[Chapter 3]{Lu}.

    \medskip
    Note that a color $\ul{x}$ in $\Ran^\otimes$ is exactly an affine point $\ul{x}:S\to \Ran^\untl$ for some affine scheme $S$. We will show, for example, the following two notions are the same:
    \begin{itemize}
      \item 
        A pair $(\bA,\bM)$ of a (strict) factorization category $\bA$ and its module $\bM$ at $\ul{x}$, as defined in this appendix;
      \item
        A pair $(\bA,\bM)$ of a $(\Ran,\on{strict})$-algebra in $\mathbf{CrysCat}$ and its $\ul{x}$-type module $\bM$, as defined in \cite{CFZ}.
    \end{itemize}

  \sssec{}
    We will also show that $(\Ran^\otimes,\on{lax})$ is not \emph{coherent}. This reflects the phenomenon that if one mimics the definitions in \secref{ss external fusion} and defines factorization multi-functors of \emph{lax-}factorization modules, then the compositions of these multi-functors are not well-defined.

    \medskip 
    Nevertheless, in \cite{CFZ}, we will define $\ul{x}$-type $(\Ran^\otimes,\on{lax})$-module objects and interpret the construction of the strictening functor \eqref{sss strictening fact} via \emph{monoidal envelopes} and \emph{operadic Kan extensions} (see \cite[Sect. 2.2.4 and Sect. 3.1.2]{Lu}).

\newpage


\begin{thebibliography}{999}

\bibitem[AS1]{AS1} F. Abellan and W. H. Stern, {\it 2-Cartesian fibrations I: A model for 1-bicategories fibred in 1-bicategories},
Applied Categorical Structures (2022).

\bibitem[AS2]{AS2} F. Abellan and W. H. Stern, {\it 2 Cartesian fibrations II: A Grothendieck construction for 1 bicategories},
arXiv:2201.09589.

\bibitem[Bogd]{Bogd} E.~Bogdnova, {\it Local systems with restricted variation on the formal punctured disc via factorization},
arXiv:2411.05297. 

\bibitem[BZGO]{BZGO} D.~Ben-Zvi, S.~Gunningham and H.~Orem, {\it Highest Weights for Categorical Representations}, 
IMRN {\bf 2020}, 9988--10004. 

\bibitem[AG]{AG} D. Arinkin and D. Gaitsgory, {\it Singular support of coherent sheaves, and the geometric Langlands conjecture},
Selecta Math. N.S. {\bf 21} (2015), 1--199.

\bibitem[AGKRRV]{AGKRRV} D. Arinkin, D.Gaitsgory, D. Kazhdan, S. Raskin, N. Rozenblyum and Y. Varshavsky,
{\it The stack of local systems with restricted variation and geometric Langlands theory with nilpotent singular support}, arXiv:2010.01906.

\bibitem[BD1]{BD1} A.~Beilinson and V.~Drinfeld, {\it Chiral algebras}, AMS {\bf 51} (2004).

\bibitem[BD2]{BD2} A.~Beilinson and V.~Drinfeld, {\it Quantization of Hitchin’s integrable system and Hecke eigensheaves}, available
at \url{https://math.uchicago.edu/~drinfeld/langlands/QuantizationHitchin.pdf}

\bibitem[CFZ]{CFZ} L.~Chen, Y.~Fu and Y.~Zhao, {\it The Ran operad}, in preparation.

\bibitem[DG]{DG} V.~Drinfeld and D.~Gaitsgory, {\it Compact generation of the category of $\mathrm {D} $-modules on the stack of $ G $-bundles on a curve}, Cambridge Journal of Mathematics 3, no. 1 (2015): 19-125.

\bibitem[FG]{FG} J.~Francis and D.~Gaitsgory, {\it  Chiral koszul duality}, Selecta Mathematica 18.1 (2012): 27-87.

\bibitem[Ga0]{Ga0} D.~Gaitsgory, {\it Construction of central elements in the affine Hecke algebra
via nearby cycles}, Invent. Math. {\bf 144} (2001), 253--280. 

\bibitem[Ga1]{Ga1} D.~Gaitsgory, {\it The Atiyah-Bott formula for the cohomology of the moduli space of bundles on a curve}, 
arXiv:1505.02331.

\bibitem[Ga2]{Ga2} D.~Gaitsgory, {\it Contractibility of the space of rational maps}, Invent. Math. {\bf 191} (2013), 91--196.

\bibitem[Ga3]{Ga3} D.~Gaitsgory, {\it Sheaves of categories and the notion of 1-affineness}, Contemporary Mathematics {\bf 643}  (2015), 1--99.


\bibitem[Ga4]{Ga4} D.~Gaitsgory, {\it Local and global versions of the Whittaker category}, PAMQ {\bf 16}  (2020), 775--904.

\bibitem[Ga5]{Ga5} D.~Gaitsgory, {\it Local and global Langlands conjecture(s) over function fields}, 
arXiv:2509.24902

\bibitem[GHL]{GHL} A. Gagna, Y. Harpaz and E. Lanari, {\it Cartesian Fibrations of ($\infty,2$)-categories}, arXiv:2107.12356

\bibitem[GL]{GL} D.~Gaitsgory and J.~Lurie, {\it Weil’s conjecture for function fields}, available at \url{https://math.ias.edu/~lurie}

\bibitem[GLC2]{GLC2} D.~Arinkin, D.~Beraldo, L.~Chen, J.~Faegerman, D.~Gaitsgory, K.~Lin, S.~Raskin and N.~Rozenblyum, \newline
{\it Proof of the geometric Langlands conjecture II: Kac-Moody localization and the FLE}, arXiv:2405.03648

\bibitem[GR0]{GR0} D.~Gaitsgory and N.~Rozenblyum,  {\it DG ind-schemes}, Contemporary Mathematics {\bf 610} (2014), 139--251.

\bibitem[GR1]{GR1} D.~Gaitsgory and N.~Rozenblyum, {\it A study in derived algebraic geometry, Vol. I:  
Correspondences and Duality}, {\bf 221} (2017), AMS, Providence, RI.

\bibitem[GR2]{GR2} D.~Gaitsgory and N.~Rozenblyum, {\it A study in derived algebraic geometry, Vol. I:  
Deformations, Lie Theory and Formal Geometry}, {\bf 221} (2017), AMS, Providence, RI.

\bibitem[Ho]{Ho} Quoc P.~Ho, {\it The Atiyah-Bott formula and connectivity in chiral Koszul duality}, arXiv:1610.00212

\bibitem[Lu0]{Lu0} J.~Lurie, {\it Higher Topos Theory}, Princeton University Press (2009).

\bibitem[Lu]{Lu} J.~Lurie, {\it Higher Algebra}, available at: \url{http://math.harvard.edu/~lurie}

\bibitem[Ma]{Ma} J.P.~May {\it The Geometry of Iterated Loop Spaces}, Lecture Notes in Mathematics (1972).

\bibitem[Ra]{Ra} S.~Raskin, {\it Chiral categories}, available at: \url{https://gauss.math.yale.edu/~sr2532/}



\bibitem[Ro]{Ro} N.~Rozenblyum, {\it Connections on moduli spaces and infinitesimal Hecke modifications}, arXiv:2108.07745 

\bibitem[Zhao]{Zhao} Y.~Zhao, {\it Half-integral levels}, arXiv:2312.11058. 


\end{thebibliography}
\end{document}